\newtheorem{thm}{Theorem}[section]
\newtheorem{definition}{Definition}
\newtheorem{prop}[thm]{Proposition}
\newtheorem{lem}[thm]{Lemma}
\newtheorem{cor}[thm]{Corollary}
 \newtheorem{rem}{Remark}
 \newtheorem{hyp}{Assumption}
\newcommand{\argmin}{\mathop{\mathrm{argmin}}}
\newcommand{\E}{\operatorname{\mathbb{E}}}
\renewcommand{\P}{\operatorname{\mathbb{P}}}
\newcommand{\tr}{\mathrm{Tr}}
\newcommand{\pa}[1]{\left(#1\right)}
\newcommand{\ac}[1]{\left\{#1\right\}}
\newcommand{\cro}[1]{\left[#1\right]}
\newcommand{\<}{\langle}
\renewcommand{\>}{\rangle}
\newcommand{\eps}{\varepsilon}
\newcommand{\X}{\mathbf{X}}
\newcommand{\R}{\mathbb{R}}
\newcommand{\1}{\mathbf{1}}
\newcommand{\N}{\mathbb{N}}
\newcommand{\cumul}{\mathrm{Cum}}
\begin{document}

%%%%%%%%%%%%%%%%%%%%%%%%%%%%%%%%%%%%%%%%%%%%%%%%%%%%%%%%%%%%%%%%%%%%%%%%%%%%%%%%%%
%%%%%%%%%%%%%%%%%%%%
%%%%%%%%%%%%%%%%%%%%%%%%%%%%%%%%%%%%%%%%%%%%%%%%%%%%%%%%%%%%%%%%%%%%%%%%%%%%%%%%%%%%%%%%
%%%%%%%%%%%%%%%%%%% ATTENTION AVANT SOUMISSION A AoS
%%%%%%%%%%%%%%%%%%%%%%%%%%%%%%%%%%%%%%%%%%%%%%%%%%%%%%%%%%%%%%%%%%%%%%%%%%%%%%%%%%%%%%%%%
%%%%%%%%%%%%%%%%%%%%% Changer les marges ligne 276 de imsart.sty
%%%%%%%%%%%%%%%%%%%%%%%%%%%%%%%%%%%%%%%%%%%%%%%%%%%%%%%%%%%%%%%%%%%%%%%%%%%%%%%%%%%%%%

\begin{frontmatter}
\title{Computational lower bounds in latent models: clustering, sparse-clustering, biclustering}
\runtitle{Computational lower bounds}

\begin{aug}
%%%%%%%%%%%%%%%%%%%%%%%%%%%%%%%%%%%%%%%%%%%%%%%
%% Only one address is permitted per author. %%
%% Only division, organization and e-mail is %%
%% included in the address.                  %%
%% Additional information can be included in %%
%% the Acknowledgments section if necessary. %%
%% ORCID can be inserted by command:         %%
%% \orcid{0000-0000-0000-0000}               %%
%%%%%%%%%%%%%%%%%%%%%%%%%%%%%%%%%%%%%%%%%%%%%%%
\author[A]{\fnms{Bertrand}~\snm{Even}\ead[label=e1]{bertrand.even@universite-paris-saclay.fr}},
\author[A]{\fnms{Christophe}~\snm{Giraud}\ead[label=e2]{christophe.giraud@universite-paris-saclay.fr}}
\and
\author[B]{\fnms{Nicolas}~\snm{Verzelen}\ead[label=e3]{Nicolas.Verzelen@inrae.fr}}
%%%%%%%%%%%%%%%%%%%%%%%%%%%%%%%%%%%%%%%%%%%%%%
%% Addresses                                %%
%%%%%%%%%%%%%%%%%%%%%%%%%%%%%%%%%%%%%%%%%%%%%%

\address[A]{Laboratoire de Math\'ematiques d'Orsay, Universit\'e Paris-Saclay\printead[presep={,\ }]{e1,e2}}

\address[B]{MISTEA, INRAE, Institut Agro,
Univ. Montpellier\printead[presep={,\ }]{e3}}

\end{aug}

\begin{abstract}
 In many high-dimensional problems, like sparse-PCA, planted clique, or clustering, the best known algorithms with polynomial time complexity fail to reach the statistical performance provably achievable by algorithms free of computational constraints.
This observation has given rise to the conjecture of the existence, for some problems, of gaps -- so called statistical-computational gaps -- between the best possible statistical performance achievable without computational constraints, and the best performance achievable with poly-time algorithms. A powerful approach to assess the best performance achievable in poly-time is to investigate the best performance achievable by polynomials with low-degree. 
We build on the seminal paper of  \cite{SchrammWein22}  and propose a new scheme to derive  lower bounds on the performance of low-degree polynomials in some latent space models. By better leveraging the latent structures, we obtain new and sharper results, with simplified proofs.  We then instantiate our scheme to provide computational lower bounds for the problems of clustering, sparse clustering, and biclustering. We also prove matching upper-bounds and some additional statistical results, 
in order to provide a comprehensive description of the statistical-computational gaps occurring in these three problems. 
\end{abstract}

\begin{keyword}[class=MSC]
\kwd[Primary ]{62H30}
\kwd{68Q17}
%\kwd[; secondary ]{???}
\end{keyword}

\begin{keyword}
\kwd{Computational lower bound}
\kwd{Statistical–computational tradeoffs}
\kwd{Low-degree polynomials}
\kwd{Clustering}
\kwd{Gaussian mixtures}
\kwd{Sparse clustering}
\kwd{Biclustering}
\end{keyword}

\end{frontmatter}

\section{Introduction}

In high-dimensional statistics, the primary goal is to derive computationally efficient estimation procedures, achieving the best possible statistical performance. Yet, in many problems, such as sparse PCA, planted clique or clustering, the best known algorithms with polynomial-time complexity are unable to match the performances provably achievable by the best estimators (without computational constraints). This observation has lead to several conjectures on the existence of gaps (called statistical-computational gaps) between the optimal statistical performance, i.e. the best performance achievable without computational constraints, and the best performance achievable by polynomial time algorithms. In particular, to assess the quality of a computationally efficient algorithm for a given task, the theoretical performance should not be compared to the optimal statistical performance (without computational constraints), but to the performance of the best poly-time algorithm. This raises the problem of establishing lower-bounds on the performance of the best poly-time algorithms for a wide range of problems.

Since high-dimensional statistics deal with random instances, the classical notions of worst-case hardness, such as P, NP, etc are not suitable for the high-dimensional framework. Instead, lower bounds are obtained for some specific models of computations, such as SoS \cite{HopkinsFOCS17,Barak19}, overlap gap property \cite{gamarnik2021overlap}, statistical query \cite{kearns1998efficient,brennan2020statistical}, and low-degree polynomials \cite{hopkins2018statistical,KuniskyWeinBandeira,SchrammWein22}, possibly combined with reductions between different statistical problems~\cite{brennan2020reducibility,pmlr-v30-Berthet13,brennan2018reducibility}.

Low-degree polynomial lower bounds (LD bounds) have recently attracted a lot of attention due to their ability to provide state-of-the-art lower bounds for a wide class of detection problems, including community detection~\cite{Hopkins17}, spikes tensor models~\cite{Hopkins17,KuniskyWeinBandeira}, sparse PCA~\cite{ding2024subexponential} among others. 
We refer to~\cite{SurveyWein2025} for a recent survey.
The low-degree polynomial framework is a computational model, where we only consider estimators, or test statistics, within the class of multivariate polynomials of degree at most $D$ of the observations. The premise of the LD literature is that for a large class of problems, the polynomials of degree $D=O(\log n)$ are as powerful as any polynomial-time algorithm. Hence, proving failure of  degree $O(\log n)$ polynomials for a given task is an indication~\cite{KuniskyWeinBandeira} that no poly-time algorithm can solve this task.
Interestingly, it has been demonstrated that this framework is closely related to other computational frameworks including statistical queries~\cite{brennan2020statistical}, free-energy landscapes from statistical physics~\cite{bandeira2022franz} or approximate message passing~\cite{montanari2024equivalence}. 
The LD framework has been initially developed for  hypothesis testing (detection problems), where the goal is to detect the existence of a possible planted signal in the data. In addition to predicting computational barriers for algorithms computable in polynomial time, LD polynomials may be used to predict in the Hard regime the amount of time needed to resolve a problem. In sparse PCA, \cite{ding2024subexponential} exhibits a phenomenon where, when the signal-to-noise ratio decreases, the complexity interpolates between being of polynomial time in the easy regime and being exponential at the informational threshold. In general, \cite{hopkins2018statistical} states in its low-degree conjecture that polynomials of degree $D$ are a proxy for algorithm of time complexity roughly $n^{D}$. 

The framework has then been extended to the estimation problem in the seminal paper of Schramm and Wein~\cite{SchrammWein22}. In the estimation framework, the goal is to lower-bound the risk of the best polynomial of degree at most $D$. A key contribution of  \cite{SchrammWein22} 
 is to relate the derivation of LD bounds to the upper-bounding of some  multivariate cumulants. The theory developed in \cite{SchrammWein22} provides a versatile framework to derive LD bounds in estimation and has been applied among others to submatrix estimation~\cite{SchrammWein22}, stochastic block models and graphons~\cite{luo2023computational}, dense cycles recovery~\cite{mao2023detection}, coloring problems~\cite{kothari2023planted}.
 However, it suffers from two limitations:
\begin{enumerate}
\item It can lead to quite complex analyses for some involved problems, and this complexity can limit the range of the results that can be obtained, as e.g. in \cite{Even24} for Gaussian mixture models  or in~\cite{luo2023computational} for biclustering. Those examples are discussed precisely later on.
\item It provides non-sharp thresholds, with spurious poly-log factors.
\end{enumerate}
The second limitation has been recently tackled by Sohn and Wein~\cite{SohnWein25}, which provides more powerful technics to derive sharp thresholds, but at the price of an even higher technicality and complexity, limiting the applicability to more involved problems.

\subsection{Our contributions}
Our main technical contribution is to propose some new derivation schemes for deriving the cumulants in  some latent variable models. The heart of the improvement is to better handle conditional independences in latent variable models by conditioning,  leading to stronger and new results, with simpler proofs. This result is then instantiated in the three following emblematic problems: clustering high-dimensional Gaussian mixtures, sparse clustering and biclustering. Whereas the computational-statistical gaps were previously known in some restrictive, we provide an almost full picture in this work. 
To complement it, we also provide upper-bounds on the statistical and computational rates for these problems.
Let us describe our main contributions into more details.\smallskip

{\bf Bounding multivariate cumulants in a model with latent variables.} 
We consider the following model of data generation. We observe a $n\times p$ matrix $Y\in\R^{n\times p}
$, which can be decomposed as $Y=X+E$, where $E$ is a noise matrix with i.i.d. Gaussian entries, and $X$ is a signal matrix, independent of $E$, structured by a latent variable $Z\in \mathcal{Z}$
\begin{equation}\label{eq:latent-model}
X_{ij}=\delta_{ij}(Z)\nu_{\theta_{ij}(Z)},\quad \textrm{for}\quad (i,j)\in [n]\times [p], 
\end{equation}
with
\begin{itemize}
\item  $\nu_{k\ell}\in \R^{n\times p}$, for $(k,\ell)\in [K]\times [L]$, possibly randomly generated;
\item $\delta_{ij}:  \mathcal{Z} \to \ac{0,1,-1}$ and $\theta_{ij}:  \mathcal{Z} \to  [K]\times [L]$, for any $(i,j)\in [n]\times [p]$.
\end{itemize}

In the analysis of \cite{SchrammWein22}, the key step for proving LD bounds is to upper-bound multivariate cumulants of the form $\kappa_{h(Z),\alpha}=\cumul(h(Z),\ac{X_{ij}: (i,j)\in \alpha})$, where $h:\mathcal{Z}\to \R$ is a measurable function, and  $\ac{X_{ij}: (i,j)\in \alpha}$ is a multiset, where the variable $X_{ij}$ is repeated $\alpha_{ij}$-times.  
Our first contribution is to provide some simple bounds, and simple recursions for bounding such cumulants $\kappa_{h(Z),\alpha}$. These bounds and recursions are obtained by merely applying a conditioning on the latent variable $Z$, and observing that many simplifications occur. While technically very simple, this first  step is the basis  for deriving new lower-bounds in different instances of the latent model (\ref{eq:latent-model}). 
\smallskip

{\bf Clustering Gaussian mixtures.}  
The classical Gaussian mixture model is an instantiation of the latent model (\ref{eq:latent-model}).
For some unknown vectors $\mu_1,\ldots,\mu_K\in\R^p$, some unknown $\sigma>0$, and an unknown partition $G^*=\ac{G^*_1,\ldots,G^*_K}$ of $[n]$, the observations $Y_{ij}$ are sampled independently with distribution
\begin{equation}\label{eq:GMM-intro}
Y_{ij} \sim \mathcal{N}(\mu_{kj},\sigma^2),\quad \text{for}\ i\in G^*_k\quad \text{and}\ j\in[p].
\end{equation}
For simplicity, we focus on the case where the partition is balanced, i.e. where all clusters $G^*_{k}$ have similar cardinality. Denoting by 
\begin{equation}\label{eq:snrclustering}
\Delta^2=\min_{k\neq l}\frac{\|\mu_k-\mu_l\|^2}{2\sigma^2}\enspace,
\end{equation}
the minimal (scaled) separation between clusters, 
we prove in Theorem \ref{thm:lowdegreeclusteringsharp} that, for $p\geq \log^5(n)$, clustering better than a random guessing with $\log(n)$-degree polynomials can be impossible when
\begin{equation}\label{eq:clustering-impossible:intro}
{{\Delta}^2} <  (c\log K)\vee {\pa{\frac{\sqrt{p}}{(\log n)^{9}}\wedge\sqrt{\frac{K^2p}{n}}}}\enspace,
\end{equation}
where $c$ is a positive numerical constant.
This result extends the LD bound of \cite{Even24}, only valid for the high-dimensional regime $p\geq n$,
to the much more challenging moderately high-dimensional regime $\log^5(n)\leq p \leq n$.
 The LD bound is also improved, removing spurious poly-$\log(n)$ factors present in the lower-bound of \cite{Even24}.
 In particular,  the bound recovers the exact BBP threshold $\sqrt{{K^2p}/{n}}$, that was conjectured in \cite{lesieur2016phase} with tools from statistical physics.
 Comparing (\ref{eq:clustering-impossible:intro}) to the statistical threshold 
 \begin{equation}\label{eq:informationalclustering-intro}
\Delta^2\gtrsim \log(K)+\sqrt{\frac{pK}{n}\log(K)}\enspace,
\end{equation}
derived in \cite{Even24}, above which partial clustering is achievable by minimizing \emph{exactly} the Kmeans criterion, we observe the existence of a statistical-computational gap when
$$p >  {n \log(K)\over K^2} \quad \text{and}\quad K \lesssim n^{1-o(1)}.$$
 We also provide some new upper-bounds proving that clustering in polynomial time is possible when $\Delta^2$ is larger (up to log factors) than (\ref{eq:clustering-impossible:intro}), in a wide range of regimes of $K,n,p$.
\smallskip 
 
 {\bf Sparse Clustering.} Sparse clustering is an instance of the  clustering model above, where the means $\mu_{k}$ are sparse. 
  Let $s\in [p]$ and an unknown subset $J^*\subseteq [p]$ with $|J^*|\leq s$.  For some unknown vectors $\mu_1,\ldots, \mu_K$ which are all supported on $J^*$, some unknown $\sigma>0$, and some unknown partition $G^*=\ac{G_1^*,\ldots, G^*_K}$ of $[n]$, 
 the observations $Y_{ij}$ are sampled independently with distribution 
\[Y_{ij} \sim \mathcal{N}(\mu_{kj},\sigma^2),\quad \text{for}\ i\in G^*_k\quad \text{and}\ j\in[p].\]

In Section \ref{sec:sparse},
we prove that clustering better than a random guessing with $\log(n)$-degree polynomials can be impossible when 
\begin{equation}\label{eq:seuil-comput-sparse}
\Delta^2\lesssim_{\log} 1+\min\pa{\sqrt{s}, \sqrt{\frac{s K^2}{n}}}+\sqrt{\frac{s^2}{n}}\quad \text{and}\quad \Delta^2\lesssim_{\log} 1+\min\pa{\sqrt{p}, \sqrt{\frac{p K^2}{n}}}\enspace.
\end{equation}
This result generalizes the  computational lower bound proved in L\"offler et al.~\cite{lffler2021computationallyefficientsparseclustering} for the specific case of $K=2$ groups. Our lower bound~\eqref{eq:seuil-comput-sparse} is valid for any $K$ and $s$ and, thereby, shed lights on the joint dependence of computationally efficient rates on $K$ and $s$. 
The second condition in \eqref{eq:seuil-comput-sparse} corresponds to the Condition \eqref{eq:clustering-impossible:intro} for clustering in poly-time in dimension $p$. 
The third term in the right-hand side of the first condition can be understood as the signal needed to ensure recovery of the $s$ columns supporting the $\mu_{k}$, while the two first terms corresponds to the rate for poly-time clustering in dimension $s$. 
Under the simplifying assumption that the signal is well spread over the $s$ columns, we prove that clustering  above the level (\ref{eq:seuil-comput-sparse}) can indeed be performed in poly-time by (i) selecting the $s$ columns with the largest $\ell^2$-norm, (ii) removing all the other columns, (iii) applying an optimal poly-time clustering algorithm on the remaining matrice. 
To delineate the statistical-computational gap, we prove in Proposition \ref{prop:upperboundsparseIT}, that, by applying an exhaustive search over all the partitions and all the columns, perfect clustering can be achieved in this context as soon as
\begin{equation}\label{eq:seuil-IT-sparse}
\Delta^2\gtrsim_{\log} 1+ \sqrt{\frac{sK}{n}}+{s\sqrt{K}\over n}\ ,\quad \text{or} \quad \Delta^2\gtrsim_{\log} 1+ \sqrt{\frac{pK}{n}} \enspace.
\end{equation}
The first minimal separation in \eqref{eq:seuil-IT-sparse} gathers  the statistical threshold
\eqref{eq:informationalclustering} in dimension $p=s$, with a separation 
$\Delta^2\geq {s\sqrt{K}}/{n}$ corresponding to the separation required for recovering the active columns set $J^*$ \emph{once} the clustering is known. We highlight the following interesting statistical-computational phenomenon in sparse-clustering, with a well spread signal. 
The additional separation $\Delta^2\gtrsim \sqrt{s^2/n}$ required in poly-time corresponds to the separation needed for recovering the active columns \emph{before} clustering, while the statistical additional separation $\Delta^2\gtrsim {s\sqrt{K}}/{n}$ corresponds to the separation needed for recovering the active columns \emph{after} clustering. This feature unveils  a better ability of non poly-time algorithm  to fully exploit the joint sparse-and-clustered structure.
Comparing (\ref{eq:seuil-comput-sparse}) and (\ref{eq:seuil-IT-sparse}), 
we observe the existence of a statistical-computational gap, which, 
depending on the regimes, can be as large as factor $\sqrt{n/K}$ or a factor $\sqrt{K}$. 
\smallskip

{\bf Biclustering.}  As a last example, we investigate the biclustering problem where both rows and columns  can be clustered. 
For some unknown matrix $\mu\in\R^{K\times L}$, some unknown $\sigma>0$, and unknown partitions $G^*=\ac{G^*_1,\ldots, G^*_K}$ of $[n]$ and $H^*=\ac{H_1^*,\ldots, H_L^*}$ of $[p]$, the $Y_{ij}$'s are sampled independently  with distribution 
\[Y_{ij} \sim \mathcal{N}(\mu_{kl},\sigma^2),\quad \text{for}\ (i,j)\in G^*_k\times H^*_l.\]
We observe that when all the clusters in $G^*$ (respectively $H^*$) have the same size $n/K$ (resp. $p/L$), 
we have for $i\in G^*_{k}$ and $i'\in G^*_{k'}$ 
\[{\|X_{i:}-X_{i':}\|^2} = \sum_{l=1}^L |H^*_{l}| (\mu_{kl}-\mu_{k'l})^2 \asymp {p\over L}\ \|\mu_{k:}-\mu_{k':}\|^2\enspace .
\]
Hence, we introduce
$$\Delta^2_r={p\over L}\min_{k\neq k'\in [K]}\frac{\|\mu_{k:}-\mu_{k':}\|^2}{2\sigma^2}\quad \text{and}\quad \Delta^2_c={n\over K}\min_{l\neq l'\in [L]}\frac{\|\mu_{:l}-\mu_{:l'}\|^2}{2\sigma^2}\enspace,$$
which represent the minimum row and column separations.
Given the symmetry of the problem, we can focus on the problem of finding the minimum separations for row clustering, i.e. for recovering partially $G^*$. We investigate if and how the column structure can help for recovering the row clusters.
Our results in Section~\ref{sec:biclustering} show the following dichotomy.
\begin{enumerate}
\item Either $\Delta_c^2 \leq_{\log} 1+\min\pa{\sqrt{n}, \sqrt{{nK^2}/{p}}}$, in which case row-clustering can be impossible in poly-time below the threshold $\Delta_r^2 \leq_{\log} 1+\min\pa{\sqrt{p}, \sqrt{{pK^2}/{n}}}$ corresponding to simple clustering;
\item Or $\Delta_c^2 \geq_{\log} 1+\min\pa{\sqrt{n}, \sqrt{{nK^2}/{p}}}$, in which case row-clustering is possible only above the threshold $\Delta_r^2\overset{\log}{=} 1+\min\pa{\sqrt{L}, \sqrt{{LK^2}/{n}}}$ corresponding to clustering in dimension $L$. 
\end{enumerate}
This result exhibits the following interesting phenomenon.
We observe that the threshold  $\Delta_{c}\geq_{\log} 1+\min\pa{\sqrt{n}, \sqrt{{nK^2}/{p}}}$ corresponds to the minimal level for clustering the $n$-dimensional columns in poly-time. When it is possible to cluster these columns in poly-time (case 2), then an optimal poly-time algorithm amounts to (i) cluster the columns, (ii) average the columns within a same group, reducing the number of columns to $L$, and (iii) apply a poly-time row clustering on the new $n\times L$ matrix. Conversely, when it is not possible to cluster the columns in poly-time (case 1), then the column structure is useless, and the minimal level for clustering the rows in poly-time corresponds to the level for simple clustering. Hence, for poly-time algorithms, the column structure is helpful for row clustering, only when the columns can be clustered in poly-time.

This is in contrast with computationally unconstrained algorithms, which can better leverage the column structure, and only require
$$\bigg\{\Delta^2_{r}\geq_{\log} 1+ \sqrt{KL \over n}\quad \text{and}\quad \Delta^2_{c}\geq_{\log} 1+\sqrt{KL \over p}\,\bigg\},\quad\ \underline{\text{or}}\quad\quad 
\bigg\{\Delta^2_{r}\geq_{\log} 1+ \sqrt{Kp \over n}\,\bigg\},
$$
for recovering $G^*$.
We observe that (i) the column separation $\Delta^2_{c}\geq_{\log} 1+\sqrt{KL / p}$ corresponds to the minimal separation required to recover $H^*$ when $G^*$ is known, and (ii) when this condition is met we can recover $G^*$ with the separation $\Delta^2_{r}\geq_{\log} 1+\sqrt{KL / n}$ which corresponds to the minimal separation required to recover $G^*$ when $H^*$ is known.  Hence, only a $K$-dimensional column separation condition  is needed to benefit from the $L$-dimensional row separation condition $\Delta^2_{r}\geq_{\log} 1+ \sqrt{KL / n}$ for successful clustering.
This feature is in contrast with poly-time algorithms, where the $n$-dimensional column  separation $\Delta^2_{c}\geq_{\log} 1+ \min\pa{\sqrt{n},\sqrt{L^2n / p}}$ is required for benefiting from the $L$-dimensional row separation condition $\Delta^2_{r}\geq_{\log} 1+ \min\pa{ \sqrt{L}, \sqrt{K^2 L/ n}}$.
Our results then unveil a much better ability of non poly-time algorithms to leverage the biclustering structure, compared to poly-time algorithms.

\subsection{Related Literature on clustering problems}

\paragraph*{Gaussian mixture clustering} Gaussian mixtures are arguably the most iconic distribution model for clustering. The corresponding problem has lead to many developments both in statistics and machine learning~\cite{Dasgupta99,VEMPALA2004,lesieur2016phase,LuZhou2016,DBLP:journals/corr/abs-1711-07211,Regev2017,giraud2019partial,fei2018hidden,chen2021hanson,Kwon20,SegolNadler2021,romanov2022,LiuLi2022,diakonikolasCOLT23b,Even24}. In the isotropic Gaussian mixture model, the minimax condition for partial recovery in any dimension was characterized in~\cite{Even24}, although it was already known  in the low-dimensional case, see e.g.~\cite{Regev2017,SegolNadler2021}.% However, the optimal separation condition under polynomial-time constraint, remains unknown despite a lot of works both on the construction of poly-time procedures and on the construction of poly-time lower bounds. 

In an asymptotic regime where $K$ is fixed, $n,p\to\infty$ with $p/n\to \alpha\geq \frac{1}{K^2}$, it was conjectured by \cite{lesieur2016phase} that the problem is indeed hard under the BBP transition $\Delta^2\asymp \sqrt{pK^2/n} $. To do so, they study the fixed points of the sate evolution equation of Approximate Message Passing. In the same asymptotic regime, \cite{banks2018information} proves that spectral detection is possible if and only if the separation is above the BBP transition $\sqrt{pK^2/n}$.

In the high-dimensional regime where $p\geq n$,~\cite{Even24} partially confirmed this conjecture by establishing
a LD lower bound that agrees (up to polylog) with the prediction of~\cite{lesieur2016phase} in the regime where $n\leq K^2$, and by unveiling another rate in the many group regime ($n\geq K^2$). These LD lower bounds  are matched by a combination of a SDP~\cite{giraud2019partial} and hierarchical-clustering techniques.
In contrast, in the low-dimensional regime $n\geq poly(p,K)$, there is no significant statistical-computational gap. Indeed, using iterative projections of high-order tensors, Liu and Li~\cite{LiuLi2022} have proved that it is possible to partially recover the clusters when $\Delta^2\gtrsim \log(K)^{1+\eps}$, with $\eps$ an arbitrary small positive constant, thereby almost matching the informational bound.
The moderately high-dimensional regime $p<n < poly(p,K)$, for some (non-explicit) polynomial $poly(p,K)$ from~\cite{LiuLi2022}, is still to be understood. 
Although there are numerous works on spectral procedures as well as Lloyd's algorithm~\cite{ndaoud2022sharp,LuZhou2016}, SDP~\cite{fei2018hidden,giraud2019partial}, or hierarchical-clustering procedures~\cite{VEMPALA2004} in this moderately high-dimensional regime, it remains largely unknown whether those are optimal among polynomial-time algorithms.

We underline that we focus in this work on the isotropic case. In the non-isotropic case, there is an additional statistical-computational gap which does not come from the high-dimensionality but from the unknown covariance structure. In particular, \cite{SQclustering} and \cite{pmlr-v195-diakonikolas23b} establish some  lower-bounds on the running time of any Statistical-Query algorithms, proving a statistical-computational gap between optimal procedures and SQ algorithms.

\paragraph*{Sparse clustering}
Motivated by practical considerations, Raftery and Dean~\cite{raftery2006variable} have introduced the sparse clustering model, where the clusters only differ on a small number of features. This lead to the development of numerous procedures that aim to building upon this sparsity to improve the clustering --see e.g.~\cite{maugis2011non,marbac2017variable,witten2010framework,mun2025high} and references therein. Notably, \cite{witten2010framework} uses a penalization by the $l_1$-norm in order to use weighted versions of the Kmeans objective. Another class of procedures amounts to alternate between feature selection and clustering (e.g.~\cite{mun2025high}). 
In the specific case where $K=2$, \cite{azizyan2013minimax} have characterized the minimax optimal rate for clustering. They also provided a two-step computationally-efficient procedure, but with significantly worse clustering rate. Under some technical assumptions, \cite{jin2017phase} introduced a more general two-step procedure that (i) selects active columns, (ii) uses a vanilla clustering procedure for $K\geq 2$, and they conjectured the existence of a statistical-computational gap. The corresponding sparse clustering detection problem was studied in~\cite{verzelen2017detection} from a minimax perspective. In the regime where the sparsity $s$ is small, they drawn some informal connection with the sparse PCA problem, for which a statistical-computational gap has been exhibited~\cite{pmlr-v30-Berthet13}. Let us remark that some procedures such as CHIME~\cite{cai2019chime} do not seem to exhibit this statistical-computational gap, but they rely on a good initialization which is only known to be achieved by non-efficient procedures. 
This connection to sparse PCA as well as the interest in sparse clustering has spurred the need for computational lower bounds~\cite{fan2018curse,brennan2020reducibility,lffler2021computationallyefficientsparseclustering}. All of these works are restricted to the case $K=2$, and focus on the sparsity effect. Brennan and Bresler~\cite{brennan2020reducibility} have reduced sparse-clustering to a variant of planted clique, whereas Fan et al.~\cite{fan2018curse} have established a matching statistical query (SQ) lower-bound. Closer to our perspective, \cite{lffler2021computationallyefficientsparseclustering} have provided a LD lower bound for the corresponding detection problem. All these lower bounds suggest that it is impossible to recover the $K=2$ groups in polynomial-time when  $\Delta^2{\leq}_{\log}\,s/\sqrt{n}$ in a high-dimensional regime where $s\leq \sqrt{pn}$. In some way, we extend this theory to the case of a general number $K\geq 2$ of groups, unraveling the impact of $K$ in the statistical-computational gap.

\paragraph*{Biclustering}
The biclustering problem arises when both the rows and the columns of a matrix $Y$ can be clustered~\cite{hartigan1972direct}. A simpler version  of this problem is to detect or estimate a single submatrix hidden in some noise. The latter is one of the earliest problem whose statistical-computational gap has been established~\cite{balakrishnan2011statistical,kolar2011minimax,ma2015computational,cai2017computational,SchrammWein22,SohnWein25}. Closer to biclustering, \cite{dadon2024detection} considers the case where there are multiple planted submatrices by providing in particular LD lower bounds for the detection problem. For the general biclustering problem with $(K,L)$ groups, the minimax estimation rate for estimating the signal matrix $X=\mathbb{E}[Y]$ in Frobenius norm has been characterized in~\cite{gao2016optimal}.
On the computational side, Luo and Gao~\cite{luo2023computational} have built on the general methodology of~\cite{SchrammWein22} to provide a LD lower bound for this problem; they also studied spectral algorithms to match this bound. However, their LD lower bound turns out to be sharp only in the almost square regime $n\asymp p$ and when $\min(K, L)\leq \sqrt{\max(n,p)}$. In particular, handling rectangular settings where $n$ and $p$ differ significantly, requires a more careful control of the cumulants, as done in this manuscript. Another important difference between our work and that of~\cite{luo2023computational} is that we focus on the problem of recovering the clustering of rows instead of reconstructing the mean matrix. In asymmetric regimes, where either $n$ is different from $p$, or $K$ different from $L$, the clustering problem turns out to behave quite differently.

Extensions of stochastic block models (SBM) to biclustering problems have been 
considered e.g. in~\cite{florescu2016spectral,ndaoud2021improved}, but their instance of the  model is quite different from ours, as it is assumed that the number of groups $K$ is equal to $L$, that each group of rows is associated to a group of columns, and that the connection probability is higher between the corresponding nodes. In this sense, the model is closer to the literature on SBM.

\subsection{Organisation and notation}

In Section~\ref{sec:technique}, we introduce the low-degree estimation framework as well as the general latent model. Then, we describe the conditioning techniques and showcase a simple application to Gaussian mixture models.  The reader less interested in the techniques for proving LD lower bounds may skip this section.
Then, we use these techniques to establish tight LD lower bounds for our three main problems: Gaussian mixture clustering (Section~\ref{sec:clustering}), sparse clustering (Section~\ref{sec:sparse}), and biclustering (Section~\ref{sec:biclustering}). A long the way, we provide polynomial-time upper bounds and informational upper bounds when unknown in order to precisely quantify the computational-statistical gaps.
Section~\ref{sec:discussion} provides a discussion of possible extensions and open problems. More technical discussions as well as the proofs  are postponed to the appendix.

\paragraph*{Notation} Given a vector $v$, we write $\|v\|$ for its Euclidean norm. For a matrix $A$, we denote $\|A\|_F$  for its Frobenius norm and and $\|A\|_{op}$ for its operator norm.
For two function $u$ and $v$, we write $u\lesssim v$ if there a exists a numerical constant such that $u\leq c v$. We write $u\lesssim v$ if $u\lesssim v$ and $v\lesssim u$. 
For two functions that may depend on $n$ and $p$, we write $u\leq_{\log} v$, if there exist  numerical constants $c$ and $c'$ such that $u\leq c \log^{c'}(np) v$. If $u,v$ also depend on some other parameter $\gamma$, we write $u\leq_{\log,\gamma} v$, if there exist a constant $c_\gamma$ depending only on $\gamma$ and a numerical constant $c'$ such that $u\leq c_\gamma \log^{c'}(np) v$. For a subset $S$, we write $|S|$ for its cardinality. Given a random variable $x$ and an event $\mathcal{B}$, we write $\mathbf{1}\{\mathcal{B}\}$ for the indicator function of $\mathcal{B}$ and $\mathbb{E}[x; \mathcal{B}]$ for $\mathbb{E}[x\mathbf{1}_{\mathcal{B}}]$.

We identify a matrix $\alpha\in \N^{n\times p}$ with the multiset of $[n]\times[p]$ containing $\alpha_{ij}$ copies of $(i,j)$. For $i\in [n]$, we write $\alpha_{i:}$ the $i$-th row of $\alpha$. Similarly, for $j\in [p]$, we write $\alpha_{:j}$ the $j$-th column of $\alpha$. We denote $supp(\alpha)=\ac{i\in [n], \alpha_{i:}\neq 0}$ and $col(\alpha)=\ac{j\in [p], \alpha_{:j}\neq 0}$. Then, we denote $\#\alpha=|supp(\alpha)|$ and $r_\alpha=|col(\alpha)|$. Finally, we shall write $|\alpha|$ the $l_1$-norm of $\alpha$, which is the cardinality of $\alpha$ viewed as a multiset. Finally, $\alpha!$ stands for $\prod_{ij}\alpha_{ij}!$ and, for any real valued matrix $Q$, $Q^\alpha=\prod_{ij}Q_{ij}^{\alpha_{ij}}$.

For $W_1, \ldots, W_l$ random variables on the same space, we write $\cumul\pa{W_1,\ldots,W_l}$ their joint cumulant. For $Z$ another random variable on the same space, we write $\cumul\pa{W_1,\ldots,W_l|Z}$ the joint cumulant of the random variables taken conditionally on $Z$.

\section{Proof technique for LD bounds in the latent model}\label{sec:technique}

\subsection{Low-degree framework}
Let us consider the latent model introduced earlier, where we observe a matrix $Y\in\R^{n\times p}$, which can be decomposed as the sum $Y=X+E$ of a noise matrix $E$ with i.i.d.\ Gaussian errors, and a signal matrix $X$ structured according to a latent variable $Z\in\mathcal{Z}$ as in (\ref{eq:latent-model})
$$X_{ij}=\delta_{ij}(Z)\nu_{\theta_{ij}(Z)},\quad \textrm{for}\quad (i,j)\in [n]\times [p],$$
with  $\delta_{ij}:  \mathcal{Z} \to \ac{0,1,-1}$ and $\theta_{ij}:  \mathcal{Z} \to  [K]\times [L]$, for any $(i,j)\in [n]\times [p]$.
For example, in the case of clustering, $Z$ is the vector of independent labels $Z=[k^*_1,\ldots, k^*_n]\in [K]^n$, $\delta_{ij}(Z)=1$ and $\theta_{ij}(Z)=(k^*_{i},j)$. 
For proving LD bounds, we make the following additional assumptions.
\begin{hyp}\label{hyp:gaussien}[Gaussian means]
 The $\nu_{kl}$'s are independent of $Z$ and i.i.d. with $\mathcal{N}\pa{0,\lambda^2}$ distribution for some $\lambda>0$.
\end{hyp}
This assumption is very convenient for our analysis, as it leads to many simplifications. We mention yet, that a similar analysis can be done for other data distributions, like the Bernoulli distribution; see Appendix~\ref{sec:bernoulli} for a discussion. 

We consider the problem where we want  to estimate some scalar function of $Z$, that we write $x(Z)$, or simply $x$, with polynomials of the $Y_{ij}$ of degree at most $D$. For example, in the case of clustering, where $Z=[k^*_1,\ldots, k^*_n]\in [K]^n$, we may want to estimate $x(Z)=\1_{k^*_1=k^*_2}$. Our goal is to lower bound the best mean-square error  achieved by a polynomial of degree at most $D$
\begin{equation}\label{eq:MMSEgeneral}
MMSE_{\leq D}:=\inf_{f\in \R_{D}[Y]}\E\cro{\pa{f(Y)-x(Z)}^2}\enspace.
\end{equation}
As noticed by \cite{SchrammWein22}, the $MMSE_{\leq D}$ can be decomposed as 
\begin{equation}\label{eq:MMSE-corr}
MMSE_{\leq D}=\E\cro{x(Z)^2}-corr^2_{\leq D}\enspace,
\end{equation}
where $corr_{\leq D}$ is the $L^2$-norm of the $L^2$-projection of $x(Z)$ on the linear span of polynomials $f(Y)$ with degree at most $D$
\begin{equation}\label{def:corr}
    corr_{\leq D}:=\underset{\E[f^{2}(Y)]=1}{\sup_{f\in\R_{D}[Y]}}\E(f(Y)x(Z))=\underset{\E\pa{f^{2}(Y)}\neq 0}{\sup_{f\in\R_{D}[Y]}}\frac{\E[f(Y)x(Z)]}{\sqrt{\E(f^{2}(Y))}}\enspace.
\end{equation}
Hence, in order to lower-bound $MMSE_{\leq D}$, it is sufficient to prove an upper-bound on $corr_{\leq D}$. Our latent model is a particular instance of the Additive Gaussian Noise Model considered in \cite{SchrammWein22}. Therefore, we can apply Theorem 2.2 from \cite{SchrammWein22} that upper-bounds the low-degree correlation $corr_{\leq D}$ by a sum of squared cumulants -- see Appendix \ref{sec:cumulants} for definitions and properties of cumulants. 
Let us recall their result.
\medskip

\begin{prop}\label{thm:schrammwein}{[Theorem 2.2. in \cite{SchrammWein22}]}
    The degree-$D$ maximum correlation satisfies the upper-bound 
    \begin{equation}\label{eq:corr_D}
    corr^{2}_{\leq D}\leq \underset{|\alpha|\leq D}{\sum_{\alpha\in\N^{n\times p}}}\frac{\kappa_{x,\alpha}^{2}}{\alpha!}\enspace,
    \end{equation}
 with $\alpha!=\prod_{ij\in[n]\times [p]}\alpha_{ij}!$, and   where,  for $\alpha\in \N^{n\times p}$,  $\kappa_{x,\alpha}$ is defined as the cumulant 
\begin{equation}\label{eq:def:kappa}
    \kappa_{x,\alpha}:=\cumul\pa{x, X_{\alpha}}=\cumul\pa{x(Z), \ac{X_{ij}}_{(i,j)\in \alpha}}\enspace,
\end{equation}
 where  $X_{\alpha}=\ac{X_{ij}}_{(i,j)\in \alpha}$ is the multiset containing $\alpha_{ij}$ copies of $X_{ij}$ for $(i,j)\in [n]\times [p]$.
\end{prop}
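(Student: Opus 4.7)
The plan is to follow the Hermite-polynomial approach of Schramm-Wein: expand $f\in\R_D[Y]$ in the orthonormal basis of the noise, exploit a conditional-expectation identity specific to additive Gaussian noise, then pass from joint moments to joint cumulants via the partition formula.

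\emph{Parametrization and numerator.} I introduce the normalized multivariate probabilists' Hermite polynomials $\phi_\alpha(Y)=\prod_{(i,j)}\mathrm{He}_{\alpha_{ij}}(Y_{ij})/\sqrt{\alpha_{ij}!}$, which form an orthonormal basis of $L^2(\mathcal{N}(0,I_{n\times p}))$, and decompose $f=\sum_{|\alpha|\leq D}c_\alpha\phi_\alpha$. Because $E\sim\mathcal{N}(0,I)$ is independent of $(X,Z)$, the one-dimensional identity $\E[\mathrm{He}_k(x+\xi)]=x^k$ for $\xi\sim\mathcal{N}(0,1)$ lifts to $\E[\phi_\alpha(Y)\mid X,Z]=X^\alpha/\sqrt{\alpha!}$, so the tower property gives
$$\E[f(Y)x(Z)]=\sum_{|\alpha|\leq D}\frac{c_\alpha}{\sqrt{\alpha!}}\,\E[x(Z)X^\alpha].$$

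\emph{From moments to cumulants.} Next I invoke the moment-cumulant partition formula
$$\E[x(Z)X^\alpha]=\sum_{\pi\in\mathcal{P}(\alpha\sqcup\{x\})}\prod_{B\in\pi}\kappa_B,$$
whose only fully connected term (the single-block partition) equals $\kappa_{x,\alpha}$, while reducible partitions produce products of lower-order joint cumulants. The key step is to absorb those reducible contributions by adjusting the lower-order coefficients $c_{\alpha'}$ with $\alpha'\subsetneq\alpha$; equivalently, one replaces $(\phi_\alpha)$ by a Wick-ordered basis $(\Psi_\alpha)$ satisfying $\E[\Psi_\alpha(Y)x(Z)]=\kappa_{x,\alpha}/\sqrt{\alpha!}$ and becoming near-orthonormal under $\P_Y$. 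A Cauchy-Schwarz/Parseval argument in this basis then yields the bound
$$corr_{\leq D}^2\leq\sum_{|\alpha|\leq D}\frac{\kappa_{x,\alpha}^2}{\alpha!}.$$

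\emph{Main obstacle.} The heart of the argument lies in the combinatorics on the partition lattice (Möbius inversion) and in the verification that the Wick-ordered basis is effectively orthonormal under $\P_Y$. A naive Parseval in $(\phi_\alpha)$ fails, since the $\phi_\alpha$ are only orthonormal under the noise-only law $\mathcal{N}(0,I)$ and not under the planted law $\P_Y$; the orthogonalization step is therefore unavoidable and carries most of the technical weight of the proof.
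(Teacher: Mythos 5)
The paper never reproves this proposition: it is quoted verbatim from Theorem 2.2 of Schramm and Wein, so your attempt has to be measured against their argument. Your steps through the numerator are fine and are indeed ingredients of that proof: the Hermite expansion, the translation identity $\E[\phi_\alpha(Y)\mid X,Z]=X^{\alpha}/\sqrt{\alpha!}$ (valid because $E$ is independent of $(X,Z)$), hence $\E[f(Y)x]=\sum_{|\alpha|\leq D}c_\alpha\,\E[x\,X^{\alpha}]/\sqrt{\alpha!}$, and the moment--cumulant partition formula. The genuine gap is exactly the step you flag as carrying ``most of the technical weight'': you postulate a Wick-ordered family $\Psi_\alpha$ with $\E[\Psi_\alpha(Y)x]=\kappa_{x,\alpha}/\sqrt{\alpha!}$ that is ``near-orthonormal under $\P_Y$'', but you give no construction and no estimate for it. This is not a benign omission: near-orthonormality under the planted law $\P_Y$ is neither available in general nor what the argument requires, and if it did hold it would essentially yield a Parseval-type identity for $corr_{\leq D}$ under $\P_Y$, a much stronger statement than the one-sided bound \eqref{eq:corr_D}. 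So the decisive inequality is asserted rather than proved.

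The mechanism in Schramm--Wein is different. One lower-bounds the denominator by conditional Jensen, $\E[f(Y)^2]\geq \E_{E}\big[\big(\E_{X}[f(X+E)]\big)^2\big]$, sets $g(E):=\E_{X}[f(X+E)]$, and expands $g$ in the Hermite basis of the noise, where orthonormality genuinely holds (orthogonality is used for $g$ under the pure-noise Gaussian law, not for $f$ under $\P_Y$). The technical heart is the identity $\E[x\,f(Y)]=\sum_{|\alpha|\leq D}\hat g_\alpha\,\kappa_{x,\alpha}/\sqrt{\alpha!}$, where $\hat g_\alpha$ are the Hermite coefficients of $g$; it is proved by induction from the moment--cumulant recursion, and this is precisely where the ``reducible'' partition terms you mention cancel against lower-order coefficients. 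Cauchy--Schwarz in the $\hat g_\alpha$'s then gives \eqref{eq:corr_D}. Note that the Jensen step is also why the bound can be loose, as discussed in Appendix~\ref{sec:discussion:prior:sparse}; this is a further indication that no (near-)orthonormal system under $\P_Y$ reproducing the cumulants can underlie the inequality. As written, your proposal therefore does not establish \eqref{eq:corr_D}; replacing the orthogonalization-under-$\P_Y$ step by the Jensen bound and the induction giving $\E[x\,f(Y)]=\sum_\alpha\hat g_\alpha\kappa_{x,\alpha}/\sqrt{\alpha!}$ would repair it along the original lines.
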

A key feature noticed by \cite{SchrammWein22}, is that the sum in (\ref{eq:corr_D}) is sparse, due to the nullity of the  cumulants of independent variables \cite{novak2014three}.
\begin{lem}\label{lem:independentcumulant}
Let $W_1,\ldots, W_K$ be random variables on the same space $\mathcal{W}$. Suppose that there exist disjoint sets $K_1$ and $K_2$, non-empty and covering $[1,K]$, such that $(W_i)_{i\in K_1}$ and $(W_i)_{i\in K_2}$ are independent. Then, we have the nullity of the joint cumulant $\cumul\pa{W_1,\ldots, W_K}=0$.
\end{lem}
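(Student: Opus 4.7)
The plan is to use the standard characterization of the joint cumulant as a mixed partial derivative of the cumulant generating function, and exploit multiplicativity of the moment generating function under independence.

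\textbf{Step 1: Reduce to the CGF.} I would start from the definition
\[
\cumul(W_1,\ldots,W_K) \;=\; \left.\frac{\partial^K}{\partial t_1\cdots\partial t_K}\,\log \E\!\left[\exp\Big(\sum_{i=1}^K t_i W_i\Big)\right]\right|_{t=0},
\]
assuming enough integrability for this to make sense (otherwise one can invoke the combinatorial moment-cumulant formula, as discussed below). Write $K(t)$ for the log-MGF on the right.

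\textbf{Step 2: Use independence to split $K(t)$.} By the hypothesis that $(W_i)_{i\in K_1}$ is independent of $(W_i)_{i\in K_2}$, the joint MGF factorises:
\[
\E\!\left[\exp\Big(\sum_{i\in K_1} t_i W_i + \sum_{i\in K_2} t_i W_i\Big)\right]
= \E\!\left[\exp\Big(\!\sum_{i\in K_1} t_i W_i\Big)\right]\,\E\!\left[\exp\Big(\!\sum_{i\in K_2} t_i W_i\Big)\right].
\]
Taking logarithms, $K(t) = K^{(1)}(t_{K_1}) + K^{(2)}(t_{K_2})$, where $K^{(j)}$ depends only on the coordinates indexed by $K_j$.

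\textbf{Step 3: Differentiate and conclude.} Since both $K_1$ and $K_2$ are non-empty, each summand $K^{(j)}$ is constant in at least one of the variables $t_1,\ldots,t_K$. Therefore $\partial_{t_1}\cdots\partial_{t_K}K^{(j)} \equiv 0$ for $j=1,2$, and hence $\cumul(W_1,\ldots,W_K)=0$.

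\textbf{Main obstacle and alternative.} The only subtlety is that the CGF derivation requires the $W_i$ to have finite exponential moments in a neighbourhood of the origin, which is not stated as an assumption. To cover the general case used in the paper (where cumulants are defined purely via mixed moments), I would instead give the argument through the moment-cumulant inversion formula
\[
\cumul(W_1,\ldots,W_K) = \sum_{\pi\in \mathcal{P}([K])} (-1)^{|\pi|-1}(|\pi|-1)!\,\prod_{B\in\pi}\E\!\left[\prod_{i\in B} W_i\right],
\]
where $\mathcal{P}([K])$ denotes the set of partitions of $[K]$. Using independence, for any partition $\pi$ refining the split $\{K_1,K_2\}$, the product of moments factorises as a product over blocks contained in $K_1$ times a product over blocks contained in $K_2$. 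For partitions $\pi$ not refining this split (i.e.\ containing a block intersecting both $K_1$ and $K_2$), independence still lets one factorise the expectation over such a block. A standard inclusion-exclusion / Möbius cancellation on the partition lattice then shows that the full sum vanishes whenever the variables admit a non-trivial independence split; this is exactly the combinatorial identity underlying Lemma~\ref{lem:independentcumulant}. I would expect this bookkeeping on partitions to be the only technical step requiring care, and I would cite a standard reference such as \cite{novak2014three} rather than reproduce it in full.
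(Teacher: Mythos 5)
Your argument is correct, and in fact the paper does not prove this lemma at all: it is quoted as a standard fact about cumulants with a citation to \cite{novak2014three}, both in the main text and again (as Lemma~\ref{lem:independentcumulant2}) in the appendix on cumulants. Your Steps 1--3 are the classical proof: the joint MGF factorises under independence, the log splits as $K^{(1)}(t_{K_1})+K^{(2)}(t_{K_2})$, and since each summand is constant in at least one of the $t_i$ (both blocks being non-empty) the full mixed partial vanishes. The only caveat is the one you already flag: the MGF need not exist. A slightly cleaner fix than switching to the moment--cumulant inversion is to run the same three steps on the characteristic function, $\log \E[\exp(i\sum_j t_j W_j)]$, which always exists and is $K$ times differentiable at $0$ whenever the joint moments defining the cumulant exist; the factorisation under independence and the "constant in one variable" argument go through verbatim. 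Your combinatorial alternative is also fine and is precisely the route taken in the cited reference, though as written the M\"obius cancellation over the partition lattice is only gestured at rather than carried out; if you wanted a self-contained proof along those lines you would need to make that cancellation explicit (e.g.\ by grouping partitions according to the common refinement with $\{K_1,K_2\}$), but citing \cite{novak2014three}, as you propose and as the paper does, is entirely adequate.
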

In light of these two results, the strategy of \cite{SchrammWein22} to upper bound the correlation $corr_{\leq D}$ is 
\begin{enumerate}
\item To find a large set of $\alpha$'s such that $\kappa_{x,\alpha}=0$ by using Lemma \ref{lem:independentcumulant};
\item To upper-bound the cumulants $\kappa_{x,\alpha}$ for the remaining $\alpha$'s. 
\end{enumerate}
The second step is performed by expressing the cumulants $\kappa_{x,\alpha}$  as a linear combination of the mixed moments of the signal matrix $X$ -- see Lemma~\ref{lem:mobiusformula} in Appendix \ref{sec:cumulants} --, and by applying the triangular inequality. However, this method fails for the problem of clustering when $p\leq n$, see \cite{Even24}. We manage to improve this proof strategy, by taking better advantage of the conditional independencies of the entries of the signal matrix $X$ conditionally on  the latent variable $Z$.

\subsection{Conditioning on the latent variables}\label{sec:howtoboundcumulant}
Our first main contribution is to propose a method
for efficiently bounding cumulants $\kappa_{x,\alpha}$ in the latent variable model (\ref{eq:latent-model}). This method then enables us to derive LD bounds for the problems of Clustering, Sparse Clustering and Biclustering. 
Our recipe to enhance  the proof technique of \cite{SchrammWein22} is to better exploit the conditional independences in the model. A key ingredient for handling conditional independences is  the Law of Total Cumulance, that we recall here.

\begin{lem}\label{lem:totalcumulance} [Law of Total Cumulance]
Let $W_1,\ldots,W_K$ and $Z$ be random variables on the same space $\mathcal{W}$. Then, 
$$\cumul\pa{W_1,\ldots,W_l}=\sum_{\pi\in \mathcal{P}([K])}\cumul\pa{{\cumul\pa{\pa{W_i}_{i\in R}|\enspace Z}}_{R\in \pi}}\enspace,$$
where $\mathcal{P}([K])$ denotes the set of all partitions of $[K]$.
\end{lem}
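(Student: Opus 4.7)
The plan is to prove the Law of Total Cumulance by combining the moment--cumulant inversion formula on the partition lattice with the tower property $\E[\cdot]=\E[\E[\cdot\mid Z]]$. Abbreviate $U_B:=\cumul((W_i)_{i\in B}\mid Z)$ for each nonempty $B\subseteq [K]$, so that the right-hand side of the claimed identity reads $\sum_{\sigma\in\mathcal{P}([K])}\cumul((U_B)_{B\in\sigma})$.

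First, recall the two mutually inverse classical formulas linking joint moments and joint cumulants via Möbius inversion on $\mathcal{P}([n])$: for any random variables $T_1,\ldots,T_n$,
\begin{align*}
\E\Big[\prod_{i=1}^n T_i\Big] &= \sum_{\pi\in\mathcal{P}([n])} \prod_{B\in\pi}\cumul\big((T_i)_{i\in B}\big),\\
\cumul(T_1,\ldots,T_n) &= \sum_{\pi\in\mathcal{P}([n])}(-1)^{|\pi|-1}(|\pi|-1)!\,\prod_{B\in\pi}\E\Big[\prod_{i\in B}T_i\Big].
\end{align*}
Applying the first formula conditionally on $Z$ to each subfamily $(W_i)_{i\in R}$ and then taking outer expectations gives $\E[\prod_{i\in R}W_i]=\sum_{\tau\in\mathcal{P}(R)}\E[\prod_{B\in\tau}U_B]$ for every $R\subseteq[K]$.

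Next, I would feed this expansion into the Möbius formula applied to $\cumul(W_1,\ldots,W_K)$. Distributing the resulting product over $R\in\pi$ rewrites the right-hand side as a signed double sum indexed by a partition $\pi$ of $[K]$ together with a collection $(\tau_R)_{R\in\pi}$ of partitions of the blocks of $\pi$; these data package exactly into a refinement $\sigma=\bigsqcup_{R\in\pi}\tau_R$ of $\pi$. I then swap the order of summation and sum first over $\sigma\in\mathcal{P}([K])$ and then over $\pi\geq\sigma$. Using the canonical bijection between partitions $\pi$ of $[K]$ coarsening $\sigma$ and partitions of the block-set $\sigma$ (with $|\pi|$ equal to the number of super-blocks), the inner sum matches term-by-term the inverse Möbius formula applied to the random variables $(U_B)_{B\in\sigma}$, and therefore equals $\cumul((U_B)_{B\in\sigma})$. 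Summing over $\sigma$ yields the claimed identity.

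The main obstacle, and really the only non-mechanical step, is the combinatorial bookkeeping in this reindexing: one must check that the sign $(-1)^{|\pi|-1}$, the factorial $(|\pi|-1)!$, and the blocks $\tau_R$ reassemble correctly under the correspondence $\pi\leftrightarrow$(partition of the blocks of $\sigma$). A structurally different route would be to work with cumulant generating functions: write $\log\E[\exp(\sum_i t_i W_i)]=\log\E[\exp\psi(t,Z)]$ where $\psi(t,Z)=\log\E[\exp(\sum_i t_i W_i)\mid Z]$ is the conditional cumulant generating function, expand both sides in multilinear Taylor series around $t=0$, and read off the identity by matching the coefficient of $\prod_i t_i$. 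I would nevertheless favour the partition-lattice derivation above, since it is finite, self-contained, and makes the role of conditioning on $Z$ transparent.
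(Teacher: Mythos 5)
Your proof is correct, but note that the paper does not actually prove this lemma: it is recalled as the classical Law of Total Cumulance (due to Brillinger), and the paper's Appendix~\ref{sec:cumulants} only supplies the moment--cumulant conversion formulas (Lemma~\ref{lem:mobiusformula}) that you use as ingredients. So you are supplying a proof where the paper offers a citation, and the route you chose is the standard one. The key points all check out: conditioning the moment expansion and taking outer expectations gives $\E\cro{\prod_{i\in R}W_i}=\sum_{\tau\in\mathcal{P}(R)}\E\cro{\prod_{B\in\tau}U_B}$; substituting into the signed Möbius formula for $\cumul(W_1,\ldots,W_K)$ produces a sum over pairs $\pa{\pi,(\tau_R)_{R\in\pi}}$, and these pairs are in bijection with pairs $(\sigma,\pi)$ where $\sigma=\bigsqcup_R\tau_R\in\mathcal{P}([K])$ refines $\pi$, since $\tau_R$ is recovered from $\sigma$ as the blocks of $\sigma$ contained in $R$; finally, for fixed $\sigma$, coarsenings $\pi\geq\sigma$ correspond bijectively to partitions $\rho$ of the block-set of $\sigma$ with $|\pi|=|\rho|$ and matching products of moments, so the inner signed sum is literally the Möbius formula for $\cumul\pa{(U_B)_{B\in\sigma}}$. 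The only things worth making explicit in a polished write-up are (i) this bijection statement, which you describe but do not verify in detail, and (ii) the standing integrability assumption (finite joint moments of order $K$, plus measurability of the conditional cumulants), which is needed for all expectations and the tower property to make sense; both are routine. Your alternative via conditional cumulant generating functions would also work but requires justifying the interchange of $\log$, expectation and Taylor expansion, so the finite partition-lattice argument you favour is indeed the cleaner choice.
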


We identify $\alpha\in\N^{n\times p}$ to a multiset of $[n]\times [p]$, where each $(i,j)$ is repeated $\alpha_{ij}$ times. For $\pi\in\mathcal{P}\pa{\alpha\cup x}$ a partition of $\alpha\cup \ac{x}$, we denote by $\pi_0$ the group containing $x$.  Applying Lemma~\ref{lem:totalcumulance} and conditioning on the latent variables $Z$ leads to
\begin{equation}\label{eq:LTC1}
\kappa_{x,\alpha}=\cumul(x,X_{\alpha})=\sum_{\pi\in \mathcal{P}\pa{\alpha\cup \ac{x}}}\cumul\pa{\cumul\pa{x, X_{\pi_{0}\setminus \ac{x}} |Z}, \cumul\pa{X_{R}|Z}_{R\in \pi\setminus \ac{\pi_{0}}}}\enspace.
\end{equation}
In our setting, the benefit of conditioning by $Z$ is that many of the conditional cumulants are zero, and those that are non-zero have very simple expressions.

\begin{lem}\label{lem:boundcumulantLTCgeneral}
In the latent model \eqref{eq:latent-model} and under Assumption \ref{hyp:gaussien}, for  $\beta\in \N^{[n]\times [p]}$,
we have 
$$\cumul\pa{x,X_{\beta}|Z}=x\,\1_{\beta=0}\quad
\text{and} 
\quad \cumul\pa{X_{\beta}|Z}= \lambda^{|\beta|}\delta(Z)^{\beta}\,\1_{|\beta|=2} \,\1_{\Omega_{\beta}(Z)}\enspace,$$
where $\delta(Z)^\beta:=\prod_{(i,j)\in\beta} \delta_{ij}(Z)$, and
\begin{equation}\label{def:Omega}
\Omega_{\beta}(Z):=\big\{\delta_{ij}(Z)\neq 0,\enspace \forall (i,j)\in \beta\big\}\cap \ac{\big|\ac{\theta_{ij}(Z):\enspace (i,j)\in \beta}\big|=1}\enspace.
\end{equation}
\end{lem}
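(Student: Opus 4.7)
The plan is to handle the two identities separately, each by a short argument that leverages the conditional structure of the model together with two elementary facts: a joint cumulant vanishes when one argument is independent of (in particular, deterministic given) the others, and the cumulants of order at least three of a jointly Gaussian family vanish.

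For the first identity, I would condition on $Z$ and observe that $x(Z)$ becomes deterministic. When $\beta = 0$, the object $\cumul(x\mid Z)$ reduces to the single argument $x$ itself. When $|\beta| \geq 1$, the singleton $\{x\}$ is, conditionally on $Z$, (trivially) independent of $\{X_{ij}\}_{(i,j)\in\beta}$, since constants are independent of everything. Lemma~\ref{lem:independentcumulant} applied to the conditional law given $Z$ then yields $\cumul(x, X_\beta\mid Z) = 0$, which gives the indicator $\mathbf{1}_{\beta=0}$ in the statement.

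For the second identity, the key observation is that, conditionally on $Z$, each $X_{ij} = \delta_{ij}(Z)\,\nu_{\theta_{ij}(Z)}$ is a fixed scalar multiple of one of the i.i.d.\ centered Gaussians $\nu_{k\ell}$ appearing in Assumption~\ref{hyp:gaussien}. Therefore the family $\{X_{ij}\}_{(i,j)\in\beta}$ is, conditionally on $Z$, jointly Gaussian and centered. The classical characterization of centered Gaussians by the vanishing of all cumulants of order $1$ and of order $\geq 3$ immediately forces $\cumul(X_\beta\mid Z) = 0$ as soon as $|\beta| \neq 2$, which explains the factor $\mathbf{1}_{|\beta|=2}$.

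It only remains to treat $|\beta|=2$, where the cumulant equals the conditional covariance. Writing $\beta = \{(i_1,j_1),(i_2,j_2)\}$ (possibly a multiset with $(i_1,j_1)=(i_2,j_2)$), independence of the $\nu_{k\ell}$ combined with Assumption~\ref{hyp:gaussien} gives
\[
\mathrm{Cov}\!\left(X_{i_1j_1},X_{i_2j_2}\mid Z\right) \;=\; \lambda^{2}\,\delta_{i_1j_1}(Z)\,\delta_{i_2j_2}(Z)\,\mathbf{1}_{\theta_{i_1j_1}(Z)=\theta_{i_2j_2}(Z)},
\]
which is exactly $\lambda^{|\beta|}\,\delta(Z)^\beta\,\mathbf{1}_{\Omega_\beta(Z)}$: on $\Omega_\beta(Z)$, both $\delta$'s are nonzero and the $\theta$'s coincide, while off $\Omega_\beta(Z)$ either a $\delta$ factor is zero (killing the product) or the two $\theta$-labels differ (killing the covariance of distinct $\nu_{k\ell}$'s). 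There is no real obstacle: the lemma is a direct consequence of the conditional Gaussianity and the independence of the $\nu_{k\ell}$, and the only bookkeeping to be careful with is that repeated indices in the multiset $\beta$ automatically belong to the same $\theta$-class, so the multiset version reduces transparently to the pairwise calculation above.
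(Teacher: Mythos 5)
Your proof is correct. The argument for the first identity is the same as the paper's: conditionally on $Z$, $x$ is deterministic, so the cumulant is $x$ when $\beta=0$ and vanishes by the independence lemma otherwise. For the second identity you take a slightly different, and somewhat more streamlined, route. The paper proceeds by a case split: if some $\delta_{ij}(Z)=0$ the cumulant vanishes because one variable is identically zero; if $\bigl|\{\theta_{ij}(Z):(i,j)\in\beta\}\bigr|\geq 2$ it splits $\beta$ into two nonempty subfamilies attached to different $\nu_{k\ell}$'s, which are conditionally independent, and invokes Lemma~\ref{lem:independentcumulant}; only in the single-$\theta$-class case does it use Gaussianity, pulling out $\delta(Z)^\beta$ by multilinearity and using that Gaussian cumulants of order $\neq 2$ vanish. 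You instead observe once and for all that, given $Z$, the whole family $\{X_{ij}\}_{(i,j)\in\beta}$ is a deterministic linear image of the centered Gaussian vector $(\nu_{k\ell})$, hence jointly Gaussian and centered, so every conditional cumulant of order $\neq 2$ vanishes; the case $|\beta|=2$ then reduces to the conditional covariance, whose explicit form $\lambda^2\delta_{i_1j_1}(Z)\delta_{i_2j_2}(Z)\mathbf{1}\{\theta_{i_1j_1}(Z)=\theta_{i_2j_2}(Z)\}$ you correctly identify with $\lambda^{|\beta|}\delta(Z)^\beta\mathbf{1}_{\Omega_\beta(Z)}$ (the $\delta$-nonvanishing part of $\Omega_\beta$ being redundant given the $\delta$ factors). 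Under Assumption~\ref{hyp:gaussien} both arguments are equally valid; what the paper's independence-splitting step buys is that the $\mathbf{1}_{\Omega_\beta}$ structure is obtained without using Gaussianity at all, which is what allows the extension to non-Gaussian (e.g.\ symmetric bounded) priors on the $\nu_{k\ell}$ in Appendix~\ref{sec:bernoulli}, where cumulants of even order larger than two no longer vanish, whereas your global joint-Gaussianity shortcut is specific to the Gaussian assumption.
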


\begin{proof}[Proof of Lemma \ref{lem:boundcumulantLTCgeneral}]
For the first formula, when $\beta\neq 0$, since the variable $x$ is $\sigma(Z)$-measurable, it is independent from $X$ conditionally on $Z$. Lemma \ref{lem:independentcumulant} implies that $\cumul\pa{x,\pa{X_{ij}}_{ij\in \beta}|\enspace Z}=0$. 

When $\beta=0$, we have $\cumul(x|Z)=\E\cro{x|Z}=x$. So, we conclude 
$$\cumul(x,X_{\beta}|Z)=x\,\1_{\beta=0}\enspace.$$

For the second formula, 
if there exists $(i_0,j_0)\in \beta$ such that $\delta_{i_0j_0}(Z)=0$, then $X_{i_0j_0}=0$ and so  $\cumul\pa{X_{\beta}|Z}=0$.

Let us then prove that if $\left|\ac{\theta_{ij}(Z),\enspace (i,j)\in \beta}\right|\geq 2$, then $\cumul\pa{X_{\beta}|Z}=0$. For that purpose, let us write, for some fixed $(k,l)\in \ac{\theta_{ij}(Z),\enspace (i,j)\in \beta}$, $\pa{\beta^{(1)}}_{ij}={\beta}_{ij}\1_{\theta_{ij}(Z)=\pa{k,l}}$ and $\pa{\beta^{(2)}}_{ij}=\beta_{ij}\1_{\theta_{ij}(Z)\neq (k,l)}$. Both $\beta^{(1)}$ and $\beta^{(2)}$ are non-zero, and sum to $\beta$. Since $X_{ij}=\delta_{ij}(Z)\nu_{\theta_{ij}(Z)}$, and since the $\nu_{k,l}$'s are independent, the two families of random variables $\pa{X_{ij}}_{ij\in \beta^{(1)}}$ and $\pa{X_{ij}}_{ij\in \beta^{(2)}}$ are independent conditionally on $Z$. Thus, Lemma \ref{lem:independentcumulant} implies the nullity of $\cumul\pa{X_{\beta}|Z}$.
Finally, since $\nu_{kl}\sim\mathcal{N}(0,\lambda^2)$, when $|\ac{\theta_{ij}(Z):\enspace (i,j)\in \beta}|=1$, we have
$$\cumul\pa{X_{\beta}|Z}= \delta(Z)^{\beta} \cumul\pa{(\nu_{\theta_{ij}(Z)})_{(i,j)\in\beta}|Z}  = \delta(Z)^{\beta}\lambda^{|\beta|}\,\1_{|\beta|=2}.$$
\end{proof}

As a consequence of Lemma \ref{lem:boundcumulantLTCgeneral}, only partitions $\pi\in\mathcal{P}\pa{\alpha\cup \ac{x}}$ fulfilling $\pi_{0}=\ac{x}$ and $|\pi_{j}|=2$ for $j\geq 1$  can provide non-zero terms in the decomposition (\ref{eq:LTC1}). This set of partition is in bijection with the set of 
partitions $\pi=\ac{\pi_{1},\ldots,\pi_{l}}\in\mathcal{P}\pa{\alpha}$ fulfilling $|\pi_{j}|=2$ for $j=1,\ldots,l$. For such a partition $\pi$, there exists at least one decomposition $\alpha=\beta_1+\ldots+\beta_l$, with $l=|\pi|=|\alpha|/2$,  fulfilling $|\beta_1|=\ldots= |\beta_l|=2$, and $\beta_1,\ldots, \beta_l$ representing the groups $\pi_{1},\ldots,\pi_{l}$, i.e $\cro{\beta_{s}}_{ij}$ counts the number of copies of $(i,j)$ in $\pi_s$.
Let us define $\mathcal{B}_{\alpha}=\ac{\beta\in\N^{n\times p}:|\beta_{1}|=\ldots=|\beta_{l}|=2,\ \beta_{1}+\ldots+\beta_{l}=\alpha}$ and denote by $\mathcal{S}_{l}$ the set of permutations on $[l]$. The permutations in $\mathcal{S}_{l}$ act on $\mathcal{B}_{\alpha}$, according to the action $\sigma\cdot\beta=(\beta_{\sigma(1)},\ldots,\beta_{\sigma(l)})$. Since group labels are meaningless for partitions, each partition $\pi\in\mathcal{P}\pa{\alpha}$ with  $|\pi_{j}|=2$ for $j\geq 1$, can be represented by a unique element $\beta(\pi)\in \mathcal{B}_{\alpha}/\mathcal{S}[l]$. 
To sum-up, each  partition $\pi'\in\mathcal{P}\pa{\alpha\cup \ac{x}}$ with $\pi'_{0}=\ac{x}$ and $|\pi'_{j}|=2$ for $j\geq 1$, can be uniquely represented by a partition $\pi\in\mathcal{P}_{2}\pa{\alpha}:=\ac{\pi\in\mathcal{P}(\alpha): |\pi_{j}|=2\ \text{for}\ j\geq 1}$, which, in turns, can be represented by an element  $\beta(\pi)\in \mathcal{B}_{\alpha}/\mathcal{S}[l]$. Hence, we have
\begin{multline}\label{eq:LTCgeneral}
\lefteqn{\sum_{\pi'\in \mathcal{P}\pa{\alpha\cup \ac{x}}}\cumul\pa{\cumul\pa{x, X_{\pi'_{0}\setminus \ac{x}} |Z}, \cumul\pa{X_{R}|Z}_{R\in \pi'\setminus \ac{\pi'_{0}}}}
=}\\\sum_{\pi\in \mathcal{P}_{2}\pa{\alpha}}\cumul\pa{x,\big(\cumul(X_{\beta_{s}(\pi)}|\enspace Z)\big)_{s\in [l]}}
\enspace.
\end{multline}
We can now state our first main result, which provides a simple formula for the cumulant $\kappa_{x,\alpha}$.

\begin{thm}\label{thm:LTC}
For $\alpha\in \N^{n\times p}$, with $|\alpha|=2l$, we define
$\mathcal{P}_{2}\pa{\alpha}:=\ac{\pi\in\mathcal{P}(\alpha): |\pi_{j}|=2\ \text{for}\ j\in[l]}.$
In the latent model \eqref{eq:latent-model} and under Assumption \ref{hyp:gaussien}, for $\alpha\in \N^{n\times p}$, with $|\alpha|=2l$,  
the cumulant $\kappa_{x,\alpha}$ can be decomposed as a sum of cumulants 
\begin{equation}\label{eq:LTCgeneral:thm}
\kappa_{x,\alpha}=\lambda^{|\alpha|}\sum_{\pi\in \mathcal{P}_{2}(\alpha)}C_{x, \beta_1(\pi),\ldots, \beta_l(\pi)}\enspace,
\end{equation}
where  $\beta(\pi)\in\ac{\beta\in\pa{\N^{n\times p}}^l:|\beta_{1}|=\ldots=|\beta_{l}|=2,\ \beta_{1}+\ldots+\beta_{l}=\alpha}$, with $\cro{\beta_{s}(\pi)}_{ij}$ counting the number of copies of $(i,j)$ in $\pi_s$, and where
\begin{align}
C_{x,\beta_{1},\ldots,\beta_{l}} 
&= \cumul\pa{x,\delta(Z)^{\beta_{1}}\1_{\Omega_{\beta_{1}}(Z)},\ldots,\delta(Z)^{\beta_{l}}\1_{\Omega_{\beta_{l}}(Z)}},\label{eq:def-C}
\end{align}
with $\Omega_{\beta}(Z)$  defined in \eqref{def:Omega}, and $\delta(Z)^\beta:=\prod_{(i,j)\in\beta} \delta_{ij}(Z)$.

In particular, denoting $\beta[S]=\ac{\beta_{s}:s\in S}$, the cumulants $C_{x,\beta_{1},\ldots,\beta_{l}}$ fulfill the recursive bound
\begin{equation}\label{eq:rec-C}
|C_{x,\beta_{1},\ldots,\beta_{l}}| \leq \E\cro{|x|; \underset{s\in [l]}{\cap}\Omega _{\beta_{s}}} + \sum_{S \subsetneq [l]} |C_{x,\beta[S]}| \P\cro{\underset{s\in [l]\setminus S}{\cap}\Omega _{\beta_{s}}}. 
\end{equation}
\end{thm}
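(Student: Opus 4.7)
The plan is to prove the two parts of the theorem in sequence. For the decomposition \eqref{eq:LTCgeneral:thm}, I would start from the Law of Total Cumulance (Lemma~\ref{lem:totalcumulance}) applied to $\cumul(x, X_\alpha)$ with conditioning on the latent variable $Z$, yielding the expression \eqref{eq:LTC1}. Then I would invoke Lemma~\ref{lem:boundcumulantLTCgeneral} to drastically prune the sum over partitions $\pi \in \mathcal{P}(\alpha \cup \{x\})$: (i) the block $\pi_0$ containing $x$ must satisfy $\pi_0 = \{x\}$, otherwise $\cumul(x, X_{\pi_0 \setminus \{x\}} | Z) = 0$ since $x$ is $\sigma(Z)$-measurable and thus conditionally independent of $X$; and (ii) every other block $R \in \pi$ must satisfy $|R| = 2$, since conditional cumulants of higher or lower order of the $X_{ij}$'s vanish under Assumption~\ref{hyp:gaussien}. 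The surviving partitions are exactly those in correspondence with $\mathcal{P}_2(\alpha)$, and for each such $\pi$ Lemma~\ref{lem:boundcumulantLTCgeneral} gives $\cumul(X_{\beta_s(\pi)}|Z) = \lambda^2 \delta(Z)^{\beta_s(\pi)} \mathbf{1}_{\Omega_{\beta_s(\pi)}(Z)}$. Pulling out the scalar $\lambda^{2l} = \lambda^{|\alpha|}$ and using $\cumul(x \mid Z) = x$ from the singleton block yields \eqref{eq:LTCgeneral:thm}.

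For the recursive bound \eqref{eq:rec-C}, I would rely on the standard moment-to-cumulant identity, isolating the block containing the distinguished variable $x$. Writing $W_0 = x$ and $W_s = \delta(Z)^{\beta_s} \mathbf{1}_{\Omega_{\beta_s}(Z)}$ for $s \in [l]$, the identity $\E[W_0 \cdots W_l] = \sum_{\pi \in \mathcal{P}([0..l])} \prod_{B \in \pi} \cumul((W_i)_{i \in B})$, grouped by the block containing the index $0$, rearranges to
\begin{equation*}
C_{x,\beta_1,\ldots,\beta_l} = \E[W_0 \cdots W_l] - \sum_{S \subsetneq [l]} C_{x, \beta[S]} \cdot \E\Big[\prod_{s \in [l]\setminus S} W_s\Big].
\end{equation*}
Since $\delta_{ij}(Z) \in \{-1,0,1\}$, we have $|W_s| \leq \mathbf{1}_{\Omega_{\beta_s}(Z)}$ pointwise, hence $|\E[W_0 \cdots W_l]| \leq \E[|x|; \cap_s \Omega_{\beta_s}]$ and $|\E[\prod_{s \notin S} W_s]| \leq \P[\cap_{s \in [l]\setminus S} \Omega_{\beta_s}]$. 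Applying the triangle inequality then yields \eqref{eq:rec-C}.

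The main obstacle is essentially bookkeeping: ensuring that the bijection between partitions $\pi' \in \mathcal{P}(\alpha \cup \{x\})$ with $\pi'_0 = \{x\}$ and all remaining blocks of size $2$ on the one hand, and the set $\mathcal{P}_2(\alpha)$ on the other, is handled cleanly with the multiset interpretation of $\alpha$ and the quotient by $\mathcal{S}_l$ mentioned in the excerpt just before the theorem statement. Once this identification is in place and one observes that the factor $\lambda^{|\beta_s|} = \lambda^2$ combines multiplicatively across the $l = |\alpha|/2$ blocks to produce the global $\lambda^{|\alpha|}$, both parts of the theorem follow directly without any further computation.
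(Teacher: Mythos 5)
Your proposal is correct and follows essentially the same route as the paper: the decomposition is obtained via the Law of Total Cumulance conditioned on $Z$, pruned by Lemma~\ref{lem:boundcumulantLTCgeneral} (forcing $\pi_0=\{x\}$ and all other blocks to be pairs), identified with $\mathcal{P}_2(\alpha)$, and rescaled by homogeneity of cumulants; the recursive bound is exactly the paper's use of the moment--cumulant recursion \eqref{eq:rec:cumul} with $x$ as the distinguished variable, followed by $|\delta_{ij}(Z)|\le 1$ and the triangle inequality. No gap to report.
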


\begin{proof}[Proof of Theorem \ref{thm:LTC}]

Formula (\ref{eq:LTCgeneral:thm}) follows from (\ref{eq:LTCgeneral}), Lemma \ref{lem:boundcumulantLTCgeneral}, and the homogeneity of cumulants. Formula (\ref{eq:rec-C}) readily follows from the recursion formula for cumulants -- see \eqref{eq:rec:cumul}, page \pageref{eq:rec:cumul} --
$$C_{x,\beta_{1},\ldots,\beta_{l}} = \E\cro{x\prod_{j\in[l]}\delta(Z)^{\beta_{j}}; \underset{s\in [l]}{\cap}\Omega _{\beta_{s}}(W)} - \sum_{S \subsetneq [l]} C_{x,\beta[S]}\E\cro{\prod_{{j\in [l]\setminus S}}\delta(Z)^{\beta_{j}};\underset{s\in [l]\setminus S}{\cap}\Omega _{\beta_{s}}(Z)}\enspace, $$
and $|\delta_{ij}(Z)|\leq 1$. 
\end{proof}

 For the sake of completeness, we derive below a simple upper-bound on the cumulant (\ref{eq:def-C}), which is good enough to get results up to poly-$\log$ factors.
 
\begin{cor}\label{cor:LTC}
Under the hypotheses of Theorem~\ref{thm:LTC}, the cumulant (\ref{eq:def-C}) can be upper-bounded by
\begin{equation}\label{eq:bound-C}
|C_{x,\beta_{1},\ldots,\beta_{l}} | \leq 2 f_{l}  \max_{\pi\in \mathcal{P}\pa{[l]\cup \ac{x}}}\left\{\E\cro{|x|; \underset{s\in \pi_{1}\setminus\ac{x}}{\cap}\Omega _{\beta_{s}}} \prod_{k=2}^{|\pi|} \P\cro{\underset{s\in \pi_{k}}{\cap}\Omega _{\beta_{s}}}\right\},
\end{equation}
with $f_{l}$ the Fubini number, which fulfills
$f_{l} \leq  3\,l!\, 2^l.$
\end{cor}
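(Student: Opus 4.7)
The plan is to iterate the recursive bound \eqref{eq:rec-C} of Theorem~\ref{thm:LTC}. By induction on $|T|$, I would first establish that for every $T\subseteq[l]$,
\begin{equation*}
|C_{x,\beta[T]}|\ \leq\ \sum_{T=T_0\supsetneq T_1\supsetneq\cdots\supsetneq T_k}\E\cro{|x|;\underset{s\in T_k}{\cap}\Omega_{\beta_s}}\prod_{j=1}^{k}\P\cro{\underset{s\in T_{j-1}\setminus T_j}{\cap}\Omega_{\beta_s}},
\end{equation*}
where the sum ranges over all strictly decreasing chains starting at $T$, with $T_k$ an arbitrary (possibly empty) subset. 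The chain of length $k=0$ contributes the base term $\E\cro{|x|;\cap_{s\in T}\Omega_{\beta_s}}$, and the induction step follows by applying \eqref{eq:rec-C} once and treating each $S\subsetneq T$ appearing there as the next element $T_1$ of a chain.

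Next, I would identify each chain with an ordered partition of $[l]\cup\{x\}$ whose last block contains $x$: the non-empty blocks $T_0\setminus T_1,\ldots,T_{k-1}\setminus T_k$ form the first $k$ blocks, and the final block is $T_k\cup\{x\}$ (reducing to the singleton $\{x\}$ when $T_k=\emptyset$). The contribution of the chain is then exactly the bracketed quantity in \eqref{eq:bound-C} evaluated at the unordered partition $\pi\in\mathcal{P}([l]\cup\{x\})$ obtained by forgetting the order, with $\pi_1$ denoting the block containing $x$; crucially, this quantity is invariant under reordering of the non-$x$ blocks. It remains to count chains: ordered partitions of $[l]$ into $k$ non-empty parts correspond to the chains with $T_k=\emptyset$, while ordered partitions of $[l]$ into $k+1$ non-empty parts correspond to the chains with $T_k\neq\emptyset$. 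Summing over $k$ yields exactly $2 f_l$ chains, so bounding each contribution by the maximum over $\pi$ delivers \eqref{eq:bound-C}.

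Finally, the estimate $f_l\leq 3\,l!\,2^l$ is a standard fact about Fubini numbers: from the exponential generating function $\sum_l f_l\,x^l/l!=1/(2-e^x)$ one obtains $f_l=\sum_{k\geq 0}k^l/2^{k+1}$, bounded up to a small constant by $l!/(2(\ln 2)^{l+1})$, and since $1/\ln 2<2$ this is dominated by $3\,l!\,2^l$ (with the constant $3$ absorbing the small cases). The only real obstacle is the combinatorial bookkeeping for the correspondence between recursion chains and ordered partitions of $[l]\cup\{x\}$ with $x$ placed in the last block; once this is set up, both the factor $2 f_l$ and the uniform domination by the maximum are immediate.
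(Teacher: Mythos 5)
Your argument is correct, but it reaches the bound by a genuinely different route than the paper. The paper's proof is a one-shot application of the M\"obius formula (Lemma~\ref{lem:mobiusformula}) to the cumulant $C_{x,\beta_1,\ldots,\beta_l}$: it bounds $|m(\pi)|=(|\pi|-1)!$, bounds each mixed moment by the corresponding $\E\cro{|x|;\cdot}$ or $\P\cro{\cdot}$ factor (using $|\delta(Z)^\beta|\le 1$), pulls out the maximum over $\pi$, and then shows $\sum_{\pi\in\mathcal{P}([l]\cup\{x\})}(|\pi|-1)!=2f_l$ through a short Stirling-number identity. You instead iterate the recursion \eqref{eq:rec-C}, unfolding it into a sum over strictly decreasing chains $[l]=T_0\supsetneq\cdots\supsetneq T_k$; your chains with the non-$x$ blocks ordered are exactly the partitions of $[l]\cup\{x\}$ counted with multiplicity $(|\pi|-1)!$, so your count of $2f_l$ agrees with the paper's. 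What your route buys is self-containedness: it uses only the recursive bound \eqref{eq:rec-C} of Theorem~\ref{thm:LTC} (applied, as the paper itself later does, to arbitrary subsets $T\subseteq[l]$), in the same spirit as the inductions in the clustering proofs; the paper's route is shorter but invokes the M\"obius expansion separately. The one weak spot is your bound on $f_l$: appealing to the asymptotic $f_l\approx l!/(2(\ln 2)^{l+1})$ ``up to a small constant'' is not a proof. The paper's argument is cleaner and fully rigorous: since all terms of the exponential generating function are nonnegative, $\frac{f_l}{l!}2^{-l}\le\sum_{m\ge 1}\frac{f_m}{m!}2^{-m}=\frac{1}{2-e^{1/2}}\le 3$, giving $f_l\le 3\,l!\,2^l$ directly; you should replace your asymptotic appeal by this (or by the elementary estimate $k^l\le l!\,2^l e^{k/2}$ inserted into $f_l=\sum_{k\ge 0}k^l 2^{-(k+1)}$, which yields the same constant).
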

 
We refer to Appendix \ref{sec:cor:LTC} for a proof of this Corollary.
The Bound  (\ref{eq:bound-C}) enables to prove meaningful computational barriers in the models considered, up to poly-log degree $D$. Yet, to prove computational barriers with sharp constants and/or higher degree $D$, we need a refined analysis, inspired by \cite{SohnWein25}, based on the recursive bound (\ref{eq:rec-C}) of Theorem~\ref{thm:LTC}.

\subsection{Deriving bounds on cumulants}
Let us now sketch how we can easily derive from Theorem~\ref{thm:LTC} some useful bounds on the cumulants $\kappa_{x,\alpha}$ of Proposition~\ref{thm:schrammwein}. While the overall strategy is similar for the different models, the precise derivation is model specific. 
As an example, we outline the derivation of a bound on $\kappa_{x,\alpha}$ for the emblematic  problem of Clustering a Gaussian mixture  (\ref{eq:GMM-intro}), which is an instantiation of the 
latent model  \eqref{eq:latent-model}, with $\nu=\mu$,
 $$Z=k^*=[k^*_1,\ldots,k^*_n]\sim\mathcal{U}([K]^n),\quad \delta_{ij}(k^*)=1,\quad \textrm{and}\quad \theta_{ij}(k^*)=(k^*_{i},j).$$
Our goal is to estimate $x=\1_{k^*_1=k^*_2}$.
 We only describe here a simple proof strategy to get a  bound on $\kappa_{x,\alpha}$, with a tight dependence in $K$, but a suboptimal dependence in $|\alpha|$.
 We refer to Appendix~\ref{prf:lowdegreeclusteringsharp} for the detailed and tighter analysis, and we refer  to  Section \ref{sec:clustering} for detailed results on the clustering problem.

 In the Gaussian clustering model,  for $\alpha\in \N^{n\times p}$, we seek to control the cumulant 
 $$\kappa_{x,\alpha}:=\cumul\pa{x, \pa{\nu_{k^*_i,j}}_{(i,j)\in \alpha}}\enspace.$$
  In the light of Theorem \ref{thm:LTC}, it is sufficient to bound, for $\beta_1+\ldots+ \beta_l=\alpha$ with $\beta_s=\ac{(i_s,j_s);(i'_s,j'_s)}$, the cumulant 
\begin{align*}
C_{x,\beta_1,\ldots,\beta_l}&=\cumul\pa{x,\pa{\1\ac{k^*_{i_s}=k^*_{i'_s}}\1\ac{j_s=j'_s}}_{s\in [l]}}\\
&=\cumul\pa{\pa{\1\ac{k^*_{i_s}=k^*_{i'_s}}\1\ac{j_s=j'_s}}_{s\in [0,l]}}\enspace,
\end{align*}
where we take the convention $i_0=1$, $i'_0=2$ and $j_0=j'_0=0$. 
Since the cumulant $C_{x,\beta_1,\ldots,\beta_l}$ is zero when $j_s\neq j'_s$ for some $s\in[l]$,
we focus on the case where $j_s=j'_s$ for all $s\in [l]$, and we seek to upper-bound 
$$C_{x,\beta_1,\ldots,\beta_l}=\cumul\pa{\pa{\1\ac{k^*_{i_s}=k^*_{i'_s}}}_{s\in [0,l]}}\enspace.$$

For any subset $S\subseteq [l]$, we write $\beta[S]=\ac{\beta_s, s\in S}$. It is convenient to introduce a graph $\mathcal{V}$ on $[0,l]$ with an edge between $s,s'$ if and only if $\ac{i_s,i'_s}$ intersects $\ac{i_{s'}, i'_{s'}}$. A first step is to remark that, according to Lemma~\ref{lem:independentcumulant}, when $S\neq \emptyset$, for having $C_{x,\beta[S]}\neq 0$, one needs to have (see Lemma~\ref{lem:nullityLTCclusteringsharp} for details)
\begin{enumerate}
\item $1,2\in \cup_{s\in S}\ac{i_s,i'_s}$;
\item The restriction of $\mathcal{V}$ to $\ac{0}\cup S$, denoted $\mathcal{V}\cro{\ac{0}\cup S}$, is connected.
\end{enumerate}
Let us call {\it active} subsets $S\subseteq [l]$, subsets  either satisfying these two conditions, or being empty. Building on the recursive bound (\ref{eq:rec-C}), we have that, for any {\it active} $S$, 
\begin{equation}\label{eq:sketch1}
|C_{x,\beta[S]}|\leq \P\cro{\forall s\in S\cup \ac{0},\enspace k^*_{i_s}=k^*_{i'_s}}+\underset{S' \text{active}}{\sum_{S'\subseteq S}}|C_{x,\beta[S']}|\P\cro{\forall s\in S\setminus S',\enspace k^*_{i_s}=k^*_{i'_s}}\enspace.
\end{equation}
Let us denote by $\# \alpha$ the number of non-zero rows of $\alpha$. 
Since the graph $\mathcal{V}\cro{S\cup \ac{0}}$ is connected, and since $1,2\in \cup_{s\in S}\ac{i_s,i'_s}$,  we have 
$$\P\cro{\forall s\in S\cup\ac{0},\enspace k^*_{i_s}=k^*_{i'_s}}=\pa{\frac{1}{K}}^{\#\alpha_{S}-1}\enspace,$$
and, for all $S'\subseteq S$, we  have
 \begin{equation}\label{eq:sktech3}
 \P\cro{\forall s\in S\setminus S',\enspace k^*_{i_s}=k^*_{i'_s}}=\pa{\frac{1}{K}}^{\#\alpha_{S\setminus S'}-cc\pa{\mathcal{V}[S\setminus S']}}\enspace,
 \end{equation}
where $cc\pa{\mathcal{V}[S\setminus S']}$ stands for the number of connected components of $\mathcal{V}[S\setminus S']$. Plugging these two formulas in \eqref{eq:sketch1}, we get
\begin{equation}\label{eq:sketch2}
|C_{x,\beta[S]}|\leq \pa{\frac{1}{K}}^{\#\alpha_{S}-1}+\underset{S' \text{active}}{\sum_{S'\subseteq S}}|C_{x,\beta[S']}|\pa{\frac{1}{K}}^{\#\alpha_{S\setminus S'}-cc\pa{\mathcal{V}[S\setminus S']}}\enspace.
\end{equation}
From this recursive bound, we derive the following upper-bound on $|C_{x, \beta[S]}|$.
\begin{lem}\label{lem:sketch}
There exists a constant $c_{|S|}$ such that $|C_{x, \beta[S]}|\leq  c_{|S|} \pa{\frac{1}{K}}^{\#\alpha_{S}-1}$.
\end{lem}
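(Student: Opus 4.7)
The plan is to proceed by strong induction on $|S|$, building directly on the recursive inequality (\ref{eq:sketch2}). For the base case $|S|=0$, one has $C_{x,\beta[\emptyset]}=\E\cro{x}=\P\cro{k^*_1=k^*_2}=1/K$, so the bound holds trivially with $c_0=1$ (since $(1/K)^{\#\alpha_\emptyset-1}=K\geq 1/K$). For the induction step, I would fix an active $S$ with $|S|\geq 1$, assume the claim for every active $S'\subsetneq S$, and bound each term of the right-hand side of (\ref{eq:sketch2}) separately, distinguishing the contribution of $S'=\emptyset$ from those of non-empty active $S'$.

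The heart of the argument is a purely combinatorial inequality on exponents of $1/K$. Writing $V_T\subseteq[n]$ for the set of row-indices appearing in $\ac{\beta_s:s\in T}$, one has $|V_T|=\#\alpha_T$ and $|V_{S'}|+|V_{S\setminus S'}|=|V_S|+|V_{S'}\cap V_{S\setminus S'}|$. For non-empty active $S'\subsetneq S$, closing the induction reduces to
$$|V_{S'}\cap V_{S\setminus S'}|\ \geq\ cc\pa{\mathcal{V}\cro{S\setminus S'}}\enspace.$$
Two observations deliver this: first, distinct connected components of $\mathcal{V}\cro{S\setminus S'}$ have disjoint index-sets, because any shared index would create an edge between them; second, by the activeness of $S$, each such component is path-connected to $S'\cup\ac{0}$ inside $\mathcal{V}\cro{S\cup\ac{0}}$, and since $S'$ active forces $\ac{1,2}\subseteq V_{S'}$, every component must share an index with $V_{S'}$. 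Combining these yields one distinct element of $V_{S'}\cap V_{S\setminus S'}$ per component. For $S'=\emptyset$, the same activeness principle gives $cc(\mathcal{V}\cro{S})\leq 2$, since at most one component of $\mathcal{V}\cro{S}$ can contain each of the indices $1$ and $2$; together with $|C_x|=1/K$, this bounds the $S'=\emptyset$ contribution by $(1/K)^{\#\alpha_S-1}$.

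Plugging these combinatorial bounds into (\ref{eq:sketch2}) and using the inductive hypothesis, one obtains, for each non-empty active $S'\subsetneq S$,
$$|C_{x,\beta[S']}|\,\pa{\frac{1}{K}}^{\#\alpha_{S\setminus S'}-cc(\mathcal{V}\cro{S\setminus S'})}\ \leq\ c_{|S'|}\,\pa{\frac{1}{K}}^{\#\alpha_S-1}\enspace,$$
so that setting $c_{|S|}:=2+\sum_{\emptyset\neq S'\subsetneq S,\ S'\text{ active}}c_{|S'|}$ closes the induction with a constant depending only on $|S|$. The main obstacle is precisely the exponent inequality: the activeness condition has to be exploited twice, once to cap $cc(\mathcal{V}\cro{S})$ by $2$ (for $S'=\emptyset$) and once to attach each component of $\mathcal{V}\cro{S\setminus S'}$ onto $V_{S'}$ through a distinct index. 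The graph-theoretic bookkeeping around connectivity and shared row-indices is the delicate point; once it is in place, the rest of the proof is a straightforward unwinding of the recursion of Theorem~\ref{thm:LTC}.
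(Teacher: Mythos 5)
Your proof is correct and follows essentially the same route as the paper: induction via the recursion \eqref{eq:sketch2}, handling the $S'=\emptyset$ term through $cc(\mathcal{V}[S])\leq 2$, and closing the induction with the exponent inequality obtained from the connectivity of $\mathcal{V}[S\cup\{0\}]$, your bound $|V_{S'}\cap V_{S\setminus S'}|\geq cc(\mathcal{V}[S\setminus S'])$ being just a reformulation of the paper's inequality $\#\alpha_{S\setminus S'}\geq |supp(\alpha_{S\setminus S'})\setminus supp(\alpha_{S'})|+cc(\mathcal{V}[S\setminus S'])$. The only cosmetic point is to define the constant recursively through an upper bound such as $c_m=2+\sum_{j<m}\binom{m}{j}c_j$ so that it depends on $|S|$ only, not on the particular family of active subsets of $S$.
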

\begin{proof}[Proof of Lemma \ref{lem:sketch}]
We prove  Lemma \ref{lem:sketch} by induction over $S$.  
The initialization is immediate since $C_{x, \emptyset}=\E\cro{x}=K^{-1}$. By induction, we get from \eqref{eq:sktech3} and \eqref{eq:sketch2}
$$|C_{x,\beta[S]}|\leq \pa{\frac{1}{K}}^{\#\alpha_{S}-1}+\pa{\frac{1}{K}}^{1+\#\alpha_S-cc(\mathcal{V}[S])}+\underset{S' \text{active}}{\sum_{\emptyset\neq S'\subseteq S}}c_{|S'|}\pa{\frac{1}{K}}^{\#\alpha_{S'}-1+\#\alpha_{S\setminus S'}-cc\pa{\mathcal{V}[S\setminus S']}}.$$
Since $\mathcal{V}\cro{S\cup \ac{0}}$ is connected, it is clear that $cc(\mathcal{V}[S])\leq 2$. Thus ${1+\#\alpha_S-cc(\mathcal{V}[S])}\geq {\#\alpha_{S}-1}$, and the second term in the right-hand side in not larger than the first one.

It remains to prove that for any active non empty subset $S'\subsetneq S$, we have 
$$\#\alpha_{S'}+\#\alpha_{S\setminus S'}-1-cc(\mathcal{V}[S\setminus S'])\geq \#\alpha_S-1.$$ 
To do so, we shall use the fact that $\mathcal{V}[S\cup\ac{0}]$ is connected. All connected component $cc$ of $\mathcal{V}[S\setminus S']$ must be connected to $\ac{0}\cup S'$. In other words, for such a connected component $cc$, there exists $i\in supp(\alpha_{cc})\cap supp(\alpha_{S'})$, where $supp(\alpha)$ is the set of non-zero rows of $\alpha$. From this, we deduce that 
$$\#\alpha_{S\setminus S'}\geq \left|supp(\alpha_{S\setminus S'})\setminus supp(\alpha_{S'})\right|+cc(\mathcal{V}[S\setminus S']),$$ 
and we conclude $\#\alpha_{S'}+\#\alpha_{S\setminus S'}-1-cc(\mathcal{V}[S\setminus S'])\geq \#\alpha_s-1$. This concludes the proof of the induction. 
\end{proof}

Lemma \ref{lem:sketch} ensures that
$\displaystyle{C_{x,\beta_1(\pi),\ldots, \beta_l(\pi)}\leq c_{l}\pa{\frac{1}{K}}^{\#\alpha-1}}$, for all $\pi\in\mathcal{P}_{2}(\alpha)$.
Summing this bound over all the partitions in $\mathcal{P}_{2}(\alpha)$, ensures the existence of  a constant $C_{|\alpha|}$, only depending on the norm $|\alpha|$, such that 
\begin{equation}\label{eq:bound:sketch}
\kappa_{x,\alpha}\leq C_{|\alpha|}\pa{\frac{1}{K}}^{\#\alpha-1}\enspace.
\end{equation}
\medskip

\begin{rem}
In Appendix \ref{prf:lowdegreeclusteringsharp}, we improve this (sketch of) proof by  controlling more carefully the terms in the induction of Lemma~\ref{lem:sketch} and the number of partitions in $\mathcal{P}_{2}(\alpha)$ such that $C_{x,\beta_1(\pi),\ldots,\beta_l(\pi)}\neq 0$. This allows us to avoid powers of $D$ in the computational barrier of clustering, and to catch the BBP threshold at the exact constant, when $n\geq poly(D,K)$.
\end{rem}

\begin{rem}
In addition to upper-bounding the cumulants $\kappa_{x,\alpha}$ for any multiset $\alpha$, one also needs to prune a large number of multisets $\alpha$ such that $\kappa_{x,\alpha}=0$. This is done in the proof of Theorem \ref{thm:lowdegreeclusteringsharp} in Appendix \ref{prf:lowdegreeclusteringsharp}. Let us underline another advantage of the conditioning: it reveals that for having $\kappa_{x,\alpha}\neq 0$, it is necessary that, for all $i\in supp(\alpha)$, $|\alpha_{i:}|\geq 2$. Such a condition is necessary  to catch the exact BBP constant.
\end{rem}

\begin{rem}
In \cite{Even24}, the control of $\kappa_{x,\alpha}$ is performed without conditioning. The power of  $\frac{1}{K}$ in the upper-bound of \cite{Even24} is not ${\#\alpha-1}$ as in \eqref{eq:bound:sketch}, but instead $\max(1,{\#\alpha+r_\alpha-|\alpha|/2-1})$, where $r_{\alpha}$ is the number of non-zero rows and $|\alpha|$ is the $\ell^1$-norm of $\alpha$. This last power
 is much worse than ${\#\alpha-1}$. For example, if one considers the matrix $\alpha$  defined by $\alpha_{ij}=\1\ac{i\leq m}\1\ac{j\leq 2}$, with $m$ even, we obtain, 
 $$\text{a bound}
\ {O\pa{{\frac{1}{K^{m-1}}}}}\ \text{with conditioning,}\quad
\text{and a bound}
\ {O\pa{\frac{1}{K}}}\ \text{without conditioning.}$$
This is the reason why our result  for clustering in Theorem \ref{thm:lowdegreeclusteringsharp} holds in any dimension $p$, and not only when $p\geq n$, as  in \cite{Even24}. 
\end{rem}
 
\section{Clustering Gaussian mixtures}\label{sec:clustering}

\paragraph*{Set-up} For the reader convenience, let us recall the  Gaussian Mixture set-up \eqref{eq:GMM-intro}. 
We observe a set of $n$ points $Y_1,\ldots,Y_n\in \R^p$, which have been generated as follows. For some unknown vectors $\mu_1,\ldots,\mu_K\in\R^p$, some unknown $\sigma>0$, and an unknown partition $G^*=\ac{G^*_1,\ldots,G^*_K}$ of $\ac{1,\ldots,n}$, the points $Y_1,\ldots,Y_n$ are sampled independently with distribution
\[Y_i \sim \mathcal{N}(\mu_k,\sigma^2I_p),\quad \text{for}\ i\in G^*_k.\]
For simplicity, we focus henceforth on the case where the clusters are balanced:
\begin{equation}\label{eq:balanced}
    {\max_k |G^*_k|\over \min_k |G^*_k|}\leq \gamma,\quad \text{for some $\gamma\geq 1$.}
\end{equation}  
The clustering objective is to recover, partially or perfectly, the partition $G^*$.
Our aim is to determine what is the minimal (scaled) separation $\Delta^2$, defined by (\ref{eq:snrclustering}), required for performing better than random clustering in polynomial time.

 The minimal informational separation $\Delta^2$ for clustering better than at random has been established in \cite{Even24}. When $p\gtrsim \log(K)$, the minimal separation for partial recovery (having less than some fixed proportion of misclassified points) is -- see Theorems 2 and 3 in \cite{Even24}
\begin{equation}\label{eq:informationalclustering}
\Delta^2\gtrsim \log(K)+\sqrt{\frac{pK}{n}\log(K)}\enspace .
\end{equation} 
%where $\gtrsim$ hides some numerical constants.
However, partial recovery at the minimal separation level \eqref{eq:informationalclustering} is achieved by exactly minimizing   the Kmeans criterion over all partitions of $[n]$. The problem of minimizing the Kmeans criterion is known to be NP-hard, and even hard to approximate \cite{awasthi2015hardness}. In fact, in high dimension $p\geq n$, \cite{Even24} provides a low-degree polynomial lower-bound suggesting that the problem is computationally hard when, up to logarithmic factors, 
\begin{equation}\label{eq:computationalclustering}
\Delta^2\leq_{\log} 1+\min\pa{\sqrt{\frac{pK^2}{n}},\sqrt{p}}\enspace.
\end{equation}
It is also conjectured in \cite{lesieur2016phase} that there exists a statistical-computational gap in high dimension $p\geq \frac{n}{K^2}$. They consider the asymptotic regime, with $K$ fixed and $n/ p\to \alpha>0$, where they study the stable fixed points of the state evolution equation of Approximate Message Passing. In that setup, based on replica theory in statistical physics,  they conjecture the hardness of clustering when $p\geq_{\log} n/K^2$ and $\Delta^2\leq \sqrt{{p}K^2/n}$. Our goal in this section is to give evidence of this phenomenon, in a non-asymptotic regime, using the low-degree framework.
More precisely, 
\begin{enumerate}
\item for $K\leq_{\text{poly-log}} \sqrt{n}$, we prove low-degree hardness at the  BBP threshold $\Delta^2\leq \sqrt{{pK^2}/{n}}$  with exact constant;
\item for $K\geq_{\text{poly-log}} \sqrt{n}$, we prove low-degree hardness at the lower separation $\Delta^2\leq_{\log}\sqrt{p}$,  with a matching upper-bound, dismissing the conjecture of \cite{lesieur2016phase} when the number of clusters is high. 
\end{enumerate}
Compared to \cite{Even24}, our results are valid in any dimension $p$, including the challenging intermediate dimensions $n/K^2 \leq p \leq n$, and they are more precise as we prove a computational barrier at the exact level of the BBP transition. We also provide in Section~\ref{sec:upperbounds-clustering} a poly-time algorithm matching the LD bound in most regimes, up to poly-log factors.

\subsection{LD lower-bound for clustering}

Our main contribution for the clustering problem is to prove low-degree hardness for 
$$\Delta^2 \leq \min\pa{\sqrt{pK^2\over n},\sqrt{p\over \log^{18} n}}.$$
As explained in Section~\ref{sec:technique}, our proof starts from Proposition~\ref{thm:schrammwein} lifted from \cite{SchrammWein22}, and then build on Theorem~\ref{thm:LTC} together with some arguments adapted from \cite{SohnWein25} to derive bounds on cumulants.

Low-degree polynomials are not well-suited for directly outputting a partition $\hat{G}$, which is combinatorial by nature. Instead, we focus on the problem of estimating the partnership matrix $M^*$ defined by $M^*_{ij}=\1\{i\stackrel{G^*}{\sim} j \}$.
Indeed, proving computational hardness for estimating $M^*$, implies computational hardness for estimating $G^*$.
Given an partition $G$, define the partnership matrix $M^G$ by $M^G_{ij}=\1\{i\stackrel{G}{\sim} j \}$. By~\cite{Even24} p.5, we know that, 
\begin{equation}\label{eq:MtoG}
\frac{1}{n(n-1)}\|M^{G}-M^{*}\|_F^2 \leq \min_{\pi \in \mathcal{S}_{K}} {1\over n} \sum_{k=1}^K |G^*_{k}\triangle \hat G_{\pi(k)}|=:2\,err(\hat G,G^*),
\end{equation}
where $\triangle$ represents the symmetric difference,  $\mathcal{S}_{K}$ the permutation group on $[K]$, and where $err(\hat G,G^*)$ is the average proportion of misclassified points in $\hat G$. Hence, estimating $M^{*}$ in polynomial-time with small square-Frobenius distance is no harder than building a polynomial-time estimator $\hat{G}$ with a small error $err(\hat{G},G^*)$. By  linearity, we focus on estimating the functional $x=M^*_{12}=\1\{1\stackrel{G^*}{\sim} 2\}$. 
In Appendix~\ref{subsec:impossibility_reconstruction}, we also show that the hardness for reconstructing $x$ within a square error $K^{-1}(1+(o(1)))$ implies that all polynomial-time balanced estimator $\hat{G}$ achieve an error $err(\hat{G},G^*)\geq 1+o(1)$. In other words, it is impossible to perform better than random guess.

For proving the LD bound, we consider the following prior on the means $\mu_{k}$ and the partition $G^*$.
\begin{definition}\label{def:priorclustering}
Let $k^*$ be a random variable uniformly distributed on $[K]^n$, and  for $k\in[K]$ set  $G^*_{k}=\ac{i\in[n]:k^*_{i}=k}$. Furthermore, let the $\mu_{k}$ be random variables independent of $k^*$, with distribution
$$\mu_{kj}\stackrel{\text{i.i.d.}}{\sim} \mathcal{N}(0,\lambda^2),\quad \text{with}\ \lambda^2=\frac{1}{p}\bar{\Delta}^{2}\sigma^2.$$
\end{definition}
The prior in Definition \ref{def:priorclustering} is an instantiation of the model \eqref{eq:latent-model}, with 
$$Z=k^*,\quad \delta_{ij}(k^*)=1,\quad \textrm{and}\quad \theta_{ij}(k^*)=(k^*_{i},j)\enspace.$$
We emphasize that, with high probability, we have a separation $\Delta^2 =\Bar{\Delta}^2\pa{1+o_p(1)}$ under this prior.
 The risk of the trivial estimator $\hat x=\E[x]$ of $x$ is
$$MMSE_{\leq 0}=\text{var}(x)=\frac{1}{K}-\frac{1}{K^2}.$$
The next theorem provides conditions ensuring that $MMSE_{\leq D}=MMSE_{\leq 0}\pa{1+o_{K}(1)}$.

\begin{thm}\label{thm:lowdegreeclusteringsharp}
Let $D\in\N$ with $D^5\leq p$ and assume that $\zeta:=\frac{\Bar{\Delta}^4}{p\sigma^4}\max\pa{ D^{18},\frac{n}{K^2}}<1$. Then, under the prior of Definition \ref{def:priorclustering}, $$MMSE_{\leq D}\geq \frac{1}{K}-\frac{1}{K^2}\cro{1+\frac{\zeta}{(1-\sqrt{\zeta})^3}}\enspace.$$
\end{thm}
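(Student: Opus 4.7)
The plan is to apply Proposition~\ref{thm:schrammwein} and bound the sum $\sum_{|\alpha|\leq D}\kappa_{x,\alpha}^{2}/\alpha!$. The trivial term $\alpha=0$ contributes exactly $\E[x]^{2}=1/K^{2}$, while Theorem~\ref{thm:LTC} forces $\kappa_{x,\alpha}=0$ for any $\alpha$ of odd weight, since then $\mathcal{P}_{2}(\alpha)=\emptyset$. Using the identity $MMSE_{\leq D}=1/K-corr_{\leq D}^{2}$, the task reduces to proving
\[
\sum_{0<|\alpha|\leq D,\,|\alpha|\text{ even}}\frac{\kappa_{x,\alpha}^{2}}{\alpha!}\;\leq\;\frac{1}{K^{2}}\cdot\frac{\zeta}{(1-\sqrt{\zeta})^{3}}.
\]

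For each even $|\alpha|=2l$, Theorem~\ref{thm:LTC} gives $\kappa_{x,\alpha}=\lambda^{2l}\sum_{\pi\in\mathcal{P}_{2}(\alpha)}C_{x,\beta(\pi)}$, where in the Gaussian-mixture specialisation $C_{x,\beta_{1},\ldots,\beta_{l}}=\cumul\bigl(x,(\1\ac{k^{*}_{i_{s}}=k^{*}_{i'_{s}}}\1\ac{j_{s}=j'_{s}})_{s\in[l]}\bigr)$. First I would prune the pairings via Lemma~\ref{lem:independentcumulant}: $C_{x,\beta(\pi)}$ can be non-zero only when every pair is column-aligned ($j_{s}=j'_{s}$), the pair graph $\mathcal{V}$ on $\ac{0,1,\ldots,l}$ (vertex $0$ representing $x$, edges between pairs sharing a row) is connected, and $1,2\in\mathrm{supp}(\alpha)$. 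The conditional viewpoint then yields a further and crucial pruning: one may restrict to $\alpha$ with $|\alpha_{i:}|\geq 2$ for every $i\in\mathrm{supp}(\alpha)$. This additional restriction is unavailable in~\cite{Even24} and is what makes it possible to capture the BBP constant sharply.

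The second step is a sharpening of Lemma~\ref{lem:sketch}. Starting from the recursion~\eqref{eq:rec-C} and exploiting the connectedness of $\mathcal{V}[\ac{0}\cup S]$ together with the fact that each connected component of $\mathcal{V}[S\setminus S']$ must meet $\ac{0}\cup S'$, I would prove $|C_{x,\beta(\pi)}|\leq\bar c_{l}\,K^{-(\#\alpha-1)}$ with $\bar c_{l}$ polynomial in $l$ instead of factorial. The argument, inspired by~\cite{SohnWein25}, is an induction on $|S|$ in which only the subsets $S'$ that both preserve connectedness and contain the rows $\ac{1,2}$ carry over, dramatically shrinking the recursion tree compared with the brutal application of the triangle inequality in~\eqref{eq:sketch2}. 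I expect the control of $\bar c_{l}$ to be the principal technical difficulty, as it is responsible for the factor $D^{18}$ entering $\zeta$ and for the hypothesis $D^{5}\leq p$.

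The final step is the enumeration. I would organise $\alpha$ by $l=|\alpha|/2$, the number of rows $r=\#\alpha\in\ac{2,\ldots,l+1}$ (the upper bound coming from the pruning $|\alpha_{i:}|\geq 2$), and the number of columns $c=r_{\alpha}\leq l$. Choosing rows $\supseteq\ac{1,2}$ contributes $\binom{n-2}{r-2}$, columns contribute $\binom{p}{c}$, and the remaining enumeration of multiplicities and surviving pairings, multiplied by $\lambda^{4l}K^{-2(r-1)}\bar c_{l}^{2}/\alpha!$, collapses into a per-pair amplification of order $\zeta$. The hypothesis $D^{5}\leq p$ absorbs the polynomial-in-$l$ prefactors, the constraint $r\leq l+1$ bounds the row combinatorics, and the resulting double sum in $(l,r)$ reorganises into a geometric series dominated by $K^{-2}\zeta/(1-\sqrt{\zeta})^{3}$ whenever $\zeta<1$. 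Combined with the refined cumulant bound of the preceding step, this sharpening is precisely what unlocks the intermediate regime $\log^{5}n\leq p\leq n$ and captures the BBP threshold $\sqrt{pK^{2}/n}$ with sharp constant.
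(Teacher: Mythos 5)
Your overall architecture (Schramm--Wein bound, Theorem~\ref{thm:LTC}, pruning of pairings, enumeration by $(|\alpha|,\#\alpha,r_\alpha)$) matches the paper, but the central technical claim of your third step is false, and it is precisely the place where the real work lies. You assert $|C_{x,\beta(\pi)}|\leq \bar c_{l}\,K^{-(\#\alpha-1)}$ with $\bar c_{l}$ \emph{polynomial} in $l$. Take $\alpha$ with $\alpha_{1j}=\alpha_{2j}=1$ for $j=1,\ldots,l$ and nothing else: it survives every pruning you list (rows $1,2$ have degree $l$, every column has degree $2$, the pair graph is connected through rows $1,2$), the unique admissible pairing is $\beta_s=\ac{(1,j_s),(2,j_s)}$, and then $C_{x,\beta_1,\ldots,\beta_l}$ is the $(l+1)$-st cumulant of the single Bernoulli$(1/K)$ variable $\1\ac{k^*_1=k^*_2}$. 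For fixed $K$ (say $K=2$) these cumulants grow factorially in $l$ (of order $l!$ up to exponential factors), so no bound of the form $\mathrm{poly}(l)\,K^{-(\#\alpha-1)}$ can hold. What the paper proves instead (Lemma~\ref{lem:boundLTCclusteringsharp}) is $|C|\leq |\alpha|^{|\alpha|-2\#\alpha+4}K^{-(\#\alpha-1)}$: the prefactor is allowed to be superexponential exactly when $\#\alpha\ll|\alpha|/2$, and the proof only closes because this excess exponent --- together with the analogous excesses $|\alpha|/2-r_\alpha$ in the pairing count and $|\alpha|-\#\alpha-r_\alpha+2$ in the enumeration of admissible $\alpha$ (the path/lollipop count $d^{3(d-r-m+2)}n^{m-2}p^{r}$ of Lemma~\ref{lem:combinatorics_kappa}) --- is balanced \emph{term by term in $(d,m,r)$} against the available factors $n^{m-2}p^{r}\lambda^{2d}K^{-2(m-1)}$, using $p\geq D^5$ and the $D^{18}$ built into $\zeta$. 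Your phrase that the enumeration ``collapses into a per-pair amplification of order $\zeta$'' skips exactly this bookkeeping, which is where $D^{18}$, $p\geq D^5$, and the sharp BBP constant actually come from; with a factorial prefactor in some configurations, the argument as you state it does not go through.

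A secondary inaccuracy: the pruning ``$|\alpha_{i:}|\geq 2$ for every $i\in\mathrm{supp}(\alpha)$'' is too strong. Rows $1$ and $2$ may carry a single entry (e.g.\ $\alpha_{1j}=\alpha_{2j}=1$ on one column gives $\kappa_{x,\alpha}=\lambda^2\mathrm{Var}(x)\neq 0$); the correct statement (Lemma~\ref{lem:nullcumulantsclusteringsharp}) requires degree at least $2$ only for rows outside $\ac{1,2}$ and for columns, whence $|\alpha|\geq 2\#\alpha-2$. This is consistent with the range $r\leq l+1$ you actually use in the enumeration, but the claim as written is incorrect and, if taken literally, would wrongly discard nonzero contributions.
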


Theorem \ref{thm:lowdegreeclusteringsharp}, proved in  Appendix~\ref{prf:lowdegreeclusteringsharp}, improves on Theorem 1 of \cite{Even24} in two directions:
 First, it is valid for any dimension $p\geq D^5$, while Theorem 1 of \cite{Even24} only covered the simplest case $p\geq n$; 
 Second, the exact BBP threshold $\sqrt{{pK^2}/{n}}$ appears in $\zeta$, whereas there is a spurious factor $D^{12}$  in Theorem 1 of \cite{Even24}.
Theorem \ref{thm:lowdegreeclusteringsharp} implies that, for any numerical constant $\eps>0$, if $\sqrt{\zeta}\leq 1-\eps$ and $p\geq D^5$, then there exists $C(\eps)>0$ such that $$MMSE_{\leq D}\geq \frac{1}{K}-\frac{C(\eps)}{K^2}=MMSE_{\leq 0}\pa{1+o_{K}(1)}\enspace.$$
Hence, no degree $D$ polynomials can perform significantly better than the trivial estimator in this regime.
In particular, taking $D=(\log n)^{1+\eta}$, if $p\geq (\log n)^{5(1+\eta)}$, we prove $(\log n)^{1+\eta}$--degree hardness when
\begin{equation}\label{eq:computationalclusteringsharp}
\bar{\Delta}^2\leq (1-\eps)\min\pa{\sqrt{\frac{pK^2}{n}},\sqrt{\frac{p}{(\log n)^{18(1+\eta)}}}}\enspace.
\end{equation}
Since polynomials of degree at most $\pa{\log n}^{1+\eta}$ are considered as a proxy for algorithms that are computable in polynomial time~\cite{KuniskyWeinBandeira,SchrammWein22},  Theorem \ref{thm:lowdegreeclusteringsharp} together with \eqref{eq:MtoG} suggest the computational hardness of clustering in the regime \eqref{eq:computationalclusteringsharp}
We remark that, when $K^2\leq  n/ (\log n)^{18(1+\eta)}$, the computational barrier \eqref{eq:computationalclusteringsharp} reduces to
\begin{equation}\label{eq:BBPclustering}
\bar{\Delta}^2\leq (1-\eps)\sqrt{\frac{pK^2}{n}}\enspace,
\end{equation}
as conjectured in \cite{lesieur2016phase} with replica heuristics. This barrier matches exactly  the BBP transition threshold \cite{banks2018information}, where,
in the asymptotic regime $n/p\to \alpha$ and $K\ll n,p$, 
 the leading eigenvalues of the matrix $Y^T Y$ become significantly larger from those of the Wigner matrix $E^TE$. We build on this property in Proposition~\ref{prop:upper-boundclustering}, in order to design a poly-time algorithm, which recovers the partition $G^*$ after projecting the data onto the low dimensional space spanned by the leading eigenvectors of the matrix $Y^T Y$.

Finally, we remark that, in low-dimension $p\leq n (\log(K)/K)^2$, the computational barrier \eqref{eq:computationalclusteringsharp} is smaller than the informational barrier \eqref{eq:informationalclustering}. Combining these two barriers, we  provide evidence, when $p\geq (\log n)^{5(1+\eta)}$, that partial recovery of $G^*$ is computationally hard below the threshold
\begin{equation}\label{eq:computationalclusteringsharp+informational}
\bar{\Delta}^2\leq (c \log K)\vee \pa{(1-\eps)\min\pa{\sqrt{\frac{pK^2}{n}},  \sqrt{\frac{p}{\log(n)^{18(1+\eta)}}}}}\enspace.
\end{equation}
In the next section, we show that clustering in poly-time is possible, in almost all regimes, above the level \eqref{eq:computationalclusteringsharp+informational}. This provides an almost complete picture of the computational barrier for clustering Gaussian mixtures.

\subsection{Matching the LD bound with a Spectral Method}\label{sec:upperbounds-clustering}
Let us first recall some known poly-time  algorithms that, for some regimes of $n,p,K$, succeed to recover $G^*$  above the separation level \eqref{eq:computationalclusteringsharp+informational}, up to log factors.
\begin{itemize}
\item \underline{Many groups $K\gtrsim \sqrt{n}$.} Hierarchical Clustering with single linkage, recovers exactly, with high probability,  the partition $G^*$  when $\Delta^2\gtrsim {\log(n)+\sqrt{p\log(n)}}$, see e.g. Proposition~4 in \cite{Even24}. Thus, if $n\leq c K^2$ for some constant $c$, it is easy to recover exactly $G^*$ when the separation is larger, up to some logarithmic factor, than the barrier \eqref{eq:computationalclusteringsharp+informational}, i.e.\ $\Delta^2\geq_{\text{poly-log}} \sqrt{p}$ in this regime;
\item \underline{High dimension $p\geq n$.} When $p\geq n$, some SDP relaxation of Kmeans, recover partially $G^*$, with high probability, when the separation is, up to some numerical constant, above the BBP threshold $\sqrt{{pK^2}/{n}}$, see Theorem 1 and Formula (12) in~\cite{giraud2019partial};
\item \underline{Low dimension $\text{poly}(p,K)\leq n$.} Liu and Li~\cite{LiuLi2022} provides a poly-time algorithm which, with high probability,  recover partially $G^*$ in poly-time,  when $\Delta^2\geq ( \log K)^{1+c}$, and exactly when $\Delta^2\geq  (\log n)^{1+c}$, where $c$ is a constant depending on the polynomial of the condition $n\geq \text{poly}(p,K)$, see Theorem~2.5 and Corollary~2.6 in~\cite{LiuLi2022}. Their result ensures in particular the absence of statistical-computational gap (up to log factors) in low dimension.
\end{itemize}
These three results show that some poly-time algorithms succeed  to recover $G^*$  above the separation level~\eqref{eq:computationalclusteringsharp+informational} -- up to log factors --, either when the number of groups is high ($K\gtrsim \sqrt{n}$), or when the dimension is high ($p\geq n$) or small ($\text{poly}(p,K)\leq n$). It remains to figure out if there exist some algorithms succeeding above the separation level~\eqref{eq:computationalclusteringsharp+informational} in moderate dimension $\text{poly-log}(n)\leq p\leq n$ with a small number of groups $K\lesssim \sqrt{n}$. Below, we show that we can find such  algorithms in almost all, but not all, this regime.  

For simplicity, we focus on the objective of perfect recovery of $G^*$ with high probability. The next proposition shows that clustering is possible in poly-time above the threshold \eqref{eq:computationalclusteringsharp+informational} --up to logarithmic factors--, except in the regime where both $p\leq {n}/{K}$ and $K^2\lesssim n \leq \text{poly}(K)$, where the problem of optimal poly-time clustering remains open. We postpone to Appendix~\ref{prf:upper-boundclustering} the proof of this proposition.

\begin{prop}\label{prop:upper-boundclustering}
Assume that the unknown partition $G^*$ is $\gamma$-balanced~\eqref{eq:balanced}.   
\begin{enumerate}
\item There exist positive constants $c_{\gamma}$, $c'_{\gamma}$ depending only on $\gamma$, such that the following holds.\\ If $n\geq c_{\gamma} K^2$, $n\geq p\geq \frac{n}{K}$ and $\Delta^2\geq c'_{\gamma}{\log(n)+\sqrt{{pK^2\log(n)}/{n}}}$, it is possible to recover exactly $G^*$ in poly-time with probability $1-O(n^{-2})$;
\item There exists a constant $c''_{\gamma}>0$ depending only on $\gamma$, such that, for all $\eps>0$, there exists $c_3(\eps,\gamma)>0$ satisfying the following.\\ If $n\geq K^{c_3(\eps,\gamma)}$ and $\Delta^2\geq c''_{\gamma}{\log(n)^{1+\eps}+\sqrt{{pK^2}/{n}}}$, it is possible to recover exactly $G^*$ in poly-time with probability $1-O(n^{-c})$.
\end{enumerate}
\end{prop}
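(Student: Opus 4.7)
The plan is a two-stage procedure: a spectral projection onto a $(K-1)$-dimensional subspace, followed by a clustering algorithm tailored to the reduced dimension, and, in Case~1, a final centroid-refinement step that upgrades partial to exact recovery.

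\textbf{Spectral reduction.} Let $\hat U\in\R^{n\times (K-1)}$ denote the top $K-1$ left singular vectors of $Y=X+E$. Under the $\gamma$-balance condition~\eqref{eq:balanced}, the centered signal matrix has rank $K-1$ with non-zero singular values of order $\sigma\Delta\sqrt{np/K}$, while $\|E\|_{op}\lesssim \sigma(\sqrt n+\sqrt p)$ with probability $1-O(n^{-2})$. A Davis--Kahan $\sin\Theta$ argument then gives $\|P_{\hat U}-P_{U^*}\|_{op}=o(1)$ as soon as $\Delta^2\gtrsim \sqrt{pK^2/n}$, where $U^*$ spans the column space of the centered signal. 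Projecting the rows of $Y$ onto $\hat U$ therefore yields $n$ points in $\R^{K-1}$ whose between-cluster separation is still of order $\Delta^2$, and whose residual noise is approximately Gaussian of variance $\sigma^2$ per coordinate.

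\textbf{Clustering in the reduced space.} In Case~1 ($n\geq c_\gamma K^2$, $n/K\leq p\leq n$), I run the SDP relaxation of $K$-means of~\cite{giraud2019partial, fei2018hidden} on the reduced data. Under $\Delta^2\gtrsim \sqrt{pK^2\log(n)/n}$, this returns a partial clustering $\widetilde G$ with misclassification proportion at most $c/K$ with probability $1-O(n^{-2})$. To obtain exact recovery, I compute the empirical centroids $\hat\mu_k=|\widetilde G_k|^{-1}\sum_{i\in\widetilde G_k}Y_i$ and reassign each $i$ to $\argmin_k\|Y_i-\hat\mu_k\|^2$: the bound $\|\hat\mu_k-\mu_k\|^2\lesssim p\sigma^2K/n$ (valid since the initial misclassification rate is $o(1/K)$), combined with a Gaussian tail estimate and a union bound over the $n$ points, forces perfect classification as soon as $\Delta^2\gtrsim \log n$. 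In Case~2 ($n\geq K^{c_3(\varepsilon,\gamma)}$), I apply the Liu--Li algorithm~\cite{LiuLi2022} directly to the $(K-1)$-dimensional projected points. Since the effective dimension is $K-1$, the condition $n\geq \mathrm{poly}(K)$ required by~\cite{LiuLi2022} is met by choosing $c_3(\varepsilon,\gamma)$ large enough, and Corollary~2.6 of~\cite{LiuLi2022} yields exact recovery once the effective separation exceeds $\log(n)^{1+\varepsilon}$.

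The main obstacle is the spectral step in the intermediate regime $n/K\leq p\leq n$, where the ratio between signal and noise operator norms is not large and one must exploit the full $(K-1)$-dimensional $\sin\Theta$ bound rather than a per-eigenvalue perturbation; in particular, the $K-1$ non-zero signal singular values may be close to each other, so the analysis cannot rely on eigenvalue separation. A secondary difficulty, specific to Case~1, is ensuring that the SDP output has misclassification rate $o(1/K)$ rather than merely $o(1)$, so that the centroid-based refinement succeeds; this is what dictates the extra $\sqrt{\log n}$ factor in the first summand of the threshold.
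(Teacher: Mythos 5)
Your overall architecture (spectral projection to dimension $\approx K$, then a low-dimensional clustering routine, then nearest-centroid reassignment) is the same as the paper's, but two of your key steps do not hold as stated. First, the Davis--Kahan step is wrong: the separation hypothesis only lower-bounds \emph{pairwise} distances $\|\mu_k-\mu_l\|$, and this does not lower-bound the smallest non-zero singular value of the centered mean matrix. For nearly collinear centers (e.g.\ $\mu_1=0$, $\mu_2=\Delta\sigma e_1$, $\mu_3=2\Delta\sigma e_1+\eps e_2$ with $\eps$ tiny) the signal matrix has a non-zero singular value of order $\eps\sqrt{n/K}$, far below the noise level $\sigma(\sqrt n+\sqrt p)$, so $\|P_{\hat U}-P_{U^*}\|_{op}=o(1)$ simply fails under $\Delta^2\gtrsim\sqrt{pK^2/n}$. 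The correct — and sufficient — statement is weaker: the \emph{projected centers} stay separated, $\|\hat p(\mu_k)-\hat p(\mu_l)\|^2\geq\tfrac14\|\mu_k-\mu_l\|^2$. This is the paper's Lemma~\ref{lem:projmixture}, and it is proved not by subspace perturbation but by a quadratic-form argument: any unit vector with constant correlation with some $(\mu_k-\mu_l)/\|\mu_k-\mu_l\|$ has Rayleigh quotient $\gtrsim \tfrac{n}{K}\Delta^2$ for $Y^TY-n I_p$, while the $(K+1)$-th empirical eigenvalue is $\lesssim\sqrt{np}$; above the BBP threshold these are incompatible, so no difference direction can live mostly outside the estimated top-$K$ eigenspace. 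Your "main obstacle" paragraph (closeness of the signal singular values to each other) is not the real issue — subspace Davis--Kahan tolerates that; the issue is signal singular values \emph{below the noise}.

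Second, you project and cluster the same data, so $\hat U$ depends on the noise $E$ and the projected points are neither Gaussian nor independent; the black-box invocations of \cite{LiuLi2022} and of the SDP of \cite{giraud2019partial} (both stated for genuine Gaussian mixtures) are therefore not justified. The paper handles this with sample splitting: the projection is computed from $Y^{(1)}$, applied to the independent half $Y^{(2)}$ (which, conditionally on the split, is still a well-separated Gaussian mixture in dimension $K$ by Lemma~\ref{lem:projmixture}), and the first half is then classified by nearest estimated centroid. Relatedly, your Case~1 route (SDP partial recovery with misclassification $o(1/K)$, then centroid refinement) relies on guarantees that are not available at the claimed threshold: in the reduced dimension the known SDP condition from \cite{giraud2019partial} carries an additive $K$ term, which is not implied by $\Delta^2\gtrsim\log n+\sqrt{pK^2\log n/n}$ when $K\gg\log n$. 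The paper instead runs single-linkage hierarchical clustering on the projected data, whose requirement $\Delta^2\gtrsim\log n+\sqrt{K\log n}$ (Proposition~4 of \cite{Even24}) is exactly what the hypothesis $p\geq n/K$ buys — note your argument never uses $p\geq n/K$, which is a sign the route does not match the stated threshold.
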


According to Proposition~\ref{prop:upper-boundclustering}, perfect recovery can be achieved in poly-time above the threshold 
$$\Delta^2\geq_{\log} {1+\min\pa{\sqrt{p}, \sqrt{\frac{pK^2}{n}}}}\enspace,$$
except when both poly-log$(n)\leq p\leq \frac{n}{K}$ and $K^2\leq n\leq \text{poly}(K)$.
The poly-time algorithm achieving  this result, essentially proceeds as follows.\\
1- First, it projects the data points onto the $K$-dimensional space spanned by the $K$ leading eigenvectors of $Y^TY$;\\ 
2- Second, it applies on the projected data points,  either  hierarchical clustering with single linkage  (first claim of Proposition \ref{prop:upper-boundclustering}), or  the tensor-based algorithm of~\cite{LiuLi2022} (second claim). 

The actual algorithm turns out to be a bit more involved, with some sample splitting to handle dependencies between the first and second step, we refer to  Appendix~\ref{prf:upper-boundclustering}  for the detailed description. 
The key result on which Proposition~\ref{prop:upper-boundclustering}
relies is Lemma~\ref{lem:projmixture}. This lemma ensures that, above the BBP threshold, at least a positive fraction of the signal is remaining after projection along the $K$ leading eigenvectors of $Y^TY$. Hence, the ambient dimension is reduced from $p$ to $K$, while preserving a fraction of the signal, reducing the initial problem to the problem of clustering in dimension $K$, for which we can apply existing optimal algorithms.

\medskip
\begin{rem}
In Section \ref{prf:upper-boundclustering}, Proposition \ref{prop:hierarchical2} provides a result  valid in any dimension, completing Proposition~\ref{prop:upper-boundclustering}. This result states that when $n\gtrsim K^2$ and 
\begin{equation}\label{eq:suboptimal-clustering}
\Delta^2\geq c_\gamma\pa{\log(n)+\sqrt{K\log(n)}+\sqrt{\frac{pK^2\log(n)}{n}}}\enspace,
\end{equation}
hierarchical clustering with single linkage applied on the projected dataset
exactly recovers  $G^*$ with high probability. Despite not matching the lower bound of Theorem \ref{thm:lowdegreeclusteringsharp}, Proposition \ref{prop:hierarchical2} improves on the result from \cite{giraud2019partial} where a separation $\Delta^2\gtrsim \log(n)+K+\sqrt{{pK(K+\log(n))}/{n}}$ is required. 
\end{rem}

\section{Sparse clustering}\label{sec:sparse}

In this section, we investigate the same problem of clustering an isotropic Gaussian mixture, but with the additional assumption  that the means of the mixture are sparse.

\paragraph*{Set-up} Let us recall the sparse clustering model. We observe a set of $n$ points $Y_1,\ldots, Y_n\in \R^p$ which have been generated as follows. For some known $s\in [p]$, there exists an unknown subset $J^*\subseteq [p]$, with cardinality $|J^*|\leq s$, such that, the unknown means $\mu_1,\ldots, \mu_K$ are all supported on $J^*$, which means that $\mu_{kj}=0$ for all $j\notin J^*$. 
Then, for some $\sigma>0$, and some unknown partition $G^*=\ac{G_1^*,\ldots, G^*_K}$ of $[n]$, the points $Y_1,\ldots, Y_n\in\R^p$ are sampled independently with distribution 
\[Y_i \sim \mathcal{N}(\mu_k,\sigma^2I_p),\quad \text{for}\ i\in G^*_k.\]
Again, we assume that the hidden partition $G^*$ is balanced, i.e that it satisfies \eqref{eq:balanced},
and 
as in Section \ref{sec:clustering}, we analyse the minimal separation 
 \eqref{eq:snrclustering} 
required for successful clustering in poly-time and without time constraints.

The sparse clustering model is a particular instance of the Gaussian mixture model. Hence, the upper-bounds for clustering an isotropic Gaussian Mixture still hold in the case of sparse clustering. The computational lower-bound of Theorem~\ref{thm:lowdegreeclusteringsharp} does not yet hold here, since the prior of Definition~\ref{def:priorclustering} is not sparse. We investigate in this section, whether the sparsity of the centers can help to recover in poly-time the partition $G^*$
 below the computational barrier \eqref{eq:computationalclusteringsharp+informational}. Our contributions are:
 \begin{enumerate}
\item Implementing the technique of Theorem \ref{thm:LTC}, we provide a LD lower bound for the problem of sparse clustering. When $s \leq_{\log} \sqrt{p(K^2\wedge n)}$,  this lower-bound 
corresponds to the computational barrier \eqref{eq:computationalclusteringsharp+informational} in reduced dimension $p=s$, plus an additional term $\sqrt{s^2/n}$, which can be interpreted as the minimal signal required to recover the active columns set $J^*$ in poly-time  before clustering.
When $s \leq_{\log} \sqrt{p(K^2\wedge n)}$, this  additional term $\sqrt{s^2/n}$ becomes larger than the computational barrier \eqref{eq:computationalclusteringsharp+informational} in dimension $p$, and only this computational barrier applies. 
\item Inspired by this lower-bound, we analyze a method that seeks to estimate the active columns set $J^*$ in polynomial time, and then clusters the points after removing the non-selected columns. Under an additional  assumption of homogeneity  of the signal along $J^*$, this method succeeds to cluster above the low-degree barrier obtained in the first step, up to log factors. This result supports our interpretation that the minimal separation for poly-time clustering is the sum of the separation $\sqrt{s^2/n}$  required for first recovering the active columns set $J^*$, plus the minimal separation for poly-time clustering in dimension $s$.
\item Under the same additional homogeneity assumption, we analyse an algorithm, not computable in polynomial time, which succeeds to cluster above the statistical rate \eqref{eq:informationalclustering} in reduced dimension $p=s$, when, in addition, $\Delta^2\geq {s\sqrt{K}}/{n}$, up to log terms. This last constraint corresponds to the separation required for recovering the active columns set $J^*$ once the clustering is known. 
 Since $K\leq n$, we observe that this separation level $\Delta^2\geq {s\sqrt{K}}/{n}$ is always smaller than the separation level $\sqrt{s^2/n}$ required for poly-time algorithms. 
We underline then a contrastive phenomenon for sparse-clustering under the homogeneity assumption. 
The additional separation $\Delta^2\gtrsim \sqrt{s^2/n}$ required in poly-time corresponds to the separation needed for recovering the active columns \emph{before} clustering, while the statistical additional separation $\Delta^2\gtrsim {s\sqrt{K}}/{n}$ corresponds to the separation needed for recovering the active columns \emph{after} clustering, exhibiting a better ability to fully exploit the joint sparse-and-clustered structure by non poly-time algorithms.

When $s\leq n$, we observe that the separation level $\Delta^2\gtrsim {s\sqrt{K}}/{n}$ is even  smaller than the minimal statistical separation for clustering in dimension $s$, hence, in this specific case, sparse clustering without computational constraints is not harder than clustering in dimension $s$ without computational constraints. 
\end{enumerate}
Comparing the statistical and the computational rates,  
we observe the existence of a statistical-computational gap when either (i) $s\geq n$, or (ii) $s\in [K,n]$ and $n\leq_{\log} [pK^2\wedge s^2]$, or (iii) $s\leq K$ and $n\leq_{\log} K^2s$.
%$s\geq_{\log} K$, or when $n\wedge K^2\geq_{\log} K( 1 \vee {s\over n})$.
In particular, while the sparsity of the means makes the problem easier, both statistical and computationally, it  widens the computational gap.

\subsection{LD lower-bound for sparse clustering}

Let us introduce the prior under which we derive our LD bound.
A simple choice could be to consider the same prior as in Definition \ref{def:priorclustering}, introducing sparsity by keeping the signal only on $s$ columns randomly chosen. Yet, such a prior introduces some weak-dependencies between the entries of a column, and the LD bounds that we would obtain under this prior  would be suboptimal --see the discussion and derivations in Appendix~\ref{sec:discussion:prior:sparse}. 
 To overcome this issue, we introduce some symmetrization in the generation of the means. For simplicity, we consider a partition into $2K$ groups,  generate $K$ means as suggested before and then symmetrize them to get the remaining $K$ means. This process could have be applied with $K$ groups, instead of $2K$, by symmetrizing the centers of the first $2\lfloor K/2 \rfloor$ groups. However, this adds an unnecessary layer of complexity in  the proof. In the following prior, the groups correspond to the different values of $(k^*_i, \eps_i)\in [K]\times \ac{-1,1}$. 
\begin{definition}\label{def:prior_sparse}
The signal matrix  $X\in \R^{n\times p}$ is generated as follows. We sample independently:\\ \smallskip
- $k^*_1,\ldots ,k^*_n$ independent with uniform distribution on $[K]$,\\ \smallskip
- $z_1,\ldots,z_p$ independent, with Bernoulli  distribution $\mathcal{B}(\rho)$, where $\rho={\Bar{s}}/{p}$,\\ \smallskip
- $\eps_1,\ldots, \eps_n$ independent with uniform distribution on $\ac{-1,1}$,\\ \smallskip
- $\nu_{k,j}$, for $k,j\in [K]\times [p]$, independent, with $\nu_{k,j}\sim\mathcal{N}\pa{0,\lambda^2}$, where $\lambda^2=\Bar{\Delta}^2\sigma^2/{\rho p}$.\\ \smallskip
Then, we set 
\begin{equation}\label{eq:prior-sparse}
\X_{ij}= z_j\eps_i\nu_{k^*_i,j}\ .
\end{equation}
\end{definition}

Under the prior~\eqref{eq:prior-sparse}, the set of active columns is $J^*=\ac{j:z_{j}=1}$, and the partition $G^*$ is defined by
$G^*_{k}=\ac{i: k^*_{i}=k,\ \text{and}\ \eps_{i}=1}$ for $k=1,\ldots,K$ and  $G^*_{k}=\ac{i: k^*_{i}=k-K,\ \text{and}\ \eps_{i}=-1}$ for $k=K+1,\ldots,2K$.

\begin{rem}
With high probability, under the sparse prior (\ref{eq:prior-sparse}), we have a separation $\Delta^2= \Bar{\Delta}^2(1+o_{\rho p}(1))$. 
\end{rem}

\begin{rem}
Let $s(z):=|\ac{j\in [p]; z_j=1}|$. 
It readily follows from large deviation inequality for Bernoulli variable, see e.g. Section 12.9.7 in \cite{HDS2},that $\P[s(z)>5\rho p]\leq \exp\pa{-\rho p/2}$.
Hence, when $\rho p\geq log(n)$, with high probability, the model fulfills a sparsity assumption with $s=5\rho p$. 
\end{rem}

As in Section \ref{sec:clustering}, we consider the estimation of the variable $x={\1}_{k^*_{1}=k^*_{2}}$.
The next theorem, proved in Section \ref{prf:lowdegreesparsesharp},  provides a lower-bound on the $MMSE_{\leq D}$ for estimating $x$.
\begin{thm}\label{thm:lowdegreesparsesharp}
Let $D\in \N$ and assume $\zeta:=\frac{\bar{\Delta}^4}{\rho^2 p^2}\max\pa{D^{14}, D^7n, D^7\rho^2 p, \rho^2p\frac{n}{K^2}}<1$. Then, under the prior distribution of Definition \ref{def:prior_sparse}, 
$$MMSE_{\leq D}\geq \frac{1}{K}-\frac{1}{K^2}\cro{1+\frac{\zeta}{\pa{1-\sqrt{\zeta}}^3}}\enspace.$$
\end{thm}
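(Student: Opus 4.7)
The proof would follow the template of Theorem~\ref{thm:lowdegreeclusteringsharp}, with additional layers to handle the Bernoulli column-selectors $z_j$ and the row symmetrization by $\eps_i$. First, I would apply Proposition~\ref{thm:schrammwein} to reduce the task to bounding $\sum_{\alpha:|\alpha|\leq D}\kappa_{x,\alpha}^2/\alpha!$, and then invoke Theorem~\ref{thm:LTC} to decompose $\kappa_{x,\alpha} = \lambda^{|\alpha|}\sum_{\pi\in\mathcal{P}_2(\alpha)} C_{x,\beta(\pi)}$. Here the latent variable is $Z=(k^*,z,\eps)$ with $\delta_{ij}(Z)=z_j\eps_i$ and $\theta_{ij}(Z)=(k^*_i,j)$. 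For a pair $\beta_s=\ac{(i_s,j_s),(i'_s,j'_s)}$, the event $\Omega_{\beta_s}$ forces $j_s=j'_s$ and $k^*_{i_s}=k^*_{i'_s}$, and because $z_{j_s}^2=z_{j_s}$, the quantity $\delta(Z)^{\beta_s}\1_{\Omega_{\beta_s}}$ simplifies to $z_{j_s}\eps_{i_s}\eps_{i'_s}\1\{j_s=j'_s\}\1\{k^*_{i_s}=k^*_{i'_s}\}$.

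Next, I would run a nullity analysis exploiting the mutual independence of $k^*$, $z$, $\eps$ and the fact that $x=\1\{k^*_1=k^*_2\}$ depends only on $k^*$. Three vanishing mechanisms emerge, each via Lemma~\ref{lem:independentcumulant}: (i) column-matching within every pair, already forced by $\Omega_{\beta_s}$; (ii) an even-row-parity constraint stemming from $\E[\eps_i^m]=\1\{m\text{ even}\}$, which requires every row multiplicity $|\alpha_{i:}|$ to be even; and (iii) a connectivity constraint analogous to that used in Lemma~\ref{lem:sketch}: defining the graph $\mathcal{V}$ on blocks $\ac{0,1,\ldots,l}$ whose edges encode shared rows (from $k^*$-coupling) and shared columns (from $z$-coupling), only \emph{active} subsets $S\subseteq[l]$ for which $\mathcal{V}[S\cup\ac{0}]$ is connected produce non-zero contributions.

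The technical core is a refined induction over $|S|$, mirroring Lemma~\ref{lem:sketch} but with the sparsity factor tracked explicitly. Combining the recursive bound~\eqref{eq:rec-C} with the probability identity
\[
\P\cro{\bigcap_{s\in T}\Omega_{\beta_s}} \;=\; \rho^{r_{\alpha_T}}\,K^{-(\#\alpha_T-cc(\mathcal{V}[T]))}
\]
valid on the event that $j_s=j'_s$ for every $s\in T$, the induction yields, for every active $S$,
\[
|C_{x,\beta[S]}|\;\lesssim_{|S|}\;\rho^{r_{\alpha_S}}\,K^{-(\#\alpha_S-1)},
\]
where $r_{\alpha_S}$ counts the distinct columns used by the blocks in $S$. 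The inductive step parallels the one of Lemma~\ref{lem:sketch}, the key point being that each connected component of $\mathcal{V}[S\setminus S']$ must bridge back to $S'\cup\ac{0}$ via either a shared row or a shared column, which ensures that the $\rho$-exponents add correctly across the splittings.

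Finally, I would sum the cumulant bounds over $\alpha$ and $\pi\in\mathcal{P}_2(\alpha)$, organizing $\alpha$ by the triple $(m,r,l)=(\#\alpha,r_\alpha,|\alpha|/2)$ under the parity constraint, and counting the contributing pair partitions. Substituting $\lambda^2=\bar\Delta^2\sigma^2/(\rho p)$ produces a product-form bound whose extremal configurations precisely recover the four terms in $\zeta=\lambda^4\max(D^{14},D^7n,D^7\rho^2 p,\rho^2 p n/K^2)$: a combinatorial baseline $D^{14}$, a many-rows regime $D^7n$ (matching the column-recovery cost $\sqrt{s^2/n}$), a many-columns regime $D^7\rho^2 p$ (matching dimension-$s$ clustering), and a BBP-type term $\rho^2 p n/K^2$. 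A geometric summation in $l$ then yields the claimed factor $\zeta/(1-\sqrt\zeta)^3$. The main technical obstacle is the sharp bookkeeping inside the recursion: to avoid spurious powers of $D$ and to reach the stated exponents, the coarse Corollary~\ref{cor:LTC} is too lossy and must be replaced by a refined induction in the spirit of~\cite{SohnWein25}, while the count of admissible $\alpha$'s has to carefully incorporate the even-row-parity constraint so that the three exponents $m$, $r$ and $l$ trade off as in $\zeta$.
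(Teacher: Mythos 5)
Your overall architecture matches the paper's (Schramm--Wein reduction, Theorem~\ref{thm:LTC}, nullity pruning, a recursion in the spirit of Lemma~\ref{lem:sketch}, then counting), but the central inductive claim is false, and it is exactly the point where sparse clustering differs from plain clustering. You assert that every active $S$ satisfies $|C_{x,\beta[S]}|\lesssim_{|S|}\rho^{r_{\alpha_S}}K^{-(\#\alpha_S-1)}$, on the grounds that each connected component of $\mathcal{V}[S\setminus S']$ ``bridges back via a shared row or a shared column''. But a bridge through a shared column (i.e.\ through the $z_j$'s) does not force any new label coincidence among the $k^*_i$'s, so it cannot contribute a factor $1/K$. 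Concretely, take $j\neq j'$ and the four pairs $\beta_1=\ac{(1,j),(2,j)}$, $\beta_2=\ac{(3,j),(4,j)}$, $\beta_3=\ac{(1,j'),(2,j')}$, $\beta_4=\ac{(3,j'),(4,j')}$, so that $\#\alpha_S=4$, $r_{\alpha_S}=2$, $|\alpha_S|=8$; a direct computation of the joint cumulant of $x=\1\{k^*_1=k^*_2\}$ with $W_s=\eps_{i_s}\eps_{i'_s}z_{j_s}\1\{k^*_{i_s}=k^*_{i'_s}\}$ gives
\begin{equation*}
C_{x,\beta_1,\ldots,\beta_4}=\rho^2(1-\rho^2)\Big(\frac{1}{K^2}-\frac{1}{K^3}\Big)\asymp \frac{\rho^2}{K^2},
\end{equation*}
which violates your claimed bound $\rho^{2}K^{-3}$ (this configuration passes all the nullity conditions, since rows $3,4$ only need the label coincidences $k^*_3=k^*_4$, never a coincidence with $k^*_1,k^*_2$). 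The correct exponent is the one in the paper's Lemma~\ref{lem:boundLTCsparsesharp}, namely $\rho^{r_\alpha}\min\big(K^{-1},K^{-(\#\alpha+r_\alpha-|\alpha|/2-1)}\big)$ up to a $|\alpha|^{O(|\alpha|)}$ factor; since $|\alpha|\geq 2r_\alpha$, this is genuinely weaker than your bound, and in the example it reads $\rho^2 K^{-1}$.

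The missing idea is that two different graphs must be tracked: the row-sharing graph $\mathcal{V}$, which governs the label-coincidence probabilities $\P[\cap_{s\in T}\Omega_{\beta_s}]=\rho^{r_{\alpha_T}}K^{-(\#\alpha_T-cc(\mathcal{V}[T]))}$, and the combined row-plus-column graph $\mathcal{W}$, which governs nullity/connectivity (your write-up conflates them). The deficit $cc(\mathcal{W})-cc(\mathcal{V})\geq r_\alpha-|\alpha|/2$ (the paper's Lemma~\ref{lem:connectivityLTCsparsesharp}, complemented by Lemma~\ref{lem:connectivityLTCsparsesharp2} in the induction) is precisely what converts the naive exponent $\#\alpha_S-1$ into $\#\alpha_S+r_{\alpha_S}-|\alpha_S|/2-1$, capped at $1$; and it is this weaker exponent, combined with the counting in $(d,m,r)$ and $\lambda^2=\bar\Delta^2\sigma^2/(\rho p)$, that produces the four regimes of $\zeta$ in the statement (your false bound would produce a strictly stronger threshold in the many-row regime, which is itself a warning sign). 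A secondary, minor point: the even-row-parity constraint you invoke is true but not needed — the paper's pruning only uses degree $\geq 2$ for rows outside $\ac{1,2}$ and for columns, and the graph count of Lemma~\ref{lem:combinatorics_kappa} suffices without parity bookkeeping.
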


In particular, for any $\eps>0$, if $\sqrt{\zeta}\leq 1-\eps$, then $$MMSE_{\leq D}=MMSE_{\leq 0}(1+o_K(1))\enspace.$$
If $D\leq (\log n)^{1+\eta}$ and $\bar{\Delta}^2\leq (1-\eps)\min\pa{\sqrt{\frac{\rho^2p^2}{n(\log n)^{9(1+\eta)}}}, \sqrt{\frac{p}{(\log n)^{9(1+\eta)}}}, \sqrt{\frac{pK^2}{n}}}$, then $MMSE_{\leq D}=\frac{1}{K}-\frac{1}{K^2}\pa{1+o_K(1)}$. Since the class of polynomials of degree at most $(\log n)^{1+\eta}$ is considered as a proxy for algorithms computable in polynomial time, Theorem~\ref{thm:lowdegreesparsesharp} provides evidence that sparse clustering is hard when 
\begin{equation}\label{eq:sparse:barrier1}
\bar{\Delta}^2\leq (1-\eps)\min\pa{\sqrt{\frac{\bar{s}^2}{n(\log n)^{9(1+\eta)}}}, \sqrt{\frac{p}{(\log n)^{9(1+\eta)}}}, \sqrt{\frac{pK^2}{n}}}\enspace.
\end{equation}
We recognize in \eqref{eq:sparse:barrier1} the barrier $\min (\sqrt{\frac{p}{(\log n)^{9(1+\eta)}}}, \sqrt{\frac{pK^2}{n}})$ from clustering in dimension $p$ -- see \eqref{eq:computationalclusteringsharp}. 
We notice yet that we have here the BBP threshold $\sqrt{pK^2/n}$ for $K$ groups, instead of $2K$ groups, due to the symmetrization in the prior of Definition~\ref{def:prior_sparse}, thereby using a factor $2$. The barrier  \eqref{eq:sparse:barrier1} can yet be smaller than the barrier for clustering in dimension $p$, due to the additional term $\sqrt{{\bar{s}^2}/\pa{n(\log n)^{9(1+\eta)}}}$. As we will see in the next section, this term can be interpreted as the barrier for estimating the non-zero columns of the signal. 

We observe that when $\bar{s}	\leq_{\log} n\wedge K^2$, the barrier  \eqref{eq:sparse:barrier1} becomes smaller than the computational barrier $c\log(K)\vee\min \pa{\sqrt{\frac{\bar{s}}{(\log n)^{18(1+\eta)}}}, \sqrt{\frac{\bar{s}K^2}{n}}}$ for clustering in dimension $\bar{s}$. 
Furthermore, the partial matrix $Y_{:J_{*}^c}$ is independent of $X$ conditionally on $J^*$.
So sparse-clustering is at least as hard as clustering the $n\times |J^*|$ matrix $Y_{:J^*}$. 
In other words, sparse-clustering cannot be easier than clustering in dimension $|J^*|$. Hence, lifting the bound proved in Theorem~\ref{thm:lowdegreeclusteringsharp} with $p=s$, 
we get that sparse clustering is low-degree hard when 
\begin{multline}\label{eq:computationalsparse}
\lefteqn{\bar{\Delta}^2\leq (c \log K)\vee \pa{(1-\eps)\min\pa{\sqrt{\frac{\bar{s}K^2}{n}},  \sqrt{\frac{\bar{s}}{(\log n)^{18(1+\eta)}}}}}}\\
\vee\pa{(1-\eps)\min\pa{\sqrt{\frac{\bar{s}^2}{n(\log n)^{9(1+\eta)}}}, \sqrt{\frac{p}{(\log n)^{9(1+\eta)}}}, \sqrt{\frac{pK^2}{n}}}}\enspace,
\end{multline}
which reduces, up to poly-logarithmic factors, to 
\begin{equation}\label{eq:sparse-barrier-simple}
\bar{\Delta}^2\leq_{\log}1+ {\min\pa{\sqrt{\frac{\bar{s}K^2}{n}}, \sqrt{\bar{s}}}+\sqrt{\frac{\bar{s}^2}{n}}}
\quad\text{and}\quad
\bar{\Delta}^2\leq_{\log} 1+ {\min\pa{\sqrt{p}, \sqrt{\frac{pK^2}{n}}}}\enspace.
\end{equation}
The first term in \eqref{eq:sparse-barrier-simple} is the sum of the computational barrier for clustering in dimension $\bar{s}$, plus a computational barrier  $\sqrt{\bar{s}^2/{n}}$ for recovering the $\bar{s}$ active columns. The second condition is simply the computational barrier for clustering in dimension $p$.  In some way, our bound~\eqref{eq:sparse-barrier-simple} extends the LD lower bound of~\cite{lffler2021computationallyefficientsparseclustering} for $K=2$ to all $(K,s)$ regimes.

\subsection{Poly-time sparse clustering}\label{sec:polytimesparse}
The setup of sparse clustering is a particular instance of clustering. Hence all the upper-bounds for clustering a Gaussian Mixture hold for sparse clustering, and we can cluster in poly-time above the second term in \eqref{eq:sparse-barrier-simple} in almost all regimes of $n,p,K$, see Section~\ref{sec:upperbounds-clustering}.

It remains to check that sparse clustering is possible in polynomial time when, up to some logarithmic factors, 
\begin{equation}\label{eq:upper:sparse:sparse}
\Delta^2\geq_{\log} {1+\sqrt{\frac{s^2}{n}}+\min\pa{\sqrt{s}, \sqrt{\frac{s K^2}{n}}}}\enspace.
\end{equation}
Since the term ${1+\min\pa{\sqrt{s}, \sqrt{{s K^2}/{n}}}}$ corresponds, up to some logarithmic factors, to the computational barrier \eqref{eq:computationalclusteringsharp+informational} for clustering in dimension $s$, it is natural~\cite{lffler2021computationallyefficientsparseclustering,mun2025high} to proceed by first detecting the active columns $J^*$ of $Y$ on which the signal is supported, then remove all the other columns of $Y$, and finally applying a clustering procedure on the reduced $Y_{:J^*}$. 

In a general, it is not easy, for $K\geq 2$, to recover exactly all the columns on which the signal is supported. We can still find columns on which most of the centers $\mu_k$ have a large part of their weight, and then recover the corresponding groups. 
However, this strategy is hard to analyse --see Appendix~\ref{sec:discussion:w}, and, for simplicity of the proof, we consider a much simpler algorithm, merely 
selecting  the $s$ columns $\hat{J}$ of $Y$ with largest Euclidean norm.
This simple method will be successful under
a minimum column-signal assumption. We denote by
\begin{equation}\label{eq:definition_omega_J*}
\omega_{J^*}^2:=\frac{1}{\sigma^2}\min_{j\in J^*}\sum_{i\in n}X_{ij}^2={1\over \sigma^2}\sum_{k\in [K]}|G^*_{k}|\mu_{k,j}^2\enspace,
\end{equation}
the minimum $\ell^2$-norm of the active columns of $X$,
and we assume that $\omega_{J^*}^2\geq_{\log}\sqrt{n}$.
 Next lemma states that, under the previous minimum column-signal assumption, the estimator $\hat{J}$ contains  $J^*$ with high probability.
\begin{lem}\label{prop:recoverycolumns}
There exists a numerical constant $c_1>0$ such that the following holds. If $\omega_{J^*}^2\geq c_1\pa{\sqrt{n \log(pn)}+\log(p)}$, then, with  probability higher than  $1-\frac{1}{n^2}$, $\hat{J}$ contains  $J^*$. 
\end{lem}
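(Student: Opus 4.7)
The strategy is to prove that, with probability at least $1-1/n^{2}$, every active column has strictly larger $\ell^{2}$-norm than every inactive one:
\[
\min_{j\in J^{*}}\|Y_{:j}\|^{2} \;>\; \max_{j\notin J^{*}}\|Y_{:j}\|^{2}.
\]
Since $\hat J$ collects the $s\geq |J^{*}|$ columns of $Y$ with largest norm, such a gap immediately forces $J^{*}\subseteq \hat J$. So the proof reduces to an upper bound on the norms of the inactive columns and a lower bound on those of the active ones.

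\emph{Inactive columns.} For $j\notin J^{*}$ we have $X_{:j}=0$, so $Y_{:j}=E_{:j}$ and $\|Y_{:j}\|^{2}/\sigma^{2}\sim \chi^{2}_{n}$. The Laurent--Massart upper tail combined with a union bound over the at most $p$ inactive columns gives a numerical constant $c$ such that, with probability at least $1-1/(2n^{2})$,
\[
\max_{j\notin J^{*}}\|Y_{:j}\|^{2} \;\leq\; \sigma^{2} n + c\sigma^{2}\pa{\sqrt{n\log(pn)} + \log(pn)}.
\]

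\emph{Active columns.} For $j\in J^{*}$, write $\|Y_{:j}\|^{2}=\|X_{:j}\|^{2}+2\langle X_{:j},E_{:j}\rangle+\|E_{:j}\|^{2}$. The signal $X$ is independent of the noise $E$, so conditionally on $X$ the cross term is centered Gaussian with variance $\sigma^{2}\|X_{:j}\|^{2}$. A Gaussian tail bound, a Laurent--Massart lower tail bound on $\|E_{:j}\|^{2}$, and a union bound over $j\in J^{*}$ (there are at most $p$ such columns) yield, on an event of probability at least $1-1/(2n^{2})$, simultaneously for all $j\in J^{*}$,
\[
|2\langle X_{:j},E_{:j}\rangle| \;\leq\; c\sigma\,\|X_{:j}\|\sqrt{\log(pn)} \quad\text{and}\quad \|E_{:j}\|^{2}\;\geq\;\sigma^{2} n - c\sigma^{2}\sqrt{n\log(pn)}.
\]
The hypothesis $\omega_{J^{*}}^{2}\geq c_{1}(\sqrt{n\log(pn)}+\log p)$ entails $\omega_{J^{*}}^{2}\geq 16 c^{2}\log(pn)$ for $c_{1}$ large enough (using $\log n\leq \sqrt{n\log(pn)}$ to convert $\log p$ into $\log(pn)$). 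Since $\|X_{:j}\|\geq \sigma\omega_{J^{*}}$, the elementary inequality $u^{2}-c u\sqrt{\log(pn)}\geq u^{2}/2$, valid for $u\geq 2c\sqrt{\log(pn)}$, applied to $u=\|X_{:j}\|/\sigma$ gives
\[
\min_{j\in J^{*}}\|Y_{:j}\|^{2} \;\geq\; \sigma^{2} n - c\sigma^{2}\sqrt{n\log(pn)} + \tfrac{\sigma^{2}}{2}\omega_{J^{*}}^{2}.
\]

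\emph{Comparison.} Subtracting the two displays, on an event of probability at least $1-1/n^{2}$,
\[
\min_{j\in J^{*}}\|Y_{:j}\|^{2} - \max_{j\notin J^{*}}\|Y_{:j}\|^{2} \;\geq\; \tfrac{\sigma^{2}}{2}\omega_{J^{*}}^{2} - c'\sigma^{2}\pa{\sqrt{n\log(pn)}+\log(pn)}.
\]
Since $\log(pn)\leq \log p + \sqrt{n\log(pn)}$, choosing $c_{1}$ large enough in the hypothesis makes the right-hand side strictly positive, which yields $J^{*}\subseteq\hat J$ and proves the lemma.

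\emph{Main obstacle.} The argument is a concentration exercise and has no real difficulty; the only subtle point is that after the union bound over the $|J^{*}|$ active columns, the Gaussian cross term $2\langle X_{:j},E_{:j}\rangle$ can be of order $\|X_{:j}\|\sqrt{\log(pn)}$, which must be absorbed by the quadratic signal $\|X_{:j}\|^{2}$. This is exactly what forces the additive $\log p$ correction in the hypothesis, on top of the usual $\sqrt{n\log(pn)}$ separation needed to beat the maximum of $p$ independent $\chi^{2}_{n}$ variables.
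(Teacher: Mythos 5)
Your proof is correct and follows essentially the same route as the paper's: show $\min_{j\in J^*}\|Y_{:j}\|^2>\max_{j\notin J^*}\|Y_{:j}\|^2$ by expanding $\|Y_{:j}\|^2$ into signal, cross and pure-noise terms, controlling them via Gaussian/chi-square (Laurent--Massart/Hanson--Wright) concentration with a union bound over all $p$ columns, and absorbing the cross term into the signal using the hypothesis on $\omega_{J^*}^2$. The only cosmetic difference is that the paper proves the statement for the sample-split matrix $Y^{(1)}=(Y+E')/\sqrt{2}$ (so the signal appears as $\|X_{:j}\|^2/2$), which changes nothing beyond numerical constants.
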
 
Let us briefly explain why the condition $\omega_{J^*}^2\geq_{\log} \sqrt{n}$ condition ensures that $\hat J=J^*$ with high probability, we refer to Appendix \ref{prf:polytimesparse} for the detailed proof of Lemma~\ref{prop:recoverycolumns}.
The square norm $\|Y_{:j}\|^2$ has mean
$\E\cro{\|Y_{:j}\|^2}=\|X_{:j}\|^2+n\sigma^2$, 
and standard deviation sdev$\cro{\|Y_{:j}\|^2}=\sigma^2\sqrt{n}$. Hence,  as soon as $\omega_{J^*}^2 \geq_{\log} \sqrt{n}$, the set $J^*$ belongs to the $s$ columns of $Y$ with maximum Euclidean norm.

It turns out --see Lemma~\ref{lem:homogeneity} and Corollary~\ref{cor:polytimesparse} below-- that, under some homogeneity conditions on the signal, the condition  $\omega_{J^*}^2\geq_{\log} \sqrt{n}$ corresponds to our regimes of interest in~\eqref{eq:upper:sparse:sparse}. 
Once the columns $J^*$ have been retrieved, we can remove all the other columns, and apply a clustering procedure in dimension $s$, leading to the next result proved  in Appendix \ref{prf:polytimesparse}

\begin{prop}\label{prop:polytimesparse}
There exist constants $c$, $c_1$,  and $c_2>0$ such that the following holds for any $\gamma$-balanced partition $G^*$ --see~\eqref{eq:balanced}. Suppose that either $s\notin [\text{poly-log}(n), n/K]$ or $n\notin [K^2, K^c]$. Then, if $$\omega_{J^*}^2\geq c_1\pa{\sqrt{n\pa{\log(pn)}}+\log(p)}\quad \text{and}\quad \Delta^2 \geq_{\log,\gamma} {1+\min\pa{\sqrt{s}, \sqrt{\frac{sK^2}{n}}}}\enspace,$$
there exists an algorithm computable in polynomial time which recovers exactly $G^*$ with probability higher than $1-n^{-c_2}$. 
\end{prop}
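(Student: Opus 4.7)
The plan is to follow the two-step column-selection-then-cluster strategy sketched just above the proposition, with a standard sample split to decouple the two steps.

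\textit{Step 1 (sample split and column recovery).} Randomly partition the rows into two halves $I_1, I_2$ of sizes roughly $n/2$. A Chernoff bound shows that with probability $1 - O(n^{-3})$, each cluster $G^*_k$ splits evenly across the two halves, so the restricted partitions remain $2\gamma$-balanced. Apply Lemma \ref{prop:recoverycolumns} to the half $I_1$ to recover $\hat J \supseteq J^*$: one only needs to check that the minimum column-signal on $I_1$ satisfies $\omega_{J^*,I_1}^2 \geq \omega_{J^*}^2/(2\gamma)$ with high probability (a direct consequence of the balanced split), so the input hypothesis $\omega_{J^*}^2 \geq c_1(\sqrt{n \log(pn)} + \log p)$ implies the hypothesis of Lemma \ref{prop:recoverycolumns} on $I_1$ up to a constant factor.

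\textit{Step 2 (clustering on the reduced problem).} Conditionally on $(I_1, Y_{I_1,:})$, the matrix $Y_{I_2, \hat J}$ is an isotropic Gaussian mixture in dimension $|\hat J| = s$ with $K$ groups, sample size $\lceil n/2 \rceil$, and the same separation $\Delta^2$; this last point uses that each $\mu_k$ is supported in $J^*\subseteq \hat J$, so restricting to $\hat J$ preserves every pairwise distance between centers. Apply Proposition \ref{prop:upper-boundclustering} to this reduced mixture. The hypothesis of our proposition excludes exactly the ``open'' regime of Proposition \ref{prop:upper-boundclustering}: if $s > n/K$, Part~1 applies; if $s < \text{poly-log}(n)$ or $n \geq K^{c_3}$, Part~2 applies; and the residual case $n \lesssim K^2$ falls in the many-groups regime, where running single-linkage hierarchical clustering directly on $Y_{I_2, \hat J}$ recovers the partition exactly as soon as $\Delta^2 \gtrsim \log(n) + \sqrt{s \log n}$ (Proposition~4 in \cite{Even24} applied in dimension $s$). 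In each sub-case the assumed separation $\Delta^2 \geq_{\log,\gamma} 1 + \min(\sqrt s, \sqrt{sK^2/n})$ matches the required rate, up to the $\log$ factors hidden in $\geq_{\log,\gamma}$.

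\textit{Step 3 (label propagation to $I_1$).} Once $G^* \cap I_2$ is recovered exactly, form the empirical centers $\widehat \mu_k$ on the coordinates of $\hat J$; the balanced structure on $I_2$ provides $\Omega(n/K)$ samples per center, so $\widehat \mu_k$ concentrates tightly around $\mu_k$. Under the same separation assumption, classifying each point of $I_1$ to its closest $\widehat \mu_k$ on $\hat J$ recovers the correct label by standard sub-Gaussian tail bounds and a union bound over $|I_1| \leq n$ points, with failure probability at most $n^{-c_2}$. Merging the two halves yields the announced poly-time algorithm.

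\textit{Main obstacle.} Nothing is conceptually hard once the sample split is in place; the work is essentially bookkeeping across the sub-poly-log, BBP and many-groups regimes, and tracking how the balance and minimum column-signal conditions scale after halving the sample size. The only slightly delicate point is verifying in each sub-case that the rate guaranteed by Proposition \ref{prop:upper-boundclustering} in dimension $s$ matches the stated rate $1 + \min(\sqrt s, \sqrt{sK^2/n})$ up to the logarithmic slack permitted by $\geq_{\log,\gamma}$.
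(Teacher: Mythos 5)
Your two-step plan (select columns on one piece of data, cluster on an independent piece) is the right strategy, and it is the same high-level strategy as the paper, but the way you create independence breaks in part of the regime the proposition covers. You split the $n$ rows into halves $I_1,I_2$. The proposition only excludes the case where \emph{both} $s\in[\text{poly-log}(n),n/K]$ and $n\in[K^2,K^c]$; in particular it covers the many-groups regime $n<K^2$, where clusters can have size $n/K=O(1)$ (the paper's own Lemma~\ref{lem:balancedclusters} requires $n\gtrsim \gamma^2K^2$ precisely for this reason). There your Chernoff claim that ``each cluster splits evenly with probability $1-O(n^{-3})$'' is false: a cluster of size $2$ lands entirely in $I_1$ with probability $1/4$, so with overwhelming probability some clusters have no representative in $I_2$. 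Then the clustering of $Y_{I_2,\hat J}$ does not even define all $K$ groups, the empirical centers $\widehat\mu_k$ needed in your Step~3 do not exist for the missing groups, and nearest-center label propagation cannot recover $G^*$ exactly. The column-selection step on $I_1$ has the same fragility (your bound $\omega^2_{J^*,I_1}\gtrsim \omega^2_{J^*}/\gamma$ relies on the balanced split), though that part is less essential than the clustering/propagation failure.

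The paper avoids this entirely by not splitting rows: it sets $Y^{(1)}=(Y+E')/\sqrt{2}$ and $Y^{(2)}=(Y-E')/\sqrt{2}$ with $E'$ an independent Gaussian matrix, producing two \emph{independent} copies of the full $n\times p$ dataset (with means $\mu_k/\sqrt 2$, hence separation and column signal only halved). It selects $\hat J$ from the column norms of $Y^{(1)}$ via Lemma~\ref{prop:recoverycolumns}, and then, conditionally on $\hat J\supseteq J^*$, applies Proposition~\ref{prop:upper-boundclustering} (and the many-groups hierarchical-clustering bound) to $Y^{(2)}_{:\hat J}$, which still contains all $n$ rows — so the full partition $G^*$ is recovered in one shot and no label-propagation step is needed. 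If you replace your row split by this Gaussian-cloning device, your Step~2 bookkeeping over the sub-poly-log, BBP and many-groups regimes goes through (keeping track of the factor $2$ loss in $\Delta^2$ and $\omega_{J^*}^2$), and Step~3 can be deleted.
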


When the signal is well spread along the active columns of $J^*$, a large separation $\Delta^2$  implies a large minimum $l^2$-norm of the active columns of $X$. In the following, we consider the case where the matrix $X$ satisfies the following homogeneity assumption.

\begin{hyp}\label{ass:homogeneity}[$\eta$-homogeneity]
For some $\eta\geq 1$, the matrix $X$ satisfies 
\begin{equation}\label{eq:assump2}
\frac{\max_{j\in J^*}\|X_{:j}\|^2}{\min_{j\in J^*}\|X_{:j}\|^2}\leq \eta\enspace.
\end{equation}
\end{hyp}

\begin{rem}
With probability larger than $1-{(n\vee p)^{-2}}$, the prior of Definition \ref{def:prior_sparse} is $\eta$-homogeneous with $\eta\leq c\sqrt{{\log(np)}/{n}}$. 
\end{rem}

We remark that when $X$ satisfies the $\eta$-homogeneity assumption, we can lower-bound the minimum $l^2$-norm of the active columns of $X$. We postpone to Section \ref{prf:homogeneity} the proof of the next lemma.

\begin{lem}\label{lem:homogeneity}
Assume that $X$ satisfies the $\eta$-homogeneity Assumption \eqref{eq:assump2}. Then, $$w_{J^*}^2\geq \frac{n(K-1)}{2sK\gamma\eta}\Delta^2\enspace.$$
\end{lem}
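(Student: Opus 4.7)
The plan is to combine three elementary inequalities: one extracting a lower bound on the total signal $\sum_k \|\mu_k\|^2$ from the minimum pairwise separation $\Delta^2$, one extracting a lower bound on $\sum_{j\in J^*}\|X_{:j}\|^2$ from the balancedness, and a pigeonhole step using the $\eta$-homogeneity to pass from the sum to the minimum over $j\in J^*$.

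First, I would use the classical polarization identity
\[
\sum_{1\le k<l\le K}\|\mu_k-\mu_l\|^2 \;=\; K\sum_{k=1}^K \|\mu_k\|^2 \;-\; \Bigl\|\sum_{k=1}^K \mu_k\Bigr\|^2 \;\le\; K\sum_{k=1}^K \|\mu_k\|^2.
\]
Since every summand on the left-hand side is at least $2\sigma^2\Delta^2$ by definition of $\Delta^2$, and there are $\binom{K}{2}$ such terms, we obtain $\sum_k \|\mu_k\|^2 \ge (K-1)\sigma^2 \Delta^2$.

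Second, the $\gamma$-balancedness assumption \eqref{eq:balanced} yields $\min_k |G^*_k|\ge n/(K\gamma)$, because $n=\sum_k |G^*_k|\le K\gamma \min_k |G^*_k|$. Since the centers are supported on $J^*$, we can rewrite
\[
\sum_{j\in J^*}\|X_{:j}\|^2 \;=\; \sum_{j\in J^*}\sum_{k=1}^K |G^*_k|\,\mu_{k,j}^2 \;=\; \sum_{k=1}^K |G^*_k|\,\|\mu_k\|^2 \;\ge\; \frac{n}{K\gamma}\sum_{k=1}^K \|\mu_k\|^2 \;\ge\; \frac{n(K-1)\sigma^2\Delta^2}{K\gamma}.
\]

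Third, the $\eta$-homogeneity assumption \eqref{eq:assump2} together with $|J^*|\le s$ gives
\[
\min_{j\in J^*}\|X_{:j}\|^2 \;\ge\; \frac{1}{\eta}\,\max_{j\in J^*}\|X_{:j}\|^2 \;\ge\; \frac{1}{\eta |J^*|}\sum_{j\in J^*}\|X_{:j}\|^2 \;\ge\; \frac{1}{\eta s}\sum_{j\in J^*}\|X_{:j}\|^2.
\]
Combining this with the second step and dividing by $\sigma^2$ using the definition \eqref{eq:definition_omega_J*} of $\omega_{J^*}^2$ yields
\[
\omega_{J^*}^2 \;\ge\; \frac{n(K-1)\Delta^2}{sK\gamma\eta},
\]
which is even stronger than the claimed bound (by a factor $2$). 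None of the steps is delicate; the only subtlety is making sure to exploit that the $\mu_k$ are all supported on $J^*$ so that the sum $\sum_{j\in J^*}\|X_{:j}\|^2$ equals $\sum_k|G^*_k|\|\mu_k\|^2$, without losing anything on columns outside $J^*$.
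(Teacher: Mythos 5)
Your proof is correct and follows essentially the same route as the paper: lower-bound $\sum_{j\in J^*}\|X_{:j}\|^2=\sum_k|G^*_k|\|\mu_k\|^2$ via the separation and the balancedness, then use the $\eta$-homogeneity together with $|J^*|\le s$ to pass to the minimum column norm $\sigma^2\omega_{J^*}^2$. The only (minor) difference is the first step: the paper argues that at most one center can have $\|\mu_k\|^2<\tfrac12\Delta^2\sigma^2$, which produces the factor $\tfrac12$ in the statement, whereas your polarization identity gives the slightly sharper $\sum_k\|\mu_k\|^2\ge(K-1)\sigma^2\Delta^2$, hence a bound stronger by a factor $2$.
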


Combining Lemma \ref{lem:homogeneity} with Proposition \ref{prop:polytimesparse} directly implies the following corollary. 

\begin{cor}\label{cor:polytimesparse}
Assume that $X$ satisfies both the $\eta$-homogeneity assumption \eqref{eq:assump2} and the balancedness condition  \eqref{eq:balanced} with $\eta, \gamma\leq \text{poly-log}(np)$. Then, except in the regime where $s\in [\text{poly-log}(n), n/K]$ and $n\in [K^2, poly(K)]$, if $$\Delta^2\geq_{\log} 1+\min\pa{\sqrt{s},\sqrt{\frac{sK^2}{n}}}+\frac{s}{\sqrt{n}}\enspace,$$ we can recover perfectly $G^*$ in polynomial time, with probability higher than $1-n^{-c}$, for some constant $c$. 
\end{cor}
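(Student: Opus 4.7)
The plan is to combine Lemma~\ref{lem:homogeneity} with Proposition~\ref{prop:polytimesparse} in a near-direct manner. First I would apply Lemma~\ref{lem:homogeneity} to convert the hypothesis on the separation $\Delta^2$ into a lower bound on the minimum active-column signal $\omega_{J^*}^2$. Concretely, the lemma gives $\omega_{J^*}^2 \geq \frac{n(K-1)}{2sK\gamma\eta}\Delta^2$, and using the summand $\Delta^2 \geq_{\log} s/\sqrt{n}$ from the corollary's hypothesis, together with $\eta,\gamma \leq \text{poly-log}(np)$ and $(K-1)/K \geq 1/2$ for $K \geq 2$, I obtain $\omega_{J^*}^2 \geq_{\log} \sqrt{n}$.

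Next I would check that this lower bound implies the column-signal condition $\omega_{J^*}^2 \geq c_1(\sqrt{n\log(pn)}+\log p)$ of Proposition~\ref{prop:polytimesparse}. The $\sqrt{n\log(pn)}$ term is absorbed into the $\geq_{\log}$ convention since $\sqrt{\log(pn)}$ is itself poly-log. For the $\log p$ part, in the standard polynomial-growth regime $p \lesssim \text{poly}(n)$ we have $\log p \lesssim \log n \lesssim \sqrt{n}$, so it is likewise dominated. The second requirement of the proposition, namely $\Delta^2 \geq_{\log,\gamma} 1 + \min(\sqrt{s},\sqrt{sK^2/n})$, is an immediate consequence of the corollary's hypothesis once the $\gamma$-dependence is absorbed using $\gamma \leq \text{poly-log}(np)$.

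Since the excluded regime $s \in [\text{poly-log}(n), n/K]$ and $n \in [K^2, \text{poly}(K)]$ is inherited verbatim from Proposition~\ref{prop:polytimesparse}, applying that proposition then yields the desired polynomial-time exact recovery of $G^*$ with probability at least $1 - n^{-c_2}$, for some constant $c_2 > 0$, which is precisely the claim. The only (mild) obstacle is purely clerical: tracking that every poly-log prefactor, every constant, and the $(K-1)/K$ factor fit cleanly into the $\geq_{\log}$ notation; no new probabilistic or algorithmic argument is required beyond what is already packaged in the two cited results.
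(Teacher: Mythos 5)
Your proposal is correct and follows exactly the paper's route: the paper proves this corollary precisely by combining Lemma~\ref{lem:homogeneity} (which turns $\Delta^2\geq_{\log} s/\sqrt{n}$ into $\omega_{J^*}^2\geq_{\log}\sqrt{n}$ via $\omega_{J^*}^2\geq \frac{n(K-1)}{2sK\gamma\eta}\Delta^2$ and the poly-log bounds on $\eta,\gamma$) with Proposition~\ref{prop:polytimesparse}, whose column-signal and separation conditions and excluded regime are inherited just as you describe. The only remark is that the absorption of the $\log p$ term into the $\geq_{\log}$ convention is harmless under the paper's conventions and needs no extra assumption beyond what the notation already allows.
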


We also prove in Appendix~\ref{sec:discussion:w}, that, when $K\leq 4$, the condition of $\eta$-homogeneity can be dropped in Corollary~\ref{cor:polytimesparse}. 

In summary, we have introduced polynomial-time estimators that match our low-degree polynomial lower bound~\eqref{eq:sparse-barrier-simple} in almost all regimes. Our results show-case that, for the sparse clustering problem, in polynomial-time, one cannot do significantly better, than  applying an agnostic clustering procedure (oblivious of the sparse structure) or applying a simple dimension reduction scheme together with clustering procedure in the reduced space. 
The only regimes where there is mismatch, namely when $n\in [K^2; poly(K)]$ and either $s\in [\text{poly-log}(n), n/K]$ or $p\in  [\text{poly-log}(n), n/K]$, are the counterparts of those that have arisen in Proposition~\ref{prop:upper-boundclustering} for  clustering.

\subsection{Upper-bound on the minimal statistical separation for sparse clustering}\label{sec:itsparse}

Our previous results characterize the optimal separation conditions in polynomial-time~\eqref{eq:sparse-barrier-simple}. We now highlight the statistical-computational gaps for this problem 
by providing sufficient conditions for, possibly non-polynomial time procedures, to recover the partition $G^*$. We already deduce from~\cite{Even24} --see also the previous subsection-- that 
$\Delta^2\gtrsim \log(K)+ \sqrt{pK\log(K)/n}$ is sufficient for the exact $K$-means estimator to achieve partial recovery, thereby lowering  the second part of low-degree Condition~\eqref{eq:sparse-barrier-simple} by a factor $\sqrt{K/\log(K)}$. Here, we focus on the second statistical-computational gap arising in sparse clustering, which is pertaining to the detection of the active columns. 
Recall the definition~\eqref{eq:definition_omega_J*} of $\omega_{J^*}^2$ as the minimum squared $\ell^2$-norm of the active columns of $X$.

\begin{prop}\label{prop:upperboundsparseIT}
There exist two numerical constants $c_1, c_2$ and an estimator $\hat{G}$ such that the following holds. If $w^2_{J^*}\geq c_1\gamma^2\pa{\sqrt{K\log(np)}+\log(np)}$ and $\Delta^2\geq c \gamma^{5/2}\left[\sqrt{\frac{sK}{n}\left[\log(n)\right]}+ \log(n)\right]$, then, with probability at least $1-4/n^2$, we have $\hat{G}=G^*$.
\end{prop}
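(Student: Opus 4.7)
The proof analyzes the (non-polynomial-time) maximum-likelihood estimator of $(G^*,J^*)$ in the sparse Gaussian mixture model. After profiling out the cluster means, the MLE reduces to
\begin{equation*}
(\hat G,\hat J)\in\argmax_{G,\,|J|\leq s}\, T(G,J),\qquad T(G,J):=\sum_{j\in J}\sum_{k=1}^K |G_k|\,\bar Y_{G_k,j}^{\,2},
\end{equation*}
with $\bar Y_{G_k,j}=|G_k|^{-1}\sum_{i\in G_k}Y_{ij}$. I would show $\hat G=G^*$ by decoupling the column-detection and cluster-recovery sub-problems, and then gluing them together.

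\textbf{Step 1} (detection of $J^*$ at $G=G^*$). The score $S^*_j:=\sum_k|G^*_k|\,\bar Y_{G^*_k,j}^{\,2}$ has mean $\|X_{:j}\|^2+K\sigma^2$, which equals $K\sigma^2$ on $(J^*)^c$ and is at least $(w_{J^*}^2+K)\sigma^2$ on $J^*$. Hanson--Wright concentration for $\chi^2$-type quadratic forms gives sub-exponential fluctuations of order $\sigma^2\sqrt{K\log(np)}+\sigma\sqrt{\|X_{:j}\|^2\log(np)}+\sigma^2\log(np)$. The assumption $w_{J^*}^2\geq c_1\gamma^2(\sqrt{K\log(np)}+\log(np))$ combined with a union bound over the $p$ columns yields, on an event $\mathcal{E}_1$ of probability $\geq 1-2n^{-2}$, that the top-$s$ indices of $\{S^*_j\}_{j\in[p]}$ coincide with $J^*$; in particular $\argmax_{|J|\leq s}T(G^*,J)=J^*$.

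\textbf{Step 2} (cluster recovery at $J=J^*$). Restricting the data to the $s$ active columns yields an $n\times|J^*|$ isotropic Gaussian mixture with the same separation $\Delta^2$, but in dimension at most $s$. The map $G\mapsto T(G,J^*)$ equals, up to an additive constant, the negative K-means energy on $Y_{:J^*}$. Invoking the exact-recovery statistical upper bound for K-means in dimension $s$ (Theorems~2--3 of~\cite{Even24} applied with $p\leftarrow s$, boosted via a standard Lu--Zhou-type exchange argument from partial to exact recovery), the assumption $\Delta^2\geq c\gamma^{5/2}[\sqrt{sK\log(n)/n}+\log(n)]$ implies, on an event $\mathcal{E}_2$ of probability $\geq 1-2n^{-2}$, that $\argmax_G T(G,J^*)=G^*$, with the quantitative margin $T(G^*,J^*)-T(G,J^*)\gtrsim_\gamma |G\triangle G^*|\,\sigma^2\Delta^2$ for every $G\neq G^*$.

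\textbf{Step 3} (joint optimality). For any $(G,J)$ with $G\neq G^*$, I decompose
\begin{equation*}
T(G,J)-T(G^*,J^*)=[T(G,J^*)-T(G^*,J^*)]+[T(G,J)-T(G,J^*)].
\end{equation*}
The first bracket is strictly negative on $\mathcal{E}_2$ with the margin above. For the second bracket, since $X_{:j}=0$ for $j\notin J^*$, the best $J$ at a fixed $G$ selects columns with large pure-noise statistics $\sum_k|G_k|\bar E_{G_k,j}^{\,2}\sim\sigma^2\chi_K^2$. A $\chi^2$ concentration inequality, combined with a peeling argument on the Hamming distance $|G\triangle G^*|$, controls this residual by $O_\gamma(s\sigma^2\log(n)+\sigma^2\sqrt{sK\log(n)})$, which is dominated by the Step~2 margin under the separation assumption. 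Intersecting $\mathcal{E}_1$, $\mathcal{E}_2$ and the uniform-noise event, and invoking Step~1 to handle competing pairs $(G^*,J)$ with $J\neq J^*$, gives $\hat G=G^*$ on an event of probability $\geq 1-4n^{-2}$.

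The main obstacle is the uniform noise control in Step~3: a naive union bound over the $K^n\binom{p}{s}$ candidate pairs is far too crude, and it has to be replaced by a peeling argument on $|G\triangle G^*|$ exploiting the classical observation that only partitions in a small Hamming neighborhood of $G^*$ can be competitive. This localization, together with a sub-exponential concentration for the maximum over such partitions of the top-$s$ sum of partition-dependent $\chi^2$ statistics, is what ultimately dictates the precise dependence of the threshold on $K$, $s$ and $\gamma$.
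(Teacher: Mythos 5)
Your Steps 1 and 2 are sound and correspond to ingredients the paper also uses (recovery of $J^*$ from column norms aggregated along $G^*$, and the exact Kmeans analysis in dimension $\leq s$). The gap is in Step 3, and it is a genuine one. First, you compare every competitor $T(G,J)$ to $T(G^*,J^*)$, but the joint maximizer at $G^*$ need not select $J^*$: whenever $|J^*|<s$ (and the statement allows $|J^*|\leq s$), the best $J$ at any fixed partition pads with pure-noise columns, each contributing $\approx K\sigma^2$ to the score, so $\max_J T(G^*,J)-T(G^*,J^*)$ can be of order $(s-|J^*|)K\sigma^2$; the same padding inflates $T(G,J)$ for $G\neq G^*$, and nothing in your decomposition cancels it. Second, and more fundamentally, your claimed control of the second bracket, $T(G,J)-T(G,J^*)=O_\gamma(s\sigma^2\log n+\sigma^2\sqrt{sK\log n})$, does not scale with $|G\triangle G^*|$, whereas the Step 2 margin does: for a partition differing from $G^*$ in a single point the margin is only $\asymp\sigma^2\Delta^2\asymp\sigma^2(\log n+\sqrt{sK\log n/n})$, which is smaller than your residual bound by a factor of order $s$ (resp. $\sqrt{n}$). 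So the domination claim fails exactly in the regime (partitions close to $G^*$) where peeling gives you the least help; to make this route work you would have to compare $G$ and $G^*$ on matched column sets so that the partition-dependent noise-column scores cancel, and then prove a uniform-in-$G$ bound on differences of the form $S_j(G)-S_j(G^*)$ that is linear in $|G\triangle G^*|$ — none of which is in your sketch.

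The paper sidesteps precisely this difficulty by not analyzing the joint maximizer at all. It splits the sample into independent copies $Y^{(1)},Y^{(2)}$, defines for each candidate partition $B$ a column set $\hat{J}(B)$ (top-$s$ column scores computed on $Y^{(1)}$), and declares $B\preceq B'$ when the Kmeans-type criterion on $Y^{(2)}$ restricted to $\hat{J}(B)\cup\hat{J}(B')$ favors $B'$; the estimator is a maximal element of this order. Because the column sets are functions of $Y^{(1)}$ only and the two partitions are compared on a common set of at most $2s$ columns containing $J^*$, the comparison reduces to the clean Kmeans analysis in dimension $\leq 2s$ (signal term $\gtrsim\Delta^2\delta_B$, Hanson–Wright plus peeling over $\delta_B$ for the noise and cross terms), with every term scaling in $\delta_B$. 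If you want to keep your joint-MLE formulation, you must either import such a matched-columns comparison or supply the missing uniform control; as written, the argument does not go through.
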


The rationale underlying the algorithm of Proposition \ref{prop:upperboundsparseIT} is to jointly select the column set $\hat J$ and the partition $\hat G$ by exactly minimizing some variant of the Kmeans criterion. Such a minimization requires to
 jointly scan over all the columns and partitions. In particular, the algorithm cannot be computed in poly-time.
  
Let us interpret the condition $w^2_{J^*}\geq_{\log} \sqrt{K}$ appearing in Proposition~\ref{prop:upperboundsparseIT}. 
We write $\bar Y^{G^*}\in\R^{K\times p}$ for the matrix obtained by averaging the rows within a same cluster: $\bar Y^{G^*}_{kj}=\text{average}\ac{Y_{ij}:i\in G^*_{k}}$. We observe that, in the balanced case where $|G^*_{k}|=n/K$ for all $k$,  we have
$\E\cro{\|\bar Y^{G^*}_{:j}\|^2} = \|\mu_{:j}\|^2+K^2\sigma^2/n$ with standard deviation sdev$\cro{\|\bar Y^{G^*}_{:j}\|^2}=K^{3/2}\sigma^2/n$. Hence,  when knowing $G^*$,
 it is possible to recover the active columns $J^*$
 as soon as
\begin{equation}\label{eq:detection-G-known}
\min_{j\in J^*}  \|\mu_{:j}\|^2 \geq_{\log} {K^{3/2}\sigma^2 \over n}\ ,
\end{equation}
by selecting the $s$ columns of $\bar Y^{G^*}$ with largest $\ell^2$-norm.
Since $\omega^2_{J^*} \asymp {n\over K\sigma^2} \min_{j\in J^*}  \|\mu_{:j}\|^2$,
Condition \eqref{eq:detection-G-known} is equivalent to  $w^2_{J^*}\geq_{\log} \sqrt{K}$. Hence, the first condition of Proposition~\ref{prop:upperboundsparseIT} corresponds to the condition for recovering $J^*$ when the partition $G^*$ is known \emph{beforehand}. As for the second condition 
$\Delta^2 \geq_{\log} 1+\sqrt{sK/n}$, 
it corresponds to the optimal condition for recovering $G^*$ when $J^*$ is known, by applying exact Kmeans on the matrix $Y_{:J^*}$, where we have only kept the active columns. 
Hence, non poly-time algorithms can fully leverage the sparse-clustering set-up by only requiring the minimal column signal for selecting the active columns when the clustering $G^*$ is known beforehand, in addition to the minimal separation for clustering when the active columns $J^*$ are known beforehand. This situation is in contrast with the poly-time algorithms, which require the minimal column signal for selecting the active columns with no clustering information.

Under the homogeneity assumption \eqref{eq:assump2},  Lemma \ref{lem:homogeneity} ensures that $\|\mu_{:j}\|^2  \gtrsim n\Delta^2 \sigma^2/s$ for all $j\in J^*$, so the condition $\Delta^2 \geq_{\log} s\sqrt{K} / n$ ensures \eqref{eq:detection-G-known}. Combining this with Proposition \ref{prop:upperboundsparseIT} leads to next corollary. 

\begin{cor}\label{cor:ITsparse}
Assume that $X$ satisfies both the $\eta$-homogeneity assumption \eqref{eq:assump2} and the balancedness condition  \eqref{eq:balanced} with $\eta, \gamma\leq \text{poly-log}(np)$. Then, if $$\Delta^2\geq_{\log} 1+\sqrt{\frac{sK}{n}}+\frac{s\sqrt{K}}{n}\enspace,$$ we have $\hat{G}=G^*$ with probability $1-4/n^2$.
\end{cor}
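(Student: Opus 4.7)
The plan is to reduce the Corollary to a direct application of Proposition~\ref{prop:upperboundsparseIT}, with Lemma~\ref{lem:homogeneity} serving as the bridge that translates the $\omega_{J^*}^2$ assumption of Proposition~\ref{prop:upperboundsparseIT} into a condition purely on $\Delta^2$. Since the conclusion $\hat{G}=G^*$ with probability $1-4/n^2$ is exactly what Proposition~\ref{prop:upperboundsparseIT} delivers, it is enough to verify its two hypotheses under the assumptions of the Corollary.

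First I would address the column-signal hypothesis $\omega_{J^*}^2 \geq c_1\gamma^2(\sqrt{K\log(np)}+\log(np))$. Lemma~\ref{lem:homogeneity}, which uses both the $\eta$-homogeneity assumption~\eqref{eq:assump2} and the $\gamma$-balancedness~\eqref{eq:balanced}, gives
\[
\omega_{J^*}^2 \;\geq\; \frac{n(K-1)}{2sK\gamma\eta}\,\Delta^2\;\gtrsim\;\frac{n}{s\gamma\eta}\Delta^2,
\]
for $K\geq 2$ (the case $K=1$ being trivial). Plugging in the standing assumption $\Delta^2 \geq_{\log} s\sqrt{K}/n$ yields $\omega_{J^*}^2 \gtrsim_{\log} \sqrt{K}/(\gamma\eta)$. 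Since $\gamma,\eta\leq \text{poly-log}(np)$, this absorbs into the $\geq_{\log}$ notation and gives $\omega_{J^*}^2 \geq_{\log} \sqrt{K}$, which implies the first hypothesis of Proposition~\ref{prop:upperboundsparseIT} for a suitable choice of the hidden poly-log factor in $\geq_{\log}$.

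Second, the separation hypothesis $\Delta^2 \geq c\gamma^{5/2}[\sqrt{(sK/n)\log(n)}+\log(n)]$ of Proposition~\ref{prop:upperboundsparseIT} is precisely matched, up to a $\gamma^{5/2}\leq \text{poly-log}(np)$ factor, by the Corollary's assumption $\Delta^2 \geq_{\log} 1+\sqrt{sK/n}$. Both hypotheses of Proposition~\ref{prop:upperboundsparseIT} are therefore in force, and its conclusion transfers directly. There is no genuine obstacle here: the only delicate point is to make sure the implicit poly-log factors hidden in $\geq_{\log}$ are chosen large enough to simultaneously absorb the $\gamma,\eta$ factors coming from Lemma~\ref{lem:homogeneity} and the $\gamma^{5/2}$ factor from Proposition~\ref{prop:upperboundsparseIT}, which is straightforward since $\eta,\gamma\leq \text{poly-log}(np)$ by assumption.
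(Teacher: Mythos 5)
Your proposal is correct and is exactly the paper's route: Lemma~\ref{lem:homogeneity} (using $\eta$-homogeneity and $\gamma$-balancedness) converts the hypothesis $\Delta^2\geq_{\log} s\sqrt{K}/n$ into the column-signal condition $\omega_{J^*}^2\geq c_1\gamma^2(\sqrt{K\log(np)}+\log(np))$, while $\Delta^2\geq_{\log}1+\sqrt{sK/n}$ supplies the separation hypothesis, so Proposition~\ref{prop:upperboundsparseIT} applies verbatim. The bookkeeping of absorbing $\gamma,\eta\leq\text{poly-log}(np)$ into the $\geq_{\log}$ notation is handled correctly.
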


Combining this corollary with the bounds of exact Kmeans in dimension $p$, we deduce that it is possible to recover $G^*$ as soon as 
\[
\Delta^2\geq_{\log} 1+\min\left[\sqrt{\frac{sK}{n}}+\frac{s\sqrt{K}}{n}, \sqrt{\frac{pK}{n}}\right]\ . 
\]
In comparison to the LD lower bound~\eqref{eq:sparse-barrier-simple}, we see that, depending on the regimes, the statistical-computational gap is possibly as large as factor $\sqrt{n/K}$ or a factor $\sqrt{K}$.

\section{Biclustering}\label{sec:biclustering}

We now turn our attention to the problem of biclustering, where both rows and columns are structured. 
Our goal is to understand if and how the clustering structure on the columns can help for recovering the clustering structure on rows, both statistically and in poly-time.

\paragraph*{Set-up} In the biclustering model, we observe a matrix $Y\in \R^{p\times n}$ generated as follows. There exists two
unknown partitions: $G^*=\ac{G^*_1,\ldots, G^*_K}$, partition of $[n]$, and $H^*=\ac{H_1^*,\ldots, H_L^*}$, partition of $[p]$. 
Then, for some unknown matrix $\mu\in\R^{K\times L}$, and unknown $\sigma>0$, the entries $Y_{ij}$ are independent  with distribution 
\[Y_{ij} \sim \mathcal{N}(\mu_{kl},\sigma^2),\quad \text{for}\ (i,j)\in G^*_k\times H^*_l.\]
We assume in the following that both $G^*$ and $H^*$ fulfill the balancedness condition~\eqref{eq:balanced}.
We observe that under the balancedness condition~\eqref{eq:balanced}, for $i\in G^*_{k}$ and $i'\in G^*_{k'}$ we have
$${\|X_{i:}-X_{i':}\|^2} = \sum_{l=1}^L |H^*_{l}| (\mu_{kl}-\mu_{k'l})^2 \asymp {p\over L}\ \|\mu_{k:}-\mu_{k':}\|^2.$$
Hence, we introduce
$$\Delta^2_r={p\over L}\min_{k\neq k'\in [K]}\frac{\|\mu_{k:}-\mu_{k':}\|^2}{2\sigma^2}\quad \text{and}\quad \Delta^2_c={n\over K}\min_{l\neq l'\in [L]}\frac{\|\mu_{:l}-\mu_{:l'}\|^2}{2\sigma^2}\enspace,$$
which represents, up to a constant factor, the minimum separation relative to the rows of $X$, and the minimum separation relative to the columns of $X$, respectively.
The biclustering model being symmetric, we focus on the problem of recovering $G^*$. We investigate the minimal separation 
$\Delta^2_{r}$ required for recovering $G^*$, and how it depends on $\Delta^2_{c}$. Our contributions are
\begin{enumerate}
\item Implementing the technique of Theorem \ref{thm:LTC}, we provide a LD lower bound for the biclustering problem, unveiling the following phenomenon. When $\Delta_{c}^2$ is below the minimal threshold $\Delta_c^2 \leq_{\log} 1+\min\pa{\sqrt{n}, \sqrt{{nK^2}/{p}}}$ for  poly-time clustering, then the clustering of the rows is as hard as when there is no column structure,  and the separation~\eqref{eq:computationalclusteringsharp+informational}  is required on $\Delta^2_{r}$ for recovering $G^*$. On the contrary, when $\Delta_{c}^2$ is above the minimal threshold for  poly-time clustering, then the column structure can be leveraged to reduce the dimension from $p$ to $L$,  and only the separation  $\Delta^2_{r}\geq_{\log} 1+\min\pa{\sqrt{L}, \sqrt{{LK^2}/{n}}}$ is required  for recovering $G^*$. In this last case,  recovery of $G^*$ is possible when $\Delta^2_{r}\geq_{\log} 1+\min\pa{\sqrt{L}, \sqrt{{LK^2}/{n}}}$ by  (i) clustering the columns, (ii) averaging all the columns within a same group, reducing the number of columns to $L$, and (iii) applying a poly-time row clustering on the new $n\times L$ matrix.
\item We prove that non poly-time algorithms can much better leverage the biclustering structure by merely requiring the separations
$$\Delta^2_{r}\geq_{\log} 1+ \sqrt{KL \over n}\quad \text{and}\quad \Delta^2_{c}\geq_{\log} 1+\sqrt{KL \over p},$$
or $\Delta^2_{r}\geq_{\log} 1+ \sqrt{Kp / n}$.
The separation $\Delta_{r}\geq_{\log} \sqrt{KL / n}$ corresponds to the statistical separation for clustering  the $n$  rows in dimension $L$, while the separation $\Delta^2_{c}\geq_{\log} \sqrt{KL / p}$ corresponds to the statistical separation for clustering the $p$ columns in dimension $K$. The separation $\Delta^2_{c}\geq_{\log} 1+\sqrt{KL / p}$, required on the columns to benefit from the dimension reduction phenomenon, i.e. to benefit from the reduced requirement  $\Delta^2_{r}\geq_{\log} 1+ \sqrt{KL / n}$ on the rows, is much smaller than the separation  $\Delta_c^2 \geq_{\log} 1+\min(\sqrt{n}, \sqrt{{nK^2}/{p}})$ required by poly-time algorithms. 
This separation $\Delta^2_{c}\geq_{\log} \sqrt{KL / p}$ corresponds to the separation needed to cluster the columns (recover $H^*$) when $G^*$ is known. Indeed, clustering the columns at this level of separation can be obtained when $G^*$ is known by (i) averaging the rows along the partition $G^*$, reducing the row dimension from $n$ to $K$, and (ii) clustering the columns of the transformed $K \times p$ matrix. Interestingly, this separation needed to recover $H^*$ when $G^*$ is known then allows to recover $G^*$ with the same separation $\Delta^2_{r}\geq_{\log} \sqrt{KL / n}$ as if the partition $H^*$ was known. Hence, non poly-time algorithms can fully leverage the biclustering structure.
\end{enumerate}

\subsection{LD lower-bound for biclustering}\label{sec:LD:biclustering}
Let us introduce the prior distribution under which we derive our LD bound. As for sparse clustering, we use a symmetrization of the means in order to derive tight lower-bound, and for convenience we consider a setting with $2K$ row-clusters and $2L$ column-clusters.  
\begin{definition}\label{def:prior_bi}
The signal matrix $X\in\R^{n\times p}$ is generated as follows. We sample independently\\ \smallskip
- $k_1^*,\ldots,k^*_n$ i.i.d. with uniform distribution on $[K]$,\\ \smallskip
- $l_1^*,\ldots, l_p^*$  i.i.d. with uniform distribution on $[L]$,\\ \smallskip
- $\eps^r_1,\ldots,\eps^r_n$  i.i.d. with uniform distribution on $\ac{-1,1}$,\\ \smallskip
- $\eps^c_1,\ldots,\eps^c_p$  i.i.d. with uniform distribution on $\ac{-1,1}$,\\ \smallskip
- $\pa{\nu_{k,l}}_{k\in [K], l\in [L]}$ i.i.d. with $\mathcal{N}\pa{0,\lambda^2}$ distribution, with $\lambda>0$.\\ \smallskip
Then, we set 
\begin{equation}\label{eq:prior-bi}
X_{ij}=\eps^r_i\eps^c_j\nu_{k^*_i,  l^*_j}\ .
\end{equation}
\end{definition}

Under the prior~\eqref{eq:prior-bi},  the partition $G^*$ is defined by
$G^*_{k}=\ac{i: k^*_{i}=k,\ \text{and}\ \eps^r_{i}=1}$ for $k=1,\ldots,K$, and  $G^*_{k}=\ac{i: k^*_{i}=k-K,\ \text{and}\ \eps^r_{i}=-1}$ for $k=K+1,\ldots,2K$; while the partition $H^*$ is defined by
$H^*_{l}=\{j: l^*_{j}=l,\ \text{and}\ \eps^c_{j}=1\}$ for $l=1,\ldots,L$, and  $H^*_{l}=\{j: l^*_{j}=l-L,\ \text{and}\ \eps^c_{j}=-1\}$ for $l=L+1,\ldots,2L$.
Furthermore, under the assumption that $L\geq \log(K)$ and $K\geq \log(L)$, we have
$$\Delta_{r}^2= {\lambda^2p\over  \sigma^2}\pa{1+O\pa{\sqrt{\log(K)\over L}}}\quad \text{and}\quad \Delta_{c}^2= {\lambda^2 n\over \sigma^2} \pa{1+O\pa{\sqrt{\log(L)\over K}}}.$$

The next result provides two LD lower-bounds for the problem of clustering under the prior distribution~\eqref{eq:prior-bi}.

\begin{thm}\label{thm:lowdegreebi}
Let $D\in \N$ and suppose that $\zeta:=\frac{\lambda^4}{\sigma^4} D^{8}\max\pa{p,n, \frac{pn}{K^2}, \frac{pn}{L^2}}<1.$ Then, under the prior distribution of Definition \ref{def:prior_bi}, we have
\begin{equation}\label{eq:LB_MMSE_D_SC1}
 MMSE_{\leq D}\geq \frac{1}{K}-\frac{1}{K^2}\pa{1+\frac{\zeta}{\pa{1-\sqrt{\zeta}}^3}}.
\end{equation}
Moreover, if $\zeta':=\frac{\lambda^4}{\sigma^4}D^{10}\frac{5p^2}{L}\max\pa{1,\frac{n}{K^2}}<1$, then, under the prior distribution of Definition \ref{def:prior_bi}, we have
\begin{equation}\label{eq:LB_MMSE_D_SC2}
MMSE_{\leq D}\geq \pa{1-L\exp\pa{-\frac{5p}{2L}\log(5)}}\pa{\frac{1}{K}-\frac{1}{K^2}\frac{\sqrt{\zeta'}}{(1-\sqrt{\zeta'})^2}}\enspace.
\end{equation}
\end{thm}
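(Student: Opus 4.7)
The plan is to apply Proposition~\ref{thm:schrammwein} to estimate $x=\mathbf{1}\{k^*_1=k^*_2\}$ (so that $\mathbb{E}[x^2]=1/K$ and $\kappa_{x,\emptyset}^2/0!=1/K^2$), thereby reducing the proof to bounding $\sum_{\alpha\neq 0}\kappa_{x,\alpha}^2/\alpha!$ via the cumulant expansion provided by Theorem~\ref{thm:LTC}. The prior~\eqref{eq:prior-bi} is a latent model of the form~\eqref{eq:latent-model} with $Z=(Z_r,Z_c)$, $Z_r=(k^*,\eps^r)$ independent of $Z_c=(l^*,\eps^c)$, hence $\delta(Z)^{\beta_s}=\delta^r_{\beta_s}\,\delta^c_{\beta_s}$ and $\Omega_{\beta_s}=\Omega^r_{\beta_s}\cap\Omega^c_{\beta_s}$ factorize into row and column pieces. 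Since $x$ depends only on $Z_r$, this product structure is the key feature I would exploit to decouple the row and column contributions in each cumulant $C_{x,\beta_1,\ldots,\beta_l}$.

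For~\eqref{eq:LB_MMSE_D_SC1}, I would mimic the inductive argument of Lemma~\ref{lem:sketch}. Combining the recursive bound~\eqref{eq:rec-C} with the product-form identity $\mathbb{P}\bigl(\bigcap_s\Omega_{\beta_s}\bigr)=\mathbb{P}\bigl(\bigcap_s\Omega^r_{\beta_s}\bigr)\,\mathbb{P}\bigl(\bigcap_s\Omega^c_{\beta_s}\bigr)$, an induction on $S\subseteq[l]$ should yield
\[
|C_{x,\beta[S]}|\;\leq\;c_{|S|}\,K^{-(\#\alpha^r_S-1)}\,L^{-(\#\alpha^c_S-1)},
\]
where $\#\alpha^r_S$, $\#\alpha^c_S$ count the distinct row and column indices spanned by $\bigcup_{s\in S\cup\{0\}}\beta_s$. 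Squaring, summing over $\pi\in\mathcal{P}_2(\alpha)$, and summing over $|\alpha|\leq D$ produces a two-parameter geometric-type series whose common ratio is bounded by $\zeta=(\lambda^4/\sigma^4)\,D^8\max(p,n,pn/K^2,pn/L^2)$. The four terms in the maximum correspond to the four elementary ways of extending a connected cumulant graph by one pair $\{(i,j),(i',j')\}$: sharing both a row and a column with an existing edge, sharing only a row (freeing $p$ columns while paying a $1/L^2$ factor for the column-cluster constraint), sharing only a column (freeing $n$ rows and paying $1/K^2$), or sharing neither but being anchored through the graph. The sign-parity constraints on $\eps^r,\eps^c$ enforced by Lemma~\ref{lem:independentcumulant} kill the would-be divergent configurations.

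For~\eqref{eq:LB_MMSE_D_SC2}, I would first restrict to the balanced-column event $\mathcal{E}=\{|H^*_\ell|\leq 5p/L,\ \forall \ell\in[2L]\}$. A Chernoff bound for $|H^*_\ell|\sim\mathrm{Bin}(p,1/(2L))$ together with a union bound gives $\mathbb{P}(\mathcal{E})\geq 1-L\exp(-\tfrac{5p}{2L}\log 5)$, which accounts for the prefactor in~\eqref{eq:LB_MMSE_D_SC2}. Passing to the truncated target $\tilde x=x\mathbf{1}_\mathcal{E}$ and using $\mathbb{E}[(f-x)^2]\geq\mathbb{E}[(f-\tilde x)^2]-\mathbb{E}[f^2\mathbf{1}_{\mathcal{E}^c}]$ combined with Proposition~\ref{thm:schrammwein} applied to $\tilde x$, the argument reduces to controlling the cumulants of $\tilde x$. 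On $\mathcal{E}$, whenever the recursion forces two columns to belong to the same column-cluster, the relevant combinatorial count is $\sum_\ell|H^*_\ell|^2\leq 5p^2/L$ instead of the $p^2$ (or the $pn/L^2$ appearing in~\eqref{eq:LB_MMSE_D_SC1}). Propagating this substitution through the same induction and the counting over $\mathcal{P}_2(\alpha)$ yields a geometric series of common ratio at most $\sqrt{\zeta'}$, from which~\eqref{eq:LB_MMSE_D_SC2} follows after tracking the small loss coming from $\mathcal{E}^c$.

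The main technical obstacle is the precise combinatorial bookkeeping required to match the stated exponents $D^8$ and $D^{10}$: getting the right power of $D$ demands, as in the proof of Theorem~\ref{thm:lowdegreeclusteringsharp}, the refinement of~\cite{SohnWein25} on top of the crude bound~\eqref{eq:bound-C}, combined with a careful pruning of the partitions $\pi\in\mathcal{P}_2(\alpha)$ for which $C_{x,\beta(\pi)}$ is non-zero. A secondary subtlety is the simultaneous treatment of the parity constraints on $\eps^r$ and $\eps^c$, which is precisely what the doubled-group construction of Definition~\ref{def:prior_bi} (with $2K$ row clusters and $2L$ column clusters) is designed to handle cleanly.
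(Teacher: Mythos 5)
Your skeleton for \eqref{eq:LB_MMSE_D_SC1} (Schramm--Wein plus Theorem~\ref{thm:LTC} plus an induction on $S$ plus counting) is the paper's, but the key inductive bound you propose, $|C_{x,\beta[S]}|\leq c_{|S|}K^{-(\#\alpha^r_S-1)}L^{-(\#\alpha^c_S-1)}$, is false, so the induction does not close. Concrete counterexample: take $\beta_1=\ac{(1,j),(2,j)}$ and $\beta_2=\ac{(1,j'),(2,j')}$ with $j\neq j'$. Each pair sits inside a single column, so $\Omega_{\beta_s}=\ac{k^*_1=k^*_2}$ carries no column-cluster constraint and the $\eps^c$'s square out; writing $\eps:=\eps^r_1\eps^r_2$, one gets $C_{x,\beta_1,\beta_2}=\cumul\pa{x,\eps x,\eps x}=\frac1K-\frac1{K^2}$, which is of order $1/K$, whereas your bound predicts $O(1/(KL))$ since $\#\alpha^r_S=\#\alpha^c_S=2$. (This $\alpha$ survives all pruning, and such configurations are exactly the ones behind the term $p$ in the maximum, so the loss is not harmless.) The correct statement, Lemma~\ref{lem:upperboundkappaSbi} in the paper, penalizes rows by $K^{-(\#\alpha_S-cc(\mathcal{V}[S\cup\ac{0}]))}$ and columns by $L^{-(r_{\alpha_S}-cc(\mathcal{N}[S]))}$, where $\mathcal{V}$ and $\mathcal{N}$ are the row- and column-intersection graphs of the pairs: only the \emph{joint} graph is connected, so neither exponent can be pushed to $\#-1$ separately, and one must then use the connectivity of the joint graph (Lemma~\ref{lem:connectivitybi}) to land on the weaker $\min\pa{1,(K\wedge L)^{-(\#\alpha+r_\alpha-|\alpha|/2-2)}}$ form, which is what actually produces the four regimes in $\max(p,n,pn/K^2,pn/L^2)$.

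For \eqref{eq:LB_MMSE_D_SC2} your truncation route ($\tilde x=x\1_{\mathcal{E}}$ and the error term $\E[f^2\1_{\mathcal{E}^c}]$) is not the paper's argument and has two unresolved gaps. First, you give no way to bound $\E[f^2\1_{\mathcal{E}^c}]$ uniformly over degree-$D$ polynomials normalized by $\E[f^2]=1$; smallness of $\P(\mathcal{E}^c)$ alone does not control it, and no higher-moment/hypercontractivity tool is set up for this mixed Gaussian--discrete model. Second, and more structurally, replacing $x$ by $x\1_{\mathcal{E}}$ destroys the locality the whole machinery relies on: $\1_{\mathcal{E}}$ depends on \emph{every} $l^*_j$, so the independence-based nullity arguments (Lemma~\ref{lem:independentcumulant}) no longer kill $\cumul(\tilde x,X_\alpha)$ for $\alpha$ missing rows $1,2$ or having degree-one rows/columns, the sum over $\alpha$ is no longer sparse, and it is unclear how the balanced count $\sum_\ell|H^*_\ell|^2\leq 5p^2/L$ would enter an unconditional cumulant at all. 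The paper instead exchanges infimum and expectation: $MMSE_{\leq D}\geq \E_{l^*,\eps^r,\eps^c}\cro{MMSE_{\leq D}(l^*,\eps^r,\eps^c)}$, then applies Proposition~\ref{thm:schrammwein} to the \emph{conditional} model in which only $k^*$ is latent. Conditionally, $|\eps^r_i\eps^c_j|=1$ so the cumulant bound reduces to the clustering bound (Lemma~\ref{lem:boundLTCclusteringsharp}), and the factor $5p^2/L$ enters purely through the count of admissible $\alpha$'s, whose columns must be pairable within the realized clusters of size at most $5p/L$ (Lemma~\ref{lem:combinatorixbi2}); the prefactor $1-L\exp(-\frac{5p}{2L}\log 5)$ comes from bounding the conditional MMSE by $0$ on the unbalanced event. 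Some such conditioning step is needed; the truncation as you sketch it does not deliver the stated bound.
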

We refer to Appendix~\ref{prf:lowdegreebi} for a proof of this result.
Instantiating Theorem~\ref{thm:lowdegreebi} for the degree $D=\log(n)^{1+\eta}$, we get that $MMSE_{\leq D}=\pa{\frac{1}{K}-\frac{1}{K^2}}\pa{1+o(1)}$
\begin{align}
&\text{if}\quad  n\lambda^2  \leq_{\log} \sigma^2\min\pa{ \sqrt{{n}}, \sqrt{\frac{L^2n}{p}}} 
\quad \text{and}\quad   p\lambda^2  \leq_{\log} \sigma^2\min\pa{\sqrt{{p}}, \sqrt{\frac{K^2p}{n}}} , \label{eq:bi1}\\ 
\text{or,}\ &\text{if}\quad  p\gg L\log L\quad  \text{and}\quad p\lambda^2\leq_{\log} \sigma^2 \min\pa{\sqrt{L},\sqrt{\frac{LK^2}{n}}}\enspace . \label{eq:bi2}
\end{align}
Let us interpret these two conditions.

\underline{The first Condition \eqref{eq:bi1}} can be reformulated as
$$\Delta_{c}^2  \leq_{\log} \min\pa{ \sqrt{{n}}, \sqrt{\frac{L^2n}{p}}}
\quad \text{and}\quad  \Delta_{r}^2  \leq_{\log} \min\pa{\sqrt{{p}}, \sqrt{\frac{K^2p}{n}}}.$$
We recognize in this condition the Threshold \eqref{eq:computationalclusteringsharp} for clustering in poly-time the $p$ columns in dimension $n$, and  for clustering in poly-time the $n$ rows in dimension $p$. In particular, this result unravels that, below the  threshold for  poly-time clustering of the columns $\Delta_{c}^2  \leq_{\log} \min\pa{ \sqrt{{n}}, \sqrt{{L^2n}/{p}}}$, the poly-time clustering of the rows is as hard as if there was no column structure. In other words, poly-time algorithms can leverage the biclustering structure only when either the columns or the rows have a separation larger than the separation~\eqref{eq:computationalclusteringsharp} for simple clustering.

\underline{The second Condition \eqref{eq:bi2}} shows that, when $\Delta_{c}^2  \geq_{\log} \min\pa{ \sqrt{{n}}, \sqrt{{L^2n}/{p}}}$, poly-time clustering of the rows can be impossible when $ p\gg L\log L$ and
$$\Delta_{r}^2\leq_{\log}  \min\pa{\sqrt{L},\sqrt{\frac{LK^2}{n}}}.$$
We recognize here the Threshold \eqref{eq:computationalclusteringsharp} for clustering in poly-time $n$ points  in dimension $L$, into $K$ groups. This means that when columns can be clustered into $L$ groups, row clustering is as hard as clustering in dimension $L$.
This threshold can be simply understood as follows. Let us define for $(i,l)\in [n]\times[L]$,
\begin{equation}\label{eq:YH}
\bar Y^{H^*}_{il}={1\over |H^*_{l}|}\sum_{j\in H^*_{l}} Y_{ij}=\mu_{k_{i}l}+\bar E^{H^*}_{il}.
\end{equation}
We observe that for  $j\in H^*_{l}$, we have
$Y_{ij}=\bar Y^{H^*}_{il}+\tilde E_{ij}$, where  $\tilde E_{ij}=E_{ij}-\bar E^{H^*}_{il}$ is independent of $\bar Y^{H^*}_{il}$, with a distribution independent of $G^*$ and $\mu$.  
Hence, clustering the rows of $Y$ is at least as hard as clustering the rows of $\bar Y^{H^*}$. 
Conversely, when $\Delta_{c}^2  \geq_{\log} \min\pa{ \sqrt{{n}}, \sqrt{{L^2n}/{p}}}$, the column structure $H^*$ can be recovered  in poly-time, with high-probability, in almost all regimes of $L,n,p$, so we can compute $\bar Y^{H^*}$ --see Section~\ref{sec:clustering}. Hence, when $\Delta_{c}^2  \geq_{\log} \min\pa{ \sqrt{{n}}, \sqrt{{L^2n}/{p}}}$,  
 clustering the rows of $Y$ is not harder than clustering the rows of $\bar Y^{H^*}$. 
Since 
var$(\bar E^{H^*}_{il})\asymp L\sigma^2/p$,  the row separation for $\bar Y^{H^*}$ is
$$\Delta^2 \asymp \min_{k\neq k'} {\|\mu_{k:}-\mu_{k':}\|^2\over 2 L\sigma^2/p}=\Delta_{r}^2$$
and the Condition \eqref{eq:bi2} corresponds to the Threshold \eqref{eq:computationalclusteringsharp} for clustering the rows of 
$\bar Y^{H^*}$ in poly-time.

\subsection{Upper bound on the statistical rate for biclustering}

To get a complete picture of the biclustering problem, let us now investigate the minimal separations $\Delta_{r}$ and $\Delta_{c}$ above which row clustering is possible. 
First of all, according to \eqref{eq:informationalclustering}, row clustering with exact Kmeans is always possible when $\Delta^2_{r}\geq_{\log} 1+ \sqrt{Kp / n}$, regardless of $\Delta_{c}^2$. Let us now examine how non poly-time algorithms can leverage the biclustering structure.

Let us consider the bi-Kmeans estimator (which is the MLE in the Gaussian setting)
\begin{equation}\label{ed:bi-Kmeans}
\big(\hat G,\hat H\big) \in \mathop{\text{argmin}}_{G,H} \underset{l\in [L]}{\sum_{k\in [K]}}\ \underset{j\in H_{l}}{\sum_{i\in G_{k}}} \pa{Y_{ij}-\bar Y_{kl}^{G\times H}}^2,
\end{equation}
where $\bar Y_{kl}^{G\times H}$ is the average value of $Y_{ij}$ over $G_{k}\times H_{l}$. In some way, this least-square estimator shares some similarities with that in~\cite{gao2016optimal}, although Gao et al.~\cite{gao2016optimal} focus their attention on the reconstruction of $X$ in Frobenius norm.
The next proposition, proved in Section \ref{prf:IT_Biclustering}, provides a condition under which bi-Kmeans is able to recover the partitions $G^*$ and $H^*$.

\begin{prop}\label{prp:IT_Biclusterling}
There exists numerical constants $c$, $c'$, $c''$ such that the following holds for all $(n\vee p)\geq c'$ and for all $\gamma>1$. 
Assume that the hidden partitions $G^*$ and $H^*$ fulfill the balancedness condition \eqref{eq:balanced}.
Then, as long as we have 
\begin{eqnarray}\label{eq:condition_sepation_line_IT}
\Delta^2_r &\geq c     \gamma^{5/2} \left[\sqrt{\frac{KL\log(n\vee p)}{n}}+ \log(n\vee p)\right],
\\  \label{eq:condition_sepation_column _IT}
\text{and}\quad \Delta^2_c &\geq c     \gamma^{5/2} \left[\sqrt{\frac{KL\log(n\vee p)}{p}}+ \log(n\vee p)\right]
\ , 
\end{eqnarray}
we have $\hat G=G^*$ and $\hat H=H^*$, 
with probability higher than $1-c''/(n\vee p)^2$.
\end{prop}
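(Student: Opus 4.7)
The plan is to show directly that, on an event of probability at least $1-c''/(n\vee p)^{2}$, the bi-Kmeans objective $L(G,H):=\|(I-\Pi_{G,H})Y\|_F^{2}$ is uniquely minimized at $(G^{*},H^{*})$, where $\Pi_{G,H}$ denotes the orthogonal projector onto matrices constant on the blocks $G_{k}\times H_{l}$. Writing $Y=X+E$ and using $\Pi_{G^{*},H^{*}}X=X$, expansion yields
\begin{equation}\label{eq:decomp-bikmeans-proof}
L(G,H)-L(G^{*},H^{*}) = \|(I-\Pi_{G,H})X\|_F^{2} + 2\<(I-\Pi_{G,H})X,E\> - \cro{\|\Pi_{G,H}E\|_F^{2}-\|\Pi_{G^{*},H^{*}}E\|_F^{2}}.
\end{equation}
Parametrizing $(G,H)$ by $m=\min_{\pi}\sum_{k}|G^{*}_{k}\triangle G_{\pi(k)}|$ and the analogous $m'$ for columns, the goal is to prove~\eqref{eq:decomp-bikmeans-proof} is positive uniformly over $(G,H)$ at misclassification level $(m,m')\neq(0,0)$.

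For the signal loss, I would first observe, under balancedness~\eqref{eq:balanced}, that any row of $G^{*}_{k}$ reassigned to a cluster matched to $G^{*}_{k'}$ contributes, via the intra-block variance of $X$ under the new partition, at least $\ag_\gamma\sigma^{2}\Delta_{r}^{2}$ to $\|(I-\Pi_{G,H})X\|_{F}^{2}$, and symmetrically for columns. A Pythagorean-type decomposition of the residual along row and column perturbations then produces
$$\|(I-\Pi_{G,H})X\|_{F}^{2}\ag c_{\gamma}\pa{m\,\sigma^{2}\Delta_{r}^{2}+m'\,\sigma^{2}\Delta_{c}^{2}}.$$

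The cross term is handled by Gaussian concentration: $|\<(I-\Pi_{G,H})X,E\>|\leq\sigma\|(I-\Pi_{G,H})X\|_{F}\sqrt{2t}$ with probability $1-e^{-t}$. The noise difference is a mean-zero Gaussian quadratic form with kernel $\Pi_{G,H}-\Pi_{G^{*},H^{*}}$, to which the Hanson--Wright inequality yields tails of order $\sigma^{2}\sqrt{r_{m,m'}\,t}+\sigma^{2}t$, where $r_{m,m'}$ denotes the squared Hilbert--Schmidt norm of the kernel. Since there are at most $(nK)^{m}(pL)^{m'}$ partition pairs at level $(m,m')$, taking $t\asymp (m+m'+1)\log(n\vee p)$ in the union bound and summing over $(m,m')$ produces the required exceptional probability. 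Combining with the signal lower bound reduces the whole argument to showing that, for every $(m,m')\neq(0,0)$,
$$c_{\gamma}\pa{m\Delta_{r}^{2}+m'\Delta_{c}^{2}}\ag \sqrt{r_{m,m'}(m+m'+1)\log(n\vee p)}+(m+m'+1)\log(n\vee p),$$
which follows from~\eqref{eq:condition_sepation_line_IT}--\eqref{eq:condition_sepation_column _IT} by inspecting the boundary cases $(m,m')\in\ac{(1,0),(0,1)}$ and the joint case $m,m'\geq 1$.

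The main obstacle is obtaining the sharp bound $r_{m,m'}\al KL(m/n+m'/p)$ on the squared Hilbert--Schmidt norm of $\Pi_{G,H}-\Pi_{G^{*},H^{*}}$, which is essential to recover the scaling $\sqrt{KL\log(n\vee p)/n}$ appearing in Proposition~\ref{prp:IT_Biclusterling}. The naive rank-$2KL$ bound gives $r_{m,m'}\asymp KL$ independently of the perturbation, which is too crude. The refinement uses that $\Pi_{G,H}$ and $\Pi_{G^{*},H^{*}}$ agree on every block not intersecting a misclassified row or column, and that each misclassified row (resp.\ column) changes only $L$ (resp.\ $K$) block averages by amounts of order $1/|G_{k}|$ (resp.\ $1/|H_{l}|$). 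Carrying out this block-by-block Hilbert--Schmidt computation, and in particular keeping the cross contributions in the joint perturbation case ($m,m'\geq 1$) of lower order, is the delicate combinatorial step on which the conclusion rests.
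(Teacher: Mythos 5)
Your overall architecture is the same as the paper's: rewrite the bi-Kmeans excess criterion as a signal term minus a cross term minus a pure-noise term, lower bound the signal linearly in the amount of row/column misclassification, absorb the cross term into the signal via Gaussian concentration, and control the pure noise by Hanson--Wright combined with a union bound over partitions at a fixed distance from the truth (the paper phrases this with partnership matrices $B_r,B_c$ and the quantities $\delta_{B_r},\delta_{B_c}$ in place of your $(m,m')$; see Lemmas \ref{lem:signal:biclustering}, \ref{lem:pure:noise:biclustering} and \ref{lem:cross:noise:clustering}). Your signal bound (a sum $m\Delta_r^2+m'\Delta_c^2$ rather than the paper's max) and your cross-term treatment are fine, and your claimed Hilbert--Schmidt estimate $r_{m,m'}\lesssim_{\gamma} KL\pa{m/n+m'/p}$ is correct in order of magnitude.

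The genuine gap is not (only) the unproven Hilbert--Schmidt computation you flag, but the way you use it: you apply a single Hanson--Wright bound to the whole kernel $\Pi_{G,H}-\Pi_{G^*,H^*}$ and pay the joint entropy $t\asymp(m+m')\log(n\vee p)$ for all of it. Your final reduction then requires in particular $m\Delta_r^2+m'\Delta_c^2\gtrsim\sqrt{KL\,m\,m'\log(n\vee p)/n}$ (the $KLm/n$ part of $r_{m,m'}$ multiplied by the $m'$ part of the entropy), and this does \emph{not} follow from the assumed separations: take $p\gg n$, $KL\gg n\log(n\vee p)$, $m=1$ and $m'$ of intermediate size (of order $\sqrt{KL\log(n\vee p)/n}$ divided by $\sqrt{KL\log(n\vee p)/p}+\log(n\vee p)$); then the right-hand side exceeds the left-hand side by a factor growing polynomially in $n\vee p$, for any fixed constant $c$ in \eqref{eq:condition_sepation_line_IT}, so your last sentence ("follows by inspecting the boundary cases") is false. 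The cure, and what the paper does, is to decompose the noise kernel as $(B_r-B_r^*)\otimes B_c^*+B_r^*\otimes(B_c-B_c^*)+(B_r-B_r^*)\otimes(B_c-B_c^*)$ and charge each piece only the entropy it actually depends on: the first piece depends only on the row partition, so its union bound costs only $\asymp m\log(n\vee p)$ and yields $m\sqrt{KL\log(n\vee p)/n}$; symmetrically for the second; only the product piece needs the joint entropy, but its Hilbert--Schmidt norm has the product scaling $\sqrt{KL\,m\,m'/(np)}$, which, after using $m\leq 2n$ and $m'\leq 2p$, is again absorbed into $m\sqrt{KL\log(n\vee p)/n}+m'\sqrt{KL\log(n\vee p)/p}$ (this is exactly the content of Lemma \ref{lem:pure:noise:biclustering} and its proof). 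In short, your Hilbert--Schmidt bound is matched to the wrong deviation level; the kernel decomposition is essential to the result, not a refinement of constants.
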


Proposition~\ref{prp:IT_Biclusterling} ensures that 
non poly-time algorithms can recover the row (or column) partition $G^*$ as soon as
$$\Delta^2_{r}\geq_{\log} 1+ \sqrt{KL \over n}\quad \text{and}\quad \Delta^2_{c}\geq_{\log} 1+\sqrt{KL \over p}.
$$
The separation $\Delta_{r}\geq_{\log} \sqrt{KL / n}$ corresponds to the statistical separation for clustering   $n$  rows in dimension $L$, while the separation $\Delta^2_{c}\geq_{\log} \sqrt{KL / p}$ corresponds to the statistical separation for clustering  $p$ columns in dimension $K$. 
The separation $\Delta^2_{c}\geq_{\log} \sqrt{KL / p}$ can be interpreted as
 the separation needed to cluster the columns (recover $H^*$) when $G^*$ is known, by computing
$$\bar Y^{G^*}_{kj}={1\over |G^*_{k}|}\sum_{i\in G^*_{k}} Y_{ij},\quad \text{for}\  (k,j)\in [K]\times[p],$$
 and then clustering the columns of $\bar Y^{G^*}$.  Indeed, the variance of the entries of $\bar Y^{G^*}$ is around $K\sigma^2/n$, and  
 the column separation for $\bar Y^{G^*}$ is then
   $$\Delta^2 \asymp \min_{l\neq l'} {\|\mu_{:l}-\mu_{:l'}\|^2\over 2 K\sigma^2/n}=\Delta_{c}^2\enspace .$$
 Strikingly, above the column separation $\Delta^2_{c}\geq_{\log} \sqrt{KL / p}$ needed to recover $H^*$ when $G^*$ is known, we are able to recover $G^*$ with the separation $\Delta^2_{r}\geq_{\log} \sqrt{KL / n}$ required when the partition $H^*$ is known. Hence, only a $K$-dimensional column separation condition  is needed to benefit from the $L$-dimensional row separation condition $\Delta^2_{r}\geq_{\log} 1+ \sqrt{KL / n}$ for successful clustering.
This feature is in contrast with poly-time algorithms, where the $n$-dimensional column  separation $\Delta^2_{c}\geq_{\log} 1+ \min\pa{\sqrt{n},\sqrt{L^2n / p}}$ is required for benefiting from the $L$-dimensional row separation condition $\Delta^2_{r}\geq_{\log} 1+ \min\pa{ \sqrt{L}, \sqrt{K^2 L/ n}}$.
Hence, our analysis unravels a much better ability of non poly-time algorithms to leverage the biclustering structure, compared to poly-time algorithms.

To complete the picture, we underline that the condition $\Delta^2_{r}\geq_{\log} 1+ \sqrt{KL /n}$ is minimal for recovering $G^*$. Indeed, we can argue as in Section~\ref{sec:LD:biclustering}, and consider, for $j\in H^*_{l}$ the decomposition $Y_{ij}=\bar Y^{H^*}_{il}+\tilde E_{ij}$, where $\bar Y^{H^*}$ is defined in \eqref{eq:YH}
and  $\tilde E_{ij}=E_{ij}-\bar E^{H^*}_{il}$ is independent of $\bar Y^{H^*}$, with a distribution independent of $G^*$ and $\mu$. The problem of clustering  the rows of $Y$ is at least as hard as the problem of clustering the rows of $\bar Y^{H^*}$,  
and the condition $\Delta^2_{r}\geq_{\log} 1+ \sqrt{KL /n}$ corresponds to the threshold above which row clustering of 
$\bar Y^{H^*}$ is statistically possible. Hence, row-clustering of $Y$ is, in general, impossible when $\Delta^2_{r}\leq_{\log} 1+ \sqrt{KL /n}$.
To sum-up, the minimal condition for recovering $G^*$ without computational constraints is 
$$\Delta^2_{r}\geq_{\log} 1+ \sqrt{Kp \over n}\,,\quad \text{or}\quad\quad \Delta^2_{r}\geq_{\log} 1+ \sqrt{KL \over n}\quad \text{and}\quad \Delta^2_{c}\geq_{\log} 1+\sqrt{KL \over p}.
$$

\section{Discussion and Open Problems}\label{sec:discussion}

The technique developed in Section~\ref{sec:technique} enables to derive computational lower bounds for three important clustering problems, matching the upper-bounds for poly-time algorithms in most of the regimes of parameters $n,p,K$. It is likely that this technique can also be successfully applied to other problems like tensor PCA \cite{TensorPCA24}, semi-supervised sparse clustering~\cite{azar2024semi}, ...

One major limitation of our technique  is that it relies on the lower-bound on the $MMSE_{\leq D}$ of \cite{SchrammWein22} in terms of a sum of cumulants. This lower bound may not be tight in some regimes, due to a Jensen inequality at the heart of the analysis of \cite{SchrammWein22}, see e.g. the discussion in Appendix~\ref{sec:discussion:prior:sparse}.  
In particular, for clustering Gaussian mixture, in the regime where both $p\leq {n}/{K}$ and $K^2\lesssim n \leq \text{poly}(K)$, our LD bound \eqref{eq:computationalclusteringsharp+informational} and our poly-time upper bound \eqref{eq:suboptimal-clustering} do not match. We suspect that, in this regime, both the LD and the poly-time upper-bounds are suboptimal. Some other proof techniques are probably needed to handle this very challenging regime, and the minimal separation for poly-time algorithms in this regime remains an open problem.

\begin{funding}
 This work has been partially supported by ANR-21-CE23-0035 (ASCAI, ANR).
\end{funding}

%%%%%%%%%%%%%%%%%%%%%%%%%%%%%%%%%%%%%%%%%%%%%%%%%%%%%%%%%%%%%
%%                  The Bibliography                       %%
%%                                                         %%
%%  imsart-???.bst  will be used to                        %%
%%  create a .BBL file for submission.                     %%
%%                                                         %%
%%  Note that the displayed Bibliography will not          %%
%%  necessarily be rendered by Latex exactly as specified  %%
%%  in the online Instructions for Authors.                %%
%%                                                         %%
%%  MR numbers will be added by VTeX.                      %%
%%                                                         %%
%%  Use \cite{...} to cite references in text.             %%
%%                                                         %%
%%%%%%%%%%%%%%%%%%%%%%%%%%%%%%%%%%%%%%%%%%%%%%%%%%%%%%%%%%%%%

%% if your bibliography is in bibtex format, uncomment commands:
\bibliographystyle{imsart-number} % Style BST file (imsart-number.bst or imsart-nameyear.bst)
\bibliography{biblio.bib}       % Bibliography file (usually '*.bib')

\begin{thebibliography}{73}
% BibTex style file: imsart-number.bst, 2017-11-03
% Default style options (sort=1,type=number).
% Used options (sort=1,type=number).

\bibitem{AchlioptasMcSherry2005}
\begin{binproceedings}[author]
\bauthor{\bsnm{Achlioptas},~\bfnm{Dimitris}\binits{D.}} \AND
  \bauthor{\bsnm{McSherry},~\bfnm{Frank}\binits{F.}}
(\byear{2005}).
\btitle{{On Spectral Learning of Mixtures of Distributions}}.
In \bbooktitle{{Learning Theory}}
(\beditor{\bfnm{Peter}\binits{P.}~\bsnm{Auer}} \AND
  \beditor{\bfnm{Ron}\binits{R.}~\bsnm{Meir}}, eds.)
\bpages{458--469}.
\bpublisher{Springer Berlin Heidelberg}, \baddress{Berlin, Heidelberg}.
\end{binproceedings}
\endbibitem

\bibitem{awasthi2015hardness}
\begin{binproceedings}[author]
\bauthor{\bsnm{Awasthi},~\bfnm{Pranjal}\binits{P.}},
  \bauthor{\bsnm{Charikar},~\bfnm{Moses}\binits{M.}},
  \bauthor{\bsnm{Krishnaswamy},~\bfnm{Ravishankar}\binits{R.}} \AND
  \bauthor{\bsnm{Sinop},~\bfnm{Ali~Kemal}\binits{A.~K.}}
(\byear{2015}).
\btitle{{The Hardness of Approximation of Euclidean k-Means}}.
In \bbooktitle{31st International Symposium on Computational Geometry (SoCG
  2015)}
\bvolume{34}
\bpages{754--767}.
\bpublisher{Schloss Dagstuhl -- Leibniz-Zentrum f{\"u}r Informatik}.
\end{binproceedings}
\endbibitem

\bibitem{azar2024semi}
\begin{barticle}[author]
\bauthor{\bsnm{Azar},~\bfnm{Eyar}\binits{E.}} \AND
  \bauthor{\bsnm{Nadler},~\bfnm{Boaz}\binits{B.}}
(\byear{2024}).
\btitle{Semi-supervised sparse gaussian classification: Provable benefits of
  unlabeled data}.
\bjournal{Advances in Neural Information Processing Systems}
\bvolume{37}
\bpages{20132--20169}.
\end{barticle}
\endbibitem

\bibitem{azizyan2013minimax}
\begin{binproceedings}[author]
\bauthor{\bsnm{Azizyan},~\bfnm{Martin}\binits{M.}},
  \bauthor{\bsnm{Singh},~\bfnm{Aarti}\binits{A.}} \AND
  \bauthor{\bsnm{Wasserman},~\bfnm{Larry}\binits{L.}}
(\byear{2013}).
\btitle{{Minimax theory for high-dimensional gaussian mixtures with sparse mean
  separation}}.
In \bbooktitle{{Advances in Neural Information Processing Systems}}
\bpages{2139--2147}.
\end{binproceedings}
\endbibitem

\bibitem{BBP05}
\begin{barticle}[author]
\bauthor{\bsnm{Baik},~\bfnm{Jinho}\binits{J.}},
  \bauthor{\bsnm{Arous},~\bfnm{G{\'e}rard~Ben}\binits{G.~B.}} \AND
  \bauthor{\bsnm{P{\'e}ch{\'e}},~\bfnm{Sandrine}\binits{S.}}
(\byear{2005}).
\btitle{{Phase transition of the largest eigenvalue for nonnull complex sample
  covariance matrices}}.
\bjournal{The Annals of Probability}
\bvolume{33}
\bpages{1643 -- 1697}.
\end{barticle}
\endbibitem

\bibitem{balakrishnan2011statistical}
\begin{barticle}[author]
\bauthor{\bsnm{Balakrishnan},~\bfnm{Sivaraman}\binits{S.}},
  \bauthor{\bsnm{Kolar},~\bfnm{Mladen}\binits{M.}},
  \bauthor{\bsnm{Rinaldo},~\bfnm{Alessandro}\binits{A.}},
  \bauthor{\bsnm{Singh},~\bfnm{Aarti}\binits{A.}} \AND
  \bauthor{\bsnm{Wasserman},~\bfnm{Larry}\binits{L.}}
(\byear{2011}).
\btitle{Statistical and computational tradeoffs in biclustering}.
\bjournal{NIPS 2011 workshop on computational trade-offs in statistical
  learning}
\bvolume{4}.
\end{barticle}
\endbibitem

\bibitem{bandeira2022franz}
\begin{barticle}[author]
\bauthor{\bsnm{Bandeira},~\bfnm{Afonso~S}\binits{A.~S.}},
  \bauthor{\bsnm{El~Alaoui},~\bfnm{Ahmed}\binits{A.}},
  \bauthor{\bsnm{Hopkins},~\bfnm{Samuel}\binits{S.}},
  \bauthor{\bsnm{Schramm},~\bfnm{Tselil}\binits{T.}},
  \bauthor{\bsnm{Wein},~\bfnm{Alexander~S}\binits{A.~S.}} \AND
  \bauthor{\bsnm{Zadik},~\bfnm{Ilias}\binits{I.}}
(\byear{2022}).
\btitle{The Franz-Parisi criterion and computational trade-offs in high
  dimensional statistics}.
\bjournal{Advances in Neural Information Processing Systems}
\bvolume{35}
\bpages{33831--33844}.
\end{barticle}
\endbibitem

\bibitem{bandeira2019computationalhardnesscertifyingbounds}
\begin{barticle}[author]
\bauthor{\bsnm{Bandeira},~\bfnm{Afonso~S.}\binits{A.~S.}},
  \bauthor{\bsnm{Kunisky},~\bfnm{Dmitriy}\binits{D.}} \AND
  \bauthor{\bsnm{Wein},~\bfnm{Alexander~S.}\binits{A.~S.}}
(\byear{2019}).
\btitle{Computational Hardness of Certifying Bounds on Constrained PCA
  Problems}.
\bjournal{arXiv 1902.07324}.
\end{barticle}
\endbibitem

\bibitem{banks2018information}
\begin{barticle}[author]
\bauthor{\bsnm{Banks},~\bfnm{Jess}\binits{J.}},
  \bauthor{\bsnm{Moore},~\bfnm{Cristopher}\binits{C.}},
  \bauthor{\bsnm{Vershynin},~\bfnm{Roman}\binits{R.}},
  \bauthor{\bsnm{Verzelen},~\bfnm{Nicolas}\binits{N.}} \AND
  \bauthor{\bsnm{Xu},~\bfnm{Jiaming}\binits{J.}}
(\byear{2018}).
\btitle{Information-theoretic bounds and phase transitions in clustering,
  sparse PCA, and submatrix localization}.
\bjournal{IEEE Transactions on Information Theory}
\bvolume{64}
\bpages{4872--4894}.
\end{barticle}
\endbibitem

\bibitem{Barak19}
\begin{barticle}[author]
\bauthor{\bsnm{Barak},~\bfnm{Boaz}\binits{B.}},
  \bauthor{\bsnm{Hopkins},~\bfnm{Samuel}\binits{S.}},
  \bauthor{\bsnm{Kelner},~\bfnm{Jonathan}\binits{J.}},
  \bauthor{\bsnm{Kothari},~\bfnm{Pravesh~K.}\binits{P.~K.}},
  \bauthor{\bsnm{Moitra},~\bfnm{Ankur}\binits{A.}} \AND
  \bauthor{\bsnm{Potechin},~\bfnm{Aaron}\binits{A.}}
(\byear{2019}).
\btitle{A Nearly Tight Sum-of-Squares Lower Bound for the Planted Clique
  Problem}.
\bjournal{SIAM Journal on Computing}
\bvolume{48}
\bpages{687-735}.
\end{barticle}
\endbibitem

\bibitem{pmlr-v30-Berthet13}
\begin{binproceedings}[author]
\bauthor{\bsnm{Berthet},~\bfnm{Quentin}\binits{Q.}} \AND
  \bauthor{\bsnm{Rigollet},~\bfnm{Philippe}\binits{P.}}
(\byear{2013}).
\btitle{{Complexity Theoretic Lower Bounds for Sparse Principal Component
  Detection}}.
In \bbooktitle{{Proceedings of the 26th Annual Conference on Learning Theory}}
(\beditor{\bfnm{Shai}\binits{S.}~\bsnm{Shalev-Shwartz}} \AND
  \beditor{\bfnm{Ingo}\binits{I.}~\bsnm{Steinwart}}, eds.).
\bseries{{Proceedings of Machine Learning Research}}
\bvolume{30}
\bpages{1046--1066}.
\bpublisher{PMLR}, \baddress{Princeton, NJ, USA}.
\end{binproceedings}
\endbibitem

\bibitem{brennan2020reducibility}
\begin{binproceedings}[author]
\bauthor{\bsnm{Brennan},~\bfnm{Matthew}\binits{M.}} \AND
  \bauthor{\bsnm{Bresler},~\bfnm{Guy}\binits{G.}}
(\byear{2020}).
\btitle{Reducibility and statistical-computational gaps from secret leakage}.
In \bbooktitle{Conference on Learning Theory}
\bpages{648--847}.
\bpublisher{PMLR}.
\end{binproceedings}
\endbibitem

\bibitem{brennan2020statistical}
\begin{barticle}[author]
\bauthor{\bsnm{Brennan},~\bfnm{Matthew}\binits{M.}},
  \bauthor{\bsnm{Bresler},~\bfnm{Guy}\binits{G.}},
  \bauthor{\bsnm{Hopkins},~\bfnm{Samuel~B}\binits{S.~B.}},
  \bauthor{\bsnm{Li},~\bfnm{Jerry}\binits{J.}} \AND
  \bauthor{\bsnm{Schramm},~\bfnm{Tselil}\binits{T.}}
(\byear{2020}).
\btitle{Statistical query algorithms and low-degree tests are almost
  equivalent}.
\bjournal{arXiv preprint arXiv:2009.06107}.
\end{barticle}
\endbibitem

\bibitem{brennan2018reducibility}
\begin{binproceedings}[author]
\bauthor{\bsnm{Brennan},~\bfnm{Matthew}\binits{M.}},
  \bauthor{\bsnm{Bresler},~\bfnm{Guy}\binits{G.}} \AND
  \bauthor{\bsnm{Huleihel},~\bfnm{Wasim}\binits{W.}}
(\byear{2018}).
\btitle{Reducibility and computational lower bounds for problems with planted
  sparse structure}.
In \bbooktitle{Conference On Learning Theory}
\bpages{48--166}.
\bpublisher{PMLR}.
\end{binproceedings}
\endbibitem

\bibitem{cai2017computational}
\begin{barticle}[author]
\bauthor{\bsnm{Cai},~\bfnm{Tony}\binits{T.}},
  \bauthor{\bsnm{Liang},~\bfnm{Tengyuan}\binits{T.}} \AND
  \bauthor{\bsnm{Rakhlin},~\bfnm{Alexander}\binits{A.}}
(\byear{2017}).
\btitle{Computational and Statistical Boundaries for Submatrix Localization in
  a Large Noisy Matrix}.
\bjournal{The Annals of Statistics}
\bvolume{45}
\bpages{1403--1430}.
\end{barticle}
\endbibitem

\bibitem{cai2019chime}
\begin{barticle}[author]
\bauthor{\bsnm{Cai},~\bfnm{T~Tony}\binits{T.~T.}},
  \bauthor{\bsnm{Ma},~\bfnm{Jing}\binits{J.}} \AND
  \bauthor{\bsnm{Zhang},~\bfnm{Linjun}\binits{L.}}
(\byear{2019}).
\btitle{Chime: Clustering of high-dimensional Gaussian mixtures with EM
  algorithm and its optimality 1}.
\bjournal{Annals of Statistics}
\bvolume{47}
\bpages{1234--1267}.
\end{barticle}
\endbibitem

\bibitem{chen2021hanson}
\begin{barticle}[author]
\bauthor{\bsnm{Chen},~\bfnm{Xiaohui}\binits{X.}} \AND
  \bauthor{\bsnm{Yang},~\bfnm{Yun}\binits{Y.}}
(\byear{2021}).
\btitle{{Hanson--Wright inequality in Hilbert spaces with application to
  $K$-means clustering for non-Euclidean data}}.
\bjournal{Bernoulli}
\bvolume{27}
\bpages{586 -- 614}.
\end{barticle}
\endbibitem

\bibitem{chen2024low}
\begin{barticle}[author]
\bauthor{\bsnm{Chen},~\bfnm{Zongchen}\binits{Z.}},
  \bauthor{\bsnm{Sheehan},~\bfnm{Conor}\binits{C.}} \AND
  \bauthor{\bsnm{Zadik},~\bfnm{Ilias}\binits{I.}}
(\byear{2024}).
\btitle{On the Low-Temperature MCMC threshold: the cases of sparse tensor PCA,
  sparse regression, and a geometric rule}.
\bjournal{arXiv preprint arXiv:2408.00746}.
\end{barticle}
\endbibitem

\bibitem{dadon2024detection}
\begin{barticle}[author]
\bauthor{\bsnm{Dadon},~\bfnm{Marom}\binits{M.}},
  \bauthor{\bsnm{Huleihel},~\bfnm{Wasim}\binits{W.}} \AND
  \bauthor{\bsnm{Bendory},~\bfnm{Tamir}\binits{T.}}
(\byear{2024}).
\btitle{Detection and recovery of hidden submatrices}.
\bjournal{IEEE Transactions on Signal and Information Processing over Networks}
\bvolume{10}
\bpages{69--82}.
\end{barticle}
\endbibitem

\bibitem{Dasgupta99}
\begin{binproceedings}[author]
\bauthor{\bsnm{Dasgupta},~\bfnm{Sanjoy}\binits{S.}}
(\byear{1999}).
\btitle{Learning mixtures of Gaussians}.
In \bbooktitle{40th Annual Symposium on Foundations of Computer Science (Cat.
  No.99CB37039)}
\bpages{634-644}.
\end{binproceedings}
\endbibitem

\bibitem{diakonikolasCOLT23b}
\begin{binproceedings}[author]
\bauthor{\bsnm{Diakonikolas},~\bfnm{Ilias}\binits{I.}},
  \bauthor{\bsnm{Kane},~\bfnm{Daniel~M.}\binits{D.~M.}},
  \bauthor{\bsnm{Pittas},~\bfnm{Thanasis}\binits{T.}} \AND
  \bauthor{\bsnm{Zarifis},~\bfnm{Nikos}\binits{N.}}
(\byear{2023}).
\btitle{SQ Lower Bounds for Learning Mixtures of Separated and Bounded
  Covariance Gaussians}.
In \bbooktitle{Proceedings of Thirty Sixth Conference on Learning Theory}
(\beditor{\bfnm{Gergely}\binits{G.}~\bsnm{Neu}} \AND
  \beditor{\bfnm{Lorenzo}\binits{L.}~\bsnm{Rosasco}}, eds.).
\bseries{Proceedings of Machine Learning Research}
\bvolume{195}
\bpages{2319--2349}.
\bpublisher{PMLR}.
\end{binproceedings}
\endbibitem

\bibitem{pmlr-v195-diakonikolas23b}
\begin{barticle}[author]
\bauthor{\bsnm{Diakonikolas},~\bfnm{Ilias}\binits{I.}},
  \bauthor{\bsnm{Kane},~\bfnm{Daniel~M.}\binits{D.~M.}},
  \bauthor{\bsnm{Pittas},~\bfnm{Thanasis}\binits{T.}} \AND
  \bauthor{\bsnm{Zarifis},~\bfnm{Nikos}\binits{N.}}
(\byear{2023}).
\btitle{SQ Lower Bounds for Learning Mixtures of Separated and Bounded
  Covariance Gaussians}.
\bjournal{Proceedings of Thirty Sixth Conference on Learning Theory}
\bvolume{195}.
\end{barticle}
\endbibitem

\bibitem{SQclustering}
\begin{barticle}[author]
\bauthor{\bsnm{Diakonikolas},~\bfnm{Ilias}\binits{I.}},
  \bauthor{\bsnm{Kane},~\bfnm{Daniel~M.}\binits{D.~M.}} \AND
  \bauthor{\bsnm{Stewart},~\bfnm{Alistair}\binits{A.}}
(\byear{2017}).
\btitle{Statistical Query Lower Bounds for Robust Estimation of
  High-Dimensional Gaussians and Gaussian Mixtures}.
\bjournal{2017 IEEE 58th Annual Symposium on Foundations of Computer Science
  (FOCS)}.
\end{barticle}
\endbibitem

\bibitem{DBLP:journals/corr/abs-1711-07211}
\begin{binproceedings}[author]
\bauthor{\bsnm{Diakonikolas},~\bfnm{Ilias}\binits{I.}},
  \bauthor{\bsnm{Kane},~\bfnm{Daniel~M.}\binits{D.~M.}} \AND
  \bauthor{\bsnm{Stewart},~\bfnm{Alistair}\binits{A.}}
(\byear{2018}).
\btitle{List-decodable robust mean estimation and learning mixtures of
  spherical gaussians}.
In \bbooktitle{Proceedings of the 50th Annual ACM SIGACT Symposium on Theory of
  Computing}.
\bseries{STOC 2018}
\bpages{1047--1060}.
\bpublisher{Association for Computing Machinery}.
\end{binproceedings}
\endbibitem

\bibitem{ding2024subexponential}
\begin{barticle}[author]
\bauthor{\bsnm{Ding},~\bfnm{Yunzi}\binits{Y.}},
  \bauthor{\bsnm{Kunisky},~\bfnm{Dmitriy}\binits{D.}},
  \bauthor{\bsnm{Wein},~\bfnm{Alexander~S}\binits{A.~S.}} \AND
  \bauthor{\bsnm{Bandeira},~\bfnm{Afonso~S}\binits{A.~S.}}
(\byear{2024}).
\btitle{Subexponential-time algorithms for sparse PCA}.
\bjournal{Foundations of Computational Mathematics}
\bvolume{24}
\bpages{865--914}.
\end{barticle}
\endbibitem

\bibitem{DavidL.DonohoandArianMalekiandAndreaMontanari}
\begin{barticle}[author]
\bauthor{\bsnm{Donoho},~\bfnm{David~L.}\binits{D.~L.}},
  \bauthor{\bsnm{Maleki},~\bfnm{Arian}\binits{A.}} \AND
  \bauthor{\bsnm{Montanari},~\bfnm{Andrea}\binits{A.}}
(\byear{2009}).
\btitle{Message-passing algorithms for compressed sensing}.
\bjournal{Proceedings of the National Academy of Sciences}
\bvolume{106}
\bpages{18919}.
\end{barticle}
\endbibitem

\bibitem{TensorPCA24}
\begin{barticle}[author]
\bauthor{\bsnm{Dudeja},~\bfnm{Rishabh}\binits{R.}} \AND
  \bauthor{\bsnm{Hsu},~\bfnm{Daniel}\binits{D.}}
(\byear{2024}).
\btitle{{Statistical-computational trade-offs in tensor PCA and related
  problems via communication complexity}}.
\bjournal{The Annals of Statistics}
\bvolume{52}
\bpages{131 -- 156}.
\bdoi{10.1214/23-AOS2331}
\end{barticle}
\endbibitem

\bibitem{Even24}
\begin{binproceedings}[author]
\bauthor{\bsnm{Even},~\bfnm{Bertrand}\binits{B.}},
  \bauthor{\bsnm{Giraud},~\bfnm{Christophe}\binits{C.}} \AND
  \bauthor{\bsnm{Verzelen},~\bfnm{Nicolas}\binits{N.}}
(\byear{2024}).
\btitle{Computation-information gap in high-dimensional clustering}.
In \bbooktitle{Proceedings of Thirty Seventh Conference on Learning Theory}
(\beditor{\bfnm{Shipra}\binits{S.}~\bsnm{Agrawal}} \AND
  \beditor{\bfnm{Aaron}\binits{A.}~\bsnm{Roth}}, eds.).
\bseries{Proceedings of Machine Learning Research}
\bvolume{247}
\bpages{1646--1712}.
\bpublisher{PMLR}.
\end{binproceedings}
\endbibitem

\bibitem{fan2018curse}
\begin{barticle}[author]
\bauthor{\bsnm{Fan},~\bfnm{Jianqing}\binits{J.}},
  \bauthor{\bsnm{Liu},~\bfnm{Han}\binits{H.}},
  \bauthor{\bsnm{Wang},~\bfnm{Zhaoran}\binits{Z.}} \AND
  \bauthor{\bsnm{Yang},~\bfnm{Zhuoran}\binits{Z.}}
(\byear{2018}).
\btitle{Curse of heterogeneity: Computational barriers in sparse mixture models
  and phase retrieval}.
\bjournal{arXiv preprint arXiv:1808.06996}.
\end{barticle}
\endbibitem

\bibitem{fei2018hidden}
\begin{binproceedings}[author]
\bauthor{\bsnm{Fei},~\bfnm{Yingjie}\binits{Y.}} \AND
  \bauthor{\bsnm{Chen},~\bfnm{Yudong}\binits{Y.}}
(\byear{2018}).
\btitle{Hidden Integrality of SDP Relaxations for Sub-Gaussian Mixture Models}.
In \bbooktitle{Proceedings of the 31st Conference On Learning Theory}.
\bseries{Proceedings of Machine Learning Research}
\bvolume{75}
\bpages{1931--1965}.
\bpublisher{PMLR}.
\end{binproceedings}
\endbibitem

\bibitem{florescu2016spectral}
\begin{binproceedings}[author]
\bauthor{\bsnm{Florescu},~\bfnm{Laura}\binits{L.}} \AND
  \bauthor{\bsnm{Perkins},~\bfnm{Will}\binits{W.}}
(\byear{2016}).
\btitle{Spectral thresholds in the bipartite stochastic block model}.
In \bbooktitle{Conference on Learning Theory}
\bpages{943--959}.
\bpublisher{PMLR}.
\end{binproceedings}
\endbibitem

\bibitem{gamarnik2021overlap}
\begin{barticle}[author]
\bauthor{\bsnm{Gamarnik},~\bfnm{David}\binits{D.}}
(\byear{2021}).
\btitle{The overlap gap property: A topological barrier to optimizing over
  random structures}.
\bjournal{Proceedings of the National Academy of Sciences}
\bvolume{118}
\bpages{e2108492118}.
\end{barticle}
\endbibitem

\bibitem{gao2016optimal}
\begin{barticle}[author]
\bauthor{\bsnm{Gao},~\bfnm{Chao}\binits{C.}},
  \bauthor{\bsnm{Lu},~\bfnm{Yu}\binits{Y.}},
  \bauthor{\bsnm{Ma},~\bfnm{Zongming}\binits{Z.}} \AND
  \bauthor{\bsnm{Zhou},~\bfnm{Harrison~H}\binits{H.~H.}}
(\byear{2016}).
\btitle{Optimal estimation and completion of matrices with biclustering
  structures}.
\bjournal{Journal of Machine Learning Research}
\bvolume{17}
\bpages{1--29}.
\end{barticle}
\endbibitem

\bibitem{HDS2}
\begin{bbook}[author]
\bauthor{\bsnm{Giraud},~\bfnm{Christophe}\binits{C.}}
(\byear{2021}).
\btitle{{Introduction to high-dimensional statistics}}.
\bseries{{Monographs on Statistics and Applied Probability}}
\bvolume{168}.
\bpublisher{CRC Press, Boca Raton, FL}.
\end{bbook}
\endbibitem

\bibitem{giraud2019partial}
\begin{barticle}[author]
\bauthor{\bsnm{Giraud},~\bfnm{Christophe}\binits{C.}} \AND
  \bauthor{\bsnm{Verzelen},~\bfnm{Nicolas}\binits{N.}}
(\byear{2019}).
\btitle{Partial recovery bounds for clustering with the relaxed $ K $-means}.
\bjournal{Mathematical Statistics and Learning}
\bvolume{1}
\bpages{317--374}.
\end{barticle}
\endbibitem

\bibitem{hartigan1972direct}
\begin{barticle}[author]
\bauthor{\bsnm{Hartigan},~\bfnm{John~A}\binits{J.~A.}}
(\byear{1972}).
\btitle{Direct clustering of a data matrix}.
\bjournal{Journal of the american statistical association}
\bvolume{67}
\bpages{123--129}.
\end{barticle}
\endbibitem

\bibitem{hopkins2018statistical}
\begin{bphdthesis}[author]
\bauthor{\bsnm{Hopkins},~\bfnm{Samuel}\binits{S.}}
(\byear{2018}).
\btitle{Statistical inference and the sum of squares method},
\btype{PhD thesis},
\bpublisher{Cornell University}.
\end{bphdthesis}
\endbibitem

\bibitem{HopkinsFOCS17}
\begin{binproceedings}[author]
\bauthor{\bsnm{Hopkins},~\bfnm{Samuel~B.}\binits{S.~B.}},
  \bauthor{\bsnm{K.~Kothari},~\bfnm{Pravesh}\binits{P.}},
  \bauthor{\bsnm{A.~Potechin},~\bfnm{Aaron}\binits{A.}},
  \bauthor{\bsnm{Raghavendra},~\bfnm{Prasad}\binits{P.}},
  \bauthor{\bsnm{chramm},~\bfnm{Tselil}\binits{T.}} \AND
  \bauthor{\bsnm{Steurer},~\bfnm{David}\binits{D.}}
(\byear{2017}).
\btitle{The Power of Sum-of-Squares for Detecting Hidden Structures}.
In \bbooktitle{2017 IEEE 58th Annual Symposium on Foundations of Computer
  Science (FOCS)}
\bpages{720-731}.
\bpublisher{IEEE Computer Society}, \baddress{Los Alamitos, CA, USA}.
\end{binproceedings}
\endbibitem

\bibitem{Hopkins17}
\begin{binproceedings}[author]
\bauthor{\bsnm{Hopkins},~\bfnm{Samuel~B.}\binits{S.~B.}} \AND
  \bauthor{\bsnm{Steurer},~\bfnm{David}\binits{D.}}
(\byear{2017}).
\btitle{Efficient Bayesian Estimation from Few Samples: Community Detection and
  Related Problems}.
In \bbooktitle{2017 IEEE 58th Annual Symposium on Foundations of Computer
  Science (FOCS)}
\bpages{379-390}.
\end{binproceedings}
\endbibitem

\bibitem{jin2017phase}
\begin{barticle}[author]
\bauthor{\bsnm{Jin},~\bfnm{Jiashun}\binits{J.}},
  \bauthor{\bsnm{Ke},~\bfnm{Zheng~Tracy}\binits{Z.~T.}} \AND
  \bauthor{\bsnm{Wang},~\bfnm{Wanjie}\binits{W.}}
(\byear{2017}).
\btitle{Phase transitions for high dimensional clustering and related
  problems}.
\bjournal{The Annals of Statistics}
\bvolume{45}.
\end{barticle}
\endbibitem

\bibitem{kearns1998efficient}
\begin{barticle}[author]
\bauthor{\bsnm{Kearns},~\bfnm{Michael}\binits{M.}}
(\byear{1998}).
\btitle{Efficient noise-tolerant learning from statistical queries}.
\bjournal{Journal of the ACM (JACM)}
\bvolume{45}
\bpages{983--1006}.
\end{barticle}
\endbibitem

\bibitem{kolar2011minimax}
\begin{barticle}[author]
\bauthor{\bsnm{Kolar},~\bfnm{Mladen}\binits{M.}},
  \bauthor{\bsnm{Balakrishnan},~\bfnm{Sivaraman}\binits{S.}},
  \bauthor{\bsnm{Rinaldo},~\bfnm{Alessandro}\binits{A.}} \AND
  \bauthor{\bsnm{Singh},~\bfnm{Aarti}\binits{A.}}
(\byear{2011}).
\btitle{Minimax localization of structural information in large noisy
  matrices}.
\bjournal{Advances in Neural Information Processing Systems}
\bvolume{24}.
\end{barticle}
\endbibitem

\bibitem{kothari2023planted}
\begin{binproceedings}[author]
\bauthor{\bsnm{Kothari},~\bfnm{Pravesh}\binits{P.}},
  \bauthor{\bsnm{Vempala},~\bfnm{Santosh~S}\binits{S.~S.}},
  \bauthor{\bsnm{Wein},~\bfnm{Alexander~S}\binits{A.~S.}} \AND
  \bauthor{\bsnm{Xu},~\bfnm{Jeff}\binits{J.}}
(\byear{2023}).
\btitle{Is planted coloring easier than planted clique?}
In \bbooktitle{The Thirty Sixth Annual Conference on Learning Theory}
\bpages{5343--5372}.
\bpublisher{PMLR}.
\end{binproceedings}
\endbibitem

\bibitem{KuniskyWeinBandeira}
\begin{binproceedings}[author]
\bauthor{\bsnm{Kunisky},~\bfnm{Dmitriy}\binits{D.}},
  \bauthor{\bsnm{Wein},~\bfnm{Alexander~S}\binits{A.~S.}} \AND
  \bauthor{\bsnm{Bandeira},~\bfnm{Afonso~S}\binits{A.~S.}}
(\byear{2019}).
\btitle{Notes on computational hardness of hypothesis testing: Predictions
  using the low-degree likelihood ratio}.
In \bbooktitle{ISAAC Congress (International Society for Analysis, its
  Applications and Computation)}
\bpages{1--50}.
\bpublisher{Springer}.
\end{binproceedings}
\endbibitem

\bibitem{Kwon20}
\begin{binproceedings}[author]
\bauthor{\bsnm{Kwon},~\bfnm{Jeongyeol}\binits{J.}} \AND
  \bauthor{\bsnm{Caramanis},~\bfnm{Constantine}\binits{C.}}
(\byear{2020}).
\btitle{The EM Algorithm gives Sample-Optimality for Learning Mixtures of
  Well-Separated Gaussians}.
In \bbooktitle{Proceedings of Thirty Third Conference on Learning Theory}
(\beditor{\bfnm{Jacob}\binits{J.}~\bsnm{Abernethy}} \AND
  \beditor{\bfnm{Shivani}\binits{S.}~\bsnm{Agarwal}}, eds.).
\bseries{Proceedings of Machine Learning Research}
\bvolume{125}
\bpages{2425--2487}.
\bpublisher{PMLR}.
\end{binproceedings}
\endbibitem

\bibitem{Laurent00}
\begin{barticle}[author]
\bauthor{\bsnm{Laurent},~\bfnm{B{\'e}atrice}\binits{B.}} \AND
  \bauthor{\bsnm{Massart},~\bfnm{Pascal}\binits{P.}}
(\byear{2000}).
\btitle{{Adaptive estimation of a quadratic functional by model selection}}.
\bjournal{Annals of Statistics}
\bvolume{28}
\bpages{1302--1338}.
\bmrnumber{MR1805785 (2002c:62052)}
\end{barticle}
\endbibitem

\bibitem{LeiRinaldo}
\begin{barticle}[author]
\bauthor{\bsnm{Lei},~\bfnm{Jing}\binits{J.}} \AND
  \bauthor{\bsnm{Rinaldo},~\bfnm{Alessandro}\binits{A.}}
(\byear{2015}).
\btitle{{Consistency of spectral clustering in stochastic block models}}.
\bjournal{Ann. Statist.}
\bvolume{43}
\bpages{215--237}.
\bmrnumber{3285605}
\end{barticle}
\endbibitem

\bibitem{lesieur2016phase}
\begin{binproceedings}[author]
\bauthor{\bsnm{Lesieur},~\bfnm{Thibault}\binits{T.}},
  \bauthor{\bsnm{De~Bacco},~\bfnm{Caterina}\binits{C.}},
  \bauthor{\bsnm{Banks},~\bfnm{Jess}\binits{J.}},
  \bauthor{\bsnm{Krzakala},~\bfnm{Florent}\binits{F.}},
  \bauthor{\bsnm{Moore},~\bfnm{Cris}\binits{C.}} \AND
  \bauthor{\bsnm{Zdeborov{\'a}},~\bfnm{Lenka}\binits{L.}}
(\byear{2016}).
\btitle{Phase transitions and optimal algorithms in high-dimensional Gaussian
  mixture clustering}.
In \bbooktitle{2016 54th Annual Allerton Conference on Communication, Control,
  and Computing (Allerton)}
\bpages{601--608}.
\bpublisher{IEEE}.
\end{binproceedings}
\endbibitem

\bibitem{LiuLi2022}
\begin{binproceedings}[author]
\bauthor{\bsnm{Liu},~\bfnm{Allen}\binits{A.}} \AND
  \bauthor{\bsnm{Li},~\bfnm{Jerry}\binits{J.}}
(\byear{2022}).
\btitle{Clustering mixtures with almost optimal separation in polynomial time}.
In \bbooktitle{Proceedings of the 54th Annual ACM SIGACT Symposium on Theory of
  Computing}.
\bseries{STOC 2022}
\bpages{1248--1261}.
\bpublisher{Association for Computing Machinery}, \baddress{New York, NY, USA}.
\end{binproceedings}
\endbibitem

\bibitem{lffler2021computationallyefficientsparseclustering}
\begin{barticle}[author]
\bauthor{\bsnm{L{\"o}ffler},~\bfnm{Matthias}\binits{M.}},
  \bauthor{\bsnm{Wein},~\bfnm{Alexander~S.}\binits{A.~S.}} \AND
  \bauthor{\bsnm{Bandeira},~\bfnm{Afonso~S.}\binits{A.~S.}}
(\byear{2022}).
\btitle{Computationally efficient sparse clustering}.
\bjournal{Information and Inference: A Journal of the IMA}
\bvolume{11}.
\end{barticle}
\endbibitem

\bibitem{LuZhou2016}
\begin{barticle}[author]
\bauthor{\bsnm{Lu},~\bfnm{Yu}\binits{Y.}} \AND
  \bauthor{\bsnm{Zhou},~\bfnm{Harrison~H}\binits{H.~H.}}
(\byear{2016}).
\btitle{Statistical and computational guarantees of lloyd's algorithm and its
  variants}.
\bjournal{arXiv preprint arXiv:1612.02099}.
\end{barticle}
\endbibitem

\bibitem{luo2023computational}
\begin{barticle}[author]
\bauthor{\bsnm{Luo},~\bfnm{Yuetian}\binits{Y.}} \AND
  \bauthor{\bsnm{Gao},~\bfnm{Chao}\binits{C.}}
(\byear{2024}).
\btitle{Computational lower bounds for graphon estimation via low-degree
  polynomials}.
\bjournal{The Annals of Statistics}
\bvolume{52}
\bpages{2318--2348}.
\end{barticle}
\endbibitem

\bibitem{ma2015computational}
\begin{barticle}[author]
\bauthor{\bsnm{Ma},~\bfnm{Zongming}\binits{Z.}} \AND
  \bauthor{\bsnm{Wu},~\bfnm{Yihong}\binits{Y.}}
(\byear{2015}).
\btitle{Computational Barriers in Minimax Submatrix Detection}.
\bjournal{The Annals of Statistics}
\bvolume{43}
\bpages{1089--1116}.
\end{barticle}
\endbibitem

\bibitem{mao2023detection}
\begin{binproceedings}[author]
\bauthor{\bsnm{Mao},~\bfnm{Cheng}\binits{C.}},
  \bauthor{\bsnm{Wein},~\bfnm{Alexander~S}\binits{A.~S.}} \AND
  \bauthor{\bsnm{Zhang},~\bfnm{Shenduo}\binits{S.}}
(\byear{2023}).
\btitle{Detection-recovery gap for planted dense cycles}.
In \bbooktitle{The Thirty Sixth Annual Conference on Learning Theory}
\bpages{2440--2481}.
\bpublisher{PMLR}.
\end{binproceedings}
\endbibitem

\bibitem{marbac2017variable}
\begin{barticle}[author]
\bauthor{\bsnm{Marbac},~\bfnm{Matthieu}\binits{M.}} \AND
  \bauthor{\bsnm{Sedki},~\bfnm{Mohammed}\binits{M.}}
(\byear{2017}).
\btitle{Variable selection for model-based clustering using the integrated
  complete-data likelihood}.
\bjournal{Statistics and Computing}
\bvolume{27}
\bpages{1049--1063}.
\end{barticle}
\endbibitem

\bibitem{maugis2011non}
\begin{barticle}[author]
\bauthor{\bsnm{Maugis},~\bfnm{Cathy}\binits{C.}} \AND
  \bauthor{\bsnm{Michel},~\bfnm{Bertrand}\binits{B.}}
(\byear{2011}).
\btitle{A non asymptotic penalized criterion for Gaussian mixture model
  selection}.
\bjournal{ESAIM: Probability and Statistics}
\bvolume{15}
\bpages{41--68}.
\end{barticle}
\endbibitem

\bibitem{montanari2024equivalence}
\begin{barticle}[author]
\bauthor{\bsnm{Montanari},~\bfnm{Andrea}\binits{A.}} \AND
  \bauthor{\bsnm{Wein},~\bfnm{Alexander~S}\binits{A.~S.}}
(\byear{2024}).
\btitle{Equivalence of approximate message passing and low-degree polynomials
  in rank-one matrix estimation}.
\bjournal{Probability Theory and Related Fields}.
\end{barticle}
\endbibitem

\bibitem{mun2025high}
\begin{barticle}[author]
\bauthor{\bsnm{Mun},~\bfnm{Jongmin}\binits{J.}},
  \bauthor{\bsnm{Dubey},~\bfnm{Paromita}\binits{P.}} \AND
  \bauthor{\bsnm{Fan},~\bfnm{Yingying}\binits{Y.}}
(\byear{2025}).
\btitle{High-Dimensional Sparse Clustering via Iterative Semidefinite
  Programming Relaxed K-Means}.
\bjournal{arXiv preprint arXiv:2505.20478}.
\end{barticle}
\endbibitem

\bibitem{ndaoud2022sharp}
\begin{barticle}[author]
\bauthor{\bsnm{Ndaoud},~\bfnm{Mohamed}\binits{M.}}
(\byear{2022}).
\btitle{{Sharp optimal recovery in the two component Gaussian mixture model}}.
\bjournal{The Annals of Statistics}
\bvolume{50}
\bpages{2096 -- 2126}.
\end{barticle}
\endbibitem

\bibitem{ndaoud2021improved}
\begin{barticle}[author]
\bauthor{\bsnm{Ndaoud},~\bfnm{Mohamed}\binits{M.}},
  \bauthor{\bsnm{Sigalla},~\bfnm{Suzanne}\binits{S.}} \AND
  \bauthor{\bsnm{Tsybakov},~\bfnm{Alexandre~B}\binits{A.~B.}}
(\byear{2021}).
\btitle{Improved clustering algorithms for the bipartite stochastic block
  model}.
\bjournal{IEEE Transactions on Information Theory}
\bvolume{68}
\bpages{1960--1975}.
\end{barticle}
\endbibitem

\bibitem{novak2014three}
\begin{barticle}[author]
\bauthor{\bsnm{Novak},~\bfnm{Jonathan}\binits{J.}}
(\byear{2014}).
\btitle{Three lectures on free probability}.
\bjournal{Random matrix theory, interacting particle systems, and integrable
  systems}
\bvolume{65}
\bpages{13}.
\end{barticle}
\endbibitem

\bibitem{PengWei07}
\begin{barticle}[author]
\bauthor{\bsnm{Peng},~\bfnm{Jiming}\binits{J.}} \AND
  \bauthor{\bsnm{Wei},~\bfnm{Yu}\binits{Y.}}
(\byear{2007}).
\btitle{Approximating k-means-type clustering via semidefinite programming}.
\bjournal{SIAM journal on optimization}
\bvolume{18}
\bpages{186--205}.
\end{barticle}
\endbibitem

\bibitem{raftery2006variable}
\begin{barticle}[author]
\bauthor{\bsnm{Raftery},~\bfnm{Adrian~E}\binits{A.~E.}} \AND
  \bauthor{\bsnm{Dean},~\bfnm{Nema}\binits{N.}}
(\byear{2006}).
\btitle{Variable selection for model-based clustering}.
\bjournal{Journal of the American Statistical Association}
\bvolume{101}
\bpages{168--178}.
\end{barticle}
\endbibitem

\bibitem{Regev2017}
\begin{binproceedings}[author]
\bauthor{\bsnm{{Regev}},~\bfnm{Oded}\binits{O.}} \AND
  \bauthor{\bsnm{{Vijayaraghavan}},~\bfnm{Aravindan}\binits{A.}}
(\byear{2017}).
\btitle{{On Learning Mixtures of Well-Separated Gaussians}}.
In \bbooktitle{{2017 IEEE 58th Annual Symposium on Foundations of Computer
  Science (FOCS)}}
\bpages{85--96}.
\end{binproceedings}
\endbibitem

\bibitem{romanov2022}
\begin{barticle}[author]
\bauthor{\bsnm{Romanov},~\bfnm{Elad}\binits{E.}},
  \bauthor{\bsnm{Bendory},~\bfnm{Tamir}\binits{T.}} \AND
  \bauthor{\bsnm{Ordentlich},~\bfnm{Or}\binits{O.}}
(\byear{2022}).
\btitle{On the Role of Channel Capacity in Learning Gaussian Mixture Models}.
\bjournal{Proceedings of Machine Learning Research vol 178:1--50}.
\end{barticle}
\endbibitem

\bibitem{SchrammWein22}
\begin{barticle}[author]
\bauthor{\bsnm{Schramm},~\bfnm{Tselil}\binits{T.}} \AND
  \bauthor{\bsnm{Wein},~\bfnm{Alexander~S}\binits{A.~S.}}
(\byear{2022}).
\btitle{Computational barriers to estimation from low-degree polynomials}.
\bjournal{The Annals of Statistics}
\bvolume{50}
\bpages{1833--1858}.
\end{barticle}
\endbibitem

\bibitem{SegolNadler2021}
\begin{barticle}[author]
\bauthor{\bsnm{Segol},~\bfnm{Nimrod}\binits{N.}} \AND
  \bauthor{\bsnm{Nadler},~\bfnm{Boaz}\binits{B.}}
(\byear{2021}).
\btitle{{Improved convergence guarantees for learning Gaussian mixture models
  by EM and gradient EM}}.
\bjournal{Electronic Journal of Statistics}
\bvolume{15}
\bpages{4510 -- 4544}.
\end{barticle}
\endbibitem

\bibitem{SohnWein25}
\begin{bmisc}[author]
\bauthor{\bsnm{Sohn},~\bfnm{Youngtak}\binits{Y.}} \AND
  \bauthor{\bsnm{Wein},~\bfnm{Alexander~S.}\binits{A.~S.}}
(\byear{2025}).
\btitle{Sharp Phase Transitions in Estimation with Low-Degree Polynomials}.
\end{bmisc}
\endbibitem

\bibitem{VEMPALA2004}
\begin{barticle}[author]
\bauthor{\bsnm{Vempala},~\bfnm{Santosh}\binits{S.}} \AND
  \bauthor{\bsnm{Wang},~\bfnm{Grant}\binits{G.}}
(\byear{2004}).
\btitle{{A spectral algorithm for learning mixture models}}.
\bjournal{Journal of Computer and System Sciences}
\bvolume{68}
\bpages{841--860}.
\bnote{Special Issue on FOCS 2002}.
\end{barticle}
\endbibitem

\bibitem{verzelen2017detection}
\begin{barticle}[author]
\bauthor{\bsnm{Verzelen},~\bfnm{Nicolas}\binits{N.}} \AND
  \bauthor{\bsnm{Arias-Castro},~\bfnm{Ery}\binits{E.}}
(\byear{2017}).
\btitle{Detection and feature selection in sparse mixture models}.
\bjournal{Annals of Statistics}
\bvolume{45}
\bpages{1920--1950}.
\end{barticle}
\endbibitem

\bibitem{SurveyWein2025}
\begin{bmisc}[author]
\bauthor{\bsnm{Wein},~\bfnm{Alexander~S.}\binits{A.~S.}}
(\byear{2025}).
\btitle{Computational Complexity of Statistics: New Insights from Low-Degree
  Polynomials}.
\end{bmisc}
\endbibitem

\bibitem{witten2010framework}
\begin{barticle}[author]
\bauthor{\bsnm{Witten},~\bfnm{Daniela~M}\binits{D.~M.}} \AND
  \bauthor{\bsnm{Tibshirani},~\bfnm{Robert}\binits{R.}}
(\byear{2010}).
\btitle{A Framework for Feature Selection in Clustering}.
\bjournal{Journal of the American Statistical Association}
\bvolume{105}
\bpages{713--726}.
\end{barticle}
\endbibitem

\bibitem{YunP14b}
\begin{barticle}[author]
\bauthor{\bsnm{Yun},~\bfnm{Se{-}Young}\binits{S.}} \AND
  \bauthor{\bsnm{Prouti{\`e}re},~\bfnm{Alexandre}\binits{A.}}
(\byear{2014}).
\btitle{{Accurate Community Detection in the Stochastic Block Model via
  Spectral Algorithms}}.
\bjournal{CoRR}
\bvolume{abs/1412.7335}.
\end{barticle}
\endbibitem

\end{thebibliography}

\newpage

\appendix

\section{Technical discussions}\label{sec:technical:discussion}

All the results stated in this section are proved in Appendix~\ref{sec:proofs:technicaldiscussion}

\subsection{Sparse clustering: discussion of  $w^2_{J^*}$ and of $\eta$-homogeneity condition}\label{sec:discussion:w}

In Sections \ref{sec:polytimesparse} and~\ref{sec:polytimesparse}, we introduced a condition on $w_{J^*}^2$ the minimum $l_2$-norm of the active columns of signal matrix $X$, in order to analyze our sparse clustering clustering procedures. As explained in that section, our condition in $\omega_{J^*}$ matches the LD lower bound when an $\eta$-homogeneity condition is satisfied (Assumption~\ref{ass:homogeneity}).

We now shortly discuss how one could extend our sparse clustering procedures to bypass the condition on $\omega_{J^*}$ or equivalently Assumption~\ref{ass:homogeneity}. First, observe that, in general, it is impossible to recover all the active  columns $J^*$ with high probability without any assumption on $w_J^*$. Nevertheless, as long as $\Delta^2$ is large enough, our feature selection procedure selects, with high probability, a subset $\Bar{J}$ of $J^*$ which separates well a large fraction of the $\mu_k$'s.

\begin{lem}\label{lem:whyomega2}
There exist a subset $\Bar{J}\subseteq J^*$ and a subset $\mathcal{K}\subset [K]$ with $|\mathcal{K}|\geq \frac{8K}{10}$ satisfying:
\begin{enumerate}
\item For all $j\in \Bar{J}$, $\sum_{k\in [K]}|G_k^*|\pa{\mu_k}_j^2\geq \frac{n\Delta^2}{80s\gamma}\sigma^2$;
\item For all $k\in \mathcal{K}$ and $l\in [K]$, $\|(\mu_k)_{\Bar{J}}-(\mu_l)_{\Bar{J}}\|^2\geq \frac{1}{8}\Delta^2\sigma^2$.
\end{enumerate}
\end{lem}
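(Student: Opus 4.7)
The plan is to take $\Bar J$ to be the set of columns with large aggregate signal, and $\mathcal K$ to be the set of labels whose mean vector carries little mass outside $\Bar J$. Specifically, set $\tau=\tfrac{n\Delta^2\sigma^2}{80\,s\gamma}$ and
\[
\Bar J=\bigl\{j\in J^*:\ w_j\geq \tau\bigr\},\qquad w_j:=\sum_{k\in[K]}|G^*_k|\,\mu_{kj}^2.
\]
Property (1) is then immediate from the definition of $\Bar J$.

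Next I control the mass of the $\mu_k$'s on the discarded columns. Since $|J^*\setminus\Bar J|\leq s$ and $w_j<\tau$ on $J^*\setminus\Bar J$, one has $\sum_{j\in J^*\setminus\Bar J}w_j<s\tau=\tfrac{n\Delta^2\sigma^2}{80\gamma}$; combined with the balancedness lower bound $|G^*_k|\geq n/(K\gamma)$, this gives
\[
\sum_{k\in[K]}\|(\mu_k)_{J^*\setminus\Bar J}\|^2\;\leq\;\frac{K\gamma}{n}\sum_{j\in J^*\setminus\Bar J}w_j\;\leq\;\frac{K\Delta^2\sigma^2}{80}.
\]
I then define $\mathcal K=\bigl\{k\in[K]:\ \|(\mu_k)_{J^*\setminus\Bar J}\|^2\leq \Delta^2\sigma^2/16\bigr\}$. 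Markov's inequality applied to the previous display yields $|[K]\setminus\mathcal K|\leq K/5$, so $|\mathcal K|\geq 8K/10$.

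To check property (2), I use the orthogonal decomposition $\|(\mu_k-\mu_l)_{\Bar J}\|^2=\|\mu_k-\mu_l\|^2-\|(\mu_k-\mu_l)_{J^*\setminus\Bar J}\|^2$ together with the minimum separation $\|\mu_k-\mu_l\|^2\geq 2\Delta^2\sigma^2$ and the triangle-square inequality $\|(\mu_k-\mu_l)_{J^*\setminus\Bar J}\|^2\leq 2\|(\mu_k)_{J^*\setminus\Bar J}\|^2+2\|(\mu_l)_{J^*\setminus\Bar J}\|^2$. When both $k,l\in\mathcal K$, each term on the right is at most $\Delta^2\sigma^2/8$, yielding $\|(\mu_k-\mu_l)_{\Bar J}\|^2\geq 2\Delta^2\sigma^2-\Delta^2\sigma^2/4=7\Delta^2\sigma^2/4$, comfortably above $\Delta^2\sigma^2/8$.

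The main obstacle I anticipate is to extend this argument to $l\in[K]\setminus\mathcal K$. The only a priori bound available on $\|(\mu_l)_{J^*\setminus\Bar J}\|^2$ for such $l$ is the per-index absolute bound $|G^*_l|\|(\mu_l)_{J^*\setminus\Bar J}\|^2\leq s\tau$, which gives only $K\Delta^2\sigma^2/80$ and becomes too weak for large $K$. I expect to overcome this by augmenting $[K]\setminus\mathcal K$ with the set of $k\in\mathcal K$ that are $\Bar J$-close to some outlier $l\in[K]\setminus\mathcal K$, and to bound the size of this extra exclusion by a pairwise double-counting argument based on $\sum_{k,l}|G^*_k||G^*_l|\,\|(\mu_k-\mu_l)_{J^*\setminus\Bar J}\|^2\leq 2n\sum_{j\in J^*\setminus\Bar J}w_j$; readjusting the constants $1/16$ and $1/8$ should then preserve the cardinality bound $|\mathcal K|\geq 8K/10$.
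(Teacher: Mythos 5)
Your construction of $\bar J$, the verification of property (1), the bound $\sum_{k}\|(\mu_k)_{J^*\setminus\bar J}\|^2\leq K\Delta^2\sigma^2/80$, and the Markov-type cardinality step all match the paper's argument (which packages them in an intermediate lemma). However, you only prove property (2) for pairs $k,l$ \emph{both} in $\mathcal K$, whereas the statement requires it for every $k\in\mathcal K$ and every $l\in[K]$; that extension is exactly the nontrivial content of the lemma, and the plan you sketch for it does not go through. A double-counting bound of the type $\sum_{k,l}|G^*_k||G^*_l|\|(\mu_k-\mu_l)_{J^*\setminus\bar J}\|^2\leq 4n\sum_{j\in J^*\setminus\bar J}w_j\leq n^2\Delta^2\sigma^2/(20\gamma)$ only controls the number of ``doomed'' pairs: a pair $(k,l)$ with $\|(\mu_k-\mu_l)_{\bar J}\|^2<\tfrac18\Delta^2\sigma^2$ has $\|(\mu_k-\mu_l)_{J^*\setminus\bar J}\|^2\geq \tfrac{15}{8}\Delta^2\sigma^2$ and contributes at least $(n/(K\gamma))^2\cdot\tfrac{15}{8}\Delta^2\sigma^2$, so this yields a bound of order $\gamma K^2$ on the number of elements of $\mathcal K$ that must be discarded --- far larger than the $K/10$ you can afford once $K\gamma$ is moderately large. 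Moreover, with your threshold $\Delta^2\sigma^2/16$ you already have $|\mathcal K|\geq 8K/10$ with no slack, so any further exclusion breaks the cardinality requirement; ``readjusting constants'' is not automatic here.

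The paper closes this gap with a simple injectivity argument that does not use any mass bound on $J^*\setminus\bar J$ at the second stage. First it thresholds at $\Delta^2\sigma^2/8$ (not $1/16$), producing a set $\mathcal K'$ with $|\mathcal K'|\geq 9K/10$ and pairwise separation $\|(\mu_k)_{\bar J}-(\mu_{k'})_{\bar J}\|^2\geq \tfrac12\Delta^2\sigma^2$ for $k\neq k'\in\mathcal K'$ (via the triangle inequality; your orthogonal-decomposition variant gives the same). Then, if some $l\in[K]$ satisfies $\|(\mu_k)_{\bar J}-(\mu_l)_{\bar J}\|^2<\tfrac18\Delta^2\sigma^2$ for a given $k\in\mathcal K'$, the triangle inequality combined with the $\tfrac12\Delta^2\sigma^2$ pairwise separation forces $\|(\mu_{k'})_{\bar J}-(\mu_l)_{\bar J}\|^2\geq\tfrac18\Delta^2\sigma^2$ for every other $k'\in\mathcal K'$; in particular such an $l$ lies outside $\mathcal K'$ and can doom at most one element of $\mathcal K'$. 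Since $|[K]\setminus\mathcal K'|\leq K/10$, removing the doomed elements leaves $\mathcal K\subset\mathcal K'$ with $|\mathcal K|\geq 8K/10$ and property (2) valid for all $l\in[K]$, at the price of the weaker constant $\tfrac18$. This is the step your proposal is missing, and it is also why the initial cardinality must be $9K/10$ rather than $8K/10$.
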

The first part of the above lemma ensures that the square norm of $X_{:j}$ is large, so that $j\in\Bar{J}$ can be detected by looking at the norm $Y_{:j}$. The second part of Lemma ~\ref{lem:whyomega2}, ensures that reducing our attention to $\Bar{J}$ allows to separate well most of the groups --but not all of them.

Assume that $\Delta^2\geq_{\log,\gamma}1+{s/\sqrt{n}}+\min(\sqrt{s},\sqrt{{sK^2}/{n}})$, which corresponds to the LD lower bound. In principle, we could then use a hierarchical scheme. We would first build independent copies of $Y$ --to the price of slightly lowering the seperation $\Delta$. Then, we could first select a subset $\hat{J}$ of size $s$  of the columns with largest empirical norm. With large probabilily, it turns out that $\hat{J}$ will contain the subset $\overline{J}$ of Lemma~\ref{lem:whyomega2}.  Considering the second independent sample and focusing on the columns $\hat{J}$, we are back to a Gaussian mixture model with $K$ groups, $|\mathcal{K}|$ of which, are separated by, up to constants, at least $\Delta^2$ from all the other groups. Hence, if we could adapt Proposition~\ref{prop:upper-boundclustering} to the case of a Gaussian mixture model, where a small proportion of the groups are not well separated, then we could distinguish the well-separated groups. Applying recursively the scheme to the subgroups and applying Lemma~\ref{lem:whyomega2} to these subgroups we would be able to recover features that allow to distinguish new groups. 

However, the technical hurdle behind this approach is that Proposition~\ref{prop:upper-boundclustering} is only valid for Gaussian mixture models such that all groups are well-separated. 
In fact, Proposition~\ref{prop:upper-boundclustering} is based on a combination of clustering techniques: spectral projections, hierarchical clustering, the high-order tensor projection method of Li and Liu~\cite{LiuLi2022}, and a SDP version of Kmeans. It is not difficult to show that both the spectral projection and hierarchical steps can be easily adapted to this setting. As for the high-order tensor projection method, we really suspect that it will be able to distinguish the groups $G^*_k$ with $k\in \mathcal{K}$, but we did not check all the details. However, we do not know how to adapt  the SDP analysis of~\cite{giraud2019partial} to this setting. As we mainly focus this manuscript on LD lower bounds, we do not pursue in this direction.

\paragraph*{Case with $K\leq 4$ clusters} When $K\leq 4$, Lemma~\ref{lem:whyomega2} straightforwardly entails that $\overline{J}$ contain enough variables so that the restriction of $X$ to $\overline{J}$ still ensures a minimum separation  at least $\Delta^2/8$ between all the clusters. Since, with high probability, $\hat{J}$ contains $J^*$, this implies that, in the second step of our polynomial-time sparse clustering procedure, we will apply a clustering procedure to a dimension $s$ Gaussian model with $K$ groups with separation $\Delta^2/8$. As a consequence, Corollary~\ref{cor:polytimesparse} is valid for $K\leq 4$ without requiring the $\eta$-homogeneity condition.

\subsection{Impossibility of partition reconstruction}\label{subsec:impossibility_reconstruction}
Given a partition $G$, define the (unnormalized) partnership matrix $M^G\in \{0,1\}^{n\times n}$ by $M^{G}_{ij}=\mathbf{1}_{k^*_i=k^*_j}$. We write $M^*$ for $M^{G^*}$.
To simplify the discussion, we consider an asymptotic setting where $(K,n)$ (and possibly $p$) go to infinity, with $K=o(n)$.
\begin{lem}\label{lem:MG-random}
If $\E[\|M^*\|^2_{F}]=n^2K^{-1}(1+o(1))$ and 
$$MMSE_{poly}:= \inf_{\hat M\ poly-time} {1\over n(n-1)}\E\cro{\|\hat M-M^*\|^2_{F}} = {1\over K}(1+o(1)),$$
then 
$$\inf_{\hat G\ poly-time} {1\over n(n-1)}\E\cro{\| M^*-M^{\hat G}\|^2_{F}}\geq {2\over K}(1+o(1)).$$
\end{lem}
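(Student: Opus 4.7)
The basic strategy is to compare the $\{0,1\}$-valued estimator $A:=M^{\hat G}$ to the best \emph{real-valued} estimator that can be built as an affine function of $A$. For any deterministic $(\alpha,\beta)\in\R^{2}$, set $\hat M(\alpha,\beta)_{ij}:=\alpha A_{ij}+\beta(1-A_{ij})$ for $i\neq j$ and $\hat M(\alpha,\beta)_{ii}:=1$. Since $\hat M(\alpha,\beta)$ is polynomial-time computable whenever $\hat G$ is, the hypothesis $MMSE_{poly}=K^{-1}(1+o(1))$ forces $\E[\|\hat M(\alpha,\beta)-M^*\|_F^2]\geq n(n-1)K^{-1}(1+o(1))$ uniformly in $(\alpha,\beta)$. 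The plan is to turn this into a bound on the alignment $\E[S_1]$, where $S_1:=\sum_{i\neq j}A_{ij}M^*_{ij}$, and then recover the factor $2$ from the identity $\|M^{\hat G}-M^*\|_F^2=N_1+\Sigma-2S_1$, with $N_1:=\sum_{i\neq j}A_{ij}$ and $\Sigma:=\sum_{i\neq j}M^*_{ij}$.

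Concretely, setting also $N_0:=n(n-1)-N_1$ and $S_0:=\Sigma-S_1$, a direct expansion (using $A_{ij}^{2}=A_{ij}$ and $(M^*_{ij})^{2}=M^*_{ij}$, and the fact that the diagonals of $\hat M(\alpha,\beta)$ and $M^{*}$ cancel) gives
\[
\E[\|\hat M(\alpha,\beta)-M^{*}\|_{F}^{2}]=\alpha^{2}\E[N_{1}]-2\alpha\E[S_{1}]+\beta^{2}\E[N_{0}]-2\beta\E[S_{0}]+\E[\Sigma],
\]
and the optimization in $(\alpha,\beta)$ decouples. Since the first hypothesis of the lemma gives $\E[\Sigma]=\E[\|M^*\|_F^2]-n=n(n-1)K^{-1}(1+o(1))$, minimizing in $(\alpha,\beta)$ and comparing to $MMSE_{poly}\cdot n(n-1)$ yields the key bound
\begin{equation}\label{eq:plan-key}
\frac{\E[S_{1}]^{2}}{\E[N_{1}]}+\frac{\E[S_{0}]^{2}}{\E[N_{0}]}\;\leq\; \E[\Sigma]-MMSE_{poly}\cdot n(n-1)=o\!\pa{n^{2}/K}.
\end{equation}

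On the other hand, expanding the partnership-matrix loss gives $\E[\|M^{\hat G}-M^*\|_F^2]=\E[N_1]+n(n-1)K^{-1}(1+o(1))-2\E[S_1]$, and from \eqref{eq:plan-key} we get $\E[S_{1}]\leq\sqrt{\E[N_{1}]\cdot o(n^{2}/K)}$. In the paper's ``balanced estimator'' setting, $\hat G$ is a partition into at most $K$ groups, so convexity of $x\mapsto x(x-1)$ gives $\E[N_1]=\E\cro{\sum_{k}|\hat G_k|(|\hat G_k|-1)}\geq n(n-1)K^{-1}(1+o(1))$. Completing the square in $\sqrt{\E[N_1]}$ then produces $\E[N_{1}]-2\E[S_{1}]\geq n(n-1)K^{-1}(1+o(1))$, and hence $\E[\|M^{\hat G}-M^{*}\|_{F}^{2}]\geq 2n(n-1)K^{-1}(1+o(1))$, which is the claim after dividing by $n(n-1)$.

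The main obstacle is the passage \eqref{eq:plan-key}: the MMSE hypothesis is information on a quadratic functional of $A$, whereas the quantity we truly need to control is the \emph{linear} cross-term $\E[S_{1}]$. The resulting bound on $\E[S_{1}]$ is only of Cauchy--Schwarz type and scales like $\sqrt{\E[N_{1}]}$, so recovering the sharp factor $2/K$ (rather than merely $1/K$) crucially requires the matching lower bound $\E[N_{1}]\geq n(n-1)/K(1+o(1))$. This is where the balancedness (equivalently, at-most-$K$-cluster) restriction on $\hat G$ enters in an essential way: for $\hat G$ equal to the singleton partition, one would have $N_{1}=0$ and only the trivial factor $1/K$ could be retained.
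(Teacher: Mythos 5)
Your proof is correct and follows essentially the same route as the paper: your optimization over the affine family $\alpha A+\beta(1-A)$ is just a hands-on way of extracting the correlation cap $\E\cro{\langle M^{\hat G},M^*\rangle}\leq\sqrt{\E\cro{\|M^{\hat G}\|_F^2}}\cdot o(n/\sqrt{K})$ that the paper obtains directly from the identity $n(n-1)\,MMSE_{poly}=\E\cro{\|M^*\|_F^2}-corr^2$ (scaling a poly-time estimator by a hardwired constant stays poly-time, exactly as your hardwired optimal $(\alpha,\beta)$ do). From there the two arguments coincide: the at-most-$K$-groups structure gives $\E\cro{\|M^{\hat G}\|_F^2}\geq n^2/K$ (your $\E[N_1]$ bound, up to the negligible diagonal), and the minimization of $a^2-2a\,\eps$ over $a\geq n/\sqrt{K}$ with $\eps=o(n/\sqrt K)$ produces the factor $2$, your unused $S_0^2/N_0$ term notwithstanding.
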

Since the error $2K^{-1}(1+o(1))$ corresponds to the error rate of a uniform random partition, it means that, if we cannot estimate $M^*$ better than by its mean value, then we cannot estimate $\hat G$ better than at random in terms of the metric $\E\cro{\| M^*-M^{\hat G}\|^2_{F}}$.

Let us consider the  prior distribution of Definition~\ref{def:priorclustering} with $K\ll n/\log(n)$ so that
$\E[\|M^*\|^2_{F}]=n^2K^{-1}(1+o(1))$ and $G^*$ is $\gamma$-balanced with $\gamma= 1+o(1)$.

\begin{prop}\label{prp:Impossibility:reconstruction}
Assume that $MMSE_{poly}=K^{-1}(1+o(1))$.
Then, all polynomial-time estimators of the partition that are $\gamma$-balanced satisfy
$\mathbb{E}[err(\hat{G},G^*)]= 1+o(1)$.
\end{prop}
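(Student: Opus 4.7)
The strategy is to prove that the expected inner product $\E[\langle M^{\hat G},M^*\rangle]$ of the partnership matrices is $o(n^2/K)$; a Cauchy--Schwarz argument then delivers the claim. Writing $n_{k,l}:=|G^*_k\cap \hat G_l|$, we have $err(\hat G,G^*)=1-n^{-1}\max_\pi \sum_k n_{k,\pi(k)}$ and $\langle M^{\hat G},M^*\rangle=\sum_{k,l}n_{k,l}^2$, so Cauchy--Schwarz and Jensen yield
\[
\E[err(\hat G,G^*)]\geq 1-\frac{1}{n}\sqrt{K\,\E[\langle M^{\hat G},M^*\rangle]},
\]
and it suffices to prove $\E[\langle M^{\hat G},M^*\rangle]=o(n^2/K)$.

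To this end, I would exploit the $MMSE_{poly}$ hypothesis via an affine-linearization trick. For any pair $i\neq j$, the scalar $\tilde M_{ij}=\alpha M^{\hat G}_{ij}+\beta$ is a polynomial-time estimator of $M^*_{ij}$, whose mean square error minimized over $(\alpha,\beta)\in \R^2$ equals $\mathrm{Var}(M^*_{ij})-C_{ij}^2/V_{ij}$ with $C_{ij}:=\mathrm{Cov}(M^{\hat G}_{ij},M^*_{ij})$ and $V_{ij}:=\mathrm{Var}(M^{\hat G}_{ij})$. By exchangeability of the prior of Definition~\ref{def:priorclustering} in the row indices, the polynomial-time MMSE for every $M^*_{ij}$ equals $K^{-1}(1+o(1))$; together with $\mathrm{Var}(M^*_{ij})=K^{-1}(1-K^{-1})$ this yields the pairwise bound $C_{ij}^2/V_{ij}\leq o(1/K)$ uniformly in $i\neq j$.

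It remains to aggregate. Cauchy--Schwarz over the $n(n-1)$ pairs, together with $V_{ij}\leq \E[M^{\hat G}_{ij}]$ and the $\gamma$-balancedness bound $\sum_{i\neq j}\E[M^{\hat G}_{ij}]=\E[\|M^{\hat G}\|_F^2]-n\leq \gamma n^2/K$, gives
\[
\Big|\sum_{i\neq j}C_{ij}\Big|\leq \sqrt{n^2\cdot o(1/K)\cdot \gamma n^2/K}=o(n^2/K).
\]
Since $\E[M^{\hat G}_{ij}M^*_{ij}]=C_{ij}+K^{-1}\E[M^{\hat G}_{ij}]$ and $\gamma n^2/K^2=o(n^2/K)$ (using $K\to\infty$), we conclude $\E[\langle M^{\hat G},M^*\rangle]=o(n^2/K)$, hence $\E[err(\hat G,G^*)]\geq 1-o(1)$. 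The main obstacle is turning the scalar MMSE hypothesis into a global bound on the overlap: a direct use of Lemma~\ref{lem:MG-random} in the identity $\|M^{\hat G}-M^*\|_F^2=\|M^{\hat G}\|_F^2+\|M^*\|_F^2-2\langle M^{\hat G},M^*\rangle$ only yields $\E[\langle M^{\hat G},M^*\rangle]\leq(\gamma-1)/2\cdot n^2/K+o(n^2/K)$, which is not sharp enough when $\gamma>1$. The affine linearization produces a quadratic (rather than linear) constraint on the $C_{ij}$, and the resulting Cauchy--Schwarz chain together with $\gamma$-balancedness absorbs the $\gamma$-dependence and delivers the sharp bound.
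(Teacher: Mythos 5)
Your proof is correct, but it takes a genuinely different route from the paper's. The paper combines Lemma~\ref{lem:MG-random}, which converts the hypothesis $MMSE_{poly}=K^{-1}(1+o(1))$ into the Frobenius lower bound $\E\cro{\|M^{\hat G}-M^*\|_F^2}\geq \frac{2n^2}{K}(1+o(1))$, with the deterministic inequality of Lemma~\ref{prop:loss:clustering}, $[1-err(\hat G,G^*)]^2\leq \gamma^2-\frac{K}{2n^2}\|M^{\hat G}-M^*\|_F^2$; this gives $\E[(1-err)^2]=o(1)$, but only because the standing setting takes $\gamma=1+o(1)$ for both $G^*$ and $\hat G$. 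You instead bound $1-err\leq n^{-1}\sqrt{K\langle M^{\hat G},M^*\rangle}$ (essentially the same Cauchy--Schwarz step, via $N_{11}$, that appears inside the paper's proof of Lemma~\ref{prop:loss:clustering}) and then control the expected overlap directly: the best affine function of $M^{\hat G}_{ij}$ is itself a poly-time estimator of $M^*_{ij}$, so the hypothesis yields the per-pair bound $\mathrm{Cov}^2(M^{\hat G}_{ij},M^*_{ij})/\mathrm{Var}(M^{\hat G}_{ij})\leq o(1/K)$, which, aggregated by Cauchy--Schwarz together with the balancedness bound $\sum_k|\hat G_k|^2\leq \gamma n^2/K$, gives $\E\langle M^{\hat G},M^*\rangle=o(n^2/K)$. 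What your route buys is robustness in $\gamma$: it only needs $\gamma=O(1)$, since $\gamma$ enters multiplicatively inside the $o(\cdot)$ terms, whereas the paper's chain (as you correctly observe) degenerates once $\gamma$ is bounded away from $1$. The price is the explicit use of $K\to\infty$ and $K=o(n)$ to absorb the terms $\gamma n^2/K^2$ and the diagonal contribution $n$; both are part of the paper's standing asymptotic setting, and $K\to\infty$ is in any case forced by the conclusion, since $err\leq 1-1/K$ always. A few points to tighten: justify the uniform per-pair poly-time MMSE lower bound by symmetrizing any scalar estimator of $M^*_{ij}$ into a matrix estimator with the same average risk (so that each per-pair infimum is at least $MMSE_{poly}$ with the same $o(1)$ sequence); handle the degenerate case $\mathrm{Var}(M^{\hat G}_{ij})=0$ (where the covariance vanishes); and note that allowing arbitrary real constants $\alpha,\beta$ in a poly-time estimator is used in the same informal sense as in the paper's own proof of Lemma~\ref{lem:MG-random}.
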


In other words, it is impossible to reconstruct in polynomial-time a (balanced) partition $\hat{G}$  better than random guessing in the regime where $MMSE_{poly}=K^{-1}(1+o(1))$.

\subsection{On the prior distribution for sparse clustering}\label{sec:discussion:prior:sparse}

For the LD bound for sparse clustering, we introduce some symmetry in the prior of Definition~\ref{def:prior_sparse}. 
Let us explain why we did not consider a closer variant of the prior of Definition \ref{def:priorclustering}, by simply keeping the signal  on $s$ columns randomly chosen.
We underline in this appendix that, under this prior, we cannot derive the desired lower bound on the low-degree MMSE. Although this could come from the fact that this (non-symmmetric) prior is not suited for establishing the hardness of spase clustering, we suspect that this behaviour is rather due to the Jensen bound at the heart of the general method of~\cite{SchrammWein22}. 

Let us first define a (non-symmetric) prior for sparse clustering, which is more in line to Definition~\ref{def:priorclustering}.  We assume in this subsection that $\sigma=1$. 
\begin{definition}\label{def:prior_sparse_simple}
The signal matrix  $X\in \R^{n\times p}$ is generated as follows. We sample independently:\\ \smallskip
- $k^*_1,\ldots ,k^*_n$ independent with uniform distribution on $[K]$,\\ \smallskip
- $z_1,\ldots,z_p$ independent, with Bernoulli  distribution $\mathcal{B}(\rho)$, where $\rho={\Bar{s}}/{p}$,\\ \smallskip
- $\nu_{k,j}$, for $k,j\in [K]\times [p]$, independent, with $\nu_{k,j}\sim\mathcal{N}\pa{0,\lambda^2}$, where $\lambda^2=\Bar{\Delta}^2\sigma^2/{\rho p}$.\\ \smallskip
Then, we set 
\begin{equation}\label{eq:prior-sparse-simple}
\X_{ij}= z_j\nu_{k^*_i,j}\ .
\end{equation}
\end{definition}

In our signal plus noise Gaussian model $Y=X+E$, it is straightforward to retrace the (strict) inequalities in the general bound of~\cite{SchrammWein22}. Indeed, given a polynomial $f(Y)$, the crux of~\cite{SchrammWein22} is to apply Jensen inequality to lower bound the second moments of $f(Y)$, that is 
\[
\mathbb{E}[f(Y)^2]\geq \mathbb{E}_Z\left[\left(\mathbb{E}_X[f(X+Z)]\right)^2\right]\ , 
\]
where $\mathbb{E}_X[.]$ ($\mathbb{E}_Z[.]$) stands for the expectation with respect to $X$ (resp. $Z$). With this bound in mind, we define
\begin{equation}\label{def:cor:SW}
    \widetilde{corr}^{(SW)}_{\leq D}:=\underset{\E\pa{f^{2}(Y)}\neq 0}{\sup_{f\in\R_{D}[Y]}}\frac{\E[f(Y)x(Z)]}{\sqrt{\mathbb{E}_Z\left[\left(\mathbb{E}_X[f(X+Z)]\right)^2\right]}}\enspace , 
\end{equation}
which is an upper bound of $corr_{\leq D}$. We readily deduce from the proof of Theorem 2.2 in~\cite{SchrammWein22} that this modified degree-$D$ maximum correlation satisfies the equality
    \begin{equation}\label{eq:corr_D_SW}
    \left(\widetilde{corr}^{(SW)}_{\leq D}\right)^2=\underset{|\alpha|\leq D}{\sum_{\alpha\in\N^{n\times p}}}\frac{\kappa_{x,\alpha}^{2}}{\alpha!}\enspace . 
    \end{equation}

The following proposition, provides a lower bound on $ \left(\widetilde{corr}^{(SW)}_{\leq D}\right)^2$.     
\begin{prop}\label{prop:sparse:bad:prior}
The exist two numerical constants $c$ and $c'$ such that the following holds. Consider any even $D$ degree such 
that $n\geq 4D$ and $\rho\leq [12 (D/2)! 2^{D/2}]^{-1}$. Then, we have 
\[
 \left(\widetilde{corr}^{(SW)}_{\leq D}\right)^2\geq c'e^{-cD\log(D) }p \frac{n^{D-2}}{K^D} \lambda^{2D} \rho^2  \ , 
\]
\end{prop}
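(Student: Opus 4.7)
The plan is to bound the sum in \eqref{eq:corr_D_SW} below by restricting attention to a well-chosen family of multisets $\alpha$ concentrated on a single column. For each column $j_0 \in [p]$ and each subset $S \subseteq [n]$ of size $D$ containing $\{1,2\}$, I define $\alpha^{(j_0,S)}_{ij} := \mathbf{1}\{j = j_0\}\mathbf{1}\{i \in S\}$, so that $|\alpha|=D$, $\alpha!=1$, and there are $p\binom{n-2}{D-2}$ such $\alpha$'s. Applying Theorem~\ref{thm:LTC} to any such $\alpha$, the elements of $\mathcal{P}_2(\alpha)$ are in bijection with perfect matchings $\pi = \pa{\{i_s, i'_s\}}_{s \in [l]}$ of $S$ (with $l = D/2$), and the event $\Omega_{\beta_s}$ collapses to $\ac{z_{j_0}=1,\ k^*_{i_s}=k^*_{i'_s}}$. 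Writing $U_s := \mathbf{1}\{k^*_{i_s}=k^*_{i'_s}\}$ and $V_s := z_{j_0} U_s$, one obtains $\kappa_{x,\alpha} = \lambda^D \sum_{\pi} C_{x,\pi}$ with $C_{x,\pi} = \cumul(x, V_1, \ldots, V_l)$.

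I then split the matchings into two classes and argue that only one of them contributes. In \emph{case B}, $\{1,2\}$ is split between two distinct pairs of $\pi$. A direct combinatorial computation shows that for every $T \subseteq [l]$ and every $a \in \{0,1\}$,
\[
\E\Bigl[\textstyle\prod_{s\in T} U_s \,\Big|\, x = a\Bigr] = K^{-|T|},
\]
because adding the edge $\{1,2\}$ to the disjoint union of pair-edges $\{i_s, i'_s\}_{s \in T}$ adjusts the connected-component count in exact compensation with the added vertices. Hence $(U_1, \ldots, U_l) \perp x$, and since $z_{j_0}$ is independent of $(x, U_1, \ldots, U_l)$ we get $(V_1, \ldots, V_l) \perp x$ and therefore $C_{x,\pi} = 0$ by Lemma~\ref{lem:independentcumulant}.

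In \emph{case A}, $\pi$ contains the pair $\{1,2\}$. Relabel so that $V_1 = z_{j_0} x$, while the $V_s$'s with $s \geq 2$ involve disjoint $k^*$'s in $S\setminus\{1,2\}$ and are built from mutually independent $U_s$'s that are jointly independent of $x$. Applying the law of total cumulance conditional on $z_{j_0}$ and invoking Lemma~\ref{lem:independentcumulant} repeatedly, the only partition $\tau$ of $\{V_0, V_1, \ldots, V_l\}$ giving a non-zero contribution is $\tau^\star = \ac{\{V_0, V_1\}, \{V_2\}, \ldots, \{V_l\}}$: any block containing $V_0$ but not $V_1$ yields $\cumul(x, U_s, \ldots) = 0$; any block containing $V_0, V_1$ together with some $V_s$ ($s\geq 2$) yields $\cumul(x, x, U_s, \ldots) = 0$ since $\{x,x\}$ is independent of the $U_s$'s; and any block of at least two $V_s$'s with $s, s' \geq 2$ yields $\cumul(U_s, U_{s'}, \ldots) = 0$ by joint independence. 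For $\tau^\star$, multilinearity of cumulants together with $z_{j_0}^2 = z_{j_0}$ gives
\[
C_{x,\pi} = \mathrm{Var}(x)\, K^{-(l-1)}\, \kappa_l(z_{j_0}),
\]
where $\kappa_l(z_{j_0})$ is the $l$-th cumulant of $\mathcal{B}(\rho)$.

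Finally, since there are $(D-3)!!$ matchings in case A,
\[
\kappa_{x,\alpha}^2 = \lambda^{2D}[(D-3)!!]^2\, \frac{(K-1)^2}{K^{D+2}}\, \kappa_{D/2}(z_{j_0})^2.
\]
The explicit series $\kappa_l(z_{j_0}) = \sum_{k=1}^l (-1)^{k-1}(k-1)!\,S(l,k)\,\rho^k$ for Bernoulli cumulants, combined with the hypothesis $\rho \leq [12(D/2)!\,2^{D/2}]^{-1}$, yields $\kappa_{D/2}(z_{j_0}) \geq \rho/2$. Summing the above estimate over the $p\binom{n-2}{D-2}$ admissible $\alpha$'s, using $\binom{n-2}{D-2} \geq n^{D-2}/(2^{D-2}(D-2)!)$ (from $n \geq 4D$), and absorbing all factorials and combinatorial constants in $D$ into a factor $e^{-cD\log D}$ via Stirling, delivers the claim. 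The main technical obstacle is the case-B independence argument, which really exploits the specific indicator structure of $x$ and the $U_s$'s; the case-A analysis is routine in spirit but requires a delicate enumeration of vanishing partitions via Lemma~\ref{lem:independentcumulant}.
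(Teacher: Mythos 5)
Your proposal is correct and follows essentially the same route as the paper: restrict to single-column multisets $\alpha$ supported on $D$ rows containing $\{1,2\}$, apply Theorem~\ref{thm:LTC}, show via the law of total cumulance (conditioning on the Bernoulli variable) that only matchings containing the pair $\{1,2\}$ contribute, obtain $C_{x,\pi}=\mathrm{Var}(x)K^{-(D/2-1)}\kappa_{D/2}(\mathcal{B}(\rho))$, count the $(D-3)!!$ such matchings, lower-bound the Bernoulli cumulant by $\rho/2$ under the smallness condition on $\rho$, and sum over the $p\binom{n-2}{D-2}$ admissible $\alpha$'s. Your explicit moment/connected-component verification that $(U_1,\ldots,U_l)$ is independent of $x$ in the split case is a welcome elaboration of an independence claim the paper states only briefly, but it does not change the argument.
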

In light of the definition of $\lambda$, we conclude that $\left(\widetilde{corr}^{(SW)}_{\leq D}\right)^2\geq 1$ 
as soon as
\begin{equation}\label{eq:condition:bad:prior:sparse}
\Bar{\Delta}^2\geq c'e^{c\log(D)} \frac{\bar{s}K}{n}\cdot  \left(\frac{n^2p}{\bar{s}^2}\right)^{1/D}\ .
\end{equation}
In particular, it is therefore not possible to provide a non-trivial lower bound on $MMSE_{\leq D}$ using the approach of~\cite{SchrammWein22} as long as the above condition is satisfied. 

Consider for a instance a regime where $K$ is  a constant and $\sqrt{n}\ll s\ll p$. Then, our main result in Section~\ref{sec:sparse} --see~\ref{eq:sparse-barrier-simple}-- entails that clustering is impossible as soon as 
\[
\Bar{\Delta}^2\leq_{\log} \frac{\bar{s}}{\sqrt{n}}\  , 
\]
whereas, equipped with the prior of Definition~\ref{def:prior_sparse_simple}, we would at best be able to prove the condition $\Bar{\Delta}^2\leq_{\log} {\bar{s}}/{n}$, which is looser by a factor $\sqrt{n}$. 
\medskip

\subsection{Extension to binary observations}\label{sec:bernoulli}

When the data $Y\in\ac{0,1}^{n\times p}$ are binary, with $\P\cro{Y_{ij}=1| X}=X_{ij}$ and $1<\tau_{0}\leq X_{ij}\leq \tau_{1}< 1$, \cite{SchrammWein22} provides 
a lower bound on the $MMSE_{\leq D}$ in terms of the cumulants $\cumul(x,X_{\alpha})$
\begin{equation}\label{eq:SW-binary}
MMSE_{\leq D}\geq \E[x^2] - \sum_{\alpha\in \ac{0,1}^{n\times p},\ |\alpha|\leq D} {\cumul(x,X_{\alpha})^2\over (\tau_{0}(1-\tau_{1}))^{|\alpha|}}\ .
\end{equation}
In this case, we cannot choose in the latent model the $\nu_{kj}$ with Gaussian distribution, since it should be bounded a.s. 
Instead, we can consider $\nu$ uniformly distributed on a shifted hypercube. 
We explain below how to handle this setting.

Let us suppose that the signal matrix is  the form $$X_{ij}=m_{ij}+\delta_{ij}(Z)\nu_{\theta_{ij}(Z)}\enspace,$$  with $m_{ij}\in (0,1)$ and the $\nu_{kl}$ taken i.i.d uniformly on $\ac{-\lambda, \lambda}$, instead of being Gaussian. We then have $X_{ij}\in[\tau_{0},\tau_{1}]$ a.s., with $\tau_{0}=\min_{ij} m_{ij}-\lambda$ and $\tau_{1}=\max_{ij} m_{ij}+\lambda$.
Since the $m_{ij}$'s are constant, by multilinearity of the cumulants combined with Lemma~\ref{lem:independentcumulant}, we have that, for any multiset $\alpha\in \N^{n\times p}$,
$$\cumul(x,X^{\alpha})=\cumul(x,(\delta_{ij}(Z)\nu_{\theta_{ij}(Z)})_{(i,j)\in\alpha}).$$
Therefore, without loss of generality, we can assume that $m_{ij}=0$ for all $(i,j)$.

As for the Gaussian prior, we can still apply the Law of Total Cumulance and get, 
$$\kappa_{x,\alpha}=\cumul(x,X_{\alpha})=\sum_{\pi\in \mathcal{P}\pa{\alpha\cup \ac{x}}}\cumul\pa{\cumul\pa{x, X_{\pi_{0}\setminus \ac{x}} |Z}, \cumul\pa{X_{R}|Z}_{R\in \pi\setminus \ac{\pi_{0}}}}\enspace.$$
For $\beta\neq 0$, it is still true that $\cumul\pa{x, \pa{X_{ij}}_{ij\in \beta} |Z}=0$. The difference lies in the expression of $\cumul\pa{X_{\beta}|Z}$ which is 
$$\cumul\pa{X_{\beta}|Z}= c_{|\beta|}\lambda^{|\beta|}\delta(Z)^{\beta}\,\1_{|\beta|\equiv0 [2]} \,\1_{\Omega_{\beta}(Z)}\enspace,$$
where $\delta(Z)^\beta:=\prod_{(i,j)\in\beta} \delta_{ij}(Z)$, 
\begin{equation}\label{def:Omega2}
\Omega_{\beta}(Z):=\big\{\delta_{ij}(Z)\neq 0,\enspace \forall (i,j)\in \beta\big\}\cap \ac{\big|\ac{\theta_{ij}(Z):\enspace (i,j)\in \beta}\big|=1}\enspace,
\end{equation}
and 
$$c_{|\beta|}=\sum_{\pi\in \mathcal{P}([|\beta|])}m(\pi)\1\ac{\forall R\in \pi,\enspace |R|\equiv 0\enspace [2]}\enspace.$$
We recall that with a Gaussian prior, this conditional cumulant was null whenever $|\beta|\neq 2$. For $\alpha\in \N^{n\times p}$ a multiset, we write $\mathcal{P}^{even}(\alpha)$ the set of all partitions $\pi$ of $\alpha$ such that, for all $R\in \pi$, $|R|\equiv 0\enspace [2]$. We end up with the following proposition. 

\begin{prop}\label{prop:LTC2}
For all $\alpha\in\N^{n\times p}$
\begin{equation}\label{eq:LTCgeneral:thm2}
\kappa_{x,\alpha}=\lambda^{|\alpha|}\sum_{\pi\in \mathcal{P}^{even}(\alpha)}\pa{\prod_{s\in |\pi|} c_{|\beta_s(\pi)|}}C_{x, \beta_1(\pi),\ldots, \beta_{|\pi|}(\pi)}\enspace,
\end{equation}
 with $\cro{\beta_{s}(\pi)}_{ij}$ counting the number of copies of $(i,j)$ in $\pi_s$, and where
\begin{align}
C_{x,\beta_{1},\ldots,\beta_{|\pi|}} 
&= \cumul\pa{x,\delta(Z)^{\beta_{1}}\1_{\Omega_{\beta_{1}}(Z)},\ldots,\delta(Z)^{\beta_{l}}\1_{\Omega_{\beta_{|\pi|}}(Z)}},\label{eq:def-C2}
\end{align}
with $\Omega_{\beta}(Z)$  defined in \eqref{def:Omega2}, and $\delta(Z)^\beta:=\prod_{(i,j)\in\beta} \delta_{ij}(Z)$.
\end{prop}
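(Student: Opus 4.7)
The plan is to mirror the proof of Theorem~\ref{thm:LTC}, replacing Lemma~\ref{lem:boundcumulantLTCgeneral} by its analog for the uniform-on-$\{-\lambda,\lambda\}$ prior, and then tracking the combinatorial constant that appears because higher even cumulants of $\nu$ no longer vanish.

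First, as in~\eqref{eq:LTC1}, apply the Law of Total Cumulance (Lemma~\ref{lem:totalcumulance}) by conditioning on $Z$:
\begin{equation*}
\kappa_{x,\alpha}=\sum_{\pi\in \mathcal{P}(\alpha\cup\{x\})}\cumul\pa{\cumul(x,X_{\pi_0\setminus\{x\}}|Z),\,\cumul(X_R|Z)_{R\in\pi\setminus\{\pi_0\}}}.
\end{equation*}
Since $x=x(Z)$ is $\sigma(Z)$-measurable and $X$ is independent of $x$ conditionally on $Z$, Lemma~\ref{lem:independentcumulant} forces $\cumul(x,X_{\pi_0\setminus\{x\}}|Z)=0$ unless $\pi_0=\{x\}$, exactly as in the Gaussian case. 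So only partitions with $\pi_0=\{x\}$ contribute.

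Second, compute $\cumul(X_\beta|Z)$ for the uniform-on-$\{-\lambda,\lambda\}$ prior. The same two arguments as in Lemma~\ref{lem:boundcumulantLTCgeneral} show that this cumulant vanishes unless $\delta_{ij}(Z)\neq 0$ for all $(i,j)\in\beta$ and all $\theta_{ij}(Z)$ are equal to some common $(k_0,\ell_0)$; that is, unless $\Omega_\beta(Z)$ holds. On $\Omega_\beta(Z)$, multilinearity yields
\begin{equation*}
\cumul(X_\beta|Z)=\delta(Z)^\beta\,\cumul\pa{\underbrace{\nu_{k_0\ell_0},\ldots,\nu_{k_0\ell_0}}_{|\beta|\ \text{copies}}}.
\end{equation*}
By the Möbius formula (Lemma~\ref{lem:mobiusformula} in Appendix~\ref{sec:cumulants}), this last joint cumulant equals $\sum_{\pi\in\mathcal{P}([|\beta|])}m(\pi)\prod_{R\in\pi}\E[\nu^{|R|}]$. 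Since $\nu$ is symmetric, $\E[\nu^{|R|}]=\lambda^{|R|}\,\mathbf{1}\{|R|\equiv 0\,[2]\}$, and factoring out $\lambda^{|\beta|}$ gives $c_{|\beta|}\lambda^{|\beta|}$ with $c_{|\beta|}$ as defined in the proposition. In particular $\cumul(X_\beta|Z)=0$ as soon as $|\beta|$ is odd, which is the only substantive departure from the Gaussian analysis.

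Third, conclude as in the proof of Theorem~\ref{thm:LTC}: the partitions $\pi'\in\mathcal{P}(\alpha\cup\{x\})$ contributing to the Law of Total Cumulance are exactly those with $\pi'_0=\{x\}$ and $|\pi'_s|\equiv 0\,[2]$ for $s\geq 1$, which are in bijection with $\mathcal{P}^{even}(\alpha)$ via $\pi\mapsto\bigl(\beta_1(\pi),\ldots,\beta_{|\pi|}(\pi)\bigr)$ up to relabeling of blocks. For each such $\pi$, the inner conditional cumulants reduce, by multilinearity and the formula just derived, to $c_{|\beta_s(\pi)|}\lambda^{|\beta_s(\pi)|}\,\delta(Z)^{\beta_s(\pi)}\,\mathbf{1}_{\Omega_{\beta_s(\pi)}(Z)}$. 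Pulling out the scalars $c_{|\beta_s|}\lambda^{|\beta_s|}$ from the outer cumulant (multilinearity again) and using $\sum_s|\beta_s(\pi)|=|\alpha|$ yields precisely~\eqref{eq:LTCgeneral:thm2} with $C_{x,\beta_1,\ldots,\beta_{|\pi|}}$ defined by~\eqref{eq:def-C2}.

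The only genuinely new ingredient over Theorem~\ref{thm:LTC} is computing the higher-order cumulants of the uniform law on $\{-\lambda,\lambda\}$, which is a direct application of the Möbius formula; the combinatorial bookkeeping (the symmetry factor collapsing equivalent orderings of the blocks into a single partition) is identical to the Gaussian case, so no additional difficulty arises there.
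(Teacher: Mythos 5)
Your proof is correct and follows essentially the same route as the paper: condition on $Z$ via the Law of Total Cumulance, keep only partitions with $\pi_0=\{x\}$ and even-sized blocks, compute $\cumul\pa{X_{\beta}|Z}=c_{|\beta|}\lambda^{|\beta|}\delta(Z)^{\beta}\1_{\Omega_{\beta}(Z)}$ through the M\"obius formula (note the even moments equal $\lambda^{|R|}$ because $\nu^2=\lambda^2$ almost surely for the two-point prior, not by symmetry alone), and then collect the surviving terms over $\mathcal{P}^{even}(\alpha)$ while pulling the scalars out of the outer cumulant. The only step of the paper you leave implicit is the preliminary reduction to $m_{ij}=0$ via shift-invariance of joint cumulants, which is immediate.
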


Applying Proposition \ref{prop:LTC2} to the three problems considered, we would obtain the same upper-bounds on $\kappa_{x,\alpha}$ than with a Gaussian Prior, up to some multiplicative constant of the form $|\alpha|^{c|\alpha|}$. The obtained computational barrier would be similar up to some power of $D$. The downside of this being that we would not be able to catch the exact BBP constant when $n\geq poly(K,D)$ for the problems of Clustering and Sparse Clustering. For Biclustering, we would obtain the same result, with a modification of the power of $D$ in the expression of $\zeta$ (see Theorem \ref{thm:lowdegreebi}).

\subsection{Discussion of other frameworks of computational lower-bounds}\label{sec:discussion:computational lower bounds}

LD Polynomials is a popular restricted class of estimators used for understanding computational limits. However, it is not the only class that is used to predict computational barriers; here is an non-exhaustive list of other classes of estimators that are widely used when trying to understand computational limits. 

\paragraph*{Sum-of-Square Hierarchy} The Sum of Square hierarchy is a family of Semi-Definite programs that is used for the task of \textit{certification} \cite{hopkins2018statistical, HopkinsFOCS17,Barak19}. The question to answer is wether SoS can certify the absence of a structure. However, \textit{certification} problems can be sometimes harder than the associated recovery problem \cite{bandeira2019computationalhardnesscertifyingbounds}. 

\paragraph*{Approximate Message Passing (AMP)} AMP is an iterative procedure that is believed to be optimal amongst polynomial time algorithms. Failure of AMP is often taken as an evidence for the Hardness of a problem \cite{lesieur2016phase,DavidL.DonohoandArianMalekiandAndreaMontanari}. AMP can be approximated by low-degree polynomials; hence LD hardness is stronger than AMP hardness. \cite{montanari2024equivalence} proved the equivalence of AMP and LD polynomials in Rank-One Matrix Estimation.

\paragraph*{SQ Lower bound~\cite{kearns1998efficient}} The Statistical Query model corresponds to a framework where we can access to queries, which are noised version of the expectation of a chosen function. Lower-bounding the number of queries needed is taken as a proxy for the time complexity of this problem \cite{SQclustering} \cite{pmlr-v195-diakonikolas23b}. In a lot of detection models, the SQ framework is equivalent to the LD framework \cite{brennan2020statistical}.

\paragraph*{Spectral Methods} Spectral Algorithms rely on computing leading eigenvectors or singular vectors of a well chosen matrix constructed from the data. \cite{BBP05} proves an asymptotic phase transition for spectral detection in sparse PCA that we refer as the BBP transition. Numerous spectral methods have studied for different models, such as clustering \cite{VEMPALA2004, banks2018information}, learning of distributions \cite{AchlioptasMcSherry2005}, the Stochastic Block model \cite{YunP14b, LeiRinaldo}, or as said before sparse PCA. Since eigenvectors of matrices can be approximated via power-iteration, spectral methods with respect to matrices which are polynomials of the data can be approximated by LD polynomials.

\paragraph*{Landscape Analysis} In optimization problems, it is possible to provide a barrier for stable Algorithms with the Overlap Gap Property \cite{gamarnik2021overlap}. This property can be extended to problems of estimation with planted structure and provide barriers for algorithms such as MCMC~\cite{chen2024low}.

\section{Background on cumulants}\label{sec:cumulants}\label{sec:cumulant}

 From \cite{SchrammWein22}, we know that in a Gaussian Additive model, proving LD lower bounds can be reduced to computing some joint cumulants $\kappa_{x,\alpha}$. We provide in this section a brief overview on cumulants, and we refer e.g. to \cite{novak2014three} for more details. 

\begin{definition}
Let $W_1,\ldots, W_l$ be random variables on the same space $\mathcal{W}$. Their cumulant generating function is the function $$K(t_1,\ldots, t_l):=\log\E\cro{\exp\pa{\sum_{l'=1}^l}t_{l'}W_{l'}}\enspace,$$
and their joint cumulant is the quantity $$\cumul\pa{W_1,\ldots,W_l}:=\pa{\pa{\prod_{l'=1}^l\frac{\partial}{\partial t_{l'}}}K(t_1,\ldots,t_l)}_{t_1,\ldots, t_l=0}\enspace.$$
\end{definition}

The joint cumulant of random variables can be expressed as a linear combination of their mixed moments and vice-versa. 

\begin{lem}\label{lem:mobiusformula}
Let $W_1,\ldots, W_l$ be random variables on the same space $\mathcal{W}$. Let $\mathcal{P}([l])$ stands for the collections of permutations of $[l]$. Then 
\begin{align}
\E\cro{W_{1}\cdots W_{l}}  &= \sum_{\pi\in\mathcal{P}([l])}\prod_{R\in \pi}\cumul\pa{W_{j}:j\in R}\enspace,\label{eq:cumul2moment}\\
\text{and}\quad\cumul\pa{W_1,\ldots, W_l}&=\sum_{\pi\in\mathcal{P}([l])}m(\pi)\prod_{R\in \pi}\E\cro{\prod_{l'\in R}W_{l'}}\enspace,
\end{align}
where $m(\pi)$ is the Möbius function $m(\pi)=(-1)^{|\pi|-1}\pa{|\pi|-1}!$.
\end{lem}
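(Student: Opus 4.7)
The plan is to prove the moment-to-cumulant expansion \eqref{eq:cumul2moment} directly from the generating-function identity $M(t)=\exp(K(t))$, where $M(t)=\E\cro{\exp\pa{\sum_i t_i W_i}}$, and then to deduce the inverse formula via Möbius inversion on the lattice of set partitions of $[l]$.

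For \eqref{eq:cumul2moment}, I would start from $\E\cro{W_1\cdots W_l} = \prod_{i=1}^l \partial_{t_i} M(t)|_{t=0}$ and expand $M=\sum_{n\geq 0} K^n/n!$. Applying $\prod_{i\in[l]}\partial_{t_i}$ to $K^n$ via Leibniz's rule produces a sum over all maps $f:[l]\to[n]$ that assign each derivative $\partial_{t_i}$ to one of the $n$ copies of $K$. Evaluating at $t=0$, every non-surjective $f$ contributes $0$ because $K(0)=0$ leaves at least one factor vanishing. Surjective maps $f:[l]\to[n]$ are in $n!$-to-one correspondence with unordered set partitions $\pi \in \mathcal{P}([l])$ with $|\pi|=n$, and each block $R\in\pi$ contributes the factor $\prod_{i\in R}\partial_{t_i} K|_{t=0}=\cumul\pa{W_i:i\in R}$. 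The $n!$ overcounting cancels the $1/n!$ from the exponential series, and summing over $n\geq 1$ yields \eqref{eq:cumul2moment}.

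For the second formula, I would first extend \eqref{eq:cumul2moment} to an arbitrary $\sigma \in \mathcal{P}([l])$ by applying it block by block:
\[
\prod_{S\in\sigma}\E\cro{\prod_{j\in S}W_j} = \sum_{\pi\leq \sigma}\prod_{R\in\pi}\cumul\pa{W_j:j\in R},
\]
where $\pi\leq\sigma$ means $\pi$ refines $\sigma$. This identifies the right-hand side as the zeta transform of $\pi\mapsto \prod_{R\in\pi}\cumul\pa{W_j:j\in R}$ on the partition lattice $\Pi_l$. Möbius inversion evaluated at the top element $\hat 1=\ac{[l]}$ then gives
\[
\cumul\pa{W_1,\ldots,W_l} = \sum_{\pi\in\mathcal{P}([l])} \mu(\pi,\hat 1) \prod_{R\in\pi}\E\cro{\prod_{j\in R}W_j},
\]
and the classical computation of the Möbius function of $\Pi_l$ yields $\mu(\pi,\hat 1) = (-1)^{|\pi|-1}(|\pi|-1)!=m(\pi)$, which is exactly the claimed identity.

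No step is conceptually difficult—this result is part of the folklore of cumulants. The only place requiring care is the combinatorial book-keeping in the Leibniz expansion (the $n!$ factor relating surjective maps $[l]\to[n]$ to unordered set partitions with $n$ blocks), together with the standard Möbius function computation on $\Pi_l$; both can be found in any reference on set-partition combinatorics.
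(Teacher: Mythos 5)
Your proof is correct. Note that the paper itself does not prove this lemma: it is stated as classical background in Appendix~\ref{sec:cumulants} with a pointer to the literature (e.g.\ \cite{novak2014three}), so there is no in-paper argument to compare against. Your derivation is the standard one — expanding $M=\exp(K)$ and matching surjections $[l]\to[n]$ with unordered partitions (the $n!$ overcount cancelling the $1/n!$) to get \eqref{eq:cumul2moment}, then applying the moment--cumulant formula blockwise and inverting on the partition lattice $\Pi_l$ with $\mu(\pi,\hat 1)=(-1)^{|\pi|-1}(|\pi|-1)!$ — and both steps are sound as written (the generating-function manipulation should be understood formally, or under a moment condition, consistent with the paper's definition of cumulants via $K(t)$). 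One cosmetic remark: the statement's phrase ``collections of permutations of $[l]$'' is a typo for \emph{partitions}, which you correctly interpreted.
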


By considering aside the trivial partition $\pi$ with one group, and by enumerating the partitions $\pi$ on $[l]$ by considering 
$\ac{l}\subset R_{0} \subset \ac{1,\ldots,l}$ and partitions $\pi'$ of $[l]\setminus R_{0}$, we get from \eqref{eq:cumul2moment}, with the short notation $\cumul[R]:=\cumul\pa{W_{j}:j\in R}$
\begin{align}
\E[W_{1}\cdots W_{l}]&=\cumul\cro{[l]} +\sum_{\ac{l}\subset R_{0} \subsetneq \ac{1,\ldots,l}}\cumul\cro{R_{0}}\sum_{\pi'\in \mathcal{P}([l]\setminus R_{0})}  \prod_{R\in \pi'} \cumul\cro{R}\nonumber\\
&=\cumul\cro{[l]} + \sum_{\ac{l}\subset R_{0} \subsetneq \ac{1,\ldots,l}}\cumul\cro{R_{0}}\ \E\bigg[\prod_{j\in [l]\setminus R_{0}}W_{j}\bigg]. \label{eq:rec:cumul}
\end{align}

As noticed by \cite{SchrammWein22}, a key feature for proving LD lower-bounds is next lemma, which gives a sufficient condition for the nullity of cumulants.

\begin{lem}\label{lem:independentcumulant2}
Let $W_1,\ldots, W_l$ be random variables on the same space $\mathcal{W}$. Suppose that there exist disjoint sets $L_1$ and $L_2$, non-empty and covering $[l]$, such that $(W_i)_{i\in L_1}$ and $(W_i)_{i\in L_2}$ are independent. Then, we have the nullity of the joint cumulant $\cumul\pa{W_1,\ldots, W_l}=0$.
\end{lem}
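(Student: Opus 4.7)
The plan is to derive the statement directly from the definition of joint cumulants via the cumulant generating function (CGF). By the independence of $(W_i)_{i\in L_1}$ and $(W_i)_{i\in L_2}$, the joint moment generating function factors:
$$\E\cro{\exp\pa{\sum_{i=1}^l t_i W_i}} = \E\cro{\exp\pa{\sum_{i\in L_1} t_i W_i}}\cdot \E\cro{\exp\pa{\sum_{i\in L_2} t_i W_i}}\enspace.$$
Taking logarithms turns this product into a sum, so the joint CGF decomposes as
$$K(t_1,\ldots,t_l) = K_1\pa{(t_i)_{i\in L_1}} + K_2\pa{(t_i)_{i\in L_2}}\enspace,$$
where $K_j$ depends only on the coordinates indexed by $L_j$.

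Next, I would apply the definition of the joint cumulant as a mixed partial derivative of $K$ at the origin. By linearity of differentiation,
$$\cumul(W_1,\ldots,W_l) = \pa{\prod_{i=1}^l \frac{\partial}{\partial t_i}}K_1\bigg|_{t=0} + \pa{\prod_{i=1}^l \frac{\partial}{\partial t_i}}K_2\bigg|_{t=0}\enspace.$$
Since $L_2$ is non-empty, pick any $j\in L_2$; then $K_1$ is constant in $t_j$, so $\partial_{t_j}K_1=0$ and the first term vanishes. By the symmetric argument using that $L_1$ is non-empty, the second term also vanishes, giving $\cumul(W_1,\ldots,W_l)=0$.

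There is essentially no hard step here, apart from the mild technical caveat that the derivatives require the MGF to be finite on a neighborhood of $0$. If one prefers to avoid any moment condition, the same conclusion can be reached by strong induction on $l$ using the moments-to-cumulants inversion formula (\ref{eq:cumul2moment}): the mixed moment $\E[\prod_{i=1}^l W_i]$ factors as $\E[\prod_{i\in L_1} W_i]\,\E[\prod_{i\in L_2} W_i]$, and grouping the partitions $\pi\in\mathcal{P}([l])$ appearing in (\ref{eq:cumul2moment}) according to their restrictions to $L_1$ and $L_2$ lets one solve for the top cumulant and match it against the factored right-hand side; the induction hypothesis kills every ``cross'' partition, leaving exactly the identity needed to conclude $\cumul(W_1,\ldots,W_l)=0$. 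Either route works; the CGF argument is noticeably shorter and is the one I would present.
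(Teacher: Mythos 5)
Your proof is correct. Note that the paper does not actually prove this lemma: both Lemma \ref{lem:independentcumulant} and Lemma \ref{lem:independentcumulant2} are quoted as known facts with a pointer to the literature (\cite{novak2014three}), so there is no internal argument to compare against. Your CGF route is the standard one and sits squarely on the paper's own definition of the joint cumulant as a mixed partial derivative of $K(t_1,\ldots,t_l)$ at the origin: independence of the two blocks factors the MGF, the log turns the product into $K_1+K_2$ with each summand depending only on its block's variables, and since both $L_1$ and $L_2$ are non-empty each summand is annihilated by at least one of the derivatives $\partial_{t_i}$. The only caveat is the one you already flag — the CGF need not be finite near $0$ even when all moments exist — and your fallback via the moments-to-cumulants inversion \eqref{eq:cumul2moment} with induction on $l$ is the right way to remove that assumption; in the paper's applications the $W_i$ are bounded (indicators, products of Rademachers) or Gaussian, so either route suffices.
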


\subsection{Further Notation for the control of the cumulants}
Here, we gather some notation that we use repeatdly in the proof.  For $\alpha\in \N^{n\times p}$ a multiset and $i\in [n]$, we write $\alpha_{i:}$ the $i$-th row of $\alpha$. Similarly, for $j\in [p]$, we write $\alpha_{:j}$ the $j$-th column of $\alpha$. We denote $supp(\alpha)=\ac{i\in [n], \alpha_{i:}\neq 0}$ and $col(\alpha)=\ac{j\in [p], \alpha_{:j}\neq 0}$. Then, we denote $\#\alpha=|supp(\alpha)|$ and $r_\alpha=|col(\alpha)|$. Finally, we shall write $|\alpha|$ the $l_1$-norm of $\alpha$. Finally, $\alpha!$ stands for $\prod_{ij}\alpha_{ij}!$ and, for any real valued matrix $Q$, $Q^\alpha=\prod_{ij}Q_{ij}^{\alpha_{ij}}$. For any finite set $S$, we write $|S|$ its cardinality. 

Given a graph $G=\pa{V,E}$, with $V$ the set of nodes and $E$ the set of edges, and $V'\subset V$, we write $G[V']$ the restriction of $G$ to the nodes in $V'$, i.e $G'=\pa{V', E'}$ with $E'=V'^2\cap E$. We write $cc(G)$ the number of connected components of $G$.

\section{Proof of Theorem \ref{thm:lowdegreeclusteringsharp}}\label{prf:lowdegreeclusteringsharp}

With no loss of generality, we assume in all the proof that $\sigma^2=1$.
Let $D\in \N$.
In the remaining of the proof, we write $x=M^*_{12}=\1_{k^*_1=k^*_2}$. Then, our goal is to lower-bound 
$$MMSE_{\leq D}=\inf_{f\in \R_{D}(Y)}\E\cro{\pa{f(Y)-x}^2}\enspace,$$
or equivalently, according to (\ref{eq:MMSE-corr}), to upper-bound $corr^2_{\leq D}$ defined in (\ref{def:corr}).  
More precisely,  proving Theorem \ref{thm:lowdegreeclusteringsharp} is equivalent to proving
\begin{equation*}
    corr^2_{D}\leq \frac{1}{K^2}\cro{1+\frac{\zeta}{(1-\sqrt{\zeta})^3}},\quad \textrm{for}\quad \zeta:=\frac{\Bar{\Delta}^4}{p}\max\pa{D^{18},\frac{n}{K^2}}<1\enspace.
\end{equation*}
The clustering model is a special case of the latent model (\ref{eq:latent-model}), with
$$Z=k^*,\quad \delta_{ij}(k^*)=1,\quad \textrm{and}\quad \theta_{i,j}(k^*)=(k^*_{i},j).$$
Combining Proposition \ref{thm:schrammwein} and Theorem \ref{thm:LTC}, we need to upper bound the cumulant, for any decomposition $\alpha=\beta_1+\ldots+\beta_l$, with $|\beta_s|=2$ for $s=1,\ldots, l$,

$$C_{x,\beta_{1},\ldots,\beta_{l}} =  \cumul\pa{x,\1_{\Omega_{\beta_{1}}(k^*)},\ldots,\1_{\Omega_{\beta_{l}}(k^*)}},$$

with 
$\Omega_{\beta}(k^*):= \ac{\big|\ac{(k^*_{i},j):\enspace (i,j)\in \beta}\big|=1}$.
Building on the recursive Bound (\ref{eq:rec-C}), we derive in Section \ref{prf:boundLTCclusteringsharp} the following upper-bound.

 \begin{lem}\label{lem:boundLTCclusteringsharp}
  We recall that $\#\alpha$ stands for the cardinality of the points $i\in[1,n]$ such that $\alpha_{i:}\neq 0$, and that $|\alpha|:=\sum_{ij}\alpha_{ij}$.
We have
 \begin{equation}\label{eq:boundLTCclusteringsharp}
|C_{x,\beta_{1},\ldots,\beta_l}|\leq |\alpha|^{|\alpha|-2\#\alpha+4}\pa{\frac{1}{K}}^{\#\alpha-1}\enspace.
\end{equation}
\end{lem}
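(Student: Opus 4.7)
The plan is to prove the bound by strong induction on $|S|$, starting from the recursive Bound~\eqref{eq:rec-C}. In the clustering model, $\delta_{ij}(k^*)=1$, so for any subset $S\subseteq[l]$ the recursion reduces to
\[
|C_{x,\beta[S]}| \leq \P\bigl[\cap_{s\in S}\Omega_{\beta_s},\, k^*_1=k^*_2\bigr] + \sum_{S'\subsetneq S} |C_{x,\beta[S']}|\,\P\bigl[\cap_{s\in S\setminus S'}\Omega_{\beta_s}\bigr].
\]
By Lemma~\ref{lem:independentcumulant}, only \emph{active} $S'$ contribute (with $\emptyset$ being active by convention), and we may restrict attention to pairs $\beta_s=\{(i_s,j_s),(i'_s,j'_s)\}$ with $j_s=j'_s$, since otherwise the cumulant vanishes outright.

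First I would make explicit the probability computations. Since the labels $k^*_i$ are i.i.d.\ uniform on $[K]$, for any subset $T\subseteq[l]$ whose column constraints are all met one has $\P\bigl[\cap_{s\in T}\Omega_{\beta_s}(k^*)\bigr]=K^{-(\#\alpha_T-cc(\mathcal{V}[T]))}$, and likewise $\P\bigl[\cap_{s\in S}\Omega_{\beta_s},\, k^*_1=k^*_2\bigr]=K^{-(\#\alpha_S-1)}$ whenever $S$ is active. The induction hypothesis asserts that $|C_{x,\beta[S']}|\leq |\alpha_{S'}|^{|\alpha_{S'}|-2\#\alpha_{S'}+4}\,K^{-(\#\alpha_{S'}-1)}$ for every active $S'\subsetneq S$. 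Plugging both probability formulas into the recursion and invoking the combinatorial inequality $\#\alpha_{S'}+\#\alpha_{S\setminus S'}-cc(\mathcal{V}[S\setminus S'])\geq \#\alpha_S$, established in the proof of Lemma~\ref{lem:sketch}, each inductive term is bounded by $|\alpha_{S'}|^{|\alpha_{S'}|-2\#\alpha_{S'}+4}\,K^{-(\#\alpha_S-1)}$, which confirms the target exponent $\#\alpha_S-1$ of $1/K$ upon factoring it out.

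It remains to control the polynomial prefactor. The key structural fact, which follows from the requirement that non-zero cumulants have $|\alpha_{i:}|\geq 2$ for every $i\in \mathrm{supp}(\alpha)$, is that $|\alpha|\geq 2\#\alpha$, so the exponent $|\alpha|-2\#\alpha+4$ is at least $4$ and measures the excess pair-multiplicity over the minimum case. The number of active $S'\subsetneq S$ is crudely bounded by $2^{|S|}\leq 2^{|\alpha|/2}\leq |\alpha|^{|\alpha|/2}$, and a careful accounting using monotonicity of the exponent $|\alpha_{S'}|-2\#\alpha_{S'}+4$ in $|\alpha_{S'}|\leq|\alpha|$ allows one to absorb this count into the target polynomial $|\alpha|^{|\alpha|-2\#\alpha+4}$.

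The main obstacle is not the shape of the recursion, which is already laid out in the sketch, but rather obtaining the precise polynomial prefactor $|\alpha|^{|\alpha|-2\#\alpha+4}$ instead of a crude $c^{|\alpha|}$-type blow-up. Achieving this requires exploiting the strong connectivity constraint revealed by conditioning on $Z$: each connected component of $\mathcal{V}[S\setminus S']$ must touch $\{0\}\cup S'$, which is precisely what governs $\#\alpha_{S'}+\#\alpha_{S\setminus S'}-cc(\mathcal{V}[S\setminus S'])$ relative to $\#\alpha_S$, and simultaneously prunes the effective count of active subsets well below $2^{|S|}$.
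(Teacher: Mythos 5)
Your overall architecture matches the paper's: start from the recursion \eqref{eq:rec-C}, compute the conditional probabilities as powers $K^{-(\#\alpha_T-cc(\mathcal{V}[T]))}$, prune to active subsets, and use the connectivity inequality $\#\alpha_{S'}+\#\alpha_{S\setminus S'}-cc(\mathcal{V}[S\setminus S'])\geq \#\alpha_S$ to keep the $K$-exponent at $\#\alpha_S-1$. That part of your argument is sound and is exactly Lemma~\ref{lem:upperbounfkappaS} in the paper.

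The genuine gap is the polynomial prefactor, which you flag as ``the main obstacle'' but do not actually resolve. Your inductive hypothesis forces you to show that $1+\sum_{S'\subsetneq S\ \text{active}}|\alpha_{S'}|^{|\alpha_{S'}|-2\#\alpha_{S'}+4}\leq |\alpha_S|^{|\alpha_S|-2\#\alpha_S+4}$, and the crude device you invoke --- bounding the number of active subsets by $2^{|S|}\leq|\alpha|^{|\alpha|/2}$ and then ``absorbing'' --- cannot work: when every row of $supp(\alpha)\setminus\{1,2\}$ has degree exactly $2$ one has $|\alpha|=2\#\alpha-2$, so the target prefactor is only $|\alpha|^{2}$, while $2^{|\alpha|/2}$ is exponentially larger; moreover the maximal active subsets $S'$ can have exponent $|\alpha_{S'}|-2\#\alpha_{S'}+4$ only slightly below the target, so a count-times-max bound fails even before the exponential count enters. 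The paper's mechanism is a dedicated device you do not supply: it introduces $f(S)=1+\sum_{S'\subsetneq S\ \text{active}}f(S')$, reduces it via $f(S)\leq 2\sum_{s\in S}f(S^*(s))$ where $S^*(s)$ is the \emph{maximal} active subset of $S\setminus\{s\}$ (well defined because active subsets are stable under union), and then shows the exponent strictly decreases by at least one at each such step --- each row newly covered when passing from $S^*(s)$ to $S$ must appear in at least two of the added pairs, plus one extra pair for connectivity, so $|\alpha_S|\geq|\alpha_{S^*(s)}|+2\,(\text{new rows})+1$ --- after which the factor $2|S|\leq|\alpha|$ is absorbed (Lemma~\ref{lem:upperboundf}). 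Without an argument of this kind your induction does not close. A further small error: in the clustering model only rows in $supp(\alpha)\setminus\{1,2\}$ are forced to have $|\alpha_{i:}|\geq 2$ (rows $1$ and $2$ are tied to the variable $x$ through the pair $\beta_0$), so the correct constraint is $|\alpha|\geq 2\#\alpha-2$, not $|\alpha|\geq 2\#\alpha$, and the exponent $|\alpha|-2\#\alpha+4$ is only guaranteed to be at least $2$, not $4$; you have imported the condition from the symmetrized sparse prior, where it does hold for all rows.
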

Combining this bound with (\ref{eq:LTCgeneral:thm2}) and counting the number of partitions $\pi\in \mathcal{P}_2(\alpha)$ such that $C_{x,\beta_{1}(\pi),\ldots,\beta_{l}(\pi)}\neq 0$, we prove in Section \ref{prf:controlcumulantsclusteringsharp4}  the next  upper-bound on $|\kappa_{x,\alpha}|$.
\medskip

\begin{lem}\label{lem:controlcumulantsclusteringsharp}
Let $\alpha\in\N^{n\times p}$ non-zero. We have the upper bound
$$|\kappa_{x,\alpha}|\leq  \pa{\frac{1}{K}}^{\#\alpha-1}\lambda^{|\alpha|}|\alpha|^{|\alpha|-2\#\alpha+4}|\alpha|^{|\alpha|-\#\alpha-r_\alpha+1}\enspace.$$
\end{lem}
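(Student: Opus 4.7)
The plan is to combine the cumulant decomposition from Theorem~\ref{thm:LTC} with the per-term bound of Lemma~\ref{lem:boundLTCclusteringsharp} and a combinatorial count of the nontrivial partitions in $\mathcal{P}_2(\alpha)$.

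First, Theorem~\ref{thm:LTC} gives
\[
\kappa_{x,\alpha}=\lambda^{|\alpha|}\sum_{\pi\in \mathcal{P}_{2}(\alpha)}C_{x,\beta_1(\pi),\ldots,\beta_l(\pi)},
\]
and Lemma~\ref{lem:boundLTCclusteringsharp} yields the uniform pointwise estimate
\[
|C_{x,\beta_1(\pi),\ldots,\beta_l(\pi)}|\leq |\alpha|^{|\alpha|-2\#\alpha+4}\pa{\frac{1}{K}}^{\#\alpha-1}
\]
for every $\pi\in\mathcal{P}_2(\alpha)$. After the triangle inequality, the task reduces to bounding by $|\alpha|^{|\alpha|-\#\alpha-r_\alpha+1}$ the number of $\pi$'s that contribute a nonzero summand.

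Second, I identify the surviving pairings. Since $\theta_{ij}(k^*)=(k^*_i,j)$, the event $\Omega_{\beta_s}(k^*)=\ac{|\ac{\theta_{ij}(k^*):(i,j)\in\beta_s}|=1}$ is empty as soon as $j_s\neq j'_s$, so $\1_{\Omega_{\beta_s}(k^*)}\equiv 0$ and $C_{x,\beta_1,\ldots,\beta_l}=0$ via Lemma~\ref{lem:boundcumulantLTCgeneral}. Only \emph{column-respecting} pairings $\mathcal{P}^{\mathrm{col}}_2(\alpha)$ contribute. Next, Lemma~\ref{lem:independentcumulant} kills any such $\pi$ whose row-multigraph $H(\pi)=\ac{\ac{i_s,i'_s}:s\in[l]}$, augmented with the virtual edge $\ac{1,2}$ encoding $x=\1_{k^*_1=k^*_2}$, is disconnected. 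The effective count is therefore the number of column-respecting pairings whose row-multigraph is $\ac{1,2}$-connected.

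Third, I bound this count via a cycle-rank encoding on the bipartite multigraph $G(\alpha)$ with vertices $supp(\alpha)\cup col(\alpha)$ and $|\alpha|$ edges, one per unit of $\alpha$. The connectivity of $H(\pi)\cup\ac{1,2}$ forces $G(\alpha)\cup\ac{1,2}$ to be connected, in which case the first Betti number of $G(\alpha)$ equals $|\alpha|-\#\alpha-r_\alpha+1$. I then encode each admissible pairing by specifying a spanning tree of $G(\alpha)$, whose combinatorial cost is absorbed into the $|\alpha|^{|\alpha|-2\#\alpha+4}$ factor of Lemma~\ref{lem:boundLTCclusteringsharp}, together with a choice, for each of the $|\alpha|-\#\alpha-r_\alpha+1$ cycle-closing edges, of at most $|\alpha|$ matching partners inside the relevant column. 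The product gives the claimed exponent.

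The main obstacle is the third step. The naive enumeration $\prod_j(|\alpha_{:j}|-1)!!$ of column-respecting matchings can exceed $|\alpha|^{|\alpha|-\#\alpha-r_\alpha+1}$, so the argument genuinely relies on the connectivity filter from Lemma~\ref{lem:independentcumulant} and, in a few residual configurations, on the finer algebraic cancellations encoded by the recursive identity~(\ref{eq:rec-C}) of Theorem~\ref{thm:LTC}. Making the spanning-tree plus cycle-edge encoding injective, while dismissing the degenerate cases (disconnected $G(\alpha)$, supports missing $\ac{1,2}$, or odd-weight columns for which no pairing exists), is where the bulk of the combinatorial work concentrates.
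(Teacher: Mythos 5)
Your first two steps coincide with the paper's proof: decompose $\kappa_{x,\alpha}$ via Theorem~\ref{thm:LTC}, bound each $C_{x,\beta_1(\pi),\ldots,\beta_l(\pi)}$ by Lemma~\ref{lem:boundLTCclusteringsharp}, and reduce to counting the contributing pairings, which must be column-respecting. The gap is in your third step. Your claim that the argument ``genuinely relies'' on the connectivity filter and on cancellations from \eqref{eq:rec-C} is not correct, and pursuing it leads you to an encoding you never complete. The crude count of \emph{all} column-respecting pairings already suffices: each column contributes at most $(|\alpha_{:j}|-1)!!\leq |\alpha_{:j}|^{|\alpha_{:j}|/2-1}$ matchings, so the total number of pairings is at most $|\alpha|^{|\alpha|/2-r_\alpha}$. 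Whenever at least one pairing contributes a non-zero cumulant (otherwise $\kappa_{x,\alpha}=0$ and the bound is trivial), Lemma~\ref{lem:nullityLTCclusteringsharp} (equivalently Lemma~\ref{lem:nullcumulantsclusteringsharp}) forces $|\alpha_{i:}|\geq 2$ for every $i\in supp(\alpha)\setminus\ac{1,2}$, hence $|\alpha|\geq 2\#\alpha-2$, i.e.\ $|\alpha|/2-r_\alpha\leq |\alpha|-\#\alpha-r_\alpha+1$. This one-line comparison of exponents is exactly how the paper converts the naive count into the stated factor; the configurations in which the naive count exceeds $|\alpha|^{|\alpha|-\#\alpha-r_\alpha+1}$ are precisely configurations with $\kappa_{x,\alpha}=0$, so no refined, connectivity-filtered count is needed.

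Moreover, the alternative route you sketch is not carried out and contains an unsound accounting step. You propose to enumerate the surviving pairings by a spanning tree of the bipartite graph plus a choice of partner for each of the $|\alpha|-\#\alpha-r_\alpha+1$ cycle-closing edges, and to ``absorb'' the spanning-tree multiplicity into the factor $|\alpha|^{|\alpha|-2\#\alpha+4}$ of Lemma~\ref{lem:boundLTCclusteringsharp}. But that factor is already fully consumed in bounding each individual cumulant: since the target is exactly the per-term bound times $|\alpha|^{|\alpha|-\#\alpha-r_\alpha+1}$, the count itself must be at most $|\alpha|^{|\alpha|-\#\alpha-r_\alpha+1}$, with no slack left to pay for the (generally many) spanning trees. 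You acknowledge that making the encoding injective and handling the degenerate cases is ``where the bulk of the combinatorial work concentrates''; that work is precisely the missing proof, and, as explained above, it is also unnecessary.
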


\medskip

The last stage, is to prune the multiset $\alpha$ for which $\kappa_{x,\alpha}=0$.
Next lemma gives necessary conditions for having $\kappa_{x,\alpha}\neq 0$.  For this purpose, it is convenient to introduce a bipartite multigraph $\mathcal{G}_\alpha$ on two disjoint sets $U=\ac{u_1,\ldots, u_n}$ and $V=\ac{v_1,\ldots, v_p}$ with, for $i,j\in [n]\times [p]$, $\alpha_{ij}$ edges between $u_i$ and $v_j$. We write $\mathcal{G}_\alpha^-$ the restriction of $\mathcal{G}_\alpha$ to non-isolated points. We denote $U(\alpha)$ the elements of $U$ spanned by $\mathcal{G}_\alpha^-$ and $V(\alpha)$ the elements of $V$ spanned by $\mathcal{G}_\alpha^-$. We refer to Section
\ref{prf:nullcumulantsclusteringsharp} for a proof of this lemma.

\begin{lem}\label{lem:nullcumulantsclusteringsharp}
Let $\alpha\in\N^{n\times p}$ be non-zero. If $\kappa_{x,\alpha}\neq 0$, then
\begin{itemize}
\item $u_1, u_2\in U(\alpha)$;
\item $\mathcal{G}^-_\alpha\cup \ac{(u_{1},u_{2})}$ is connected;
\item All the elements of $U(\alpha)\setminus \ac{u_{1},u_{2}}$ and $V(\alpha)$ are of degree at least 2.
\end{itemize}
In particular, we have $\#\alpha\geq 2$, $|\alpha|\geq 2r_\alpha$ and $|\alpha|\geq 2\#\alpha-2$.
\end{lem}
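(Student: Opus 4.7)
The plan is to establish each bullet by exhibiting an appropriate (possibly conditional) independence structure and applying Lemma~\ref{lem:independentcumulant} (nullity of cumulants of independent variables), in combination with the law of total cumulance (Lemma~\ref{lem:totalcumulance}) when a single latent coordinate must be isolated. The ``in particular'' consequences will then follow by direct counting of edges in the bipartite multigraph $\mathcal{G}_\alpha$. I expect the main technical obstacle to be the third bullet in the row case, where degree one at some $u_i$ with $i\neq 1,2$ requires isolating the single latent coordinate $k_i^*$ inside one factor of the cumulant $C_{x,\beta_1,\dots,\beta_l}$ and then collapsing it to a deterministic constant by conditioning.

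For the first bullet, assume by symmetry $1\notin supp(\alpha)$. If also $2\notin supp(\alpha)$, then $x=\1_{k_1^*=k_2^*}$ is a function of $(k_1^*,k_2^*)$, which is jointly independent of all $(k_i^*)_{i\in supp(\alpha)}$ and all $\nu_{k,j}$ entering $X_\alpha$, so Lemma~\ref{lem:independentcumulant} yields $\kappa_{x,\alpha}=0$. If $2\in supp(\alpha)$, condition on $k_2^*$: since $k_1^*$ is uniform on $[K]$ and independent of $(k_2^*,X_\alpha)$, we have $x\perp X_\alpha\mid k_2^*$ and $\E[x\mid k_2^*]=1/K$. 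By Lemma~\ref{lem:totalcumulance}, $\kappa_{x,\alpha}$ splits as a sum over partitions of $\ac{x}\cup X_\alpha$; conditional independence combined with Lemma~\ref{lem:independentcumulant} kills every block that mixes $x$ with some $X_{ij}$, leaving only partitions in which $\ac{x}$ is its own block. But there $\cumul(x\mid k_2^*)=1/K$ is a constant and its outer cumulant with the remaining (nonempty) family of arguments vanishes, again by Lemma~\ref{lem:independentcumulant}.

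For the second bullet, suppose $\mathcal{G}_\alpha^-\cup\ac{(u_1,u_2)}$ admits a connected component $C$ disjoint from $\ac{u_1,u_2}$. Split $\alpha=\alpha^{(C)}+\alpha^{(C^c)}$ according to whether $(i,j)$ has its endpoints in $C$ or not. Since a connected component of a bipartite graph partitions both rows and columns, the entries of $X_{\alpha^{(C)}}$ involve only $k_i^*$ with $u_i\in C$ and $\nu_{k,j}$ with $v_j\in C$, whereas $\ac{x}\cup X_{\alpha^{(C^c)}}$ involves only $k_i^*$ with $u_i\notin C$ (which includes the required $u_1,u_2$ from the first bullet) and $\nu_{k,j}$ with $v_j\notin C$. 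These two families are jointly independent, $\alpha^{(C)}\neq 0$ because $C$ is a component of non-isolated nodes, and Lemma~\ref{lem:independentcumulant} yields $\kappa_{x,\alpha}=0$.

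For the third bullet, I use the pairing formula~\eqref{eq:LTCgeneral:thm}; under the clustering prior, for a pair $\beta=\ac{(i_1,j_1),(i_2,j_2)}$ one has $\Omega_\beta(k^*)=\ac{k_{i_1}^*=k_{i_2}^*}\cap\ac{j_1=j_2}$. If $v_{j_0}\in V(\alpha)$ has degree one, then in any $\pi\in\mathcal{P}_2(\alpha)$ its unique incident edge is paired with an edge of some column $j'\neq j_0$, so $\Omega_{\beta^*}=\emptyset$, $\1_{\Omega_{\beta^*}}\equiv 0$, and $C_{x,\beta_1,\dots,\beta_l}=0$ by multilinearity of the cumulant. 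If instead $u_i\in U(\alpha)\setminus\ac{u_1,u_2}$ has degree one, with unique incident edge $(i,j_0)$ paired in $\beta^*=\ac{(i,j_0),(i',j')}$ with $i'\neq i$ (forced by $|\alpha_{i:}|=1$), then $k_i^*$ appears only in $\1_{\Omega_{\beta^*}}=\1_{k_i^*=k_{i'}^*}\1_{j_0=j'}$, while $x$ and each $\1_{\Omega_{\beta_s}}$ for $s\neq *$ are measurable with respect to $k_{-i}^*:=(k_j^*)_{j\neq i}$. Conditioning on $k_{-i}^*$ in Lemma~\ref{lem:totalcumulance}, any block of size at least two contains at least one variable that is constant given $k_{-i}^*$, so Lemma~\ref{lem:independentcumulant} forces the corresponding conditional cumulant to vanish; only the discrete partition contributes, and then the outer cumulant has $\E[\1_{\Omega_{\beta^*}}\mid k_{-i}^*]=(1/K)\1_{j_0=j'}$ as a (deterministic) constant argument, which forces it to vanish too. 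Hence $C_{x,\beta_1,\dots,\beta_l}=0$ for every pairing, and $\kappa_{x,\alpha}=0$. The three ``in particular'' consequences are immediate: $\#\alpha\geq 2$ from $u_1,u_2\in U(\alpha)$; $|\alpha|\geq 2r_\alpha$ from each of the $r_\alpha$ columns in $V(\alpha)$ contributing degree at least $2$; and $|\alpha|\geq 2\#\alpha-2$ from summing degrees at least $1$ at $u_1,u_2$ and at least $2$ at each of the remaining $\#\alpha-2$ rows of $U(\alpha)$.
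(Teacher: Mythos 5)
Your proof is correct and, at the level of tools, follows the paper's strategy (nullity of cumulants of independent families via Lemma~\ref{lem:independentcumulant}, plus the pairing reduction of Theorem~\ref{thm:LTC}), but two of your local arguments genuinely differ from the paper's. For the connectivity bullet, you split $\alpha$ along a connected component $C$ disjoint from $\ac{u_1,u_2}$ and apply Lemma~\ref{lem:independentcumulant} directly to $\cumul\pa{x,X_\alpha}$; the paper instead reduces to the pairings via Theorem~\ref{thm:LTC} and invokes Lemma~\ref{lem:nullityLTCclusteringsharp} (connectivity of the auxiliary graph $\mathcal{V}[\ac{0}\cup[l]]$ for every pairing). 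Your version is more elementary and self-contained — it uses no Gaussian structure — while the paper's simply recycles machinery already in place. For a degree-one column, the paper uses the sign symmetry of the Gaussian column $\nu_{\cdot,j_0}$ and multilinearity (so $\kappa_{x,\alpha}=-\kappa_{x,\alpha}$), a one-line argument that bypasses Theorem~\ref{thm:LTC}; you instead note that in any pairing the unique column-$j_0$ edge is matched across columns, so $\Omega_{\beta^*}=\emptyset$ and each $C_{x,\beta_1,\ldots,\beta_l}$ vanishes by multilinearity — this is precisely the observation made in Section~\ref{sec:howtoboundcumulant} and is equally valid, though it leans on the Gaussian-prior pairing formula rather than on symmetry. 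For the first bullet and the degree-one row case your arguments agree in substance with the paper's; you merely re-derive the key independences through the law of total cumulance (conditioning on $k_2^*$, resp.\ on $k^*_{-i}$), where the paper observes directly that the conditional law of the isolated indicator is Bernoulli$(1/K)$, hence unconditional independence — your longer route is sound. One point you (like the paper) leave implicit: when $|\alpha|$ is odd, $\mathcal{P}_2(\alpha)$ is empty and the pairing-based steps give $\kappa_{x,\alpha}=0$ vacuously. The degree counting for the three ``in particular'' consequences matches the paper's.
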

\medskip

\begin{rem}
In fact, we can prove that $\mathcal{G}^-_\alpha$ is connected (see \cite{Even24}), but it is sufficient and more straightforward to prove that $\mathcal{G}^-_\alpha\cup \ac{(u_{1},u_{2})}$ is connected.
\end{rem}
\medskip

We derive from Lemma \ref{lem:nullcumulantsclusteringsharp} the next lemma, which upper-bounds the cardinality of the  $\alpha$'s providing non-zero $\kappa_{x,\alpha}$, in terms of $|\alpha|$, $\#\alpha$ and $r_{\alpha}:=|\ac{j\in[1,p]\enspace \alpha_{:j}\neq 0}|$. We refer to Section \ref{prf:combinatorics_kappa} for a proof of this lemma.

\medskip

\begin{lem}\label{lem:combinatorics_kappa}
Given $m\geq 2$, $r\geq 1$, $d\geq \max(2m-2,2r)$, there exists at most $d^{3(d-r-m+2)}n^{m-2}p^r$ matrices $\alpha\in \N^{n\times p}$ satisfying the conditions of Lemma \ref{lem:nullcumulantsclusteringsharp} with $\#\alpha=m$, $r_{\alpha}=r$ and $|\alpha|=d$. 
\end{lem}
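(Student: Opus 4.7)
The plan is to decompose the count into (i) the choice of row and column supports for $\alpha$ and (ii) the count of multi-graph structures supported on those vertices. Given $\alpha$ satisfying the hypotheses of Lemma~\ref{lem:nullcumulantsclusteringsharp}, the row support $U(\alpha)\subseteq [n]$ has cardinality $m$ and must contain $u_1,u_2$, while the column support $V(\alpha)\subseteq [p]$ has cardinality $r$. The number of such pairs of supports is therefore at most $\binom{n-2}{m-2}\binom{p}{r}\le \frac{n^{m-2}\,p^r}{(m-2)!\,r!}$. Consequently it suffices to prove that, for any fixed labeled supports $(U',V')$ with $|U'|=m$, $|V'|=r$, and $u_1,u_2\in U'$, the number of admissible multi-graphs on $U'\cup V'$ is at most $(m-2)!\,r!\,d^{3q}$, where $q:=d-m-r+2$. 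The assumption $d\ge\max(2m-2,2r)$ implies $2d\ge 2(m+r)-2$, hence $q\ge 1$; it also guarantees $m\le d$ and $r\le d$, which will be used in the encoding below.

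For the multi-graph count at fixed supports, I would exploit the structural information from Lemma~\ref{lem:nullcumulantsclusteringsharp}: the augmented graph $\mathcal{G}_\alpha^-\cup\{(u_1,u_2)\}$ is connected, has $m+r$ vertices and $d+1$ edges, so its cyclomatic number is exactly $q$. The strategy is to set up an injection from the set of valid multi-graphs into the Cartesian product of (a) the set of linear orderings of $U'\setminus\{u_1,u_2\}$ and of $V'$ (of size $(m-2)!\,r!$), and (b) the set $[d]^{3q}$ of $q$ triples, each triple specifying the two endpoints (an element of $U'$ and an element of $V'$, each labeled by a value in $[d]$) and the multiplicity (in $[d]$) of one of the $q$ non-tree edges of $\mathcal{G}_\alpha^-\cup\{(u_1,u_2)\}$. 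Concretely, to each multi-graph $H$ I would associate a canonical ordering $\sigma(H)$ of the inner rows and of the columns (for instance the BFS visit order from $u_1$ with a fixed tie-breaking rule) and a canonically ordered list of the $q$ non-tree edges of $H\cup\{(u_1,u_2)\}$ obtained by processing the BFS traversal from $u_1$. Since each non-tree triple contributes at most $d^3$ possibilities, the total number of encodings is bounded by $(m-2)!\,r!\,d^{3q}$, which yields the multi-graph bound and, after multiplying by the support count, the claimed estimate $d^{3(d-r-m+2)}n^{m-2}p^r$.

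\textbf{Main obstacle.} The delicate point is to rigorously make the correspondence $H\mapsto(\sigma(H),\text{triples})$ injective and to verify that the data of the $q$ triples is indeed sufficient to reconstruct $H$ given the ordering. The BFS spanning tree depends on $H$ itself, so one must carefully sequence the discovery of the non-tree edges during the traversal, using $\sigma(H)$ to resolve any ambiguity, so that the tree edges can be recovered from the ordering together with the non-tree edges. A further subtlety is the treatment of parallel edges: an entry $\alpha_{ij}>1$ contributes $\alpha_{ij}-1$ non-tree edges between the same pair $(u_i,v_j)$, and these parallel contributions must be correctly packaged inside the $q$ non-tree triples. Once this bookkeeping is handled, the combinatorial bound follows cleanly by multiplying the size of the support enumeration by the size of the encoding space.
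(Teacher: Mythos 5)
Your reduction to counting structures on fixed supports and your identification of $q=d-r-m+2$ as the cyclomatic number of $\mathcal{G}_\alpha^-\cup\{(u_1,u_2)\}$ are both correct, and the target bound $(m-2)!\,r!\,d^{3q}$ per support is of the right order. However, the encoding you propose is not injective, so the step you flag as the ``main obstacle'' is a genuine gap rather than a technicality: the canonical BFS visit order together with the list of non-tree edges (with multiplicities and discovery times) does \emph{not} determine the graph, even under the connectivity and minimum-degree-two constraints of Lemma~\ref{lem:nullcumulantsclusteringsharp}. Concretely, take rows $\{u_1,u_2,d,e,f\}$ and columns $\{b,c,g\}$, and compare $H$ with edges $u_1b,u_1c,u_2b,u_2c,bd,be,cf,dg,eg,fg$ to $H'$ with edges $u_1b,u_1c,u_2b,u_2c,bd,ce,cf,dg,eg,fg$. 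Both are bipartite, both satisfy all conditions of Lemma~\ref{lem:nullcumulantsclusteringsharp} (every vertex outside $\{u_1,u_2\}$ has degree $2$ or $3$, the augmented graphs are connected), here $d=10$, $m=5$, $r=3$, $q=4$. Running BFS from $u_1$ with the natural label tie-breaking, both graphs produce the visit order $u_1,b,c,u_2,d,e,f,g$, and in both cases the non-tree edges are exactly $u_2c$, $eg$, $fg$ (plus the augmented edge $u_1u_2$), discovered at the same moments of the traversal. Yet $H\neq H'$: the ambiguity sits in the \emph{tree} edges (whether $e$ is attached to $b$ or to $c$), which is attached to a degree-two vertex and is invisible to any re-sequencing of the non-tree data. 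To invert an encoding of this type you would have to additionally record part of the parent map, and the whole difficulty of the lemma is to show that this extra information costs only $d^{O(q)}$ rather than $d^{\Theta(m+r)}$; your proposal does not address this.

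The paper circumvents the problem with a constructive enumeration (adapted from Sohn and Wein): every admissible graph is generated from the two vertices $u_1,u_2$ by a sequence of exactly $T=d-m-r+2$ operations, each adding either a path between two existing vertices or a lollipop rooted at one existing vertex, with all newly created vertices of degree two; surjectivity is established by a deconstruction argument that peels off such paths and lollipops while preserving the degree and connectivity constraints (Lemmas~\ref{lem:constructiongraphs} and~\ref{lem:surjectivitygraphs}). Each operation is encoded by its length and at most two existing attachment points (at most $d^3$ choices), and the labels of the new vertices contribute $n^{m-2}p^r$ overall, which gives exactly the claimed bound. The key point your BFS scheme misses, and which the path/lollipop decomposition captures, is that all of the structural ambiguity is concentrated in the $O(q)$ attachment points of the degree-two chains, so it can be paid for within the $d^{3q}$ budget. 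You would either need to adopt a decomposition of this kernel/ear type, or substantially enrich your encoding and prove that the added data remains of size $d^{O(q)}$.
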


We now have all the pieces to upper-bound 
 the degree-D correlation $corr^2_{\leq D}$. Given $d\geq 1$, we set $\mathcal{D}_{d}:=\ac{m,r\in[2,d]\times [1,d],\enspace d\geq 2m-2,\enspace d\geq 2r}$. We have
\begin{align*}
corr^2_{\leq D}\leq& \underset{|\alpha|\leq D}{\sum_{\alpha\in\N^{n\times p}}}\kappa_{x,\alpha}^2\\
\leq&\frac{1}{K^2}+\sum_{d=2}^{D}\sum_{m,r\in\mathcal{D}_{d}}p^{r}n^{m-2}d^{5(d-r-(m-2))}d^{2d-4(m-2)}\lambda^{2d}\pa{\frac{1}{K^2}}^{m-1}\\
\leq&\frac{1}{K^2}+\frac{1}{K^2}\sum_{d=1}^{D}\sum_{m,r\in\mathcal{D}_{d}}(D^{7}\lambda^2)^{d}\pa{\frac{p}{D^{5}}}^r\pa{\frac{n}{K^2  D^{9}}}^{m-2}\enspace.
\end{align*}
Given $d\in[1,D]$ and $m,r\in\mathcal{D}_{d}$, let us upper-bound $(D^{7}\lambda^2)^{d}\pa{\frac{p}{D^{5}}}^r\pa{\frac{n}{K^2  D^{9}}}^{m-2}$. First, let us assume that $r\geq m-1$. By definition of $\mathcal{D}_d$, we can assume that $d\geq 2r$. Recall the definition of $\zeta$ in the statement of the theorem. We get
\begin{align*}
(D^{7}\lambda^2)^{d}\pa{\frac{p}{D^{5}}}^r\pa{\frac{n}{K^2  D^{9}}}^{m-2}=&\pa{D^{7}\frac{1}{p}\bar{\Delta}^2}^{d-2r}\pa{\frac{D^{9}\bar{\Delta}^4}{p}}^{r-(m-2)}\pa{\frac{\bar{\Delta}^4 n}{pK^2}}^{m-2}\\
\leq&\zeta^{\frac{d-2r}{2}}\zeta^{r-(m-2)}\zeta^{m-2}
\leq\zeta^{\frac{d}{2}}\enspace.
\end{align*}
Now, let us suppose that $r\leq m-2$, and let us consider the case $d\geq 2m-2$. 
\begin{align*}
(D^{7}\lambda^2)^{d}\pa{\frac{p}{D^{5}}}^r\pa{\frac{n}{K^2  D^{9}}}^{m-2}=&\pa{D^{7}\frac{1}{p}\bar{\Delta}^2}^{d-2(m-2)}\pa{\frac{nD^5}{K^2p^2}\bar{\Delta}^4}^{m-2-r}\\
&\times\pa{\bar{\Delta}^4\frac{n}{K^2 p}}^r\enspace.
\end{align*}
It is clear using directly the definition $\zeta=\frac{\Bar{\Delta}^4}{p}\max\pa{ D^{18},\frac{n}{K^2}}$ that $D^{7}\frac{1}{p}\bar{\Delta}^2\leq \sqrt{\zeta}$ and $\bar{\Delta}^4\frac{n}{K^2 p}\leq \zeta$. For the remaining factor $\frac{nD^5}{K^2p^2}\bar{\Delta}^4$, we use the hypothesis $p\geq D^5$. Thus, $\frac{nD^5}{K^2p^2}\bar{\Delta}^4\leq \frac{n}{K^2p}\bar{\Delta}^4\leq \zeta$. Hence, 
\begin{align*}
(D^{9}\lambda^2)^{d}\pa{\frac{p}{D^{7}}}^r\pa{\frac{n}{K^2  D^{11}}}^{m-2}\leq&\zeta^{\frac{d-2(m-2)}{2}}\zeta^{m-2-r}\zeta^{r}
\leq\zeta^{\frac{d}{2}}\enspace.
\end{align*}
Hence, for all $d\in[1,D]$ and $m,r\in\mathcal{D}_d$, we have $(D^{9}\lambda^2)^{d}\pa{\frac{p}{D^{7}}}^r\pa{\frac{n}{K^2 16 D^{11}}}^{m-2}\leq\zeta^{\frac{d}{2}}$. Combining this with $|\mathcal{D}_d|\leq \frac{d(d-1)}{2}$ leads to
\begin{align*}
corr^2_{\leq D}\leq&\frac{1}{K^2}+\frac{1}{K^2}\sum_{d=2}^{D}\frac{d(d-1)}{2}\zeta^{d/2}\\
\leq&\frac{1}{K^2}\cro{1+\frac{\zeta}{(1-\sqrt{\zeta})^3}}\enspace.
\end{align*} 
This concludes the proof of the theorem.

\subsection{Proof of Lemma \ref{lem:boundLTCclusteringsharp}}\label{prf:boundLTCclusteringsharp}

Let $\beta_1,\ldots,\beta_l$ such that $|\beta_s|=2$ for $s\in [l]$ and such that $\beta_1+\ldots+\beta_l=\alpha$. We seek to upper-bound

$$C_{x,\beta_{1},\ldots,\beta_{l}} =  \cumul\pa{x,\1_{\Omega_{\beta_{1}}(k^*)},\ldots,\1_{\Omega_{\beta_{l}}(k^*)}},$$

with $\Omega_{\beta}(k^*):= \ac{\big|\ac{(k^*_{i},j):\enspace (i,j)\in \beta}\big|=1}$. For $C_{x,\beta_{1},\ldots,\beta_{l}}$ to be non-zero, it is necessary that, for $s\in [l]$, $\Omega_{\beta_s}(k^*)$ is an event of positive probability. This condition implies that $\beta_s$ must be contained in a  single column of $\alpha$, which we denote $j_s$. We write $\beta_s=\ac{(i_s,j_s);(i'_s,j_s)}$, for $s\in [l]$. We also take the convention $i_0=1$, $i'_0=2$, and $j_0=0$. We then have 
$$C_{x,\beta_{1},\ldots,\beta_{l}} =  \cumul\pa{\1\ac{k^*_{i_s}=k^*_{i'_s}}_{s\in [0,l]}}\enspace.$$
For $S\subseteq [l]$, we denote $\beta[S]=\ac{\beta_s,s\in S}$ and $\alpha_S=\sum_{s\in S}\beta_s$. Applying the recursion formula \ref{eq:rec-C}, we deduce that, for all $S\subseteq [l]$,
\begin{align}\nonumber
|C_{x,\beta[S]} |\leq&   \left|\E\cro{\prod_{s\in \ac{0}\cup S}\1\ac{k^*_{i_s}=k^*_{i'_s}}}\right|+\sum_{S'\subsetneq S}\left|C_{x,\beta[S']}\right|\left|\E\cro{\prod_{s\in S\setminus S'}\1\ac{k^*_{i_s}=k^*_{i'_s}}}\right|\\
\leq&\P\cro{\forall s\in \ac{0}\cup S,\enspace k^*_{i_s}=k^*_{i'_s} }+\sum_{S'\subsetneq S}\left|C_{x,\beta[S']}\right|\P\cro{\forall s\in S\setminus S',\enspace k^*_{i_s}=k^*_{i'_s} }\enspace.\label{eq:rec-C-clustering}
\end{align}

Let us compute, for any subset $R\subseteq[0,l]$, the quantity $\P\cro{\forall s\in R,\enspace k^*_{i_s}=k^*_{i'_s} }$. To do so, let us define $\mathcal{V}$ the graph on $[0,l]$ defined by; for $s,s'\geq 0$, there is an edge between $s$ and $s'$ if and only if $\ac{i_s, i'_s}\cap \ac{i_{s'}, i'_{s'}}\neq \emptyset$. Let $\mathcal{V}[R]$ denote the restriction of $\mathcal{V}$ to $R$ and $cc(\mathcal{V}[R])$ the number of connected components of this graph. Let $cc_1,\ldots, cc_{q^*}$ be the connected components of $\mathcal{V}[R]$, with $q^*=cc(\mathcal{V}[R])$. For $q\in [q^*]$, let $i_{q}$ be any element of $\cup_{s\in cc_{q}}\ac{i_s, i'_s}$. Having for all $s\in R$, $k^*_{i_s}=k^*_{i'_s}$ is equivalent to having, for all $q\in [q^*]$ and for all $i\in \cup_{s\in cc_{q'}}\ac{i_s, i'_s}$, $k^*_i=k^*_{i_{q}}$. Such an event occurs with probability

\begin{align*}
\P\cro{\forall q\in [q^*],\enspace \forall i\in \cup_{s\in cc_{q}}\ac{i_s, i'_s}, \enspace k^*_i=k^*_{i_{q}}}=&\prod_{q\in [q^*]}\P\cro{\forall i\in \cup_{s\in cc_{q}}\ac{i_s, i'_s}, \enspace k^*_i=k^*_{i_{q}}}\\
=&\prod_{q\in [q^*]}\pa{\frac{1}{K}}^{|\cup_{s\in cc_{q}}\ac{i_s,i'_s}|-1}\enspace.
\end{align*}

By definition of the graph $\mathcal{V}$, we have $\sum_{q\leq q^*}|\cup_{s\in cc_{q}}\ac{i_s,i'_s}|=|\cup_{s\in R}\ac{i_s, i'_s}|$. If $R$ does not contain $0$, then $|\cup_{s\in R}\ac{i_s, i'_s}|=\#\alpha_R$, with $\alpha_R=\sum_{s\in R}\beta_s$. If $R$ contains $0$, $|\cup_{s\in R}\ac{i_s, i'_s}|=|supp(\alpha_{R\setminus \ac{0}})\cup \ac{1,2}|$. Plugging this in \eqref{eq:rec-C-clustering} leads to

\begin{equation}\label{eq:recursionLTCclusteringsharp}
|C_{x,\beta[S]} |\leq \pa{\frac{1}{K}}^{|supp(\alpha_S)\cup\ac{1,2}|-cc(\mathcal{V}[S])}+\sum_{S'\subsetneq S}|C_{x,\beta[S']}|\pa{\frac{1}{K}}^{\#\alpha_{S\setminus S'}-cc\pa{\mathcal{V}[S\setminus S']}}\enspace.
\end{equation}

The next lemma prunes the subsets $S\subseteq [l]$ such that $C_{x,\beta[S]}\neq 0$. In the following, we denote $\mathcal{S}([l])$ the set of all subsets $S\subseteq [l]$ satisfying;\begin{enumerate}
\item $\mathcal{V}[\ac{0}\cup S]$ is connected;
\item If $S\neq \emptyset$, then, for all $i\in \cup_{\ac{0}\cup S}\ac{i_s,i'_s}$, there exists $s\neq s'\in \ac{0}\cup S$ such that $i\in \ac{i_s,i'_s}$ and $i\in \ac{i_{s'},i'_{s'}}$. In particular, $1,2\in supp(\alpha_S)$.
\end{enumerate}
\medskip

\begin{lem}\label{lem:nullityLTCclusteringsharp}
Let $S\subseteq [l]$ such that $C_{x,\beta[S]}\neq 0$. Then $S\in \mathcal{S}([l])$
\end{lem}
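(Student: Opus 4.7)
The plan is to prove the contrapositive of each defining property of $\mathcal{S}([l])$ by exhibiting an independence or conditional-mean-zero structure that forces the joint cumulant to vanish. Writing $W_s = \1\ac{k^*_{i_s}=k^*_{i'_s}}$ for $s\in\ac{0}\cup S$, the object of interest is $C_{x,\beta[S]} = \cumul\pa{W_s : s\in\ac{0}\cup S}$.

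For the connectedness condition, I would suppose $\mathcal{V}[\ac{0}\cup S]$ is disconnected, so that $\ac{0}\cup S = A\sqcup B$ with no $\mathcal{V}$-edge between $A$ and $B$. By the very definition of $\mathcal{V}$, the two sets of involved row-indices $I_A := \cup_{s\in A}\ac{i_s,i'_s}$ and $I_B := \cup_{s\in B}\ac{i_s,i'_s}$ are then disjoint. Since the coordinates of $k^*$ are i.i.d., the families $\pa{W_s}_{s\in A}$ and $\pa{W_s}_{s\in B}$ depend on disjoint sub-vectors of $k^*$ and are therefore independent. Lemma~\ref{lem:independentcumulant} immediately gives $C_{x,\beta[S]}=0$.

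For the second property, suppose there exists an index $i\in \cup_{s\in\ac{0}\cup S}\ac{i_s,i'_s}$ that appears in exactly one pair, say the pair associated to some $s_0\in\ac{0}\cup S$, and without loss of generality take $i=i'_{s_0}$. The key observation is that $k^*_i$ is independent of $K_{-i}:=\pa{k^*_j}_{j\neq i}$, that every $W_s$ with $s\neq s_0$ is measurable with respect to $K_{-i}$, and that $W_{s_0}=\1\ac{k^*_{i_{s_0}}=k^*_i}$ satisfies $\E\cro{W_{s_0}\mid K_{-i}}=1/K$. Setting $\tilde W_{s_0}:=W_{s_0}-1/K$, one has $\E\cro{\tilde W_{s_0}\mid K_{-i}}=0$. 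Since $S\neq\emptyset$ guarantees at least two arguments in the cumulant, multilinearity together with the vanishing of cumulants containing a constant argument yields $C_{x,\beta[S]} = \cumul\pa{\tilde W_{s_0},\pa{W_s}_{s\in\pa{\ac{0}\cup S}\setminus\ac{s_0}}}$.

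I would then expand this last cumulant via the Möbius formula of Lemma~\ref{lem:mobiusformula}: it is a signed sum over partitions $\pi$ of $\ac{0}\cup S$ of the products $\prod_{R\in\pi}\E\cro{\prod_{s\in R}W'_s}$, where $W'_{s_0}=\tilde W_{s_0}$ and $W'_s=W_s$ otherwise. In the unique block $R_0$ containing $s_0$, conditioning on $K_{-i}$ gives
\[
\E\cro{\tilde W_{s_0}\prod_{s\in R_0\setminus\ac{s_0}}W_s}=\E\cro{\E\cro{\tilde W_{s_0}\mid K_{-i}}\prod_{s\in R_0\setminus\ac{s_0}}W_s}=0,
\]
so every term in the expansion vanishes and $C_{x,\beta[S]}=0$. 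The main obstacle is precisely this second part: the failure of the second property of $\mathcal{S}([l])$ does not, on its own, produce a genuine independence between one $W_{s_0}$ and the rest (the remaining index $i_{s_0}$ may well be shared), so Lemma~\ref{lem:independentcumulant} cannot be invoked directly; the conditioning-plus-Möbius argument is what substitutes for it. Finally, applying the property to $i=1$ and $i=2$ (which lie in $\ac{i_0,i'_0}$) produces the stated consequence $1,2\in \text{supp}(\alpha_S)$.
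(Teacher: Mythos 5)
Your argument for the connectedness condition is the same as the paper's: a disconnection of $\mathcal{V}[\ac{0}\cup S]$ splits the pairs into two families depending on disjoint coordinates of $k^*$, and Lemma~\ref{lem:independentcumulant} kills the cumulant. For the second condition, however, you take a genuinely different route, and your motivating claim that the failure of this condition ``does not produce a genuine independence'' is actually not true: this is precisely what the paper exploits. If $\underline{i}$ lies in the pair of a single $s_0$, then conditionally on $\pa{W_s}_{s\in(\ac{0}\cup S)\setminus\ac{s_0}}$ together with $k^*_{i'_{s_0}}$, the variable $W_{s_0}=\1\ac{k^*_{i_{s_0}}=k^*_{i'_{s_0}}}$ is Bernoulli of parameter $1/K$ when $i_{s_0}\neq i'_{s_0}$ (and deterministically $1$ when $i_{s_0}=i'_{s_0}$); its conditional law does not depend on the conditioning, so $W_{s_0}$ is genuinely independent of the remaining variables and Lemma~\ref{lem:independentcumulant} applies directly. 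Your substitute --- center $W_{s_0}$ at $1/K$ by multilinearity, expand by the M\"obius formula of Lemma~\ref{lem:mobiusformula}, and kill the block containing $s_0$ by the tower property with respect to $K_{-\underline{i}}$ --- is a valid alternative; it trades one application of the independence lemma for a slightly longer computation, but requires no structural insight beyond the conditional-mean identity.

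One small gap in your version: you implicitly assume $i_{s_0}\neq i'_{s_0}$. Since $\alpha$ may have entries $\geq 2$, a pair $\beta_{s_0}$ can have both row indices equal to $\underline{i}$, in which case $W_{s_0}\equiv 1$, $\E\cro{W_{s_0}\mid K_{-\underline{i}}}=1\neq 1/K$, and your centering identity $\E\cro{\tilde W_{s_0}\mid K_{-\underline{i}}}=0$ fails. The conclusion still holds trivially there --- a cumulant of at least two arguments containing a constant vanishes, a fact you already use --- but the case should be stated, as the paper does explicitly. With that degenerate case added, your proof is correct.
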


\begin{proof}[Proof of Lemma \ref{lem:nullityLTCclusteringsharp}]
Let us first suppose that $\mathcal{V}[\ac{0}\cup S]$ is not connected and let us prove that $C_{x,\beta[S]}=0$. Let $S_1$ and $S_2$ be a partition of $\ac{0}\cup S$ with no edges connecting them. Then, $\pa{k^*_i}_{i\in \cup_{s\in S_1}\ac{i_s, i'_s}}$ is independent from $\pa{k^*_i}_{i\in \cup_{s\in S_2}\ac{i_s, i'_s}}$ and so $\pa{\1\ac{k^*_{i_s}=k^*_{i'_s}}}_{s\in S_1}$ is independent from $\pa{\1\ac{k^*_{i_s}=k^*_{i'_s}}}_{s\in S_2}$. Lemma \ref{lem:independentcumulant2} then implies that $C_{x,\beta[S]}=0$.

Now, suppose $S\neq \emptyset$ and suppose that there exists  $\underline{i}\in \cup_{s\in \ac{0}\cup S}\ac{i_s, i'_s}$ such that there exists only one $s_0\in \ac{0}\cup S$ such that $\underline{i}\in \ac{i_{s_0}, i'_{s_0}}$. We suppose by symmetry that $\underline{i}=i_{s_0}$. Conditionally on $\pa{\1\ac{k^*_{i_s}=k^*_{i'_s}}}_{s\in \pa{\ac{0}\cup S}\setminus \ac{s_0}}\bigcup \ac{k^*_{i'_{s_0}}}$, the variable $\1\ac{k^*_{i_{s_0}}=k^*_{i'_{s_0}}}$ is either a Bernoulli of parameter $\frac{1}{K}$ if $i_{s_0}\neq i'_{s_0}$, either deterministic and equal to $1$ if $i_{s_0}= i'_{s_0}$.  A fortiori, $\1\ac{k^*_{i_{s_0}}=k^*_{i'_{s_0}}}$ is independent from $\pa{\1\ac{k^*_{i_s}=k^*_{i'_s}}}_{s\in \pa{\ac{0}\cup S}\setminus \ac{ s_0}}$. Lemma \ref{lem:independentcumulant2} again implies $C_{x,\beta[S]}=0$.

\end{proof}

\medskip

Pruning the other terms in \eqref{eq:recursionLTCclusteringsharp} leads us to, for all $S\in \mathcal{S}([l])$, 
\begin{equation}\label{eq:recursionLTCclusteringsharp3}
|C_{x,\beta[S]}|\leq \pa{\frac{1}{K}}^{\#\alpha_S-1}+\underset{S'\in \mathcal{S}([l])}{\sum_{S'\subsetneq S}}|C_{x,\beta[S']}|\pa{\frac{1}{K}}^{\#\alpha_{S\setminus S'}-cc\pa{\mathcal{V}[S\setminus S']}}\enspace.
\end{equation}

In the following, let us define recursively a function $f$ on $\mathcal{S}([l])$ satisfying, for all $S\in \mathcal{S}([l])$,
\begin{equation}\label{eq:deffunctionf}
f(S)=1+\underset{S'\in \mathcal{S}([l])}{\sum_{S'\subsetneq S}}f(S')\enspace,
\end{equation}
with $f(\emptyset)=1$. The next lemma upper-bounds, for $S\in \mathcal{S}([l])$, $|C_{x,\beta[S]}|$ with respect to $f(S)$. We refer to Section \ref{prf:upperbounfkappaS} for a proof of this lemma.

\begin{lem}\label{lem:upperbounfkappaS}
For all $S\in \mathcal{S}([l])$, we have $|C_{x,\beta[S]}|\leq \pa{\frac{1}{K}}^{\#\alpha_{S}-1}f(S)$.
\end{lem}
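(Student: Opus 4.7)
The plan is to proceed by strong induction on $|S|$, using the recursive bound \eqref{eq:recursionLTCclusteringsharp3} and the recursive definition \eqref{eq:deffunctionf} of $f$. The base case $S=\emptyset$ follows at once from $C_{x,\emptyset}=\E[x]=1/K$, which trivially satisfies the claimed bound (with the natural convention $\#\alpha_\emptyset-1:=1$, encoding that the virtual node $0$ already ``carries'' the indices $\{1,2\}$ via the constraint $k^*_1=k^*_2$ imposed by $x$). For the inductive step, I would substitute the inductive hypothesis $|C_{x,\beta[S']}|\leq (1/K)^{\#\alpha_{S'}-1}f(S')$ for each $S'\in\mathcal{S}([l])$ strictly contained in $S$ into \eqref{eq:recursionLTCclusteringsharp3}, obtaining
\begin{equation*}
|C_{x,\beta[S]}|\leq \pa{\frac{1}{K}}^{\#\alpha_S-1}+\underset{S'\in \mathcal{S}([l])}{\sum_{S'\subsetneq S}} f(S')\pa{\frac{1}{K}}^{\#\alpha_{S'}-1+\#\alpha_{S\setminus S'}-cc(\mathcal{V}[S\setminus S'])}.
\end{equation*}

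Matching this with the target $(1/K)^{\#\alpha_S-1}f(S)=(1/K)^{\#\alpha_S-1}\bigl(1+\sum_{S'} f(S')\bigr)$ reduces everything to establishing the single exponent inequality
\begin{equation*}
\#\alpha_{S'}+\#\alpha_{S\setminus S'}-cc(\mathcal{V}[S\setminus S'])\geq \#\alpha_S,\qquad \text{for all } S'\in\mathcal{S}([l]),\ S'\subsetneq S.
\end{equation*}
Since $\#\alpha_S=|supp(\alpha_{S'})\cup supp(\alpha_{S\setminus S'})|$, this is equivalent via inclusion--exclusion to $|supp(\alpha_{S'})\cap supp(\alpha_{S\setminus S'})|\geq cc(\mathcal{V}[S\setminus S'])$ (with $\{1,2\}$ playing the role of $supp(\alpha_\emptyset)$ for $S'=\emptyset$). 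The combinatorial argument goes as follows: since $S\in\mathcal{S}([l])$ implies that $\mathcal{V}[\{0\}\cup S]$ is connected, every connected component of $\mathcal{V}[S\setminus S']$ must link to $\{0\}\cup S'$, hence contain some $s$ for which $\{i_s,i'_s\}$ intersects $\{1,2\}\cup supp(\alpha_{S'})=supp(\alpha_{S'})$ (the last equality because $1,2\in supp(\alpha_{S'})$ whenever $S'\in\mathcal{S}([l])$ is non-empty). Moreover, two distinct components of $\mathcal{V}[S\setminus S']$ cannot share a witness index, since any common index would by the definition of $\mathcal{V}$ create an edge across them, contradicting their separation. Each component therefore contributes a distinct element to $supp(\alpha_{S'})\cap supp(\alpha_{S\setminus S'})$, proving the inequality.

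The main obstacle is precisely this combinatorial inequality, and in particular the boundary case $S'=\emptyset$: there the inequality amounts to $cc(\mathcal{V}[S])\leq 2$, which itself requires the observation that at most two components of $\mathcal{V}[S]$ can share an index with $\{1,2\}$ (since any two nodes of $S$ sharing a common index would lie in the same component of $\mathcal{V}[S]$). Once this bookkeeping around the virtual node $0$ is settled, the induction closes routinely: summing the contributions and recognising $1+\sum_{S'\subsetneq S, S'\in\mathcal{S}([l])} f(S')=f(S)$ reconstructs exactly the desired bound $(1/K)^{\#\alpha_S-1}f(S)$.
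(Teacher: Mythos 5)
Your proof is correct and follows essentially the same route as the paper: induction on $S$ via the recursion \eqref{eq:recursionLTCclusteringsharp3} and the recursive definition of $f$, with the $S'=\emptyset$ term handled through $cc(\mathcal{V}[S])\leq 2$ and the general term through the connectivity inequality $\#\alpha_{S'}+\#\alpha_{S\setminus S'}-cc(\mathcal{V}[S\setminus S'])\geq \#\alpha_S$, which is exactly the paper's Lemma \ref{lem:connectivityLTCclusteringsharp}. The only cosmetic difference is that you prove that inequality by exhibiting distinct witness indices in $supp(\alpha_{S'})\cap supp(\alpha_{S\setminus S'})$, whereas the paper counts $\#\alpha_{cc_q}\geq 1+|supp(\alpha_{cc_q})\setminus supp(\alpha_{S'})|$ per component; both rest on the same connectivity argument.
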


It remains to upper-bound $f(S)$ for all $S\in \mathcal{S}([l])$. We postpone to Section \ref{prf:upperboundf} the computation leading to next lemma.

\medskip
\begin{lem}\label{lem:upperboundf}
For all $S\in \mathcal{S}([l])$ with $S\neq \emptyset$, we have $f(S)\leq |\alpha|^{|\alpha_S|-2\#\alpha_S+4}$.
\end{lem}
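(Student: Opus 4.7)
The plan is to prove Lemma \ref{lem:upperboundf} by strong induction on $|S|$. The key observation is that for $S \in \mathcal{S}([l])$, one may view $\{0\} \cup S$ as an edge set in a multigraph on the row indices $\mathrm{supp}(\alpha_S)$, where each $\beta_s$ (with $s \in \{0\}\cup S$) contributes one edge connecting $i_s$ to $i'_s$ (a loop if $i_s = i'_s$). The two conditions defining $\mathcal{S}([l])$ say exactly that this graph is connected and has minimum degree $\geq 2$ (with the convention that a loop contributes $2$ to the degree). In particular, since $|\alpha_S| = 2|S|$ and $\{0\} \cup S$ contributes $|S|+1$ edges, the handshake inequality forces $\#\alpha_S \leq |S|+1$, whence the quantity $c(S) := |\alpha_S|/2 + 2 - \#\alpha_S = |S|+2-\#\alpha_S$ is the cyclomatic number of this graph and satisfies $c(S) \geq 1$. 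The exponent to be matched is $|\alpha_S| - 2\#\alpha_S + 4 = 2c(S)$, so the target becomes $f(S) \leq |\alpha|^{2c(S)}$.

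For the base case $|S| = 1$, the covering condition forces $\{i_s, i'_s\} = \{1,2\}$ (each of rows $1,2$ must be hit in some $\beta_{s'}$ with $s' \neq 0$), giving $\#\alpha_S = 2$ and $c(S) = 1$; then $f(S) = 1 + f(\emptyset) = 2 \leq |\alpha|^2$, which holds since $|\alpha| \geq |\alpha_S| = 2$.

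For the inductive step, apply \eqref{eq:deffunctionf} and the inductive hypothesis to obtain
\begin{equation*}
f(S) \;\leq\; 2 + \!\!\!\sum_{\substack{\emptyset \neq S' \subsetneq S\\ S' \in \mathcal{S}([l])}}\!\! |\alpha|^{2c(S')} ,
\end{equation*}
so it suffices to show that this sum is at most $|\alpha|^{2c(S)} - 2$. The crucial combinatorial input is that for any $S' \in \mathcal{S}([l])$ with $S' \subsetneq S$, the deleted edge set $T := S \setminus S'$ must leave a connected, min-degree-$2$ subgraph containing $\beta_0$. A direct vertex–edge counting argument (every removed vertex has all its $S$-incidences in $T$, hence contributes $\geq 2$ to $|T|$) shows $|T| \geq \#\alpha_S - \#\alpha_{S'}$, so the "cyclomatic defect" satisfies $c(S) - c(S') = |T| - (\#\alpha_S - \#\alpha_{S'}) \geq 0$, and equality holds iff $T$ is a disjoint union of detachable cycles. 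Beyond equality, each additional cyclomatic unit removed corresponds to cutting off an ear of the graph, and the number of ears of length $\leq \ell$ in a subgraph is bounded by $|\alpha_S|^{\ell}$. Organizing the sum by the value $j := c(S) - c(S') \in \{0, 1, \ldots, c(S)-1\}$ and bounding the number of $S'$ producing each defect $j$ by $|\alpha_S|^{O(j)}$ yields a geometric-type bound that collapses to at most $|\alpha|^{2c(S)} - 2$, using $|\alpha| \geq |\alpha_S|$.

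The main obstacle is exactly this combinatorial counting step: the set $\mathcal{S}([l])$ is intricate because of the combined constraints (connectivity, minimum degree $\geq 2$, and the fixed ``anchor'' $\beta_0$), and one must show that enlarging the defect $j$ by one is compensated by at most a factor $|\alpha|^2$ in the number of valid $S'$. The cleanest realization is via an ear-decomposition of the row-graph of $\{0\} \cup S$: every $S' \in \mathcal{S}([l]) \cap 2^S$ is obtained by stripping off an admissible union of ears, and each stripped ear involves at most $|\alpha_S|$ edges, giving the required polynomial-in-$|\alpha|$ control. A minor additional subtlety is to handle loops ($i_s = i'_s$) and bridges uniformly within the ear decomposition, which is accomplished by treating loops as degenerate ears of length one.
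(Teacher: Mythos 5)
Your reformulation is correct as far as it goes: for nonempty $S\in\mathcal{S}([l])$ the exponent $|\alpha_S|-2\#\alpha_S+4$ is indeed twice the cyclomatic number $c(S)=|S|+2-\#\alpha_S$ of the row multigraph of $\{0\}\cup S$, and your base case is fine. The gap is exactly the step you yourself flag as "the main obstacle": the claim that the number of admissible $S'\subsetneq S$ with defect $j=c(S)-c(S')$ is $|\alpha|^{O(j)}$, and that the resulting sum collapses below $|\alpha|^{2c(S)}-2$, is asserted but not proved, and the ingredients you cite do not deliver it. The bound you invoke ("the number of ears of length $\leq \ell$ is bounded by $|\alpha_S|^{\ell}$") is keyed to the \emph{length} of the removed material, not to the defect: a single removed ear can have length of order $|S|$, so per-vertex counting of removed structures gives $|\alpha|^{|S\setminus S'|}$ rather than $|\alpha|^{O(j)}$, and the exponent constant matters — any constant larger than $2$ in $O(j)$ makes the terms $|\alpha|^{cj}\,|\alpha|^{2(c(S)-j)}$ increase with $j$ and the induction fails. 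What is actually needed is that each atomic removal (maximal ear, chord, or lollipop) has defect exactly one and is determined by, say, any single one of its edges, hence at most $|S|\leq|\alpha|/2$ choices per defect unit; and, in addition, that every admissible $S'\subsetneq S$ is reachable from $S$ by defect-one peelings staying inside $\mathcal{S}([l])$ — a nontrivial deconstruction statement of the same nature as Lemmas \ref{lem:constructiongraphs}--\ref{lem:surjectivitygraphs}, which you do not supply. Note also that your range $j\in\{0,1,\ldots,c(S)-1\}$ includes $j=0$: either you prove (via connectivity, since a "detachable cycle" vertex-disjoint from the kept part would disconnect $\mathcal{V}[\{0\}\cup S]$) that no proper $S'$ has defect $0$, or a single such term already equals $|\alpha|^{2c(S)}$ and your target inequality is dead on arrival. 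Until these points are established with explicit constants, the key inequality is an assertion, not a proof.

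For contrast, the paper's proof sidesteps all counting by defect. It groups the proper subsets by one missing index $s\in S$, uses that the admissible subsets of $S\setminus\{s\}$ are stable under union so a maximal one $S^*(s)$ exists, bounds $\sum_{S'\subsetneq S}f(S')\leq\sum_{s\in S}\left(2f(S^*(s))-1\right)$ directly from the recursion, and then shows — using the double-covering condition on the rows outside $supp(\alpha_{S^*(s)})\cup\{1,2\}$ together with connectivity — that the exponent drops by at least one from $S$ to $S^*(s)$, so the factor $2|S|\leq|\alpha|$ is absorbed. Your route is repairable (the missing counting is essentially the path/lollipop argument of Lemma \ref{lem:combinatorics_kappa} run in reverse), but as written it has a genuine hole at its central step.
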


Plugging Lemma \ref{lem:upperboundf} in Lemma \ref{lem:upperbounfkappaS} leads us to 

\begin{equation*}|C_{x,\beta_1,\ldots,\beta_l}|\leq |\alpha|^{|\alpha|-2\#\alpha+4}\pa{\frac{1}{K}}^{\#\alpha-1}\enspace,
\end{equation*}

which concludes the proof of the lemma.

\subsection{Proof of Lemma \ref{lem:upperbounfkappaS}}\label{prf:upperbounfkappaS}

Let us prove by induction that, for all $S\in \mathcal{S}([l])$, $|C_{x,\beta[S]}|\leq \pa{\frac{1}{K}}^{\#\alpha_{S}-1}f(S)$. The initialization is straightforward since $C_{x}=\kappa\pa{\1\ac{k^*_1=k^*_2}}=\frac{1}{K}$ and $\alpha_{\emptyset}=0$.

For the induction, let $S\in \mathcal{S}([l])$ and let us suppose that the result holds for all $S'\in \mathcal{S}([l])$ with $S'\subsetneq S$. Applying Inequality \eqref{eq:recursionLTCclusteringsharp3} to $S$ together with the induction hypothesis leads to 

\begin{align*}
|C_{x,\beta[S]}|\leq&\pa{\frac{1}{K}}^{\#\alpha_{S}-1}+\underset{S'\in \mathcal{S}([l])}{\sum_{S'\subsetneq S}} |C_{x,\beta[S']}|\pa{\frac{1}{K}}^{\#\alpha_{S\setminus S'}-cc\pa{\mathcal{V}[S\setminus S']}}\\
\leq& \pa{\frac{1}{K}}^{\#\alpha_{S}-1}+\frac{1}{K}\pa{\frac{1}{K}}^{\#\alpha_{S}-cc(\mathcal{V}[S])}+\\
&+\underset{\emptyset\neq S'\in \mathcal{S}([l])}{\sum_{S'\subsetneq S}}f(S')\pa{\frac{1}{K}}^{\#\alpha_{S'}-1}\pa{\frac{1}{K}}^{\#\alpha_{S\setminus S'}-cc\pa{\mathcal{V}[S\setminus S']}}\\
\leq &\pa{\frac{1}{K}}^{\#\alpha_{S}-1}+\underset{S'\in \mathcal{S}([l])}{\sum_{S'\subsetneq S}}f(S')\pa{\frac{1}{K}}^{\#\alpha_{S'}-1+\#\alpha_{S\setminus S'}-cc\pa{\mathcal{V}[S\setminus S']}}\enspace.
\end{align*}
In the last line, for the term corresponding to $S'=\emptyset$, we used the fact that $cc(\mathcal{V}[S])\leq cc(\mathcal{V}[S\cup \ac{0}])+1\leq 2$, since $\mathcal{V}[S\cup \ac{0}]$ is assumed to be connected. We deduced from that $\frac{1}{K}\pa{\frac{1}{K}}^{\#\alpha_{S}-cc(\mathcal{V}[S])}\leq \pa{\frac{1}{K}}^{\#\alpha_{S}-1}$.

The next lemma uses the connectivity of the graph $\mathcal{V}[\ac{0}\cup S]$ in order to lower-bound the other exponents in the above inequality.
\medskip

\begin{lem}\label{lem:connectivityLTCclusteringsharp}
For all $\emptyset \neq S'\subsetneq S$ with $S,S'\in \mathcal{S}([l])$, $$\#\alpha_{S'}-1+\#\alpha_{S\setminus S'}-cc\pa{\mathcal{V}[S\setminus S']}\geq \#\alpha_{S}-1 \enspace.$$
\end{lem}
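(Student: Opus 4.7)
The plan is to rewrite the target inequality in a combinatorial form and then exploit the connectivity hypothesis packaged in the definition of $\mathcal{S}([l])$. Using that $supp(\alpha_{S'})\cup supp(\alpha_{S\setminus S'})=supp(\alpha_S)$, the inclusion--exclusion identity gives
\[
\#\alpha_{S'}+\#\alpha_{S\setminus S'}=\#\alpha_{S}+\big|supp(\alpha_{S'})\cap supp(\alpha_{S\setminus S'})\big|.
\]
Hence the inequality of the lemma is equivalent to
\[
\big|supp(\alpha_{S'})\cap supp(\alpha_{S\setminus S'})\big|\ \geq\ cc\big(\mathcal{V}[S\setminus S']\big),
\]
and the whole argument reduces to exhibiting at least one distinct row index in this intersection for each connected component of $\mathcal{V}[S\setminus S']$.

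To produce these witnesses, I fix a connected component $C$ of $\mathcal{V}[S\setminus S']$. Since $S\in\mathcal{S}([l])$, the graph $\mathcal{V}[\{0\}\cup S]$ is connected, so there must exist at least one edge in $\mathcal{V}[\{0\}\cup S]$ between some $s\in C$ and some $s'\in\{0\}\cup S'$. By the definition of $\mathcal{V}$, this edge corresponds to a row index $i\in\{i_s,i'_s\}\cap\{i_{s'},i'_{s'}\}$. In particular $i\in supp(\alpha_C)\subseteq supp(\alpha_{S\setminus S'})$. Moreover, $i\in\{i_{s'},i'_{s'}\}$ with $s'\in\{0\}\cup S'$; since $S'\neq\emptyset$ and $S'\in\mathcal{S}([l])$, the second defining property of $\mathcal{S}([l])$ yields $1,2\in supp(\alpha_{S'})$, so $\{i_0,i'_0\}=\{1,2\}\subseteq supp(\alpha_{S'})$. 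In every case, $i\in supp(\alpha_{S'})$, and thus $i\in supp(\alpha_{S'})\cap supp(\alpha_{S\setminus S'})$.

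It remains to show that the witness $i=i(C)$ chosen above can be taken distinct for distinct components. Suppose, for contradiction, that two distinct components $C_1,C_2$ of $\mathcal{V}[S\setminus S']$ share a common row witness $i\in supp(\alpha_{C_1})\cap supp(\alpha_{C_2})$. Then there exist $s_1\in C_1$ and $s_2\in C_2$ with $i\in\{i_{s_1},i'_{s_1}\}\cap\{i_{s_2},i'_{s_2}\}$, which creates an edge between $s_1$ and $s_2$ inside $\mathcal{V}[S\setminus S']$, contradicting that $C_1\neq C_2$. Therefore, associating to each connected component of $\mathcal{V}[S\setminus S']$ a witness in $supp(\alpha_{S'})\cap supp(\alpha_{S\setminus S'})$ produces an injection, which yields the desired lower bound and concludes the proof.

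No real obstacle is expected beyond keeping careful track of the role of the extra vertex $0$ (with $\{i_0,i'_0\}=\{1,2\}$): the key subtlety is that the witness edge can come from the $0$ side of $\{0\}\cup S'$ rather than from $S'$ itself, but this is harmlessly handled by the fact that $1,2\in supp(\alpha_{S'})$ whenever $S'\neq\emptyset$ and $S'\in\mathcal{S}([l])$.
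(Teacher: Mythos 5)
Your proof is correct and follows essentially the same argument as the paper: both rely on the connectivity of $\mathcal{V}[S\cup\{0\}]$ to force each connected component of $\mathcal{V}[S\setminus S']$ to share a row with $supp(\alpha_{S'})$ (using $1,2\in supp(\alpha_{S'})$ since $S'\in\mathcal{S}([l])$ is nonempty), together with the disjointness of supports of distinct components. Your inclusion--exclusion reformulation with injective witnesses is just a repackaging of the paper's per-component count $\#\alpha_{cc_q}\geq 1+|supp(\alpha_{cc_q})\setminus supp(\alpha_{S'})|$ summed over components.
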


\medskip

Applying Lemma \ref{lem:connectivityLTCclusteringsharp} leads to 

\begin{equation*}
|C_{x, \beta[S]}|\leq \pa{\frac{1}{K}}^{\#\alpha_{S}-1}\pa{1+\underset{S'\in \mathcal{S}([l])}{\sum_{S'\subsetneq S}}f(S')}=\pa{\frac{1}{K}}^{\#\alpha_{S}-1}f(S)\enspace,
\end{equation*}

which concludes the induction.

\subsection{Proof of Lemma \ref{lem:connectivityLTCclusteringsharp}}\label{prf:connectivityLTCclusteringsharp2}

For any subset $R\subseteq [l]$, we write $\alpha_R=\sum_{s\in R}\beta_s$.

Let $\emptyset \neq S'\subsetneq S$ with $S,S'\in \mathcal{S}([l])$. Let $q^*=cc\pa{\mathcal{V}[S\setminus S']}$ and let us write $cc_1, \ldots, cc_{q^*}$ those connected components. Since $\mathcal{V}[S\cup \ac{0}]$ is connected, we deduce that, for all $q\leq q^*$, $cc_{q}$ is connected to $S'\cup \ac{0}$ in $\mathcal{V}[S\cup \ac{0}]$. This implies that, for all $q\in [q^*]$, $\#\alpha_{cc_{q}}\geq 1+|supp(\alpha_{cc_{q}})\setminus supp(\alpha_{S'})|$. Hence, we have 
\begin{align*}
\#\alpha_{S'}-1+\#\alpha_{S\setminus S'}-cc\pa{\mathcal{V}[S\setminus S']}=&\#\alpha_{S'}-1+\sum_{q\in [q^*]}(\#\alpha_{cc_{q}}-1)\\
\geq & \#\alpha_{S'}-1+\sum_{q\in [q^*]}|supp(\alpha_{cc_{q}})\setminus supp(\alpha_{S'})|\\
\geq &\#\alpha_{S}-1\enspace,
\end{align*}
which concludes the proof of the lemma.

\subsection{Proof of Lemma \ref{lem:upperboundf}}\label{prf:upperboundf}

We shall prove, by induction, that for all $S\in \mathcal{S}([l])$,
\begin{equation}
\label{eq:upper_F_S_a}
f(S)\leq |\alpha|^{|\alpha_S|-2|supp(\alpha_S)\cup \ac{1,2}|+4}\enspace.
\end{equation}
In fact, the bound~\eqref{eq:upper_F_S_a} implies the desired result. Indeed, for $S\neq \emptyset$, we deduce from the definition of $\mathcal{S}([l])$ that  $1,2\in supp(\alpha_S)$. This implies that $|supp(\alpha_{S}\cup\ac{1,2})|=\#\alpha_S$. The case $S=\emptyset$ is straightforward as $f(\emptyset)=1$.

Hence, we only need to prove~\eqref{eq:upper_F_S_a}.
The initialization is trivial since $f(\emptyset)=1=0^0$ and $\alpha_{\emptyset}=0$. Let us take $S\in  \mathcal{S}([l])$ and let us suppose that the result holds for all $S'\in \mathcal{S}([l])$ with $S'\subsetneq S$. For all $s\in S$, let $S^*(s)$ be the maximal (with respect to the inclusion) element  of $\mathcal{S}([l])$ which is included in $S\setminus \ac{s}$. The existence of such an element in ensured by the fact that the set of elements $S'\in \mathcal{S}([l])$ with $S'\subseteq S\setminus \ac{s}$ is not empty (it contains $\emptyset$) and is stable by union. We have

\begin{align*}
f(S)=&1+\underset{S'\in \mathcal{S}([l])}{\sum_{S'\subsetneq S}}f(S')\\
\leq & 1+\sum_{s\in S}\underset{S'\in \mathcal{S}([l])}{\sum_{S'\subseteq S^*(s)}}f(S')\\
\leq & 1+\sum_{s\in S} \cro{2f(S^*(s))-1}\\
\leq & 2\sum_{s\in S}f(S^*(s))\ , 
\end{align*}
where we used the recursive definition of $f$ in the third line. Applying the induction hypothesis leads us to 
\begin{equation}\label{eq:upper_F_S}
f(S)\leq 2\sum_{s\in S}|\alpha|^{|\alpha_{S^*(s)}|-2|supp(\alpha_{S^*(s)}\cup\ac{1,2})|+4}\enspace.
\end{equation}

Let $s\in S$ and let $i\in \pa{\cup_{s'\in S}\ac{i_{s'}, i'_{s'}}}\setminus \pa{\cup_{s'\in S^*(s)\cup \ac{0}}\ac{i_{s'}, i'_{s'}}}$. Since $S$ belong to $\mathcal{S}([l])$ and by definition of that collection, we know that there must exist at least two different $s_1,s_2\in S\setminus \pa{S^*(s)\cup \ac{0}}$ such that $i\in supp(\beta_{s_1})$ and $i\in supp(\beta_{s_2})$. And, from the connectivity of the graph $\mathcal{V}$, we know that there exists $i\in \pa{supp(\alpha_{S\setminus S^*(s)})\cup \ac{1,2}}\cap \pa{supp(\alpha_{S^*(s)})\cup \ac{1,2}}$. We deduce that 

$$|\alpha_S|\geq |\alpha_{S^*(s)}|+2\left|\pa{\cup_{s'\in S}\ac{i_{s'}, i'_{s'}}}\setminus \pa{\cup_{s'\in S^*(s)\cup \ac{0}}\ac{i_{s'}, i'_{s'}}}\right|+1\enspace.$$
We deduce that 
$$|\alpha_{S^*(s)}|-2|supp(\alpha_{S^*(s)})\cup\ac{1,2}|+4\leq  |\alpha_{S}|-2|supp(\alpha_{S}\cup\ac{1,2})|-1+4\enspace.$$
Coming back to~\eqref{eq:upper_F_S}, we get 
$$f(S)\leq \frac{2|S|}{|\alpha|}|\alpha|^{|\alpha_{S}|-2|supp(\alpha_{S}\cup\ac{1,2})|+4}\leq |\alpha|^{|\alpha_{S}|-2|supp(\alpha_{S}\cup\ac{1,2})|+4}\enspace\ ,$$
since, for $S\in \mathcal{S}([l])$, $|\alpha|\leq 2|S|$. We have proved~\eqref{eq:upper_F_S_a}.

\subsection{Proof of Lemma \ref{lem:controlcumulantsclusteringsharp}}\label{prf:controlcumulantsclusteringsharp4}

 To prove Lemma  \ref{lem:controlcumulantsclusteringsharp} we merely need to upper-bound the number of partitions $\pi=\pi_1,\ldots,\pi_l\in \mathcal{P}_2(\alpha)$, satisfying $C_{x,\beta_1(\pi),\ldots, \beta_l(\pi)}\neq 0$, where $\beta_s(\pi)$ counts the number of copies of $(i,j)$ in $\pi_s$. For such a partition, we have $l=\frac{|\alpha|}{2}$, and we need that all groups $\pi_s$ must be contained in a single column of $\alpha$. Hence, we only need to upper-bound the number of partition of each multiset $\alpha_{:j}$, for $j\in col(\alpha)$, into groups of size $2$.

For $j\in col(\alpha)$, the number of partitions of the multiset $\alpha_{:j}$ into pairings is at most $|\alpha_{:j}|^{|\alpha_{:j}|/2-1}$. We deduce that the number of satisfying partitions of $\alpha$ in pairings is at most $|\alpha|^{|\alpha|/2-r_\alpha}$.

Lemma \ref{lem:nullityLTCclusteringsharp} also implies that for all $i\in supp(\alpha)\setminus \ac{1,2}$, $|\alpha_{i:}|\geq 2$. We deduce that $|\alpha|\geq 2\#\alpha-2$ and so $|\alpha|^{|\alpha|/2-r_\alpha}\leq |\alpha|^{|\alpha|-\#\alpha-r_\alpha+1}$. Combining Lemma \ref{lem:boundLTCclusteringsharp} and \eqref{eq:LTCgeneral:thm2}
leads us to 
$$|\kappa_{x,\alpha}|\leq  \lambda^{|\alpha|}\pa{\frac{1}{K}}^{\#\alpha-1}|\alpha|^{|\alpha|-2\#\alpha+4}|\alpha|^{|\alpha|-\#\alpha-r_\alpha+1}\enspace,$$
which concludes the proof of Lemma \ref{lem:boundLTCclusteringsharp}.

\subsection{Proof of Lemma \ref{lem:nullcumulantsclusteringsharp}}\label{prf:nullcumulantsclusteringsharp}

Let $\alpha\neq 0$. Let us prove that if $\mathcal{G}_\alpha^{-}$ does not satisfy any of the three conditions of Lemma \ref{lem:nullcumulantsclusteringsharp}, then $\kappa_{x,\alpha}=0$.

\medskip
Let us first suppose that either $u_1\notin U(\alpha)$ or $u_2\notin U(\alpha)$. Then, conditionally on $\pa{X_{ij}}_{ij\in \alpha}$, $x$ is a Bernoulli of parameter $\frac{1}{K}$. Thus, $x$ is independent from $\pa{X_{ij}}_{ij\in \alpha}$. We conclude with Lemma \ref{lem:independentcumulant2} that $\kappa_{x,\alpha}=0$

\medskip
Let us suppose that $\mathcal{G}_\alpha^-\cup \ac{\pa{u_1,u_2}}$ is not connected. Recall  Theorem~\ref{thm:LTC}. For proving that $\kappa_{x,\alpha}=0$,  it is sufficient to prove that, for all decomposition $\beta_1+\ldots+\beta_l=\alpha$, with $\beta_s=\ac{(i_s,j_s);(i'_s,j_s)}$, we have $C_{x,\beta_1,\ldots,\beta_l}=0$. For such a decomposition to be non-zero, we need that the graph $\mathcal{V}[\ac{0}\cup [l]]$ defined in Section \ref{prf:boundLTCclusteringsharp} is connected --see Lemma \ref{lem:nullityLTCclusteringsharp}. This directly implies that $\mathcal{G}_\alpha^-\cup \ac{\pa{u_1,u_2}}$ is connected. So, if $\mathcal{G}_\alpha^-\cup \ac{\pa{u_1,u_2}}$ is not connected, all those decompositions satisfy $C_{x,\beta_1,\ldots,\beta_l}=0$ and we deduce $\kappa_{x,\alpha}=0$.

\medskip

It remains to prove that if a node in $U(\alpha)\setminus \ac{u_1,u_2}$ or $V(\alpha)$ is of degree $1$, then $\kappa_{x,\alpha}=0$. Let us first suppose that there exists $v_{j_0}\in V(\alpha)$ of degree $1$. $(x,\pa{X_{ij}}_{ij\in \alpha})$ has the same law as $\pa{x,\pa{X_{ij}(-1)^{j=j_0}}}$ and so $\kappa_{x,\alpha}=\kappa\pa{x, \pa{X_{ij}(-1)^{j=j_0}}}=-\kappa\pa{x, \pa{X_{ij}}}=-\kappa_{x,\alpha}$. Hence, $\kappa_{x,\alpha}=0$.

Let us now suppose that there exists a vertex $u_{i_0}\in U(\alpha)\setminus \ac{1,2}$ which is of degree $1$. As previously,  we know from Theorem~\ref{thm:LTC} that for proving that $\kappa_{x,\alpha}=0$, it is sufficient to prove that, for all decomposition $\alpha=\beta_1+\ldots+\beta_l$ with $\beta_s=\ac{(i_s,j_s);(i'_s,j_s)}$, we have $C_{x,\beta_1,\ldots,\beta_l}=0$. Let $\beta_1,\ldots, \beta_l$ be such a decomposition and let $s_0\in [l]$ such that $i_0\in \ac{i_{s_0},i'_{s_0}}$. Since $u_{i_0}$ is of degree at most $1$, it is clear that $i_0\notin \cup_{s\neq s_0}\ac{i_s,i'_s}$. Thus, $\1\{k^*_{i_{s_0}}=k^*_{i'_{s_0}}\}$ is independent from $\ac{x}\bigcup \pa{\1\ac{k^*_{i_s}=k^*_{i'_s}}}_{s\neq s_0}$. We deduce from lemma \ref{lem:independentcumulant2} that $C_{x,\beta_1,\ldots,\beta_l}=0$. This being true for all decomposition, we conclude $\kappa_{x,\alpha}=0$.

\subsection{Proof of Lemma \ref{lem:combinatorics_kappa}}\label{prf:combinatorics_kappa}

We adapt the proof of Lemma 5.5 of \cite{SohnWein25} to the case of bipartite multigraphs. We remark first that counting the matrices $\alpha$ is equivalent to counting the bipartite multigraphs $\mathcal{G}_\alpha$. Let us first construct the graphs $\mathcal{G}_\alpha$ with some basic operations. We will then upper-bound the number of graphs by counting the operations that are needed to construct all the graphs. In the following, for $G$ a bipartite multigraph of $U\times V$, we denote $G^-$ the graph obtained after removing all the isolated nodes.

In order to construct a bipartite multigraph $G$ of $U\times V$ such that $G^-$ satisfies the conditions of Lemma \ref{lem:nullcumulantsclusteringsharp},  we start with two isolated vertices $u_1$ and $u_2$. Then, recursively, we are allowed to add either a "path" or a "lollipop". Let us precise what these two operations on graphs correspond to:

\begin{itemize}
\item For adding a "path", we choose $l>0$ and two existing nodes $w^{(0)},w^{(l+1)}$ (not necessarily distinct) of $G^-$. Then, we choose distinct $w^{(1)},\ldots, w^{(t)}\in U\cup V$ (with the constraint that, for $l'\in [0,l]$, if $w_{(l')}\in U$ then $w_{(l'+1)}\in V$ and conversely) which are not nodes of $G^-$ and we add the edges $\pa{(w^{(l')}, w^{(l'+1)})}_{l'\in [0,l]}$ to $G$. We remark that all the new nodes added to $G^-$ are of degree $2$,
\item For constructing a "lollipop", we choose  $w^{(0)}$ an existing node of $G^-$. We choose distinct $w^{(1)},\ldots, w^{(l)}\in U\cup V$ (still with the constraint that, for $l'\in [0,l-1]$, if $w_{(l')}\in U$ then $w_{(l'+1)}\in V$ and conversely) which are not nodes of $G^-$. We then add to $G$ the edges  $((w^{(l')}, w^{(l'+1)}))_{l'\in [0,l-1]}$. Then, we choose $l'\in  [l-1]$ and we add the edge $(w^{(l)}, w^{(l')})$, with the constraint that $(w^{(l)}, w^{(l')})\in \pa{U\times V}\cup \pa{V\times U}$. We have added nodes of degree $2$ except one node of degree $3$ (which is $w^{(l')}$).
\end{itemize}

We postpone to Section \ref{prf:constructiongraphs}  the proof of the next lemma, which states that this construction of graphs is surjective.

\begin{lem}\label{lem:constructiongraphs}
All graph $G$, with $G^-$ satisfying the conditions of Lemma \ref{lem:nullcumulantsclusteringsharp}, can be obtained by a finite number of operations "path" or "lollipop".
\end{lem}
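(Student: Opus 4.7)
The plan is to argue by strong induction on $|E(G)|$. For the base case $|E(G)| = 2$, the three conditions of Lemma~\ref{lem:nullcumulantsclusteringsharp} force $G^-$ to be the length-2 path $u_1 - v - u_2$ for some $v \in V$: indeed, two edges in a bipartite multigraph can put both $u_1$ and $u_2$ in $U(\alpha)$ only if one edge is incident to each of them, and the condition that every $V$-vertex in $V(\alpha)$ has degree $\geq 2$ forces these two edges to share a common $V$-endpoint $v$. This graph is then produced by a single path operation with $w^{(0)} = u_1$, $w^{(1)} = v$, $w^{(2)} = u_2$.

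For the inductive step, I aim to peel off from $G^-$ a substructure corresponding to the reverse of a single path or lollipop operation, leaving a strictly smaller bipartite multigraph still satisfying the conditions. To this end, I introduce the \emph{skeleton} $R$ of $G^-$, obtained by contracting each maximal path of interior degree-2 vertices into a single super-edge; the vertex set of $R$ is $B := \{u_1, u_2\} \cup \{v : \deg_{G^-}(v) \geq 3\}$, and the three conditions on $G^-$ translate to $\tilde R := R \cup \{(u_1,u_2)\}$ being connected with minimum degree $\geq 2$. At the level of $R$, a path operation in $G^-$ adds a single super-edge between two existing branch vertices (possibly equal, in which case a loop), while a lollipop operation adds a new branch vertex $v$ of degree 3 in $R$, carrying a loop at $v$ and one super-edge from $v$ to an existing branch vertex. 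Accordingly, I would look either for (i) a super-edge of $R$ whose removal leaves $\tilde R$ connected with minimum degree $\geq 2$ (undoing a path operation), or (ii) a branch vertex of $R$ with the exact lollipop-junction structure whose removal together with its two incident super-edges preserves the validity of $\tilde R$ (undoing a lollipop operation).

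To exhibit such a peel, I would use the block-cut decomposition of $\tilde R$. If $\tilde R$ has several blocks, a leaf block disjoint from the virtual edge $(u_1,u_2)$ provides either a peelable loop super-edge (case (i)) or a lollipop junction (case (ii)), depending on whether the leaf block reduces to a single cycle at a cut vertex or to a tadpole whose inner junction is a degree-3 branch vertex of $G^-$. If $\tilde R$ is 2-vertex-connected, the classical open-ear decomposition of 2-connected multigraphs yields, whenever $\tilde R$ is strictly bigger than a single cycle, an open ear avoiding the virtual edge whose last super-edge is peelable under case (i), lifting to a path ear in $G^-$ with distinct endpoints $w^{(0)} \neq w^{(l+1)}$; when $\tilde R$ is exactly a cycle through the virtual edge, $G^-$ is already a single $u_1$-to-$u_2$ path and we are in the base case. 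The main obstacle is verifying that each candidate peel preserves the three conditions of Lemma~\ref{lem:nullcumulantsclusteringsharp}, in particular avoiding the creation of new degree-1 vertices outside $\{u_1,u_2\}$ or the loss of the connectivity of $\tilde R$; this is handled by choosing the innermost leaf block in the block-cut tree and routing the peel, when necessary, through $u_1$ or $u_2$, whose degrees are allowed to drop to 1. The lollipop operation is essential precisely in the tadpole subcase of case (ii), since attempting to build a pendant cycle via a pendant path by a sequence of path operations would, at an intermediate step, create a new leaf of degree 1 at the junction, violating condition 3.
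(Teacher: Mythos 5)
Your overall plan---peel off the reverse of a single path or lollipop operation and induct on the number of edges---is the same descent as the paper's, but your inductive invariant is too strong, and this is a genuine gap. You require every peel to leave a graph ``still satisfying the conditions'' of Lemma~\ref{lem:nullcumulantsclusteringsharp}, equivalently $\tilde R$ connected with minimum degree $\geq 2$, which in particular forces $u_1$ and $u_2$ to remain non-isolated after each peel. This cannot be maintained. Take $G^-$ to be two vertex-disjoint cycles, one through $u_1$ and one through $u_2$, with all other vertices of degree $2$. This graph satisfies all three conditions of Lemma~\ref{lem:nullcumulantsclusteringsharp}, and it is constructible by two path operations (one with $w^{(0)}=w^{(l+1)}=u_1$, one with $w^{(0)}=w^{(l+1)}=u_2$), so the lemma does hold for it. However, every admissible reverse operation must remove an entire cycle: removing any proper sub-path leaves a degree-one vertex of $V(\alpha)$ or $U(\alpha)\setminus\{u_1,u_2\}$, and there is no degree-$3$ vertex to serve as a lollipop junction. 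Removing a whole cycle isolates $u_1$ or $u_2$, so no peel meets your criterion that ``removal leaves $\tilde R$ connected with minimum degree $\geq 2$''; your block-cut/ear search finds nothing and the induction stalls on a graph for which the statement is true. The point you are missing is that the construction starts from two \emph{isolated} vertices $u_1,u_2$, so intermediate graphs need not cover them. The paper therefore runs the descent inside the weaker class $\mathcal{H}$ (connectivity of $G^-\cup\{(u_1,u_2)\}$ plus degree at least $2$ off $\{u_1,u_2\}$, with no coverage requirement at $u_1,u_2$), proves in Lemma~\ref{lem:surjectivitygraphs} that removing a path or lollipop chosen around a non-disconnecting edge preserves $\mathcal{H}$, and uses a spanning-tree argument to show the process can only terminate at the edgeless graph. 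Your parenthetical that the degrees of $u_1,u_2$ are ``allowed to drop to $1$'' both contradicts your stated invariant and is still not weak enough: in the example above they must drop to $0$.

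Two secondary remarks. First, your leaf-block case analysis is not coherent as written: a tadpole is not $2$-connected and hence cannot be a block, the ``innermost leaf block'' is undefined, and the verification that a lollipop peel preserves connectivity and the degree condition at the far endpoint of the connecting path is precisely the content that must be proved (it is the analogue of Lemma~\ref{lem:surjectivitygraphs}), not something to be waved away. Second, your terminal case ``$\tilde R$ is exactly a cycle through the virtual edge'' corresponds to $G^-$ being a single $u_1$--$u_2$ path of arbitrary length, which is one path operation, not your two-edge base case; this is harmless but should be stated as such. More generally, once the invariant is relaxed to $\mathcal{H}$, the block-cut and ear-decomposition machinery becomes unnecessary: as in the paper, any edge whose removal does not disconnect $G^-\cup\{(u_1,u_2)\}$ already yields a valid peel.
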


Now, suppose that a graph $\mathcal{G}_{\alpha}^-$ has been produced by $T$ operations "path" or "lollipop". Let $l_t$ denote  the number of vertices added at step $t$. The total number $|\alpha|$ of multi-edges  satisfies  $|\alpha|=\sum_t (l_t+1)$ and the total number of vertices $r_\alpha+\#\alpha$ satisfies $r_\alpha+\#\alpha-2=\sum_t l_t$. This implies that $T=|\alpha|-\#\alpha-r_\alpha+2=d-m-r+2$. Then, since for all $t\in [T]$, we have necessarily $l_t\in [1,d]$, for obtaining all the possible graphs, the number of possibilities for choosing the $l_t$'s is at most $d^{d-m-r+2}$. At each step, there are at most $d^2$ possibilities for choosing the existing vertices (counting the future existing vertex when we add a "lollipop").  Finally, the new vertices must be chosen either in $U\setminus \ac{1,2}$ or $V$ depending on where the existing edges are. It is clear that, in total, the number of nodes that we need to choose in $U$ is $m-2$ and the number of nodes that we need to choose in $V$ is $r$. When we choose a new node, the fact that it belongs to $U$ or $V$ is entirely determined by the previous choices of nodes. The final count is at most

$$d^{3(d-r-m+2)}n^{m-2}p^r\enspace,$$

which concludes the proof of the lemma.

\subsection{Proof of Lemma \ref{lem:constructiongraphs}}\label{prf:constructiongraphs}

Let $G$ a bipartite multigraph of $U\times V$ such that $G^-$ satisfies the conditions of Lemma \ref{lem:nullcumulantsclusteringsharp}, which are:
\begin{enumerate}
\item $u_1,u_2$ are nodes of  $G^-$,
\item $G^-\cup \ac{(u_1, u_2)}$ is connected;
\item All the nodes of $G^-$, except $u_1$ and $u_2$, are of degree at least $2$. Together with the first point, this implies that all the nodes of $G^-\cup \ac{(u_1, u_2)}$ are of degree at least $2$.
\end{enumerate}

Let us prove that $G^-$ can be obtained with a finite number of operations "path" or "lollipop". To do so, we deconstruct the graph $G$ by removing paths and lollipops. We write $G_0=G$ and we construct recursively a sequence of subgraphs of $G$ as follows.

Suppose that we are given a graph $G_t$, for $t\geq 0$. If it exists, choose an edge $e$ of $G_t^-$ whose absence does not disconnect the graph $G_t^-\cup \ac{(u_1,u_2)}$. Then, let $P$ be the maximal path included in $G_t^-$ containing $e$ and such that all the nodes inside this path are of degree $2$ and are not $u_1$ or $u_2$. We distinguish two cases:
\begin{itemize}
\item If the extremities of this path are distinct, or if they are not distinct but are a node of degree at least $4$ (or equal either to $u_1$ or $u_2$), we remove all the edges of the path to obtain the graph $G^-_{t+1}$. This corresponds to removing a "path".
\item If both extremities of this path are a same node of degree $3$ which is not in $\ac{u_1,u_2}$, then remove also all the edges of this path. There remains an edge $e'$ connecting the extremity to the rest of the graph. We consider the maximum path containing $e'$ such that all the nodes are of degree $2$ and are not $u_1$ or $u_2$. We also remove all the edges of this path. This corresponds to removing a "lollipop". 
\end{itemize}

We stop at the step $T$ which corresponds to the moment where there is no edge deconnecting $G_T^-\cup\ac{(u_1,u_2)}$. To conclude the proof of the lemma, it remains to prove that $G_T$ has no edge -which is the starting point of the construction of the graphs above-. First, we define the set  $\mathcal{H}$ of bipartite multigraphs satisfying, for $G\in \mathcal{H}$;

\begin{enumerate}
\item $G^-\cup \ac{(u_1,u_2)}$ is connected;
\item All the nodes of $G^-$, except $u_1$ and $u_2$, are of degree at least $2$.
\end{enumerate}

It is clear that $G_0\in \mathcal{H}$, since $G_0$ satisfies the conditions of Lemma \ref{lem:nullcumulantsclusteringsharp}. Lemma~\ref{lem:surjectivitygraphs} below implies that, if $G_t\in \mathcal{H}$, then $G_{t-1}$ belongs to $\mathcal{H}$. 

\begin{lem}\label{lem:surjectivitygraphs}
Let $G\in \mathcal{H}$ be a non empty graph. Let us remove either a "lollipop" or a "path" from $G$ with the scheme described above. Then, the obtained graph $G'$ also belongs to $\mathcal{H}$.
\end{lem}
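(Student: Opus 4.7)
The plan is to verify the two defining conditions of $\mathcal{H}$---connectivity of ${G'}^{-}\cup\{(u_1,u_2)\}$, and degree at least $2$ in ${G'}^{-}$ for every vertex outside $\{u_1,u_2\}$---by a case analysis on the removal operation. The structural observation underpinning both verifications is that, by maximality of the extracted path $P$ (and, in the lollipop branch, of the ``stick''), every interior vertex of the removed structure has both of its $G^{-}$-edges inside it. Hence all such interior vertices become isolated and drop out of ${G'}^{-}$, leaving only the endpoints to analyze.

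For the degree condition, by maximality each endpoint of $P$ either has degree at least $3$ in $G^{-}$ or belongs to $\{u_1,u_2\}$: an endpoint of degree $2$ not in $\{u_1,u_2\}$ would either allow $P$ to be extended, or force $P$ to be an isolated cycle, contradicting connectivity of $G^{-}\cup\{(u_1,u_2)\}$. In Case~1a ($a\neq b$), each endpoint loses exactly one incident edge, hence retains degree at least $2$ when not in $\{u_1,u_2\}$. In Case~1b ($a = b$), the algorithm enters this branch only when $a$ has degree at least $4$ or $a\in\{u_1,u_2\}$; in the former case $a$ retains degree at least $2$ after losing two cycle edges, in the latter the degree condition at $a$ is vacuous. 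In Case~2 (lollipop), $c$ has degree exactly $3$ and loses all three of its incident edges, so $c$ becomes isolated; the far endpoint $d$ of the stick is distinct from $c$ (the bipartite structure forbids self-loops and $c$ has only one non-cycle edge) and loses a single edge, so $d$ retains degree at least $2$ or lies in $\{u_1,u_2\}$.

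For the connectivity condition, the key structural observation I would establish is that the removed structure is attached to the remainder of $G^{-}$ only through its surviving endpoints: $a$ and $b$ in the path case, and $d$ alone in the lollipop case. Indeed, every other vertex of the removed structure has all of its $G^{-}$-incidences inside the removed set. Consequently, any walk in $G^{-}\cup\{(u_1,u_2)\}$ between two ${G'}^{-}$-vertices either avoids the removed interior, or enters and exits through the same gateway and can be truncated to one that does. In the path case this rerouting uses the standing hypothesis that $G^{-}\setminus\{e\}\cup\{(u_1,u_2)\}$ is still connected, which yields an alternative path from $a$ to $b$ avoiding $e$; by the degree-$2$ structure of the interior of $P$ such an alternate path cannot re-enter the interior of $P$, and therefore lives inside ${G'}^{-}\cup\{(u_1,u_2)\}$.

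The principal difficulty lies in this last rerouting step in the path case: one must upgrade the hypothesis ``removing the single edge $e$ does not disconnect'' to ``removing all edges of $P$ does not disconnect''. The argument hinges on the observation that any $a$-to-$b$ walk entering an interior vertex of $P$ is forced by the degree-$2$ constraint to traverse $P$ in its entirety, and hence to use $e$---contradicting the existence of such an alternate path. The remaining verifications, namely the degree bookkeeping across sub-cases and the pendant argument in the lollipop case (where all lollipop-internal vertices funnel through $d$), are routine case checks.
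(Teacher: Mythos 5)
Your proposal is correct and follows essentially the same route as the paper's proof: a case analysis on the path versus lollipop removal, with maximality of $P$ (and the branch conditions of the algorithm) giving the degree bookkeeping at the surviving endpoints, and connectivity obtained by upgrading ``removing $e$ does not disconnect'' to ``removing all of $P$ does not disconnect'' via the degree-$2$ structure of the interior. The only difference is one of detail: the paper asserts this last connectivity step in a single sentence, whereas you spell out the rerouting argument explicitly, which is a faithful elaboration rather than a different approach.
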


In particular, $G_T\in \mathcal{H}$. Let us suppose that $G_T$ has an edge and let us find a contradiction. If $G_T$ only has edges between $u_1$ and $u_2$, then removing one of these edges does not disconnect $G_T^-\cup \ac{(u_1,u_2)}$ and this leads to a contradiction.

Otherwise, one can extract from $G_T^-\cup \ac{(u_1,u_2)}$ a spanning tree with $u_1$ as the root. We can suppose that there exists at least one leaf of this tree which is not $u_2$. This leaf is of degree at least $2$ for $G_T^-\cup \ac{(u_1,u_2)}$ but of degree $1$ for the spanning tree. Removing an edge which is not in the tree does not disconnect $G_T^-\cup \ac{(u_1,u_2)}$. This contradicts the fact that $T$ is the final step. All in all, we have shown that $G_{T}$ does not have any edge, which concludes the proof of the lemma.

\subsection{Proof of Lemma \ref{lem:surjectivitygraphs}}\label{prf:surjectivitygraphs}

Let $G\in \mathcal{H}$. Let $e$ an edge that does not disconnect $G^-\cup \ac{(u_1,u_2)}$. Let $P$ be the maximal path included in $G^-$ containing $e$ and such that all the nodes inside $P$ are of degree $2$ and are not $u_1$ or $u_2$.

\medskip

We first suppose that the extremities of this path are distinct, or that they are not distinct but are a node of degree at least $4$ or that they are equal to $u_1$ or $u_2$. Then, the obtained graph $G'$ is $G\setminus P$. Let us prove that $G'\in \mathcal{H}$. There are two things to check:
\begin{enumerate}
\item Let us prove that $G'^-\cup \ac{(u_1,u_2)}$ is connected. Since the absence of $e$ does not disconnect $G^-\cup \ac{(u_1,u_2)}$, so does the absence of $P$ and we get that  $G'^-\cup \ac{(u_1,u_2)}$ is connected,
\item Let us prove that all the nodes of $G'^-\cup \ac{(u_1,u_2)}$ except $u_1$ and $u_2$ are of degree at least $2$. The nodes along the path $P$ are not nodes of $G'^-\cup \ac{(u_1,u_2)}$. The nodes that are not extremities of $P$ have the same degree for $G'^-\cup \ac{(u_1,u_2)}$ than for $G^-\cup \ac{(u_1,u_2)}$. It remains to check that the extremities of $P$ which are not $u_1$ or $u_2$ are of degree at least $2$ for $G'^-\cup \ac{(u_1,u_2)}$. Let $w,w'$ be those extremities, and let us suppose that $w\notin \ac{u_1,u_2}$. If $w\neq w'$, then the degree of $w$ decreases by $1$ and since, by maximality of $P$, it was different from $2$ for $G^-\cup \ac{(u_1,u_2)}$, it is at least $2$ for $G'^-\cup \ac{(u_1,u_2)}$. If $w= w'$ then the degree of $w$ decreases by $2$ and since it was larger than $4$ for $G^-\cup \ac{(u_1,u_2)}$, it is at least $2$ for $G'^-\cup \ac{(u_1,u_2)}$.
\end{enumerate}

Thus, we have proved that $G'\in \mathcal{H}$.

\medskip

We now suppose that the two extremities of the path are the same node $w$, different from $u_1$ or $u_2$, and is of degree $3$. Let $e'$ the unique edge of $G\setminus P$ such $w$ as an extremity of $e'$ and let $P'$ denote the maximal path containing $e'$ and that only travels through nodes of degree $2$ which are not in $\ac{u_1,u_2}$. Then, using the exact same arguments as above, it is clear that $G\setminus (P\cup P')\in \mathcal{H}$. This concludes the proof of the lemma.

\section{Proof of Theorem \ref{thm:lowdegreesparsesharp}}\label{prf:lowdegreesparsesharp}

Without loss of generality, we assume through the proof that $\sigma^2=1$. Let $D\in \N$. We recall the assumption
\begin{equation}\label{eq:definition_zeta:sparse}
\zeta:=\frac{\bar{\Delta}^4}{\rho^2 p^2}\max\pa{D^{14}, D^7n, D^7\rho^2 p, \rho^2p\frac{n}{K^2}}<1\enspace.
\end{equation}

Again, as in the proof of Theorem \ref{thm:lowdegreeclusteringsharp} in Section \ref{prf:lowdegreeclusteringsharp}, the expression of the $MMSE_{\leq D}$ can be reduced to 
$$MMSE_{\leq D}=\inf_{f\in R_D[Y]}\E\cro{\pa{f(Y)-x}^2}=\frac{1}{K}-corr_{\leq D}^2\enspace,$$
with $x=\1_{k_1^*=k_2^*}$ and $corr_{\leq D}^2$ being defined by Equation \eqref{def:corr}. We shall again upper-bound $corr_{\leq D}^2$ using Proposition \ref{thm:schrammwein}, which states that
$$corr_{\leq D}^{2}\leq \underset{|\alpha|\leq D}{\sum_{\alpha\in \N^{n\times p}}}\frac{\kappa_{x,\alpha}^2}{\alpha!}\enspace,$$
with $\kappa_{x,\alpha}=\cumul\pa{x, \pa{X_{ij}}_{ij\in \alpha}}$. Here, $\alpha$ is seen as a multiset of $[n]\times [p]$, i.e $\pa{X_{ij}}_{ij\in \alpha}$ contains $\alpha_{ij}$ copies of $X_{ij}$.
The sparse clustering model is a special case of the latent model (\ref{eq:latent-model}), with
$$Z=(k^*,z,\eps),\quad \delta_{ij}(Z)=z_j\eps_i,\quad \textrm{and}\quad \theta_{i,j}(Z)=(k^*_{i},j)\enspace .$$
Combining Proposition \ref{thm:schrammwein} and Theorem \ref{thm:LTC}, we need to upper bound, for any decomposition $\beta_1+\ldots+\beta_l=\alpha$, with $|\beta_s|=2$, the cumulant

$$C_{x,\beta_{1},\ldots,\beta_{l}} =  \cumul\pa{x,\prod_{(ij)\in \beta_1}\eps_i\1_{\Omega_{\beta_{1}}(k^*)},\ldots,\prod_{(ij)\in \beta_1}\eps_i\1_{\Omega_{\beta_{l}}(k^*)}},$$

with 
$\Omega_{\beta}(k^*):= \ac{\big|\ac{(k^*_{i},j):\enspace (i,j)\in \beta}\big|=1}\cap \ac{\forall j\in col(\beta),\enspace z_j=1}$.

Building on the recursive Bound (\ref{eq:rec-C}), we derive in Section \ref{prf:boundLTCclusteringsharp} the following upper-bound.

 \begin{lem}\label{lem:boundLTCsparsesharp}
 We recall that $\#\alpha$ stands for the number of indices  $i\in[1,n]$ such that $\alpha_{i:}\neq 0$, that $r_\alpha$ stands for the number of indices $j\in[1,p]$ such that $\alpha_{:j}\neq 0$ and that $|\alpha|:=\sum_{ij}\alpha_{ij}$.
We have
 \begin{equation*}
C_{x,\beta_1,\ldots,\beta_l}\leq\rho^{r_{\alpha}}|\alpha|^{|\alpha|-r_\alpha-\#\alpha+2}\min\pa{\pa{\frac{1}{K}}^{\#\alpha+r_{\alpha}-\frac{|\alpha|}{2}-1}, \frac{1}{K}}\enspace.
\end{equation*}
\end{lem}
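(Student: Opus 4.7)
The argument refines the proof of Lemma~\ref{lem:boundLTCclusteringsharp}: we iterate the recursive identity of Theorem~\ref{thm:LTC}, now tracking the two additional latent ingredients of the sparse prior of Definition~\ref{def:prior_sparse}---the Bernoulli column selectors $z_j$ and the Rademacher signs $\eps_i$. The $z$'s will yield the $\rho^{r_{\alpha}}$ prefactor; the $\eps$'s, via parity cancellations, upgrade the $1/K$ bound into the minimum in the statement. As in the clustering case, the condition $\Omega_{\beta_s}(Z)\neq \emptyset$ forces each $\beta_s$ to be supported in a single column, so we write $\beta_s=\{(i_s,j_s),(i_s',j_s)\}$ and keep the graph $\mathcal{V}$ on $\{0\}\cup[l]$ from Section~\ref{prf:boundLTCclusteringsharp}. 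Using $\delta_{ij}(Z)=z_j\eps_i$ and $z_{j_s}^2=z_{j_s}$, one has $\delta(Z)^{\beta_s}\mathbf{1}_{\Omega_{\beta_s}(Z)}=\eps_{i_s}\eps_{i_s'}\,z_{j_s}\,\mathbf{1}\{k^*_{i_s}=k^*_{i_s'}\}$, and since $(\eps,z,k^*)$ are mutually independent while $x$ depends only on $k^*$, every joint expectation appearing in~\eqref{eq:rec-C} factorizes as a product of three pieces: a $\rho^{r_{\alpha_S}}$ factor from the $z$'s, a $(1/K)^{\#\alpha_S-cc(\mathcal{V}[S])}$-type probability from the $k^*$'s, and a $\{0,1\}$-valued parity indicator $q(S)$ from the $\eps$'s that vanishes unless every row index of $\alpha_S$ occurs with even total multiplicity.

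We then run two parallel inductions on~\eqref{eq:rec-C}. The first one ignores the parity indicator and mirrors the inductions of Lemmas~\ref{lem:upperbounfkappaS}--\ref{lem:upperboundf}: it produces the bound $\rho^{r_\alpha}|\alpha|^{|\alpha|-r_\alpha-\#\alpha+2}\cdot(1/K)$, in which the $1/K$ is the Bernoulli factor $\P[k^*_1=k^*_2]$, the $\rho^{r_\alpha}$ collects one $\rho$ per distinct column of $\alpha$, and the modified exponent on $|\alpha|$ reflects that the combinatorial function analogous to $f$ is now indexed by both the row- and column-supports of the $\beta_s$'s. This yields the second argument of the minimum. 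The second induction retains the parity indicator: a subset $S$ can contribute only if $q(S)=1$, which, combined with the connectivity of $\mathcal{V}[\{0\}\cup S]$, forces the inequality $|\alpha_S|\geq 2(\#\alpha_S+r_{\alpha_S}-cc(\mathcal{V}[S]))$---each column hosts a whole number of $\beta_s$'s, and within each column every row index appears an even number of times. Propagating this inequality through the recursion yields an extra $(1/K)^{\#\alpha+r_{\alpha}-|\alpha|/2-2}$ factor over the baseline $1/K$, producing the first argument of the minimum. Taking the minimum of the two bounds concludes the proof.

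The main obstacle will be the second induction. Contrary to Lemma~\ref{lem:nullityLTCclusteringsharp}, the parity indicator does not yield a clean pruning of subsets, because subsets violating it still enter the recursion~\eqref{eq:rec-C} indirectly through the contributions of their own lower-order cumulants. The delicate step is therefore a careful graph-theoretic bookkeeping on $\mathcal{V}$ that converts the parity constraint into the above lower bound on $|\alpha_S|$ in terms of $\#\alpha_S$ and $r_{\alpha_S}$, and that propagates uniformly through every layer of the induction. Once this is established, combining with the count of decompositions $\pi\in\mathcal{P}_2(\alpha)$ keeping $C_{x,\beta_1(\pi),\ldots,\beta_l(\pi)}$ nonzero, as in Section~\ref{prf:controlcumulantsclusteringsharp4}, gives both bounds and hence the claim.
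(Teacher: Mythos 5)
Your overall architecture is the right one (condition on the latent triple $(\eps,z,k^*)$, exploit their mutual independence to factor the expectations in the recursion \eqref{eq:rec-C}, and run an induction with a combinatorial counting function to pay the $|\alpha|^{\,\cdot}$ factor), and you correctly locate the origin of the $\rho^{r_\alpha}$ prefactor and of the baseline $1/K$. But the part you yourself flag as "the main obstacle" is exactly where the proof lives, and the route you sketch for it does not work. First, you keep only the row-intersection graph $\mathcal{V}$ from the clustering proof. In the sparse prior, two pairs $\beta_s,\beta_{s'}$ lying in the same column are dependent through the shared Bernoulli variable $z_{j_s}$ even when their row supports are disjoint, so nullity of $C_{x,\beta[S]}$ cannot be decided from $\mathcal{V}$-connectivity: the paper has to introduce the enlarged graph $\mathcal{W}$ (an edge when the pairs share a row \emph{or} a column), prune by connectivity of $\mathcal{W}[S\cup\{0\}]$, and then convert back to $k^*$-probabilities, which only see $\mathcal{V}$, via the inequality $cc(\mathcal{W}[R])-cc(\mathcal{V}[R])\geq r_{\alpha_R}-|\alpha_R|/2$ (Lemma \ref{lem:connectivityLTCsparsesharp}). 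Without $\mathcal{W}$ you either prune cumulants that are in fact nonzero, or you have no connectivity statement strong enough to produce the exponent $\#\alpha+r_\alpha-\tfrac{|\alpha|}{2}-1$.

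Second, your "parity indicator" characterizes vanishing of the \emph{expectations} $\E[\prod\eps_{i_s}\eps_{i'_s}]$, not of the \emph{cumulants}; the correct pruning coming from the signs is the condition that, for every column $j$, at least two distinct row indices of the pairs in that column also occur outside it (otherwise the column's block, including $z_j$, is independent of the rest and Lemma \ref{lem:independentcumulant} kills the cumulant) — this is item 2 in the definition of $\mathcal{S}([l])$ and it is what propagates through the induction, together with the requirement $|(\alpha_S)_{i:}|\geq 2$ for $i\notin\{1,2\}$. Third, the concrete inequality you propose to extract from parity plus $\mathcal{V}$-connectivity, namely $|\alpha_S|\geq 2(\#\alpha_S+r_{\alpha_S}-cc(\mathcal{V}[S]))$, is false: take $\beta_s=\{(1,j_s),(2,j_s)\}$ for $l$ distinct columns $j_1,\ldots,j_l$ with $l$ even; then $|\alpha_S|=2l$, $\#\alpha_S=2$, $r_{\alpha_S}=l$, $cc(\mathcal{V}[S])=1$, parity holds, yet $2l<2(l+1)$. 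The inequality that is actually true and needed compares $cc(\mathcal{V})$ with $cc(\mathcal{W})$, not with $\#\alpha_S$. Finally, a smaller point: the count of pairings $\pi\in\mathcal{P}_2(\alpha)$ belongs to the proof of Lemma \ref{lem:controlcumulantssparsesharp}, not to the present lemma, which bounds a single $C_{x,\beta_1,\ldots,\beta_l}$. So, as written, the proposal leaves the decisive exponent unproven and rests on an incorrect intermediate inequality.
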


Combining this bound with (\ref{eq:LTCgeneral:thm2}) and counting the number of partitions $\pi\in \mathcal{P}_2(\alpha)$ for which $C_{x,\beta_{1}(\pi),\ldots,\beta_{l}(\pi)}\neq 0$, we prove in Section \ref{prf:controlcumulantssparsesharp}  the following  upper-bound on $|\kappa_{x,\alpha}|$.

\begin{lem}\label{lem:controlcumulantssparsesharp}
Let $\alpha\in\N^{n\times p}$ non-zero. We have
$$|\kappa_{x,\alpha}|\leq \lambda^{|\alpha|}\rho^{r_\alpha}|\alpha|^{2\pa{|\alpha|-r_\alpha-\#\alpha+2}}\min\pa{\pa{\frac{1}{K}}^{\#\alpha+r_{\alpha}-\frac{|\alpha|}{2}-1}, \frac{1}{K}}\enspace. $$
\end{lem}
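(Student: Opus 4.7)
The plan is straightforward once Theorem~\ref{thm:LTC} and Lemma~\ref{lem:boundLTCsparsesharp} are at hand. Theorem~\ref{thm:LTC} gives
\[
\kappa_{x,\alpha}=\lambda^{|\alpha|}\sum_{\pi\in\mathcal{P}_{2}(\alpha)}C_{x,\beta_{1}(\pi),\ldots,\beta_{l}(\pi)},
\]
while Lemma~\ref{lem:boundLTCsparsesharp} bounds each summand by a quantity depending only on $|\alpha|$, $r_\alpha$ and $\#\alpha$. So the proof reduces to (i) pruning the partitions $\pi\in\mathcal{P}_{2}(\alpha)$ that contribute non-trivially, and (ii) simplifying the resulting exponent of $|\alpha|$ using structural properties of $\alpha$.

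For step (i), I would note that $C_{x,\beta_{1},\ldots,\beta_{l}}$ vanishes whenever some pair $\beta_s$ is not contained in a single column of $\alpha$. Indeed, the definition of $\Omega_{\beta_s}$ in the sparse model requires $|\{(k^{*}_{i},j):(i,j)\in\beta_s\}|=1$, which is impossible when the two elements of $\beta_s$ lie in distinct columns. Hence the admissible partitions factor as independent perfect matchings of each column of $\alpha$, and using the classical estimate $(2m-1)!!\leq(2m)^{m-1}$, I get
\[
\#\{\pi\in\mathcal{P}_2(\alpha):\ C_{x,\beta_{1}(\pi),\ldots,\beta_{l}(\pi)}\neq 0\}\leq\prod_{j\in col(\alpha)}|\alpha_{:j}|^{|\alpha_{:j}|/2-1}\leq|\alpha|^{|\alpha|/2-r_\alpha},
\]
exactly as in the proof of Lemma~\ref{lem:controlcumulantsclusteringsharp}.

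Step (ii) relies on a parity argument. The prior of Definition~\ref{def:prior_sparse} is invariant under the sign-flip $\epsilon_{i_0}\to-\epsilon_{i_0}$ for any fixed $i_0\in[n]$, which multiplies $X^{\alpha}$ by $(-1)^{|\alpha_{i_0:}|}$ while leaving $x=\mathbf{1}\{k^{*}_{1}=k^{*}_{2}\}$ untouched. Since the cumulant generating function $\log\mathbb{E}\bigl[e^{t_{0}x+\sum t_{ij}X_{ij}}\bigr]$ inherits this symmetry, the cumulant $\kappa_{x,\alpha}$ picks up a factor $(-1)^{|\alpha_{i_0:}|}$, forcing $\kappa_{x,\alpha}=0$ as soon as some row of $\alpha$ has odd degree. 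Consequently, whenever $\kappa_{x,\alpha}\neq 0$, every $i\in supp(\alpha)$ satisfies $|\alpha_{i:}|\geq 2$, hence $\#\alpha\leq|\alpha|/2$.

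Combining the two ingredients with Lemma~\ref{lem:boundLTCsparsesharp}, I obtain
\[
|\kappa_{x,\alpha}|\leq\lambda^{|\alpha|}\rho^{r_\alpha}|\alpha|^{3|\alpha|/2-2r_\alpha-\#\alpha+2}\min\!\left(\left(\tfrac{1}{K}\right)^{\#\alpha+r_\alpha-|\alpha|/2-1},\tfrac{1}{K}\right),
\]
and the inequality $\#\alpha\leq |\alpha|/2\leq|\alpha|/2+2$ is equivalent to $3|\alpha|/2-2r_\alpha-\#\alpha+2\leq 2(|\alpha|-r_\alpha-\#\alpha+2)$, so the exponent of $|\alpha|$ can be inflated to the announced one (since $|\alpha|\geq 2$). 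The only delicate point is to phrase the parity argument at the level of cumulants rather than mixed moments; this is handled cleanly by differentiating the invariant cumulant generating function in the relevant variables, as indicated above.
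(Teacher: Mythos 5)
Your proof is correct and follows essentially the same route as the paper: it prunes the pairings in $\mathcal{P}_2(\alpha)$ to those whose pairs lie in a single column, bounds their number by $\prod_j |\alpha_{:j}|^{|\alpha_{:j}|/2-1}\leq|\alpha|^{|\alpha|/2-r_\alpha}$, and uses the sign-flip symmetry in $\eps_{i_0}$ to get $|\alpha|\geq 2\#\alpha$ on the non-vanishing set, which inflates the exponent to $2(|\alpha|-r_\alpha-\#\alpha+2)$. The only cosmetic difference is that you rederive the parity constraint via the cumulant generating function inside this proof, whereas the paper obtains the same fact from its separate pruning lemma (Lemma~\ref{lem:nullcumulantssparsesharp}) proved by the identical sign-flip argument.
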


The last stage is to prune the multisets $\alpha$ for which $\kappa_{x,\alpha}=0$.
The next lemma gives necessary conditions for having $\kappa_{x,\alpha}\neq 0$.  For this purpose, it is convenient, as in the proof of Theorem \ref{thm:lowdegreeclusteringsharp}, to introduce a bipartite multigraph $\mathcal{G}_\alpha$ on two disjoint sets $U=\ac{u_1,\ldots, u_n}$ and $V=\ac{v_1,\ldots, v_p}$ with $\alpha_{ij}$ edges between $u_i$ and $v_j$, for any $i,j\in [n]\times [p]$. We write $\mathcal{G}_\alpha^-$ the restriction of $\mathcal{G}_\alpha$ to non-isolated nodes. We denote $U(\alpha)$ the elements of $U$ which are nodes of $\mathcal{G}_\alpha^-$ and $V(\alpha)$ the elements of $V$ which are nodes of $\mathcal{G}_\alpha^-$. We refer to Section \ref{prf:nullcumulantssparsesharp} for a proof of this lemma.

\begin{lem}\label{lem:nullcumulantssparsesharp}
Let $\alpha\in\N^{n\times p}$ be non-zero. If $\kappa_{x,\alpha}\neq 0$, then
\begin{itemize}
\item $u_1, u_2\in U(\alpha)$;
\item $\mathcal{G}^-_\alpha\cup \ac{(u_{1},u_{2})}$ is connected;
\item All the elements of $U(\alpha)$ and $V(\alpha)$ are of degree at least 2.
\end{itemize}
In particular, we have $\#\alpha\geq 2$, $|\alpha|\geq 2r_\alpha$ and $|\alpha|\geq 2\#\alpha$.
\end{lem}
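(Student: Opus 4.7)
The plan is to mirror the three-part structure of the proof of Lemma~\ref{lem:nullcumulantsclusteringsharp} for plain clustering, with the crucial new ingredient being the Rademacher row symmetry $\eps_i \mapsto -\eps_i$ afforded by the sparse prior (Definition~\ref{def:prior_sparse}). Together with the Gaussian column symmetry $\nu_{:,j} \mapsto -\nu_{:,j}$, this upgrades the degree-$\geq 2$ conclusion to \emph{all} rows in $U(\alpha)$ (not merely those distinct from $u_1,u_2$) and to all columns in $V(\alpha)$. Throughout, the main tool is Lemma~\ref{lem:independentcumulant2} combined with the multilinearity of cumulants.

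For the first item, suppose $u_1 \notin U(\alpha)$. Then none of the $X_{ij}$ appearing in $\alpha$ depend on $k^*_1$, while $x = \1_{k^*_1 = k^*_2}$ is, conditionally on $k^*_2$, a Bernoulli$(1/K)$ random variable. Hence $x$ is independent of $(X_{ij})_{(i,j)\in\alpha}$, and Lemma~\ref{lem:independentcumulant2} forces $\kappa_{x,\alpha}=0$; the case $u_2 \notin U(\alpha)$ is symmetric. For the second item, if $\mathcal{G}^-_\alpha \cup \ac{(u_1,u_2)}$ is disconnected, there is a connected component $C$ of $\mathcal{G}^-_\alpha$ containing neither $u_1$ nor $u_2$; the latent variables $(k^*_i,\eps_i)_{u_i \in C}$ and $(z_j,(\nu_{k,j})_k)_{v_j \in C}$ are independent of all other latents and of $x$, so $(X_{ij})_{(i,j)\in\alpha}$ splits into two independent sub-families, one of which is independent of $x$, and Lemma~\ref{lem:independentcumulant2} again yields $\kappa_{x,\alpha}=0$.

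The third item is where the sparse-prior symmetries come into play. Fix $u_{i_0} \in U(\alpha)$: the flip $\eps_{i_0} \mapsto -\eps_{i_0}$ preserves the joint distribution of all latent variables, leaves $x$ fixed, and sends $X_{i_0,j} \mapsto -X_{i_0,j}$ for every $j$, while leaving $X_{ij}$ unchanged for $i\neq i_0$. Combining this distributional identity with the multilinearity of the cumulant gives
\begin{equation*}
\kappa_{x,\alpha} \;=\; \cumul\bigl(x,\,\bigl((-1)^{\1_{i=i_0}} X_{ij}\bigr)_{(i,j)\in\alpha}\bigr) \;=\; (-1)^{|\alpha_{i_0:}|}\,\kappa_{x,\alpha},
\end{equation*}
so $|\alpha_{i_0:}|$ must be even, hence $\geq 2$. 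An identical argument with the Gaussian symmetry $\nu_{:,j_0} \mapsto -\nu_{:,j_0}$ yields that every $v_{j_0} \in V(\alpha)$ has even column-degree $\geq 2$. The three numerical consequences then follow at once: $\#\alpha \geq 2$ from item~1 since $u_1 \neq u_2$; and $|\alpha| = \sum_i |\alpha_{i:}| \geq 2\#\alpha$, as well as $|\alpha|\geq 2 r_\alpha$, by summing row, resp. column, degrees over the support.

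The main conceptual novelty---rather than a technical obstacle---is recognizing that this double sign symmetry makes the argument considerably cleaner than the row-counterpart of Lemma~\ref{lem:nullcumulantsclusteringsharp}, which relied on more delicate LTC-based reasoning; the upgrade to even degrees for $u_1, u_2$ is precisely what enables the cleaner counting bound $|\alpha| \geq 2\#\alpha$ used downstream in the proof of Theorem~\ref{thm:lowdegreesparsesharp}.
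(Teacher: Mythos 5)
Your proof is correct, and on two of the four items it takes a genuinely different (and arguably more elementary) route than the paper. The first item and the row-degree argument coincide with the paper's proof: independence of $x$ from $X_\alpha$ when $u_1$ or $u_2$ is missing, and the Rademacher flip $\eps_{i_0}\mapsto-\eps_{i_0}$ giving $\kappa_{x,\alpha}=(-1)^{|\alpha_{i_0:}|}\kappa_{x,\alpha}$ for \emph{every} row of the support, which is exactly the extra symmetry of the sparse prior that yields $|\alpha|\geq 2\#\alpha$ instead of $2\#\alpha-2$. Where you diverge: for columns, the paper observes that a column of total weight $1$ admits no decomposition of $\alpha$ into pairs each supported in a single column, so Theorem~\ref{thm:LTC} annihilates $\kappa_{x,\alpha}$; you instead flip the sign of the Gaussian column $\nu_{:,j_0}$ (the same trick the paper uses in the plain-clustering Lemma~\ref{lem:nullcumulantsclusteringsharp}), which is equally valid and even gives the slightly stronger conclusion that all column degrees are even. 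For connectivity, the paper routes through Theorem~\ref{thm:LTC} and Lemma~\ref{lem:nullityLTCsparsesharp} (disconnectedness of $\mathcal{G}^-_\alpha\cup\{(u_1,u_2)\}$ forces disconnectedness of the auxiliary graph $\mathcal{W}$ for every pairing), whereas you give a direct splitting argument: a component $C$ avoiding $u_1,u_2$ carries latents independent of everything else, so Lemma~\ref{lem:independentcumulant2} applies; this is the more self-contained argument the paper alludes to in its remark (cf.\ the approach of \cite{Even24}), at the cost of not reusing the LTC machinery already set up for the cumulant bounds. One small point of hygiene: in your connectivity step you should note that $u_1,u_2\in U(\alpha)$ may be assumed (the contrary case being disposed of by your first item) before adding the edge $(u_1,u_2)$ and extracting the component $C$; and it is the degree bound $\geq 2$ at rows $1,2$, not evenness per se, that drives $|\alpha|\geq 2\#\alpha$.
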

\medskip

\begin{rem}
In fact, we can prove that $\mathcal{G}^-_\alpha$ is connected (see \cite{Even24}), but it is sufficient and more straightforward using Theorem \ref{thm:LTC} to prove that $\mathcal{G}^-_\alpha\cup \ac{(u_{1},u_{2})}$ is connected.
\end{rem}
\medskip

Let $d\in [2,D]$, $m,r$ such that $d\geq 2\max\pa{r,m}$. Since the conditions of Lemma \ref{lem:nullcumulantssparsesharp} are more restrictive than the ones of Lemma \ref{lem:nullcumulantsclusteringsharp}, we can apply Lemma \ref{lem:combinatorics_kappa}. Thus, there exists at most $n^{m-2}p^r d^{3(d-r-m+2)}$ matrices $\alpha$ satisfying the conditions of Lemma \ref{lem:nullcumulantssparsesharp} with $|\alpha|=d$, $\#\alpha=m$ and $r_\alpha=r$.

Then, using Proposition \ref{thm:schrammwein} together with Lemma \ref{lem:controlcumulantssparsesharp}, and pruning the terms that do not satisfy the conditions of Lemma \ref{lem:nullcumulantssparsesharp}, we get
\begin{align*}
corr_{\leq D}^2-\frac{1}{K^2}\leq& \sum_{\alpha\neq 0, \enspace |\alpha|\leq D}\kappa_\alpha^2\\
\leq & \frac{1}{K^2}\sum_{d\in [D]}\underset{2\leq m\leq d/2}{\sum_{r\leq d/2}}\pa{d^7 \lambda^2}^{d}\pa{\frac{n}{d^7}}^{m-2}\pa{\frac{\rho^2 p}{d^7}}^r \min\pa{1,\pa{\frac{1}{K^2}}^{m+r-\frac{d}{2}-2}}\enspace.
\end{align*}
Let us fix $r\geq 1$, $m\geq 2$ and $d\geq \max\pa{2r, 2m}$ and let us upper-bound the quantity $$A_{d,r,m}:=\pa{d^7 \lambda^2}^{d}\pa{\frac{n}{d^7}}^{m-2}\pa{\frac{\rho^2 p}{d^7}}^r \min\pa{1,\pa{\frac{1}{K^2}}^{m+r-\frac{d}{2}-2}}\enspace.$$ First, let us suppose that $d<2(m-2)+2r$.
Decomposing into sums of positive terms $m-2=m-2-(d/2-r)+(d/2-r)$ and $d=2r+2(d/2-r)$, we get
\begin{align*}
A_{d,r,m}=
&\pa{d^7 \lambda^2}^{d}\pa{\frac{n}{d^7}}^{m-2}\pa{\frac{\rho^2 p}{d^7}}^r \pa{\frac{1}{K^2}}^{m+r-\frac{d}{2}-2}\\
\leq&D^{7(d-m-r+2)}  \lambda^{2d}n^{m-2}\pa{\rho^2 p}^r \pa{\frac{1}{K^2}}^{m+r-\frac{d}{2}-2}\\
\leq&D^{7(d-m-r+2)}\pa{\lambda^{4}\rho^{2}p}^r\pa{\lambda^{4}n}^{d/2-r}\pa{\frac{n}{K^2}}^{m-2-(d/2-r)}\\
\leq&D^{7(d-m-r+2)} \pa{\lambda^{4}\rho^{2}p}^{r-(m-2-(d/2-r))} \pa{\frac{\lambda^{4}\rho^{2}pn}{K^2}}^{m-2-(d/2-r)} \pa{\lambda^{4}n}^{d/2-r}\\
\leq & \pa{\frac{D^7\bar{\Delta}^{4}}{p}}^{r-(m-2-(d/2-r))} \pa{\frac{\bar{\Delta}^{4}n}{pK^2}}^{m-2-(d/2-r)} \pa{\frac{D^7\bar{\Delta}^4n}{ p^2\rho^2}}^{d/2-r}\\
\leq&\zeta^{r-(m-2-(d/2-r))}\zeta^{m-2-(d/2-r)}\zeta^{d/2-r}=\zeta^{\frac{d}{2}}\enspace ,
\end{align*}
where we used in the fifth line that $\lambda^2= \bar{\Delta}^2/(p\rho)$ and the definition~\eqref{eq:definition_zeta:sparse} of $\zeta$.

On the other hand, if $d\geq 2r+2(m-2)$, we decompose $d=d-\pa{2(m-2)+2r}+2(m-2)+2r$ to get 
\begin{align*}
A_{d,r,m}=&\pa{d^7 \lambda^2}^{d}\pa{\frac{n}{d^7}}^{m-2}\pa{\frac{\rho^2 p}{d^7}}^r \min\pa{1,\pa{\frac{1}{K^2}}^{m+r-\frac{d}{2}-2}}\\
\leq &\pa{D^7 \lambda^2}^{d}\pa{\frac{n}{D^7}}^{m-2}\pa{\frac{\rho^2 p}{D^7}}^r \\
\leq &\pa{D^7\lambda^2}^{d-2r-2(m-2)}\pa{D^7\lambda^4 n}^{m-2}\pa{D^7\lambda^4 \rho^2 p}^{r}\\
\leq &\pa{D^7\frac{\bar{\Delta}^2}{p\rho}}^{d-2r-2(m-2)}\pa{D^7\frac{\bar{\Delta}^4}{p^2\rho^2}n}^{m-2}\pa{D^7\frac{\bar{\Delta}^4}{p}}^{r}\\
\leq& \sqrt{\zeta}^{d-2r-2(m-2)}\zeta^{m-2}\zeta^{r}=\zeta^{\frac{d}{2}}\enspace. 
\end{align*}

We can conclude the proof of the theorem with 
\begin{align*}
corr_{\leq D}^2\leq&\frac{1}{K^2}\pa{1+\sum_{d\in [2,D]}\underset{2\leq m\leq d/2}{\sum_{r\leq d/2}}\zeta^{\frac{d}{2}}}
\leq\frac{1}{K^2}\pa{1+\sum_{d\in [2,D]}\frac{d(d-1)}{2}\zeta^{\frac{d}{2}}}    
\leq\frac{1}{K^2}\pa{1+\frac{\zeta}{\pa{1-\sqrt{\zeta}}^3}}\enspace.
\end{align*}

\subsection{Proof of Lemma \ref{lem:boundLTCsparsesharp}.}\label{prf:boundLTCsparsesharp}

Let us fix a decomposition $\beta_1+\ldots+\beta_l=\alpha$, with $|\beta_s|=2$ and let us upper-bound $|C_{x,\beta_{1},\ldots,\beta_l}|$. For having $C_{x,\beta_{1},\ldots,\beta_l}$, it is necessary that each $\beta_s$ is supported on only one column. Thus, we can write $\beta_s=\ac{(i_s,j_s);(i'_s,j_s)}$. Henceforth, we use the convention $i_0=1$, $i'_0=2$, and $j_0=0$. For $S\subseteq [l]$, we write $\beta[S]=\ac{\beta_s, s\in S}$. Building on \eqref{eq:rec-C}, we get, for all $S\subseteq [l]$, the recursion formula 
\begin{align*}
|C_{x, \beta[S]}|\leq& \P\cro{\forall s\in \ac{0}\cup S,\enspace k^*_{i_s}=k^*_{i'_s}}\P\cro{\forall s\in S,\enspace z_{j_s}=1}+\\
&+\sum_{S'\subseteq S}|C_{x, \beta[S']}|\P\cro{\forall s\in \ac{0}\cup S',\enspace k^*_{i_s}=k^*_{i'_s}}\P\cro{\forall s\in S,\enspace z_{j_s}=1}\\
\leq& \rho^{r_{\alpha_S}}\P\cro{\forall s\in \ac{0}\cup S,\enspace k^*_{i_s}=k^*_{i'_s}}+\sum_{S'\subseteq S}|C_{x, \beta[S']}|\rho^{r_{\alpha_{S\setminus S'}}}\P\cro{\forall s\in S\setminus S',\enspace k^*_{i_s}=k^*_{i'_s}}\enspace,
\end{align*}

where, for $R\subseteq [l]$, $\alpha_R=\sum_{s\in R}\beta_R$.

Let us compute, for any subset $R\subseteq [0,l]$, the quantity $\P\cro{\forall s\in R,\enspace k^*_{i_s}=k^*_{i_s}}$. To do so, let us define, as in Section \ref{prf:boundLTCclusteringsharp}, $\mathcal{V}$ the graph on $[0,l]$ defined by: for $s,s'\geq 0$, there is an edge between $s$ and $s'$ if and only if $\ac{i_s, i'_s}\cap \ac{i_{s'}, i'_{s'}}\neq \emptyset$. For $R\subseteq [0,l]$, we write $\mathcal{V}[R]$ the restriction of $\mathcal{V}$ to $R$ and $cc(\mathcal{V}[R])$ the number of connected components of this graph. As in Section \ref{prf:boundLTCclusteringsharp}, we obtain, when $0\in R$, $\P\cro{\forall s\in R,\enspace k^*_{i_s}=k^*_{i'_s}}=\pa{\frac{1}{K}}^{|supp(\alpha_{R\setminus \ac{0}})\cup\ac{1,2}|-cc\pa{\mathcal{V}[R]}}$, and, when  $0\notin R$, $\P\cro{\forall s\in R,\enspace k^*_{i_s}=k^*_{i'_s}}=\pa{\frac{1}{K}}^{\#\alpha_{R}-cc\pa{\mathcal{V}[R]}}$. In turn, for all $S\subseteq [l]$, we have

\begin{equation}\label{eq:recursionLTCsparsesharp}
|C_{x,\beta[S]}|\leq \rho^{r_{\alpha_S}}\pa{\frac{1}{K}}^{|supp(\alpha_S)\cup\ac{1,2}|-cc(\mathcal{V}[S\cup\{0\}])}+\sum_{S'\subsetneq S}|C_{x,\beta[S']}|\rho^{r_{\alpha_{S\setminus S'}}}\pa{\frac{1}{K}}^{\#\alpha_{S \setminus S'}-cc(\mathcal{V}[S\setminus S'])}\enspace.
\end{equation}

The next lemma whose proof is postponed to Section \ref{prf:nullityLTCsparsesharp}, prunes subsets $S\subseteq [l]$ such that $C_{x,\beta[S]}= 0$. To do so, we introduce the graph $W$ on $[0,l]$ with an edge between $s,s'\in [0,l]$ if and only if $j_s=j_{s'}$ or $\ac{i_s, i'_s}\cap \ac{i_{s'}, i'_{s'}}\neq \emptyset$ (we recall that for $s=0$, we write $j_0=0$). For $S\subseteq[l]$ and $j\in \cup_{s\in S\cup\ac{0}}\ac{j_s}=col(\alpha_s)\cup \ac{0}$, we write $S_j=\ac{s\in S, j_s=j}$ (in particular $S_0=\ac{0}$).  
In the following, we denote $\mathcal{S}([l])$ the collection of subset $S$ of $[l]$ satisfying;
\begin{enumerate}
\item $\mathcal{W}[S\cup \ac{0}]$ is connected;
\item If $S\neq \emptyset$, then for all $j\in col(\alpha_S)\cup \ac{0}$, there exist $\underline{i}\neq \underline{i}'\in \cup_{s\in S_j}\ac{i_s,i'_s}$ such that both $\underline{i}$ and $\underline{i'}$ are in $\cup_{s\in S\setminus S_j}\ac{i_s,i'_s}$;
\item For all $i\in supp(\alpha_S)\setminus \ac{1,2}$, $|\pa{\alpha_S}_{i:}|\geq 2$.
\end{enumerate}

In particular, the second property implies that, as long as $S\neq \emptyset$, we have 
$\{1,2\}\subset supp(\alpha_S)$. 
\medskip

\begin{lem}\label{lem:nullityLTCsparsesharp}
For $S\subseteq [l]$, if $C_{x,\beta[S]}\neq 0$, then $S\in \mathcal{S}([l])$.
\end{lem}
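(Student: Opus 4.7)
I would prove the contrapositive: if $S\notin\mathcal{S}([l])$ then $C_{x,\beta[S]}=0$. Using $\delta_{ij}(Z)=z_j\epsilon_i$, $\theta_{ij}(Z)=(k^*_i,j)$, together with $z_{j_s}\in\{0,1\}$ and $\epsilon_i\in\{-1,1\}$, the cumulant can be rewritten as
$$C_{x,\beta[S]}=\cumul\bigl(x,(W_s)_{s\in S}\bigr),\qquad W_s:=\epsilon_{i_s}\epsilon_{i'_s}\,\mathbf{1}\{k^*_{i_s}=k^*_{i'_s}\}\,\mathbf{1}\{z_{j_s}=1\},$$
where the three families $(k^*_i)$, $(\epsilon_i)$, $(z_j)$ are mutually independent. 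For each of the three defining conditions of $\mathcal{S}([l])$ I exhibit, upon failure, either a splitting of the variables into two independent sub-families (so that Lemma~\ref{lem:independentcumulant2} applies) or a distribution-preserving sign flip of the Rademacher $\epsilon_i$'s that, by multilinearity of cumulants, forces $C_{x,\beta[S]}=-C_{x,\beta[S]}$.

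\textbf{Condition 1.} If $\mathcal{W}[S\cup\{0\}]$ splits as $R_1\sqcup R_2$ with no $\mathcal{W}$-edges between them, then no row or column index is shared across $R_1$ and $R_2$, so $(W_s,x)_{s\in R_1}$ and $(W_s,x)_{s\in R_2}$ are built from disjoint subsets of independent variables, and Lemma~\ref{lem:independentcumulant2} kills the cumulant. \textbf{Condition 2 at $j=0$.} If $\{1,2\}\not\subset\mathrm{supp}(\alpha_S)$, say $1\notin\mathrm{supp}(\alpha_S)$, then $k^*_1$ occurs only in $x$ and, conditionally on the remaining $k^*$'s, $x$ is a Bernoulli$(1/K)$ independent of $(W_s)_{s\in S}$. \textbf{Condition 3.} If $i\in\mathrm{supp}(\alpha_S)\setminus\{1,2\}$ has $|(\alpha_S)_{i:}|=1$, then $i$ lies in a single pair $\beta_{s_0}$ with $i_{s_0}\neq i'_{s_0}$, so $\epsilon_i$ appears only in $W_{s_0}$; the flip $\epsilon_i\to-\epsilon_i$ preserves the joint law but negates $W_{s_0}$, yielding $C_{x,\beta[S]}=-C_{x,\beta[S]}=0$.

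\textbf{Condition 2 at some $j\neq 0$} (the delicate case). Set $A_j=\cup_{s\in S_j}\{i_s,i'_s\}$, $B_j=\cup_{s\in S\setminus S_j}\{i_s,i'_s\}$ and assume $|A_j\cap B_j|\leq 1$. For any $A\subset A_j\setminus B_j$, the joint flip $(\epsilon_i\to-\epsilon_i)_{i\in A}$ preserves the joint law (the $\epsilon_i$ are i.i.d.\ uniform on $\{\pm 1\}$), leaves $x$ and $(W_s)_{s\in S\setminus S_j}$ unchanged, and multiplies each $W_s$, $s\in S_j$, by $(-1)^{|\{i_s,i'_s\}\cap A|}$; multilinearity of cumulants then gives
$$C_{x,\beta[S]}=(-1)^{\sum_{i\in A}(\alpha_S)_{ij}}\,C_{x,\beta[S]}.$$
If some $i\in A_j\setminus B_j$ has $(\alpha_S)_{ij}$ odd, taking $A=\{i\}$ closes the argument. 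Otherwise every column-$j$ row-multiplicity in $A_j\setminus B_j$ is even, and I would combine this evenness with Condition~3 (which forces $|(\alpha_S)_{i:}|\geq 2$ for all $i\in A_j\setminus B_j$ outside $\{1,2\}$) together with the constraint $|A_j\cap B_j|\leq 1$ to produce a direct independence split between $(W_s)_{s\in S_j}$ and $\{x\}\cup(W_s)_{s\in S\setminus S_j}$, the only potentially shared variables being $k^*_i$ for $i\in A_j\cap(\{1,2\}\cup B_j)$, whose residual influence can be removed by a symmetric partition of the column-$j$ pair-structure.

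\textbf{Main obstacle.} The all-even sub-case of Condition~2 at $j\neq 0$ is the technical bottleneck: a single $\epsilon$-flip no longer produces a sign change, so one has to carefully orchestrate the joint symmetries of $(k^*, z, \epsilon)$ together with the pair-structure of $(\beta_s)_{s\in S_j}$ to extract an independence decomposition. Tracking the interplay between Conditions~2 and~3 in this residual regime is the main bookkeeping challenge of the proof.
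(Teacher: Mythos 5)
Your treatment of Condition 1 (split of $\mathcal{W}[S\cup\{0\}]$), of Condition 3 (sign flip of a single $\eps_i$ with $|(\alpha_S)_{i:}|=1$), and of the $j=0$ instance of Condition 2 matches the paper's arguments and is correct. The genuine gap is exactly where you flag it: the failure of Condition 2 at some column $j\neq 0$ when every row of $A_j\setminus B_j$ has \emph{even} multiplicity in column $j$. This sub-case is not vacuous (e.g.\ $S_j=\{s_1,s_2\}$ with $\beta_{s_1}=\beta_{s_2}=\{(\underline{i},j),(a,j)\}$, $\underline{i}\in B_j$, $a\notin B_j$: then $(\alpha_S)_{aj}=2$ and no single flip changes the sign), and your proposed resolution — ``a symmetric partition of the column-$j$ pair-structure'' — is not an argument. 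In particular it does not say how to neutralize the randomness genuinely shared between the two blocks, namely $\eps_{\underline{i}}$ and $k^*_{\underline{i}}$ for the single shared index $\underline{i}\in A_j\cap B_j$, and $k^*_1,k^*_2$ when $1$ or $2$ lies in $A_j$ (they enter $x$ even though no $\eps$ does). Without that, Lemma~\ref{lem:independentcumulant2} cannot be invoked, and the proof is incomplete.

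The paper does not use a parity split at all; its single argument covers odd and even multiplicities uniformly. The key observation is a conditional-symmetry statement: if the block $S\setminus S_{j_0}$ shares at most one row index $\underline{i}$ with $S_{j_0}$, then the family of products $(\eps_{i_s}\eps_{i'_s})_{s\in S\setminus S_{j_0}}$ is independent of $\eps_{\underline{i}}$ — because flipping \emph{all} $\eps_i$ with $i\neq \underline{i}$ preserves their joint law and has the same effect on these products as flipping $\eps_{\underline{i}}$ itself — and, analogously, the family $(\1\{k^*_{i_s}=k^*_{i'_s}\})_{s\in S\setminus S_{j_0}}$ is independent of $k^*_{\underline{i}}$ (its conditional law given $k^*_{\underline{i}}=t$ is the same for every $t$, by relabelling the group labels). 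Since all remaining variables indexed by $A_{j_0}\setminus\{\underline{i}\}$, as well as $z_{j_0}$, appear only in the $S_{j_0}$ block, one obtains that $(z_{j_0},(\eps_{i_s}\eps_{i'_s}\1\{k^*_{i_s}=k^*_{i'_s}\})_{s\in S_{j_0}})$ is independent of the block containing $x$ and $(W_s)_{s\in S\setminus S_{j_0}}$, and the cumulant vanishes by Lemma~\ref{lem:independentcumulant2}, with no case distinction on parities. This is the idea your proposal is missing; supplying it (and checking it jointly for the $\eps$'s, the $k^*$'s and the $z$'s, including the indices $1,2$ through $x$) would close the gap, and would in fact also subsume your odd-multiplicity flip.
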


Pruning the other terms in \eqref{eq:recursionLTCsparsesharp} leads to, for all $S\in \mathcal{S}([l])$, 

\begin{equation}\label{eq:recursionLTCsparsesharp3}
|C_{x,\beta[S]}|\leq \rho^{r_{\alpha_S}}\pa{\frac{1}{K}}^{|supp(\alpha_S)\cup\ac{1,2}|-cc(\mathcal{V}[S\cup\{0\}])}+\underset{S'\in \mathcal{S}([l])}{\sum_{S'\subsetneq S}}|C_{x,\beta[S']}|\rho^{r_{\alpha_{S\setminus S'}}}\pa{\frac{1}{K}}^{\#\alpha_{S\setminus S'}-cc\pa{\mathcal{V}[S\setminus S']}}\enspace.
\end{equation}

In the following, let us define recursively a function $f$ on $\mathcal{S}([l])$ satisfying, for all $S\in \mathcal{S}([l])$,

\begin{equation}\label{eq:deffunctionfsparse}
f(S)=1+\underset{S'\in \mathcal{S}([l])}{\sum_{S'\subsetneq S}}f(S')\enspace.
\end{equation}

In particular, $f(\emptyset)=1$. The next lemma, proved in  Section \ref{prf:upperboundkappaSsparse},  relies on the connectivity of $\mathcal{W}[S\cup \ac{0}]$ for  $S\in \mathcal{S}([l])$, to bound $|C_{x,\beta[S]}|$ with respect to this function $f$.
\begin{lem}\label{lem:upperboundkappaSsparse}
For all $S\in \mathcal{S}([l])$, we have $|C_{x,\beta[S]}|\leq \rho^{r_{\alpha_S}}\min\pa{\pa{\frac{1}{K}}^{\#\alpha_S+r_{\alpha_S}-\frac{|\alpha_S|}{2}-1}, \frac{1}{K}}f(S)$.
\end{lem}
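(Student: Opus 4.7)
I will induct on $|S|$, mirroring the strategy used for Lemma~\ref{lem:upperbounfkappaS} in the dense clustering case, but keeping track of the additional column count $r_{\alpha_S}$ through the $\rho^{r_{\alpha_S}}$ factor and the refined exponent of $1/K$. The base case $S=\emptyset$ is immediate: $C_{x,\emptyset}=\E[x]=1/K$, while $\rho^{0}\,\min\bigl((1/K)^{-1},1/K\bigr)\,f(\emptyset)=1/K$. For the inductive step, plug the induction hypothesis into the recursion~\eqref{eq:recursionLTCsparsesharp3}; using $r_{\alpha_{S'}}+r_{\alpha_{S\setminus S'}}\geq r_{\alpha_{S}}$ (every nonzero column of $\alpha_{S}$ must lie in at least one of the two sub-multisets) together with $\rho\leq 1$, one factors out the common term $\rho^{r_{\alpha_S}}$ from every contribution. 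Writing $N(S):=\max\bigl(\#\alpha_{S}+r_{\alpha_{S}}-|\alpha_{S}|/2-1,\,1\bigr)$ and using $|\alpha_{S}|/2=|S|$, the recursion~\eqref{eq:deffunctionfsparse} for $f$ then closes provided one can verify the two combinatorial inequalities
\begin{equation*}
\text{(a)}\ \ \#\alpha_{S}-cc\bigl(\mathcal{V}[S\cup\{0\}]\bigr)\geq N(S),\qquad \text{(b)}\ \ N(S')+\#\alpha_{S\setminus S'}-cc\bigl(\mathcal{V}[S\setminus S']\bigr)\geq N(S)
\end{equation*}
for every $S'\subsetneq S$ in $\mathcal{S}([l])$, with the convention $N(\emptyset)=1$.

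\textbf{Establishing (a).} Two separate lower bounds will be combined. First, the component of $0$ in $\mathcal{V}[S\cup\{0\}]$ already contains the two distinct rows $1$ and $2$ while every other component contains at least one further row, so $\#\alpha_{S}\geq cc(\mathcal{V}[S\cup\{0\}])+1$, taking care of the trivial branch $N(S)=1$. Second, since $S\in\mathcal{S}([l])$, the graph $\mathcal{W}[S\cup\{0\}]$ is connected; as $\mathcal{W}$ differs from $\mathcal{V}$ only by the addition of a clique on each column-slice $S_{j}$, contracting every $S_{j}$ to a single vertex yields a connected graph on $r_{\alpha_{S}}+1$ vertices, while the edges added inside each $S_{j}$ reduce the number of components by at most $\sum_{j}(cc(\mathcal{V}[S_{j}])-1)$. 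Combining these observations with $cc(\mathcal{V}[S_{j}])\leq|S_{j}|$ and $\sum_{j}|S_{j}|=|S|$ gives $cc(\mathcal{V}[S\cup\{0\}])\leq 1+|S|-r_{\alpha_{S}}$, which is precisely the $E(S)$-branch of $N(S)$.

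\textbf{Establishing (b).} For $S'=\emptyset$ it suffices to show $cc(\mathcal{V}[S])\leq|S|+2-r_{\alpha_{S}}$, which follows from (a) and the elementary fact that removing vertex $0$ from $\mathcal{V}[S\cup\{0\}]$ changes the component count by at most $1$: node $0$ only carries rows $1$ and $2$, so it can merge at most two components. For $\emptyset\neq S'\subsetneq S$ in $\mathcal{S}([l])$, the argument extends Lemma~\ref{lem:connectivityLTCclusteringsharp}: each connected component of $\mathcal{V}[S\setminus S']$ must be linked to $S'\cup\{0\}$ inside the larger connected graph $\mathcal{W}[S\cup\{0\}]$, either through a shared row (contributing to the row overlap $\#\alpha_{S'}+\#\alpha_{S\setminus S'}-\#\alpha_{S}$) or through a shared column (contributing to the column overlap $r_{\alpha_{S'}}+r_{\alpha_{S\setminus S'}}-r_{\alpha_{S}}$). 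These two contributions are exactly what is needed to absorb the gap $N(S)-N(S')=(r_{\alpha_{S}}-r_{\alpha_{S'}})-(|S|-|S'|)$ after combining them with the column-contraction identity proved in (a).

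\textbf{Main obstacle.} The dense clustering analogue (Lemma~\ref{lem:connectivityLTCclusteringsharp}) only required tracking the row overlap, so the $\mathcal{V}$-connectivity of $S\cup\{0\}$ was enough. In the sparse regime the connectivity of $\mathcal{W}[S\cup\{0\}]$ can be realized through row \emph{or} column sharing, so one must carefully distribute the connectivity credit between the row and column contributions without double counting. Making this precise requires splitting the components of $\mathcal{V}[S\setminus S']$ according to the nature of their linking edge to $S'\cup\{0\}$ in $\mathcal{W}$, and applying the column-contraction argument of (a) to the restricted subgraph obtained after these removals. This bookkeeping, rather than any new probabilistic input, is where the main work of the proof will lie.
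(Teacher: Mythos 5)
Your overall route is the paper's: induct on $S$ through the recursion \eqref{eq:recursionLTCsparsesharp3} with the function $f$ of \eqref{eq:deffunctionfsparse}, factor out $\rho^{r_{\alpha_S}}$ via $r_{\alpha_{S'}}+r_{\alpha_{S\setminus S'}}\geq r_{\alpha_S}$ and $\rho\leq 1$, and reduce everything to exponent comparisons. The pieces you actually carry out are correct: the base case, the easy $1/K$ branch, your inequality (a) (the clique-contraction count giving $cc(\mathcal{V}[S\cup\ac{0}])\leq 1+|S|-r_{\alpha_S}$ is a valid, slightly different derivation of what the paper obtains from Lemma \ref{lem:connectivityLTCsparsesharp} applied to $S\cup\ac{0}$), and the $S'=\emptyset$ term (removing node $0$ costs at most one component because its $\mathcal{V}$-neighbours through row $1$, respectively row $2$, form cliques).

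The gap is the decisive case of a nonempty $S'\subsetneq S$, i.e. the inequality $cc(\mathcal{V}[S\setminus S'])\leq \pa{\#\alpha_{S'}+\#\alpha_{S\setminus S'}-\#\alpha_S}+|S\setminus S'|-\pa{r_{\alpha_S}-r_{\alpha_{S'}}}$, which you only outline and explicitly defer ("where the main work will lie"). Moreover, your sketch of it is inaccurate as stated: it is not true that each connected component of $\mathcal{V}[S\setminus S']$ is linked to $S'\cup\ac{0}$ by a shared row or column; a $\mathcal{V}$-component may reach $S'\cup\ac{0}$ only through other nodes of $S\setminus S'$ via column edges. The correct attachment statement (the paper's Lemma \ref{lem:connectivityLTCsparsesharp2}) concerns the $\mathcal{W}$-components $C$ of $S\setminus S'$: each must share a row with $supp(\alpha_{S'})$ or a column with $col(\alpha_{S'})$ (column $0$ is unavailable and rows $1,2$ already lie in $supp(\alpha_{S'})$ since $S'\in\mathcal{S}([l])$ is nonempty), giving $\#\alpha_{C}+r_{\alpha_{C}}\geq 1+|supp(\alpha_{C})\setminus supp(\alpha_{S'})|+|col(\alpha_{C})\setminus col(\alpha_{S'})|$ per component; summing yields $cc(\mathcal{W}[S\setminus S'])\leq \pa{\#\alpha_{S'}+\#\alpha_{S\setminus S'}-\#\alpha_S}+\pa{r_{\alpha_{S'}}+r_{\alpha_{S\setminus S'}}-r_{\alpha_S}}$, and your contraction count applied inside each such $C$ (the paper's Lemma \ref{lem:connectivityLTCsparsesharp} for $R=S\setminus S'$) gives $cc(\mathcal{V}[S\setminus S'])\leq cc(\mathcal{W}[S\setminus S'])+|S\setminus S'|-r_{\alpha_{S\setminus S'}}$; chaining the two gives exactly the inequality above. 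Also note your displayed gap $N(S)-N(S')=(r_{\alpha_S}-r_{\alpha_{S'}})-(|S|-|S'|)$ drops the term $\#\alpha_S-\#\alpha_{S'}$ (harmless, since it is absorbed by $\#\alpha_{S\setminus S'}$ on the left of (b), but it should be corrected). So the plan is the right one and repairable along the paper's lines, but as written the key combinatorial step is missing and its sketch mis-attributes the row-or-column attachment to $\mathcal{V}$-components instead of $\mathcal{W}$-components.
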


It remains to upper-bound $f(S)$ for all $S\in \mathcal{S}([l])$. 

\begin{lem}\label{lem:upperboundfsparse}
For all non empty $S\in \mathcal{S}([l])$, we have $f(S)\leq |\alpha|^{|\alpha_S|-\#\alpha_S -r_{\alpha_S}+2}$.
\end{lem}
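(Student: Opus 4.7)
The plan is to mirror the inductive proof of Lemma~\ref{lem:upperboundf}, adapted so that the exponent accounts for both rows ($\#\alpha_S$) and columns ($r_{\alpha_S}$). I will induct on $S \in \mathcal{S}([l])$ ordered by inclusion, with base case $S = \emptyset$: $f(\emptyset) = 1 \leq |\alpha|^2 = |\alpha|^{|\alpha_\emptyset|-\#\alpha_\emptyset-r_{\alpha_\emptyset}+2}$.

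For the induction step, I would first verify that $\mathcal{S}([l])$ is stable under union, which guarantees that for each $s \in S$ there is a unique maximum element $S^*(s) \in \mathcal{S}([l])$ with $S^*(s) \subseteq S\setminus\{s\}$. Conditions 1 and 3 defining $\mathcal{S}([l])$ transfer to unions immediately; condition 2 transfers because $(S_1)_j \subseteq (S_1\cup S_2)_j$ and $(S_1\cup\{0\})\setminus (S_1)_j \subseteq ((S_1\cup S_2)\cup\{0\})\setminus (S_1\cup S_2)_j$ (using $j_0 = 0 \neq j$ whenever $j \in col(\alpha_{S_1})$). Once $S^*(s)$ is well-defined, the same argument as in Lemma~\ref{lem:upperboundf} (every $S' \subsetneq S$ in $\mathcal{S}([l])$ is contained in $S^*(s)$ for any $s \in S\setminus S'$) gives
\[
f(S) \leq 1 + \sum_{s\in S}\big[2f(S^*(s)) - 1\big] \leq 2\sum_{s\in S} f(S^*(s)).
\]
Applying the induction hypothesis to each $S^*(s)$ and using $|\alpha_S|=2|S|\leq |\alpha|$, it will be enough to establish the combinatorial inequality
\[
|\alpha_S|-\#\alpha_S-r_{\alpha_S}\ \geq\ |\alpha_{S^*(s)}|-\#\alpha_{S^*(s)}-r_{\alpha_{S^*(s)}}+1,\qquad (\ast)
\]
for every $s\in S$, to conclude $f(S) \leq (2|S|/|\alpha|)\cdot |\alpha|^{|\alpha_S|-\#\alpha_S-r_{\alpha_S}+2} \leq |\alpha|^{|\alpha_S|-\#\alpha_S-r_{\alpha_S}+2}$.

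To prove $(\ast)$, set $T := S\setminus S^*(s) \ni s$, so that $|\alpha_S| - |\alpha_{S^*(s)}| = 2|T|$, and the inequality reduces to $\Delta\#\alpha + \Delta r \leq 2|T| - 1$, where $\Delta\#\alpha$ and $\Delta r$ count new rows and new columns introduced by $T$. I would argue this by combining three ingredients. First, property 3 of $\mathcal{S}([l])$ applied to $S$ forces every new row $i\neq 1,2$ in $supp(\alpha_T)\setminus supp(\alpha_{S^*(s)})$ to have total degree $\geq 2$ in $\alpha_S$, hence to be hit by at least two edges from $T$. Second, property 2 applied to $S$ at every new column $j \in col(\alpha_T)\setminus col(\alpha_{S^*(s)})$ forces two distinct rows of $\cup_{t\in S_j}\{i_t,i'_t\}$ to already lie in $supp(\alpha_{S^*(s)})\cup\{1,2\}$, producing repeated attachments of $T$-edges to the old graph. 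Third, the maximality of $S^*(s)$ rules out cost-free extensions: for every $t\in T\setminus\{s\}$, $S^*(s)\cup\{t\}\notin\mathcal{S}([l])$, so each $t$ either violates property 2 at some column or property 3 at some row \emph{with respect to $S^*(s)\cup\{t\}$ alone}, and this violation must be repaired by some other element of $T$. A clean amortized count of $T$'s edges against new vertex-slots (the case $|T|=1$, split according to whether $i_s=i'_s$ or $i_s\neq i'_s$, being the clean template) then yields $\Delta\#\alpha + \Delta r \leq 2|T|-1$.

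The main obstacle is establishing $(\ast)$. In the clustering setting only new rows had to be counted and the argument reduced to a clean connectivity statement on the graph $\mathcal{V}$. Here, the simultaneous presence of new rows and new columns, together with the interaction between properties 2, 3 and the maximality of $S^*(s)$, requires a more delicate amortized bookkeeping. Getting a deficit of exactly $1$ (rather than $0$) is precisely where maximality of $S^*(s)$ is needed, and I expect most of the work to lie in handling the edge cases where several elements of $T$ share their column $j_t$, or where $i_t=i'_t$ for some $t\in T$.
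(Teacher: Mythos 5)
Your inductive skeleton (stability of $\mathcal{S}([l])$ under union, the maximal sets $S^*(s)$, the bound $f(S)\leq 2\sum_{s\in S}f(S^*(s))$, then an exponent-drop inequality) coincides with the paper's, but the crux of the lemma — your inequality $(\ast)$ — is left as a plan rather than a proof, and both the plan and the induction set-up have genuine problems. First, with your base case $f(\emptyset)\leq|\alpha|^{0-0-0+2}$, the inequality $(\ast)$ is simply false when $S^*(s)=\emptyset$: take $S=\ac{s_1,s_2}$ with $\beta_{s_1}=\ac{(1,j_1),(2,j_1)}$ and $\beta_{s_2}=\ac{(1,j_2),(2,j_2)}$, $j_1\neq j_2$. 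This $S$ belongs to $\mathcal{S}([l])$, one has $S^*(s_1)=\emptyset$, and $|\alpha_S|-\#\alpha_S-r_{\alpha_S}=4-2-2=0$, whereas $(\ast)$ would demand it be at least $1$; so your chain $f(S)\leq 2\sum_s|\alpha|^{e(S^*(s))}\leq|\alpha|^{e(S)}$ does not close. The paper avoids exactly this by proving the strengthened statement $f(S)\leq |\alpha|^{|\alpha_S|-|supp(\alpha_S)\cup\ac{1,2}|-r_{\alpha_S}+2}$, whose exponent for $S=\emptyset$ is $0$ (not $2$), and identifying $|supp(\alpha_S)\cup\ac{1,2}|$ with $\#\alpha_S$ only for nonempty $S$.

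Second, the mechanism you propose for the decrement of $1$ is misdirected. In the paper, maximality of $S^*(s)$ is used only to justify $f(S)\leq 2\sum_s f(S^*(s))$; it plays no role in the exponent drop. Moreover, property 2 of $\mathcal{S}([l])$ does not force two rows of a new column's block to lie in $supp(\alpha_{S^*(s)})\cup\ac{1,2}$ — it only places them in $\cup_{s'\in S\setminus S_j}\ac{i_{s'},i'_{s'}}$, i.e., possibly in other blocks of $T=S\setminus S^*(s)$ — so your "repeated attachments of $T$-edges to the old graph" is not what the hypothesis provides. What actually yields the drop in the paper: set $S^-=S\setminus S^*(s_0)$, group its edges by column, and use connectivity of $\mathcal{W}[S\cup\ac{0}]$ to order the new columns $j^1,\ldots,j^r$ so that each block shares at least one row with the previously processed part (one "repeat" per block), while property 2 applied to the \emph{last} new column $j^r$ — for which $S_{j^r}$ is exactly the last block and $S\setminus S_{j^r}$ is exactly the processed part — supplies \emph{two} repeats; counting the $2|S^-|$ row-slots against the first occurrences of new rows and these $r+1$ repeats gives $|\alpha_{S^-}|\geq r+1+|supp(\alpha_{S^-})\setminus(supp(\alpha_{S^*(s_0)})\cup\ac{1,2})|$, which is precisely the decrement; when $r=0$ the drop comes instead from property 3 (every new row has multiplicity at least $2$ in $\alpha_{S^-}$) together with $|\alpha_{S^-}|\geq 2$. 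Since you explicitly defer this combinatorial step as "the main obstacle" and the ingredients you list for it are not the ones that make it work, the proposal does not constitute a proof of the lemma.
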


\medskip

Applying Lemma \ref{lem:upperboundfsparse} and Lemma \ref{lem:upperboundkappaSsparse} to $S=[l]$ leads to

\begin{equation*}
C_{x,\beta_1,\ldots,\beta_l}\leq \rho^{r_{\alpha}}|\alpha|^{|\alpha|-r_\alpha-\#\alpha+2}\min\pa{\pa{\frac{1}{K}}^{\#\alpha+r_{\alpha}-\frac{|\alpha|}{2}-1}, \frac{1}{K}}\enspace,
\end{equation*}

which concludes the proof of the lemma.

\subsection{Proof of Lemma \ref{lem:nullityLTCsparsesharp}}\label{prf:nullityLTCsparsesharp}

Let $S\subseteq [l]$. Let us suppose that $S\notin \mathcal{S}([l])$ and let us prove that $C_{x,\beta[S]}=0$. The set $\mathcal{S}([l])$ is an intersection of three constraints. We shall suppose that one of these constraints is not satisfied;

\begin{enumerate}

\item Let us first suppose that $\mathcal{W}[S\cup \ac{0}]$ is not connected. Let $C_1$ and $C_2$ a partition of $S\cup \ac{0}$ with no edges of $\mathcal{W}$ connecting them. We suppose by symmetry that $0\in C_1$. Then, the family of random variables $((\eps_i,k^*_i)_{i\in \cup_{s\in C_1}\ac{i_s, i'_s}}, \pa{z_j}_{j\in \cup_{s\in C_1\setminus \ac{0}}\{j_s\}})$ is independent of the family $((\eps_i,k^*_i)_{i\in \cup_{s\in C_2}\ac{i_s, i'_s}}, \pa{z_j}_{j\in \cup_{s\in C_2}\{j_s\}})$. Then, Lemma \ref{lem:independentcumulant2} implies that $C_{x,\beta[S]}=0$.

\item Let us now suppose that there exists  $j_0\in col(\alpha_S)\cup \ac{0}$ with at most one element in  $\cup_{s\in S_{j_0}}\ac{i_s,i'_s}$ which is also in $\cup_{s\in S\setminus S_{j_0}}\ac{i_s,i'_s}$. Let us denote $\underline{i}$ this element.  
Then, $(\epsilon_{\underline{i}})$ is independent of 
$(\epsilon_{i_s}\epsilon_{i'_s})_{s\in S\setminus S_{j_0}}$. Indeed, since the $\epsilon_i$'s are distributed as independent rademacher, the conditional distribution of $(\epsilon_{i_s}\epsilon_{i'_s})_{s\in S\setminus S_{j_0}}$ does not depend on the value $(\epsilon_{\underline{i}})$. Since, apart from $\epsilon_{\underline{i}}$, all the other $\epsilon_i$ with $i\in \cup_{s\in S_{j_0}}\ac{i_s,i'_s}$ do not occur in $(\epsilon_{i_s}\epsilon_{i'_s})_{s\in S\setminus S_{j_0}}$, we deduce that $(\epsilon_i)_{i\in \cup_{s\in S_{j_0}}\ac{i_s,i'_s}}$ is independent of $(\epsilon_{i_s}\epsilon_{i'_s})_{s\in S\setminus S_{j_0}}$. We have proved that 
$(\eps_{i_s}\eps_{i'_s})_{s\in S_{j_0}}$ is independent of $(\eps_{i_s}\eps_{i'_s})_{s\in S\setminus S_{j_0}}$. Arguing similarly, we get that $(z_{j_0}, (\eps_{i_s}\eps_{i'_s}\1\{k^*_{i_s}=k^*_{i'_s}\})_{s\in S_{j_0}})$ is independent of $((z_{j})_{j\neq j_0}, (\eps_{i_s}\eps_{i'_s}\1\{k^*_{i_s} = k^*_{i'_s}\})_{s\in S\setminus S_{j_0}})$. From Lemma \ref{lem:independentcumulant2}, we conclude that $C_{x,\beta[S]}=0$. 

\medskip

\item Let us finally suppose that there exists $\underline{i}\in supp(\alpha_S)\setminus \ac{1,2}$ with $|\pa{\alpha_S}_{\underline{i}:}|=1$. Let $s_0$ the unique element of $S$ such that $\underline{i}\in supp(\beta_{s_0})$; we suppose $\underline{i}=i_{s_0}$ for exemple. The random variable $\eps_{\underline{i}}$ is symmetric and independent from all the other random variables. Hence, changing $\eps_{\underline{i}}$ to $-\eps_{\underline{i}}$ does not change the joint law of all the random variables and thus, by multilinearity of the cumulant, we have 
\begin{align*}
C_{x,\beta[S]}=&\cumul\pa{x, \pa{\eps_{i_s}\eps_{i'_s}\1_{z_{j_s}\neq 0}\1_{k^*_{i_s}=k^*_{i'_s}}}_{s\in S}}\\
=& \cumul\pa{x, \pa{\eps_{i_s}\eps_{i'_s}\1_{z_{j_s}\neq 0}\1_{k^*_{i_s}=k^*_{i'_s}}}_{s\in S\setminus \{s_0\}}, -\eps_{i_{s_0}}\eps_{i'_{s_0}}\1_{z_{j_{s_0}}\neq 0}\1_{k^*_{i_{s_0}}=k^*_{i'_{s_0}}}}\\
=&-\cumul\pa{x, \pa{\eps_{i_s}\eps_{i'_s}\1_{z_{j_s}\neq 0}\1_{k^*_{i_s}=k^*_{i'_s}}}_{s\in S}}\\
=&-C_{x,\beta[S]}\enspace,
\end{align*}
which, in turn, implies that $C_{x,\beta[S]}=0$.

\end{enumerate}

\subsection{Proof of Lemma \ref{lem:upperboundkappaSsparse}}\label{prf:upperboundkappaSsparse}

Let us prove by induction that, for all $S\in \mathcal{S}([l])$, 
\[
|C_{x,\beta[S]}|\leq \rho^{r_{\alpha_S}}\min (\frac{1}{K},\pa{\frac{1}{K}}^{\#\alpha_{S}+r_{\alpha_S}-\frac{|\alpha_S|}{2}-1})f(S)\ . 
\]
The initialization is trivial since $C_{x,\beta[\emptyset]}= \cumul(x)=\frac{1}{K}$ and $|\alpha_{\emptyset}|=0$.

\medskip
\paragraph*{Induction} Let $S\neq \emptyset \in \mathcal{S}([l])$ and let us suppose that the result holds for all $S'\subsetneq S$ with $S'\in \mathcal{S}([l])$. Since $S\neq \emptyset$, 
we know from the remark below the definition of $\mathcal{S}([l])$ that $\{1,2\}\subset supp(\alpha_S)$. 

Applying Inequality \eqref{eq:recursionLTCsparsesharp3} to $S$ together with the induction hypothesis leads to 
\begin{align*}
|C_{x,\beta[S]}|\leq&\rho^{r_{\alpha_S}}\pa{\frac{1}{K}}^{\#\alpha_S-cc(\mathcal{V}[S\cup\{0\}])}+\underset{S'\in \mathcal{S}([l])}{\sum_{S'\subsetneq S}} |C_{x,\beta[S']}|\rho^{r_{\alpha_{S\setminus S'}}}\pa{\frac{1}{K}}^{\#\alpha_{S\setminus S'}-cc\pa{\mathcal{V}[S\setminus S']}}\\
\leq& \rho^{r_{\alpha_S}}\pa{\frac{1}{K}}^{\#\alpha_S-cc(\mathcal{V}[S\cup\{0\}])}\\
&+\underset{S'\in \mathcal{S}([l])}{\sum_{S'\subsetneq S}}f(S')\min\pa{\frac{1}{K}, \pa{\frac{1}{K}}^{\#\alpha_{S'}+r_{\alpha_{S'}}-\frac{|\alpha_{S'}|}{2}-1}}\pa{\frac{1}{K}}^{\#\alpha_{S\setminus S'}-cc\pa{\mathcal{V}[S\setminus S']}}\rho^{r_{\alpha_{S'}}+r_{\alpha_{S\setminus S'}}}\enspace .
\end{align*}
Let us remark that $r_{\alpha_{S'}}+r_{\alpha_{S\setminus S'}}\geq r_{\alpha_S}$. 
Since $\#\alpha_S-cc(\mathcal{V}[S\cup\{0\}])\geq 1$ and  since $\#\alpha_{S\setminus S'}-cc\pa{\mathcal{V}[S\setminus S']}\geq 0$, we directly deduce that $|C_{x,\beta[S]}|\leq \frac{1}{K}f(S)\rho^{r_{\alpha_S}}$. 

It remains to prove that $|C_{x,\beta[S]}|\leq \rho^{r_{\alpha_S}}f(S)\pa{\frac{1}{K}}^{\#\alpha_S+r_{\alpha_{S}}-\frac{|\alpha_S|}{2}-1}$. Let us isolate  the term $S'=\emptyset$  in the sum.  
\begin{align*}
\rho^{-r_{\alpha_S}}|C_{x,\beta[S]}|\leq& \pa{\frac{1}{K}}^{\#\alpha_S-cc(\mathcal{V}[S\cup\{0\}])}+\underset{S'\in \mathcal{S}([l])}{\sum_{\emptyset\neq S'\subsetneq S}}f(S')\pa{\frac{1}{K}}^{\#\alpha_{S'}+r_{\alpha_{S'}}-\frac{|\alpha_{S'}|}{2}-1+\#\alpha_{S\setminus S'}-cc\pa{\mathcal{V}[S\setminus S']}}\\
&+\frac{1}{K}\pa{\frac{1}{K}}^{\#\alpha_{S }-cc\pa{\mathcal{V}[S]}}\enspace.
\end{align*}

The next lemma uses the connectivity of the graph $\mathcal{W}[S\cup \ac{0}]$ in order to lower bound the exposants of the above inequality. We refer to Section \ref{prf:connectivityLTCsparsesharp} for its proof.
\begin{lem}\label{lem:connectivityLTCsparsesharp}
For any subset $R\subseteq [0,l]$, $cc\pa{\mathcal{W}[R]}-cc\pa{\mathcal{V}[R]}\geq r_{\alpha_{R}}-\frac{|\alpha_{R}|}{2}$.
\end{lem}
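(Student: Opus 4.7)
The plan is to treat $\mathcal{W}[R]$ as obtained from $\mathcal{V}[R]$ by adding the extra edges $(s,s')$ with $j_s=j_{s'}$ (but $\{i_s,i'_s\}\cap\{i_{s'},i'_{s'}\}=\emptyset$), and to bound how much the number of connected components can drop when these edges are added. Since adding any single edge to a graph decreases the number of connected components by at most one, the quantity $cc(\mathcal{V}[R])-cc(\mathcal{W}[R])$ is at most the number of edges in any edge set $F$ such that $\mathcal{V}[R]\cup F$ has the same connectivity as $\mathcal{W}[R]$. The whole game is then to exhibit a small such $F$.

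The key observation is that the extra edges naturally split along columns. Set $R_j:=\{s\in R:\ j_s=j\}$ and let $C:=\{j_s:\ s\in R\}$. For each $j\in C$, the set $R_j$ is a clique in $\mathcal{W}[R]$ (via the column-edges). Hence if one picks a representative $s^*_j\in R_j$ and takes $F$ to consist of the star edges $(s^*_j,s)$ for $s\in R_j\setminus\{s^*_j\}$ and $j\in C$, then in $\mathcal{V}[R]\cup F$ every two elements of $R_j$ are already connected through $s^*_j$, so this enlarged graph has the same connected components as $\mathcal{W}[R]$. By construction, $|F|=\sum_{j\in C}(|R_j|-1)=|R|-|C|$, so
\begin{equation*}
cc(\mathcal{V}[R])-cc(\mathcal{W}[R])\ \leq\ |R|-|C|.
\end{equation*}

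Finally one converts $|R|-|C|$ into the desired expression $|\alpha_R|/2-r_{\alpha_R}$. Writing $R=(R\setminus\{0\})\sqcup(R\cap\{0\})$ one has $|R\setminus\{0\}|=|\alpha_R|/2$ because each $\beta_s$ with $s\neq 0$ has size $2$; on the column side, $|C|=r_{\alpha_R}+\mathbf{1}\{0\in R\}$ if we count the placeholder $j_0=0$, and the singleton $R_0=\{0\}$ contributes $|R_0|-1=0$ to the sum anyway, so the placeholder is harmless. Subtracting gives $|R|-|C|=|\alpha_R|/2-r_{\alpha_R}$, which rearranges to the claimed $cc(\mathcal{W}[R])-cc(\mathcal{V}[R])\geq r_{\alpha_R}-|\alpha_R|/2$. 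There is no real obstacle here beyond the bookkeeping about the formal index $s=0$; the content of the lemma is the per-column star bound $|R_j|-1$, which is sharp.
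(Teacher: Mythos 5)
Your argument is correct, and it reaches the bound by a genuinely different route than the paper's own proof. The paper reduces to the case where $\mathcal{W}[R]$ is connected, orders the connected components of $\mathcal{V}[R]$ so that each one is attached to the union of the previous ones by a column-edge of $\mathcal{W}$, and runs an induction showing $|\alpha^{(q')}|\geq 2r_{\alpha^{(q')}}+2(q'-1)$; the two ingredients are that every column appearing in a component carries at least one full pair $\beta_s$ (hence $|\alpha_{cc}|\geq 2r_{\alpha_{cc}}$) and that each attachment reuses a column already counted, gaining $+2$. You instead control $cc(\mathcal{V}[R])-cc(\mathcal{W}[R])$ directly by an edge count: every edge of $\mathcal{W}[R]$ not in $\mathcal{V}[R]$ lies inside a column class $R_j$, a star on each $R_j$ (with $|R_j|-1$ edges, all of them edges of $\mathcal{W}[R]$) already generates the component structure of $\mathcal{W}[R]$ on top of $\mathcal{V}[R]$, and adding one edge lowers the number of components by at most one, so the drop is at most $\sum_{j}(|R_j|-1)=|\alpha_R|/2-r_{\alpha_R}$; your bookkeeping for the placeholder $s=0$ (with $j_0=0$ and $R_0=\{0\}$ contributing nothing) is consistent with the convention $\alpha_R=\alpha_{R\setminus\{0\}}$ under which the lemma is invoked with $R=S\cup\{0\}$. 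The two proofs ultimately count the same slack — your $\sum_j(|R_j|-1)$ equals the paper's cumulative ``$+2$ per extra $\mathcal{V}$-component'' budget — but yours dispenses with the reduction to connected $\mathcal{W}[R]$ and with the induction, cleanly separating the purely graph-theoretic fact (components drop by at most the number of added edges) from the column bookkeeping, whereas the paper's formulation makes explicit how $\mathcal{W}$-connectivity glues the $\mathcal{V}$-components, a pattern it reuses in the neighboring arguments (e.g.\ Lemma~\ref{lem:connectivityLTCclusteringsharp} and Lemma~\ref{lem:connectivityLTCsparsesharp2}).
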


Applying Lemma \ref{lem:connectivityLTCsparsesharp} together with the fact that $\mathcal{W}[S\cup \ac{0}]$ is connected leads us to
\begin{align*}
\frac{|C_{x,\beta[S]}|}{\rho^{\alpha_R}}\leq &\pa{\frac{1}{K}}^{\#\alpha_S+r_{\alpha_S}-\frac{|\alpha_S|}{2}-1}\\ &+\underset{\emptyset\neq S'\in \mathcal{S}([l])}{\sum_{S'\subsetneq S}}f(S')\pa{\frac{1}{K}}^{\#\alpha_{S'}+r_{\alpha_{S'}}-\frac{|\alpha_{S'}|}{2}-1+\#\alpha_{S\setminus S'}+r_{\alpha_{S\setminus S'}}-\frac{|\alpha_{S\setminus S'}|}{2}-cc\pa{\mathcal{W}[S\setminus S']}}\\
&+\pa{\frac{1}{K}}^{1+\#\alpha_{S}+r_{\alpha_{S}}-\frac{|\alpha_{S}|}{2}-cc\pa{\mathcal{W}[S]}}\enspace.
\end{align*}
The next lemma uses again the connectivity of $\mathcal{W}[S\cup \ac{0}]$ in order to lower-bound the exposants in the sum above. We refer to Section \ref{prf:connectivityLTCsparsesharp2} for its proof.

\medskip
\begin{lem}\label{lem:connectivityLTCsparsesharp2}
For any subset $S'\subseteq S$ that both belonf to $\mathcal{S}([l])$ and such that $S'\neq \emptyset$, we have  
$$\#\alpha_{S'}+r_{\alpha_{S'}}+\#\alpha_{S\setminus S'}+r_{\alpha_{S\setminus S'}}-cc\pa{\mathcal{W}[S\setminus S']}\geq \#\alpha_S+r_{\alpha_S}\enspace.$$
\end{lem}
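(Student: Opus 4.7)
The plan is to rewrite the desired inequality using inclusion–exclusion and then interpret the remaining bound as the existence of an injection from the connected components of $\mathcal{W}[S\setminus S']$ into a set of rows and columns that are ``shared'' between $S'$ and $S\setminus S'$. Concretely, observing that $supp(\alpha_S)=supp(\alpha_{S'})\cup supp(\alpha_{S\setminus S'})$ and $col(\alpha_S)=col(\alpha_{S'})\cup col(\alpha_{S\setminus S'})$, inclusion–exclusion gives
\begin{align*}
\#\alpha_{S'}+\#\alpha_{S\setminus S'}-\#\alpha_S &= |supp(\alpha_{S'})\cap supp(\alpha_{S\setminus S'})|,\\
r_{\alpha_{S'}}+r_{\alpha_{S\setminus S'}}-r_{\alpha_S} &= |col(\alpha_{S'})\cap col(\alpha_{S\setminus S'})|.
\end{align*}
Thus the statement reduces to proving
$$cc(\mathcal{W}[S\setminus S']) \leq |supp(\alpha_{S'})\cap supp(\alpha_{S\setminus S'})| + |col(\alpha_{S'})\cap col(\alpha_{S\setminus S'})|.$$

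To build the injection, I will enumerate the connected components $\mathcal{C}_1,\ldots,\mathcal{C}_q$ of $\mathcal{W}[S\setminus S']$. Since $S,S'\in \mathcal{S}([l])$, both $\mathcal{W}[S\cup\{0\}]$ and $\mathcal{W}[S'\cup\{0\}]$ are connected. For each $\mathcal{C}_k$, pick a path in $\mathcal{W}[S\cup\{0\}]$ from a vertex of $\mathcal{C}_k$ to $S'\cup\{0\}$; the first edge of this path that exits $\mathcal{C}_k$ must land directly in $S'\cup\{0\}$, because landing in another component $\mathcal{C}_{k'}$ of $\mathcal{W}[S\setminus S']$ would contradict $\mathcal{C}_k\neq \mathcal{C}_{k'}$. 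This exit edge is either through a common row $i$ or a common column $j$ between some $s\in \mathcal{C}_k$ and some $t\in S'\cup\{0\}$. If $t\in S'$, this row (resp.\ column) belongs to $supp(\alpha_{S'})\cap supp(\alpha_{S\setminus S'})$ (resp.\ $col(\alpha_{S'})\cap col(\alpha_{S\setminus S'})$). If $t=0$, the edge must go through a row in $\{1,2\}$ since $j_0=0$ is not a real column; and because $S'\neq \emptyset$ with $S'\in \mathcal{S}([l])$, we have $\{1,2\}\subset supp(\alpha_{S'})$, so again this row lies in $supp(\alpha_{S'})\cap supp(\alpha_{S\setminus S'})$. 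This defines $\phi(\mathcal{C}_k)$ as an element of $(supp(\alpha_{S'})\cap supp(\alpha_{S\setminus S'}))\cup (col(\alpha_{S'})\cap col(\alpha_{S\setminus S'}))$.

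Finally, I will check injectivity: if $\phi(\mathcal{C}_k)=\phi(\mathcal{C}_{k'})$ equals some row $i$ (the column case is symmetric), then both components contain a vertex with $i$ in its row support; but any two such vertices are joined by an edge of $\mathcal{W}[S\setminus S']$ through the shared row $i$, so they must lie in the same component, forcing $k=k'$. Combining this injection with the inclusion–exclusion identities above yields the inequality of the lemma. I do not anticipate any serious obstacle; the only subtle point is the case $t=0$, which is precisely why the property $\{1,2\}\subset supp(\alpha_{S'})$ built into the definition of $\mathcal{S}([l])$ is essential.
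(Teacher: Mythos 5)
Your proof is correct and follows essentially the same route as the paper's: both arguments rest on the connectivity of $\mathcal{W}[S\cup\{0\}]$ forcing every connected component of $\mathcal{W}[S\setminus S']$ to share a row or a column with $S'$ (the case of an edge to the vertex $0$ being absorbed via $\{1,2\}\subset supp(\alpha_{S'})$), together with the disjointness of supports of distinct components. The only difference is bookkeeping: you recast the count as inclusion–exclusion plus an injection of components into shared rows/columns, whereas the paper sums the per-component inequality $\#\alpha_{cc_{q'}}+r_{\alpha_{cc_{q'}}}\geq 1+|supp(\alpha_{cc_{q'}})\setminus supp(\alpha_{S'})|+|col(\alpha_{cc_{q'}})\setminus col(\alpha_{S'})|$ — the two are equivalent.
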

\medskip

For the term $S'=\emptyset$, we use the fact that $cc\pa{\mathcal{W}[S]}\leq 2$ to get the desired  inequality

$$\pa{\frac{1}{K}}^{1+\#\alpha_{S}+r_{\alpha_{S}}-\frac{|\alpha_{S}|}{2}-cc\pa{\mathcal{W}[S]}}\leq  \pa{\frac{1}{K}}^{\#\alpha_S+r_{\alpha_S}-\frac{|\alpha_S|}{2}-1}\enspace.$$

We deduce from this and Lemma \ref{lem:connectivityLTCsparsesharp2} that

\begin{align*}
\frac{|C_{x,\beta[S]}|}{\rho^{\alpha_R}}\leq & \pa{\frac{1}{K}}^{\#\alpha_S+r_{\alpha_S}-\frac{|\alpha|}{2}-1}\pa{1+\underset{S'\in \mathcal{S}([l])}{\sum_{S'\subsetneq S}}f(S')}\\
\leq & f(S)\pa{\frac{1}{K}}^{\#\alpha_S+r_{\alpha_S}-\frac{|\alpha|}{2}-1}\enspace,
\end{align*}

which concludes the induction and the proof of the lemma.

\subsection{Proof of Lemma \ref{lem:connectivityLTCsparsesharp}}\label{prf:connectivityLTCsparsesharp}

Let us fix $R\subseteq [0,l]$. We need to prove that 
$$|\alpha_{R}|\geq 2\pa{r_{\alpha_R}-cc\pa{\mathcal{W}[R]}+cc\pa{\mathcal{V}[R]}}\enspace.$$
By definition, $\mathcal{V}(R)$ is a subgraph of $\mathcal{W}_r$. Hence, 
to show this inequality, it is sufficient to prove it for all connected components of $\mathcal{W}[R]$. We can therefore suppose without loss of generality that $\mathcal{W}[R]$ is connected.

Denote $q=cc(\mathcal{V}[R])$ and let us write $cc_1,\ldots, cc_q$ the collection of the connected components of $\mathcal{V}[R]$. Since the graph $\mathcal{W}[R]$ is connected, we can, up to a reordering of the $cc_l$'s,  suppose that, for all $q'\in [2,q]$, $\mathcal{W}[R]$ has an edge connecting $cc_{q'}$ to $\pa{\cup_{q''<q'}cc_{q''}}$. In the following, for all $q'\in [q]$, we write $\alpha_{cc_{q'}}=\sum_{s\in cc_{q'}}\beta_s$ and $\alpha^{(q')}=\sum_{q''\leq q'}\alpha_{cc_{q''}}$. Let us prove by induction over $q'\in [q]$ that 
$$|\alpha^{(q')}|\geq 2r_{\alpha^{(q')}}+2(q'-1)\enspace.$$

\paragraph*{Initialization} For all $j\in col(\alpha^{(1)})$, there exists $s\in cc_1\setminus \ac{0}$ such that $j_s=j$ (we recall that $\beta_s=\ac{(i_s,j_s);(i'_s,j_s)}$). Thus, $|\pa{\alpha^{(1)}}_{:j}|\geq |\beta_s|=2$. We deduce  that  $|\alpha^{(1)}|\geq 2r_{\alpha^{(1)}}$.

\paragraph*{Induction} Let us suppose that the result holds for some $q'\in [q-1]$ and let us prove that it still holds for $q'+1$. As for the initalisation, we have $|\alpha_{cc_{q'+1}}|\geq 2r_{\alpha_{cc_{q'+1}}}$. Since $\mathcal{W}$
has an edge connecting $cc_{q'+1}$ to $\pa{\cup_{q''\leq q'}cc_{q''}}$ whereas $\mathcal{V}$ does not have any, we know that $col(\alpha_{cc_{q'+1}})$ intersects $col(\alpha^{(q')})$. Together with the induction hypothesis, this implies that
$$|\alpha^{(q'+1)}|\geq 2r_{\alpha^{(q')}}+2(q'-1)+2r_{\alpha_{cc_{q'+1}}}\geq 2r_{\alpha^{(q'+1)}}+2q'\enspace,$$
and concludes the induction.

\subsection{Proof of Lemma \ref{lem:connectivityLTCsparsesharp2}}\label{prf:connectivityLTCsparsesharp2}

Let $  S'\subseteq S$ be a non-empty set. Since $S'\in \mathcal{S}([l])$ is non-empty, we know that  $\ac{1,2}\subset supp(\alpha_{S'})$. Let us prove the inequality
$$\#\alpha_{S'}+r_{\alpha_{S'}}+\#\alpha_{S\setminus S'}+r_{\alpha_{S\setminus S'}}\geq \#\alpha_S+r_{\alpha_S}+cc\pa{\mathcal{W}[S\setminus S']}\enspace.$$

Let $q=cc\pa{\mathcal{W}[S\setminus S']}$ and let us write $cc_1,\ldots, cc_q$ the collection of the connected components of $\mathcal{W}[S\setminus S']$. Since the graph $\mathcal{W}[S\setminus S']$ does not have any edge between the $cc_{q'}$'s, we have $\#\alpha_{S\setminus S'}=\sum_{q'\in [q]}\#\alpha_{cc_{q'}}$ and 
$r_{\alpha_{S\setminus S'}}=\sum_{q'\in [q]}r_{\alpha_{cc_{q'}}}$.

Since the graph $\mathcal{W}[S\cup \{0\}]$ is connected, for all $q'\in [q]$, $\mathcal{W}[S\cup \{0\}]$ has an edge connecting $cc_{q'}$ to $S'\cup\{0\}$. Thus, for all $q'\in [q]$, either $supp(\alpha_{cc_{q'}})$ intersects $supp(\alpha_{S'})\cup \ac{1,2}=supp(\alpha_{S'})$, or that  $col(\alpha_{cc_{q'}})$ intersects $col(\alpha_{S'})$. We deduce that 
$$\#\alpha_{cc_{q'}}+r_{\alpha_{cc_{q'}}}\geq 1+|supp(\alpha_{cc_{q'}})\setminus supp(\alpha_{S'})|+|col(\alpha_{cc_{q'}})\setminus col(\alpha_{S'})|\enspace.$$
Gathering everything, we get 
\begin{align*}
\lefteqn{\#\alpha_{S'}+r_{\alpha_{S'}}+\#\alpha_{S\setminus S'}+r_{\alpha_{S\setminus S'}}}&\\ =&\#\alpha_{S'}+r_{\alpha_{S'}}+\sum_{q'\in [q]}\#\alpha_{cc_{q'}}+\sum_{q'\in [q]}r_{\alpha_{cc_{q'}}}\\
\geq & \#\alpha_{S'}+r_{\alpha_{S'}}+\sum_{q'\in [q]}1+|supp(\alpha_{cc_{q'}})\setminus supp(\alpha_{S'})|+|col(\alpha_{cc_{q'}})\setminus col(\alpha_{S'})|\\
\geq &\#\alpha_S+r_{\alpha_S}+cc\pa{\mathcal{W}[S\setminus S']}\enspace,
\end{align*}

which concludes the proof of the lemma.

\subsection{Proof of Lemma \ref{lem:upperboundfsparse}}\label{prf:upperboundfsparse}

We proceed by induction on $S\in \mathcal{S}([l])$ to prove that  
\begin{equation}\label{eq:upper_f_S_a}
f(S)\leq |\alpha|^{|\alpha_S|-|supp(\alpha_S)\cup \ac{1,2}|-r_{\alpha_S}+2}\enspace.
\end{equation}
Since when $S\neq \emptyset$, we have $|supp(\alpha_S)\cup \ac{1,2}|= |supp(\alpha_S)|=\#\alpha_s$ --see the remark below the definition of $\mathcal{S}([l])$, this is sufficient for our purpose.

The initialization is trivial since $f(\emptyset)=1$ and $\alpha_{\emptyset}=0$. Let us take $S\in  \mathcal{S}([l])$ and let us suppose that the result holds for all $S'\subsetneq S$. For all $s\in S$, denote $S^*(s)$  the maximal element of $\mathcal{S}([l])$ which is included in $S\setminus \ac{s}$. The existence of such an element in justified by the fact that the set of elements $S'\in \mathcal{S}([l])$ with $S'\subseteq S\setminus \ac{s}$ is not empty (it contains $\emptyset$) and is stable by union. Then, we have 
\begin{align*}
f(S)=&1+\underset{S'\in \mathcal{S}([l])}{\sum_{S'\subsetneq S}}f(S')
\leq  1+\sum_{s\in S}\underset{S'\in \mathcal{S}([l])}{\sum_{S'\subseteq S^*(s)}}f(S')\\
\leq & 1+\sum_{s\in S} \cro{2f(S^*(s))-1}\\
\leq & 2\sum_{s\in S}f(S^*(s))\ , 
\end{align*}
where we used the recursive definition of $f$ in the second row. Applying the induction hypothesis leads us to 
\begin{equation}\label{eq:upper_f_S}
f(S)\leq 2\sum_{s\in S}|\alpha|^{|\alpha_{S^*(s)}|-|supp(\alpha_{S^*(s)})\cup \ac{1,2}|-r_{\alpha_{S^*(s)}}+2}\enspace.
\end{equation}

Let us fix $s_0\in S\setminus S^*(s_0)$. We denote  in the following $S^-=S\setminus S^*(s_0)$. Besides, let $S^-_0$ denote the set of all $s\in S\setminus S^*(s_0)$ such that $j_s\in col(\alpha_{S^*(s_0)})$. Then, we write $j^1,\ldots, j^r\in [p]$ the other columns of $S\setminus S^*(s_0)$ that do not appear in $col(\alpha_{S^*(s_0)})$. Besides, for $r'=1,\ldots, r$, we write $S^-_{r'}$ the set of $s\in S^-$ such that $j_s=j^{r'}$. Since the graph $\mathcal{W}[S\cup \ac{0}]$ is connected, we can suppose, up to some reordering, that for all $r'\leq r$, there exists $i\in supp(\alpha_{S^-_{r'}})\cap \pa{supp\pa{\alpha_{S^*(s_0)}+\sum_{0\leq r''\leq r'-1}\alpha_{S^-_{r''}}}\cup \ac{1,2}}$. Moreover, at the final step $r'=r$, from Lemma \ref{lem:nullityLTCsparsesharp}, we know that there exist two distinct $i,i'\in  supp(\alpha_{S^-_{r}})\cap \pa{supp\pa{\alpha_{S^*(s_0)}+\sum_{0\leq r''\leq r-1}\alpha_{S^-_{r''}}}\cup \ac{1,2}}$. If $r\neq 0$, those two claims imply that $|\alpha_{S^-}|\geq r+1+|supp(\alpha_{S^-})\setminus\pa{ supp(\alpha_{S^*(s_0)})\cup \ac{1,2}}|$. Since $\ac{1,2}\subseteq supp(\alpha_S)$, we derive from the latter that
$$|\alpha_S|-|supp(\alpha_S)\cup \ac{1,2}|-r_{\alpha_S}-1\geq |\alpha_{S^*(s_0)}|-|supp(\alpha_{S^*(s_0)})\cup \ac{1,2}|-r_{\alpha_{S^*(s_0)}}\enspace.$$

If $r=0$, we use the fact that all $i\in supp(\alpha_S)$ must satisfy $|\pa{\alpha_S}_{i:}|\geq 2$ --see Lemma \ref{lem:nullityLTCsparsesharp}. This implies that 
\begin{align*}
|\alpha_{S^-}|& \geq \min\pa{2, 2\left|supp(\alpha_{S^-})\setminus \pa{supp(\alpha_{S^*(s_0)})\cup \ac{1,2}}\right|}\\ &\geq \left|supp(\alpha_{S^-})\setminus \pa{supp(\alpha_{S^*(s_0)})\cup \ac{1,2}}\right|+1 \enspace . 
\end{align*}
Hence, as in the case $r\neq 0$, we also get in the case $r=0$ that 
$$|\alpha_S|-|supp(\alpha_S)\cup \ac{1,2}|-r_{\alpha_S}-1\geq |\alpha_{S^*(s_0)}|-|supp(\alpha_{S^*(s_0)})\cup \ac{1,2}|-r_{\alpha_{S^*(s_0)}}\enspace.$$

Hence, we deduce from~\eqref{eq:upper_f_S} that
$$f(S)\leq \frac{2|S|}{|\alpha|}{|\alpha|}^{|\alpha_S|-|supp(\alpha_S)\cup \ac{1,2}|-r_{\alpha_S}+2}\leq {|\alpha|}^{|\alpha_S|-|supp(\alpha_S)\cup \ac{1,2}|-r_{\alpha_S}+2}\enspace\ . $$
We have shown~\eqref{eq:upper_f_S_a}, which concludes the proof.

\subsection{Proof of Lemma \ref{lem:controlcumulantssparsesharp}}\label{prf:controlcumulantssparsesharp}

In order to get a suitable upper bound of $|\kappa_{x,\alpha}|$ from Theorem \ref{thm:LTC} and Lemma \ref{lem:boundLTCsparsesharp}, it is sufficient to upper-bound the number of partitions $\pi=\pi_1,\ldots,\pi_l$ of the multisets $\alpha$ into groups of size $2$ such that $C_{x,\beta_1(\pi),\ldots,\beta_l(\pi)}\neq 0$, where $\pa{\beta_s(\pi)}_{ij}$ counts the number of copies of $(i,j)$ in $\pi_s$. Remember that for such a partition, it is necessary that, for $s\in[l]$, $\pi_s$ is contained in a single column of $\alpha$. The number of partitions into pairings of each multiset $\alpha_{:j}$ is at most $|\alpha_{:j}|^{\frac{|\alpha_{:j}|}{2}-1}$. We deduce that the total number of satisfying partitions is at most $|\alpha|^{\frac{|\alpha|}{2}-r_\alpha}$. Since $|\alpha|\geq 2\#\alpha$, we upper-bound this quantity by $ |\alpha|^{|\alpha|-\#\alpha-r_\alpha}$.

Plugging this with Lemma \ref{lem:boundLTCsparsesharp} in Theorem \ref{thm:LTC} leads us to 
$$|\kappa_{x,\alpha}|\leq \lambda^{|\alpha|}\rho^{r_\alpha}|\alpha|^{2\pa{|\alpha|-r_\alpha-\#\alpha+2}}\min\pa{\pa{\frac{1}{K}}^{\#\alpha+r_{\alpha}-\frac{|\alpha|}{2}-1}, \frac{1}{K}}\enspace, $$

which concludes the proof of the lemma.

\subsection{Proof of Lemma \ref{lem:nullcumulantssparsesharp}}\label{prf:nullcumulantssparsesharp}

Let $\alpha\in \N^{n\times p}\neq 0$. We shall suppose successively that one of the conditions of Lemma \ref{lem:nullcumulantssparsesharp} is not satisfied and and prove that $\kappa_{x,\alpha}=0$.

\begin{enumerate}

\item We suppose that either $\alpha_{1:}=0$ or $\alpha_{2:}=0$. By symmetry, we suppose that $\alpha_{2:}=0$. Then, the label $k_2^*$ is independent of the random variables $\pa{X_{ij}}_{ij\in \alpha}\cup \ac{k^*_1}$. And since $k^*_2$ follows a uniform law on $[K]$, we directly deduce that $x=\1_{k_1^*=k_2^*}$ is also independent from $\pa{X_{ij}}_{ij\in \alpha}\cup \ac{k^*_1}$ (and a fortiori from $\pa{X_{ij}}_{ij\in \alpha}$). By Lemma \ref{lem:independentcumulant2}, we have $\kappa_{x,\alpha}=0$.

\item We suppose that there exists $i_{0}\in supp(\alpha)$ such that $\sum_{j\in [p]}\alpha_{i_0j}= 1$ and we shall prove that $\kappa_{x,\alpha}=0$. $\eps_{i_0}$ is symmetric and independent from the other random variables. In particular, $x,\pa{X_{ij}}_{ij\in \alpha}$ has the same distribution as $x,\pa{\pa{-1}^{\1_{i=i_0}}X_{ij}}_{ij\in \alpha}$ and so $\kappa_{x,\alpha}=-\kappa_{x,\alpha}$. We deduce $\kappa_{x,\alpha}=0$.

\item We suppose that there exists $j_{0}\in [p]$ such that $\sum_{i\in [n]}\alpha_{ij_0}= 1$. It is clear in that case that there does not exist any decomposition $\alpha=\beta_1+\ldots+\beta_l$ with $\beta_s=\ac{(i_s,j_s);(i'_s,j_s)}$. Hence, Theorem \ref{thm:LTC} ensures that $\kappa_{x,\alpha}=0$.

\item Let us suppose that the graph $\mathcal{G}_\alpha^-\cup \ac{(u_1,u_2)}$ is not connected. Let $\beta_1+\ldots+\beta_l=\alpha$ with $\beta_s=\ac{(i_s,j_s);(i'_s,j_s)}$. Let us prove that $C_{x,\beta_1,\ldots, \beta_l}$ is null. The fact that $\mathcal{G}_\alpha^-\cup \ac{(u_1,u_2)}$ is disconnected implies that the graph $\mathcal{W}$ of $[0,l]$ defined is Section \ref{prf:boundLTCsparsesharp} is also disconnected. We deduce from Lemma \ref{lem:nullityLTCsparsesharp} that $C_{x,\beta_1,\ldots,\beta_l}$ is null. This being true for all decompositions, we conclude that $\kappa_{x,\alpha}=0$.

\end{enumerate}

\section{Proof of Theorem \ref{thm:lowdegreebi}}\label{prf:lowdegreebi}

Theorem~\ref{thm:lowdegreebi} states two LD lower bounds~\eqref{eq:LB_MMSE_D_SC1} and~\eqref{eq:LB_MMSE_D_SC2} in two different regimes. We prove them separately in Subsections~\ref{prf:lowerboundbi2} and {prf:lowerboundbi1}.

\subsection{Reduction to a $L$-dimensional problem: Proof of~\eqref{eq:LB_MMSE_D_SC2}}\label{prf:lowerboundbi2}

Without loss of generality, we suppose that $\sigma^2=1$. Let us fix $D\in \N$ and let us suppose that $$\zeta':=\lambda^4D^{10}\frac{5p^2}{L}\max\pa{1,\frac{n}{K^2}}<1\enspace.$$
Working conditionally on $l^*, \eps^r,\eps^c$, we get
\begin{align*}
MMSE_{\leq D}=&\inf_{f\in \R_{D}[Y]}\E\cro{\pa{f(Y)-x}^2}\\
=&\inf_{f\in \R_{D}[Y]}\E_{l^*}\cro{\E\cro{\pa{f(Y)-x}^2|l^*, \eps^r, \eps^c}}\\
\geq& \mathbb{E}_{l^*,\eps^r,\eps^c}\cro{\inf_{f\in \R_{D}[Y]}\E\cro{\pa{f(Y)-x}^2|l^*, \eps^r, \eps^c}}\enspace.
\end{align*}

In the following, we fix $l^*, \eps^r, \eps^c$ and we consider 
\[MMSE_{\leq D}(l^*, \eps^r, \eps^c)=\inf_{f\in \R_{D}[Y]}\E\cro{\pa{f(Y)-x}^2|l^*, \eps^r, \eps^c}\enspace .
\]
 Let us suppose that, for all $l\in [L]$, we have, $\left|\ac{j\in [p], l_j^*=l}\right|\leq 5\frac{p}{L}$. We write $$MMSE_{\leq D}(l^*, \eps^r, \eps^c)=\frac{1}{K}-corr_{\leq D}^2(l^*, \eps^r, \eps^c)\enspace.$$

We shall upper-bound $corr_{\leq D}^2(l^*, \eps^r, \eps^c)$ using Proposition \ref{thm:schrammwein} which states that $$corr_{\leq D}^2(l^*, \eps^r, \eps^c)\leq \sum_{\alpha\in \N^{n\times p}}\frac{\kappa_{x,\alpha}(l^*, \eps^r, \eps^c)^2}{\alpha!}\enspace,$$

where $\kappa_{x,\alpha}(l^*, \eps^r, \eps^c)=\cumul\pa{x, \pa{X_{ij}}_{ij\in \alpha}| l^*, \eps^r, \eps^c}$, where we see $\alpha$ as a multiset of $[n]\times [p]$. This conditional biclustering model is a special case of the latent model \eqref{eq:latent-model} with  
$$Z=(k^*, l^*, \eps^r,\eps^c), \quad \delta_{ij}(k^*)=\eps^r_i\eps^c_j,\quad \textrm{and}\quad \theta_{i,j}(k^*)=(k^*_{i},l^*_j)\enspace,$$

where $l^*, \eps^r, \eps^c$ are considered as deterministic. Combining Proposition \ref{thm:schrammwein} and Theorem \ref{thm:LTC}, we need to upper-bound, for any multiset $\alpha$ and any decomposition $\alpha=\beta_1+\ldots+\beta_l$ with $|\beta_s|=2$, the cumulant 
$$C_{x,\beta_1,\ldots, \beta_l}(l^*, \eps^r, \eps^c)=\cumul\pa{x,\prod_{ij\in \beta_1}\eps^r_i\eps^c_j\1_{\Omega_{\beta_1}(k^*,l^*)},\ldots, \prod_{ij\in \beta_l}\eps^r_i\eps^c_j\1_{\Omega_{\beta_l}(k^*,l^*)} \Big|\enspace l^*, \eps^r, \eps^c}\enspace,$$

with $\Omega_{\beta}(k^*,l^*):= \ac{\big|\ac{(k^*_{i},l^*_j):\enspace (i,j)\in \beta}\big|=1}$. Building on Lemma \ref{lem:boundLTCclusteringsharp}, we derive the following upper-bound, whose proof is postponed to the end of the section.

\begin{lem}\label{lem:boundLTCbi2}
 We recall that $\#\alpha$ stands for the cardinality of the points $i\in[n]$ such that $\alpha_{i:}\neq 0$ and that $|\alpha|:=\sum_{ij}\alpha_{ij}$. We have, for all $l^*$, $\eps^r$, $\eps^c$, that 
$$|C_{x,\beta_1,\ldots,\beta_l}(l^*, \eps^r, \eps^c)|\leq |\alpha|^{|\alpha|}\pa{\frac{1}{K}}^{\#\alpha-1}\enspace.$$

 \end{lem}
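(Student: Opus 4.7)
The plan is to reduce this conditional biclustering cumulant to the clustering cumulant already controlled in Lemma \ref{lem:boundLTCclusteringsharp}. The key observation is that, conditionally on $(l^*, \eps^r, \eps^c)$, the only remaining randomness is in $k^*$, and the quantity $\1_{\Omega_{\beta_s}(k^*,l^*)}$ becomes, up to a deterministic factor, the clustering indicator $\1_{\{k^*_{i_s} = k^*_{i'_s}\}}$.

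First, I would exploit multilinearity of joint cumulants. Given $(l^*, \eps^r, \eps^c)$, each factor $\prod_{(i,j)\in \beta_s} \eps^r_i \eps^c_j$ is a deterministic sign in $\{-1,+1\}$. Pulling them out gives
\begin{equation*}
|C_{x,\beta_1,\ldots,\beta_l}(l^*, \eps^r, \eps^c)| = |\cumul(x, \1_{\Omega_{\beta_1}(k^*,l^*)}, \ldots, \1_{\Omega_{\beta_l}(k^*,l^*)} \mid l^*)|.
\end{equation*}
Writing $\beta_s = \{(i_s, j_s); (i'_s, j'_s)\}$ with $|\beta_s|=2$, the event $\Omega_{\beta_s}(k^*, l^*)$ is the intersection of the deterministic condition $\{l^*_{j_s} = l^*_{j'_s}\}$ with the random event $\{k^*_{i_s} = k^*_{i'_s}\}$. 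If $l^*_{j_s} \neq l^*_{j'_s}$ for some $s$, the corresponding indicator vanishes identically and the cumulant is zero, so the bound is trivial. Otherwise, the indicator reduces to $\1_{\{k^*_{i_s} = k^*_{i'_s}\}}$ and we are left with
\begin{equation*}
|C_{x,\beta_1,\ldots,\beta_l}(l^*, \eps^r, \eps^c)| \;\leq\; |\cumul(x, (\1_{\{k^*_{i_s} = k^*_{i'_s}\}})_{s\in [l]})|,
\end{equation*}
which is precisely the object bounded in Lemma~\ref{lem:boundLTCclusteringsharp}.

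The proof then concludes by replaying that argument verbatim: define the graph $\mathcal{V}$ on $\{0\} \cup [l]$ with an edge between $s,s'$ iff $\{i_s,i'_s\} \cap \{i_{s'},i'_{s'}\} \neq \emptyset$, apply the recursion (\ref{eq:rec-C}) restricted to the ``active'' subsets $S$ for which $\mathcal{V}[\{0\} \cup S]$ is connected and $1,2 \in \cup_{s \in S}\{i_s,i'_s\}$, and derive by induction the bound
\begin{equation*}
|\cumul(x, (\1_{\{k^*_{i_s} = k^*_{i'_s}\}})_{s\in [l]})| \leq |\alpha|^{|\alpha|-2\#\alpha+4}\pa{\frac{1}{K}}^{\#\alpha-1}.
\end{equation*}
Since $\#\alpha \geq 2$ whenever the cumulant is non-vanishing (otherwise all indicators are constant and the joint cumulant collapses to zero), the exponent $|\alpha|-2\#\alpha+4 \leq |\alpha|$, which yields the stated bound $|\alpha|^{|\alpha|}(1/K)^{\#\alpha - 1}$.

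The main subtlety to address, and the only way in which the setting differs from pure clustering, is that here $\beta_s$ may span \emph{two distinct columns} $j_s \neq j'_s$ sharing the same label $l^*_{j_s} = l^*_{j'_s}$, whereas in Lemma~\ref{lem:boundLTCclusteringsharp} each $\beta_s$ was forced to lie in a single column. This discrepancy is harmless: the recursion in the clustering proof, as well as the connectivity arguments (Lemmas~\ref{lem:nullityLTCclusteringsharp}, \ref{lem:connectivityLTCclusteringsharp}, \ref{lem:upperboundf}), depend only on the row indices $(i_s, i'_s)$ and on the randomness of $k^*$, never on the actual column labels. Once the deterministic constraint on $l^*$ is factored out, the column structure plays no role in the bound.
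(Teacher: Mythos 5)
Your proposal is correct and follows essentially the same route as the paper: condition on $(l^*,\eps^r,\eps^c)$, pull out the deterministic signs, note that non-vanishing forces $l^*_{j_s}=l^*_{j'_s}$ so the event $\Omega_{\beta_s}$ reduces to $\{k^*_{i_s}=k^*_{i'_s}\}$, and invoke the clustering bound of Lemma~\ref{lem:boundLTCclusteringsharp}, whose argument depends only on the row indices. Your explicit remarks — that the two-column support of $\beta_s$ is harmless and that $\#\alpha\geq 2$ (else the cumulant vanishes) turns the exponent $|\alpha|-2\#\alpha+4$ into $|\alpha|$ — are exactly the points the paper uses implicitly.
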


The number of partition of the multiset $\alpha$ into groups of size $2$ is at most $|\alpha|^{\frac{|\alpha|}{2}-1}\leq |\alpha|^{\frac{|\alpha|}{2}}$. Combining this with Lemma \ref{lem:boundLTCbi2} and Theorem \ref{thm:LTC} leads us to 
\begin{equation}\label{eq:upper_kappa:bi_condition}
|\kappa_{x,\alpha}(l^*, \eps^r, \eps^c)|\leq |\alpha|^{\frac{|\alpha|}{2}} |\alpha|^{|\alpha|}\pa{\frac{1}{K}}^{\#\alpha-1}\enspace.
\end{equation}

Then, we prune the multisets $\alpha$ for which $\kappa_{x,\alpha}(l^*, \eps^r, \eps^c)=0$.

\begin{lem}\label{lem:nullcumulantsbisharp2}
Let $\alpha\in\N^{n\times p}$ be non-zero. If $\kappa_{x,\alpha}(l^*, \eps^r, \eps^c)\neq 0$, then: 
\begin{enumerate}
\item $1,2\in supp(\alpha)$;
\item All the elements $i\in supp(\alpha)\setminus \ac{1,2}$ are such that $|\alpha_{i:}|\geq 2$;
\item There exists a decomposition $\alpha=\beta_1+\ldots+\beta_l$, where $\beta_s=\ac{(i_s, j_s), (i'_s, j'_s)}$, and such that, for all $s\in [l]$, $l^*_{j_s}=l^*_{j'_s}$.
\end{enumerate}
In particular, we have $\#\alpha\geq 2$ and $|\alpha|\geq 2\#\alpha-2$.
\end{lem}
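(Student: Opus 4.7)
The plan is to argue by contrapositive for each of the three conditions, exploiting the fact that conditioning on $(l^*,\eps^r,\eps^c)$ freezes the sign factor $\delta(Z)^\beta = \prod_{(i,j)\in \beta}\eps^r_i\eps^c_j$ and leaves only $k^*$ (and the $\nu_{kl}$'s) as randomness. By Theorem~\ref{thm:LTC} applied to this conditional model, $\kappa_{x,\alpha}(l^*,\eps^r,\eps^c)$ is a $\pm 1$-weighted sum, over pairings $\pi\in\mathcal{P}_2(\alpha)$ with $\beta_s(\pi)=\{(i_s,j_s),(i'_s,j'_s)\}$, of joint cumulants of $x$ with the indicators $\1_{\Omega_{\beta_s}(k^*,l^*)}$. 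It therefore suffices to show that if any of the three properties fails, every such cumulant $C_{x,\beta_1,\ldots,\beta_l}(l^*,\eps^r,\eps^c)$ vanishes via Lemma~\ref{lem:independentcumulant}.

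Conditions 1 and 3 are the short ones. If $1\notin supp(\alpha)$ (symmetrically for $2$), then $k^*_1$ does not appear in any $X_{ij}$ indexed by $(i,j)\in\alpha$; conditionally on everything else, $x=\1\{k^*_1=k^*_2\}$ is Bernoulli$(1/K)$ and independent of $(X_{ij})_{(i,j)\in\alpha}$, so Lemma~\ref{lem:independentcumulant} yields $\kappa=0$. For condition 3, if \emph{no} pairing of $\alpha$ satisfies $l^*_{j_s}=l^*_{j'_s}$ for all $s$, then in every pairing at least one pair $\beta_s$ contains two indices with distinct $l^*$-values, making $\Omega_{\beta_s}(k^*,l^*)$ an empty event (it requires $|\{(k^*_i,l^*_j):(i,j)\in\beta_s\}|=1$); hence $\1_{\Omega_{\beta_s}}=0$ deterministically and all terms in the Theorem~\ref{thm:LTC} expansion vanish.

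The main work lies in condition 2, but the mechanism is the same. Fix $i_0\in supp(\alpha)\setminus\{1,2\}$ with $|\alpha_{i_0:}|=1$, so $\alpha$ contains a unique occurrence $(i_0,j_0)$. In any pairing, $(i_0,j_0)$ belongs to exactly one pair $\beta_{s_0}=\{(i_0,j_0),(i',j')\}$ with $i'\neq i_0$, and no other $\beta_s$ involves row $i_0$; consequently the random variable $k^*_{i_0}$ only enters through $\1_{\Omega_{\beta_{s_0}}(k^*,l^*)}=\1\{k^*_{i_0}=k^*_{i'}\}\1\{l^*_{j_0}=l^*_{j'}\}$ and does not appear in $x$ (because $i_0\notin\{1,2\}$) nor in any $\1_{\Omega_{\beta_s}}$ for $s\neq s_0$. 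Since $k^*_{i_0}$ is uniform on $[K]$ and independent of all other $k^*_i$, it follows that $\1_{\Omega_{\beta_{s_0}}}$ is independent of $\bigl(x,(\1_{\Omega_{\beta_s}})_{s\neq s_0}\bigr)$ (either it is identically $0$ when $l^*_{j_0}\neq l^*_{j'}$, or it is a Bernoulli$(1/K)$ drawn from $k^*_{i_0}$ alone). Lemma~\ref{lem:independentcumulant} therefore forces $C_{x,\beta_1,\ldots,\beta_l}(l^*,\eps^r,\eps^c)=0$ for every pairing, and summing yields $\kappa_{x,\alpha}(l^*,\eps^r,\eps^c)=0$. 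The only subtlety to check is the independence of $k^*_{i_0}$, which is precisely where the two hypotheses $i_0\notin\{1,2\}$ and $|\alpha_{i_0:}|=1$ are used.

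The concluding quantitative bounds are immediate: condition 1 gives $\#\alpha\geq 2$, and combining the trivial bound $|\alpha_{i:}|\geq 1$ for $i\in\{1,2\}\cap supp(\alpha)$ with $|\alpha_{i:}|\geq 2$ for the remaining $\#\alpha-2$ rows in $supp(\alpha)$ provided by condition 2 gives $|\alpha|\geq 2+2(\#\alpha-2)=2\#\alpha-2$.
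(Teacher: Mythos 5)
Your proof is correct and follows essentially the same route as the paper's: conditioning on $(l^*,\eps^r,\eps^c)$, invoking the pairing decomposition of Theorem~\ref{thm:LTC}, and killing each term via Lemma~\ref{lem:independentcumulant} (independence of $x$ from $X_\alpha$ when $1$ or $2$ is missing from $supp(\alpha)$, independence of the lone indicator $\1\{k^*_{i_0}=k^*_{i'}\}$ when row $i_0\notin\{1,2\}$ occurs only once, and deterministic vanishing of $\1_{\Omega_{\beta_s}}$ when no pairing respects $l^*_{j_s}=l^*_{j'_s}$). The counting at the end matches the paper's conclusion as well.
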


The last step of the proof amounts to counting the number of $\alpha$'s satisfying the conditions of Lemma \ref{lem:nullcumulantsbisharp2}.
\begin{lem}\label{lem:combinatorixbi2}
Suppose that, for all $l\in [L]$, we have $\left|\ac{j\in [p], l_j^*=l}\right|\leq 5\frac{p}{L}$. Let $d\in [D]$ and $m\in [2, \frac{d+2}{2}]$. Then, there are at most $d^{2d}n^{m-2}\pa{5\frac{p^2}{L}}^{\frac{d}{2}}$ matrices $\alpha$ satisfying the conditions of Lemma \ref{lem:nullcumulantsbisharp2} with $|\alpha|=d$ and $\#\alpha=m$.
\end{lem}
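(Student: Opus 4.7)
The plan is to follow the same spirit as in the proof of Lemma~\ref{lem:combinatorics_kappa}, but exploiting that here we do not need a connectivity constraint: we simply encode each valid $\alpha$ by (at least) one ordered tuple of $d/2$ ``column-compatible'' pairs and upper-bound the number of such tuples. Because a given $\alpha$ corresponds to \emph{many} tuples (arbitrary re-ordering of the pairs, swaps inside each pair, and possibly distinct valid decompositions), any upper bound on the number of tuples is automatically an upper bound on the number of $\alpha$'s.

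First, I would use condition 3 of Lemma~\ref{lem:nullcumulantsbisharp2} to note that every admissible $\alpha$ admits a decomposition $\alpha = \beta_1 + \cdots + \beta_{d/2}$ with $\beta_s = \{(i_s,j_s),(i'_s,j'_s)\}$, $|\beta_s|=2$, and $l^*_{j_s} = l^*_{j'_s}$. Fixing an arbitrary ordering of the pairs and of the two endpoints inside each pair yields an ordered tuple of $d$ endpoints. So it suffices to upper-bound the number of such tuples whose row-support has size exactly $m$ and contains $\{1,2\}$.

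Next I would split the counting into three blocks:
\begin{itemize}
\item \emph{Support of rows}: the set $\mathrm{supp}(\alpha)\setminus\{1,2\}$ consists of $m-2$ elements of $[n]\setminus\{1,2\}$, giving at most $\binom{n-2}{m-2}\leq n^{m-2}$ choices.
\item \emph{Row assignment}: each of the $d$ endpoint-slots is filled with a row from the chosen set of $m$ rows, giving at most $m^d\leq d^d$ possibilities.
\item \emph{Column assignment}: for each pair $s\in[d/2]$, the ordered column pair $(j_s,j'_s)$ must satisfy $l^*_{j_s}=l^*_{j'_s}$. The number of such ordered column pairs is
\[
\sum_{l\in[L]} \bigl|\{j\in[p]:\,l^*_j=l\}\bigr|^2 \;\leq\; \frac{5p}{L}\,\sum_{l\in[L]}\bigl|\{j\in[p]:\,l^*_j=l\}\bigr| \;=\;\frac{5p^2}{L},
\]
where I used the hypothesis $|\{j:l^*_j=l\}|\leq 5p/L$ for every $l$. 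Taken over all $d/2$ pairs, this gives at most $(5p^2/L)^{d/2}$ configurations.
\end{itemize}

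Multiplying these three bounds yields at most $n^{m-2}\,d^d\,(5p^2/L)^{d/2}\leq d^{2d}\,n^{m-2}\,(5p^2/L)^{d/2}$ tuples, hence at most that many matrices $\alpha$ satisfying the stated conditions. I do not foresee any real obstacle: the main point to keep in mind is that one must count tuples, not matrices, and convince oneself that the resulting overcount (from permutations of pairs, swaps of endpoints inside pairs, and possibly several valid decompositions of the same $\alpha$, as well as tuples that produce a matrix with strictly smaller row-support than $m$) only goes in the right direction and so does not affect the upper bound.
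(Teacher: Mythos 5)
Your proposal is correct and follows essentially the same route as the paper: both use the pair decomposition guaranteed by the third condition of Lemma~\ref{lem:nullcumulantsbisharp2}, bound the row-support choices by $n^{m-2}$, exploit the hypothesis $|\{j: l^*_j=l\}|\leq 5p/L$ to control the column choices (your per-pair sum-of-squares count $\sum_l |\{j:l^*_j=l\}|^2\leq 5p^2/L$ is just a slightly sharper packaging of the paper's choice of $col(\alpha)$), and absorb the remaining slot-filling freedom into a crude $d$-power factor. The only difference is cosmetic: your tuple-based count actually gives $d^{d}$ instead of $d^{2d}$, which is of course still within the stated bound.
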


Combining Lemma \ref{lem:combinatorixbi2} and~\eqref{eq:upper_kappa:bi_condition}, and supposing that, for all $l\in [L]$, we have $\left|\ac{j\in [p], l_j^*=l}\right|\leq 5\frac{p}{L}$, we end up with

\begin{align*}
corr^2_{\leq D}-\frac{1}{K^2}\leq& \frac{1}{K^2}\sum_{d=1}^{D}\sum_{m\in [2, \frac{d+2}{2}]}d^{5d}\lambda^{2d}\pa{\frac{n}{K^2}}^{m-2}\pa{5\frac{p^2}{L}}^{\frac{d}{2}}\\
\leq& \frac{1}{K^2}\sum_{d=1}^{D}\sum_{m\in [2, \frac{d+2}{2}]}\pa{\sqrt{\frac{5p^2}{L}}\lambda^2 d^5}^d \pa{\frac{n}{K^2}}^{m-2}\\
\leq& \frac{1}{K^2}\sum_{d=1}^{D}\sum_{m\in [2, \frac{d+2}{2}]}\pa{\sqrt{\frac{5p^2}{L}}\lambda^2 D^5}^{d-2(m-2)}\pa{\lambda^4 D^{10}\frac{5p^2 n}{LK^2}}^{m-2}\\
\leq& \frac{1}{K^2}\sum_{d=1}^{D}\frac{d}{2}\zeta'^{d/2}\\
\leq& \frac{1}{K^2}\frac{\sqrt{\zeta'}}{(1-\sqrt{\zeta'})^2}\enspace.
\end{align*}

Hence, provided that ,for all $l\in [L]$, $\left|\ac{j\in [p], l_j^*=l}\right|\leq 5\frac{p}{L}$, we have $$MMSE_{\leq D}(l^*,\eps^r,\eps_c)\geq \frac{1}{K}-\frac{1}{K^2}\frac{\sqrt{\zeta'}}{(1-\sqrt{\zeta'})^2}\enspace.$$

Moreover, using a large deviation Inequality for Binomial random variables --see e.g Exercise 12.9.7 of \cite{HDS2}--, we have that, with probability at least $1-L\exp\pa{-\frac{5p}{2L}\log(5)}$, for all $l\in [L]$, $\left|\ac{j\in [p], l_j^*=l}\right|\leq 5\frac{p}{L}$. We deduce from this that 

$$MMSE_{\leq D}\geq \pa{1-L\exp\pa{-\frac{5p}{2L}\log(5)}}\pa{\frac{1}{K}-\frac{1}{K^2}\frac{\sqrt{\zeta'}}{(1-\sqrt{\zeta'})^2}}\enspace.$$

\begin{proof}[Proof of Lemma \ref{lem:boundLTCbi2}]

Let $\beta_1,\ldots,\beta_l$ such that $|\beta_s|=2$ for $s\in [l]$ and such that $\beta_1+\ldots+\beta_l=\alpha$ and let $l^*\in [L]^p$. We seek to upper-bound 

$$C_{x,\beta_1,\ldots, \beta_l}(l^*, \eps^r, \eps^c)=\cumul\pa{x,\prod_{ij\in \beta_1}\eps^r_i\eps^c_j\1_{\Omega_{\beta_1}(k^*,l^*)},\ldots, \prod_{ij\in \beta_l}\eps^r_i\eps^c_j\1_{\Omega_{\beta_l}(k^*,l^*)} \Big|\enspace l^*}\enspace,$$

with $\Omega_{\beta}(k^*,l^*):= \ac{\big|\ac{(k^*_{i},l^*_j):\enspace (i,j)\in \beta}\big|=1}$. For $C_{x,\beta_{1},\ldots,\beta_{l}}(l^*)$ to be non-zero, it is necessary that, for all $s\in [l]$, $\Omega_{\beta_s}(k^*,l^*)$ is an event of positive probability conditionally on $l^*$. This condition enforces that $|\ac{l^*_j}_{j\in col(\beta_s)}|=1$ for all $s$. We can assume that the latter property is true in the followsing.  
We write $\beta_s=\ac{(i_s,j_s);(i'_s,j'_s)}$, for $s\in [l]$, with $l^*_{j_s}=l^*_{j'_s}$. We also take the convention $i_0=1$, $i'_0=2$, and $j_0=0$. We then have

$$C_{x,\beta_{1},\ldots,\beta_{l}}(l^*, \eps^r, \eps^c) =  \pa{\prod_{s\in [l]}\prod_{ij\in \beta_s}\eps^r_i\eps^c_j}\cumul\pa{\1\ac{k^*_{i_s}=k^*_{i'_s}}_{s\in [0,l]}}\enspace,$$

which, in turn, implies 

$$\left|C_{x,\beta_{1},\ldots,\beta_{l}}(l^*, \eps^r, \eps^c) \right|= \left|\cumul\pa{\1\ac{k^*_{i_s}=k^*_{i'_s}}_{s\in [0,l]}}\right|\enspace.$$

From Lemma \ref{lem:boundLTCclusteringsharp}, we deduce

\begin{equation*}|C_{x,\beta_1,\ldots,\beta_l}(l^*, \eps^r, \eps^c)|\leq |\alpha|^{|\alpha|}\pa{\frac{1}{K}}^{\#\alpha-1}\enspace,
\end{equation*}

which concludes the proof of the lemma.

\end{proof}

\begin{proof}[Proof of Lemma \ref{lem:nullcumulantsbisharp2}]
\begin{enumerate}
\item By symmetry, let us suppose that $1\notin supp(\alpha)$. Then, conditionally on $l^*,\eps^r, \eps^c$, $x$ is independent from $\pa{X_{ij}}_{ij\in \alpha}$. We deduce from Lemma \ref{lem:independentcumulant2} that $\kappa_{x,\alpha}(l^*, \eps^r, \eps^c)= 0$.
\item Let us suppose that there exists $\underline{i}\in supp(\alpha)\setminus \ac{1,2}$ with $|\alpha_{\underline{i}:}|=1$. Consiser any decomposition $\alpha=\beta_1+\ldots+\beta_l$ with $\beta_s=\ac{(i_s,j_s);(i'_s, j'_s)}$. Let $s_0$ be the only element such that $\underline{i}\in supp(\beta_{s_0})$. It follows  that $\1\{k^*_{i_{s_0}}=k^*_{i'_{s_0}}\}$ is independent of  $(x, (\1\{k^*_{i_{s}}=k^*_{i'_{s}}\})_{s\in [l]\setminus \ac{s_0}})$ and thus, from Lemma \ref{lem:independentcumulant2}, we deduce $C_{x,\beta_1,\ldots, \beta_l}(l^*, \eps^r, \eps^c)=0$. This being true for all decompositions of $\alpha$, we have $\kappa_{x,\alpha}(l^*, \eps^r, \eps^c)= 0$.
\item The last point of the lemma is a direct consequence of Theorem \ref{thm:LTC}. 
\end{enumerate}
\end{proof}

\begin{proof}[Proof of Lemma \ref{lem:combinatorixbi2}]
Since we necessarily have $1,2\in supp(\alpha)$, there are at most $n^{m-2}$ possibilities for choosing $supp(\alpha)$. Using the third point of Lemma \ref{lem:nullcumulantsbisharp2} together with the hypothesis $\left|\ac{j\in [p], l_j^*=l}\right|\leq 5\frac{p}{L}$, for all $l\in [L]$, we deduce that the number of possibilities for choosing $col(\alpha)$ is at most $\pa{\frac{5p^2}{L}}^{d/2}$. Finally, there are at most $m^d |col(\alpha)|^d\leq d^{2d}$ possibilities for choosing $\alpha$ once $supp(\alpha)$ and $col(\alpha)$ is determined. This concludes the proof of the lemma.
\end{proof}

\subsection{Proof of the first lower bound \eqref{eq:LB_MMSE_D_SC1} of $MMSE_{\leq D}$}\label{prf:lowerboundbi1}

 Without loss of generality, we suppose through the proof that $\sigma^2=1$. Let us fix $D\in \N$ and let us suppose $$\zeta:=\lambda^4D^8\max\pa{n,p,\frac{np}{K^2},\frac{np}{L^2}}<1\enspace.$$

As in the proof of Theorems \ref{thm:lowdegreeclusteringsharp} and \ref{thm:lowdegreesparsesharp}, the expression of the $MMSE_{\leq D}$ can be reduced to $$MMSE_{\leq D}=\inf_{f\in \R_{D}[Y]}\E\cro{\pa{f(Y)-x}^2}=\frac{1}{K}-corr_{\leq D}^2\enspace,$$

with $x=\1_{k^*_1=k^*_2}$ and $corr_{\leq D}^2$ being defined in Equation \eqref{def:corr}. We shall, as in the proofs of Theorem \ref{thm:lowdegreeclusteringsharp} and \ref{thm:lowdegreesparsesharp}, upper-bound $corr^2_{\leq D}$ using Proposition \ref{thm:schrammwein}, which states that 

$$corr_{\leq D}^2\leq \underset{|\alpha|\leq D}{\sum_{\alpha\in \N^{n\times p}}}\frac{\kappa_{x,\alpha}^2}{\alpha!}\enspace,$$

with $\kappa_{x,\alpha}=\cumul\pa{x, \pa{X_{ij}}_{ij\in \alpha}}$, where we see $\alpha$ as a multiset of $[n]\times [p]$. The biclustering model is a special case of the latent model (\ref{eq:latent-model}), with

$$Z=k^*, l^*, \eps^r,\eps^c, \quad \delta_{ij}(k^*)=\eps^r_i\eps^c_j,\quad \textrm{and}\quad \theta_{i,j}(k^*)=(k^*_{i},l^*_j)\enspace.$$

Combining Proposition \ref{thm:schrammwein} and Theorem \ref{thm:LTC}, we need to upper-bound, for any multiset $\alpha$ and any decomposition $\beta_1+\ldots+\beta_l=\alpha$, with $|\beta_s|=2$, the cumulant 

$$C_{x,\beta_1,\ldots, \beta_l}=\cumul\pa{x,\prod_{ij\in \beta_1}\eps^r_i\eps^c_j\1_{\Omega_{\beta_1}(k^*,l^*)},\ldots, \prod_{ij\in \beta_l}\eps^r_i\eps^c_j\1_{\Omega_{\beta_l}(k^*,l^*)}}\enspace,$$

with $\Omega_{\beta}(k^*,l^*):= \ac{\big|\ac{(k^*_{i},l^*_j):\enspace (i,j)\in \beta}\big|=1}$. Building on the recursive Bound (\ref{eq:rec-C}), we derive in Section \ref{prf:boundLTCbi} the following upper-bound.

\begin{lem}\label{lem:boundLTCbi}
  We recall that $\#\alpha$ stands for the cardinality of the points $i\in[1,n]$ such that $\alpha_{i:}\neq 0$ and that $|\alpha|:=\sum_{ij}\alpha_{ij}$. We have

$$|C_{x,\beta_1,\ldots,\beta_l}|\leq |\alpha|^{\frac{|\alpha|}{2}}\frac{1}{K}\min\pa{1,\pa{\frac{1}{K\wedge L}}^{\#\alpha+r_\alpha-\frac{|\alpha|}{2}-2}}\enspace.$$
 \end{lem}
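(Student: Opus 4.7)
The proof will parallel that of Lemma~\ref{lem:boundLTCclusteringsharp}, handling the row labels $k^*$ and column labels $l^*$ simultaneously, and exploiting their independence. Writing $\beta_s = \{(i_s, j_s), (i'_s, j'_s)\}$ for $s \geq 1$ with the convention $(i_0, i'_0) = (1, 2)$ and no column index at $s=0$, I will introduce two graphs on $\{0, 1, \ldots, l\}$: the row graph $\mathcal{V}^r$ of Section~\ref{prf:boundLTCclusteringsharp}, with an edge between $s$ and $s'$ iff $\{i_s, i'_s\} \cap \{i_{s'}, i'_{s'}\} \neq \emptyset$, and the column graph $\mathcal{V}^c$ on $\{1, \ldots, l\}$ with an edge iff $\{j_s, j'_s\} \cap \{j_{s'}, j'_{s'}\} \neq \emptyset$.

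Applying the recursion~\eqref{eq:rec-C} with $|\delta^{\beta_s}| \leq 1$, the probabilities that arise will factor by the independence of $k^*$ and $l^*$: for any $R \subseteq \{0,1,\ldots,l\}$,
$$\P\bigl[\cap_{s \in R} \Omega_{\beta_s}\bigr] = \pa{\tfrac{1}{K}}^{|V^r(R)| - cc(\mathcal{V}^r[R])} \cdot \pa{\tfrac{1}{L}}^{|V^c(R \setminus \{0\})| - cc(\mathcal{V}^c[R \setminus \{0\}])},$$
which mirrors the single-label computation of Section~\ref{prf:boundLTCclusteringsharp}, carried out twice. Each term in the recursion thus decouples into a row factor and a column factor.

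I will then prune zero contributions: combining Lemma~\ref{lem:nullcumulantsbisharp2} with the arguments of Lemmas~\ref{lem:nullityLTCclusteringsharp} and~\ref{lem:nullityLTCsparsesharp}, the cumulant $C_{x, \beta[S]}$ vanishes unless $\mathcal{V}^r[\{0\} \cup S]$ is connected and every row in $\alpha_S$ outside $\{1, 2\}$, as well as every column in $\alpha_S$, has multiplicity at least $2$ (the latter multiplicities coming from the Rademacher symmetries of $\eps^r$ and $\eps^c$). Let $\mathcal{S}([l])$ denote the resulting non-vanishing collection. By induction on $S \in \mathcal{S}([l])$, using a counting function $f$ analogous to~\eqref{eq:deffunctionf}, I will establish
$$|C_{x, \beta[S]}| \leq \tfrac{f(S)}{K}\min\Bigl(1,\, \bigl(\tfrac{1}{K \wedge L}\bigr)^{\#\alpha_S + r_{\alpha_S} - |\alpha_S|/2 - 2}\Bigr),$$
the $1/K$ prefactor being initialised from $\E[x] = 1/K$ at $S = \emptyset$. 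The two terms inside the minimum correspond to two bookkeepings of the joint probability: either discarding the column factor entirely (giving $1$), or combining the row and column exponents via $(1/K)^a(1/L)^b \leq (1/(K\wedge L))^{a+b}$.

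The main obstacle will be producing the clean joint exponent $\#\alpha_S + r_{\alpha_S} - |\alpha_S|/2 - 2$ in the combined case: this will require a two-fold application of Lemma~\ref{lem:connectivityLTCclusteringsharp} (once for each of $\mathcal{V}^r$ and $\mathcal{V}^c$), together with the identity $|\alpha_S| = 2|S|$ and careful manipulations of connected components to merge the row and column exponents additively. Finally, a bound $f([l]) \leq |\alpha|^{|\alpha|/2}$, in the spirit of Lemma~\ref{lem:upperboundfsparse} and exploiting the inequalities $|\alpha| \geq 2\#\alpha - 2$ and $|\alpha| \geq 2r_\alpha$ coming from the pruning, will combine with the induction bound above to yield the claimed estimate.
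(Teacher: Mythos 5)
There is a genuine gap in your pruning and connectivity argument. You claim that $C_{x,\beta[S]}$ vanishes unless the \emph{row} graph $\mathcal{V}^r[\ac{0}\cup S]$ is connected, but in the biclustering model dependence between the variables $W_s=\eps^r_{i_s}\eps^r_{i'_s}\eps^c_{j_s}\eps^c_{j'_s}\1_{\Omega_{\beta_s}(k^*,l^*)}$ can flow through shared \emph{columns} (via $l^*$ and $\eps^c$) just as well as through shared rows: e.g.\ $\beta_1=\ac{(1,j),(2,j')}$ and $\beta_2=\ac{(3,j),(4,j')}$ are row-disjoint yet both contain the factor $\1\ac{l^*_j=l^*_{j'}}$, so Lemma~\ref{lem:independentcumulant} does not apply and the cumulant need not vanish. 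The only graph whose connectivity is forced by non-vanishing is the \emph{superposition} $\mathcal{W}$ of the row and column intersection graphs (this is Lemma~\ref{lem:nullityLTCbi}). Because your pruning is too aggressive, the recursive bound built on \eqref{eq:rec-C} drops nonzero terms $C_{x,\beta[S']}$, and the induction is unjustified. For the same reason, your plan to obtain the clean exponent $\#\alpha_S+r_{\alpha_S}-|\alpha_S|/2-2$ by "a two-fold application of Lemma~\ref{lem:connectivityLTCclusteringsharp}, once for each of $\mathcal{V}^r$ and $\mathcal{V}^c$," cannot work as stated: that lemma assumes the relevant graph restricted to $S\cup\ac{0}$ is connected, and neither $\mathcal{V}^r[S\cup\ac{0}]$ nor $\mathcal{V}^c[S]$ is individually guaranteed to be connected — only $\mathcal{W}[S\cup\ac{0}]$ is.

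What is actually needed, and what the paper does, is to carry the component-corrected exponents through the induction, bounding $|C_{x,\beta[S]}|$ by $f(S)\pa{1/K}^{\#\alpha_S-cc(\mathcal{V}[S\cup\ac{0}])}\pa{1/L}^{r_{\alpha_S}-cc(\mathcal{N}[S])}$ (Lemma~\ref{lem:upperboundkappaSbi}), where the splitting inequalities across $S=S'\cup(S\setminus S')$ are proved componentwise inside each connected component of $\mathcal{V}$ (resp.\ $\mathcal{N}$), with no connectivity assumption on the individual graphs. Only at the end does one convert to the clean exponent, and this conversion is a genuinely \emph{joint} statement: connectivity of $\mathcal{W}$ forces the connected components of $\mathcal{V}$ and $\mathcal{N}$ to be glued together by cross-edges, giving $|\alpha|\geq 2\pa{cc(\mathcal{V})+cc(\mathcal{N})-2}$ and hence $\#\alpha+r_\alpha-cc(\mathcal{V})-cc(\mathcal{N})-1\geq\max\pa{0,\#\alpha+r_\alpha-|\alpha|/2-2}$ (Lemma~\ref{lem:connectivitybi}); the two factors are then merged via $(1/K)^a(1/L)^b\leq (1/K)(1/(K\wedge L))^{a+b-1}$. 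Your overall architecture (recursion on subsets, factorization of the probabilities by independence of $k^*$ and $l^*$, the counting function $f$ with $f([l])\leq|\alpha|^{|\alpha|/2}$) matches the paper, and your multiplicity-one pruning via the Rademacher signs is valid, but without the joint connectivity argument the central exponent in the claimed bound is not established.
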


Combining this bound with (\ref{eq:LTCgeneral:thm2}) and counting the number of partitions $\pi\in \mathcal{P}_2(\alpha)$ for which $C_{x,\beta_{1}(\pi),\ldots,\beta_{l}(\pi)}\neq 0$ , we prove in Section \ref{prf:controlcumulantsbisharp}  the next  upper-bound on $|\kappa_{x,\alpha}|$.
\medskip

\begin{lem}\label{lem:controlcumulantsbisharp}
Let $\alpha\in\N^{n\times p}$ non-zero. We have the upper bound $$|\kappa_{x,\alpha}|\leq \lambda^{|\alpha|}|\alpha|^{|\alpha|}\frac{1}{K}\min\pa{1,\pa{\frac{1}{K\wedge L}}^{\#\alpha+r_\alpha-\frac{|\alpha|}{2}-2}}\enspace.$$
\end{lem}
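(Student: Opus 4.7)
The plan is to derive Lemma~\ref{lem:controlcumulantsbisharp} by directly assembling the per-partition bound of Lemma~\ref{lem:boundLTCbi} with a crude combinatorial count of pair-partitions, so the work is essentially bookkeeping: all the hard analysis (the recursion over subsets $S\subseteq [l]$, the graph $\mathcal{W}$, the exploitation of conditional independence given $Z=(k^*,l^*,\eps^r,\eps^c)$) has already been done inside Lemma~\ref{lem:boundLTCbi}.

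The starting point is Theorem~\ref{thm:LTC}, which yields the identity
$$\kappa_{x,\alpha}=\lambda^{|\alpha|}\sum_{\pi\in\mathcal{P}_2(\alpha)} C_{x,\beta_1(\pi),\ldots,\beta_l(\pi)}\enspace.$$
In particular $\kappa_{x,\alpha}=0$ as soon as $|\alpha|$ is odd (then $\mathcal{P}_2(\alpha)=\emptyset$), so one may assume $|\alpha|=2l$. I then apply the triangle inequality and substitute the bound of Lemma~\ref{lem:boundLTCbi}, which is uniform in $\pi$, to obtain
$$|\kappa_{x,\alpha}|\leq \lambda^{|\alpha|}\,|\mathcal{P}_2(\alpha)|\,|\alpha|^{|\alpha|/2}\,\frac{1}{K}\min\pa{1,\pa{\frac{1}{K\wedge L}}^{\#\alpha+r_\alpha-\frac{|\alpha|}{2}-2}}\enspace.$$

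It remains to count pair-partitions. Viewing $\alpha$ as a multiset of cardinality $|\alpha|$, the number of perfect matchings is at most the number of perfect matchings of $|\alpha|$ distinguishable elements, namely $(|\alpha|-1)!! = |\alpha|!/(2^{|\alpha|/2}(|\alpha|/2)!)$, which is trivially bounded by $|\alpha|^{|\alpha|/2}$. Plugging this into the previous display collapses the two factors $|\alpha|^{|\alpha|/2}$ into $|\alpha|^{|\alpha|}$ and yields exactly the statement of the lemma.

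There is no real obstacle here: the only mild subtlety is that, by contrast with the clustering and sparse-clustering analogues (Lemmas~\ref{lem:controlcumulantsclusteringsharp} and~\ref{lem:controlcumulantssparsesharp}), we do not further restrict to pairings whose blocks lie inside a single column of $\alpha$ -- in biclustering the relevant constraint $|\{(k^*_{i},l^*_{j}):(i,j)\in\beta\}|=1$ couples indices across different columns of the same column-cluster -- so the refined factor $|\alpha|^{-r_\alpha}$ seen in the sparse case is no longer available, and the crude bound $|\mathcal{P}_2(\alpha)|\leq |\alpha|^{|\alpha|/2}$ is what is used. This explains the slightly weaker $|\alpha|^{|\alpha|}$ factor in the statement of Lemma~\ref{lem:controlcumulantsbisharp}, which is nonetheless sufficient to absorb the powers of $D$ into the quantity $\zeta$ defined in Theorem~\ref{thm:lowdegreebi}.
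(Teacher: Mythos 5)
Your proof is correct and follows essentially the same route as the paper: combine the identity of Theorem~\ref{thm:LTC} with the uniform per-partition bound of Lemma~\ref{lem:boundLTCbi} and a crude count of pair-partitions (the paper uses $|\mathcal{P}_2(\alpha)|\leq|\alpha|^{|\alpha|/2-1}$, you use $|\alpha|^{|\alpha|/2}$, and either count yields the stated $|\alpha|^{|\alpha|}$ factor). Your remark on why no column-restricted pairing count (and hence no $|\alpha|^{-r_\alpha}$ gain) is available in the biclustering setting also matches the paper's treatment.
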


The last stage, is to prune the multisets $\alpha$ for which $\kappa_{x,\alpha}=0$.
Next lemma gives necessary conditions for having $\kappa_{x,\alpha}\neq 0$.  We refer to Section \ref{prf:nullcumulantsbisharp} for a proof of this lemma.

\begin{lem}\label{lem:nullcumulantsbisharp}
Let $\alpha\in\N^{n\times p}$ be non-zero. If $\kappa_{x,\alpha}\neq 0$, then
\begin{itemize}
\item $1,2\in supp(\alpha)$;
\item For all $i\in supp(\alpha)$, $|\alpha_{i:}|\geq 2$;
\item For all $j\in col(\alpha)$, $|\alpha_{j:}|\geq 2$.
\end{itemize}
In particular, we have $\#\alpha\geq 2$, $|\alpha|\geq 2r_\alpha$ and $|\alpha|\geq 2\#\alpha$.
\end{lem}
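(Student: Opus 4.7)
The plan is to prove the three bullet points separately by exploiting the invariances built into the prior of Definition~\ref{def:prior_bi}, and then derive the ``In particular'' consequences by simple counting. The key tool will be Lemma~\ref{lem:independentcumulant2}, combined with the symmetry of the Rademacher signs $\eps^r$ and $\eps^c$ and the exchangeability of the labels $k^*_1, k^*_2$.

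For the first bullet, I would argue by contradiction: suppose, say, $1 \notin \text{supp}(\alpha)$, so that $\alpha_{1:} = 0$. Then the variable $k^*_1$ does not appear in any $X_{ij}$ for $(i,j)\in\alpha$, while all the other sources of randomness $(k^*_i)_{i\geq 2}, l^*, \eps^r, \eps^c, \nu$ are independent of $k^*_1$. Since $k^*_1$ is uniform on $[K]$, the random variable $x = \mathbf{1}_{k^*_1 = k^*_2}$ is, conditionally on the latter sources of randomness, a Bernoulli with parameter $1/K$ independent of $\{X_{ij}\}_{ij\in\alpha}$. Applying Lemma~\ref{lem:independentcumulant2} gives $\kappa_{x,\alpha}=0$, the desired contradiction. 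The case $2 \notin \text{supp}(\alpha)$ is symmetric.

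For the second bullet, suppose some $i_0 \in \text{supp}(\alpha)$ satisfies $|\alpha_{i_0:}|=1$. I would exploit the fact that $\eps^r_{i_0}$ is a symmetric Rademacher independent of everything else, and that $X_{i_0 j} = \eps^r_{i_0}\eps^c_j \nu_{k^*_{i_0}, l^*_j}$ is linear in $\eps^r_{i_0}$. Flipping $\eps^r_{i_0} \mapsto -\eps^r_{i_0}$ leaves the joint law of $(x,(X_{ij})_{ij\in\alpha})$ unchanged (since $x$ does not depend on $\eps^r_{i_0}$ and no other $X_{ij}$ with $i\neq i_0$ depends on it), but multiplies $X_{i_0 j_0}$ — the unique entry in row $i_0$ — by $-1$. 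Multilinearity of cumulants then gives $\kappa_{x,\alpha} = -\kappa_{x,\alpha}$, hence $\kappa_{x,\alpha}=0$. Here the condition $|\alpha_{i_0:}|=1$ (rather than $\geq 2$) is essential: with two or more entries in row $i_0$, the sign flips would cancel. The argument works uniformly for $i_0 \in \{1,2\}$ as well, because $x$ is independent of $\eps^r$. The third bullet follows by the entirely analogous argument using $\eps^c_{j_0}$ for any $j_0 \in \text{col}(\alpha)$ with $|\alpha_{:j_0}|=1$.

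Finally, the ``In particular'' clause is immediate bookkeeping: the first bullet forces $\{1,2\}\subseteq\text{supp}(\alpha)$ and hence $\#\alpha \geq 2$; the second bullet gives $|\alpha| = \sum_{i\in \text{supp}(\alpha)} |\alpha_{i:}| \geq 2\#\alpha$; and the third gives $|\alpha|=\sum_{j\in \text{col}(\alpha)}|\alpha_{:j}|\geq 2 r_\alpha$. I do not expect any real obstacle: the only subtle step is checking that the sign-flip argument applies uniformly to $i\in\{1,2\}$, which hinges on noting that $x = \mathbf{1}_{k^*_1=k^*_2}$ is a function of $k^*$ alone and thus independent of the Rademacher signs.
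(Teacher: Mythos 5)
Your proposal is correct and follows essentially the same route as the paper's proof: independence of $x=\1_{k^*_1=k^*_2}$ from $\ac{X_{ij}}_{(i,j)\in\alpha}$ together with Lemma~\ref{lem:independentcumulant2} for the first bullet, and the sign-flip symmetry of $\eps^r_{i_0}$ (resp.\ $\eps^c_{j_0}$) combined with multilinearity of cumulants for the second and third bullets, with the counting consequences as stated. Your extra remark that the sign-flip argument applies even to $i_0\in\ac{1,2}$ because $x$ does not depend on $\eps^r$ is a correct detail that the paper uses implicitly.
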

\medskip

For any $r\geq 1$, $m\geq 2$ and $d\geq \max\pa{2r,2m}$, there are at most $p^rn^{m-2}d^{2d}$ matrices $\alpha$ satisfying the conditions of Lemma \ref{lem:nullcumulantsbisharp} with $|\alpha|=d$, $r_\alpha=r$ and $\#\alpha=m$. Using Proposition \ref{thm:schrammwein}, we have 
\begin{align*}
corr^2_{\leq D}\leq&\underset{|\alpha|\leq D}{\sum_{\alpha\in \N^{n\times p}}}\kappa_{x,\alpha}^2\\
\leq& \frac{1}{K^2}+\frac{1}{K^2}\sum_{d=2}^{D}\sum_{m=2}^{d/2}\sum_{r=1}^{d/2} p^r n^{m-2}d^{4d}\lambda^{2d}\min\pa{1,\pa{\frac{1}{K\wedge L}}^{2m+2r-d-4}}\enspace.
\end{align*} 

Let us fix $r\geq 1$, $m\geq 2$ and $d\geq \max\pa{2r, 2m}$ and let us upper-bound $p^r n^{m-2}d^{4d}\lambda^{2d}\min\pa{1,\pa{\frac{1}{K\wedge L}}^{2m+2r-d-4}}$. First, we suppose that $d\geq 2\pa{m-2+r}$ and we get 
\begin{align*}
p^r n^{m-2}d^{4d}\lambda^{2d}\min\pa{1,\pa{\frac{1}{K\wedge L}}^{2m+2r-d-4}}\leq &\pa{D^4\lambda^2}^{d}p^r n^{m-2}\\
\leq& \pa{D^4\lambda^2}^{d-2r-2(m-2)}\pa{D^8\lambda^4n}^{m-2}\pa{D^8\lambda^4p}^{r}\\
\leq& \sqrt{\zeta}^{d-2r-2(m-2)}\zeta^{m-2}\zeta^{r}= \zeta^{d/2}\enspace .
\end{align*}

Then, let us suppose that $d\leq 2\pa{m-2+r}$. By symmetry of the roles of $K$ and $L$, we suppose that $K\wedge L=K$. We get 
\begin{align*}
p^r n^{m-2}d^{4d}\lambda^{2d}\min\pa{1,\pa{\frac{1}{K\wedge L}}^{2m+2r-d-4}}= &p^r n^{m-2}\pa{2d^4}^d\lambda^{2d}\min\pa{1,\pa{\frac{1}{K}}^{2m+2r-d-4}}\\
\leq& p^r n^{m-2}\pa{D^4\lambda^2}^d\pa{\frac{1}{K}}^{2m+2r-d-4}\\
\leq& n^{\frac{d}{2}-r}\pa{\frac{n}{K^2}}^{m-2-\pa{\frac{d}{2}-r}}p^r\pa{D^4\lambda^2}^{2r+2(d/2-r)}\\
\leq &\pa{D^8\lambda^4p}^r\pa{D^8\lambda^4n}^{\frac{d}{2}-r}\pa{\frac{n}{K^2}}^{m-2-\pa{\frac{d}{2}-r}}\\
\leq& \pa{D^8\lambda^4n}^{\frac{d}{2}-r}\pa{D^8\lambda^4p}^{\frac{d}{2}-(m-2)}\pa{D^8\lambda^4p\frac{n}{K^2}}^{m-2-\pa{\frac{d}{2}-r}}\\
\leq & \sqrt{\zeta}^d\enspace.
\end{align*}

In the end, we get 
\begin{align*}
corr^2_{\leq D}-\frac{1}{K^2}\leq&\frac{1}{K^2}\sum_{d=2}^{D}\sum_{m=2}^{d/2}\sum_{r=1}^{d/2}\sqrt{\zeta}^d\\
\leq& \frac{1}{K^2}\sum_{d=2}^{D}\frac{d(d-1)}{2}\sqrt{\zeta}^d\\
\leq& \frac{1}{K^2}\frac{\zeta}{\pa{1-\sqrt{\zeta}}^3}\enspace,
\end{align*}

which concludes the proof of the theorem.

\subsection{Proof of Lemma \ref{lem:boundLTCbi}}\label{prf:boundLTCbi}

Let $\alpha=\beta_1+\ldots+\beta_l$ with, for $s\in [l]$, $\beta_s=\ac{(i_s,j_s);(i'_s,j'_s)}$. Let us upper-bound the absolute value of the cumulant $$C_{x,\beta_{1},\ldots,\beta_{l}} =  \cumul\pa{x,\pa{\eps^r_{i_s}\eps^r_{i'_s}\eps^c_{j_s}\eps^c_{j'_s}\1_{\Omega_{\beta_{s}}(k^*,l^*)}}_{s\in [l]}}.$$

For $S\subseteq [l]$, we write $\beta[S]:=\ac{\beta_s, s\in S}$ and we write $\alpha_S=\sum_{s\in S}\beta_s$. In the following, we take the convention $i_0=1$, $i'_0=2$ and $j_0=0$. Applying the recursion formula \ref{eq:rec-C}, we have, for all $S\subseteq [l]$,

\begin{align}\nonumber
|C_{x,\beta[S]} |\leq&   \P\cro{\forall s\in S\cup \ac{0}, k^*_{i_s}=k^*_{i'_s}}\P\cro{\forall s\in S, l^*_{j_s}=l^*_{j'_s}}+\\\nonumber
&+\sum_{S'\subsetneq S}\left|C_{x,\beta[S']}\right|\P\cro{\forall s\in S\setminus S', k^*_{i_s}=k^*_{i'_s}}\P\cro{\forall s\in S\setminus S', l^*_{j_s}=l^*_{j'_s}}\\\nonumber
\leq&\pa{\frac{1}{K}}^{|supp(\alpha_S)\cup \ac{1,2}|-cc(\mathcal{V}[S\cup\ac{0}])}\pa{\frac{1}{L}}^{r_{\alpha_S}-cc\pa{\mathcal{N}[S]}}+\\
&+ \sum_{S'\subsetneq S}\left|C_{x,\beta[S']}\right| \pa{\frac{1}{K}}^{\#\alpha_{S\setminus S'}-cc(\mathcal{V}[S\setminus S'])}\pa{\frac{1}{L}}^{r_{\alpha_{S\setminus S'}}-cc\pa{\mathcal{N}[S\setminus S']}}\enspace,\label{eq:rec-C-bi}
\end{align}

where $\mathcal{V}$ and $\mathcal{N}$ are two graphs on $[0,l]$ defined as follows. For $s,s'\in [0,l]$, $\mathcal{V}$ has an edge between $s$ and $s'$ if and only if $\ac{i_s,i'_s}\cap \ac{i_{s'},i'_{s'}}\neq \emptyset$. For $s,s'\in [0,l]$, $\mathcal{N}$ has an edge between $s$ and $s'$ if and only if $\ac{j_s,j'_s}\cap \ac{j_{s'},j'_{s'}}\neq \emptyset$ (we consider $j_0=j'_0=0$ which implies that $0$ is an isolated point of $\mathcal{N}$). We also define the graph $\mathcal{W}$ on $[0,l]$ with an edge between $s$ and $s'$ if and only if either $\mathcal{V}$ or $\mathcal{N}$ has an edge between $s$ and $s'$. Finally, given a subset $S\subset [0,l]$, we define $\mathcal{V}(S)$, $\mathcal{N}(S)$, and $\mathcal{W}(S)$ as the subgraphs of $\mathcal{V}$, $\mathcal{N}$, and $\mathcal{W}$ induced by $S$.

The next lemma prunes the subsets $S\subseteq [l]$ such that $C_{x,\beta[S]}\neq 0$. In the following, we denote $\mathcal{S}([l])$ the collection of all subsets $S\subseteq [l]$ such that either $S=\emptyset$ or $\mathcal{W}[\ac{0}\cup S]$ is connected and both $1,2\in supp(\alpha_S)$. We postpone to Section \ref{prf:nullityLTCbi} the proof of the next lemma.
\begin{lem}\label{lem:nullityLTCbi}
Let $S\subseteq [l]$ such that $C_{x,\beta[S]}\neq 0$. Then $S\in \mathcal{S}([l])$.
\end{lem}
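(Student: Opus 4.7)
The plan is to adapt the strategy used in Lemma~\ref{lem:nullityLTCclusteringsharp} and Lemma~\ref{lem:nullityLTCsparsesharp}: assume $S\neq\emptyset$ and that one of the defining conditions of $\mathcal{S}([l])$ fails, then exhibit a partition of the cumulant's arguments into two independent groups, so that Lemma~\ref{lem:independentcumulant2} forces $C_{x,\beta[S]}=0$.

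First I would handle the condition $1,2\in\mathrm{supp}(\alpha_S)$. Suppose, by symmetry, that $1\notin\mathrm{supp}(\alpha_S)$. Then for every $s\in S$, neither $k^*_1$ nor $\eps^r_1$ enters the quantity $\eps^r_{i_s}\eps^r_{i'_s}\eps^c_{j_s}\eps^c_{j'_s}\1_{\Omega_{\beta_s}(k^*,l^*)}$. The only argument that involves $k^*_1$ is $x=\1\{k^*_1=k^*_2\}$, and conditionally on the sigma-algebra generated by all the other randomness (which determines $k^*_2$), $x$ is a $\mathrm{Bern}(1/K)$ variable. Hence $x$ is independent of the family $(\eps^r_{i_s}\eps^r_{i'_s}\eps^c_{j_s}\eps^c_{j'_s}\1_{\Omega_{\beta_s}(k^*,l^*)})_{s\in S}$, and Lemma~\ref{lem:independentcumulant2} yields $C_{x,\beta[S]}=0$.

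Next, assuming $1,2\in\mathrm{supp}(\alpha_S)$, I would show that the connectedness of $\mathcal{W}[\{0\}\cup S]$ is necessary. If $\mathcal{W}[\{0\}\cup S]$ is not connected, pick a partition $\{C_1,C_2\}$ of $\{0\}\cup S$ with no edge of $\mathcal{W}$ between them and $0\in C_1$. Since there is no $\mathcal{V}$-edge between $C_1$ and $C_2$, the row index sets $A_1:=\{1,2\}\cup\bigcup_{s\in C_1\setminus\{0\}}\{i_s,i'_s\}$ and $A_2:=\bigcup_{s\in C_2}\{i_s,i'_s\}$ are disjoint; and since there is no $\mathcal{N}$-edge between $C_1$ and $C_2$, the column index sets $B_1:=\bigcup_{s\in C_1\setminus\{0\}}\{j_s,j'_s\}$ and $B_2:=\bigcup_{s\in C_2}\{j_s,j'_s\}$ are also disjoint. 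Because the variables $(k^*_i,\eps^r_i)_{i\in[n]}$ and $(l^*_j,\eps^c_j)_{j\in[p]}$ are jointly independent, the blocks $\big((k^*_i,\eps^r_i)_{i\in A_1},(l^*_j,\eps^c_j)_{j\in B_1}\big)$ and $\big((k^*_i,\eps^r_i)_{i\in A_2},(l^*_j,\eps^c_j)_{j\in B_2}\big)$ are independent. Therefore $(x,(\text{arg}_s)_{s\in C_1\setminus\{0\}})$ is independent of $(\text{arg}_s)_{s\in C_2}$, and Lemma~\ref{lem:independentcumulant2} again gives $C_{x,\beta[S]}=0$.

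The only delicate point in the proof is the first case: one has to be careful that although $x=\1\{k^*_1=k^*_2\}$ depends on $k^*_2$ (which may appear among the other arguments), the conditional independence trick still works because $k^*_1$ is an independent uniform label that does not enter any other argument. Once this is observed, the rest is a direct application of Lemma~\ref{lem:independentcumulant2} together with the explicit description of the row/column dependencies of the arguments, mirroring the arguments already used in Sections~\ref{prf:boundLTCclusteringsharp} and~\ref{prf:boundLTCsparsesharp}.
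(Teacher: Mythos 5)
Your proof is correct and follows essentially the same route as the paper's: for each failed condition defining $\mathcal{S}([l])$ you split the arguments of $C_{x,\beta[S]}$ into two independent non-empty blocks (using the disjointness of row and column index sets forced by the absence of $\mathcal{W}$-edges, resp. the conditional $\mathrm{Bern}(1/K)$ argument when $1$ or $2$ is missing from $supp(\alpha_S)$) and invoke Lemma~\ref{lem:independentcumulant2}. The only difference is that you spell out the conditional-Bernoulli justification for the independence of $x$ in the first case, which the paper leaves implicit.
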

In particular, we can henceforth restrict our attention to subsets $S\in \mathcal{S}([l])$, so that 
\begin{align}\nonumber
|C_{x,\beta[S]} |\leq&\pa{\frac{1}{K}}^{\#\alpha_S-cc(\mathcal{V}[S\cup\ac{0}])}\pa{\frac{1}{L}}^{r_{\alpha_S}-cc\pa{\mathcal{N}[S]}}+\\
&+ \underset{S'\in \mathcal{S}([l])}{\sum_{S'\subsetneq S}}\left|C_{x,\beta[S']}\right| \pa{\frac{1}{K}}^{\#\alpha_{S\setminus S'}-cc(\mathcal{V}[S\setminus S'])}\pa{\frac{1}{L}}^{r_{\alpha_{S\setminus S'}}-cc\pa{\mathcal{N}[S\setminus S']}}\enspace.\label{eq:rec-C-bi2}
\end{align}

In the following, let us define recursively a function $f$ on $\mathcal{S}([l])$ satisfying, for all $S\in \mathcal{S}([l])$,
\begin{equation}\label{eq:deffunctionfbi}
f(S)=1+\underset{S'\in \mathcal{S}([l])}{\sum_{S'\subsetneq S}}f(S')\enspace.
\end{equation}
In particular, $f(\emptyset)=1$. Using the connectivity of $\mathcal{W}[S\cup \ac{0}]$ whenever $S\in \mathcal{S}([l])$ is non-empty, we prove in Section \ref{prf:upperboundkappaSsparse} the following lemma.
\begin{lem}\label{lem:upperboundkappaSbi}
For all $S\in \mathcal{S}([l])$, we have $|C_{x,\beta[S]}|\leq f(S)\pa{\frac{1}{K}}^{\#\alpha_S-cc(\mathcal{V}[S\cup \ac{0}])}\pa{\frac{1}{L}}^{r_{\alpha_S}-cc(\mathcal{V}[S])}.$
\end{lem}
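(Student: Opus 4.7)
\textbf{Proof plan for Lemma \ref{lem:upperboundkappaSbi}.} The plan is to proceed by strong induction on $|S|$, mirroring the strategy already used to prove Lemma \ref{lem:upperbounfkappaS} (clustering) and Lemma \ref{lem:upperboundkappaSsparse} (sparse clustering), but now tracking two independent exponents at once — one in $1/K$ tied to the graph $\mathcal{V}$, and one in $1/L$ tied to the graph $\mathcal{N}$. For the base case $S=\emptyset$, direct evaluation gives $|C_{x,\emptyset}|=1/K$, which matches the claimed bound since $f(\emptyset)=1$, $\#\alpha_\emptyset=0$, $cc(\mathcal{V}[\{0\}])=1$, $r_{\alpha_\emptyset}=0$, and $cc(\mathcal{N}[\emptyset])=0$ (with the standard convention that the empty graph contributes $0$ components).

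For the inductive step, I would start from the recursion \eqref{eq:rec-C-bi2}, which after applying Lemma \ref{lem:nullityLTCbi} restricts the sum over subsets $S'\subsetneq S$ to those lying in $\mathcal{S}([l])$. Plugging in the inductive bound on each $|C_{x,\beta[S']}|$ and isolating the term $S'=\emptyset$, it suffices to show that for every nonempty $S'\in\mathcal{S}([l])$ with $S'\subsetneq S$, the two inequalities
\begin{align*}
\#\alpha_{S'}-cc(\mathcal{V}[S'\cup\{0\}])+\#\alpha_{S\setminus S'}-cc(\mathcal{V}[S\setminus S']) &\geq \#\alpha_S - cc(\mathcal{V}[S\cup\{0\}]),\\
r_{\alpha_{S'}}-cc(\mathcal{N}[S'])+r_{\alpha_{S\setminus S'}}-cc(\mathcal{N}[S\setminus S']) &\geq r_{\alpha_S}-cc(\mathcal{N}[S]),
\end{align*}
hold simultaneously, while the $S'=\emptyset$ term is absorbed using $cc(\mathcal{V}[S])\leq cc(\mathcal{V}[S\cup\{0\}])+1$ together with $cc(\mathcal{N}[S])=cc(\mathcal{N}[S\cup\{0\}])$ (since $0$ is isolated in $\mathcal{N}$). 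These two inequalities are exact analogues of Lemma \ref{lem:connectivityLTCclusteringsharp}, applied independently to $\mathcal{V}$ and to $\mathcal{N}$: each connected component of $\mathcal{V}[S\setminus S']$ (resp. $\mathcal{N}[S\setminus S']$) must, by connectivity of $\mathcal{W}[S\cup\{0\}]$, share at least one vertex with $\text{supp}(\alpha_{S'})\cup\{1,2\}$ (resp. $\text{col}(\alpha_{S'})$), producing the desired count.

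The main obstacle will be a subtlety absent in the clustering case: the graph controlling connectivity is $\mathcal{W}=\mathcal{V}\cup\mathcal{N}$, not $\mathcal{V}$ or $\mathcal{N}$ separately. Hence, a component of $\mathcal{V}[S\setminus S']$ might be connected to $S'\cup\{0\}$ only via an $\mathcal{N}$-edge, sharing a column rather than a row with $\alpha_{S'}$; in that situation the row-vertex savings go to the $\mathcal{N}$-inequality instead of the $\mathcal{V}$-one. To handle this cleanly, I would enumerate the connected components of $\mathcal{V}[S\setminus S']$ and $\mathcal{N}[S\setminus S']$ separately and, for each, pick the witnessing edge in $\mathcal{W}$; the crucial observation is that each such witness contributes either a new row to $\text{supp}(\alpha_{S\setminus S'})\setminus\text{supp}(\alpha_{S'})$ or a new column to $\text{col}(\alpha_{S\setminus S'})\setminus\text{col}(\alpha_{S'})$, and these two types of contributions feed into disjoint inequalities, so no double-counting occurs. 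Once these two connectivity lemmas are established, the induction closes exactly as in the proof of Lemma \ref{lem:upperbounfkappaS}, using $1+\sum_{S'\subsetneq S, S'\in\mathcal{S}([l])} f(S') = f(S)$.
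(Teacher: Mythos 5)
Your skeleton matches the paper's proof: the same induction over $S\in\mathcal{S}([l])$, the same base case, the same treatment of the $S'=\emptyset$ term via $cc(\mathcal{V}[S])\leq cc(\mathcal{V}[S\cup \ac{0}])+1$, and exactly the two counting inequalities
$$\#\alpha_{S'}+\#\alpha_{S\setminus S'}-cc(\mathcal{V}[S'\cup\ac{0}])-cc(\mathcal{V}[S\setminus S'])\geq \#\alpha_{S}-cc(\mathcal{V}[S\cup\ac{0}])$$
and its column analogue for $\mathcal{N}$. The gap is in how you propose to prove these inequalities. You derive them from the connectivity of $\mathcal{W}[S\cup\ac{0}]$ by assigning each connected component of $\mathcal{V}[S\setminus S']$ (resp.\ $\mathcal{N}[S\setminus S']$) a witnessing $\mathcal{W}$-edge and letting row-witnesses "feed" the $\mathcal{V}$-inequality and column-witnesses the $\mathcal{N}$-inequality. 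This does not establish the two inequalities \emph{separately}, which is what the induction needs: the step requires $K^{-A'}L^{-B'}\leq K^{-A}L^{-B}$ coordinatewise, and a deficit in the $1/K$ exponent cannot be compensated by slack in the $1/L$ exponent unless you assume an ordering between $K$ and $L$. Concretely, take $S=S'\cup\ac{s}$ where $\beta_s$ involves two rows disjoint from $supp(\alpha_{S'})\cup\ac{1,2}$ but a column of $col(\alpha_{S'})$: this component of $\mathcal{V}[S\setminus S']$ has no row-witness at all, so your accounting leaves the $\mathcal{V}$-inequality short by one; it nevertheless holds, but for a different reason, namely that this vertex forms its own component of $\mathcal{V}[S\cup\ac{0}]$, so the right-hand side drops by one through $-cc(\mathcal{V}[S\cup\ac{0}])$. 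Note also that a component of $\mathcal{V}[S\setminus S']$ need not have any $\mathcal{W}$-edge directly to $S'\cup\ac{0}$ (it may attach only through other components of $S\setminus S'$), so even the witness choice is not well defined.

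The correct argument — and the one the paper uses — does not invoke $\mathcal{W}$ at all. Each inequality is a purely intra-$\mathcal{V}$ (resp.\ intra-$\mathcal{N}$) statement: work inside each connected component $cc_{q'}$ of $\mathcal{V}[S\cup\ac{0}]$, list the connected components of $\mathcal{V}[(S'\cup\ac{0})\cap cc_{q'}]$ and of $\mathcal{V}[(S\setminus S')\cap cc_{q'}]$, and use the connectivity of $\mathcal{V}[cc_{q'}]$ (adjacency in $\mathcal{V}$ means shared rows) to order them so that each intersects, in rows, the union of the previous ones; summing the resulting counts over $q'$ gives the $\mathcal{V}$-inequality exactly, with $-cc(\mathcal{V}[S\cup\ac{0}])$ absorbing the components of $S\setminus S'$ that are only column-attached. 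The same refinement argument applied to $\mathcal{N}$ with column labels gives the second inequality. With this replacement of your witness step, the rest of your induction closes as you describe.
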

The next lemma, proved in Section \ref{prf:upperboundfbi}, provides an upper-bound of $f([l])$
\begin{lem}\label{lem:upperboundfbi}
For all $S\in \mathcal{S}([l])$, we have $f([l])\leq |\alpha|^{\frac{|\alpha_S|}{2}}$
\end{lem}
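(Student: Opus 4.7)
The plan is to proceed by induction on $S\in\mathcal{S}([l])$, mirroring the strategy used in the proofs of Lemma~\ref{lem:upperboundf} (clustering) and Lemma~\ref{lem:upperboundfsparse} (sparse clustering). I will prove the (presumed intended) bound $f(S)\leq |\alpha|^{|\alpha_S|/2}$ for every $S\in\mathcal{S}([l])$, from which the statement follows by taking $S=[l]$. The base case $S=\emptyset$ is immediate since $f(\emptyset)=1$ and $|\alpha_\emptyset|=0$.

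For the inductive step, I would fix $S\in\mathcal{S}([l])$ non-empty and, for each $s\in S$, denote by $S^*(s)$ the maximal element of $\mathcal{S}([l])$ contained in $S\setminus\{s\}$; such a maximum exists because $\mathcal{S}([l])$ is stable by union and contains $\emptyset$. The defining recursion \eqref{eq:deffunctionfbi} together with the observation that every $S'\subsetneq S$ in $\mathcal{S}([l])$ is contained in some $S^*(s)$ yields, after unfolding and using $f$'s recursive definition once more,
\begin{equation*}
f(S)=1+\sum_{\substack{S'\subsetneq S\\ S'\in\mathcal{S}([l])}} f(S')\leq 1+\sum_{s\in S}\big(2f(S^*(s))-1\big)\leq 2\sum_{s\in S}f(S^*(s)).
\end{equation*}
Applying the induction hypothesis to each $S^*(s)$ gives $f(S)\leq 2\sum_{s\in S}|\alpha|^{|\alpha_{S^*(s)}|/2}$.

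The key numerical step is the bound $|\alpha_{S^*(s)}|\leq |\alpha_S|-2$. This holds trivially since $s\in S\setminus S^*(s)$ and $|\beta_s|=2$, so $|\alpha_{S\setminus S^*(s)}|\geq 2$ and $|\alpha_{S^*(s)}|=|\alpha_S|-|\alpha_{S\setminus S^*(s)}|\leq|\alpha_S|-2$. Hence $f(S)\leq 2|S|\,|\alpha|^{|\alpha_S|/2-1}=(2|S|/|\alpha|)\,|\alpha|^{|\alpha_S|/2}$. Since $|\alpha_S|=2|S|$ and $|\alpha_S|\leq |\alpha|$, the prefactor $2|S|/|\alpha|$ is bounded by $1$, which closes the induction and proves the claimed inequality.

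The main technical point to watch for is that, unlike the clustering and sparse clustering cases, we do not need here to exploit any connectivity of $\mathcal{W}[S\cup\{0\}]$: the connectivity information has already been absorbed into the cumulant bound of Lemma~\ref{lem:upperboundkappaSbi} through the factors $(1/K)^{\#\alpha_S-cc(\mathcal{V}[S\cup\{0\}])}$ and $(1/L)^{r_{\alpha_S}-cc(\mathcal{N}[S])}$. For bounding $f(S)$ itself, only the crude size control $|\alpha_S|=2|S|$ is used, and the argument is essentially combinatorial, so no obstacle is anticipated beyond verifying that the maximal set $S^*(s)$ is well-defined, which follows from stability of $\mathcal{S}([l])$ under union.
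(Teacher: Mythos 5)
Your proof is correct and follows essentially the same route as the paper's: induction on $S\in\mathcal{S}([l])$ via the maximal sets $S^*(s)$ (whose existence rests on stability of $\mathcal{S}([l])$ under union), the bound $f(S)\leq 2\sum_{s\in S}f(S^*(s))$, the drop $|\alpha_{S^*(s)}|\leq|\alpha_S|-2$, and the final factor $2|S|/|\alpha|\leq 1$. Your closing remark that no connectivity of $\mathcal{W}[S\cup\{0\}]$ is needed here, in contrast with the clustering and sparse-clustering analogues, is also accurate.
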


Combining Lemma \ref{lem:upperboundfbi} and Lemma \ref{lem:upperboundkappaSbi}  implies that 
\begin{equation}\label{eq:rec-C-bi3}
|C_{x,\beta_1,\ldots,\beta_l}|\leq \pa{|\alpha|}^{\frac{|\alpha|}{2}}\frac{1}{K}\pa{\frac{1}{K\wedge L}}^{\#\alpha+r_{\alpha}-cc(\mathcal{V}[[0,l]])-cc(\mathcal{V}[[l]])-1}\enspace.
\end{equation}

It remains to lower-bound the quantity $\#\alpha+r_{\alpha}-cc(\mathcal{V}[[0,l]])-cc(\mathcal{V}[[l]])$. To do so, we shall use the connectivity of the graph $\mathcal{W}[0,l]$. We postpone to Section \ref{prf:connectivitybi} the proof of the next lemma.

\begin{lem}\label{lem:connectivitybi}
We have 
$$\#\alpha+r_{\alpha}-cc(\mathcal{V}[[0,l]])-cc(\mathcal{V}[[l]])-1\geq \max\pa{0, \#\alpha+r_\alpha-\frac{|\alpha|}{2}-2}\enspace.$$

\end{lem}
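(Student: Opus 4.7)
The plan is to establish separately the two inequalities hidden in $\max(0,\#\alpha + r_\alpha - |\alpha|/2 - 2)$. Since $|\alpha|=2l$, both reduce to a comparison between $cc(\mathcal{V}[[0,l]])+cc(\mathcal{N}[[l]])$ and either $l+1$ or $\#\alpha + r_\alpha -1$. Throughout, I may assume that $\mathcal{W}[[0,l]]$ is connected, since otherwise Lemma \ref{lem:nullityLTCbi} forces $C_{x,\beta_1,\ldots,\beta_l}=0$ and the bound \eqref{eq:rec-C-bi3} is vacuous. I also note that, by Lemma \ref{lem:nullcumulantsbisharp}, $1$ and $2$ belong to $supp(\alpha)$.

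For the non-negativity part, $\#\alpha + r_\alpha - cc(\mathcal{V}[[0,l]])-cc(\mathcal{N}[[l]])-1\geq 0$, I count rows and columns across connected components. Two indices $s,s'\in[0,l]$ lie in the same component of $\mathcal{V}[[0,l]]$ as soon as their row sets $\ac{i_s,i'_s}$ and $\ac{i_{s'},i'_{s'}}$ meet, so the components of $\mathcal{V}[[0,l]]$ induce a partition of the full row support into disjoint nonempty subsets. The component containing $0$ already accounts for at least two distinct rows, namely $1$ and $2$, while each of the remaining $cc(\mathcal{V}[[0,l]])-1$ components contributes at least one further row disjoint from $\ac{1,2}$, giving $\#\alpha \geq cc(\mathcal{V}[[0,l]])+1$. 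The same partitioning argument applied to $\mathcal{N}[[l]]$ (whose components index disjoint nonempty subsets of columns in $col(\alpha)$) yields $r_\alpha \geq cc(\mathcal{N}[[l]])$. Summing both inequalities gives the claim.

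For the second bound $cc(\mathcal{V}[[0,l]])+cc(\mathcal{N}[[l]])\leq l+1$, I use sub-additivity of the cycle-rank under edge union. Pick any spanning forest $F$ of the connected simple graph $\mathcal{W}[[0,l]]$: since $\mathcal{W}[[0,l]]$ has $l+1$ vertices and is connected, $|F|=l$. Split $F=F_V\sqcup F_N$ according to whether each edge of $F$ lies in $\mathcal{V}$ or in $\mathcal{N}$ (breaking ties arbitrarily); both $F_V$ and $F_N$ are subforests of $\mathcal{V}[[0,l]]$ and $\mathcal{N}[[0,l]]$ respectively, so
\begin{equation*}
l = |F_V|+|F_N| \leq \pa{(l+1)-cc(\mathcal{V}[[0,l]])}+\pa{(l+1)-cc(\mathcal{N}[[0,l]])},
\end{equation*}
which rearranges into $cc(\mathcal{V}[[0,l]])+cc(\mathcal{N}[[0,l]])\leq l+2$. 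The convention $j_0=j'_0=0$ makes $0$ an isolated vertex of $\mathcal{N}$, so $cc(\mathcal{N}[[0,l]])=cc(\mathcal{N}[[l]])+1$, and substituting yields the desired $cc(\mathcal{V}[[0,l]])+cc(\mathcal{N}[[l]])\leq l+1=|\alpha|/2+1$.

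The main potential obstacle is the asymmetric role of the vertex $0$, which is attached to the row pair $\ac{1,2}$ in $\mathcal{V}$ but is isolated in $\mathcal{N}$: the two $+1$ terms appearing above (in $\#\alpha\geq cc(\mathcal{V}[[0,l]])+1$ and in $cc(\mathcal{N}[[0,l]])=cc(\mathcal{N}[[l]])+1$) both reflect this asymmetry and fortunately conspire to give exactly matching bounds. The rest is routine graph-theoretic bookkeeping.
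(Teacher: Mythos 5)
Your proof is correct, and it reads the statement the same way the paper's own proof does: the second connectivity term is $cc(\mathcal{N}[[l]])$ (the $\mathcal{V}[[l]]$ in the displayed statement is a typo propagated from Lemma~\ref{lem:upperboundkappaSbi}), and the standing hypotheses ($\mathcal{W}[[0,l]]$ connected, $1,2\in supp(\alpha)$) are exactly those the paper invokes implicitly; your reduction via Lemma~\ref{lem:nullityLTCbi} with $S=[l]$ is a legitimate way to justify them. The non-negativity half is the same argument as the paper's (terse) one: components of $\mathcal{V}[[0,l]]$ have disjoint row sets, the component of $0$ covers both rows $1$ and $2$, so $\#\alpha\geq cc(\mathcal{V}[[0,l]])+1$, and $r_\alpha\geq cc(\mathcal{N}[[l]])$. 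For the second half you diverge: the paper orders the connected components $cc_1,\ldots,cc_t$ of $\mathcal{V}[[0,l]]$ so that each new one meets an $\mathcal{N}$-column class already seen (using connectivity of $\mathcal{W}$), bounds $|\alpha_{cc_{t'}\setminus\ac{0}}|\geq 2|q(cc_{t'}\setminus\ac{0})|$, and telescopes to get $|\alpha|\geq 2\pa{cc(\mathcal{V}[[0,l]])+cc(\mathcal{N}[[l]])-1}$; you instead take a spanning tree of $\mathcal{W}[[0,l]]$ with $l$ edges, split it into a $\mathcal{V}$-forest and an $\mathcal{N}$-forest, bound each by $(l+1)$ minus the number of components of the ambient graph, and use that $0$ is isolated in $\mathcal{N}$ to convert $cc(\mathcal{N}[[0,l]])$ into $cc(\mathcal{N}[[l]])+1$. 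The two routes yield exactly the same inequality $cc(\mathcal{V}[[0,l]])+cc(\mathcal{N}[[l]])\leq |\alpha|/2+1$; yours is shorter and avoids the without-loss-of-generality reordering of components and the per-component edge count, at the cost of introducing the spanning-forest device, while the paper's count stays closer to the bookkeeping already used in Lemmas~\ref{lem:connectivityLTCsparsesharp} and~\ref{lem:connectivityLTCsparsesharp2} for the sparse case. Either is acceptable; no gap.
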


Using Lemma \ref{lem:connectivitybi}, we are able to conclude the proof of the lemma with

$$|C_{x,\beta_1,\ldots,\beta_l}|\leq |\alpha|^{\frac{|\alpha|}{2}}\frac{1}{K}\min\pa{1,\pa{\frac{1}{K\wedge L}}^{\#\alpha+r_\alpha-\frac{|\alpha|}{2}-2}}\enspace.$$

\subsection{Proof of Lemma \ref{lem:nullityLTCbi}}\label{prf:nullityLTCbi}

Let $S\notin \mathcal{S}([l])$ and let us prove that $C_{x,\beta[S]}=0$.

Let us first suppose that $\mathcal{W}[S\cup \ac{0}]$ is not connected. Let $C_1$, $C_2$ be a partition of $S\cup \ac{0}$ with no edges of $\mathcal{W}$ connecting them. Hence, the family of random variables $((\eps^r_i,k^*_i)_{i\in \cup_{s\in C_1}\ac{i_s, i'_s}}, \pa{\eps^c_j,l^*_j}_{j\in \cup_{s\in C_1}\ac{j_s, j'_s}})$ is independent of the family~\\
$((\eps^r_i,k^*_i)_{i\in \cup_{s\in C_2}\ac{i_s, i'_s}}, \pa{\eps^c_j,l^*_j}_{j\in \cup_{s\in C_2}\ac{j_s, j'_s}})$. Then,  Lemma \ref{lem:independentcumulant2} implies that $C_{x,\beta_{[S]}}=0$.

Let us now suppose that either $1\notin supp(\alpha_S)$ either $2\notin supp(\alpha_S)$. Then, $\1_{k^*_{1}=k^*_{2}}$ is independent from $((\eps^r_i,k^*_i)_{i\in \cup_{s\in S}\ac{i_s, i'_s}}, \pa{\eps^c_j,l^*_j}_{j\in \cup_{s\in S}\ac{j_s, j'_s}})$. Thus, from Lemma \ref{lem:independentcumulant2}, we deduce that  $C_{x,\beta_{[S]}}=0$.

\subsection{Proof of Lemma \ref{lem:upperboundkappaSbi}}

Let us prove by induction on $S\in \mathcal{S}([l])$ that 
\[
|C_{x,\beta[S]}|\leq f(S)\pa{\frac{1}{K}}^{\#\alpha_S-cc(\mathcal{V}[S\cup \ac{0}])}\pa{\frac{1}{L}}^{r_{\alpha_S}-cc(\mathcal{V}[S])}\enspace . 
\]

The initialization is straightforward since $f(\emptyset)=1$ and $C_{x,\beta[\emptyset]}=\frac{1}{K}$. Consider a set $S\in \mathcal{S}([l])$ non-empty and let us suppose that the result holds for all $S'\in \mathcal{S}([l])$ with $S'\subsetneq S$. Combining \eqref{eq:rec-C-bi2} and the induction hypothesis leads to

\begin{align*}
|C_{x,\beta[S]} |\leq&\pa{\frac{1}{K}}^{\#\alpha_S-cc(\mathcal{V}[S\cup\ac{0}])}\pa{\frac{1}{L}}^{r_{\alpha_S}-cc\pa{\mathcal{N}[S]}}+\\
&+ \underset{S'\in \mathcal{S}([l])}{\sum_{S'\subsetneq S}}\left|C_{x,\beta[S']}\right| \pa{\frac{1}{K}}^{\#\alpha_{S\setminus S'}-cc(\mathcal{V}[S\setminus S'])}\pa{\frac{1}{L}}^{r_{\alpha_{S\setminus S'}}-cc\pa{\mathcal{N}[S\setminus S']}}\\
\leq& \pa{\frac{1}{K}}^{\#\alpha_S-cc(\mathcal{V}[S\cup\ac{0}])}\pa{\frac{1}{L}}^{r_{\alpha_S}-cc\pa{\mathcal{N}[S]}}+\pa{\frac{1}{K}}\pa{\frac{1}{K}}^{\#\alpha_S-cc\pa{\mathcal{V}[S]}}\pa{\frac{1}{L}}^{r_{\alpha_s}-cc\pa{\mathcal{N}[S]}}+\\
&+ \underset{S'\in \mathcal{S}([l])}{\sum_{\emptyset\neq S'\subsetneq S}}f(S')\pa{\frac{1}{K}}^{\#\alpha_{S'}+\#\alpha_{S\setminus S'}-cc(\mathcal{V}[S'\cup\ac{0}])-cc(\mathcal{V}[S\setminus S'])}\pa{\frac{1}{L}}^{r_{\alpha_{S'}}+r_{\alpha_{S\setminus S'}}-cc\pa{\mathcal{N}[S']}-cc\pa{\mathcal{N}[S\setminus S']}}\enspace.\end{align*}

Let us deal with the term corresponding to $S'=\emptyset$. It is clear that $cc\pa{\mathcal{V}[S]}\leq cc\pa{\mathcal{V}[S\cup \ac{0}]}+1$. Thus $\pa{\frac{1}{K}}\pa{\frac{1}{K}}^{\#\alpha_S-cc\pa{\mathcal{V}[S]}}\pa{\frac{1}{L}}^{r_{\alpha_s}-cc\pa{\mathcal{N}[S]}}\leq  \pa{\frac{1}{K}}^{\#\alpha_S-cc(\mathcal{V}[S\cup\ac{0}])}\pa{\frac{1}{L}}^{r_{\alpha_S}-cc\pa{\mathcal{N}[S]}}$.

By the recursive definition of $f(S)$, it is sufficient to prove that, for all non empty $S'\subsetneq S$, we both have $$\#\alpha_{S'}+\#\alpha_{S\setminus S'}-cc(\mathcal{V}[S'\cup\ac{0}])-cc(\mathcal{V}[S\setminus S'])\geq \#\alpha_{S}-cc(\mathcal{V}[S\cup\ac{0}])$$ and 
$$r_{\alpha_{S'}}+r_{\alpha_{S\setminus S'}}-cc\pa{\mathcal{N}[S']}-cc\pa{\mathcal{N}[S\setminus S']}\geq r_{\alpha_{S}}-cc\pa{\mathcal{N}[S]}\enspace.$$

Let us prove only that $\#\alpha_{S'}+\#\alpha_{S\setminus S'}-cc(\mathcal{V}[S'\cup\ac{0}])-cc(\mathcal{V}[S\setminus S'])\geq \#\alpha_{S}-cc(\mathcal{V}[S\cup\ac{0}])$, the proof being similar for the other term. Let $q=cc(\mathcal{V}[S\cup\ac{0}])$ and let $cc_1,\ldots, cc_q$ be the connected components of $\mathcal{V}[S\cup\ac{0}]$. Let $q'\in [q]$ and let $h=cc(\mathcal{V}[\pa{S'\cup\ac{0}}\cap cc_{q'}])+cc(\mathcal{V}[(S\setminus S')\cap cc_{q'}])$. Let $a_1,\ldots, a_h$ denote the collection of those connected components. Since the graph $\mathcal{V}[cc_{q'}]$ is connected, we can, up to a possible reordering of these connected components,  suppose that, for all $h'\in [2,h]$, $\cup_{s\in a_{h'}}\ac{i_s, i'_s}$ intersects $\cup_{h''<h'}\cup_{s\in a_{h''}}\ac{i_s, i'_s}$. We deduce that 
\begin{align*}
\sum_{h'\in [h]}|\cup_{s\in a_{h'}}\ac{i_s,i'_s}|= & |\cup_{s\in a_{1}}\ac{i_s,i'_s}|+\sum_{h'\geq 2}|\cup_{s\in a_{h'}}\ac{i_s,i'_s}|\\
\geq& |\cup_{s\in a_{1}}\ac{i_s,i'_s}|+\sum_{h'\geq 2}\pa{1+\left|\cup_{s\in a_{h'}}\ac{i_s,i'_s}\setminus \pa{\cup_{h''<h'}\cup_{s\in a_{h''}}\ac{i_s, i'_s}}\right|}\\
\geq& |\cup_{s\in cc_{q'}}\ac{i_s, i'_s}|+cc(\mathcal{V}[S'\cap cc_{q'}])+cc(\mathcal{V}[(S\setminus S')\cap cc_{q'}]) -1\enspace. 
\end{align*}

Together with the fact that $\sum_{h'\in [h]}|\cup_{s\in a_{h'}}\ac{i_s,i'_s}|=|\cup_{s\in \pa{S'\cup\ac{0}}\cap cc_{q'}}\ac{i_s,i'_s}|+|\cup_{s\in (S'\setminus S)\cap cc_{q'}}\ac{i_s,i'_s}|$, this leads us  to 
\begin{flalign*}
|\cup_{s\in \pa{S'\cup \ac{0}}\cap cc_{q'}}\ac{i_s,i'_s}|+|\cup_{s\in (S\setminus S')\cap cc_{q'}}\ac{i_s,i'_s}|-cc(\mathcal{V}[S'\cap cc_{q'}])-cc(\mathcal{V}[(S\setminus S')\cap cc_{q'}])
\\  \geq |\cup_{s\in cc_{q'}}\ac{i_s, i'_s}|-1\enspace.
\end{flalign*}
Summing other all $q'\in [q]$ leads us to
$$\#\alpha_{S'}+\#\alpha_{S\setminus S'}-cc(\mathcal{V}[S'\cup\ac{0}])-cc(\mathcal{V}[S\setminus S'])\geq \#\alpha_S-cc\pa{\mathcal{V}[S\cup \ac{0}]}\enspace.$$

This concludes the proof of the lemma.

\subsection{Proof of Lemma \ref{lem:upperboundfbi}}\label{prf:upperboundfbi}

We proceed by induction to prove that, for all $S\in \mathcal{S}([l])$, we have  
$f(S)\leq |\alpha|^{\frac{|\alpha_S|}{2}}$. 
The initialization is trivial since $f(\emptyset)=1$ and $\alpha_{\emptyset}=0$. Let us take $S\in  \mathcal{S}([l])$ non empty and let us suppose that the result holds for all $S'\subsetneq S$. For all $s\in S$, let $S^*(s)$ the maximal element of $\mathcal{S}([l])$ which is included in $S\setminus \ac{s}$. The existence of such an element in provided from the fact that the set of elements $S'\in \mathcal{S}([l])$ with $S'\subseteq S\setminus \ac{s}$ is not empty (it contains $\emptyset$) and is stable by union. We have 
\begin{align*}
f(S)=1+\underset{S'\in \mathcal{S}([l])}{\sum_{S'\subsetneq S}}f(S')
& \leq  1+\sum_{s\in S}\underset{S'\in \mathcal{S}([l])}{\sum_{S'\subsetneq S^*(s)}}f(S')\\
& \leq  1+\sum_{s\in S} \cro{2f(S^*(s))-1}\\
& \leq  2\sum_{s\in S}f(S^*(s)).
\end{align*}
Applying the induction hypothesis leads to 
\begin{equation*}
f(S)\leq 2\sum_{s\in S}|\alpha|^{\frac{|\alpha_{S^*(s)}|}{2}}\enspace.
\end{equation*}
Since $S^*(s)$ does not contain $s$, it follows that $|\alpha_{S^*_s}|\leq |\alpha_{S}|-2$. We deduce that 
$$f(S)\leq 2\sum_{s\in S}|\alpha|^{\frac{|\alpha_S|}{2}-1}=  \frac{2|S|}{|\alpha|} |\alpha|^{\frac{|\alpha_S|}{2}}\leq \pa{|\alpha|}^{\frac{|\alpha_S|}{2}}\enspace,$$
where the last inequality comes from the fact that $|S|\leq \frac{|\alpha|}{2}$. This concludes the induction and the proof of the lemma.

\subsection{Proof of Lemma \ref{lem:connectivitybi}}\label{prf:connectivitybi}

Recall that $\mathcal{V}=\mathcal{V}[[0,l]]$ and that $\mathcal{W}=\mathcal{W}[[0,l]]$. For short, we write $\mathcal{N}'= \mathcal{N}[[l]]$.

We know that $\ac{1,2}\subset supp(\alpha)$ and that they are in the same connected component of $\mathcal{V}$. Thus, it is clear that $\#\alpha+r_\alpha-cc(\mathcal{N}')-cc(\mathcal{V})\geq 1$. It remains to prove that $$\#\alpha+r_\alpha-cc(\mathcal{N}')-cc(\mathcal{V})\geq \#\alpha+r_\alpha-\frac{|\alpha|}{2}-1\enspace,$$
which is equivalent to
$$|\alpha|\geq 2\pa{cc(\mathcal{N}')+cc(\mathcal{V})-2}\enspace.$$
We shall use the fact that $\mathcal{W}$ is connected. We write $q=cc(\mathcal{N}')$. We write $J_1,\ldots, J_q$ the partition of $col(\alpha)$ induced by the equivalence relation; $j$ and $j'$ are equivalent if and only if there exists $s,s'$ in the same connected component of $\mathcal{N}'$ such that $j\in col(\beta_s)$ and $j'\in col(\beta_{s'})$. For $R\subseteq [l]$, we write $q(R)\subseteq [q]$ the collection of $q'$ such that $\sum_{s\in R}\sum_{j\in J_{q'}}|(\beta_s)_{:j}|\neq 0$. In other words, $q(R)$ also corresponds to the collection of connected components of $\mathcal{N}'$ that intersect with $R$. 
Let us finally write $t=cc(\mathcal{V})$ and $cc_1,\ldots, cc_t$ the connected components of $\mathcal{V}$.

The graph $\mathcal{W}$ corresponds to the superposition of $\mathcal{V}$ and of $\mathcal{N}'$. Connected components in $\mathcal{V}$ are connected by edges in $\mathcal{N}$.

Besides, recall that $\mathcal{W}$ is connected. Hence, we can assume, without loss of generality, that for all $t'\in [2,t]$, $q(cc_{t'}\setminus \ac{0})$ intersects $\cup_{t''\leq t'-1}q(cc_{t''}\setminus \ac{0})$. For all $t'\in [t]$, it is clear that $|\alpha_{cc_{t'}\setminus \ac{0}}|\geq 2|q(cc_{t'}\setminus \ac{0})|$. Hence, we conclude that 
\begin{align*}
|\alpha|\geq& \sum_{t'\leq t}|\alpha_{cc_{t'}\setminus \ac{0}}|
\geq \sum_{t'\leq t}2q(cc_{t'}\setminus \ac{0})\\
\geq& 2\sum_{t'\leq t}\pa{\1\ac{t'\neq 1}+\left|q(cc_{t'}\setminus \ac{0})\setminus \cup_{t''\leq t'-1}q(cc_{t''}\setminus \ac{0})\right|}\\
\geq& 2(t-1)+2\left|q([l])\right|= 2t+2q-1\enspace.
\end{align*}
This concludes the proof of the lemma.

\subsection{Proof of Lemma \ref{lem:controlcumulantsbisharp}}\label{prf:controlcumulantsbisharp}

In order to get a satisfying upper-bound of $|\kappa_{x,\alpha}|$ from Inequality Theorem \ref{thm:LTC} and Lemma \ref{lem:boundLTCbi}, it is sufficient to remark that $\mathcal{P}_2(\alpha)$ contains at most $|\alpha|^{\frac{|\alpha|}{2}-1}$ elements.

Plugging this with Lemma \ref{lem:boundLTCbi} in Theorem \ref{thm:LTC} leads to

$$|\kappa_{x,\alpha}|\leq |\alpha|\frac{1}{K}\min\pa{1,\pa{\frac{1}{K\wedge L}}^{\#\alpha+r_\alpha-\frac{|\alpha|}{2}-2}}\enspace.$$

which concludes the proof of the lemma.

\subsection{Proof of Lemma \ref{lem:nullcumulantsbisharp}}\label{prf:nullcumulantsbisharp}

Let $\alpha\in \N^{n\times p}$ non-zero. Let us prove that if $\alpha$ does not satisfy the three conditions of Lemma \ref{lem:nullcumulantsbisharp}, then $\kappa_{x,\alpha}=0$.

First, we suppose that either $1$ or $2$ is not in $supp(\alpha)$. This implies that $x$ is independent from $\pa{X_{ij}}_{ij\in \alpha}$ and we deduce from Lemma \ref{lem:independentcumulant2} that $\kappa_{x,\alpha}=0$. 

Let us suppose that there exists $i_0\in supp(\alpha)$ with $|\alpha_{i_0:}|=1$. Since $\eps^{r}_{i_0}$ has the same law as $-\eps^{r}_{i_0}$ and is independent from all the other variables (in particular, changing $\eps^{r}_{i_0}$ by $-\eps^{r}_{i_0}$ does not change the law of $(x,X)$). So, by multilinearity of cumulants, we have $\kappa_{x,\alpha}=\pa{-1}^{|\alpha_{i_0:}|}\kappa_{x,\alpha}=-\kappa_{x,\alpha}$. We deduce directly that $\kappa_{x,\alpha}=0$.

Similarly, if there exists $j_0$ such that $|\alpha|_{:j_0}= 1$, we use the fact that changing $\eps^c_{j_0}$ to $-\eps^c_{j_0}$ does not change the law of $(x,X)$ and we deduce $\kappa_{x,\alpha}=\pa{-1}^{|\alpha_{:j_0}|}\kappa_{x,\alpha}=-\kappa_{x,\alpha}$ so that $\kappa_{x,\alpha}=0$.

\section{Proof of Corollary \ref{cor:LTC}}\label{sec:cor:LTC} 
To get the Bound (\ref{eq:bound-C}), we start from M\"obius formula -- see   Lemma \ref{lem:mobiusformula} in Appendix~\ref{sec:cumulants} --
\begin{align*}
|C_{x,\beta_{1},\ldots,\beta_{l}}|&\leq \sum_{\pi\in \mathcal{P}\pa{[l]\cup \ac{x}}} (|\pi|-1)! \E\cro{|x|; \underset{s\in \pi_{1}\setminus \ac{x}}{\cap}\Omega _{\beta_{s}}} \prod_{k=2}^{|\pi|} \P\cro{\underset{s\in \pi_{k}}{\cap}\Omega _{\beta_{s}}}\\
&\leq   \max_{\pi\in \mathcal{P}\pa{[l]\cup \ac{x}}}\left\{\E\cro{|x|; \underset{s\in \pi_{1}\setminus\ac{x}}{\cap}\Omega _{\beta_{s}}} \prod_{k=2}^{|\pi|} \P\cro{\underset{s\in \pi_{k}}{\cap}\Omega _{\beta_{s}}}\right\} \sum_{\pi\in \mathcal{P}\pa{[l]\cup \ac{x}}} (|\pi|-1)!\enspace .
\end{align*}
Denoting by ${l+1 \brace k}$ the Stirling number of the second kind, which counts the number of partitions $\pi\in \mathcal{P}\pa{[l]\cup \ac{x}}$ with $k$ non-empty sets, we get
\begin{align*}
\sum_{\pi\in \mathcal{P}\pa{[l]\cup \ac{x}}} (|\pi|-1)! &= \sum_{k=1}^{l+1} {l+1 \brace k} (k-1)! \\
&=  l!+{l+1 \brace l} (l-1)!+ \sum_{k=1}^{l-1} {l \brace k} k!+\sum_{k=2}^{l-1} {l \brace k-1} (k-1)! \\
&= \sum_{k=1}^{l} {l \brace k} k! + \sum_{k=1}^{l-2} {l \brace k} k!+{l+1 \brace l} (l-1)! \\
&= 2 \sum_{k=1}^{l} {l \brace k} k! =2f_{l},
\end{align*}
where we used for the penultimate equality that  
$${l+1 \brace l} (l-1)!= {(l+1)\over 2} l! =  {(l-1)\over 2} l!+l!= {l \brace l-1} (l-1)!+l!\enspace. $$
The bound on $f_{l}$ readily follows from the exponential generating function evaluated at $x=1/2$
$$\sum_{l\geq 1} {f_{l}\over l!} \pa{1\over 2}^l ={1\over 2-\exp(1/2)}\leq 3.$$ 
The proof of Corollary \ref{cor:LTC} is complete.

\section{Proof of the upper bounds}\label{sec:proof:ub}

\subsection{Proof of Proposition \ref{prop:upper-boundclustering}}\label{prf:upper-boundclustering}

Let us suppose without loss of generality that $\sigma^2=1$. We shall introduce two different procedures corresponding to the different regimes. 

Both procedures proceed from the same general scheme:
\begin{enumerate}
\item We split the dataset randomly into two datasets $Y^{(1)}$ and $Y^{(2)}$;
\item We compute $\hat{v}_1,\ldots, \hat{v}_K$ the leading eigenvectors of $\pa{Y^{(1)}}^TY^{(1)}$ and we project orthogonally $Y^{(2)}$ onto $\hat{v}_1,\ldots, \hat{v}_K$;
\item We apply a low-dimensional clustering procedure on the projected dataset $\hat{p}\pa{Y^{(2)}}$;
\item We perform Linear Discriminant Analysis in order to assign each point of $Y^{(1)}$ to one of the clusters of $\hat{p}\pa{Y^{(2)}}$.
\end{enumerate}

Let $\delta_1,\ldots, \delta_n$ i.i.d uniformly taken on $\ac{1,2}$. Let $I_1=\ac{i\in [n], \delta_i=1}$ and $I_2=\ac{i\in [n], \delta_i=2}$. Let $Y^{(1)}\in \R^{|I_1|\times p}$ be the data matrix  restricted to $I_1$ and $Y^{(2)}\in \R^{|I_2|\times p}$ be the date matrix  restricted to $I_2$. Let $\hat{v}_1,\ldots, \hat{v}_K$ be the $K$ leading eigenvectors of $\pa{Y^{(1)}}^TY^{(1)}$ and let $\hat{p}$ be the orthogonal projection on $\hat{v}_1,\ldots, \hat{v}_K$. The following key lemma ensures that the projected centers are still well-separated.

 We recall that, in this section, we assume that the partition $G^*$ is balanced in the following sense.
  For some constant $\gamma>0$,we have 
 $$\frac{\max_{k}|G_k^*|}{\min_k|G_k^*|}\leq \gamma\enspace.$$

\begin{lem}\label{lem:projmixture}
We suppose $\max(K,\log(n))\leq p\leq n$. There exists a constants $c_{\gamma}$ that only depends on $\gamma$ such that the following holds provided that $\Delta^4\geq c_{\gamma}\frac{pK^2}{n}$.  With probability at least $1-\frac{4}{n^2}$, we have  $\|\hat{p}(\mu_k)-\hat{p}(\mu_l)\|^2\geq \frac{1}{4}\|\mu_k-\mu_l\|^2$ for all  $k, l\in[1,K]$.
\end{lem}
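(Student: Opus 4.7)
My plan is to reduce the lemma to an operator-norm perturbation estimate and then apply Davis--Kahan to the matrices $(X^{(1)})^TX^{(1)}$ and $(Y^{(1)})^TY^{(1)}$. Since each row of $X^{(1)}$ equals some $\mu_{k_i^\star}$, the row space $S$ of $X^{(1)}$ is contained in $\operatorname{span}(\mu_1,\ldots,\mu_K)$ and contains every difference $\mu_k-\mu_l$, so that writing $P$ for the orthogonal projection onto $S$, I have $P(\mu_k-\mu_l)=\mu_k-\mu_l$. Letting $r=\operatorname{rank}(X^{(1)})\leq K$ and $\hat p^{(r)}$ denote the projection on the top-$r$ eigenspace of $(Y^{(1)})^TY^{(1)}$, the inclusion $\operatorname{Range}(\hat p^{(r)})\subseteq \operatorname{Range}(\hat p)$ yields $\|\hat p(\mu_k-\mu_l)\|\geq \|\hat p^{(r)}(\mu_k-\mu_l)\|\geq(1-\|P-\hat p^{(r)}\|_{op})\|\mu_k-\mu_l\|$, so it will suffice to prove $\|P-\hat p^{(r)}\|_{op}\leq 1/2$ with probability at least $1-4/n^2$.

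\textbf{Perturbation setup and key estimates.} I will work with the shifted decomposition $(Y^{(1)})^TY^{(1)}-|I_1|\mathbf{I}_p=(X^{(1)})^TX^{(1)}+W$, where $W=\bigl((E^{(1)})^TE^{(1)}-|I_1|\mathbf{I}_p\bigr)+(X^{(1)})^TE^{(1)}+((X^{(1)})^TE^{(1)})^T$. Subtracting $|I_1|\mathbf{I}_p$ does not affect the eigenvectors but replaces the dominant noise contribution $(E^{(1)})^TE^{(1)}$, of operator norm $\sim n$, by the centered Wishart fluctuation, of operator norm $\sim \sqrt{np}$, which is precisely what matches the BBP threshold. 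Writing $X^{(1)}=Z^{(1)}M$ with $M\in\R^{K\times p}$ carrying $\mu_k^{T}$ as rows, a Bernstein bound combined with the balance assumption gives $|G_k^\star\cap I_1|\asymp_\gamma n/K$ uniformly in $k$ on an event of probability $\geq 1-n^{-2}$, so that $D=(Z^{(1)})^TZ^{(1)}\asymp_\gamma(n/K)\mathbf{I}_K$ and $\lambda_r\bigl((X^{(1)})^TX^{(1)}\bigr)\gtrsim_\gamma(n/K)\sigma_r(M)^2$. A geometric argument based on the pairwise separation $\|\mu_k-\mu_l\|\geq\sqrt 2\Delta$ then yields $\sigma_r(M)^2\gtrsim\Delta^2$, so that the signal gap is at least of order $n\Delta^2/K$. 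Wishart concentration gives $\|(E^{(1)})^TE^{(1)}-|I_1|\mathbf{I}_p\|_{op}\lesssim \sqrt{np}$, and the rank-$r$ SVD $X^{(1)}=U\Sigma V^T$ with $U\in\R^{|I_1|\times r}$ orthonormal reveals that $U^TE^{(1)}$ has i.i.d.\ $\mathcal{N}(0,1)$ entries, whence $\|(X^{(1)})^TE^{(1)}\|_{op}\leq \|X^{(1)}\|_{op}\sqrt p\lesssim_\gamma\Delta\sqrt{np/K}$ after recentering the means.

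\textbf{Davis--Kahan and conclusion.} The sin-$\Theta$ theorem then gives
\[
\|P-\hat p^{(r)}\|_{op}\lesssim\frac{\|W\|_{op}}{\lambda_r\bigl((X^{(1)})^TX^{(1)}\bigr)}\lesssim_\gamma K\sqrt{\frac{p}{n\Delta^4}}+\sqrt{\frac{Kp}{n\Delta^2}},
\]
and the hypothesis $\Delta^4\geq c_\gamma pK^2/n$, with $c_\gamma$ large enough, will drive both terms below $1/4$, giving the desired $\|P-\hat p^{(r)}\|_{op}\leq 1/2$.

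\textbf{Main obstacle.} The sharpest technical hurdles will be: (i) establishing the geometric lower bound $\sigma_r(M)^2\gtrsim\Delta^2$ uniformly over admissible configurations, which is straightforward in generic position (e.g.\ under the Gaussian prior of Definition~\ref{def:priorclustering}, where $\sigma_r(M)^2\asymp \lambda^2p\asymp\Delta^2$) but needs care at rank-deficient arrangements of the means; and (ii) controlling $\|X^{(1)}\|_{op}$ without any a priori bound on $\max_k\|\mu_k\|$, since pairwise separation alone does not forbid an arbitrarily large common offset $\bar\mu$. I will handle (ii) by a preliminary reduction to the column-centered data $\tilde Y^{(1)}=Y^{(1)}-\mathbf{1}(\bar Y^{(1)})^T$, whose signal matrix satisfies $\|\tilde X^{(1)}\|_{op}\lesssim_\gamma\Delta\sqrt{n/K}$ while still capturing every difference $\mu_k-\mu_l$; the top-$K$ eigenspaces of $(\tilde Y^{(1)})^T\tilde Y^{(1)}$ and $(Y^{(1)})^TY^{(1)}$ differ only by the rank-one centroid direction, which is captured in both cases.
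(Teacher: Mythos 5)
Your reduction to the single estimate $\|P-\hat p^{(r)}\|_{op}\leq 1/2$ is strictly stronger than what the lemma asserts, and it is false under the lemma's hypotheses, because the key geometric claim $\sigma_r(M)^2\gtrsim \Delta^2$ does not follow from pairwise separation. Take $K=3$, $\mu_1=0$, $\mu_2=(T,0,0,\ldots)$, $\mu_3=(2T,\eps,0,\ldots)$ with $T\geq \sqrt2\,\Delta$ huge and $\eps>0$ tiny: all pairwise distances are $\geq T$, yet $r=2$ and $\sigma_2(M)\asymp\eps$ is arbitrarily small. Then $\lambda_r\bigl((X^{(1)})^TX^{(1)}\bigr)\asymp (n/K)\eps^2$ can sit far below the Wishart fluctuation $\sqrt{np}$, Davis--Kahan gives nothing, and in fact $\|P-\hat p^{(r)}\|_{op}$ will be close to $1$ (the $\eps$-direction of the row space is drowned in noise) while the conclusion of the lemma still holds, since the normalized differences put only an $O(\eps^2/T^2)$ fraction of their mass on that ill-conditioned direction. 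So the right object is not the full span of the means but the set of directions having constant correlation with some normalized difference $(\mu_k-\mu_l)/\|\mu_k-\mu_l\|$; this is exactly what the paper's proof controls: Lemma~\ref{lem:signal} shows that for every unit $x$ with $|\<x,(\mu_k-\mu_l)/\|\mu_k-\mu_l\|\>|\geq 1/2$ one has $\|A\mu x\|^2\geq \tfrac{n}{64\gamma K}\Delta^2$ regardless of how degenerate the configuration of means is, Lemma~\ref{lem:orthogonalspace} bounds $\hat\lambda_{K+1}$, and the conclusion follows from a Rayleigh-quotient contradiction applied to $y-\hat p(y)$, with no eigenspace perturbation bound at all. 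If you insist on a sin-$\Theta$ route, you would have to apply it only to the eigenspace of $(X^{(1)})^TX^{(1)}$ above the threshold $c\,n\Delta^2/K$ and prove separately that each normalized difference has constant correlation with that space --- which is Lemma~\ref{lem:signal} in disguise.

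Your fix for the second obstacle is also not correct as stated: after column-centering, $\|\tilde X^{(1)}\|_{op}$ is \emph{not} $\lesssim_\gamma \Delta\sqrt{n/K}$, because $\Delta$ is the \emph{minimum} separation and the maximum pairwise distance (hence the centered operator norm) can exceed it by an arbitrary factor (already for $K=2$ with $\|\mu_1-\mu_2\|=T\gg\Delta$ one gets $\|\tilde X^{(1)}\|_{op}\asymp T\sqrt{n}$). Consequently the cross-term bound $\|(X^{(1)})^TE^{(1)}\|_{op}\lesssim_\gamma\Delta\sqrt{np/K}$ is unavailable, and with the true gap $(n/K)\sigma_r(M)^2$ the Davis--Kahan quotient is not controlled by $\Delta^4\geq c_\gamma pK^2/n$. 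The paper sidesteps this entirely by never bounding $\|X^{(1)}\|_{op}$: in Lemma~\ref{lem:crossproduct} the cross term is absorbed into the signal term via $|\<A\mu x, PEx\>|\leq \tfrac14\|A\mu x\|^2+4\|PE\|_{op}^2$ with $\|PE\|_{op}\lesssim \sqrt{K}+\sqrt{p+\log n}$, so only the projection of the noise onto the (at most $K$-dimensional) signal row space ever enters. Any repair of your argument needs both ingredients: a statement about directions correlated with differences rather than the whole row space, and a cross-term treatment that does not invoke $\|X^{(1)}\|_{op}$ or any upper bound on $\max_k\|\mu_k\|$.
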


We organize the proof in the following way. In Section \ref{sec:tructenseur}, we apply \cite{LiuLi2022} to the projected dataset $Y^{(2)}$ this allows to prove the second part Proposition~\ref{prop:upper-boundclustering} --see Proposition~\ref{cor:projection+tenseur} below. In Section \ref{sec:hierarchical}, we apply some hierarchical clustering procedure which will lead to the first part of Proposition~\ref{prop:upper-boundclustering} --see Proposition~\ref{cor:hierarchical+projection}. Finally, in Section \ref{prf:projmixture}, we provide a proof of Lemma \ref{lem:projmixture}.

Throughout the proof, we shall multiple times rely on the following lemma that ensures that the restrictions of $G^*$ to $I_1$ and $I_2$ are balanced.
\begin{lem}\label{lem:balancedclusters}
Suppose that $n\geq c\gamma^2 K^2$ with $c$ a numerical constant. Then, with probability higher than $1-1/n^2$, for all $k\in [K]$, we have $|G_k^*\cap I_1|\geq |G_k^*|/4$ and  $|G_k^*\cap I_2|\geq |G_k^*|/4$.
\end{lem}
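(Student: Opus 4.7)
This is essentially a Chernoff bound for the Binomial distribution combined with the balancedness assumption and a union bound over the $K$ clusters. I will sketch the steps, each of which is routine.

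First, I would observe that conditionally on $G^*$, the random set $G_k^*\cap I_1=\{i\in G_k^*:\delta_i=1\}$ is governed by $|G_k^*|$ independent fair coin flips, so $|G_k^*\cap I_1|$ follows a Binomial$(|G_k^*|,1/2)$ distribution, with mean $|G_k^*|/2$. By symmetry, $|G_k^*\cap I_2|=|G_k^*|-|G_k^*\cap I_1|$ has the same distribution, so it suffices to control the lower tail of a Binomial$(N,1/2)$ variable at $N/4$. A standard Chernoff inequality yields
\[
\P\bigl[\operatorname{Bin}(N,1/2)<N/4\bigr]\leq \exp(-N/8),\quad\text{with }N=|G_k^*|.
\]

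Next, I would use the balancedness condition~\eqref{eq:balanced}: together with $\sum_k|G_k^*|=n$, it forces $|G_k^*|\geq n/(\gamma K)$ for every $k\in[K]$. Under the assumption $n\geq c\gamma^2K^2$, this lower bound becomes $|G_k^*|\geq c\gamma K$. A union bound over the $2K$ events indexed by $k\in[K]$ and $s\in\{1,2\}$ then gives
\[
\P\Bigl[\exists\,k\in[K],\,s\in\{1,2\}:\ |G_k^*\cap I_s|<|G_k^*|/4\Bigr]\leq 2K\,\exp\!\left(-\frac{n}{8\gamma K}\right).
\]
Choosing the numerical constant $c$ sufficiently large ensures that the exponential decay dominates the prefactor $2K$ and drives the whole quantity below $n^{-2}$, which is the stated bound.

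There is no real obstacle in this argument: it is essentially a textbook application of Chernoff's inequality, and the only care lies in the bookkeeping of the constants. The role of the condition $n\geq c\gamma^2K^2$ is simply to guarantee that every cluster is large enough (of order at least $\gamma K$, and thus of logarithmic order in $n$ in the regimes of interest) for the exponential concentration to beat both the union bound and the target probability $n^{-2}$.
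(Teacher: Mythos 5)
Your proof is correct and follows essentially the same route as the paper: the paper also treats $|G_k^*\cap I_1|$ as a Binomial$(|G_k^*|,1/2)$ variable, applies Hoeffding's inequality with deviation $t=|G_k^*|/4$ (which gives exactly your $\exp(-|G_k^*|/8)$ tail), lower-bounds $|G_k^*|\geq n/(\gamma K)$ via balancedness, and concludes with a union bound over the clusters using $n\geq c\gamma^2K^2$ to make the exponential term dominate. The only cosmetic difference is that the paper notes $n/(\gamma K)\geq \sqrt{c n}$, so the exponent is of order $\sqrt{n}$ rather than merely logarithmic, which makes the final comparison with $2K/n^2$ immediate.
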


In the following, we work conditionally on $I_1$ and $I_2$ and we assume, without loss of generality, that the event of Lemma \ref{lem:balancedclusters} holds.

\begin{proof}[Proof of Lemma \ref{lem:balancedclusters}]
Let us fix $k\in [K]$ and let us consider $|G_k^*\cap I_1|$ which is a binomial of parameters $|G_k^*|\geq \frac{n}{K\gamma}$ and $\frac{1}{2}$. Using Hoeffding Inequality, we deduce that, for $t>0$, 
$$\P\cro{||G_k^*\cap I_1|-\frac{|G_k^*|}{2}|\geq t}\leq 2\exp\pa{\frac{-2t^2}{|G_k^*|}}\enspace.$$
Taking $t=\frac{|G_k^*|}{4}$, applying an union bound on all $k\in [K]$, yields that the desired result holds with probabilily higher than $1- 2K\exp[-(\min_{k=1,\ldots,K}|G_k^*|)/4]$ which is larger than $1-1/n$ as soon since $|G_k^*|\geq n/[K\gamma]\geq c^{1/2}\sqrt{n}$. 
\end{proof}

\subsubsection{Tensor method of  Li and Liu~\cite{LiuLi2022}}\label{sec:tructenseur}

In this section, we apply as a black-box the iterative tensor projection procedure of\cite{LiuLi2022} to the projected dataset $\hat{p}\pa{Y^{(2)}}$. This polynomial-time method is described in  Algorithm \ref{alg:projection+tenseur}. In this subsection, we denote $\hat{G}$ the resulting estimator of the partition. 

\begin{algorithm}
\caption{Projection and iterative tensor projection}\label{alg:projection+tenseur}
\KwData{$Y_{1},\ldots, Y_{n}$}
Draw $(\delta_i)_{i\in [1,n]}$ independently and uniformly on $\{1,2\}$;\\
Compute $\hat{v_1},\ldots,\hat{v}_K$ the leading eigenvectors of $\pa{Y^{(1)}}^T Y^{(1)}$, with $Y^{(1)}$ the restriction of $Y$ to $I_1$;\\
For $i\in I_2$, compute $\hat{p}(Y_i)$ the orthogonal projection of $Y_i$ on the space $Vect\pa{\hat{v_1},\ldots,\hat{v}_K}$ (if $p\leq \max\pa{K,\log(n)}$ keep $Y_i$);\\
Compute $\hat{G}$ the clustering of the projected dataset $\hat{p}(Y^{(2)})$ using the method from \cite{LiuLi2022};\\
\For{$k\in[1,K]$}{
Compute $\hat{\mu}_k:=\frac{1}{|\hat{G}_k|}\sum_{i\in \hat{G}_k}Y_i$}
\For{$i\in I_1$}{
Assign $i$ to the group $\hat{G}_k$ minimizing $\|Y_i-\hat{\mu}_k\|$.}
\KwResult{The partition $\hat{G}$.}
\end{algorithm}

\medskip
The following proposition states that $\hat{G}$ perfectly recover the unknown partition provided the separation $\Delta^2$ is large compared to $\log(n)+\sqrt{\frac{pK^2}{n}}$. This comes to the price of the condition that $n$ is at least polynomial in $K$.

\begin{prop}\label{cor:projection+tenseur}
For any $\eps>0$, there exist constants $c_{\gamma}$, $c'_{\gamma,\epsilon}>0$, and $c_2>0$ such that the following holds. If $\Delta^2\geq c_{\gamma}\pa{\log(n)^{1+\eps}+\sqrt{\frac{pK^2}{n}}}$ and $n\geq K^{c'_{\gamma,\epsilon}}$, the output $\hat{G}$ of Algorithm \ref{alg:projection+tenseur} satisfies $$\P\cro{\hat{G}=G^*}\geq 1-c_2/n^2\enspace.$$
\end{prop}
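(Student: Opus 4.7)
The plan is to combine sample splitting with Lemma~\ref{lem:projmixture} to reduce the problem to low-dimensional clustering, invoke the Liu--Li tensor method~\cite{LiuLi2022} on the projected second half, and label the remaining points by nearest-centroid classification against the empirical means of the clusters identified on $I_2$. I would begin by conditioning on the balancedness event of Lemma~\ref{lem:balancedclusters}, which holds with probability $1-1/n^2$ under $n\geq c_\gamma K^2$ and guarantees $|G^*_k\cap I_q|\asymp_\gamma n/K$ for $q\in\ac{1,2}$ and $k\in[K]$.

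Conditionally on $Y^{(1)}$, the projector $\hat p$ onto the top-$K$ eigenvectors of $(Y^{(1)})^\top Y^{(1)}$ is deterministic, so $\pa{\hat p(Y_i)}_{i\in I_2}$ is an independent $K$-dimensional isotropic Gaussian mixture with centers $\hat p(\mu_k)$ and noise variance $\sigma^2=1$. Under $\Delta^4\geq c_\gamma pK^2/n$, Lemma~\ref{lem:projmixture} ensures, with probability $1-4/n^2$, that $\|\hat p(\mu_k)-\hat p(\mu_l)\|^2\geq \|\mu_k-\mu_l\|^2/4$ for all $k\neq l$, whence the scaled separation of the projected mixture is at least $\Delta^2/4\geq \tilde c_\gamma\log(n)^{1+\eps}$. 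Applying Corollary~2.6 of~\cite{LiuLi2022} in effective dimension $K$ to this independent projected sample, and choosing $c'_{\gamma,\eps}$ large enough so that the hypothesis $n\geq K^{c'_{\gamma,\eps}}$ fulfills their polynomial-in-$K$ sample-size requirement, one obtains $\hat G=G^*|_{I_2}$ with probability at least $1-n^{-c_2}$. The low-dimensional regime $p\leq\max(K,\log n)$ is handled identically by applying Liu--Li directly to $Y^{(2)}$.

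On this success event, the empirical centroids write $\hat\mu_k=\mu_k+\bar E^{(2)}_k$, where $\bar E^{(2)}_k$ is an isotropic Gaussian vector of coordinate variance at most $c_\gamma K/n$, independent of $(Y_i)_{i\in I_1}$. Standard Gaussian concentration yields, uniformly over $k$ with probability $1-O(n^{-2})$, the centroid error bound $\|\hat\mu_k-\mu_k\|^2\al_\gamma (p+\log n)K/n$. For $i\in I_1$ with $k^*_i=k$ and $l\neq k$, expanding
\begin{equation*}
\|Y_i-\hat\mu_l\|^2-\|Y_i-\hat\mu_k\|^2=\|\hat\mu_l-\mu_k\|^2-\|\hat\mu_k-\mu_k\|^2+2\<E_i,\hat\mu_k-\hat\mu_l\>,
\end{equation*}
the deterministic part is at least $\|\mu_k-\mu_l\|^2/2-O_\gamma(pK/n)$ by a triangle inequality, while the cross term is, conditionally on $Y^{(2)}$, Gaussian with variance $\sigma^2\|\hat\mu_k-\hat\mu_l\|^2$, hence of typical size $\sigma\pa{\|\mu_k-\mu_l\|+\sqrt{pK/n}}\sqrt{\log n}$ after a union bound over $(i,l)\in I_1\times[K]$. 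Plugging in $\|\mu_k-\mu_l\|^2\geq 2\Delta^2\sigma^2$ together with the assumptions $\Delta^2\geq c_\gamma\log(n)^{1+\eps}$ and $\Delta^4\geq c_\gamma pK^2/n$, the signal dominates both the centroid error and the cross term, so that every $i\in I_1$ is correctly classified.

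The main difficulty lies in the interplay between the two different regimes dictated by the hypothesis: $\Delta^4\geq c_\gamma pK^2/n$ is needed to preserve the separation through the spectral projection (Lemma~\ref{lem:projmixture}), while $\Delta^2\geq c_\gamma\log(n)^{1+\eps}$ drives both the Liu--Li guarantee in dimension~$K$ and the control of the Gaussian cross terms in the final nearest-centroid step. The polynomial sample-size requirement $n\geq K^{c'_{\gamma,\eps}}$ is inherited verbatim from~\cite{LiuLi2022} and dictates the final constant in the statement; a technical point to watch is that the event of Lemma~\ref{lem:projmixture} and the success of Liu--Li must be combined through a single conditioning on $Y^{(1)}$, which is made possible by the sample splitting.
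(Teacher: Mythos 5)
Your proposal is correct and follows essentially the same route as the paper: sample splitting, the spectral projection preserved by Lemma~\ref{lem:projmixture} on the balancedness event of Lemma~\ref{lem:balancedclusters}, the Liu--Li tensor procedure applied to the projected (independent) second half, and nearest-centroid classification of $I_1$ against the empirical centers from $I_2$. The only cosmetic difference is that you re-derive the final classification step by hand, where the paper simply invokes the linear-discriminant-analysis result stated as Lemma~\ref{lem:LDA}, whose separation condition $\Delta^2\gtrsim_\gamma \log(n)+\sqrt{pK\log(n)/n}$ is implied by the proposition's hypothesis exactly as in your estimate.
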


\medskip

\begin{proof}[Proof of Proposition \ref{cor:projection+tenseur}]\label{prf:projection+tenseur}
This proof mostly builds upon Lemma~\ref{lem:projmixture} and the work of~\cite{LiuLi2022}.
Note that if $p\leq \max\pa{\log(n), K}$, one does not need to use the projection $\hat{p}$ and we can consider $\hat{p}=I_d$. In all cases, the dimension of the projected dataset is at most $\max\pa{\log(n), K}$.

We work conditionally on $I_1$ and $I_2$. Without loss of generality, this event being of high probability when $n\geq c_{\gamma}K^2$, we suppose that $|I_1|,|I_2|\geq n/4$ and that for all $k\in [K]$, $|G_k^*\cap I_1|, |G_k^*\cap I_2|\geq \frac{|G_k^*|}{4}$ (Lemma \ref{lem:balancedclusters}). Conditionally on $I_1$ and $I_2$, the dataset $Y^{(2)}$ is independent from $Y^{(1)}$ and thus $Y^{(2)}$ is independent from $\hat{p}$. We then deduce from Lemma~\ref{lem:projmixture} that $\hat{p}(Y^{(2)})$ is a Gaussian mixture with $K$ groups of dimension at most $\max\pa{\log(n), K}$ and, with high probability, with a separatition  at least $\Delta/2$ between the groups. Then, we are in position to 
the results in Section 2.2 of \cite{LiuLi2022} that we state here as a Proposition. In fact, the original theorem of Li and Liu is stated for a Gaussian mixture model where the the group of each observation is sampled at random, whereas we are considering here a setting where the partition $G^*$ is fixed in advance. Nevertheless, by closely inspecting the proof, one readily checks that their result extends to our setting.

\begin{prop}\label{prop:LiuLi}\cite{LiuLi2022}
Let $Z_1,\ldots, Z_{n'}\in \R^{p'}$ being sampled from a mixture of Gaussian with an almost balanced partition $G'^*$ (i.e for all $k\in [K]$, $|G'^*_k|\geq \frac{n'}{\gamma 2 K}$) of $[n']$ and centers $\mu'_1,\ldots, \mu'_K\in \R^{p'}$. For all $\eps>0$, there exists positive constant $c_{\gamma}$, $c'_{\gamma,\epsilon}$, and $c''$  such that the following holds. If $\min_{k\neq l}\|\mu'_{k}-\mu'_j\|\geq c'\pa{\log(n)}^{\frac{1}{2}+\eps}$ and $n'\geq (p'K)^{c}$, there exists an algorithm $\hat{G}$ computable in polynomial time such that
$$\P\cro{\hat{G}=G}\geq 1-1/n^{c''}\enspace.$$
\end{prop}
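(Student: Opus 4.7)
The plan is to deduce this statement from the original result of \cite{LiuLi2022}, the only difference being that their theorem is stated for a mixture where labels are drawn i.i.d.\ from a probability vector, whereas here the partition $G'^{*}$ is fixed in advance and merely required to be $\gamma$-balanced. First, I would recall the high-level structure of their algorithm: form empirical moment tensors $\widehat{T}_{q}=n'^{-1}\sum_{i=1}^{n'} Z_{i}^{\otimes q}$ for an order $q=O(\log n')$, apply a robust tensor-decomposition power iteration to extract approximate centers $\widehat{\mu}'_{1},\ldots,\widehat{\mu}'_{K}$, and then assign each $Z_{i}$ to its nearest estimated center.

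Next, I would audit Liu--Li's proof to verify that the random-label structure is only used through concentration of within-cluster empirical averages. The key observation is that $\widehat{T}_{q}$ decomposes as $\widehat{T}_{q}=\sum_{k=1}^{K}(|G'^{*}_{k}|/n')\widehat{T}_{q,k}$, where $\widehat{T}_{q,k}$ is the empirical $q$-th moment restricted to cluster $k$. Each $\widehat{T}_{q,k}$ concentrates around its population counterpart (a function of $\mu'_{k}$ and $I_{p'}$) at a rate governed solely by $|G'^{*}_{k}|$, and the balancedness assumption $|G'^{*}_{k}|\geq n'/(2\gamma K)$ provides precisely the deterministic lower bound needed to replace the high-probability lower bound on random cluster sizes used in \cite{LiuLi2022}. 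In the random-label setting, the weights $|G'^{*}_{k}|/n'$ concentrate around the mixture probabilities $\pi_{k}\gtrsim 1/(\gamma K)$; here they are simply deterministic and bounded below, which is strictly more favorable.

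With this observation, the rest of the analysis of \cite{LiuLi2022} transfers essentially verbatim: the tensor power iteration, the perturbation bounds on $\widehat{\mu}'_{k}$, and the final rounding step all rely only on the concentration of $\widehat{T}_{q,k}$ and on the separation $\min_{k\neq \ell}\|\mu'_{k}-\mu'_{\ell}\|\geq c'(\log n')^{1/2+\eps}$. After adjusting constants to absorb the factor $\gamma$ arising from the balancedness assumption, one recovers the conclusion $\P(\widehat{G}=G'^{*})\geq 1-n^{-c''}$ under the stated conditions.

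The main obstacle is essentially bookkeeping: carefully tracking how the balancedness parameter $\gamma$ propagates through the constants in Liu--Li's perturbation analysis, and in particular verifying that their polynomial condition $n'\geq (p'K)^{c'}$ still holds with an exponent $c'_{\gamma,\eps}$ depending only on $\gamma$ and $\eps$. No mathematical novelty is required beyond substituting deterministic cluster-size lower bounds for the probabilistic ones used in the original paper.
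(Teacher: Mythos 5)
Your proposal matches the paper's treatment: the paper also obtains this proposition by citing the Liu--Li result and remarking that their random-label analysis extends to a fixed $\gamma$-balanced partition, since the label randomness only enters through lower bounds on cluster sizes, which balancedness supplies deterministically. No further detail is given in the paper beyond this inspection argument, so your sketch is at the same level and follows the same route.
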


Suppose that $c_{\gamma}$ in the condition $\Delta^{2}\geq c_{\gamma}[\log(n)^{1+\eps}+\sqrt{\frac{pK^2}{n}}]$  of Proposition~\ref{cor:projection+tenseur} is large enough and that $c'_{\gamma,\epsilon}$ such that $n\geq K^{c'_{\gamma,\epsilon}}$ is also large enough, so that Proposition \ref{prop:LiuLi} holds when applied to $\hat{p}\pa{Y^{(2)}}$. Then, we dispose of an partition $\hat{G}^{(2)}$ in $\pa{I_2}$ computable in polynomial time which is equal to the restriction of $G^*$ to $I_2$ with high probability.

For $k\in [K]$, we write $\hat{\mu}_k^{oracle}=\frac{1}{|I_2\cap G^*_k|}\sum_{i\in I_2\cap G^*_k}Y_i$ and $\hat{\mu}_k^{(2)}=\frac{1}{|I_2\cap \hat{G}_k^{(2)}|}\sum_{i\in I_2\cap \hat{G}_k^{(2)}}Y_i$. The next Lemma characterizes a regime on which linear discriminant analysis with the centers $\hat{\mu}_k^{oracle}$ does perfect classification of $I_1$. We refer for example to Section 12.7.1 (page 271) of the textbook \cite{HDS2} for a proof of this lemma. 

\begin{lem}\label{lem:LDA}\cite{HDS2}
For $i\in I_1$, let us define $\hat{k}_i=\argmin_{k\in [K]}\|Y_i-\hat{\mu}_k^{oracle}\|$, by breaking arbitrarily equality. There exist constants $c_{\gamma}$ and  $c'$ such that if $\Delta^2\geq c_{\gamma}\pa{\log(n)+\sqrt{\frac{pK\log(n)}{n}}}$, the following holds with probability at least $1-\frac{c'}{n^2}$. For all $i\in I_1$, $\hat{k}_i=k^*_i$.
\end{lem}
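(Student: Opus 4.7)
The plan is to reduce the event $\{\hat k_i=k^*_i\text{ for all }i\in I_1\}$ to a collection of at most $|I_1|\cdot K$ pairwise comparisons and control each resulting stochastic term via standard Gaussian and chi-square concentration, following the classical LDA analysis detailed in Section~12.7.1 of~\cite{HDS2}. Fix $i\in I_1$ with true label $k=k^*_i$ and $l\neq k$. Writing $Y_i=\mu_k+E_i$ and $\hat\mu_m^{oracle}=\mu_m+\bar E_m$ with $\bar E_m=n_m^{-1}\sum_{j\in I_2\cap G^*_m}E_j$ and $n_m=|I_2\cap G^*_m|$, a direct expansion gives
\begin{equation*}
\|Y_i-\hat\mu_l^{oracle}\|^2-\|Y_i-\hat\mu_k^{oracle}\|^2=\|\mu_k-\mu_l\|^2+2\langle\mu_k-\mu_l,\,E_i-\bar E_l\rangle+2\langle E_i,\,\bar E_k-\bar E_l\rangle+\|\bar E_l\|^2-\|\bar E_k\|^2\,,
\end{equation*}
so the conclusion reduces to showing the right-hand side is strictly positive for every valid triple $(i,k,l)$.

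Conditionally on $(I_1,I_2)$, which by Lemma~\ref{lem:balancedclusters} may be assumed to satisfy $n_m\geq|G^*_m|/4\gtrsim_\gamma n/K$, the families $(E_i)_{i\in I_1}$ and $(\bar E_m)_{m\in[K]}$ are mutually independent Gaussian vectors. Standard Gaussian tail bounds applied to the centered terms $\langle\mu_k-\mu_l,E_i\rangle\sim\mathcal N(0,\|\mu_k-\mu_l\|^2)$ and $\langle\mu_k-\mu_l,\bar E_l\rangle\sim\mathcal N(0,\|\mu_k-\mu_l\|^2/n_l)$ give, with failure probability at most $n^{-4}$ each,
\begin{equation*}
|\langle\mu_k-\mu_l,E_i\rangle|\lesssim\|\mu_k-\mu_l\|\sqrt{\log n}\,,\qquad|\langle\mu_k-\mu_l,\bar E_l\rangle|\lesssim_\gamma\|\mu_k-\mu_l\|\sqrt{K\log(n)/n}\,.
\end{equation*}
For the cross-term, Laurent--Massart concentration applied to $\|\bar E_k-\bar E_l\|^2\sim(n_k^{-1}+n_l^{-1})\chi^2_p$ yields $\|\bar E_k-\bar E_l\|^2\lesssim_\gamma pK/n$ with high probability, and then conditioning on $(\bar E_m)_m$ and invoking a Gaussian tail bound on $\langle E_i,\bar E_k-\bar E_l\rangle$ gives $|\langle E_i,\bar E_k-\bar E_l\rangle|\lesssim_\gamma\sqrt{pK\log(n)/n}$. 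A union bound over the at most $nK^2\leq n^3$ triples $(i,k,l)$ preserves a $1-c/n^2$ probability guarantee.

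The main obstacle is the chi-square difference $\|\bar E_l\|^2-\|\bar E_k\|^2$, which has non-zero expectation $p/n_l-p/n_k$ of magnitude $O_\gamma(pK/n)$ under $\gamma$-balancedness. Decomposing $\|\bar E_m\|^2=p/n_m+(\|\bar E_m\|^2-p/n_m)$ and applying Laurent--Massart to each fluctuation gives $|\|\bar E_l\|^2-\|\bar E_k\|^2|\leq|p/n_l-p/n_k|+O_\gamma(K\sqrt{p\log(n)}/n+K\log(n)/n)$ with failure probability at most $n^{-4}$. Assembling all estimates, the right-hand side of the displayed identity is bounded below by $\|\mu_k-\mu_l\|^2-C_\gamma(\|\mu_k-\mu_l\|\sqrt{\log n}+\sqrt{pK\log(n)/n}+pK/n)$, which is strictly positive once $\|\mu_k-\mu_l\|^2\geq 2\Delta^2$ and $c_\gamma$ is taken sufficiently large; in particular, the residual deterministic contribution $pK/n$ is dominated by $\Delta^2$ in the regime of Proposition~\ref{prop:upper-boundclustering}, where the stronger condition $\Delta^2\gtrsim_\gamma\sqrt{pK^2\log(n)/n}\geq pK/n$ (valid whenever $p\leq n\log n$, hence in particular whenever $p\leq n$) is imposed. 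The announced bound $\hat k_i=k^*_i$ for every $i\in I_1$ follows.
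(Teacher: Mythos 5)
The paper does not actually spell out a proof of Lemma~\ref{lem:LDA}: it only points to the classical LDA analysis of \cite{HDS2}, Section~12.7.1, and your argument is exactly that standard route. The exact expansion of $\|Y_i-\hat\mu_l^{oracle}\|^2-\|Y_i-\hat\mu_k^{oracle}\|^2$ into the signal term, the two linear terms, the cross term and the chi-square difference is correct, the conditional independence of $(E_i)_{i\in I_1}$ and $(\bar E_m)_{m\in[K]}$ given the split is the right ingredient, and the Gaussian and Laurent--Massart bounds you apply to each piece (using Lemma~\ref{lem:balancedclusters} to get $n_m\gtrsim_\gamma n/K$) are all sound. (A cosmetic slip: with per-event failure probability $n^{-4}$ and your count of $n^3$ triples the union bound only yields $1-c/n$; since there are in fact at most $nK\le n^2$ comparisons, or since the exponent in each deviation bound is at your disposal, this is trivially repaired.)

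The substantive issue is the one you flagged yourself: the deterministic bias $p/n_l-p/n_k$. Under $\gamma$-balancedness with $\gamma>1$ this is genuinely of order $pK/n$ (it comes from unequal true cluster sizes, not merely from the fluctuation of the random split), and the lemma's own hypothesis $\Delta^2\geq c_\gamma(\log n+\sqrt{pK\log(n)/n})$ does not dominate it in general: for instance with $p\asymp n$, $K\gg\log^2 n$ and two clusters of sizes differing by a constant factor, one has $pK/n\asymp K\gg \log n+\sqrt{K\log n}$, and a point of the smaller cluster lying at the minimal separation from the larger one is then misclassified with high probability, so the comparison really does fail there. Your fix---invoking the stronger separation $\Delta^2\gtrsim_\gamma\sqrt{pK^2\log(n)/n}$ (or $\sqrt{pK^2/n}$ together with $p\le n$) available in Proposition~\ref{prop:upper-boundclustering}---is legitimate in every place the paper actually uses the lemma (the proofs of Propositions~\ref{cor:projection+tenseur} and~\ref{cor:hierarchical+projection}), but it is external to the lemma's hypothesis. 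So what you have proved is the lemma with the separation condition augmented by a term of order $c_\gamma\, pK/n$ (equivalently, under the extra restriction $p\lesssim_\gamma n\Delta^2/K$), not the statement as written; you should say this explicitly rather than closing the proof by borrowing the proposition's regime, since as stated, with arbitrary $\gamma$-balanced clusters, the claim needs that extra term.
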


Then, on the high probability event on which Lemma \ref{lem:LDA} holds and on which the clustering procedure from \cite{LiuLi2022} onto the projected dataset $\hat{p}(Y^{(2)})$ recovers exactly the restriction of $G^*$ to $I_2$, Linear Discriminant Analysis with the centers $\mu_k^{(2)}$ also does perfect classification. We are then able to recover the entire partition $G^*$. This concludes the proof of the corollary.

\end{proof}

\subsubsection{Hierarchical Clustering}\label{sec:hierarchical}

In this Section, we apply a single linkage hierarchical clustering procedure on the dataset $\hat{p}\pa{Y^{(2)}}$. We consider the case where $p\geq \frac{n}{K}$.
Let $\hat{G}$ be the projected hierarchical clustering procedure obtained from Algorithm \ref{alg:hierarchical+projection}. For any two disjoint sets $A,B$ we define the single linkage function $l(A,B)=\min_{i\in A, i'\in B}\|Y_i-Y_{i'}\|$.

\begin{algorithm}
\caption{Hierarchical Clustering algorithm with single linkage after splitting and projecting the dataset}\label{alg:hierarchical+projection}
\KwData{$Y_{1},\ldots, Y_{n}$}
Draw $(\delta_i)_{i\in [1,n]}$ independently and uniformly on $\{1,2\}$;\\
Compute $\hat{v_1},\ldots,\hat{v}_K$ the leading eigenvectors of $\pa{Y^{(1)}}^T Y^{(1)}-nI_p$, with $Y^{(1)}$ the restriction of $Y$ to $I_1$;\\
For $i\in I_2$, compute $\hat{p}(Y_i)$ the orthogonal projection of $Y_i$ on the space $Vect\pa{\hat{v_1},\ldots,\hat{v}_K}$ (if $p\leq \max\pa{\log(n),K}$ keep $Y_i$);\\
$t\gets 0$;\\
$G^{(0)} \gets \ac{\{i\}_{i\in I_2}}$;\\
\While{$t<|I_2|-K$}{
    Find $\hat{a},\hat{b}$ minimizing $l\pa{G^{(t)}_{\hat{a}},G^{(t)}_{\hat{b}}}$;\\
    Build $G^{(t+1)}$ by merging the groups $G^{(t)}_{\hat{a}}$ and $G^{(t)}_{\hat{b}}$, the other groups remaining unchanged;\\$t\gets t+1$;\\
    }
$\hat{G}:=\hat{G}^{(t)}$;\\
\For{$k\in[1,K]$}{
Compute $\hat{\mu}_k:=\frac{1}{|\hat{G}_k|}\sum_{i\in \hat{G}_k}X_i$}
\For{$i\in I_1$}{
Assign $i$ to the group $\hat{G}_k$ minimizing $\|X_i-\hat{\mu}_k\|$.}
\KwResult{The partition $\hat{G}$.}
\end{algorithm}

The following proposition provides separation conditions under which Algorithm \ref{alg:hierarchical+projection}  perfectly recover the partition  $G^*$ with high probability.

\begin{prop}\label{cor:hierarchical+projection}
There numerical constants $c$ and $c'$ and a positive constant $c_{\gamma}$ that only depends on $\gamma$ such that the following holds. Suppose $n\geq p\geq \frac{n}{K}$, $n\geq c K^2$, and 
\[
\Delta^2\geq c_{\gamma}\pa{\log(n)+\sqrt{\frac{pK^2\log(n)}{n}}}\enspace .
\]  Denoting $\hat{G}$ the output of Algorithm \ref{alg:hierarchical+projection}, we have $\P[\hat{G}=G^*]\geq 1-c'/n^2$.
\end{prop}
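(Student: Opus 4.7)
I would organize the proof around the two main stages of Algorithm~\ref{alg:hierarchical+projection}: hierarchical recovery of $G^*|_{I_2}$ on the projected data, followed by the LDA-type reassignment of $I_1$. Starting by conditioning on the random split, Lemma~\ref{lem:balancedclusters} combined with $n \geq cK^2$ ensures that both halves inherit an almost-balanced restriction of $G^*$ with probability $1 - O(n^{-2})$. Since $\hat p$ depends only on $Y^{(1)}$, it is independent of $Y^{(2)}$; moreover the hypothesis forces $\Delta^4 \gtrsim pK^2/n$, so Lemma~\ref{lem:projmixture} gives $\|\hat p(\mu_k - \mu_l)\|^2 \geq \tfrac{1}{2}\Delta^2$ for all $k \neq l$ with probability $\geq 1-4n^{-2}$.

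For the single linkage step, I would show that the maximum within-cluster squared distance in the projected space is strictly smaller than the minimum between-cluster one, uniformly over pairs in $I_2$. Conditionally on $\hat p$, the vectors $\hat p(E_i)$ for $i \in I_2$ are centered Gaussians in a subspace of dimension at most $\max(K, \log n)$, so $\chi^2$-concentration combined with a union bound over the $O(n^2)$ pairs bounds every within-cluster squared distance by $2\max(K,\log n) + O(\sqrt{K\log n}+\log n)$. For a between-cluster pair $i \in G^*_k \cap I_2$, $i' \in G^*_l \cap I_2$, one expands
\[
\|\hat p(Y_i - Y_{i'})\|^2 = \|\hat p(\mu_{k}-\mu_{l})\|^2 + 2\langle\hat p(\mu_{k}-\mu_{l}), \hat p(E_i - E_{i'})\rangle + \|\hat p(E_i - E_{i'})\|^2,
\]
where the cross term is, conditionally on $\hat p$, Gaussian with variance at most $2\|\hat p(\mu_{k}-\mu_{l})\|^2$; absorbing it into the signal via $2|uv|\leq \tfrac12 u^2 + 2v^2$ leaves a gap between these extrema of at least $\tfrac14\Delta^2 - O(\sqrt{K\log n} + \log n)$. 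Since $p \geq n/K$ yields $\sqrt{K\log n}\leq \sqrt{pK^2 \log(n)/n}$, the assumed separation makes this gap strictly positive. Single linkage then performs all within-cluster merges strictly before any cross-cluster merge, so stopping at $K$ components returns $\hat G^{(2)} = G^*|_{I_2}$.

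On this event, the empirical centers $\hat\mu_k = |G^*_k \cap I_2|^{-1}\sum_{i \in G^*_k \cap I_2} Y_i$ coincide with the oracle centers of Lemma~\ref{lem:LDA}, whose separation assumption $\Delta^2 \gtrsim \log(n) + \sqrt{pK\log(n)/n}$ is implied by the hypothesis of the proposition; a final union bound then ensures that every $i \in I_1$ is assigned to its true cluster. The main obstacle is the between-cluster control in the single linkage step: the cross term is Gaussian with scale $\|\hat p(\mu_k - \mu_l)\|$ rather than a fixed constant and may dominate $\Delta$, so it has to be absorbed uniformly over $(k,l)$ and all pairs in $I_2$ without spoiling the $\tfrac14\Delta^2$ signal, which is precisely what produces the $\sqrt{K\log n}$ term in the required separation and explains why the $\sqrt{pK^2\log(n)/n}$ threshold (rather than the $\sqrt{p\log n}$ threshold of unprojected single linkage) is enough.
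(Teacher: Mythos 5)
Your proposal is correct and takes essentially the same route as the paper: condition on the split, use Lemma~\ref{lem:balancedclusters} and Lemma~\ref{lem:projmixture} (the hypothesis indeed forces $\Delta^4\gtrsim_\gamma pK^2/n$), recover $G^*$ restricted to $I_2$ by single linkage in the projected space of dimension at most $\max(K,\log n)$ using $p\geq n/K$ to absorb the resulting $\sqrt{K\log n}$ term into the assumed separation, and finish by the LDA step via Lemma~\ref{lem:LDA}. The only difference is that you re-derive the single-linkage guarantee inline (maximum within-cluster distance below minimum between-cluster distance, via $\chi^2$ and Gaussian concentration with a union bound over pairs, including the correct absorption of the cross term), where the paper simply invokes Proposition~4 of \cite{Even24} applied to the projected mixture; both are valid.
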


\begin{proof}[Proof of Proposition \ref{cor:hierarchical+projection}]

If $p\leq \max\pa{\log(n), K}$, we have that $\hat{p}$ is the identity and we can therefore assume that $p\geq \max\pa{\log(n), K}$.

Conditionally on $I_1$ and $I_2$, the dataset $Y^{(2)}$ is independent from $Y^{(1)}$ and thus $Y^{(2)}$ is independent from $\hat{p}$. We deduce from this that $\hat{p}(Y^{(2)})$ is a Gaussian mixture of dimension $\max\pa{\log(n), K}$ which is well separated with high probability (Lemma \ref{lem:projmixture} provides a separation at least $\Delta^2/4$). With high probability, using Lemma \ref{lem:balancedclusters}, we also have $|I_2|\geq n/4$ and for all $k\in[K]$ $|I_2\cap G^*_k|\geq \frac{n}{4K\gamma}$. 

Hence, applying a Hierarchical procedure ensures that, if $\Delta^2\geq c'' \pa{\log(n)+\sqrt{K\log(n)}}$ --see Proposition 4 in~\cite{Even24}, then we recover exactly with high probability the restriction of the partition $G^*$ to $I_2$. We write the obtained partition $\hat{G}^{(2)}$. Note that the condition $\Delta^2\geq c''\pa{ \log(n)+\sqrt{K\log(n)}}$ is ensured if the constant $c_{\gamma}$ in the statement of the proposition such that $\Delta^2\geq c_{\gamma}[\log(n)+\sqrt{\frac{pK^2\log(n)}{n}}]$ is large enough. Hence, $I_1$ is perfectly clustered. In turn, we deduce that $I_2$ is perfectly clustered by applying by arguing as in the previous proof.

\end{proof}

The proof of Proposition \ref{cor:hierarchical+projection} also porvides a result when we do not make any assumption on the dimension. We state this result as a proposition.

\begin{prop}\label{prop:hierarchical2}
There numerical constants $c$ and $c'$ and a positive constant $c_{\gamma}$ that only depends on $\gamma$ such that the following holds. Suppose  $n\geq c K^2$, and 
\[
\Delta^2\geq c_{\gamma}\pa{\log(n)+\sqrt{K\log(n)}+\sqrt{\frac{pK^2\log(n)}{n}}}\enspace .
\]  Denoting $\hat{G}$ the output of Algorithm \ref{alg:hierarchical+projection}, we have $\P[\hat{G}=G^*]\geq 1-c'/n^2$.
\end{prop}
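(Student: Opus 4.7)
The plan is to follow the same scheme as in the proof of Proposition \ref{cor:hierarchical+projection}, but to track more carefully the effective dimension of the dataset on which hierarchical single-linkage clustering is applied, so that the argument works without the assumption $p\geq n/K$.

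First, as in Algorithm \ref{alg:hierarchical+projection}, I would split the data into $(Y^{(1)},Y^{(2)})$ indexed by $(I_1,I_2)$ and work conditionally on the splitting. By Lemma \ref{lem:balancedclusters}, the condition $n\geq cK^2$ ensures that, with probability at least $1-1/n^2$, both restrictions $G^*\cap I_1$ and $G^*\cap I_2$ remain $(4\gamma)$-balanced with $|I_1|,|I_2|\geq n/4$. In what follows I work on this event.

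Next I would split the argument into two regimes according to the dimension. If $p\leq \max(K,\log n)$, no projection is performed (the algorithm keeps $Y^{(2)}$ unchanged), and single-linkage hierarchical clustering is applied directly to $Y^{(2)}$, viewed as a balanced Gaussian mixture in $\R^p$. By Proposition~4 of \cite{Even24}, this step perfectly recovers the restriction of $G^*$ to $I_2$ with probability at least $1-1/n^2$ as soon as $\Delta^2\geq c''(\log n+\sqrt{p\log n})$, which is implied by the hypothesis $\Delta^2\geq c_\gamma(\log n+\sqrt{K\log n})$ of the proposition since $p\leq \max(K,\log n)$. If instead $p>\max(K,\log n)$, $Y^{(2)}$ is projected onto the $K$-dimensional subspace spanned by the top-$K$ eigenvectors of $(Y^{(1)})^\top Y^{(1)}$. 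Conditionally on $(I_1,I_2)$, the projector $\hat p$ depends only on $Y^{(1)}$ and is therefore independent of $Y^{(2)}$, so that $\hat p(Y^{(2)})$ is a balanced Gaussian mixture on a $K$-dimensional subspace of $\R^p$. Lemma \ref{lem:projmixture}, which applies since $\max(K,\log n)\leq p\leq n$, guarantees that the separation is preserved up to a factor $1/2$ with probability at least $1-4/n^2$ whenever $\Delta^4\geq c_\gamma pK^2/n$; this is in particular implied by $\Delta^2\geq c_\gamma\sqrt{pK^2\log(n)/n}$. Applying Proposition~4 of \cite{Even24} in dimension $K$ to $\hat p(Y^{(2)})$, the restriction of $G^*$ to $I_2$ is perfectly recovered as soon as $\Delta^2/4\geq c''(\log n+\sqrt{K\log n})$, which is again covered by the assumption of the proposition.

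Finally, once the restriction of $G^*$ to $I_2$ is correctly recovered, the estimated means $\hat\mu_k^{(2)}$ coincide with the oracle means $\hat\mu_k^{\mathrm{oracle}}=\frac{1}{|G^*_k\cap I_2|}\sum_{i\in G^*_k\cap I_2}Y_i$. Invoking Lemma \ref{lem:LDA} to classify the points of $I_1$ via linear discriminant analysis with these centers yields $\hat G=G^*$ with probability at least $1-c'/n^2$, provided $\Delta^2\geq c_\gamma(\log n+\sqrt{pK\log(n)/n})$; this last condition is dominated by $c_\gamma\sqrt{pK^2\log(n)/n}$ so it is also covered. A union bound over the four events (balanced splitting, preserved separation under projection, perfect hierarchical clustering on $I_2$, and correct LDA classification on $I_1$) gives the announced probability.

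The main difference with Proposition \ref{cor:hierarchical+projection} is bookkeeping rather than technique: when $p<n/K$ the term $\sqrt{K\log n}$ coming from hierarchical clustering in the (at most) $K$-dimensional projected space is no longer absorbed by $\sqrt{pK^2\log(n)/n}$, so it must appear explicitly in the separation condition. The main obstacle, modest here, is to ensure that the two distinct regimes $p\leq \max(K,\log n)$ and $p>\max(K,\log n)$ can be handled uniformly by the same algorithm, which is the reason for the dimension check built into Algorithm \ref{alg:hierarchical+projection}.
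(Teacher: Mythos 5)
Your proposal is correct and follows essentially the same route as the paper: the paper obtains Proposition \ref{prop:hierarchical2} directly from the proof of Proposition \ref{cor:hierarchical+projection} (splitting, spectral projection controlled by Lemma \ref{lem:projmixture}, single-linkage clustering of $\hat p(Y^{(2)})$ via Proposition~4 of \cite{Even24}, then LDA via Lemma \ref{lem:LDA}), simply keeping the $\sqrt{K\log n}$ term explicit instead of absorbing it using $p\geq n/K$. Your two-regime bookkeeping matches exactly what the paper intends, so there is nothing to add.
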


\subsubsection{Proof of Lemma \ref{lem:projmixture}}\label{prf:projmixture}

We work conditionally on $I_1$. To ease the notation, we write $Y=Y^{(1)}$ and $n'=|I_1|$. Without loss of generality, we restrict ourselves in the following to the event where $n'\geq n/4$ and for all $k\in [K]$, $|I_1\cap G_k^*|\geq |G_k^*|/4\geq \frac{n}{4\gamma K}$ (Lemma \ref{lem:balancedclusters}). We remark that $\hat{v}_1,\ldots, \hat{v}_K$ are also the $K$ leading eigenvectors of $Y^T Y-n'I_p$. We seek to find an event of high probability on which 
\begin{itemize}
\item the quantity $x^T\pa{Y^T Y-n'I_p}x$ is uniformly large for unit vectors such that $|\<x,\frac{\mu_k-\mu_l}{\|\mu_k-\mu_l\|}\>|$ is large enough, for some $k\neq l$,
\item the $(k+1)$-th eigenvalue $\hat{\lambda}_{k+1}$ of $Y^TY-n'I_p$ is small.
\end{itemize}

Such an event is provided by Lemma \ref{lem:uniform} and \ref{lem:orthogonalspace}, respectively proven in Section \ref{prf:uniform} and \ref{prf:orthogonalspace}.

\medskip

\begin{lem}\label{lem:uniform}
There exists a positive constant $c_{\gamma}$ that only depends on $\gamma$ such that, if $\Delta^4\geq c_{\gamma}\frac{pK^2}{n}$, the following holds with probability at least $1-\frac{2}{n^2}$. Simultaneously on all $x\in \R^p$ such that $\|x\|=1$ and such that there exists $k\neq l$ with $|\<x,\frac{\mu_k-\mu_l}{\|\mu_k-\mu_l\|}\>|\geq \frac{1}{2}$, we have $$x^T\pa{Y^TY-n'I_p}x\geq \frac{n}{256\gamma K}\Delta^2\enspace.$$
\end{lem}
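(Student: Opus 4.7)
The plan is to decompose $Y=M+E$, where $M\in\R^{n'\times p}$ is the matrix with rows $\mu_{k^*_i}$, $i\in I_1$ (deterministic given $I_1$ and the centers), and $E$ has i.i.d.\ $\mathcal{N}(0,1)$ entries. For any unit vector $x\in\R^p$,
\[
x^T(Y^TY-n'I_p)x=\|Mx\|^2+2\<Mx,Ex\>+x^T(E^TE-n'I_p)x,
\]
so the task reduces to (a) a lower bound on $\|Mx\|^2$ under the assumption on $x$ and (b) a uniform control of the two noise terms. For (a), the hypothesis $|\<x,(\mu_k-\mu_l)/\|\mu_k-\mu_l\|\>|\ge 1/2$ combined with $\|\mu_k-\mu_l\|^2\ge 2\Delta^2$ gives $|\<x,\mu_k-\mu_l\>|\ge \Delta/\sqrt{2}$, and the triangle inequality then yields $\max(\<x,\mu_k\>^2,\<x,\mu_l\>^2)\ge \Delta^2/8$. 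Combined with the balance estimate $|I_1\cap G^*_{k'}|\ge n/(4\gamma K)$ (to which we have already restricted ourselves via Lemma \ref{lem:balancedclusters}), this gives $\|Mx\|^2=\sum_{k'}|I_1\cap G^*_{k'}|\<x,\mu_{k'}\>^2\ge n\Delta^2/(32\gamma K)$.

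For (b), the crux is the rank bound $K':=\mathrm{rank}(M)\le K$. Let $U_M\in\R^{n'\times K'}$ denote the matrix of left singular vectors of $M$. Since $Mx$ lies in the column space of $U_M$, writing $Mx=U_Mw$ with $\|w\|=\|Mx\|$ I obtain
\[
|2\<Mx,Ex\>|=|2w^T U_M^T Ex|\le 2\|Mx\|\,\|U_M^T E\|_{op}.
\]
Crucially, $U_M$ is deterministic conditional on $I_1$ and the centers, with orthonormal columns, so $U_M^T E$ is a $K'\times p$ Gaussian matrix with i.i.d.\ $\mathcal{N}(0,1)$ entries. Davidson--Szarek type deviation bounds then give, for $t\asymp \sqrt{\log n}$ and with probability at least $1-1/n^2$ each, the inequalities $\|U_M^T E\|_{op}\le \sqrt{K'}+\sqrt p+t$ and $\|E^TE-n'I_p\|_{op}\lesssim \sqrt{n'p}+\sqrt{n'}\,t+p+t^2$ (the latter obtained by expanding $(\sqrt{n'}\pm\sqrt p\pm t)^2$ around $n'$). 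Using the hypothesis $\max(K,\log n)\le p\le n$, these simplify to $\|U_M^T E\|_{op}^2\lesssim p$ and $\|E^TE-n'I_p\|_{op}\lesssim \sqrt{np}$.

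Intersecting these two events (probability at least $1-2/n^2$), and applying $2ab\le \tfrac12 a^2+2b^2$ to the cross term, the decomposition yields, uniformly over all admissible $x$,
\[
x^T(Y^TY-n'I_p)x\ge \tfrac12\|Mx\|^2-2\|U_M^T E\|_{op}^2-\|E^TE-n'I_p\|_{op}\ge \frac{n\Delta^2}{64\gamma K}-C\sqrt{np},
\]
where I used $p\le \sqrt{np}$ to absorb the $p$ term. Under the assumption $\Delta^4\ge c_\gamma pK^2/n$, equivalently $\Delta^2\ge \sqrt{c_\gamma}K\sqrt{p/n}$, the second term is at most half of the first provided $c_\gamma$ is taken large enough depending only on $\gamma$, yielding the announced bound $n\Delta^2/(256\gamma K)$.

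The hard part is, a priori, the uniform control over the highly non-convex admissible set of $x$'s; this is entirely resolved by the low-rank trick, since $\|U_M^T E\|_{op}$ and $\|E^TE-n'I_p\|_{op}$ do not depend on $x$ and no covering argument is required. The reason the assumption $\Delta^4\gtrsim pK^2/n$ is the correct scale, rather than the far stronger $\Delta^2\gtrsim 1+p/n$ that a naive $|\<Mx,Ex\>|\le \|Mx\|\,\|E\|_{op}$ bound would force, is that replacing $\|E\|_{op}\lesssim \sqrt{n'}+\sqrt p$ by $\|U_M^T E\|_{op}\lesssim \sqrt K+\sqrt p\lesssim \sqrt p$ shrinks the effective dimension in the cross term from $n'$ down to $K$.
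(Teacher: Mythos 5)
Your proof is correct and follows essentially the same route as the paper: the decomposition $Y=A\mu+E$ into a signal term, a cross term, and a quadratic noise term, with the signal lower bound from the balancedness of $G^*\cap I_1$, the quadratic term controlled by $\|E^TE-n'I_p\|_{op}$ (the paper's Lemma \ref{lem:quadratic}), and the cross term handled by exactly the same rank-$K$ projection trick, i.e.\ bounding $\|U_M^TE\|_{op}$ for a deterministic orthonormal basis of the column space of $A\mu$ (the paper's Lemmas \ref{lem:crossproduct} and \ref{lem:operatorprojection}). The only differences are cosmetic (slightly different AM--GM splitting and constants), so no further comments are needed.
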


\medskip

\begin{lem}\label{lem:orthogonalspace}
There exists a positive constant $c_{\gamma}$ that only depends on $\gamma$ such that, if $\Delta^4\geq c_{\gamma}\frac{pK^2}{n}$, the following holds with probability at least $1-\frac{2}{n^2}$. Simultaneously on all $x\in\R^p$ such that $\|x\|=1$ and such that $x\in\pa{\mu_1,\ldots,\mu_K}^{\perp}$, we have $$x^T\pa{Y^T Y-n'I_p}x\leq \frac{n}{512\gamma K}\Delta^2\enspace.$$ 
\end{lem}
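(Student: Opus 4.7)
The plan is to reduce the statement to a standard concentration bound on the operator norm of a centered Wishart matrix. The key observation is that any unit vector $x$ in $\mathrm{span}(\mu_1,\ldots,\mu_K)^\perp$ satisfies $Xx=0$, since every row of the signal matrix $X$ equals some $\mu_{k^*_i}$; consequently $Yx = Ex$, where $E=E^{(1)}$ is the $n'\times p$ noise matrix restricted to $I_1$. Therefore, for every such $x$,
$$x^T(Y^T Y - n' I_p)x \;=\; x^T(E^T E - n' I_p)x \;\leq\; \|E^T E - n' I_p\|_{op},$$
and the lemma reduces to a uniform (over $x$) control of the centered Wishart matrix $E^T E - n' I_p$.

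I would then invoke the Davidson--Szarek deviation inequality for the extreme singular values of $E$: conditionally on the high-probability event $n'\geq n/4$ provided by Lemma~\ref{lem:balancedclusters}, with probability at least $1-2/n^2$ one has $s_{\max}(E)\leq \sqrt{n'}+\sqrt{p}+c\sqrt{\log n}$ and $s_{\min}(E)\geq |\sqrt{n'}-\sqrt{p}|-c\sqrt{\log n}$. Squaring and comparing with $n'$ yields, for some numerical constant $C$,
$$\|E^T E - n' I_p\|_{op} \;\leq\; C\bigl(\sqrt{n' p}+p+\sqrt{n'\log n}+\log n\bigr).$$

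The final step is to compare this bound with the SNR condition. Using $n'\leq n$ together with the standing hypotheses $p\leq n$ and $p\geq \log n$ from Lemma~\ref{lem:projmixture}, each of the four terms on the right-hand side is bounded by $C'\sqrt{np}$. It then remains to check that
$$\frac{n}{512\gamma K}\Delta^2 \;\geq\; C'\sqrt{np},$$
which is equivalent to $\Delta^4 \geq (512 C')^2 \gamma^2\cdot pK^2/n$; taking the constant $c_\gamma$ in the hypothesis $\Delta^4\geq c_\gamma pK^2/n$ sufficiently large in terms of $\gamma$ then closes the proof. There is no real obstacle beyond a careful bookkeeping of constants; the difficulty is entirely absorbed in verifying that the three terms in the Wishart operator-norm bound are each compatible with the single SNR condition, which in turn relies on the two standing assumptions $p\leq n$ and $p\geq \log n$ carried over from Lemma~\ref{lem:projmixture}.
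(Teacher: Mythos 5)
Your proposal is correct and follows essentially the same route as the paper: observe that $x\perp\mathrm{span}(\mu_1,\ldots,\mu_K)$ kills the signal so that $x^T(Y^TY-n'I_p)x=x^T(E^TE-n'I_p)x\leq\|E^TE-n'I_p\|_{op}$, control this operator norm by a standard centered-Wishart deviation bound, and absorb the resulting $O(\sqrt{np})$ term (using $\log n\leq p\leq n$) into $\tfrac{n}{512\gamma K}\Delta^2$ by taking $c_\gamma$ large. The only cosmetic difference is that you invoke Davidson--Szarek for the extreme singular values of $E$ where the paper cites its Lemma~\ref{lem:normop} from \cite{HDS2}; these are interchangeable, and your conditioning on $n'\geq n/4$ is not even needed since $n'\leq n$ suffices for the comparison.
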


\medskip

In the following, we suppose $\Delta^4\geq c_1 \frac{pK^2}{n}$, with $c_1$ a numerical constant large enough such that Lemma \ref{lem:uniform} and Lemma \ref{lem:orthogonalspace} both hold. We restrict ourselves to the event of probability at least $1-\frac{4}{n^2}$ defined as the union of the two events of Lemma \ref{lem:uniform} and Lemma \ref{lem:orthogonalspace}.

Lemma \ref{lem:orthogonalspace} implies that the $(k+1)$-th largest eigenvalue of $\pa{Y^T Y-n'I_p}$ satisfies $\hat{\lambda}_{k+1}\leq \frac{n}{512\gamma K}\Delta^2$. Let $k\neq l$ and $y=\frac{\mu_k-\mu_l}{\|\mu_k-\mu_l\|}$. We decompose $y=\hat{p}(y)+(y-\hat{p}(y))$. Since $\<y,y\>=1$, then either $\<y,\hat{p}(y)\>\geq \frac{1}{2}$, either $\<y,(y-\hat{p}(y))\>\geq \frac{1}{2}$. 

Let us suppose that $\<y,(y-\hat{p}(y))\>\geq \frac{1}{2}$ and let us find a contradiction. Using Lemma \ref{lem:uniform}, we deduce that $(y-\hat{p}(y))^T\pa{Y^T Y-n'I_p}(y-\hat{p}(y))\geq \frac{n}{256\gamma K}\Delta^2$. However, $y-\hat{p}(y)$ is the orthogonal projection of $y$ onto the space spread by the eigenvectors corresponding to the eigenvalues $\hat{\lambda}_{k+1},\ldots, \hat{\lambda}_{p}$. Thus, $(y-\hat{p}(y))^T\pa{Y^T Y-n'I_p}(y-\hat{p}(y))\leq \hat{\lambda}_{k+1}\leq \frac{n}{512 \gamma K}\Delta^2$, which leads to a contradiction.

Thus, $\<y,\hat{p}(y)\>\geq \frac{1}{2}$. Hence, $\|\hat{p}(\mu_k)-\hat{p}(\mu_l)\|^2=\|\mu_k-\mu_l\|^2\|\hat{p}(y)\|^2=\|\mu_k-\mu_l\|^2\<y,\hat{p}(y)\>^2\geq \frac{\|\mu_k-\mu_l\|^2}{4}$. This concludes the proof of the lemma.

\begin{proof}[Proof of Lemma \ref{lem:uniform}]\label{prf:uniform}

In the proof of this lemma, we write $A\in \ac{0,1}^{n'\times K}$ for the assignment matrix defined by $A_{ik}=\1_{i\in G^*_k}$, $\mu\in \R^{K\times p}$ for the matrix of the means whose $k$-th row is $\mu_k$, and $E\in \R^{n'\times p}$ the noise matrix $Y-\E[Y]$ which is distributed as i.i.d. standard normal distributions.

Using the decomposition $Y=A\mu+E$, we have $x^T(Y^TY-n'I_p)x=x^T(E^TE-n'I_p)x+2x^T(A\mu)^TEx+x^T(A\mu)^T(A\mu)x$. The three following lemmas, proved in Sections \ref{prf:signal}, \ref{prf:quadratic} and \ref{prf:crossproduct}, control each of these terms. Let us define the set of suibable unit vectors $$\mathcal{X}:=\ac{x\in \R^{p}: \|x\|=1 , \text{ and}\, \exists k\neq l\in [K]\enspace s.t\enspace \<x,\frac{\mu_k-\mu_l}{\|\mu_k-\mu_l\|}\>\geq \frac{1}{2}}\enspace.$$

\begin{lem}\label{lem:signal}
For any $x\in \mathcal{X}$, we have 
\[
x^T(A\mu)^T(A\mu)x\geq \frac{n}{64\gamma K}\Delta^2 \enspace .
\]
\end{lem}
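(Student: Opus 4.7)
My plan is to observe that $(A\mu)^T(A\mu)$ is exactly the weighted Gram matrix $\sum_{k=1}^K |G^*_k\cap I_1|\,\mu_k\mu_k^T$ of the centers, with weights equal to the sizes of the restrictions of the clusters to $I_1$. Consequently,
\begin{equation*}
x^T(A\mu)^T(A\mu)x = \sum_{k=1}^K |G^*_k\cap I_1|\,\langle \mu_k,x\rangle^2.
\end{equation*}
Since we work on the good event of Lemma~\ref{lem:balancedclusters}, we have $|G^*_k\cap I_1|\geq |G^*_k|/4\geq n/(4\gamma K)$ for every $k$, which pulls out a factor $n/(4\gamma K)$ and reduces the problem to lower bounding $\sum_k\langle \mu_k,x\rangle^2$ uniformly over $x\in\mathcal{X}$.

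For this second step, I will just exploit the defining property of $\mathcal{X}$: for any $x\in\mathcal{X}$ there exist $k\neq l$ with $|\langle x,\mu_k-\mu_l\rangle|\geq \tfrac{1}{2}\|\mu_k-\mu_l\|$, and by definition~\eqref{eq:snrclustering} of $\Delta^2$ together with $\sigma=1$, we have $\|\mu_k-\mu_l\|^2\geq 2\Delta^2$. Hence $\langle x,\mu_k-\mu_l\rangle^2\geq \Delta^2/2$. Applying the elementary inequality $(a-b)^2\leq 2(a^2+b^2)$ to $a=\langle x,\mu_k\rangle$ and $b=\langle x,\mu_l\rangle$ then yields
\begin{equation*}
\langle x,\mu_k\rangle^2+\langle x,\mu_l\rangle^2\;\geq\;\tfrac{1}{2}\langle x,\mu_k-\mu_l\rangle^2\;\geq\;\tfrac{\Delta^2}{4}.
\end{equation*}
Dropping the remaining nonnegative terms in $\sum_{k'}\langle x,\mu_{k'}\rangle^2$ gives the same lower bound for the whole sum.

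Combining the two steps produces $x^T(A\mu)^T(A\mu)x\geq \tfrac{n}{4\gamma K}\cdot\tfrac{\Delta^2}{4}=\tfrac{n\Delta^2}{16\gamma K}$, which is stronger than the claimed $\tfrac{n}{64\gamma K}\Delta^2$. There is no real obstacle here: the statement is purely deterministic once one conditions on $I_1$ and the balancedness event, so I only need to be careful about the factor $2$ coming from the normalization in the definition of $\Delta^2$ and to keep track of which cluster sizes are those of the sub-sample $I_1$ rather than the full sample.
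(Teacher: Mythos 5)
Your proof is correct and follows essentially the same route as the paper's: write $(A\mu)^T(A\mu)=\sum_k |G^*_k\cap I_1|\,\mu_k\mu_k^T$, use the balancedness event to extract the factor $n/(4\gamma K)$, and lower bound $\langle x,\mu_k\rangle^2+\langle x,\mu_l\rangle^2$ through the projection of $x$ on $\mu_k-\mu_l$. The only (cosmetic) difference is at the last step: the paper argues that one of $|\langle x,\mu_k\rangle|,|\langle x,\mu_l\rangle|$ must exceed $\Delta/4$, yielding the constant $1/64$, whereas you use $(a-b)^2\leq 2(a^2+b^2)$ together with $\|\mu_k-\mu_l\|^2\geq 2\Delta^2$, which gives the slightly sharper $\frac{n}{16\gamma K}\Delta^2$ and hence the stated bound.
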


\begin{lem}\label{lem:quadratic}
With probability at least $1-\frac{1}{n^2}$, simultaneously on all unit vectors $x\in \R^p$, we have $$|x^T\pa{EE^T-n'I_p}x|\leq 4\sqrt{n'(6p+4\log(n))}+48p+32\log(n)\enspace.$$
\end{lem}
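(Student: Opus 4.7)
For any unit vector $x\in\R^{p}$, the quadratic form $x^{T}(E^{T}E - n'I_{p})x = \|Ex\|^{2} - n'$ is a centered $\chi^{2}_{n'}$ deviation, since $Ex \sim \mathcal{N}(0,I_{n'})$. (The statement of the lemma reads $EE^{T}$ but, since $x\in\R^{p}$, it is naturally interpreted as $E^{T}E - n'I_{p}$.) The plan is to combine a pointwise Laurent--Massart chi-square concentration inequality with a standard $\epsilon$-net discretization on the unit sphere.

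First, I will establish the pointwise bound. For each fixed unit vector $x$, the Laurent--Massart inequality yields, for every $t>0$,
\[\P\bigl[\,|x^{T}(E^{T}E - n'I_{p})x| \geq 2\sqrt{n'\,t} + 2t\,\bigr] \leq 2e^{-t}.\]
Next, I will upgrade this to a uniform bound. Let $\mathcal{N}$ be a $\tfrac14$-net of the sphere $\mathbb{S}^{p-1}$, which can be chosen with $|\mathcal{N}| \leq 9^{p}$ by a standard volumetric estimate. Taking $t := 3p + 2\log(n) + \log 2$ and applying a union bound over $\mathcal{N}$ gives, with probability at least $1 - 1/n^{2}$,
\[\max_{y\in \mathcal{N}}|y^{T}(E^{T}E - n'I_{p})y| \leq 2\sqrt{n'\,t} + 2t.\]
The classical net-to-sphere inequality $\|A\|_{\mathrm{op}} \leq (1-2\epsilon)^{-1}\max_{y\in \mathcal{N}}|y^{T}Ay|$, applied with $\epsilon = 1/4$ and $A = E^{T}E - n'I_{p}$, then yields
\[\sup_{\|x\|=1}|x^{T}(E^{T}E - n'I_{p})x| \leq 4\sqrt{n'\,t} + 4t.\]

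Finally, plugging in $t$ and using the loose bounds $3p + 2\log n + \log 2 \leq 6p + 4\log n$ inside the square root and $4(3p + 2\log n + \log 2) \leq 48p + 32\log n$ outside recovers the stated inequality. This is essentially the textbook concentration of a Wishart-type operator norm; the only mildly delicate point is calibrating the net resolution $\epsilon$ and the tail parameter $t$ so that the two contributions telescope into the explicit constants of the lemma. No step presents a real obstacle.
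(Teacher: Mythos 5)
Your proof is correct, and the constants do work out: with $t=3p+2\log n+\log 2$ the union bound over a $\tfrac14$-net of size $9^p$ gives failure probability $2\cdot 9^p e^{-t}=9^pe^{-3p}n^{-2}\leq n^{-2}$, and the factor $2$ from the net-to-sphere inequality turns $2\sqrt{n't}+2t$ into $4\sqrt{n't}+4t$, which is dominated by the stated $4\sqrt{n'(6p+4\log n)}+48p+32\log n$. You are also right that the $EE^T$ in the statement is a typo for $E^TE$; the paper's own proof works with $E^TE-n'I_p$.

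The only difference from the paper is that the paper does not reprove the Wishart deviation bound: it simply writes $|x^T(E^TE-n'I_p)x|\leq \|E^TE-n'I_p\|_{op}$ and invokes a textbook lemma (Lemma 12.10 in \cite{HDS2}, restated as Lemma \ref{lem:normop}) asserting the existence of an exponential random variable $\xi$ with $\|E^TE-n'I_p\|_{op}\leq 4\sqrt{n'(6p+2\xi)}+48p+16\xi$, and then sets $\xi\leq 2\log n$ on an event of probability $1-1/n^2$. Your Laurent--Massart plus $\epsilon$-net argument is essentially the standard proof of that cited lemma, so the mathematical content is the same; your version is self-contained at the cost of redoing the concentration-plus-discretization bookkeeping, while the paper's version is a two-line reduction to a black-box result whose constants were chosen to match.
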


\begin{lem}\label{lem:crossproduct}
With probability at least $1-\frac{1}{n^2}$, simultaneously on all unit vectors $x\in\R^p$, we have $$|x^T(A\mu)^TEx|\leq \frac{1}{4}\|A\mu x\|^2+4\pa{\sqrt{K}+7\sqrt{p+2\log(n)}}^2\enspace.$$
\end{lem}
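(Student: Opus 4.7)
The plan is to exploit the fact that $A\mu x$ always lies in the low-dimensional subspace $V := \mathrm{Im}(A\mu) \subseteq \R^{n'}$, whose dimension is at most $K$. Writing $P_V$ for the orthogonal projection onto $V$, we have $\<A\mu x, Ex\> = \<A\mu x, P_V Ex\>$ by self-adjointness of $P_V$, and Cauchy--Schwarz combined with the elementary inequality $ab \leq a^2/4 + b^2$ yield, simultaneously for every unit vector $x \in \R^{p}$,
\[
|x^T(A\mu)^T E x| \;\leq\; \|A\mu x\| \cdot \|P_V E\|_{op} \;\leq\; \frac{\|A\mu x\|^2}{4} + \|P_V E\|_{op}^2 .
\]
Crucially, the noise term $\|P_V E\|_{op}^2$ is a single random number that does not depend on $x$, so once we bound it on a high-probability event, the resulting estimate is automatically uniform in $x$.

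It therefore remains to show $\|P_V E\|_{op}^2 \leq 4(\sqrt{K} + 7\sqrt{p + 2\log n})^2$ with probability at least $1 - 1/n^2$. Let $d := \dim V \leq K$, and write $P_V = UU^T$ for an isometry $U \in \R^{n' \times d}$. Since $U$ depends only on $(A,\mu)$, it is independent of $E$, and since its columns are orthonormal, the matrix $U^T E \in \R^{d \times p}$ has i.i.d.\ $\mathcal{N}(0,1)$ entries; moreover $\|P_V E\|_{op} = \|U U^T E\|_{op} = \|U^T E\|_{op}$. The Davidson--Szarek concentration inequality for Gaussian matrices then gives, for every $t > 0$,
\[
\P\bigl[\|U^T E\|_{op} \geq \sqrt{d} + \sqrt{p} + t\bigr] \;\leq\; 2 e^{-t^2/2} .
\]
Choosing $t = \sqrt{4\log n + 2\log 2}$ makes the failure probability at most $1/n^2$ and yields, on that event, $\|P_V E\|_{op} \leq \sqrt{K} + \sqrt{p} + 2\sqrt{2\log n}$.

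To close the argument, a routine constant chase (e.g.\ $2\sqrt{p \cdot 2\log n} \leq p + 2\log n$ gives $(\sqrt{p} + 2\sqrt{2\log n})^2 \leq 3p + 12\log n \leq 12(p + 2\log n)$, hence $\sqrt{p} + 2\sqrt{2\log n} \leq 4\sqrt{p+2\log n}$) produces $\sqrt{K} + \sqrt{p} + 2\sqrt{2\log n} \leq \sqrt{K} + 4\sqrt{p + 2\log n} \leq 2(\sqrt{K} + 7\sqrt{p+2\log n})$, and squaring gives the claimed bound. The main (and essentially the only) conceptual step is the rank-reduction via $P_V$: a naive use of $|x^T(A\mu)^T E x| \leq \|A\mu\|_{op}\|E\|_{op}$ would pay $\sqrt{n'+p}$ in the noise term, which is far too crude for the subsequent eigengap argument in Lemma~\ref{lem:projmixture}. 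All other steps are elementary, and the suboptimal constant $7$ in the statement is chosen simply to absorb the slack from Davidson--Szarek and from collapsing $\sqrt{p}$ and $\sqrt{\log n}$ into a single $\sqrt{p+2\log n}$.
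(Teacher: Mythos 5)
Your proof is correct and follows essentially the same route as the paper: project the noise onto the at-most-$K$-dimensional image of $A\mu$, use Cauchy--Schwarz together with $ab\leq a^2/4+b^2$ (uniformity in $x$ being automatic since the noise term is independent of $x$), and control $\|P_V E\|_{op}=\|U^TE\|_{op}$ by viewing $U^TE$ as a $d\times p$ standard Gaussian matrix. The only, immaterial, difference is the concentration tool: you invoke Davidson--Szarek directly, whereas the paper passes through a bound on $\|W^TW-KI\|_{op}$ via Lemma~\ref{lem:normop}; both give an operator-norm bound absorbed by the constant $7$ in the statement.
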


Combining Lemmas \ref{lem:signal}, \ref{lem:quadratic}, and \ref{lem:crossproduct}, we deduce that, with probability at least $1-\frac{2}{n^2}$, simultaneously on all $x\in\mathcal{X}$, we have

\begin{align*}
x^T\pa{Y^TY-n'I_p}x\geq& \frac{1}{2}\|A\mu x\|^2-8\pa{\sqrt{K}+7\sqrt{p+2\log(n)}}^2
\\ & -4\sqrt{n'(6p+4\log(n))}-48p-32\log(n)\\
\geq& \frac{n}{128\gamma K}\Delta^2-8\pa{\sqrt{K}+7\sqrt{p+2\log(n)}}^2\\ &-4\sqrt{n'(6p+4\log(n))}-48p-32\log(n)\\
\geq& \frac{n}{128\gamma K}\Delta^2-c'\pa{K+p+\sqrt{pn}+\sqrt{n\log(n)}}\enspace,
\end{align*}
with $c'$ a numerical constant. Let us now restrict ourself to the event of probability $1-\frac{2}{n^2}$ on which, simultaneously on all $x\in \mathcal{X}$, the above inequality is true. We recall the hypothesis $n\geq p\geq \max\pa{K,\log(n)}$. Under this hypothesis, $\pa{K+p+\sqrt{pn}+\sqrt{n\log(n)}}\leq 4 \sqrt{pn}$. Thus, if the constant $c_{\gamma}$ such that $\Delta^4\geq c\frac{pK^2}{n}$ is large enough, we conclude conclude that.

$$x^T\pa{Y^TY-n'I_p}x\geq \frac{n}{256\gamma K}\Delta^2\enspace.$$
It remains to prove Lemmas  \ref{lem:signal}, \ref{lem:quadratic}, and \ref{lem:crossproduct}. 

\end{proof}

\begin{proof}[The signal term: proof of Lemma \ref{lem:signal}]\label{prf:signal}

Let us take a unit vector $x$ such that, for some $k\neq l$, we have  $|\<x,\frac{\mu_k-\mu_l}{\|\mu_k-\mu_l\|}\>|\geq \frac{1}{2}$. We write $y=\frac{\mu_k-\mu_l}{\|\mu_k-\mu_l\|}$ and we compute
\begin{align*}
x^T(A\mu)^T(A\mu)x=&\sum_{k'\in[1,K]}\sum_{a\in G_{k'}^*}\<\mu_{k'},x\>^2
\geq \frac{n}{4K\gamma}\pa{\<x,\mu_k\>^2+\<x,\mu_l\>^2}\enspace.
\end{align*}

Using the hypothesis $|\<x,y\>|\geq \frac{1}{2}$, we deduce that $|\<x, \mu_k-\mu_l\>|\geq \frac{\Delta}{2}$ and therefore  $|\<x,\mu_k\>|\geq \frac{\Delta}{4}$ or $|\<x,\mu_l\>|\geq \frac{\Delta}{4}$.  Hence, $x^T(A\mu)^T(A\mu)x\geq \frac{n}{64\gamma K}\Delta^2$. This concludes the proof of the lemma.

\end{proof}

\begin{proof}[Proof of Lemma \ref{lem:quadratic}]\label{prf:quadratic}

For any $x\in \R^p$ such that $\|x\|=1$, we have
$$|x^T(E^TE-n'I_p)x|\leq \|EE^T-n'I_p\|_{op}\enspace.$$
We use the next lemma for upper-bounding this quantity. We refer for example to the textbook~\cite{HDS2} (Lemma 12.10, page 273).
\begin{lem}\label{lem:normop}
There exists a random variable $\xi$ with exponential distribution of parameter $1$ such that $$\|E^TE-n'I_p\|_{op}\leq 4\sqrt{n'(6p+2\xi)}+48p+16\xi\enspace.$$
\end{lem}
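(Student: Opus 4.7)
\textbf{Proof proposal for Lemma \ref{lem:normop}.}

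The plan is to control $\|E^TE-n'I_p\|_{op}$ via the extreme singular values of $E$, using standard Gaussian concentration. First, I would recall that $\|E^TE-n'I_p\|_{op}=\max\bigl(\sigma_{\max}(E)^2-n'\,,\,n'-\sigma_{\min}(E)^2\bigr)$, so it suffices to control $\sigma_{\max}(E)$ from above and $\sigma_{\min}(E)$ from below. Since $E\mapsto \sigma_{\max}(E)$ and $E\mapsto -\sigma_{\min}(E)$ are both $1$-Lipschitz with respect to the Frobenius norm, the Borell--TIS inequality (Gaussian concentration) yields, for any $t\geq 0$,
\[
\mathbb{P}\bigl[\sigma_{\max}(E)\geq \mathbb{E}\sigma_{\max}(E)+t\bigr]\leq e^{-t^2/2},\qquad \mathbb{P}\bigl[\sigma_{\min}(E)\leq \mathbb{E}\sigma_{\min}(E)-t\bigr]\leq e^{-t^2/2}.
\]
Combined with Gordon's inequality $\sqrt{n'}-\sqrt{p}\leq \mathbb{E}\sigma_{\min}(E)\leq\mathbb{E}\sigma_{\max}(E)\leq \sqrt{n'}+\sqrt{p}$ (assuming $n'\geq p$; otherwise one only needs the upper inequality and the trivial bound $n'-\sigma_{\min}(E)^2\leq n'$), this gives the two deviation bounds $\sigma_{\max}(E)\leq \sqrt{n'}+\sqrt{p}+t$ and $\sigma_{\min}(E)\geq (\sqrt{n'}-\sqrt{p}-t)_+$, each with probability at least $1-e^{-t^2/2}$.

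Next, I would translate these into bounds on the eigenvalues of $E^TE$. Squaring the upper bound,
\[
\sigma_{\max}(E)^2-n'\leq p+t^2+2\sqrt{n'p}+2\sqrt{n'}\,t+2\sqrt{p}\,t,
\]
and, using AM--GM $2\sqrt{p}\,t\leq p+t^2$, this is at most $2p+2t^2+2\sqrt{n'p}+2\sqrt{n'}\,t$. A symmetric computation, handled by cases depending on the sign of $\sqrt{n'}-\sqrt{p}-t$, gives the same upper bound for $n'-\sigma_{\min}(E)^2$. Hence, on the intersection of the two good events,
\[
\|E^TE-n'I_p\|_{op}\leq 2p+2t^2+2\sqrt{n'p}+2\sqrt{n'}\,t.
\]

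Finally, I would encode the two deviation events into a single random variable $\xi$ with exponential distribution of parameter $1$. A convenient way is to pick $\xi$ so that the tail bound $\mathbb{P}[\xi\geq s]=e^{-s}$ dominates the bad event at level $t=\sqrt{2s}$ (using the quantile/coupling trick, absorbing the factor $2$ from the union bound into a redefinition of $\xi$). Substituting $t=\sqrt{2\xi}$ in the inequality above and bounding
\[
2\sqrt{n'p}+2\sqrt{2n'\xi}\;\leq\; 2\sqrt{2n'(6p+2\xi)}\;\leq\; 4\sqrt{n'(6p+2\xi)},
\]
together with $2p+4\xi+2\sqrt{2p\xi}\leq 48p+16\xi$ (AM--GM again), yields the stated bound. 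The main difficulty here is not conceptual but bookkeeping: one has to keep track of the $\sqrt{p}\,t$ cross term and split it suitably between the $\sqrt{n'(6p+2\xi)}$ and $48p+16\xi$ contributions, and handle cleanly the regime $\sqrt{n'}<\sqrt{p}+t$ where the lower bound on $\sigma_{\min}(E)$ becomes vacuous. These are routine once the two Gaussian concentration statements are in hand.
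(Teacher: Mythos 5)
Your proposal is correct and follows essentially the same route as the argument the paper relies on: the paper gives no proof of this lemma but cites \cite{HDS2}, where the bound is obtained exactly as you do, from Gaussian concentration of $\sigma_{\max}(E)$ and $\sigma_{\min}(E)$ around $\sqrt{n'}\pm\sqrt{p}$ followed by a quantile coupling of the deviation level with an exponential variable. The only bookkeeping points — absorbing the factor $2$ from the union bound (e.g.\ by shifting $t^2/2$ by $\ln 2$) and splitting the cross terms — go through with ample slack thanks to the generous constants $6p$, $48p$, $16\xi$ in the statement.
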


Thus, with probability at least $1-\frac{1}{n^2}$, we have $$\|E^TE-n'I_p\|_{op}\leq 4\sqrt{n'(6p+4\log(n))}+48p+32\log(n)\enspace,$$

which concludes the proof of the lemma.

\end{proof}

\begin{proof}[Proof of Lemma \ref{lem:crossproduct}]\label{prf:crossproduct}

We denote $P$ the orthogonal projection onto the rows of $A\mu$. For any $x\in \R^p$ such that $\|x\|=1$, we have
\begin{align*}
|x^T(A\mu)^TEx|=&|\<A\mu x, Ex\>|=|\<A\mu x, PEx\>|\\
\leq&\|A\mu x\|\|PEx\|\\
\leq& \|A\mu x\| \|PE\|_{op}\\
\leq& \frac{1}{4}\|A\mu x\|^2+4\|PE\|_{op}^2\enspace.
\end{align*}

The next lemma, which is just   proved below,  provides an upper-bound of the quantity $\|PE\|_{op}$.

\begin{lem}\label{lem:operatorprojection}
With probability at least $1-\frac{1}{n^2}$, we have $$\|PE\|_{op}\leq \pa{\sqrt{K}+7\sqrt{p+2\log(n)}}\enspace.$$
\end{lem}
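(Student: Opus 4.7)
The plan is to exploit the fact that $P$ is a low-rank orthogonal projection (of rank at most $K$) that is independent of the noise $E$, and then reduce the bound to the operator norm of a small standard Gaussian matrix. Concretely, the $i$-th entry of $A\mu x$ depends only on $k^*_i$ through $\mu_{k^*_i}$, so the column space of $A\mu$ (viewed as a subspace of $\R^{n'}$) is spanned by the $K$ cluster-indicator vectors of $\{G^*_k\cap I_{1}\}_{k\in[K]}$; in particular it has dimension at most $K$ and is determined by $A$ and $\mu$, which are deterministic (equivalently, independent of $E$).

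The argument will then proceed in three short steps. First, write $P = UU^T$ with $U\in\R^{n'\times K}$ having orthonormal columns; since $U$ is an isometry onto its range, $\|PE\|_{op} = \|U^T U U^T E\|_{op} = \|U^T E\|_{op}$. Second, note that each column of $E$ is an $\R^{n'}$-valued standard Gaussian vector independent of $U$, so by rotational invariance applied column by column, the matrix $U^T E \in \R^{K\times p}$ has i.i.d.\ $\mathcal N(0,1)$ entries (unconditionally). Third, invoke the standard Gaussian concentration bound for the operator norm of a $K\times p$ standard Gaussian matrix $G$: $\P[\|G\|_{op}\geq \sqrt{K}+\sqrt{p}+t]\leq e^{-t^2/2}$ (see, e.g., Davidson--Szarek, or Chapter~4 of Vershynin). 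Taking $t = 2\sqrt{\log n}$ yields $\|U^T E\|_{op}\leq \sqrt{K}+\sqrt{p}+2\sqrt{\log n}$ with probability at least $1-n^{-2}$.

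To finish, it remains to check that the right-hand side is at most $\sqrt{K}+7\sqrt{p+2\log n}$. This follows from the elementary bounds $\sqrt{p}\leq \sqrt{p+2\log n}$ and $2\sqrt{\log n} = \sqrt{2}\sqrt{2\log n}\leq \sqrt{2}\sqrt{p+2\log n}$, which together give $\sqrt{p}+2\sqrt{\log n}\leq (1+\sqrt{2})\sqrt{p+2\log n}\leq 7\sqrt{p+2\log n}$.

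I do not expect any genuine obstacle here: the argument is entirely standard once one recognizes that $P$ has rank at most $K$ and is independent of $E$. The only minor care is to state the rotational invariance argument correctly (column-wise, not entrywise), and to keep track of the fact that the slack constant $7$ in the lemma is deliberately generous, so the precise tail constant in the Gaussian concentration bound is irrelevant.
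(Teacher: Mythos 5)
Your proof is correct and takes essentially the same route as the paper: both reduce $\|PE\|_{op}$ to $\|U^TE\|_{op}$ for an orthonormal basis $U$ of the rank-$\leq K$ range of $P$ (deterministic given $I_1$, hence independent of $E$), observe that $U^TE$ is a $K\times p$ standard Gaussian matrix, and finish with a Gaussian concentration bound. The only difference is the last step, where the paper writes $\|U^TE\|_{op}^2\leq K+\|(U^TE)^T(U^TE)-KI_p\|_{op}$ and invokes Lemma~\ref{lem:normop}, whereas you apply the Davidson--Szarek tail bound directly; this is equally valid and even gives a slightly sharper constant than the deliberately generous $7$.
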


Lemma \ref{lem:operatorprojection} implies that, with probability at least $1-\frac{1}{n^2}$, simultaneously on all $x\in \R^p$ with $\|x\|=1$, we have $$x^T(A\mu)^TEx\leq \frac{1}{4}\|A\mu x\|^2+4\pa{\sqrt{K}+7\sqrt{K+2\log(n)}}^2\enspace.$$

\end{proof}

\begin{proof}[Proof of Lemma \ref{lem:operatorprojection}]\label{prf:operatorprojection}
This lemma is stated as an exercise in~\cite{HDS2} (exercise 12.9.6, page 288). Let $r\leq K$ denote the rank of $A\mu$. We define $u_1,\ldots,u_r$ an orthogonal basis of the space spanned by the rows of $A\mu$. Let $U=\pa{u_1,\ldots, u_r}$. Then $P=UU^T$. For any unit norm vector $x\in \R^p$, we have 
\begin{align*}
x^T(PE)^T(PE)x=&x^TE^T P^2 Ex\\
=&x^TE^TPEx\\
=&x^TE^TUU^TEx\\
=&x^T(U^TE)^T(U^TE)x\enspace.
\end{align*}
Thus, $\|PE\|_{op}=\|U^TE\|_{op}$. Moreover, the columns of $U^T E$ are independent with law $\mathcal{N}\pa{0,I_r}$. Let us now take again some $x\in \R^p$ with $\|x\|=1$. We denote $W=U^TE$ and get 
\begin{align*}
\|Wx\|^2=&x^TW^TWx= x^T(W^TW-KI_n)x+K\\
\leq& K+\|W^TW-KI_n\|_{op}\enspace.
\end{align*}
Thus, we have $\|W\|^2_{op}\leq K+\|W^TW-KI_p\|_{op}$. Applying Lemma \ref{lem:normop}, we deduce the existence of an exponential random variable $\xi'$ such that $|W|_{op}^2\leq \pa{\sqrt{K}+7\sqrt{p+\xi'}}^2$. Hence, with probability $1-\frac{1}{n^4}$, we have 
$$\|W\|_{op}\leq \sqrt{K}+7\sqrt{p+2\log(n)}\enspace,$$
which concludes the proof of the lemma.

\end{proof}

\begin{proof}[Proof of Lemma \ref{lem:orthogonalspace}]\label{prf:orthogonalspace}

Let $x\in \pa{\mu_1,\ldots, \mu_K}^{T}$ be a unit vector. We have $A\mu x=0.$ Thus, we have $x^T\pa{Y^TY-n'I_p}x=x^T\pa{E^T E-n'I_p}x$. In turn, we have $$x^T\pa{Y^TY-n'I_p}x\leq \|EE^{T}-n'I_p\|_{op}\enspace.$$

By Lemma \ref{lem:normop}, we have that, with probability at least $1-\frac{1}{n^2}$, $$\|E^TE-n'I_p\|_{op}\leq 4\sqrt{n'(6p+4\log(n))}+48p+32\log(n)\enspace.$$

Thus, with probability at least $1-\frac{1}{n^2}$, uniformly on all unitary $x$, 

$$x^T\pa{Y^TY-n'I_p}x\leq 4\sqrt{n'(6p+4\log(n))}+48p+32\log(n)\enspace,$$

Recall $n \geq p\geq \log(n)$. If the constant $c_{\gamma}$ such that $\Delta^4\geq c_{\gamma}\frac{pK^2}{n}$ is large enough, we have that, with the same high probability, uniformly on all such  unit
vectors $x$,

$$x^T\pa{Y^TY-n'I_p}x\leq \frac{n}{512\gamma K}\Delta^2\enspace,$$

which concludes the proof of the lemma.
\end{proof}

\subsection{Proof of Proposition \ref{prop:polytimesparse}}\label{prf:polytimesparse}

Without loss of generality, we suppose that $\sigma^2=1$. Let $E'\in \R^{n\times p}$ with i.i.d $\mathcal{N}(0,\frac{1}{2})$ entries. Then $Y^{(1)}=(Y+E')/\sqrt{2}$ and $Y^{(2)}=(Y-E')/\sqrt{2}$ are two independent datasets such that, when $i\in G_k^*$, we both have $Y^{(1)}_i\sim\mathcal{N}\pa{\mu_k/\sqrt{2}, I_p}$ and $Y^{(1)}_i\sim\mathcal{N}\pa{\mu_k/\sqrt{2}, I_p}$. Our strategy follows the two steps;

\begin{enumerate}
\item Use the first dataset $Y^{(1)}$ in order to estimate the set $J^*$ of active columns;
\item Use a clustering procedure to the second dataset, keeping only columns estimated in the first step. 
\end{enumerate}

For the first step, we consider $\hat{J}$ collecting the $s$ columns of $Y^{(1)}$ with the largest euclidean norm. We recall the definition
\begin{equation*}
w_{J^*}:=\min_{j\in J^*}\sum_{i\in [n]}X_{ij}^2\enspace.
\end{equation*}

Next lemma states that, if $w_{J^*}$ is large enough, then $\hat{J}$ contains $J^*$ with high probability.

\begin{lem}\label{lem:recoverycolumns}
There exists a numerical constant $c_1>0$ such that the following holds. If \begin{equation}\label{eq:condition_omega_J*}
w^2_{J^*}:=\min_{j\in J^*}\sum_{i\in [n]}X_{ij}^2\geq c_1\pa{\sqrt{n\log(pn)}+\log(p)}\enspace,
\end{equation} then, with probability higher than  $1-\frac{1}{n^2}$, $\hat{J}$ contains  $J^*$.
\end{lem}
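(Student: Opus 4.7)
The plan is to exploit the Gaussian structure of $Y^{(1)}$: each column $\|Y^{(1)}_{:j}\|^2$ is a non-central chi-squared random variable on $n$ degrees of freedom with non-centrality parameter $\tfrac12\|X_{:j}\|^2$, equal to $0$ when $j\notin J^*$ and at least $\tfrac12\omega_{J^*}^2$ when $j\in J^*$. Since $\hat J$ picks the $s$ columns of $Y^{(1)}$ of largest Euclidean norm and $|J^*|\leq s$, the inclusion $J^*\subseteq\hat J$ follows as soon as
\[
\min_{j\in J^*}\|Y^{(1)}_{:j}\|^2 \;>\; \max_{j'\notin J^*}\|Y^{(1)}_{:j'}\|^2,
\]
and I will therefore establish this separation on an event of probability at least $1-n^{-2}$.

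First, I would invoke Laurent--Massart concentration for non-central chi-squared variables with deviation level $t$ of order $\log(np)$, combined with a union bound over the $p$ columns. This produces, on an event of probability at least $1-n^{-2}$, the simultaneous bounds
\begin{align*}
\max_{j'\notin J^*}\|Y^{(1)}_{:j'}\|^2 &\leq n+c_2\bigl(\sqrt{n\log(np)}+\log(np)\bigr),\\
\min_{j\in J^*}\|Y^{(1)}_{:j}\|^2 &\geq n+\tfrac12\|X_{:j}\|^2-c_2\sqrt{(n+\|X_{:j}\|^2)\log(np)},
\end{align*}
for a numerical constant $c_2$. Subtracting the two and using $\sqrt{a+b}\leq\sqrt a+\sqrt b$, the separation event is implied by
\[
\tfrac12\|X_{:j}\|^2 \;\geq\; 2c_2\bigl(\sqrt{n\log(np)}+\log(np)\bigr)+c_2\sqrt{\|X_{:j}\|^2\log(np)}, \qquad \forall\,j\in J^*.
\]

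The remaining step is to check that this inequality follows from the hypothesis $\omega_{J^*}^2\geq c_1(\sqrt{n\log(pn)}+\log(p))$. The self-referential term $c_2\sqrt{\|X_{:j}\|^2\log(np)}$ is bounded by $\tfrac18\|X_{:j}\|^2$ as soon as $\|X_{:j}\|^2\geq 64c_2^2\log(np)$, and the leading additive terms are absorbed in $\tfrac14\|X_{:j}\|^2$ by choosing $c_1$ large enough; here one uses $\log(np)=\log(p)+\log(n)$ with $\log(n)\leq \sqrt{n\log(np)}$, so that the additive $\log(np)$ appearing in my analysis is controlled by the weaker $\log(p)$ entering the hypothesis. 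The main potential obstacle is precisely this quadratic-in-$\sqrt{\|X_{:j}\|^2}$ absorption, but it is handled by the standard splitting above; everything else amounts to the classical chi-squared tail bounds and bookkeeping of constants.
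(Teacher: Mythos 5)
Your proposal is correct and follows essentially the same route as the paper: the paper expands $\|Y^{(1)}_{:j}\|^2=\tfrac12\|X_{:j}\|^2+\|E_{:j}\|^2+\sqrt{2}\langle X_{:j},E_{:j}\rangle$ and controls the chi-squared and cross terms separately (Hanson--Wright plus a Gaussian tail, union bound over the $p$ columns), which is exactly equivalent to your direct use of non-central chi-squared concentration. Your handling of the self-referential term and of the $\log(np)$ versus $\log(p)$ bookkeeping matches the paper's argument up to constants.
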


Let us then work conditionally on $\hat{J}$ and let us suppose that $\hat{J}$ indeed contains $J^*$. Let $Y^{(2)}_{\hat{J}}$ the restriction of $Y^{(2)}$ to the columns $j\in \hat{J}$. Since $Y^{(2)}$ is independent from $\hat{J}$, we deduce that $Y^{(2)}_{\hat{J}}$ is a Gaussian Mixture with a separation $\Delta^2/2$ in dimension $s$. We can conclude the proof using Proposition \ref{prop:upper-boundclustering}. We deduce that, except when $s\in [\text{Poly-log}(n), n/K]$ and $n\in [K^2, K^c]$, with $c$ some numerical constant, if $$\Delta^2\overset{\log}{\geq }1+\min\pa{\sqrt{s}, \sqrt{\frac{sK^2}{n}}}\enspace,$$ then it is possible to recover exactly $G^*$ with high probability and with an algorithm computable in polynomial time.

\begin{proof}[Proof of Lemma \ref{lem:recoverycolumns}]

In order to prove that $\hat{J}$ contains $J^*$ with high probability, it is sufficient to prove that, with high probability, for all $j\in J^*$ and for all $j'\in [p]\setminus J^*$, we have $\|Y^{(1)}_{:j}\|^2> \|Y^{(1)}_{:j'}\|^2$.
We decompose the matrix $Y^{(1)}=X/\sqrt{2}+E$ with $E$ a gaussian matrix. For $j\in [p]$, we have $$\|Y^{(1)}_{:j}\|^2=\frac{\|X_{:j}\|^2}{2}+\|E_{:j}\|^2+\sqrt{2}\<X_{:j},E_{:j}\>\enspace.$$
Using Hanson-Wright inequality inequality for Gaussian variables (e.g. Lemma 1~\cite{Laurent00}) and the tail of a gaussian random variable, we deduce that, with  probability higher than  $1-\frac{1}{n^2}$, uniformly on all $j\in [p]$, we have 
\begin{equation}\label{eq:HW_SC}
\left|\|Y^{(1)}_{:j}\|^2-n-\frac{1}{2}\|X_{:j}\|^2\right|\leq c' \pa{\sqrt{n\pa{\log(pn)}}+\|X_{:j}\|\sqrt{\log(pn)}+\log(pn)}\enspace ,
\end{equation}
for some $c'>0$.  On this same event of high probability, we have that, for $j\in [p]$ for which $X_{:j}=0$, 
$$\|Y^{(1)}_{:j}\|^2\leq n+c'\pa{\sqrt{n\pa{\log(pn)}}+\log(pn)}\enspace.$$
In light of~\eqref{eq:HW_SC}, if we take the constant $c_1$ large enough in~\eqref{eq:condition_omega_J*}, we conclude that
$\min_{j\in J^*}\|Y_{:j}\|^2> \max_{j\notin J^*}\|Y_{:j'}\|^2$, which concludes the proof of the lemma.

\end{proof}

\subsection{Proof of Proposition \ref{prop:upperboundsparseIT}}\label{prf:upperboundsparseIT}

We suppose without loss of generality that $\sigma^2=1$. Let $E'\in \R^{n\times p}$ with i.i.d $\mathcal{N}(0,1)$ entries. Then $Y^{(1)}=(Y+E')/\sqrt{2}$ and $Y^{(2)}=(Y-E')/\sqrt{2}$ are two independent datasets such that, when $i\in G_k^*$, we both have $Y^{(1)}_i\sim\mathcal{N}\pa{\mu_k/\sqrt{2}, I_p}$ and $Y^{(1)}_i\sim\mathcal{N}\pa{\mu_k/\sqrt{2}, I_p}$.

For any partition $G$ of $[n]$ into $K$ groups, we denote $B^G$ the associated normalized partnership matrix defined by 
$$B^G_{ij}=\sum_{k\in [K]}\frac{1}{|G_k|}\1\ac{i\in G_k}\1\ac{j\in G_k}\enspace.$$
The application $G\to B^G$ is a bijection from the set of all partitions into $K$ groups to the set of matrices (Lemma 12.3 of \cite{HDS2} page 262) $$\mathcal{B}=\ac{B\in S_n(\R)^+: \enspace B_{ij}\geq 0, Tr(B)=K, B1=1, B^2=B }\enspace.$$
To alleviate the notation, we write $B^*$ for $B^{G^*}$.

For any such normalized partnership matrix  $B\in \mathcal{B}$ with associated partition $G$, we define $\hat{J}(B)$ as the subset of the $s$ indices $j\in [p]$ that maximizes the square $l_2$ norm 
 $$\sum_{k\in [K]}\pa{\sum_{a\in G_k}Y^{(1)}_{aj}}^2\enspace .$$

Then, for $B\in \mathcal{B}$ and $J\subseteq [p]$, we define the criterion on $Y^{(2)}$.
$$Crit(B,J)(Y^{(2)})=\<Y^{(2)}_J (Y^{(2)}_J)^T-|J|I_n, B\>\enspace.$$
Take any two distinct $B,B'\in \mathcal{B}$. We define the partial ordering relation $'\preceq'$ by $B \preceq B'$ if 
\begin{equation}\label{eq:definition_critere_K_means_sparse}
Crit(B,\hat{J}(B)\cup\hat{J}(B'))(Y^{(2)})\leq Crit(B',\hat{J}(B)\cup\hat{J}(B'))(Y^{(2)})\enspace .
\end{equation}
Finally, we define $\hat{B}$ and the associated partition $\hat{G}$ as any maximal $B$ with respect to this ordering.

In fact, we will show that, provided that  $\Delta^2$ and $w_{J^*}$ are large enough, we have, with high probability, $B\prec B^*$ for all $B\in \mathcal{B}$, which in turn implies that $\hat{G}=G^*$. 

Let us shortly discuss the definition of our estimator. Given $B$, $\hat{J}(B)$ selects the 
columns with empirical largest norms, ie those which are most likely to contain the informative columns. Then, $B \preceq B'$ corresponds to the fact that the Kmeans criterion restricted to the columns in $\hat{J}(B)\cup\hat{J}(B')$ is smaller for $B'$ than for $B$ --see e.g. ~\cite{PengWei07} for the connection between Kmeans criterion and $Crit$. Here, we use a simple sample splitting scheme to avoid technicalities in the simultaneous control of the $\hat{J}(B)$'s and of the Kmeans criterion.

The proofs proceeds with two main steps. First, by Lemma~\ref{prop:recoverycolumns}, $\hat{J}(B^*)$ contains $J^*$ with high probability as long as $w_{J^*}$ is large enough.

Then, we work conditionally on the event of Lemma \ref{prop:recoverycolumns}. The property $J^*\subseteq \hat{J}(B^*)$ implies that, for any $B\in \mathcal{B}$, $Y^{(2)}_{\hat{J}(B)\cup\hat{J}(B^*)}$ is a gaussian mixture of dimension at most $2s$ with a separation $\Delta^2/2$. The next lemma, which builds upon previous analyses of the exact Kmeans criterion~\cite{Even24}, states $B^*$ is a global maximum of the restricted Kmeans criterion provided the separation is large enough. 

\begin{lem}\label{lem:ITsparseKmeans}
Assume that $\Delta^2\geq c \gamma^{5/2}\left[\sqrt{\frac{sK}{n}\left[\log(n)\right]}+ \log(n)\right]$ where $c$ is a large enough numerical constant and assume that $J\subseteq \hat{J}(B^*)$. With probability at least $1-\frac{2}{n^2}$, we have $$Crit(B,\hat{J}(B)\cup\hat{J}(B^*))(Y^{(2)})< Crit(B,\hat{J}(B^*)\cup\hat{J}(B^*))(Y^{(2)})\enspace ,$$
simultaneously for all $B\in \mathcal{B}$.
\end{lem}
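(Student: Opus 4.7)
The plan is a $K$-means style comparison of $Crit(B,J)$ and $Crit(B^*,J)$ on the second split, with $J=\hat{J}(B)\cup\hat{J}(B^*)$, leveraging the sample splitting introduced at the beginning of the proof. Since $J$ is measurable with respect to $Y^{(1)}$ while $Y^{(2)}_J=X_J/\sqrt{2}+\tilde{E}_J$, with $\tilde{E}_J\in\R^{n\times|J|}$ a standard Gaussian matrix independent of $Y^{(1)}$, one may condition on $Y^{(1)}$ and treat $J$ as deterministic when applying Gaussian concentration. Using that both $B^*$ and $B=B^G$ are rank-$K$ orthogonal projections, $\tr(B^*-B)=0$, so the $|J|I_n$ diagonal correction cancels and
\begin{equation*}
Crit(B^*,J)-Crit(B,J)=S_{B,J}+L_{B,J}+Q_{B,J},
\end{equation*}
with $S_{B,J}=\tfrac{1}{2}\<X_JX_J^T,B^*-B\>$, $L_{B,J}=\sqrt{2}\<(B^*-B)X_J,\tilde{E}_J\>_F$, and $Q_{B,J}=\<\tilde{E}_J\tilde{E}_J^T,B^*-B\>$.

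The key identity, exploiting that $X$ is constant within each true cluster, is $B^*X_J=X_J$; hence $(B^*-B)X_J=(I_n-B)X_J$ and $S_{B,J}=\tfrac{1}{2}\|(I_n-B)X_J\|_F^2$ is half the within-cluster variance of $G$ restricted to columns $J$. Since we work on the event of Lemma~\ref{prop:recoverycolumns}, $J\supseteq\hat{J}(B^*)\supseteq J^*$, so $X_J$ retains the full signal, and the standard balanced $K$-means argument (see e.g.~\cite{Even24}) gives $S_{B,J}\gtrsim_{\gamma}r(G,G^*)\,\Delta^2$, where $r(G,G^*)$ is the number of points of $G$ misclassified with respect to $G^*$ up to a permutation of labels. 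The same identity yields $\mathrm{Var}(L_{B,J}\mid Y^{(1)})=2\|(I_n-B)X_J\|_F^2=4S_{B,J}$, tying the linear-noise fluctuations directly to the signal.

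Conditional Gaussian tails then give $|L_{B,J}|\lesssim\sqrt{S_{B,J}\,t}$, and Hanson--Wright applied to $\sum_{j\in J}\tilde{e}_j^T(B^*-B)\tilde{e}_j$, combined with $\|B^*-B\|_F^2\lesssim_\gamma rK/n$ and $\|B^*-B\|_{op}\leq 2$ (difference of two orthogonal projections), yields $|Q_{B,J}|\lesssim\sqrt{sKr\,t/n}+t$, both with failure probability $e^{-t}$ for a fixed $B$ at distance $r$ from $B^*$. A peeling argument on $r\in\ac{1,\ldots,n}$ with union bound over the at most $(enK/r)^r$ partitions at distance $r$ calls for $t=t_r:=r\log(enK/r)+3\log n$, keeping the overall failure probability below $2/n^2$. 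The sufficient condition $S_{B,J}>2(|L_{B,J}|+|Q_{B,J}|)$ then reduces to $\Delta^2\gtrsim_{\gamma}\sqrt{sK\,t_r/(rn)}+t_r/r$, whose worst case $r=1$ (so $t_1\asymp\log n$) yields the claimed threshold $\Delta^2\geq c\gamma^{5/2}\cro{\sqrt{sK\log(n)/n}+\log n}$.

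The main obstacle will be the interplay between the union bound over partitions and the data-dependent column support $J=J(B)$: without sample splitting, controlling $Q_{B,J}$ uniformly over $B$ would require an extra uniform bound over $J$ of size at most $2s$, adding $\log\binom{p}{\leq 2s}\asymp s\log(p/s)$ inside $t_r$ and degrading the rate to the much worse $\sqrt{sK\cdot s/n}$. The sample-splitting step sidesteps this: conditionally on $Y^{(1)}$, the set $J(B)$ is a deterministic function of $B$, and one only union bounds over $\mathcal{B}$. The remaining delicate piece is the signal lower bound $S_{B,J}\gtrsim_\gamma r\Delta^2$, which must simultaneously exploit $\gamma$-balancedness, the preservation of the full separation on $J\supseteq J^*$, and the contractive effect of reassigning misclassified points, following closely the exact $K$-means analysis of~\cite{Even24}.
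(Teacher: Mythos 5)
Your proposal is correct and follows essentially the same route as the paper's proof: the same signal/cross/quadratic decomposition of the criterion difference, the same key identities ($B^*X_J=X_J$ so the cross-term variance equals a constant times the signal term, and $\tr(B^*-B)=0$), Hanson--Wright plus Gaussian tails with a peeling/union bound over partitions, and the same use of sample splitting so that the selected column sets are deterministic conditionally on $Y^{(1)}$. The only difference is bookkeeping: you parametrize distance to $B^*$ by the misclassification count $r$ (deferring the signal bound $S\gtrsim_\gamma r\Delta^2$ to the exact Kmeans analysis of \cite{Even24}), whereas the paper works with $\delta_B=\|B^*-B^*B\|_1$ via Lemma 4 of \cite{giraud2019partial}, its Lemma \ref{lem:norm_Hanson_Wright} and the counting Lemma \ref{lem:cardinalite}; the two parametrizations are equivalent up to $\gamma$-dependent constants and lead to the same threshold.
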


Combining Lemmas \ref{prop:recoverycolumns} and Lemma \ref{lem:ITsparseKmeans} leads to the desired result.

\subsubsection{Proof of Lemma \ref{prop:recoverycolumns}}

In this proof, we write $Y$ instead of $Y^{(1)}$ for simplicity of notation. 
In order to prove that $J^*\subseteq \hat{J}(B^*)$ with high probability, we prove that, with high probability, uniformly on all $j\in J^*$ and $j'\notin J^*$, $$\sum_{k\in [K]}\pa{\sum_{a\in G^*_k}Y_{aj}}^2\geq \sum_{k\in [K]}\pa{\sum_{a\in G^*_k}Y_{aj'}}^2\enspace.$$

For any $j\in [p]$, we have that 
\begin{align*}
\sum_{k\in [K]}\pa{\sum_{a\in G^*_k}Y_{aj}}^2=&\sum_{k\in [K]}\pa{|G_k^*|(\mu_k)_j+\sum_{a\in G^*_k}E_{aj}}^2\\
=&\sum_{k\in [K]}|G_k^*|^2(\mu_k)_j^2
+\sum_{k\in [K]}\pa{\sum_{a\in G^*_k}E_{aj}}^2
+2\sum_{k\in [K]}(\mu_{k})_j\pa{\sum_{a\in G^*_k}E_{aj}}\enspace.
\end{align*}

\paragraph*{The quadratic noise term}$\sum_{k\in [K]}\pa{\sum_{a\in G^*_k}E_{aj}}^2=E_{:j}^T S E_{:j}$ with $S$ the $n\times n$ matrix defined by $S_{ij}=\sum_k\1\ac{i,j\in G_k^*}$. Thus, using Hanson-Wright Lemma (see Appendix B.6 of \cite{HDS2} for example), we deduce that, with probability at least $1-\frac{1}{n^2}$, uniformly on all $j\in [p]$, we have,
\begin{align*}
\left|\sum_{k\in [K]}\pa{\sum_{a\in G^*_k}E_{aj}}^2-\sum_{k\in [K]}|G_k^*|\right|\leq& c\sqrt{\sum_{k\in [K]}|G_k^*|^2\pa{\log(n)+\log(p)}}+
c \max_{k\in [K]}|G_k^*|\pa{\log(n)+\log(p)}\\
\leq& c' \gamma\pa{\sqrt{\frac{n^2}{K}\pa{\log(n)+\log(p)}}+\frac{n}{K}\pa{\log(n)+\log(p)}}\enspace ,
\end{align*}
for some numerical constants $c$ and $c'$.

\paragraph*{The cross-product term} The random variable $2\sum_{k\in [K]}(\mu_{k})_j\pa{\sum_{a\in G^*_k}E_{aj}}$ is normally distributed with variance $4\sum_{k\in [K]}|G_k^*|(\mu_k)_j^2$. So, with probability at least $1-\frac{1}{n^2}$, for all $j\in [p]$, and for some numerical constant $c$, $$2\left|\sum_{k\in [K]}(\mu_{k})_j\pa{\sum_{a\in G^*_k}E_{aj}}\right|\leq c\sqrt{\sum_{k\in [K]}|G_k^*|(\mu_k)_j^2\pa{\log(n)+\log(p)}}\enspace ,$$
for some constant $c>0$.

Let us restrict ourselves to an event of high probability on which those two deviation hold. If $j\notin J^*$, then, for some $c>0$, we have 
\begin{align*}
\sum_{k\in [K]}\pa{\sum_{a\in G_k}Y_{aj}}^2-\sum_{k\in [K]}|G_k^*|\leq& c\gamma\pa{\sqrt{\frac{n^2}{K}\pa{\log(n)+\log(p)}}+\frac{n}{K}\pa{\log(n)+\log(p)}}\\
\leq&c\gamma\frac{n}{K}\pa{\log(np)+\sqrt{K\log(np)}}\\
\leq& c\gamma^2 \min_k |G_{k}^*|\pa{\log(np)+\sqrt{K\log(np)}}\\
\leq &\frac{1}{8}\min_k |G_k^*|w_{J^*}^2
\end{align*}

provided the constant $c_1$ such that $w_{J^*}^2\geq c_1\gamma^2(\sqrt{K\log(np)}+\log(np))$ is large enough. 

Let us turn to the case where $j\in J^*$. Provided that the numerical constant $c_1$ in the condition $w_{J^*}^2\geq c_1\gamma^2\pa{\sqrt{K\pa{\log(np)}}+\log(np)}$ is large enough, we have
\begin{align*}
\sum_{k\in [K]}\pa{\sum_{a\in G_k}Y_{aj}}^2-\sum_{k\in [K]}|G_k^*|\geq & \min |G_k^*|\sum_{k\in [K]}|G_k^*|(\mu_{k})_j^2-c\sqrt{\sum_{k\in [K]}|G_k^*|(\mu_k)_j^2\log(np)}-\min |G_k^*|\frac{1}{8}w^2_{J^*}\\
\geq& \frac{1}{2}\min|G_k^*|w_{J^*}\enspace.
\end{align*}

This concludes the proof of the lemma.

\subsubsection{Proof of Lemma \ref{lem:ITsparseKmeans}}\label{prf:ITsparseKmeans}

In this section, for the sake of simplicity, we write $Y$ instead of $Y^{(2)}$ and for any $B\in \mathcal{B}$, we write $Y_B$ the restriction of $Y$ to the columns in $\hat{J}(B^*)\cup \hat{J}(B)$. We recall that we work conditionally on $(\hat{J}(B))_{B\in \mathcal{B}}$ and that we suppose $J\subseteq \hat{J}(B^*)$. We denote $s_B=|\hat{J}(B^*)\cup \hat{J}(B)|\leq 2s$. 

For $B\in \mathcal{B}$, we decompose the observations as  $$Y_B=X_B+E_B\enspace,$$ where $(X_B)_{ij}= \mu_{kj}$ if $i\in G_k^*$ and for $j\in \hat{J}(B^*)\cup \hat{J}(B)$, and  
$E_B\in \R^{n\times s_B}$  is the restriction of $Y-\E[Y]$ to the columns $\hat{J}(B^*)\cup \hat{J}(B)$.
Let us decompose the difference of the criterions.
\begin{align*}
\lefteqn{Crit(B^*, \hat{J}(B^*)\cup \hat{J}(B))(Y)-Crit(B, \hat{J}(B^*)\cup \hat{J}(B))(Y)}&\\ & = \<Y_BY_B^T-s_BI_n, B^*-B\>\\
& =\<X_BX_B^T, B^*-B\>+
\<E_BE_B^T-s_BI_n, B^*-B\>
+2\<X_B(E_B)^T, B^*-B\>= S(B) + N(B)+ C(B)\enspace.
\end{align*}
As mentionned earlier in the proof, this corresponds to the difference of a Kmeans criterion~\cite{PengWei07}, which has been thoroughly studied in~\cite{Even24}. 

In the remainder of this proof, we write $\|A\|_1$ for its entry-wise $l_1$ norm. 
Using directly Lemma 4 from \cite{giraud2019partial}, we deduce that the signal term satisfies 
\begin{equation}\label{eq:signal:sparse:K:means}
S(B)=\<X_BX_B^T, B^*-B\>\geq \frac{1}{4}\Delta^2\delta_B\enspace,
\end{equation}
with $\delta_B=\|B^*-B^*B\|_1$. It remains to upper-bound the quadratic noise term $\<E_BE_B^T-s_BI_n, B^*-B\>$ and the crossed term $\<X_B(E_B)^T, B^*-B\>$ with respect to $\delta_B$. The next two lemmas provide an uniform control of these two terms.

\begin{lem}\label{lem:ITsparsequadratic}
There exists a numerical constant $c_1$ such that the following holds. With  probability at least $1-\frac{1}{n^2}$, we have  
$$|N(B)|\leq c_1 [\delta_B\vee 1]\left[\sqrt{\frac{\gamma sK}{n}\left[\log(n)+ \gamma^2\right]}+ \log(n)+ \gamma^2\right]
\enspace ,$$
simultaneously over all $B\in \mathcal{B}$.
\end{lem}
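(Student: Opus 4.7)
\textbf{Proof plan for Lemma \ref{lem:ITsparsequadratic}.}

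My plan is to write $N(B)$ as a centered quadratic form in the Gaussian matrix $E_B$, combine the Hanson–Wright inequality with geometric estimates that express $\|B^*-B\|_F$ and $\|B^*-B\|_{op}$ in terms of $\delta_B$, and finally apply a peeling/slicing union bound indexed by $\delta_B$.

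First, since $\operatorname{tr}(B^*) = \operatorname{tr}(B) = K$, one has $s_B \operatorname{tr}(B^* - B) = 0$, so
\begin{equation*}
N(B) = \operatorname{tr}\!\bigl[E_B^T (B^* - B) E_B\bigr] - \mathbb{E}\bigl[\operatorname{tr}\!\bigl(E_B^T (B^* - B) E_B\bigr)\bigr].
\end{equation*}
Conditionally on $(\hat J(B))_{B\in\mathcal{B}}$ (which are built from the independent sample $Y^{(1)}$), the matrix $E_B$ has independent $\mathcal{N}(0,1)$ entries and lives in dimension $n\times s_B$ with $s_B\leq 2s$. For fixed $B$, the Hanson–Wright inequality then gives, for any $t\geq 1$ and a numerical constant $c$,
\begin{equation*}
|N(B)| \leq c\Bigl(\|B^*-B\|_F\,\sqrt{s_B\,t}\;+\;\|B^*-B\|_{op}\, s_B\, t\Bigr)
\end{equation*}
with probability at least $1-2e^{-t}$.

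The second step is to bound the two norms by $\delta_B$. Writing $B^*_{ij}=\sum_k|G^*_k|^{-1}\mathbf{1}\{i,j\in G^*_k\}$ and using the balancedness $\min_k|G^*_k|\geq n/(\gamma K)$ together with the identity $\|B^*-B\|_F^2 \leq \langle B^*,B^*-B^*B\rangle + \langle B,B-B^*B\rangle$ (standard for projections onto $\mathcal{B}$; see Lemma~7 in~\cite{giraud2019partial} and the computations in~\cite{Even24}), one obtains bounds of the form
\begin{equation*}
\|B^*-B\|_F^2 \;\lesssim\; \frac{\gamma K}{n}\,\delta_B, \qquad \|B^*-B\|_{op} \;\lesssim\; \frac{\gamma K}{n}.
\end{equation*}
Plugging this back and using $s_B\leq 2s$ yields, for fixed $B$,
\begin{equation*}
|N(B)| \;\lesssim\; \sqrt{\frac{\gamma K s}{n}\,\delta_B\, t}\;+\;\frac{\gamma K s}{n}\, t,
\end{equation*}
with probability $1-2e^{-t}$.

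The main obstacle is the uniform control, since $|\mathcal{B}|$ can be as large as $K^n$, so a naive union bound is hopeless. The remedy is a peeling argument along the dyadic scale of $\delta_B$: for each integer $q\geq 0$, restrict to the slice $\mathcal{B}_q:=\{B\in\mathcal{B}:\ 2^q\leq \delta_B\vee 1<2^{q+1}\}$ and use an entropy bound showing $\log|\mathcal{B}_q|\lesssim 2^q\gamma K\log(n)/(n/K) + K\log(n)$ coming from the fact that a partition $B$ with $\delta_B\leq \delta$ differs from $B^*$ on at most $O(\delta \gamma K)$ labels (this is the content of Lemma 7 in~\cite{giraud2019partial}; see also the proof of Theorem~2 in~\cite{Even24}). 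Applying the pointwise bound with $t=\log|\mathcal{B}_q|+3\log(n)$ and then summing geometrically over $q$ delivers, with probability at least $1-n^{-2}$, a bound of the order
\begin{equation*}
|N(B)|\;\lesssim\;(\delta_B\vee 1)\Bigl[\sqrt{\tfrac{\gamma s K}{n}\,\bigl(\log n+\gamma^2\bigr)}\;+\;\log n+\gamma^2\Bigr]
\end{equation*}
simultaneously for all $B\in\mathcal{B}$, which is the desired conclusion. The $\gamma^2$ term arises from the balancedness constraint governing how many distinct partitions populate each slice $\mathcal{B}_q$ and from the operator-norm contribution in Hanson–Wright.
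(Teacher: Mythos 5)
Your overall strategy is the same as the paper's (Hanson--Wright for the fixed-$B$ quadratic form, norms of $B^*-B$ controlled via $\delta_B$, then a peeling over slices of $\delta_B$ with an entropy bound), but two of your key estimates are wrong and they break the deviation term. First, the claim $\|B^*-B\|_{op}\lesssim \gamma K/n$ is false: $B^*$ and $B$ are orthogonal projections of rank $K$, so their \emph{entries} are bounded by $\gamma K/n$ but their difference generically has operator norm of order $1$ (and never more than $2$); the correct and sufficient bound, used in the paper's Lemma~\ref{lem:norm_Hanson_Wright}, is simply $\|B-B^*\|_{op}\leq 2$, together with $\|B-B^*\|_F\leq 6\sqrt{\delta_B/m}$ with $m\geq n/(\gamma K)$. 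Second, your Hanson--Wright statement carries a spurious factor $s_B$ on the operator-norm term: writing $\mathrm{tr}[E_B^T(B^*-B)E_B]$ as $\mathrm{vec}(E_B)^T\,(I_{s_B}\otimes(B^*-B))\,\mathrm{vec}(E_B)$, the lifted matrix has Frobenius norm $\sqrt{s_B}\,\|B^*-B\|_F$ but \emph{unchanged} operator norm, so the deviation term is $\|B^*-B\|_{op}\,t$, not $\|B^*-B\|_{op}\,s_B\,t$.

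These two errors do not cancel in general: your pointwise bound ends with a term of order $\frac{\gamma K s}{n}\,t$, which after the union bound produces a contribution of order $\frac{\gamma K s}{n}(\log n+\gamma)$ per unit of $\delta_B$; this exceeds the target $\log n+\gamma^2$ whenever $s\gtrsim n/(\gamma K)$, a regime the lemma must cover (the sparse-clustering results are stated for arbitrary $s\in[p]$, including $s\geq n$). With the corrected norms the pointwise bound is $c\bigl(\sqrt{s_B\,t\,\delta_B\gamma K/n}+t\bigr)$, and the peeling with the entropy bound of Lemma~\ref{lem:cardinalite}, namely $\log|\mathcal{B}_j|\lesssim j[\log n+\gamma]$ for the slice $\delta_B\in(j-1,j]$ (your stated entropy $\delta_B\gamma K^2\log(n)/n + K\log n$ is not the one needed and would not reduce to the claimed bound either), yields the statement exactly as in the paper.
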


\begin{lem}\label{lem:ITsparsecrossed}
There exists a numerical constant $c_2$ such that the following holds. With probability at least $1-\frac{1}{n^2}$, we have  
$$|C(B)|\leq c_2 \sqrt{S(B)(\delta_{B}\vee1) \left(\log(n) + \gamma^2 \right)}\enspace  , $$
 simultaneously for all $B\in \mathcal{B}$.
\end{lem}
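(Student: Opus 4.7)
\textbf{Proof plan for Lemma \ref{lem:ITsparsecrossed}.}
Throughout the argument I work conditionally on the first split $Y^{(1)}$, so that all the sets $\hat{J}(B^*)$ and $\hat{J}(B)$ (hence $s_{B}$) are deterministic, while the restricted noise matrix $E_{B}\in\R^{n\times s_{B}}$ still has i.i.d.\ $\mathcal{N}(0,1)$ entries. Writing $M=B^*-B$, one has
\[
C(B)=2\,\<MX_{B},E_{B}\>_{F}\ ,
\]
which, for a fixed $B$, is a centered Gaussian with variance $4\|MX_{B}\|_{F}^{2}$. So the whole argument reduces to (i) controlling $\|MX_{B}\|_{F}^{2}$ by $S(B)$ and $\delta_{B}$, and (ii) performing a union bound over $B\in\mathcal{B}$.

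For the variance bound, I will use the projection structure of $B$ and $B^*$ together with the block-constant structure of the rows of $X_{B}$ (constant on each $G^*_{k}$). Writing $X_{B}=AV$ with $A\in\{0,1\}^{n\times K}$ the assignment matrix of $G^{*}$ and $V\in\R^{K\times s_{B}}$ the stacked centers $\mu_{k,\hat{J}(B^*)\cup\hat{J}(B)}/\sqrt{2}$, one has
\[
\|MX_{B}\|_{F}^{2}=\mathrm{tr}\!\bigl(V^{\top}A^{\top}M^{2}AV\bigr),\qquad S(B)=\mathrm{tr}\!\bigl(V^{\top}A^{\top}MAV\bigr).
\]
The (balanced) projection structure of $B^{*}$ and $B$ makes $A^{\top}M^{2}A\preceq c\gamma (\delta_{B}\vee 1)\,A^{\top}MA$ in the semidefinite order, which, after tracing against the Gram matrix $VV^{\top}$, yields the central inequality
\begin{equation}\label{eq:crossed_variance_bound}
\|MX_{B}\|_{F}^{2}\leq c\gamma\,(\delta_{B}\vee 1)\,S(B).
\end{equation}
This step mimics Lemma~4 of \cite{giraud2019partial} (used to obtain the signal bound \eqref{eq:signal:sparse:K:means}) but applied to $M^{2}$ in place of $M$; the $\gamma$ factor comes from the balancedness assumption entering through the entries $|G^{*}_{k}|^{-1}$ and $|G_{k}|^{-1}$ in $B^{*}$ and $B$.

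Conditionally on $X_{B}$, standard Gaussian tail then gives, for any $t>0$,
\[
\P\bigl(|C(B)|\geq 2\sqrt{2 t}\,\|MX_{B}\|_{F}\bigr)\leq 2e^{-t}.
\]
I will now take a union bound over $B\in\mathcal{B}$, following the partition-counting strategy of \cite{Even24,giraud2019partial}: the number of $B\in\mathcal{B}$ with $\delta_{B}\in[\delta,2\delta]$ is at most $n^{c\delta}$, so that choosing $t=c'(\delta_{B}\vee 1)(\log n+\gamma^{2})$ makes the bad event summable over $B$. Combined with \eqref{eq:crossed_variance_bound}, this yields, simultaneously over $B\in\mathcal{B}$ and with probability at least $1-n^{-2}$,
\[
|C(B)|\leq c_{2}\sqrt{S(B)\,(\delta_{B}\vee 1)\,(\log n+\gamma^{2})}\ ,
\]
as required. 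The absorption of the $\gamma^{2}$ factor into the exponent $t$ is what makes the final bound agree with the one stated for the quadratic term in Lemma~\ref{lem:ITsparsequadratic}.

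The main obstacle is the semidefinite inequality \eqref{eq:crossed_variance_bound}: establishing that $A^{\top}M^{2}A\preceq c\gamma(\delta_{B}\vee 1)A^{\top}MA$ is not immediate from $\|M\|_{\mathrm{op}}\leq 1$ alone, and requires exploiting simultaneously that both $B^{*}$ and $B$ are normalized partnership matrices (so that $MAV$ has a specific low-rank block structure indexed by the mismatched pairs of clusters, of aggregate size controlled by $\delta_{B}$) and that the partitions are $\gamma$-balanced. Once this inequality is in hand, the Gaussian concentration and the partition-counting union bound are entirely standard.
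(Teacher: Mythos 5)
There is a genuine gap, and it sits exactly where you flag "the main obstacle." Your plan bounds the variance of $C(B)$ by $4\|MX_B\|_F^2\leq c\gamma(\delta_B\vee 1)S(B)$ via an unproven semidefinite inequality $A^{\top}M^2A\preceq c\gamma(\delta_B\vee 1)A^{\top}MA$. But this whole detour is unnecessary: since the rows of $X_B$ are constant on the groups of $G^*$, one has $B^*X_B=X_B$, and since $B$ is a projector, $(B^*-B)X_B=(I-B)X_B$ and
\[
\|(B^*-B)X_B\|_F^2=\tr\!\left[X_B^{\top}(I-B)X_B\right]=\tr\!\left[X_B^{\top}X_B-X_B^{\top}BX_B\right]=S(B)
\]
exactly. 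This identity (with no $\delta_B$ or $\gamma$ factor) is the paper's key step; the $(\delta_B\vee 1)(\log n+\gamma)$ factor in the final bound then comes solely from the peeling union bound over the shells $\{B:\delta_B\in(j-1,j]\}$, whose log-cardinality is $cj(\log n+m^+/m)$ by Lemma~\ref{lem:cardinalite}.

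Moreover, your lossy variance bound is not just inelegant — it breaks the conclusion. Combining $\|MX_B\|_F^2\leq c\gamma(\delta_B\vee 1)S(B)$ with the deviation level $t\asymp(\delta_B\vee 1)(\log n+\gamma^2)$ gives $|C(B)|\lesssim\sqrt{\gamma}\,(\delta_B\vee 1)\sqrt{S(B)(\log n+\gamma^2)}$, i.e.\ a full factor $(\delta_B\vee 1)$ outside the square root rather than $(\delta_B\vee 1)^{1/2}$ as in the statement. That weaker bound does not suffice downstream: in the proof of Lemma~\ref{lem:ITsparseKmeans}, after AM--GM one would be left with a term of order $(\delta_B\vee 1)^2(\log n+\gamma^2)$, which the separation $\Delta^2\gtrsim\sqrt{sK\log n/n}+\log n$ cannot absorb since $\delta_B$ can be of order $n$. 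Apart from this, your conditioning on the first split and the Gaussian-tail-plus-peeling union bound are exactly the paper's argument (modulo the minor omission of the $m^+/m\leq\gamma$ term in the shell counts, which your choice $\log n+\gamma^2$ covers anyway); once you replace the inequality \eqref{eq:crossed_variance_bound} by the exact identity above, the proof goes through as you outline.
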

We consider henceforth that we are under the event of probability higher than $1-2/n^2$ where the deviation bounds in Lemma~\ref{lem:ITsparsequadratic} and~\ref{lem:ITsparsecrossed} hold. Then, for any  $B\in \mathcal{B}\setminus \{B^*\}$, we have 
\begin{eqnarray*}
S(B)- |N(B)| - |C(B)|\geq \frac{S(B)}{2} - (c_1+2c_2) [\delta_B\vee 1]\left[\sqrt{\frac{\gamma sK}{n}\left[\log(n)+ \gamma^2\right]}+ \log(n)+ \gamma\right] \\
\geq \delta_B \frac{\Delta^2}{8}- (c_1+2c_2) [\delta_B\vee 1]\left[\sqrt{\frac{\gamma sK}{n}\left[\log(n)+ \gamma^2\right]}+ \log(n)+ \gamma\right] \ . 
\end{eqnarray*}
 Besides, we know from Lemma 9 in~\cite{Even24} that $\delta_{B}\geq m/(m)\geq 1/\gamma$ if $B\neq B^*$. Hence, $[\delta_B\vee 1]\leq \gamma \delta_B$. We deduce that 
 \[
S(B)- |N(B)| - |C(B)|
\geq \delta_B \frac{\Delta^2}{8}- (c_1+2c_2) \gamma \delta_B \left[\sqrt{\frac{\gamma sK}{n}\left[\log(n)+ \gamma\right]}+ \log(n)+ \gamma\right] \ ,  
 \] 
This last quantity is positive provided that the constant $c$ in the condition 
\[
\Delta^2\geq c \gamma^{5/2}\left[\sqrt{\frac{sK}{n}\left[\log(n)\right]}+ \log(n)\right]
\]
is large enough. 

We have proved that $S(B)- |N(B)| - |C(B)|>0$ for all $B\neq B^*$ which, in light of the definition of $S(B)+N(B)+C(B)$, leads to the desired result.

\begin{proof}[Proof of Lemma \ref{lem:ITsparsequadratic}]\label{prf:ITsparsequadratic}

Le us denote $m=\min_{k\in [K]}|G_k^*|\geq \frac{n}{K\gamma}$. For a fixed $B$, $N(B)=\<E_BE_B^T-s_BI_n, B^*-B\>$ is a quadratic form of Gaussian random variables. Thus, we are in position to apply Hanson-Wright inequality for Gaussian variables--see e.g. Lemma 1 in~\cite{Laurent00}. For a fixed $B\in \mathcal{B}$, with probability higher than $1-2e^{-x}$, we have 
\begin{equation}\label{eq:quadratic_noise}
|N(B)|\leq c\pa{\sqrt{s_B x \|B^*-B\|_F^2}+x\|B^*-B\|_{op}}\enspace \ ,
\end{equation}
where $c$ is a numerical constant. The next Lemma constrol $\|B^*-B\|_F$ and $\|B^*-B\|_{op}$
\begin{lem}\label{lem:norm_Hanson_Wright}
For all $B$, we have $\|B-B^*\|_F\leq   6 \sqrt{\frac{\delta_{B}}{m}}$ and 
$\|B-B^*\|_{op}\leq   2$.
\end{lem}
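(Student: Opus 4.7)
}

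My plan is to handle the two bounds separately, with the operator-norm bound being essentially immediate and the Frobenius bound requiring a short direct computation.

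For the operator-norm bound, I will exploit the fact that both $B$ and $B^*$ are orthogonal projections: the defining relations $B\in S_n(\R)^+$, $B^2=B$ (and likewise for $B^*$) force $B$ to be symmetric idempotent, hence $\|B\|_{op}=\|B^*\|_{op}\leq 1$. The triangle inequality then yields $\|B-B^*\|_{op}\leq 2$ directly.

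For the Frobenius-norm bound, the key is to rewrite everything in terms of the confusion counts $n_{kl}:=|G^*_k\cap G_l|$, where $\{G_l\}$ is the partition associated with $B$ and $\{G^*_k\}$ is the one associated with $B^*$. Using $B^2=B$ and $(B^*)^2=B^*$, I get $\|B\|_F^2=\|B^*\|_F^2=K$, so $\|B-B^*\|_F^2 = 2K-2\mathrm{Tr}(BB^*)$. A short calculation shows $\mathrm{Tr}(BB^*)=\sum_{k,l} n_{kl}^2/(|G^*_k||G_l|)$, and a routine manipulation using $\sum_l n_{kl}=|G^*_k|$ gives
\[
\|B-B^*\|_F^2 \;=\; 2\sum_{k,l}\frac{n_{kl}(|G_l|-n_{kl})}{|G^*_k|\,|G_l|}.
\]
On the other hand, splitting the entries of $B^*-B^*B$ according to whether $j\in G^*_k$ or not and summing yields
\[
\delta_B \;=\; \|B^*-B^*B\|_1 \;=\; 2\sum_{k,l}\frac{n_{kl}(|G_l|-n_{kl})}{|G_l|}.
\]
Since $|G^*_k|\geq m$ for every $k$, the first identity is majorized by $m^{-1}$ times the second, giving $\|B-B^*\|_F^2 \leq \delta_B/m$, which is stronger than the claimed bound $6\sqrt{\delta_B/m}$.

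The only mildly delicate step is the case-by-case computation of $\|B^*-B^*B\|_1$, where one must correctly count, for each $i\in G^*_k$, the contributions of indices $j$ with $j\in G^*_k\cap G_l$ and those with $j\in G_l\setminus G^*_k$; apart from this bookkeeping there is no real obstacle. Everything else is algebra on projection matrices, and the loose constant $6$ in the statement leaves a comfortable margin over the cleaner bound that the direct computation actually delivers.
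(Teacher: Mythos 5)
Your proof is correct, and it takes a genuinely different route from the paper. The paper's proof is a modular decomposition argument: it writes $B-B^*=(I-B^*)(B-B^*)(I-B^*)+B^*(B-B^*)+(B-B^*)B^*+B^*(B-B^*)B^*$, bounds $\|B^*(B-B^*)B^*\|_F\leq\|(B-B^*)B^*\|_F$ using that $B^*$ is a projector, and then imports the bounds $\|(I-B^*)(B-B^*)(I-B^*)\|_F\leq\sqrt{\delta_B/m}$ and $\|(B-B^*)B^*\|_F\leq\sqrt{2\delta_B/m}$ from Lemmas 13 and 15 of \cite{Even24}, which yields the constant $1+3\sqrt{2}\leq 6$. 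You instead carry out a self-contained direct computation: using $B^2=B$, $(B^*)^2=B^*$ and $\tr(B)=\tr(B^*)=K$ you get $\|B-B^*\|_F^2=2K-2\tr(BB^*)$, and your confusion-count identities $\tr(BB^*)=\sum_{k,l}n_{kl}^2/(|G^*_k||G_l|)$, $\|B-B^*\|_F^2=2\sum_{k,l}n_{kl}(|G_l|-n_{kl})/(|G^*_k||G_l|)$ and $\delta_B=2\sum_{k,l}n_{kl}(|G_l|-n_{kl})/|G_l|$ all check out, so bounding $|G^*_k|\geq m$ gives the sharper inequality $\|B-B^*\|_F^2\leq\delta_B/m$, i.e. constant $1$ rather than $6$. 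Your operator-norm argument (two orthogonal projections, triangle inequality) coincides with the paper's. What each approach buys: yours is elementary, avoids any citation to \cite{Even24}, and improves the constant; the paper's decomposition is less sharp but reuses intermediate quantities such as $\|(B-B^*)B^*\|_F$ that are already controlled in the earlier work and appear in related arguments, so it requires no new computation.
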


We remark that the quantity $\delta_B$ is upper-bounded by $2n$. We use a peeling-type argument/For $j\in [2n]$, we denote $$\mathcal{B}_{j}:=\ac{B\in \mathcal{B},\enspace \delta_B\in (j-1,j]}\enspace .$$
We shall apply the definition inequality~\eqref{eq:quadratic_noise} together with an union bound over $\mathcal{B}_j$, this for all $j=1,\ldots, 2n$. The following lemma is a direct consequence of Lemma 17 in~\cite{Even24}. 
\begin{lem}\label{lem:cardinalite}
There exists a positive numerical constant $c$ such that, for any $j=1,\ldots, 2n$, we have
    \[
    \log\left[|\mathcal{B}_{j}|\right]\leq c j \left[\log(n)+ \frac{m^+}{m}\right]\ ,
    \]
where $m^+=\max|G^*_k|$. 
\end{lem}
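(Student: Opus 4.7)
\emph{Proof plan for Lemma \ref{lem:cardinalite}.}

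The plan is to reduce the counting problem to a statement about partitions of $[n]$, and then invoke the cited Lemma 17 of \cite{Even24} essentially verbatim. Recall that the map $G\mapsto B^G$ is a bijection between partitions of $[n]$ into $K$ groups and the set $\mathcal{B}$, so counting elements of $\mathcal{B}_j$ is the same as counting partitions $G$ such that $\delta_{B^G}:=\|B^*-B^*B^G\|_1\in (j-1,j]$. The first step will be to interpret $\delta_{B^G}$ as a (normalized) misclassification count between $G$ and $G^*$: a direct computation starting from the definition $B^G_{ij}=\sum_k \frac{1}{|G_k|}\1\{i\in G_k\}\1\{j\in G_k\}$ shows that $\delta_{B^G}$ is, up to constants depending on the cluster sizes, proportional to the number of points $i\in[n]$ whose label in $G$ disagrees with their label in $G^*$ (after an optimal permutation of labels). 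More precisely, if $N(G)$ denotes this misclassification count, then $\delta_{B^G}\asymp N(G)/m$, so that $\delta_{B^G}\leq j$ implies $N(G)\lesssim jm^+/m$ (the asymmetry is what produces the factor $m^+/m$ in the final bound).

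The second step is the combinatorial count. Once we know that $G\in \mathcal{B}_j$ implies at most $N\lesssim jm^+/m$ points are misclassified, the number of such partitions is bounded by $\binom{n}{N} K^{N}$: choose which $N$ indices are misclassified, and assign each one a new label in $[K]$. Taking logarithms and using $\log\binom{n}{N}\leq N \log(en/N)\leq N\log(n)$ gives
\[
\log |\mathcal{B}_j|\ \lesssim\ N\bigl[\log(n)+\log(K)\bigr]\ \lesssim\ j\,\frac{m^+}{m}\bigl[\log(n)+\log(K)\bigr]\ .
\]
Since $K\leq n$, $\log(K)\leq \log(n)$, and we can absorb the factor $m^+/m$ into the bracket (using $m^+/m\geq 1$) to match the stated bound $c j[\log(n)+m^+/m]$.

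The main obstacle, and the only non-routine part, is the first step: establishing the quantitative comparison between $\delta_{B^G}=\|B^*-B^*B^G\|_1$ and the misclassification count $N(G)$. I expect this to follow by the same block-wise analysis that underlies Lemma 17 in \cite{Even24} — decompose $B^*-B^*B^G$ along the blocks $G^*_k\times G^*_{k'}$ and separate the contributions of correctly and incorrectly classified points — but one must be careful with the normalization $1/|G_k|$ in $B^G$, which is what produces the asymmetric factor $m^+/m$ rather than a clean factor of $1$. Once this geometric lemma is in hand, the rest is a direct application of the counting argument above, and indeed Lemma 17 of \cite{Even24} packages exactly this combinatorial step, so it suffices to cite it with our parameters.
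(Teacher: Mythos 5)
The paper's proof of this lemma is exactly the citation you end with---Lemma 17 of \cite{Even24} applied with these parameters---so your bottom line coincides with the paper. But the self-contained sketch you give of why the bound holds contains two quantitative errors and one structural one. First, the normalization in your key geometric step is off by a factor of $m$: the quantity $\delta_{B^G}=\|B^*-B^*B^G\|_1$ is comparable to the misclassification (non-plurality) count itself, not to that count divided by $m$. A direct computation gives $\delta_{B^G}=2\sum_l\bigl(|G_l|-\sum_k|G^*_k\cap G_l|^2/|G_l|\bigr)$, which is sandwiched between one and two times $\sum_l\bigl(|G_l|-\max_k|G^*_k\cap G_l|\bigr)$; this is also the only reading consistent with the paper's own uses of $\delta_B$ (e.g.\ $\delta_B\geq m/m^+$ whenever $B\neq B^*$, and $\|B-B^*\|_F\leq 6\sqrt{\delta_B/m}$). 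Second, even granting your intermediate bound $\log|\mathcal{B}_j|\lesssim j\frac{m^+}{m}\bigl[\log(n)+\log(K)\bigr]$, the final ``absorption'' step is false: the product $\frac{m^+}{m}\log(n)$ cannot be bounded by a constant times the sum $\log(n)+\frac{m^+}{m}$ (take $m^+/m\asymp\log n$), so your argument does not reach the stated bound $cj\bigl[\log(n)+\frac{m^+}{m}\bigr]$.

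Third, structurally, the count ``choose the $N$ misclassified indices and give each a new label, giving $\binom{n}{N}K^N$'' does not determine the partition, because the correspondence between the clusters of $G$ and those of $G^*$ is not fixed by that data. The regime responsible for the $m^+/m$ term is precisely the one this scheme misses: move $t\in[m,m^+]$ points of a large true cluster into a small one so that they become the plurality of that cluster; then $\delta_{B^G}\asymp m$ stays bounded while the entropy of choosing the $t$ moved points can be of order $m^+\asymp\frac{m^+}{m}\,\delta_{B^G}$. This is why the correct bound has $j\cdot\frac{m^+}{m}$ appearing additively alongside $j\log(n)$, rather than multiplying $\log(n)$, and it is exactly the situation that Lemma 17 of \cite{Even24} handles. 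So the citation does carry the proof, but not as a packaging of your reduction: as written, your reduction both misstates the relation between $\delta_B$ and the misclassification count and would not yield the lemma even if its intermediate bound were accepted.
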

By definition, we have $m^+/m\leq \gamma$.
By Lemma~\ref{lem:cardinalite}, we deduce that 
$\log(\mathcal{B}_j)\leq c j[\log(n)+ \gamma]$ for some constant $c>0$. 

Putting everything together we conclude that, with probability at leat $1- 1/n^2$, we have 
\[
|N(B)|\leq c [\delta_B\vee 1]\left[\sqrt{\frac{\gamma sK}{n}\left[\log(n)+ \gamma\right]}+ \log(n)+ \gamma\right]\ . 
\]

\end{proof}

\begin{proof}[Proof of Lemma~\ref{lem:norm_Hanson_Wright}]
The proof is based on standard linear algebra and follows from the computations in~\cite{giraud2019partial} and~\cite{Even24}.  Since $B$ and $B^*$ are projector, we have $\|B-B^*\|_{op}\leq 2$. Besides, we have
\[
B-B^*= (I-B^*)(B-B^*)(I-B^*) + B^* (B-B^*) +   (B-B^*)B^* + B^*(B-B^*)B^*
\]
Since $B^*$ is a projector, we have $\|B^*(B-B^*)B^*\|_F\leq \|(B-B^*)B^*\|_{F}$. It follows that 
\begin{equation}\label{eq:frobenius_loss_bi1}
\|B-B^*\|_F\leq \|(I-B^*)(B-B^*)(I-B^*)\|_F + 3\|(B-B^*)B^*\|_{F}\ , 
\end{equation}
In the proof of Lemma 13 in~\cite{Even24}, it is shown that 
\[
\|(I-B^*)(B-B^*)(I-B^*)\|_F\leq  \sqrt{\frac{\delta_{B}}{m}}
\]
Besides, it is shown in proof of Lemma 15 in~\cite{Even24}, that 
\[
\|(B-B^*)B^*\|_{F}\leq \sqrt{2\frac{\delta_{B}}{m}}\ . 
\]
The result follows. 
\end{proof}

\begin{proof}[Proof of Lemma \ref{lem:ITsparsecrossed}]\label{prf:ITsparsecrossed}
 Observe that the random variable $C(B)$ is distributed as a Gaussian random variables whose variance is given by 
 \[
4 \|(B^*-B)X_B\|_F^2 = tr [X_B^TX_B- X_B^T B X_B]= 4S(B)
\ ,
 \]
since $B^2=B$ and $B^* X_B = X_B$. 
Then, we apply an union bound over all $\mathcal{B}_{j}$'s and all $\mathcal{B}_{c,j'}$. Together with Lemma~\ref{lem:cardinalite}, this allows us to conclude that, with probability higher than $1- 1/n^2$, we have  
\[
C(B)\leq c \sqrt{S(B)\lceil\delta_{B}\rceil \left(\log(n) + \frac{m^+}{m}\right)}\ ,
\]
simultaneously for all $B\in \mathcal{B}$. Since $m_+/m\leq \gamma$, the result follows.

\end{proof}

\subsection{Proof of  Proposition~\ref{prp:IT_Biclusterling}}\label{prf:IT_Biclustering}

As in the proof of Proposition~\ref{prop:upperboundsparseIT}, we express the exact Kmeans criterion in terms of partnership matrices. 

Given a partition $G$, we define $B_r^G$ s the corresponding partnership matrix, that is $B_r\in\mathbb{R}^{n\times n}$ is such that $(B_{r})_{ij}=0$ if $i$ and $j$ are not in the same group, whereas $(B_r)_{i,j}$ is equal to 1 over the size of group that contains $i$ and $j$ otherwise. For short, we write $B_r^*$ for $B_r^{G^*}$. Also, $\mathcal{B}_r$ for the collection of all possible partnership matrices of size $n$ with $K$ groups.

Simarly, we define partnership matrice $B_c^H\in\mathbb{R}^{p\times p}$ associated to a partition $H$, the collection $\mathcal{B}_c$ of all such partnership matrices, whereas we denote $B^*_c$ for $B_c^{H^*}$. Finally, given the bi-Kmeans estimator $(\hat{G}, \hat{H})$ from~\eqref{ed:bi-Kmeans}, we write $\hat{B}_c$ and $\hat{B}_r$ for $\hat{B}_c^{\hat{G}}$ and $\hat{B}_r^{\hat{H}}$.

 We will often use that any $B_r\in \mathcal{B}_c$ (resp. $B_c\in \mathcal{B}_c$) is a projection matrix  and that its trace is equal to $K$ (resp. $L$).

Partnership matrices are handy representations for analyzing Kmeans criteria~\cite{PengWei07,giraud2019partial}. Indeed, equiped with this notation, the bi-Kmeans estimator~\eqref{ed:bi-Kmeans} can be reformulated as 
\begin{equation}\label{ed:bi-Kmeans:2}
\left(\hat{B}_r,\hat{B}_c\right)= \arg\max_{B_r\in \mathcal{B}_r,B_c\in \mathcal{B}_c} \tr\left[ Y^T B_r Y B_c\right]\ . 
\end{equation}

\begin{proof}[Proof of~\eqref{ed:bi-Kmeans:2}]
Developing the criterion inside~\eqref{ed:bi-Kmeans}, we have that 
\[
(\hat G,\hat H) \in \mathop{\text{argmax}}_{G,H} \underset{l\in [L]}{\sum_{k\in [K]}}\ \underset{j\in H_{l}}{\sum_{i\in G_{k}}} 2Y_{ij}\bar Y_{kl}^{G\times H} - \pa{\bar Y_{kl}^{G\times H}}^2 .
\]
Then, we observe 
\begin{align*}
\underset{j\in H_{l}}{\sum_{i\in G_{k}}} 2Y_{ij}\bar Y_{kl}^{G\times H} - \pa{\bar Y_{kl}^{G\times H}}=  \underset{l\in [L]}{\sum_{k\in [K]}} |G_k||H_l| \pa{\bar Y_{kl}^{G\times H}}^2= \|B_r Y B_c\|_F^2= \tr\left[ Y^T B_r Y B_c\right] \ , 
\end{align*}
since $B_r$ and $B_c$ are orthogonal projectors. 
\end{proof}

In the following proposition, we write $m_r= \min_{a=k,\ldots , K} |G^*_k|$ and $m^+_r= \max_{k=1,\ldots , K} |G^*_k|$, the respective size of the smallest group and of the largest group of the true partition of the rows. We similarly define $m_c$ and $m_c^+$. By assumption, we have
$\max(m_r^+/m_r, m_c^+/m_c)\leq \gamma$.

Throughout the proof of the proposition, we write $X=\E\cro{Y}$ the signal matrix and $E=Y-X$ the noise matrix, whose entries are i.i.d $\mathcal{N}(0,\sigma^2)$. We suppose without loss of generality that $\sigma^2=1$. As for Proposition~\ref{prop:upperboundsparseIT}, this proof heavily builds upon the analysis of the exact Kmeans criterion in~\cite{Even24}. By definition of our estimator, we have
\[
\tr\left[ Y^T B_r Y B_c\right]\geq \tr\left[ Y^T B^*_r Y B^*_c\right]
\]
The latter inequality implies that 
\begin{equation}\label{eq:inequality_fundamental}
S(\hat{B}_r,\hat{B}_c)\leq N(\hat{B}_r,\hat{B}_c) + C(\hat{B}_r,\hat{B}_c)
\end{equation}
where
\begin{eqnarray}\label{eq:condition1:bi_it}
    S(B_r,B_c)&:=& \tr\left[ X^T B^*_r X B^*_c\right] -  \tr\left[ X^T B_r X B_c\right]\ ; \\ \label{eq:condition2:bi_it}
    N(B_r,B_c) & :=& \tr\left[ E^T B_r E B_c\right] -  \tr\left[ E^T B^*_r E B^*_c\right]\ ;\\ \label{eq:condition3:bi_it}
    C(B_r,B_c) & := & 2\tr\left[ X^T B_r E B_c\right] -  2\tr\left[ X^T B^*_r E B^*_c\right]\ .
\end{eqnarray}
Here, $S(B_r,B_c)$ is a deterministic signal term that only depends on $X$, whereas $N(B_r,B_c)$ is a pure noise term that only depends on $E$. For $B_r\in \mathcal{B}_r$, we write $\delta_{B_r}=\|B_r^*-B_r^*B_r\|_1$. Similarly, for $B_c\in \mathcal{B}_c$, we write $\delta_{B_c}=\|B_c^*-B_c^*B_c\|_1$.

\begin{lem}\label{lem:signal:biclustering}
 For any $B_r$ and $B_c$, we have 
 \[
    S(B_r,B_c)\geq \left[ \delta_{B_c}  \frac{\Delta^2_c}{4}\right]\vee \left[ \delta_{B_r}  \frac{\Delta^2_r}{4}\right]\enspace.
 \]   
\end{lem}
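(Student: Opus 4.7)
Both lower bounds will be proved in parallel; by the $(n,K)\leftrightarrow(p,L)$ symmetry of the biclustering model, I only sketch the row bound $S(B_r,B_c)\geq \delta_{B_r}\Delta_r^2/4$, the column bound being entirely analogous.

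The plan is to reduce $S(B_r,B_c)$ to an expression of the form $\<XX^T, B_r^* - B_r\>$, to which the one-sided Kmeans signal inequality (Lemma 4 in \cite{giraud2019partial}) applies. The key algebraic input is that $X$ is constant on every bicluster $G^*_k\times H^*_l$, so $B_r^* X = X$ and $X B_c^* = X$. Writing $\tr[X^T X] = \tr[X^T B_r^* X B_c^*]$ and splitting via $\pm\tr[X^T B_r X B_c^*]$ gives
\begin{equation*}
S(B_r,B_c) \,=\, \tr[X^T(B_r^* - B_r) X B_c^*] \,+\, \tr[X^T B_r X (B_c^* - B_c)].
\end{equation*}
Using $B_c^* X^T = X^T$ (the transpose of $X B_c^* = X$) and cyclicity of the trace, the first summand reduces to $\<XX^T, B_r^* - B_r\>$, while the second reduces to $\tr[X^T B_r X (I - B_c)]$, which is non-negative: $X^T B_r X$ is PSD and $I - B_c$ is an orthogonal projection, hence PSD, and the trace of a product of two PSD matrices is non-negative. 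Therefore
\begin{equation*}
S(B_r,B_c) \,\geq\, \<XX^T, B_r^* - B_r\>.
\end{equation*}

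It then remains to lower bound $\<XX^T, B_r^* - B_r\>$ by $\delta_{B_r}\Delta_r^2/4$. I would apply Lemma 4 of \cite{giraud2019partial} to the rows of $X$, viewed as $n$ points in $\R^p$ clustered by $G^*$, whose deterministic centers $X_{k_\star:}\in\R^p$ have coordinates $(X_{k_\star,j})_{j} = \mu_{k,l_j^*}$. Expanding,
\begin{equation*}
\|X_{k_\star:}-X_{k'_\star:}\|^2 \,=\, \sum_{l=1}^{L}|H_l^*|(\mu_{kl}-\mu_{k'l})^2 \,\geq\, \frac{p}{L\gamma}\|\mu_{k:}-\mu_{k':}\|^2,
\end{equation*}
where the balancedness $|H_l^*|\geq p/(L\gamma)$ has been used. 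In view of the definition $\Delta_r^2 = (p/L)\min_{k\neq k'}\|\mu_{k:}-\mu_{k':}\|^2/(2\sigma^2)$ (with $\sigma^2=1$), Lemma 4 of \cite{giraud2019partial} then delivers the announced bound, up to a $\gamma$-factor that, as elsewhere in the analysis of Proposition~\ref{prp:IT_Biclusterling}, is absorbed into the numerical constants.

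The only non-trivial step is spotting the decomposition that allows the $B_c$-dependent remainder to be discarded by positive semi-definiteness; once that is in place, everything reduces to the classical one-block Kmeans signal inequality of \cite{giraud2019partial}. In particular the proof is purely deterministic, with no probabilistic content.
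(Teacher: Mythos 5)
Your proof is correct, and its skeleton is the same as the paper's: insert an intermediate trace, discard a non-negative remainder, apply Lemma~4 of \cite{giraud2019partial} to the surviving term, and obtain the other bound by the row/column symmetry. The difference lies in which cross term you insert and how you kill the remainder. The paper adds and subtracts $\tr[X^TB_r^*XB_c]$, so its remainder is $\tr[(XB_c)(XB_c)^T(B_r^*-B_r)]$, whose non-negativity is itself obtained from Lemma~4 applied to the matrix $XB_c$ (whose rows are constant on the true row groups), and its main term $S(B_r^*,B_c)=\langle X^TX,B_c^*-B_c\rangle$ directly gives the column bound. You instead add and subtract $\tr[X^TB_rXB_c^*]$, reducing the remainder to $\tr[X^TB_rX(I-B_c)]$, which you dispatch by the elementary fact that the trace of a product of two PSD matrices is non-negative — so you only invoke Lemma~4 once, for the main term $\langle XX^T,B_r^*-B_r\rangle$; this is a marginally more self-contained treatment of the cross term. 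One further point: your lower bound on the center separation, $\|X_{k:}-X_{k':}\|^2\geq \frac{p}{L\gamma}\|\mu_{k:}-\mu_{k':}\|^2$, produces $\delta_{B_r}\Delta_r^2/(4\gamma)$ rather than the stated $\delta_{B_r}\Delta_r^2/4$; the paper's own proof faces exactly the same slack (its column separation is only $\geq 2\Delta_c^2/\gamma$ under balancedness) and silently absorbs it, so your explicit acknowledgement of the $\gamma$-factor is, if anything, more careful than the original, and harmless since the constant is later swallowed in Proposition~\ref{prp:IT_Biclusterling}.
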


\begin{lem}\label{lem:pure:noise:biclustering}
There exists numerical constants $c,c'$ such that, with probability higher than $1- c'/(n\vee p)^2$, we have
\begin{eqnarray*}
N(B_c,B_r) &\leq&  c  \frac{m_r^+}{m_r}\delta_{B_r} \left[\sqrt{\frac{p}{m_rm_c} \left[\log(n\vee p)+  \frac{m_r^+}{m_r}+ \frac{m_c^+}{m_c}\right]} \right]\\ && + c \frac{m_c^+}{m_c}\delta_{B_c} \left[\sqrt{\frac{n}{m_rm_c} \left[\log(n\vee p)+  \frac{m_r^+}{m_r}+ \frac{m_c^+}{m_c}\right]} \right]\\
& &+c \left[\frac{m_r^+}{m_r} \delta_{B_r}+ \frac{m_c^+}{m_c} \delta_{B_c} \right]\left[\log(n\vee p)+ \frac{m_r^+}{m_r}+ \frac{m_c^+}{m_c}\right]\ ,
\end{eqnarray*}
simultaneously over all $B_r$ and $B_c$. 
\end{lem}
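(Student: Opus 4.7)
The plan is to mirror the proof of Lemma~\ref{lem:ITsparsequadratic}, adapting the peeling argument to the product structure of biclustering partnership matrices. First I would decompose, writing $B_r = B_r^* + (B_r - B_r^*)$ and $B_c = B_c^* + (B_c - B_c^*)$ and expanding,
\[
N(B_r, B_c) = \tr[E^T B_r^* E (B_c - B_c^*)] + \tr[E^T (B_r - B_r^*) E B_c^*] + \tr[E^T (B_r - B_r^*) E (B_c - B_c^*)].
\]
Each piece is a centered quadratic form in the i.i.d.\ Gaussian entries of $E$, so the Hanson--Wright inequality (Lemma~1 in~\cite{Laurent00}) gives, for any fixed $(B_r, B_c)$ and $x>0$, with probability at least $1 - 6e^{-x}$,
\[
|N(B_r, B_c)| \lesssim \sqrt{K \|B_c - B_c^*\|_F^2\, x} + \sqrt{L \|B_r - B_r^*\|_F^2\, x} + \sqrt{\|B_r - B_r^*\|_F^2 \|B_c - B_c^*\|_F^2\, x} + x,
\]
using $\|B_r^*\|_F^2 = K$, $\|B_c^*\|_F^2 = L$ and that all of $B_r, B_r^*, B_c, B_c^*$ are projectors with operator norm at most one. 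The analogue of Lemma~\ref{lem:norm_Hanson_Wright} applied separately to the row and column partnerships yields $\|B_r - B_r^*\|_F^2 \lesssim \delta_{B_r}/m_r$ and $\|B_c - B_c^*\|_F^2 \lesssim \delta_{B_c}/m_c$, so the first two contributions scale like $\sqrt{\delta_{B_r}\, L/m_r \cdot x}$ and $\sqrt{\delta_{B_c}\, K/m_c \cdot x}$. Since $m_r \asymp n/K$ and $m_c \asymp p/L$, these match the target $\sqrt{p/(m_r m_c)}$ and $\sqrt{n/(m_r m_c)}$ scalings.

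To upgrade to a uniform bound, I would peel simultaneously over $\mathcal{B}_{r,j} := \{B_r : \delta_{B_r} \in (j-1, j]\}$, $j \in \{1, \ldots, 2n\}$, and $\mathcal{B}_{c,j'} := \{B_c : \delta_{B_c} \in (j'-1, j']\}$, $j' \in \{1, \ldots, 2p\}$. The analogue of Lemma~\ref{lem:cardinalite} (a direct adaptation of Lemma~17 of~\cite{Even24} to each coordinate) gives $\log|\mathcal{B}_{r,j}| \leq c\,j\,[\log(n\vee p) + m_r^+/m_r]$ and $\log|\mathcal{B}_{c,j'}| \leq c\,j'\,[\log(n\vee p) + m_c^+/m_c]$. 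Taking
\[
x_{j,j'} := c'\,\bigl(j\,[\log(n\vee p) + m_r^+/m_r] + j'\,[\log(n\vee p) + m_c^+/m_c] + \log(n\vee p)\bigr)
\]
and union-bounding over the $O(np)$ peels provides, with probability at least $1 - c''/(n\vee p)^2$, the pointwise Hanson--Wright bound simultaneously over all $(B_r, B_c)$. Substituting $j \leftarrow (\delta_{B_r}\vee 1)$ and $j' \leftarrow (\delta_{B_c}\vee 1)$ and regrouping produces the three announced contributions, with the factors $m_r^+/m_r$ and $m_c^+/m_c$ in front of $\delta_{B_r}$ and $\delta_{B_c}$ arising from the peel-size penalty $x_{j,j'}$.

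The main obstacle is the cross piece $\tr[E^T(B_r - B_r^*)E(B_c - B_c^*)]$, whose Hanson--Wright control features the genuinely coupled factor $\sqrt{\delta_{B_r}\delta_{B_c}/(m_r m_c) \cdot x_{j,j'}}$. I would split this through a weighted AM--GM of the form $\sqrt{ab} \leq \tfrac{1}{2}(\alpha a + \alpha^{-1} b)$, with $\alpha$ tuned so that the $\delta_{B_r}$-contribution merges into a multiple of $\delta_{B_r}\sqrt{L/m_r \cdot [\log(n\vee p) + m_r^+/m_r + m_c^+/m_c]}$ and the $\delta_{B_c}$-contribution into a multiple of $\delta_{B_c}\sqrt{K/m_c \cdot [\log(n\vee p) + m_r^+/m_r + m_c^+/m_c]}$, so that the cross term is absorbed cleanly into the first two announced terms (using the crude a priori bounds $\delta_{B_r} \leq 2n$ and $\delta_{B_c} \leq 2p$ where needed to transfer a $\sqrt{\delta_{B_r}/m_r}$ factor into a $\sqrt{n/m_r}$ factor). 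Bookkeeping this split consistently across both peels, and separately handling the edge case $\delta_{B_r}\wedge\delta_{B_c} = 0$ (i.e.\ $B_r = B_r^*$ or $B_c = B_c^*$, where only the two non-cross terms survive), delivers the stated inequality.
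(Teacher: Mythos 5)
Your proposal is correct and follows essentially the same route as the paper: the identical three-term decomposition $N_1+N_2+N_3$, Hanson--Wright applied to each quadratic form with the Frobenius/operator norm bounds $\|B_r-B_r^*\|_F\lesssim\sqrt{\delta_{B_r}/m_r}$, $\|B_c-B_c^*\|_F\lesssim\sqrt{\delta_{B_c}/m_c}$, joint peeling over $\mathcal{B}_{r,j}\times\mathcal{B}_{c,j'}$ with the cardinality bound adapted from Lemma 17 of \cite{Even24}, and absorption of the cross term via the crude bounds $\delta_{B_r}\leq 2n$, $\delta_{B_c}\leq 2p$ together with $\delta_{B_r}\geq m_r/m_r^+$, $\delta_{B_c}\geq m_c/m_c^+$ off the exceptional cases $B_r=B_r^*$, $B_c=B_c^*$. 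No gaps of substance.
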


\begin{lem}\label{lem:cross:noise:clustering}
There exists numerical constants $c,c'$ such that, with probability higher than $1- c'/(n\vee p)^2$, we have
\[
C(B_c,B_r)\leq c \sqrt{S(B_r,B_r)\left[\delta_{B_r}\frac{m_r^+}{m_r} \left(\log(n\vee p) + \frac{m_r^+}{m_r}\right)+\delta_{B_c}\frac{m_c^+}{m_c}\left(\log(n\vee p) + \frac{m_c^+}{m_c}\right) \right]}\ .
\]
simultaneously over all $B_r$ and $B_c$. 

\end{lem}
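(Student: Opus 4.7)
The plan is to view $C(B_r,B_c)$ as a centered Gaussian linear functional of the noise matrix $E$ whose variance is controlled by the signal term $S(B_r,B_c)$, and then to apply a peeling argument parallel to the one used for the pure noise term in Lemma~\ref{lem:pure:noise:biclustering}. By cyclicity of the trace, write
\[
C(B_r,B_c)=2\tr(M\,E),\qquad M:=B_cX^{T}B_r-B_c^{*}X^{T}B_r^{*}.
\]
Since the entries of $E$ are i.i.d.\ $\mathcal{N}(0,\sigma^{2})$, for each fixed $(B_r,B_c)$ the random variable $C(B_r,B_c)$ is centered Gaussian with variance $4\sigma^{2}\|M\|_F^{2}$. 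Splitting $M=(B_c-B_c^{*})X^{T}B_r+B_c^{*}X^{T}(B_r-B_r^{*})$ and using that $\|B_r\|_{\mathrm{op}}\vee\|B_c^{*}\|_{\mathrm{op}}\leq 1$ together with the identities $B_r^{*}X=X=XB_c^{*}$ yields
\[
\|M\|_F\leq\|X(B_c^{*}-B_c)\|_F+\|(B_r^{*}-B_r)X\|_F.
\]

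Next I would bound each of these norms by $\sqrt{S(B_r,B_c)}$ through the key identity obtained by direct expansion of $S(B_r,B_c)=\|X\|_F^{2}-\tr(X^{T}B_rXB_c)$: since $I-B_r$ and $I-B_c$ are projectors, one gets
\[
S(B_r,B_c)=\|(B_r^{*}-B_r)X\|_F^{2}+\|B_rX(B_c^{*}-B_c)\|_F^{2},
\]
and by the symmetric role of rows and columns also $S(B_r,B_c)=\|X(B_c^{*}-B_c)\|_F^{2}+\|(B_r^{*}-B_r)XB_c\|_F^{2}$. In particular $\|(B_r^{*}-B_r)X\|_F^{2}\vee\|X(B_c^{*}-B_c)\|_F^{2}\leq S(B_r,B_c)$, so that $\|M\|_F^{2}\leq 4\,S(B_r,B_c)$ and Gaussian concentration gives, for every fixed $(B_r,B_c)$ and every $u>0$,
\[
\P\bigl(|C(B_r,B_c)|\geq u\sqrt{S(B_r,B_c)}\bigr)\leq 2\exp(-c\,u^{2})
\]
for some numerical constant $c>0$.

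Finally I would peel $\mathcal{B}_r\times\mathcal{B}_c$ by setting $\mathcal{B}_{r,j}:=\{B_r:\delta_{B_r}\in(j-1,j]\}$ and $\mathcal{B}_{c,j'}:=\{B_c:\delta_{B_c}\in(j'-1,j']\}$, and applying Lemma~\ref{lem:cardinalite} to the rows and, by symmetry, to the columns, to obtain
\[
\log|\mathcal{B}_{r,j}\times\mathcal{B}_{c,j'}|\leq c\bigl[j(\log(n\vee p)+m_r^{+}/m_r)+j'(\log(n\vee p)+m_c^{+}/m_c)\bigr].
\]
Choosing $u_{j,j'}^{2}$ proportional to this quantity plus an extra $3\log(n\vee p)+2\log(j+j')$ to enable summation, a union bound over $(j,j')$ gives, with probability at least $1-c'/(n\vee p)^{2}$ and simultaneously over all $(B_r,B_c)$,
\[
|C(B_r,B_c)|\leq c\sqrt{S(B_r,B_c)\bigl[\lceil\delta_{B_r}\rceil(\log(n\vee p)+\tfrac{m_r^{+}}{m_r})+\lceil\delta_{B_c}\rceil(\log(n\vee p)+\tfrac{m_c^{+}}{m_c})\bigr]}.
\]
Since $\delta_{B_r}\geq m_r/m_r^{+}$ for any $B_r\neq B_r^{*}$ (by Lemma~9 of~\cite{Even24}, as already used in the proof of Lemma~\ref{lem:ITsparseKmeans}), and analogously for columns, one may replace $\lceil\delta_{B_r}\rceil$ by $(m_r^{+}/m_r)\delta_{B_r}$ up to an absolute constant, yielding the stated bound. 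The main obstacle is the key decomposition of $S(B_r,B_c)$ into two non-negative squared Frobenius norms: it is what allows the variance of $C$ to be bounded by $S$ rather than by some larger, dimension-dependent quantity, and its proof relies crucially on the specific structure $B_r^{*}X=X=XB_c^{*}$ built into the signal model.
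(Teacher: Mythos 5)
Your proposal is correct and follows essentially the same route as the paper: for each fixed $(B_r,B_c)$ the cross term is a centered Gaussian whose variance is controlled by $S(B_r,B_c)$, then a peeling over the cells $\delta_{B_r}\in(j-1,j]$, $\delta_{B_c}\in(j'-1,j']$ with the cardinality bound (Lemma~\ref{lem:cardinalite2}) and a union bound, and finally $\lceil\delta\rceil\lesssim \delta\,m^+/m$ via $\delta\geq m/m^+$. The only cosmetic difference is that the paper computes the variance exactly, $4\|B_rXB_c-B_r^*XB_c^*\|_F^2=4S(B_r,B_c)$ using $X=B_r^*XB_c^*$, whereas you bound it by a constant multiple of $S(B_r,B_c)$ through a triangle-inequality split; in fact your $\|M\|_F^2$ equals $S(B_r,B_c)$ exactly, so the extra factor is harmless but unnecessary.
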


We now use that $m^+_r/m_r\leq \gamma$ and $m_c^+/m_c\leq \gamma$. By combining the two previous lemmas, we deduce that, for some numerical constants $c,c'$, with probability at least $1- c'/(n\vee p)^2$, we have

\begin{eqnarray*}
N(B_c,B_r)+C(B_c,B_r) &\leq & \frac{S(B_r,B_r)}{2}+ c \gamma^{5/2}\delta_{B_r} \left[\sqrt{\frac{KL\log(n\vee p)}{n}}+ \log(n\vee p)\right]\\
& & + c \gamma^{5/2}\delta_{B_c} \left[\sqrt{\frac{KL\log(n\vee p)}{p}}+ \log(n\vee p)\right]\ . 
\end{eqnarray*}
Let us specify this inequality to $\hat{B}_c$ and $\hat{B}_r$. Coming back to~\eqref{eq:inequality_fundamental} and using the lower bound of $S(\hat{B}_c,\hat{B}_c)$ from Lemma~\ref{lem:signal:biclustering}, we observe that, necessarily we have $\hat{B}_c=B_c^*$ and $\hat{B}_r=B_r^*$. It remains to prove the lemmas.

\begin{proof}[Proof of Lemma~\ref{lem:signal:biclustering}]
By Linearity, we have 
\[
S(B_r,B_c) = S(B^*_r,B_c) + \tr\left[ X^T (B^*_r-B_r) X B_c\right]= S(B^*_r,B_c) + \tr\left[ (X B_c)(XB_c)^T (B^*_r-B_r) \right]\ , 
\]
since $B_c$ is a projector. Observe that the rows of $XB_c$ are identical on each group of the true partition of the rows. Hence, it follows from Lemma 4 in~\cite{giraud2019partial} that $\tr[(X B_c)(XB_c)^T (B^*_r-B_r)]\geq 0$ for any $B_r\in \mathcal{B}_r$. Hence, we have $S(B_r,B_c) \geq  S(B^*_r,B_c)$. Then, we deduce again from Lemma~4 in~\cite{giraud2019partial}, that 
\[
S(B^*_r,B_c)\geq \frac{\Delta_c^2}{4}\delta_{B_c} \  .
\]
The result of the lemma then follows by reversing the role of $B_c$ and $B_r$.

\end{proof}

\begin{proof}[Proof of Lemma~\ref{lem:pure:noise:biclustering}]

We first decompose $N(B_r,B_l)$ into a sum of three terms $N(B)= N_1(B_c)+ N_2(B_r)+ N_3(B_r,B_c)$ where
\begin{eqnarray*}
N_1(B_c) = \tr\left[E^TB_r^*E(B_c-B_c^*)\right] \\
N_2(B_r) = \tr\left[E^T(B_r-B_r^*)EB_c^*\right] \\
N_3(B_r,B_c) = \tr\left[E^T(B_r-B_r^*)E(B_c-B_c^*)\right] \ .
\end{eqnarray*}
Since $B_r^*$ is a rank $K$ projector, observe that $N_1(B_c)$ corresponds to the pure noise term in the analysis of a Kmeans criterion for a Gaussian mixture model in dimension $K$ with $p$ observations and $L$ groups. Thus, we could apply Lemma 11 in~\cite{Even24} to control  it. Similarly, $N_2(B_r)$ corresponds to the pure noise term in the analysis of a Kmeans criterion for a Gaussian mixture model in dimension $L$ with $n$ observations and $K$ groups. Still, as the term $N_3(B_r,B_c)$ is slightly more involved, we provide a dedicated proof. 

For the simultaneous control of these three quantities, we will apply  Hanson-Wright inequality together with a dedicated pealing argument. For any integer $j\in [1,2n]$, (resp. $j\in [1,2p]$), we define $\mathcal{B}_{r,j}= \{B\in \mathcal{B}_r: \delta_{B_c}\in (j-1,j]\}$ (resp. $\mathcal{B}_{c,j}= \{B_c\in \mathcal{B}_c: \delta_{B_c}\in (j-1,j]\}$). Since $\delta_{B_r}$ is always smaller than $2n$, this give us a partition of $\mathcal{B}_r\setminus \{B^*_r\}$. We shall apply Hanson-Wright inequality together with an union bound to each of these sets.

For this purpose, we need to control the Frobenius and operator norm of $B_r-B_r^*$ and of $B_c-B_c^*$. Adapting Lemma~\ref{lem:norm_Hanson_Wright} to our setting lead us to
\begin{lem}\label{lem:norm_Hanson_Wright2}
For all $B_r$ and $B_c$, we have 
\begin{eqnarray}
\|B_r-B_r^*\|_F&\leq  & 6 \sqrt{\frac{\delta_{B_r}}{m_r}}\\
\|B_c-B_c^*\|_F&\leq  & 6  \sqrt{\frac{\delta_{B_c}}{m_c}}\\ 
\|B_r-B_r^*\|_{op}&\leq  & 2\text{ and }\|B_c-B_c^*\|_{op}\leq  2\enspace .
\end{eqnarray}
\end{lem}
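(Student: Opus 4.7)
The statement is essentially Lemma~\ref{lem:norm_Hanson_Wright} applied twice, once to the row partnership matrices and once to the column partnership matrices. Since the row (resp.\ column) partnership matrices in the biclustering model inherit exactly the same algebraic structure as the partnership matrices in the clustering model (orthogonal projectors onto spaces of dimension $K$, resp.\ $L$, encoding a partition of $[n]$, resp.\ $[p]$, whose minimum group size is $m_r$, resp.\ $m_c$), the proof will be a direct transcription of the earlier argument.

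For the operator norm bounds, note that $B_r,B_r^*,B_c,B_c^*$ are all orthogonal projectors, hence each of operator norm exactly $1$. The triangle inequality then gives $\|B_r-B_r^*\|_{op}\leq 2$ and $\|B_c-B_c^*\|_{op}\leq 2$.

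For the Frobenius norm bound on $\|B_r-B_r^*\|_F$, I would copy verbatim the decomposition used in the proof of Lemma~\ref{lem:norm_Hanson_Wright}. Writing
\[
B_r-B_r^* = (I-B_r^*)(B_r-B_r^*)(I-B_r^*) + B_r^*(B_r-B_r^*) + (B_r-B_r^*)B_r^* + B_r^*(B_r-B_r^*)B_r^*,
\]
and using that $B_r^*$ is a projector (so $\|B_r^*(B_r-B_r^*)B_r^*\|_F\leq \|(B_r-B_r^*)B_r^*\|_F$), I get
\[
\|B_r-B_r^*\|_F \leq \|(I-B_r^*)(B_r-B_r^*)(I-B_r^*)\|_F + 3\|(B_r-B_r^*)B_r^*\|_F.
\]
The two terms on the right-hand side are precisely what is controlled in the proofs of Lemma~13 and Lemma~15 of~\cite{Even24}, which yield $\|(I-B_r^*)(B_r-B_r^*)(I-B_r^*)\|_F\leq \sqrt{\delta_{B_r}/m_r}$ and $\|(B_r-B_r^*)B_r^*\|_F\leq \sqrt{2\delta_{B_r}/m_r}$ respectively. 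Combining, one obtains $\|B_r-B_r^*\|_F\leq (1+3\sqrt{2})\sqrt{\delta_{B_r}/m_r}\leq 6\sqrt{\delta_{B_r}/m_r}$. Swapping the roles of rows and columns (and replacing $m_r$ by $m_c$ and $B_r$ by $B_c$) gives the analogous bound on $\|B_c-B_c^*\|_F$.

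The only point that deserves explicit verification is that the lemmas from~\cite{Even24} really apply with $m$ replaced by $m_r$ (resp.\ $m_c$). In~\cite{Even24}, $m$ denotes the minimum size of a cluster for the unique partition, and those bounds on $\|(I-B^*)(B-B^*)(I-B^*)\|_F$ and $\|(B-B^*)B^*\|_F$ are purely geometric consequences of the partnership-matrix constraints in $\mathcal{B}$; nothing in their proofs depends on the ambient data matrix. Consequently, they can be invoked as black boxes in the biclustering setting by specializing to either the row partition (with minimum size $m_r$) or the column partition (with minimum size $m_c$). No new obstacle arises.
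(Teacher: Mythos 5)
Your proof is correct and follows essentially the same route as the paper, which proves this lemma by directly adapting Lemma~\ref{lem:norm_Hanson_Wright}: the same four-term decomposition of $B_r-B_r^*$, the same appeal to Lemmas~13 and~15 of~\cite{Even24} (which indeed only use the partnership-matrix geometry and the minimum group size), and the same projector argument giving the operator-norm bound of $2$.
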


The following lemma is a straightforward adaptation of Lemma~\ref{lem:cardinalite}. 
\begin{lem}\label{lem:cardinalite2}
There exists a positive numerical constant $c$ such that 
    \[
    \log\left[|\mathcal{B}_{r,j}|\right]\leq c j \left[\log(n)+ \frac{m_r^+}{m_r}\right]\ ,
    \]
for any $j=1,\ldots, 2n$. A similar result holds for $\mathcal{B}_{c,j}$. 
\end{lem}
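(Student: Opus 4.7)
The plan is to observe that Lemma~\ref{lem:cardinalite2} is purely combinatorial: the sets $\mathcal{B}_{r,j}$ and $\mathcal{B}_{c,j}$ have exactly the same structure as the sets $\mathcal{B}_j$ already controlled in Lemma~\ref{lem:cardinalite} during the proof of Proposition~\ref{prop:upperboundsparseIT}. In each case, we are counting normalized partnership matrices associated with balanced partitions of a finite set into a fixed number of groups, restricted to the peeling shell $\delta_B\in(j-1,j]$ around the ground-truth partition. The noise variables, the dimension of the observations, and the feature structure do not enter the counting argument at all; only the cardinality of the set being partitioned, the number of groups, and the balancedness ratio $m^+/m$ matter.

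Concretely, I would first treat the case of rows. The set $\mathcal{B}_{r,j}$ consists of partnership matrices $B_r$ associated with partitions of $[n]$ into $K$ groups satisfying $\delta_{B_r}\in (j-1,j]$, and $B_r^*$ has minimum group size $m_r$ and maximum group size $m_r^+$. This is exactly the setting of Lemma~17 of~\cite{Even24} (which is how Lemma~\ref{lem:cardinalite} above was obtained), and a direct invocation yields
\[
\log|\mathcal{B}_{r,j}|\leq c\,j\left[\log(n)+\frac{m_r^+}{m_r}\right],
\]
for any $j\in\{1,\ldots,2n\}$, the range $2n$ being the trivial upper bound $\delta_{B_r}\leq 2n$ valid for every $B_r\in\mathcal{B}_r$.

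The statement for $\mathcal{B}_{c,j}$ is obtained by the symmetric argument: one simply transposes the roles of rows and columns, replacing $n$ by $p$, $K$ by $L$, and $m_r^{\pm}$ by $m_c^{\pm}$, and applies Lemma~17 of~\cite{Even24} again. This gives $\log|\mathcal{B}_{c,j}|\leq c\,j[\log(p)+m_c^+/m_c]$ for $j\in\{1,\ldots,2p\}$; up to changing $\log(n)$ or $\log(p)$ to $\log(n\vee p)$ (which only modifies the constant $c$), this is the announced bound.

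There is no substantive obstacle here: the lemma is, as the surrounding text already acknowledges, a verbatim adaptation of Lemma~\ref{lem:cardinalite} with the row (respectively column) indices playing the role of the sample indices in the sparse-clustering setting. The only mild care needed is to make sure the peeling ranges $j\leq 2n$ and $j\leq 2p$ are large enough to cover every non-trivial $B_r\in\mathcal{B}_r\setminus\{B_r^*\}$ and $B_c\in\mathcal{B}_c\setminus\{B_c^*\}$, which is immediate from the definitions of $\delta_{B_r}$ and $\delta_{B_c}$ as $\ell^1$ distances between projectors bounded by $2n$ and $2p$ respectively.
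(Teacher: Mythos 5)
Your proposal is correct and matches the paper's treatment: the paper itself proves Lemma~\ref{lem:cardinalite2} by declaring it a straightforward adaptation of Lemma~\ref{lem:cardinalite} (i.e.\ of Lemma~17 in~\cite{Even24}), applied to partitions of $[n]$ into $K$ groups for the rows and, symmetrically, of $[p]$ into $L$ groups for the columns. Your remarks on the peeling ranges $j\leq 2n$, $j\leq 2p$ and on the purely combinatorial nature of the count are exactly the points that make the adaptation immediate.
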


Now, we are in position to apply Hanson-Wright inequality for Gaussian variables (e.g. Lemma 1~\cite{Laurent00}) to all $B_r$ belonging to $\mathcal{B}_{r,j}$, this for all $j=1,\ldots, 2n$. For a fixed $B_r$, the random variable $N_2(B_r)$ is of form $U^T H U$ where $U$ is a standard Gaussian vector of dimension $nK$, and $H$ is a symmetric matrix satisfying $tr[H]=0$, $\|H\|_F\leq 6\sqrt{K\frac{\delta_{B_r}}{m_r}}$ and $\|H\|_{op}\leq 2$. We deduce, that with probability 
higher than $1/(n\vee p)^2$, we have 
\[
N_2(B_r)\leq  c \left[\delta_{B_r}\vee 1  \right] \left[\sqrt{\frac{K}{m_r} [\log(n\vee p)+ \frac{m_r^+}{m_r}]} + \log(n\vee p)+ \frac{m_r^+}{m_r} \right].
\]
for any $B_r\neq B_r^*$. A similar bound holds for $N_1(B_c)$. For $N_3(B_c,B_r)$, we apply Hanson-Wright inequality to all $B_r$ and $B_c$ belonging to $\mathcal{B}_{c,j}$ and $\mathcal{B}_{r,j'}$. The random variable  $N_3(B_c,B_r)$ is of form $U^T H U$ where $U$ is a standard Gaussian vector of dimension $np$, and $H$ is a symmetric matrix defined by; for $(i,j)\in [n]\times [p]$ and $(i',j')\in [n]\times [p]$, we have $H_{(i,j), (i',j')}=\pa{B_r-B_r^*}_{ii'}\pa{B_c-B^*_c}_{jj'}$. This matrix satisfies $tr[H]=0$, $\|H\|_F\leq 36\sqrt{\frac{\delta_{B_r}\delta_{B_c}}{m_cm_r}}$ and $\|H\|_{op}\leq 4$.  We deduce, that with probability 
higher than $1/(n\vee p)^2$, we have 
\begin{eqnarray*}
N_3(B_c,B_r)&\leq&  c  \left[\sqrt{\frac{\delta_{B_r}\delta_{B_c}[\delta_{B_r}+ \delta_{B_c}+1]}{m_rm_c} \left[\log(n\vee p)+ \frac{m_r^+}{m_r}+ \frac{m_c^+}{m_c}\right]} \right]\\ 
&&+ c[\delta_{B_r}+ \delta_{B_c} + 1]\left[\log(n\vee p)+ \frac{m_r^+}{m_r}+ \frac{m_c^+}{m_c}\right]\ ,
\end{eqnarray*}
as long as $B_c\neq B^*_c$ and $B_r\neq B_r^*$. Recall that $\delta_{B_r}\leq 2n$ and $\delta_{B_c}\leq 2p$. Besides, we know from Lemma 9 in~\cite{Even24} that $\delta_{B_r}\geq m_r/(m_r^+)$ if $B_r\neq B_r^*$ and  $\delta_{B_c}\geq m_c/(m_c^+)$ if $B_c\neq B_c^*$.
This leads to 
\begin{eqnarray*}
N_3(B_c,B_r)&\leq&  c  \frac{m_r^+}{m_r}\delta_{B_r} \left[\sqrt{\frac{p}{m_rm_c} \left[\log(n\vee p)+  \frac{m_r^+}{m_r}+ \frac{m_c^+}{m_c}\right]} \right]\\ && + c \frac{m_c^+}{m_c}\delta_{B_c} \left[\sqrt{\frac{n}{m_rm_c} \left[\log(n\vee p)+  \frac{m_r^+}{m_r}+ \frac{m_c^+}{m_c}\right]} \right]\\
& &+c \left[\frac{m_r^+}{m_r} \delta_{B_r}+ \frac{m_c^+}{m_c} \delta_{B_c} \right]\left[\log(n\vee p)+ \frac{m_r^+}{m_r}+ \frac{m_c^+}{m_c}\right]\ .
\end{eqnarray*}

\end{proof}

\begin{proof}[Proof of Lemma~\ref{lem:cross:noise:clustering}]

For a fixed $B$, $C(B)$ is distributed a Gaussian random variable whose variance is given by
\[
4 \|B_rX B_c - B^*_rX B^*_c\|_F^2 = 4\tr\left[X^TX - X^T B_r X B_c\right]= 4S(B_r,B_c)\ ,
\]
since $X= B_r^*XB^*_c$. Then, we apply an union bound over all $\mathcal{B}_{r,j}$'s and all $\mathcal{B}_{c,j'}$. Together with Lemma~\ref{lem:cardinalite2}, this allows us to conclude that, with probability higher than $1- c'/(n\vee p)^2$, we have  
\[
C(B_c,B_r)\leq c \sqrt{S(B_r,B_r)\left[\lceil\delta_{B_r}\rceil \left(\log(n\vee p) + \frac{m_r^+}{m_r}\right)+\lceil \delta_{B_c}\rceil\left(\log(n\vee p) + \frac{m_c^+}{m_c}\right) \right]}\ .
\]

\end{proof}

\section{Proofs for technical discussions}\label{sec:proofs:technicaldiscussion}

\begin{proof}[Proof of Lemma \ref{lem:whyomega2}]

It is a consequence of the following lemma whose proof is given below. 

\begin{lem}\label{lem:whyomega}
There exists a subset $\Bar{J}\subseteq J^*$ and a subset $\mathcal{K'}\subset [K]$ with $|\mathcal{K'}|\geq \frac{9K}{10}$ satisfying;
\begin{enumerate}
\item For all $j\in \Bar{J}$, $\sum_{k\in [K]}|G_k^*|\pa{\mu_k}_j^2\geq \frac{n\Delta^2\sigma^2}{80s\gamma}$;
\item For all $k\neq l\in \mathcal{K'}$, $\|(\mu_k)_{\Bar{J}}-(\mu_l)_{\Bar{J}}\|^2\geq \frac{1}{2}\Delta^2\sigma^2$.
\end{enumerate}
\end{lem}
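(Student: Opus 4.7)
The plan is to define $\Bar{J}$ directly from the threshold in property 1, namely
\[
\Bar{J} := \Big\{j \in J^* : \sum_{k\in [K]} |G^*_k|(\mu_k)_j^2 \geq \tfrac{n\Delta^2\sigma^2}{80 s \gamma}\Big\},
\]
so that property 1 holds by construction. The remaining work is to find $\mathcal{K}'\subseteq[K]$ with $|\mathcal{K}'|\geq 9K/10$ for which the pairwise distance restricted to $\Bar{J}$ remains comparable to the full separation $\|\mu_k-\mu_l\|^2 \geq 2\Delta^2\sigma^2$. Note that since every $\mu_k$ is supported on $J^*$, one has the exact Pythagorean identity $\|\mu_k - \mu_l\|^2 = \|(\mu_k-\mu_l)_{\Bar{J}}\|^2 + \|(\mu_k-\mu_l)_{J^*\setminus \Bar{J}}\|^2$, so it suffices to control the ``lost'' mass $\|(\mu_k-\mu_l)_{J^*\setminus \Bar{J}}\|^2$ uniformly over a large index set.

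First, I would bound the total residual mass. For any $j \in J^*\setminus \Bar{J}$, the definition of $\Bar{J}$ combined with $\gamma$-balancedness (which gives $|G^*_k|\geq n/(K\gamma)$) yields
\[
\sum_{k\in [K]} (\mu_k)_j^2 \ \leq\ \tfrac{K\gamma}{n}\sum_{k\in [K]}|G^*_k|(\mu_k)_j^2 \ <\ \tfrac{K\Delta^2\sigma^2}{80s}.
\]
Summing over the at most $s$ columns of $J^*\setminus \Bar{J}$,
\[
\sum_{k\in[K]} \|(\mu_k)_{J^*\setminus \Bar{J}}\|^2 \ <\ \tfrac{K\Delta^2\sigma^2}{80}.
\]

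Next, I would apply a Markov-type removal. Setting
\[
\mathcal{S} := \big\{k\in[K] : \|(\mu_k)_{J^*\setminus \Bar{J}}\|^2 \geq \tfrac{\Delta^2\sigma^2}{8}\big\},
\]
the previous display gives $|\mathcal{S}|\cdot \tfrac{\Delta^2\sigma^2}{8} < \tfrac{K\Delta^2\sigma^2}{80}$, hence $|\mathcal{S}| < K/10$. Define $\mathcal{K}' := [K]\setminus \mathcal{S}$, which therefore satisfies $|\mathcal{K}'|\geq 9K/10$. For any two distinct $k,l \in \mathcal{K}'$, the triangle/parallelogram bound $(a+b)^2\leq 2a^2+2b^2$ gives
\[
\|(\mu_k-\mu_l)_{J^*\setminus \Bar{J}}\|^2 \leq 2\|(\mu_k)_{J^*\setminus \Bar{J}}\|^2 + 2\|(\mu_l)_{J^*\setminus \Bar{J}}\|^2 < \tfrac{\Delta^2\sigma^2}{2},
\]
so the Pythagorean identity above together with $\|\mu_k-\mu_l\|^2\geq 2\Delta^2\sigma^2$ delivers $\|(\mu_k-\mu_l)_{\Bar{J}}\|^2 > \tfrac{3\Delta^2\sigma^2}{2}\geq \tfrac{\Delta^2\sigma^2}{2}$, which is property 2.

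The main obstacle is conceptual rather than computational: the naive strategy would count ``bad pairs'' $(k,l)$ with $\|(\mu_k-\mu_l)_{\Bar{J}}\|^2 < \Delta^2\sigma^2/2$ and hope to extract a large independent set in the associated bad graph. However, a bound on the total number of bad edges only yields, via Tur\'an-type estimates, an independent set of size $O(1)$ rather than $9K/10$. The key idea allowing us to sidestep this graph-theoretic bottleneck is to note that bad pairs are caused by one of the two endpoints carrying large mass on $J^*\setminus \Bar{J}$, and therefore the one-dimensional budget $\sum_k \|(\mu_k)_{J^*\setminus \Bar{J}}\|^2$ already controls, via Markov's inequality, the set of ``heavy'' groups that need to be removed. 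This converts a vertex-cover problem into an additive $\ell^2$-mass argument, and this is what makes the constants in the lemma (the factor $80$ in the threshold, the factor $1/2$ in the separation, and the fraction $9/10$ of surviving groups) line up exactly.
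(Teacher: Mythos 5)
Your proof is correct and follows essentially the same route as the paper: define $\Bar{J}$ by the column-mass threshold, remove the at most $K/10$ groups carrying mass at least $\Delta^2\sigma^2/8$ on $J^*\setminus\Bar{J}$ via a Markov/counting argument, and then deduce the restricted separation from the full separation $\|\mu_k-\mu_l\|^2\geq 2\Delta^2\sigma^2$. The only cosmetic differences are that the paper keeps the weights $|G_k^*|$ in the counting step while you drop them first, and it uses the triangle inequality on norms where you use Pythagoras with $(a+b)^2\leq 2a^2+2b^2$ (which in fact yields the slightly stronger bound $3\Delta^2\sigma^2/2$).
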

Assume that there exist $k\in \mathcal{K'}$ and $l\in [K]$ such that $\|(\mu_k)_{\Bar{J}}-(\mu_l)_{\Bar{J}}\|^2< \frac{1}{8}\Delta^2\sigma^2$. By Lemma \ref{lem:whyomega}, we deduce that for the all $k'\in \mathcal{K'}\setminus\{k\}$, we have 
 $\|(\mu_{k'})_{\Bar{J}}-(\mu_l)_{\Bar{J}}\|^2\geq  \frac{1}{8}\Delta^2\sigma^2$. As a consequence, there exist  at most $K/10$ elements $k\in \mathcal{K}'$ such that there exists $l\in [K]$ with $\|(\mu_k)_{\Bar{J}}-(\mu_l)_{\Bar{J}}\|^2< \frac{1}{8}\Delta^2\sigma^2$. Defining $\mathcal{K}$ by removing all these elements from $\mathcal{K}$, we arrive at the desired conclusion.
\end{proof}

\begin{proof}[Proof of Lemma \ref{lem:whyomega}]
We define $\Bar{J}$ the set of all $j$ such that $\sum_{k\in [K]}|G_k^*|\pa{\mu_k}_j^2\geq \frac{n\Delta^2\sigma^2}{80s\gamma}$. Since the signal is supported on $J^*$ which is of size at most $s$, we deduce that $\sum_{j\notin \Bar{J}}\sum_{k\in [K]}|G_k^*|\pa{\mu_k}_j^2\leq \frac{n\Delta^2\sigma^2}{80\gamma}$. Let $\mathcal{K}^-$ the set of all $k\in [K]$ with $\|\pa{\mu_k}_{J^*\setminus \Bar{J}}\|^2_2\geq \frac{\Delta^2\sigma^2}{8}$. We have
$$|\mathcal{K}^-|\frac{n}{K\gamma}\frac{\Delta^2\sigma^2}{8}\leq \frac{n\Delta^2\sigma^2}{80\gamma}\enspace,$$
which, in turn, implies that  $|\mathcal{K^-}|\leq \frac{K}{10}$. 

We set $\mathcal{K'}=[K]\setminus \mathcal{K}^-$ which is of size at least $\frac{9K}{10}$. For $k,l\in \mathcal{K'}$, we have $$\|(\mu_k)_{\Bar{J}}-(\mu_l)_{\Bar{J}}\|^2\geq \pa{\|\mu_k-\mu_l\|-2\frac{\Delta\sigma}{\sqrt{8}}}^2\geq \pa{\frac{\sqrt{2}}{2}\Delta\sigma}^2\geq \frac{1}{2}\Delta^2\sigma^2\enspace.$$
\end{proof}

\begin{proof}[Proof of Lemma \ref{lem:homogeneity}]\label{prf:homogeneity}
Suppose that $X$ satisfies Assumption \ref{ass:homogeneity} for some $\eta\geq 1$. Since $\min_{k\neq l}\frac{\|\mu_k-\mu_l\|^2}{2\sigma^2}\geq \Delta^2$, we deduce that at least all except one of the $\mu_k$'s satisfy $\|\mu_k\|^2\geq \frac{1}{2}\Delta^2\sigma^2$. We deduce that $$\sum_{j\in J^*}\|X_{:j}\|^2\geq (K-1)\min_{k\in [K]}|G_k^*|\frac{1}{2}\Delta^2\sigma^2\geq \frac{n(K-1)}{2K\gamma}\Delta^2\sigma^2\enspace. $$
On the other hand, $$\sum_{j\in J^*}\|X_{:j}\|^2\leq s\max_{j\in J^*}\|X_{:j}\|^2\leq s\eta w_{J^*}^2\sigma^2\enspace.$$
We conclude the proof of the lemma with $$w_{J^*}^2\geq \frac{n(K-1)}{2sK\gamma\eta}\Delta^2\enspace.$$
\end{proof}

\begin{proof}[Proof of Lemma~\ref{lem:MG-random}]

We have
\begin{align*}
n(n-1)\ MMSE_{poly}&= \E\cro{\|M^*\|^2_{F}} - \sup_{\hat M\ poly-time,\ \E\cro{\|\hat M\|^2_{F}}=1} \E\cro{\langle M^*,\hat M\rangle_{F}}^2\\
&=:  \E\cro{\|M^*\|^2_{F}} - corr^2 = {n^2\over K}(1+o(1)) - corr^2.
\end{align*}

In particular $corr^2=o(n^2/K)$. 
Since 
$$\sup_{\hat G\ poly-time} \E\cro{\langle M^*, M^{\hat G}\rangle_{F}} \leq \sqrt{ \E\cro{\|M^{\hat G}\|^2_{F}}}\ corr$$
and $\E\cro{\|M^{\hat G}\|^2_{F}}\geq n^2/K$,
we get
\begin{align*}
\inf_{\hat G\ poly-time} \E\cro{\| M^*-M^{\hat G}\|^2_{F}}&\geq \E\cro{\|M^*\|^2_{F}}+\E\cro{\|M^{\hat G}\|^2_{F}} - 2 \sqrt{ \E\cro{\|M^{\hat G}\|^2_{F}}}\ corr\\
&\geq   \E\cro{\|M^*\|^2_{F}}+\min_{a\geq n/\sqrt{K}}(a^2 - 2a\ corr)\\
& = \E\cro{\|M^*\|^2_{K}} + {n^2\over K}- {2n\over \sqrt{K}} corr = {2 n^2\over K} (1+o(1))\enspace ,
\end{align*}
where we used $corr^2=o(n^2/K)$ and $\E[\|M^*\|^2_{F}]=n^2K^{-1}(1+o(1))$ for the last two equalities.
\end{proof}

\begin{proof}[Proof of Proposition~\ref{prp:Impossibility:reconstruction}]
The proof is obtained by combining Lemma~\ref{lem:MG-random} with the following lemma.
\begin{lem}\label{prop:loss:clustering}
Assume that both $G^*$ and $G$ are $\gamma$-balanced as defined in~\eqref{eq:balanced}. Then, it
follows that 
\[
[1- err(G,G^*)]^2\leq  \gamma^2 - \frac{K\|M^G-M^*\|_F^2}{2n^2}\ .
\]
\end{lem}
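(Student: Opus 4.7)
The plan is to reduce everything to a calculation involving the cross-counts $a_{kl}:=|G_k\cap G^*_l|$ and then apply Cauchy--Schwarz together with the balancedness assumption.

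First, by relabeling the groups of $G$ if necessary, I would assume without loss of generality that the identity permutation achieves the minimum in the definition of $err(G,G^*)$. Using $|G_k\triangle G^*_k|=|G_k|+|G^*_k|-2|G_k\cap G^*_k|$ and $\sum_k |G_k|=\sum_k |G^*_k|=n$, this gives the clean identity
\[
1-err(G,G^*)=\frac{1}{n}\sum_{k\in[K]}|G_k\cap G^*_k|=\frac{1}{n}\sum_{k\in[K]}a_{kk}.
\]

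Next, squaring and applying Cauchy--Schwarz on the $K$ diagonal terms yields
\[
n^2\,[1-err(G,G^*)]^2=\Bigl(\sum_k a_{kk}\Bigr)^2\le K\sum_k a_{kk}^2\le K\sum_{k,l}a_{kl}^2=K\,\langle M^G,M^*\rangle_F,
\]
where the last identity comes from the fact that $\langle M^G,M^*\rangle_F$ counts pairs $(i,j)$ lying in the same group under both partitions, which equals $\sum_{k,l}|G_k\cap G^*_l|^2$. The polarization identity then rewrites this as
\[
\langle M^G,M^*\rangle_F=\tfrac{1}{2}\bigl(\|M^G\|_F^2+\|M^*\|_F^2-\|M^G-M^*\|_F^2\bigr).
\]

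The final ingredient is the $\gamma$-balancedness. Since $\sum_k |G_k|=n$, the average cluster size is $n/K$, and balancedness implies $\max_k|G_k|\le \gamma n/K$. Therefore $\|M^G\|_F^2=\sum_k|G_k|^2\le (\max_k|G_k|)\cdot n\le \gamma n^2/K$, and likewise for $\|M^*\|_F^2$. Substituting,
\[
[1-err(G,G^*)]^2\le \frac{K}{2n^2}\bigl(2\gamma n^2/K\bigr)-\frac{K\|M^G-M^*\|_F^2}{2n^2}=\gamma-\frac{K\|M^G-M^*\|_F^2}{2n^2},
\]
and bounding $\gamma\le \gamma^2$ (since $\gamma\ge 1$) yields the stated inequality.

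There is no real obstacle here: the only point that requires a moment of care is verifying the combinatorial identity $\langle M^G,M^*\rangle_F=\sum_{k,l}a_{kl}^2$ and choosing the right Cauchy--Schwarz, which gives the factor $K$ matching the $K/(2n^2)$ in the target bound. The slight slack $\gamma\le \gamma^2$ is what yields the form stated in the lemma.
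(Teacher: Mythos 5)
Your proof is correct and follows essentially the same route as the paper: identify $1-err(G,G^*)=\frac{1}{n}\sum_k|G_k\cap G^*_k|$, apply Cauchy--Schwarz to the diagonal overlaps, expand $\|M^G-M^*\|_F^2$ via the inner product $\langle M^G,M^*\rangle_F=\sum_{k,l}|G_k\cap G^*_l|^2$, and use balancedness to bound $\sum_k|G_k|^2$. The only (cosmetic) difference is that you use the sharper bound $\sum_k|G_k|^2\leq\gamma n^2/K$ and then relax $\gamma\leq\gamma^2$, whereas the paper bounds it directly by $\gamma^2 n^2/K$.
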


\end{proof}

\begin{proof}[Proof of Lemma~\ref{prop:loss:clustering}]
Without loss of generality, we assume that the permutation $\pi$ in the definition of $err(G,G^*)$ is the identity. 
\[
err(G,G^*)= \frac{1}{2n}\sum_{k=1}^K |G_k \Delta G^*_k|= 1 - \frac{1}{n}\sum_{k=1}^K
|G_k\cap G^*_k|\ , 
\]
which implies that $\sum_{k=1}^K|G_k\cap G^*_k| = n[1- err(G,G^*)]$.
Let us define  $N_1(G)= |\{(i,j): i\stackrel{G}{\sim} j\}|$,
$N_1(G^*)= |\{(i,j): i\stackrel{G^*}{\sim} j\}|$,
and $N_{11}=|\{(i,j): i\stackrel{G^*}{\sim} j\text{ and } i\stackrel{G}{\sim} j \}|$.
Expanding the squares, we have, for $\gamma$-balanced partitions $G,G^*$,  that 
\begin{equation}\label{eq:M2N11}
\|M^G-M^*\|_F^2=  2[N_1(G)+N_1(G^*)]- 4N_{11} \leq 2\gamma^2\frac{n^2}{K} -4 N_{1,1} \ .
\end{equation}
Furthermore,
\begin{eqnarray}\nonumber
    N_{11}&\geq& {1\over 2}\sum_{k}|G_k \cap G^*_k|^2 \\
    &\geq & \frac{1}{2K} \left[\sum_{k=1}^K  |G_k \cap G^*_k|\right]^2 = \frac{n^2}{2K}[1- err(G,G^*)]^2 \ ,\label{eq:N11}
\end{eqnarray}
where we used Cauchy-Schwarz inequality in the second line. Plugging \eqref{eq:N11} in \eqref{eq:M2N11}
gives
$${K\over 2 n^2} \|M^G-M^*\|_F^2 \leq \gamma^2-[1- err(G,G^*)]^2\enspace .$$
The proof of Lemma~\ref{prop:loss:clustering} is complete
\end{proof}

\begin{proof}[Proof of Proposition~\ref{prop:sparse:bad:prior}]
Given any subset $I\subset \{3,\ldots, n\}$ and any $j=1,\ldots, p$, define the matrix $\alpha^{(I,j)}$ 
\[
\alpha^{(I,j)}_{i,j'}= \mathbf{1}\{j'=j\}\mathbf{1}\{i \in I\cup \{1,2\}\}
\]
By permutation invariance of the problem $\kappa_{x,\alpha^{(I,j)}}$ does not depend on $j$ and only depends on $|I|$ through its cardinality. Denote $I_0= \{3,\ldots, D\}$. 
By permutation invariance, we know that 
\begin{eqnarray}\label{eq:lower:correlation:precis}
 \left(\widetilde{corr}^{(SW)}_{\leq D}\right)^2\geq p \binom{n-2}{D-2}\kappa^2_{x,\alpha^{(I_0,1)}}\ . 
\end{eqnarray}
To alleviate the notation, we henceforth write $\alpha$ for  $\alpha^{(I_0,1)}$.

From Theorem~\ref{thm:LTC}, we deduce that 
\begin{equation}\label{eq:LTCgeneral:thm:exemple}
\kappa_{x,\alpha}=\lambda^{D}\sum_{\pi\in \mathcal{P}_{2}(\alpha)}C_{x, \beta_1(\pi),\ldots, \beta_l(\pi)}\enspace,
\end{equation}
where $l=D/2$ here. We recall that $\beta_s=\beta_s(\pi)$ satisfies $|\beta_{s}(\pi)|=2$ so that we can write 
$\beta_{s}$ as $\ac{(i_s,1), (i'_s,1)}$. Equipped with this notation, we have
\begin{align}
C_{x,\beta_{1},\ldots,\beta_{l}} &= \cumul\pa{x, z_1\1_{k^*_{i_1}=k^*_{i'_1}},\ldots,z_1 \1_{k^*_{i_l}=k^*_{i'_l}}}, \label{eq:def-C:bis}
\end{align}
In order to compute \eqref{eq:def-C:bis}, we apply the law of total cumulance (Lemma~\ref{lem:totalcumulance}) by conditionning on $z_1$. Let us define $W_0:=x$ and $W_s= z_1\1_{k^*_{i_s}=k^*_{i'_s}}$ for $s\in [l]$. 
Consider any partition $\overline{\pi}\in \mathcal{P}([0;l])$. By Lemma~\ref{lem:totalcumulance}, we have 
\[
C_{x,\beta_{1},\ldots,\beta_{l}} = \sum_{\overline{\pi}\in \mathcal{P}([0;l]) }\cumul\pa{\cumul\pa{(W_i)_{i\in R}|z_1}_{R\in \overline{\pi}}}
\]
Denote $R_0$ the group that contains $W_0=x$. If $|R_0|=1$, then $\cumul\pa{W_0|z_1}=1/K$ and is constant almost surely. As a consequence, we have  $\cumul\pa{\cumul\pa{(W_i)_{i\in R}|z_1}_{R\in \overline{\pi}}}=0$ by Lemma~\ref{lem:independentcumulant2} since a constant is independent from any other random variable. If $|R_0|=2$ and the other random variable $W_s\in R_0$ is of the form $z_1\1_{k^*_{1}=k^*_{2}}$, we have $\cumul\pa{(W_i)_{i\in R_0}|z_1}= z_1K^{-1}(1-1/K)$. For any other choice of $R_0$, we claim that $\cumul\pa{(W_i)_{i\in R}|z_1}=0$. Indeed, conditionally to $z_1$, $\1_{k^*_1=k^*_2}$ is independent from all the other random variables since each $k^*_i$, for $i\in [D]$ occurs at most once in the other random variables. Now consider a group $R\neq R_0$ of $\overline{\pi}$ that do not contain $0$. For the same independence argument, we have $\cumul\pa{(W_i)_{i\in R}|z_1}=0$ if $|R|>1$. We conclude that $C_{x,\beta_{1},\ldots,\beta_{l}}=0$ unless there exists $s\in [l]$ such that $\beta_s=(1,2)$, in which case, we have 
\[
C_{x,\beta_{1},\ldots,\beta_{l}}= \frac{1}{K^{l}}\left(1 -\frac{1}{K}\right)\cumul\pa{z_1,\ldots,z_1}
\]
Coming back to~\eqref{eq:LTCgeneral:thm:exemple} and counting , we conclude that
\[
\kappa_{x,\alpha} = \frac{(D-2)!}{2^{D/2-1}(D/2-1)!}\cdot \frac{1}{K^{D/2}}\lambda^{D}\left(1-\frac{1}{K}\right)\cumul\pa{z_1,\ldots,z_1}
\]
Let us lower bound the cumulant $\cumul\pa{z_1,\ldots,z_1}$ between Bernoulli distribution of parameter $\rho$. By M\"obius formula in Lemma~\ref{lem:mobiusformula}, we have 
\[
\cumul\pa{z_1,\ldots,z_1}\geq  \rho - \rho^2 \sum_{\pi\in \mathcal{P}(l)} (|\pi|-1)!\geq \rho - 6 \rho^2 l! 2^{l}\ , 
\]
where we used the same computation as in the proof of Corollary~\ref{cor:LTC}. Coming back to~\eqref{eq:lower:correlation:precis} and relying on our condition,  we conclude that 
\[
 \left(\widetilde{corr}^{(SW)}_{\leq D}\right)^2\geq c'e^{-cD\log(D) }p n^{D-2} \frac{1}{K^D}\lambda^{2D} \rho^2  \ , 
\]
where $c$ and $c'$ are positive numerical constants.

\end{proof}

\end{document}